\theoremstyle{plain}
\newtheorem{Thm}{Theorem}[section]
\newtheorem{Lem}[Thm]{Lemma}
\newtheorem{Cor}[Thm]{Corollary}
\newtheorem{Prop}[Thm]{Proposition}
\theoremstyle{definition}
\newtheorem{Def}[Thm]{Definition}
\newtheorem{Def-Lem}[Thm]{Definition-Lemma}
\newtheorem{Cond}[Thm]{Condition}
\newtheorem{Rem}[Thm]{Remark}
\newtheorem*{Ack}{Acknowledgments}
\numberwithin{equation}{section}
\newcommand{\Proj}{\operatorname{Proj}}
\newcommand{\Sing}{\operatorname{Sing}}
\newcommand{\Spec}{\operatorname{Spec}}
\newcommand{\Cl}{\operatorname{Cl}}
\newcommand{\Bs}{\operatorname{Bs}}
\newcommand{\mult}{\operatorname{mult}}
\newcommand{\reg}{\operatorname{reg}}
\newcommand{\Pli}{\operatorname{Pli}}
\newcommand{\wt}{\operatorname{wt}}
\newcommand{\ord}{\operatorname{ord}}
\newcommand{\Supp}{\operatorname{Supp}}
\newcommand{\Qsm}{\operatorname{Qsm}}
\newcommand{\EL}{\operatorname{EL}}
\newcommand{\Cent}{\operatorname{Cent}}
\renewcommand{\wt}{\operatorname{wt}}
\newcommand{\sigmawt}{\text{$\sigma$-$\operatorname{wt}$}}
\newcommand{\tauonewt}{\text{$\tau_1$-$\operatorname{wt}$}}
\newcommand{\msi}{\mathsf{i}}
\newcommand{\coeff}{\operatorname{coeff}}
\newcommand{\mbA}{\mathbb{A}}
\newcommand{\mbC}{\mathbb{C}}
\newcommand{\mbD}{\mathbb{D}}
\newcommand{\mbE}{\mathbb{E}}
\newcommand{\mbP}{\mathbb{P}}
\newcommand{\mbQ}{\mathbb{Q}}
\newcommand{\mbT}{\mathbb{T}}
\newcommand{\mbU}{\mathbb{U}}
\newcommand{\mbZ}{\mathbb{Z}}
\newcommand{\mcC}{\mathcal{C}}
\newcommand{\mcE}{\mathcal{E}}
\newcommand{\mcF}{\mathcal{F}}
\newcommand{\mcH}{\mathcal{H}}
\newcommand{\mcI}{\mathcal{I}}
\newcommand{\mcL}{\mathcal{L}}
\newcommand{\mcM}{\mathcal{M}}
\newcommand{\mcO}{\mathcal{O}}
\newcommand{\mcW}{\mathcal{W}}
\newcommand{\mcY}{\mathcal{Y}}
\newcommand{\mcZ}{\mathcal{Z}}
\newcommand{\msp}{\mathsf{p}}
\newcommand{\msq}{\mathsf{q}}
\newcommand{\msr}{\mathsf{r}}
\newcommand{\msw}{\mathsf{w}}
\newcommand{\inj}{\hookrightarrow}
\newcommand{\ratmap}{\dashrightarrow}
\newcommand{\lrd}{\llcorner}
\newcommand{\rrd}{\lrcorner}
\def\imod#1{\allowbreak\mkern10mu({\operator@font mod}\,\,#1)}
\title[Birationally solid Fano 3-fold hypersurfaces]{Birationally solid Fano 3-fold hypersurfaces}
\author[Takuzo Okada]{Takuzo Okada}
\address{Department of Mathematics, Faculty of Science and Engineering\endgraf
Saga University, Saga 840-8502 Japan}
\email{okada@cc.saga-u.ac.jp}
\subjclass[2020]{14E08 \and 14J45}
\date{}
\begin{document}
%%%%%%%%%%%%%%%%%%%%%%%%%%%%%%%%%%
%%%%%%%%%%%%%%%%%%%%%%%%%%%%%%%%%%

\begin{abstract}
A Fano variety of Picard number $1$ is said to be {\it birationally solid} if it is not birational to a Mori fiber space over a positive dimensional base.
In this paper we complete the classification of quasi-smooth birationally solid Fano $3$-fold weighted hypersurfaces.
\end{abstract}

\maketitle

\tableofcontents

%%%%%%%%%%%%%%%%%%%%%%%%%%%%%%%%%%
%%%%%%%%%%%%%%%%%%%%%%%%%%%%%%%%%%
\section{Introduction} \label{sec:intro}
%%%%%%%%%%%%%%%%%%%%%%%%%%%%%%%%%%
%%%%%%%%%%%%%%%%%%%%%%%%%%%%%%%%%%

Throughout the paper we work over the field $\mbC$ of complex numbers.
A Fano variety of Picard number one is \textit{birationally solid} (or simply \textit{solid}) if it is not birational to a Mori fiber space over a positive dimensional base.
Shokurov originally introduced this notion in \cite{Sho88} with a different name: birationally solid varieties were called \textit{primitive} in \cite{Sho88}.
Later on the name ``birational solidity" was proposed in \cite{AO}.
This notion was introduced as a generalization of birational rigidity, where we recall that a Fano variety of Picard number one is \textit{birationally rigid} if it is not birational to a Mori fiber space other than those that are isomorphic to itself.

By a \textit{quasi-smooth Fano $3$-fold weighted hypersurface}, we mean a hypersurface $X$ in a weighted projective $4$-space $\mbP (a_0, \dots, a_4)$ such that $X$ is quasi-smooth, that is, the affine cone of $X$ is smooth outside the origin, $X$ has only terminal singularities, and
\[
\iota_X := \sum_{i=0}^4 a_i - d > 0,
\]
where $d$ is the degree of the quasi-homogenous polynomial defining $X$.
The number $\iota_X$ is called the {\it index} (or \textit{Fano index}) of $X$.
Quasi-smooth Fano $3$-fold weighted hypersurfaces are classified by Reid, Iano-Fletcher, Brown, and Suzuki (see \cite{IF}, \cite{BS07a}, \cite{BS07b}), and they form $130$ families among which $95$ families consist of those with index $1$ (see \cite[Table 5]{IF} and \cite[Table 1]{Okstrat} for the list, and see Theorem~\ref{thm:wtbdd} for the completeness of the list).

After the pioneering work on smooth quartic $3$-folds by Iskovskikh--Manin \cite{IM}, Corti--Pukhlikov--Reid \cite{CPR} proved birational rigidity of quasi-smooth members in each of the $95$ families under suitable generality assumptions.
Cheltsov--Park \cite{CP} removed the generality assumptions and the classification of birationally rigid quasi-smooth Fano $3$-fold weighted hypersurfaces is finally completed as follows.

\begin{Thm}[{Cheltsov--Park \cite{CP}}] \label{thm:CP}
Any quasi-smooth Fano $3$-fold weighted hypersurface of index $1$ is birationally rigid.
\end{Thm}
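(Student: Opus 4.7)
The plan is to apply the Noether--Fano--Iskovskikh method in combination with the Sarkisov program. Suppose $\varphi \colon X \ratmap Y/T$ is a birational map to a Mori fiber space. Choose a complete linear system on $Y$ pulled back from a very ample class on $T$ (or a multiple of $-K_Y$ if $T$ is a point), and let $\mcM$ be its proper transform on $X$. Since $\Pic(X) = \mbZ\langle -K_X\rangle$ and $\iota_X = 1$, we have $\mcM \sim_{\mbQ} -n K_X$ for some positive integer $n$. By the Noether--Fano inequality, if $\varphi$ is not a square birational self-map then the pair $(X, \tfrac{1}{n}\mcM)$ is not canonical, so some geometric divisorial valuation $v$ of $\mbC(X)$ computes a log discrepancy less than $1$; its center on $X$ is what we call a \emph{maximal center}. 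The task is to rule out, for each of the $95$ families of index $1$, every potential maximal centre other than those leading to a Sarkisov self-link whose target is $X$ again.

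I would first exclude curves as maximal centres. If $C \subset X$ is such a centre, then $\mult_C \mcM > n$, so intersecting $\mcM$ with a general member yields $(\mult_C \mcM)^2 \cdot ((-K_X)\cdot C) \le n^2 (-K_X)^3$, giving $(-K_X)\cdot C < (-K_X)^3$. In all $95$ families the anticanonical degree is a very small rational number, and the intersection $(-K_X)\cdot C$ has a controlled denominator coming from the weights of the ambient $\mbP(a_0,\dots,a_4)$; a short case analysis then rules out every candidate curve. Next I would exclude smooth points via the local $4$-inequality: if $p$ is smooth and a maximal centre, then $(\mcM_1 \cdot \mcM_2)_p > 4n^2$ for two general members, and one bounds this from above by intersecting $\mcM$ with an appropriate chain of hypertangent divisors at $p$, following the hypertangent-sequence method of \cite{CPR}. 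For the general member of each family this is routine; the new content over \cite{CPR} is to make the argument uniform over the whole family, which is carried out by replacing degenerate hypertangent chains with refined \emph{isolating} linear subsystems, or by working with carefully chosen auxiliary surfaces through $p$ whose scheme-theoretic intersection with $\mcM$ can be controlled even when the hypertangent construction collapses.

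The main obstacle is the treatment of the terminal cyclic quotient singularities $p = \tfrac{1}{r}(1,a,r-a) \in X$. By Kawakita's classification of divisorial extractions from terminal $3$-fold quotient singularities, each such $p$ admits a unique divisorial extraction, namely the Kawamata weighted blowup with weights $(1,a,r-a)$, so this is the only candidate maximal centre supported at $p$. For each $p$ one must either exclude it by a local intersection inequality involving the weighted multiplicity $\wmult_p$ (using again hypertangent or isolating subsystems adapted to the weights), or else initiate the Sarkisov link beginning with the weighted blowup and check that it terminates on $X$ itself as a square self-link. The delicate point, and the genuine source of difficulty in removing the generality hypotheses of \cite{CPR}, is that for certain special (non-generic) members the naive hypertangent exclusions at specific singular points fail; these must be handled one-by-one across the offending families, either by producing an ad hoc isolating system, by exhibiting an explicit birational involution that realises the centre as giving a self-link, or by pushing the local analysis further using the structure of the weighted blowup.

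Once every maximal centre has been either excluded or identified with a Sarkisov self-link ending on $X$, the Sarkisov program forces $\varphi$ to be a composition of such self-links, hence $Y/T \cong X/\mathrm{pt}$ as Mori fiber spaces, which is exactly the statement of birational rigidity. The heart of the argument, and where I expect to spend the overwhelming majority of the effort, is the family-by-family, singularity-by-singularity analysis described in the previous paragraph.
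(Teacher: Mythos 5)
The paper does not prove this statement: Theorem \ref{thm:CP} is imported verbatim from Cheltsov--Park \cite{CP} (building on \cite{IM} and \cite{CPR}), so there is no internal proof to compare against. Your outline does faithfully reproduce the architecture of the proof in \cite{CPR} and \cite{CP}: Noether--Fano plus the Sarkisov program reduces the claim to showing that every maximal centre on each of the $95$ index-$1$ families is either excluded or untwisted by a self-link, with curves handled by the inequality $(-K_X)\cdot C < (-K_X)^3$, smooth points by Corti's $4n^2$ inequality together with hypertangent or isolating systems, and quotient singular points via the Kawamata blow-up. One attribution slip: the uniqueness of the divisorial extraction from a terminal quotient point is Kawamata's theorem \cite{Kawamata}, not Kawakita's; Kawakita's classification concerns contractions to $cDV$ and higher-index non-quotient points, which are not needed here since quasi-smooth hypersurfaces have only cyclic quotient singularities.

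However, as a proof this is only a plan. The entire mathematical content of the theorem lives in the deferred case analysis: for each of the $95$ families one must run the exclusion/untwisting dichotomy at every singular point and every potential centre, and---this is precisely what distinguishes \cite{CP} from \cite{CPR}---one must do so for \emph{every} quasi-smooth member, not just a general one. The failures of the hypertangent method on special members are not isolated nuisances; handling them occupies the bulk of the memoir \cite{CP} and required genuinely new exclusion arguments (for instance inversion-of-adjunction computations on carefully chosen surfaces, of the kind this paper re-uses in Lemmas \ref{lem:exclquotsing} and \ref{lem:excl110nspt}, as well as the criterion \cite[Theorem~2.2.1]{CP} invoked in the proof of Proposition \ref{prop:Sgen}). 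So the proposal identifies the correct strategy but supplies none of the verifications that constitute the theorem.
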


\begin{Thm}[{Abban--Cheltsov--Park \cite{ACP}}] \label{thm:ACP}
Let $X$ be a quasi-smooth Fano $3$-fold weighted hypersurface of index greater than $1$.
Then $X$ is not birationally rigid.
If $X$ belongs to Family \textnumero~$\msi$, where $\msi \notin \{100, 101, 102, 103, 110\}$, then $X$ is not birationally solid.
\end{Thm}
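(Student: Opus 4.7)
The plan is to handle the two assertions of the theorem in tandem. The $35$ families of index $\iota_X > 1$ split into the five exceptional families in $\{100, 101, 102, 103, 110\}$, for which only non-rigidity is asserted, and the remaining $30$ families, for which I will prove non-solidity (which automatically implies non-rigidity, since a solid Fano variety is in particular non-birational to any MFS over a positive-dimensional base, including itself equipped with a non-trivial fibration structure is ruled out).

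For the $30$ non-exceptional families, the strategy is to construct an explicit birational map from $X$ to a Mori fiber space over a positive-dimensional base. In each case I would begin by selecting a terminal cyclic quotient singularity $\mathsf{p} \in X$, usually of highest index among the singular points of $X$, and taking its Kawamata blowup $\sigma \colon Y \to X$ with exceptional divisor $E$. Because $(-K_X)^3 = \iota_X^3 d / \prod_i a_i$ is comparatively large when $\iota_X > 1$, one expects after $\sigma$ to find a movable linear system $|\mcM| \subset |n(-K_X)|$, often a pencil cut out by low-weight coordinate monomials, whose proper transform $\sigma^*|\mcM| - m E$ is nef on $Y$ for an appropriate coefficient $m$. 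Running the $2$-ray game attached to the extremal ray complementary to the one contracted by $\sigma$ on the Mori chamber decomposition of $Y$ then terminates at a Mori fibration $Y' \to B$, and a careful choice of $|\mcM|$ ensures $\dim B \ge 1$, typically giving a del Pezzo fibration over $\mbP^1$ or a conic bundle over $\mbP^2$.

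For the five exceptional families only non-rigidity is required, and the same framework applies except that the endpoint of the $2$-ray game is allowed to be a different Fano variety $X'$ of Picard number one; this yields a Sarkisov link $X \ratmap X'$ witnessing non-rigidity without contradicting the (presumed) solidity of $X$, which is exactly the subject of the remaining sections of the paper. Constructing such a link for each of the five families thus amounts to exhibiting one new extremal extraction from $X$ and identifying the outcome of the MMP on the resulting $Y$, guided by the numerical data of the weights and degree.

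The main obstacle is the family-by-family execution. The quintuple $(a_0, \dots, a_4; d)$ dictates which singular point to blow up, which system $|\mcM|$ to use, and what the endpoint of the $2$-ray game turns out to be, so verifying the construction for each of the $35$ families needs a separate case-by-case analysis of Mori cones, discrepancies, and stability in the Cox ring of the toric ambient weighted projective space. The finest point is, for the $30$ non-exceptional cases, ensuring that $\dim B > 0$ and not $0$: a bad choice of $|\mcM|$ or of the blowup center can make the $2$-ray game terminate at another Fano variety rather than a positive-dimensional base, and distinguishing these outcomes is where most of the work lies. The hardest subcases are expected to be those of low Fano index (namely $\iota_X = 2$) with few high-index cyclic quotient singularities, where the candidate pencil barely satisfies the required movability bounds and one may be forced to iterate Sarkisov links or perform an unprojection before the fibration becomes visible.
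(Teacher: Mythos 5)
This theorem is quoted from \cite{ACP} and is not reproved in the paper; the construction the paper recalls in Section~\ref{sec:birig} (Kawamata blow-up of the highest-index quotient singular point, embedding into a rank~$2$ toric variety, and a $2$-ray game terminating either in a divisorial contraction to another Mori--Fano $3$-fold or in a fibration over a positive-dimensional base) is exactly the strategy you outline, so your sketch matches the intended argument. The only blemishes are the garbled parenthetical justifying that non-solidity implies non-rigidity (the clean one-line reason: a Mori fiber space over a positive-dimensional base is never square birational to a Mori--Fano variety) and the omission, for the five exceptional families, of the check that the target $\hat{X}$ of the link is not isomorphic to $X$ --- which is needed to conclude non-rigidity and holds here because $\hat{X}$ has Fano index $1$ while $X$ has index greater than $1$.
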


It took many years---50 years after \cite{IM}---to complete the classification of birationally rigid quasi-smooth Fano $3$-fold weighted hypersurfaces.
However the classification of birationally solid ones has not been finished yet: birational solidity or non-solidity for members of Family \textnumero~$\msi$ with $\msi \in \{100, 101, 102, 103, 110\}$ remains undetermined.
The list of the remaining 5 families is as follows:
\begin{itemize}
\item Family \textnumero~$100$: $X_{18} \subset \mbP (1, 2, 3, 5, 9)$.
\item Family \textnumero~$101$: $X_{22} \subset \mbP (1, 2, 3, 7, 11)$.
\item Family \textnumero~$102$: $X_{26} \subset \mbP (1, 2, 5, 7, 13)$.
\item Family \textnumero~$103$: $X_{38} \subset \mbP (2, 3, 5, 11, 19)$.
\item Family \textnumero~$110$: $X_{21} \subset \mbP (1, 3, 5, 7, 8)$.
\end{itemize}
In the above list, the subscript of $X$ is the degree of the defining equations of weighted hypersurfaces in the corresponding weighted projective space (see Sections~\ref{sec:birig} and \ref{sec:tririg} for details).
The following is the main result of this paper, which completes the classification of birationally solid quasi-smooth Fano $3$-fold weighted hypersurfaces.

\begin{Thm} \label{thm:main}
Let $X$ be a quasi-smooth Fano $3$-fold weighted hypersurface of index greater than $1$.
Then $X$ is birationally solid if and only if it belongs to family \textnumero~$\msi$ for $\msi \in \{100, 101, 102, 103, 110\}$.
\end{Thm}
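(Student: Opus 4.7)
The ``only if'' direction of Theorem~\ref{thm:main} is the content of Theorem~\ref{thm:ACP}, so the plan is to establish the ``if'' direction: for each $\msi \in \{100, 101, 102, 103, 110\}$, a quasi-smooth member $X$ of family \textnumero~$\msi$ is birationally solid. Since by Theorem~\ref{thm:ACP} such an $X$ fails to be birationally rigid, there do exist nontrivial birational maps out of $X$ to other Mori fiber spaces, and what must be proved is that none of these, nor any chain obtained by composing them, terminates on a Mori fiber space over a base of positive dimension. Accordingly I would treat the five families one by one, in each case classifying all Sarkisov links originating from $X$ and verifying that every link lands on a Fano variety of Picard number one.

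The framework is the Sarkisov program combined with the Noether--Fano method developed for weighted hypersurfaces in \cite{CPR}, \cite{CP}, and \cite{ACP}. Given a birational map $\varphi \colon X \ratmap Y/S$ to a Mori fiber space, I pull back an ample class to obtain a mobile linear system $\mcH$ on $X$ with $\mcH \sim_{\mbQ} -n K_X$ for some $n \in \mbQ_{>0}$, and analyze its maximal centers: subvarieties $Z \subset X$ along which the pair $(X, \tfrac{1}{n} \mcH)$ fails to be canonical. The first step for each family is to rule out all smooth points and all curves as maximal centers, using standard multiplicity estimates together with the restriction and flex-curve techniques refined in \cite{CP} and \cite{ACP}. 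What remains is a finite list of terminal cyclic quotient singularities of $X$.

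For each surviving maximal center $\mathsf{p} \in X$, the Kawamata blowup $\sigma \colon \tilde X \to X$ at $\mathsf{p}$ initiates a two-ray game on $\tilde X$, whose output is a Sarkisov link that I must compute explicitly. Concretely, I would realize $\tilde X$ inside a suitable toric ambient space, run the variation of GIT on its Cox ring, determine the second extremal contraction, and identify the model on the other side as a quasi-smooth Fano $3$-fold of Picard number one, typically another weighted hypersurface or a complete intersection in a weighted projective space. The specific arithmetic of the weight tuples $(1,2,3,5,9)$, $(1,2,3,7,11)$, $(1,2,5,7,13)$, $(2,3,5,11,19)$, and $(1,3,5,7,8)$ is expected to conspire so that the second extremal ray of $\tilde X$ always produces another Fano rather than a del Pezzo fibration or a conic bundle; verifying this case by case is the combinatorial heart of the argument.

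The main obstacle I anticipate is twofold. First, the exclusion of curves through singular points as maximal centers is delicate in this higher-index setting, and will require a careful adaptation of the inversion of adjunction and multiplier ideal arguments of \cite{CP} and \cite{ACP} to each of the five specific weight configurations. Second, the explicit two-ray game must be run at every distinguished singular point of every family, and at each step one must ensure that no intermediate small modification or divisorial contraction opens up a fibration structure over a positive-dimensional base; this amounts to a detailed analysis of the nef and movable cones of $\tilde X$. Once these two steps are executed for each of the five families, every Sarkisov link from $X$ lands on another Picard-rank-one Fano threefold, so no chain of links can reach a Mori fiber space over a higher-dimensional base, and birational solidity of $X$ follows.
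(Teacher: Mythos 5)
There is a genuine gap, and it is the central one. Classifying the elementary links out of $X$ and checking that each lands on a Mori--Fano $3$-fold does not by itself prove birational solidity: a birational map $X \ratmap Y/T$ decomposes into a \emph{chain} of links, and after the first link $X \ratmap \hat{X}$ the subsequent links start from $\hat{X}$, not from $X$. Your final inference (``every Sarkisov link from $X$ lands on another Picard-rank-one Fano threefold, so no chain of links can reach a Mori fiber space over a higher-dimensional base'') is therefore invalid as stated. One must iterate: classify $\EL(X)$, then $\EL(\hat{X})$ (and $\EL(\breve{X})$ for Family \textnumero~110), and continue until the collection of Mori--Fano targets closes up under taking links. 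The classification of $\EL(X)$ itself is essentially already available from \cite{ACP}; the paper's real work is the second stage.

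That second stage is also where your expectations about the geometry go wrong. The target $\hat{X}$ of the link from the highest-index quotient point is \emph{not} quasi-smooth: it is a Fano hypersurface of index $1$ carrying a compound Du Val point of type $cE_6$, $cE_7$, or $cE_8$ (type $cE/2$ for Family \textnumero~102, and the second target $\breve{X}$ for Family \textnumero~110 has a $cD/2$ point). Excluding or classifying maximal extractions centered at such a point requires determining \emph{all} divisorial contractions with center that $cE$ point --- counting divisors of discrepancy $1$ via weighted blow-ups after analytic re-embeddings, and ruling out contractions of higher discrepancy --- which is the technical core of the paper (its Section on divisorial contractions to $cE$ and $cD/2$ points, building on unpublished work of Hayakawa). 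None of this is anticipated by your plan, which assumes the targets are again quasi-smooth weighted hypersurfaces or complete intersections amenable to the standard \cite{CPR}/\cite{CP} exclusion toolkit. Finally, for Family \textnumero~110 even the iterated classification cannot be completed outright: certain smooth points on a distinguished curve of $\breve{X}$ cannot be excluded as maximal centers unconditionally, and the paper substitutes a ``Sarkisov generating set'' argument (untwisting a fixed mobile system along a decreasing sequence of thresholds) to close the loop among $\{X,\hat{X},\breve{X}\}$. Your proposal would stall at both of these points.
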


\begin{Rem}
\textit{Any} quasi-smooth Fano $3$-fold weighted hypersurface of index $1$ is nonrational by Theorem \ref{thm:CP}.
Rationality questions for \textit{very general} quasi-smooth Fano $3$-fold hypersurfaces of index $> 1$ are settled in \cite{Okstrat}. 
One of the main motivations of introducing birational solidity lies in the rationality questions.
Birational solidity implies nonrationality, and in principle one can prove birational solidity of a given Fano variety.
This enables us to drop the assumption of \textit{very generality} in the above result for the 5 families in Theorem \ref{thm:main}: \textit{any} quasi-smooth member of the $5$ families in Theorem \ref{thm:main} is nonrational.
\end{Rem}

We explain a main difficulty in the proof of Theorem~\ref{thm:main} and how to overcome it as well as the organization of the paper.
In Section \ref{sec:prelim}, we give basic definitions and explain the framework of proof of birational solidity.
In Section~\ref{sec:exclmethod}, we give various methods for excluding a given subvariety as a so called maximal center which is the center of a divisorial contraction initiating an elementary link.
Sections~\ref{sec:birig} and \ref{sec:tririg} are devoted to the proof of Theorem~\ref{thm:main}. 
Let $X$ be a member of one of the $5$ families in Theorem~\ref{thm:main}.
Then an elementary (Sarkisov) link $X \ratmap \hat{X}$ to a Fano $3$-fold $\hat{X}$ is constructed in \cite{ACP}.
For Family \textnumero $110$, $X$ admits another elementary link $X \ratmap \breve{X}$ to a Fano $3$-fold $\breve{X}$.
The constructions of these links are explained in Sections~\ref{sec:birig} and \ref{sec:tririg}.
The proof of birational solidity will be done by classifying elementary links from $X$ and from $\hat{X}$ (and also from $\breve{X}$).
To this end, we need to understand divisorial contractions to $X$ and to $\hat{X}$ (and also to $\breve{X}$) especially with center a singular point.
The main difficulty lies in the fact that the Fano $3$-fold $\hat{X}$ has a terminal singularity of type $cE$ except when $X$ belongs to Family \textnumero $102$, and that detailed analysis of such a singularity is quite difficult (see Remark~\ref{rem:divcontcE} for the classification of divisorial contractions to $cE$ points).
We will overcome this difficulty by classifying divisorial contractions to particular types of $cE$ points based on ideas from the currently unpublished preprint \cite{HayakawaE} by Hayakawa.
These arguments are the technical core of the paper and are summarized in Section~\ref{sec:divcont}.
%This is the technical core of the paper and is the main ingredient of the proof of Theorem~\ref{thm:main}, that is summarized in Section~\ref{sec:divcont}.
We believe that the results in Section~\ref{sec:divcont} are of interest on their own and will be applied to further study of Fano $3$-folds admitting $cE$ points.

In the literature there is no known example of a Fano $3$-fold with a $cE$ point whose birational models are determined, and this paper  provides first such Fano $3$-folds.
 
\begin{Ack}
The author is supported by JSPS KAKENHI Grant Numbers JP18K03216 and JP22H01118.
The author would like to thank Takayuki Hayakawa for his generosity to share ideas in the preprint \cite{HayakawaE}.
The ideas are on various weighted blow-ups obtaining divisors of discrepancy $1$ over $cE$ points, which are crucial in Section~\ref{sec:divdisc1}.
The author also thanks Erik Paemurru for comments and questions that improved the exposition of the paper.
\end{Ack}

%%%%%%%%%%%%%%%%%%%%%%%%%%%%%%%%%%
%%%%%%%%%%%%%%%%%%%%%%%%%%%%%%%%%%
\section{Preliminaries} \label{sec:prelim}
%%%%%%%%%%%%%%%%%%%%%%%%%%%%%%%%%%
%%%%%%%%%%%%%%%%%%%%%%%%%%%%%%%%%%

%%%%%%%%%%%%%%%%%%%%%%%%%%%%%%%%%%
\subsection{Notation and basic definitions}
%%%%%%%%%%%%%%%%%%%%%%%%%%%%%%%%%%

%%%
\subsubsection{Pliability set and birational solidity}
%%%

\begin{Def}
Let $\pi \colon X \to S$ be a morphism with connected fibers between normal projective varieties.
We say that $\pi \colon X \to S$ (or simply $X/S$) is a \textit{Mori fiber space} if $X$ is $\mbQ$-factorial, has only terminal singularities, the relative Picard number is $1$, $-K_X$ is ample over $S$, and $\dim S < \dim X$.
\end{Def}

\begin{Def}
A \textit{Fano variety} is a normal projective $\mbQ$-factorial variety with only terminal singularities whose anti-canonical divisor is ample.
A Fano variety of Picard number $1$ is called a \textit{Mori-Fano variety}.
\end{Def}

A Mori-Fano variety $X$ together with its structure morphism $X \to \Spec (\mbC)$ is nothing but a Mori fiber space $X/\Spec (\mbC)$ over a point, and we usually omit the description $/\Spec (\mbC)$ in this case.
By a birational map $f \colon X/S \ratmap Y/T$ between Mori fiber spaces, we mean a birational map $f \colon X \ratmap Y$ between their total spaces.

\begin{Def}
A birational map $f \colon X/S \ratmap Y/T$ between Mori fiber spaces is \textit{square} if there is a birational map $g \colon S \ratmap T$ such that the diagram
\[
\xymatrix{
X \ar[d] \ar@{-->}[r]^{f} & Y \ar[d] \\
S \ar@{-->}[r]_{g} & T}
\]
commutes and the induced birational map between generic fibers of $X/S$ and $Y/T$ is a biregular isomorphism.

Mori fiber spaces $X/S$ and $Y/T$ are \textit{square birational}, denoted $X/S \sim_{\operatorname{sq}} Y/T$, if there is a square birational map $f \colon X/S \ratmap Y/T$.
\end{Def}

For a Mori-Fano variety $X$, a square birational map $X \ratmap Y/T$ (or $Y/T \ratmap X$) to (or from) a Mori fiber space $Y/T$ is simply a biregular isomorphism.

\begin{Def}
The \textit{pliability set} of a Mori fiber space $X/S$ is the set
\[
\Pli (X/S) := \{\, \text{Mori fiber space $Y/T$} \mid \text{$Y$ is birational to $X$} \,\}/\sim_{\operatorname{sq}}.
\]
\end{Def}

\begin{Def}
Let $X$ be a Mori-Fano variety.
We say that $X$ is \textit{birationally solid} if $\Pli (X)$ consists only of Mori-Fano varieties, or in other words, if $X$ is not birational to a Mori fiber space over a positive dimensional base.
We say that $X$ is \textit{birationally rigid} if $\Pli (X) = \{X\}$.
\end{Def}

%%%
\subsubsection{Weighted hypersurfaces}
%%%

Let $a_0, \dots, a_n$ be positive integers and let
\[
\mbP := \mbP (a_0, \dots, a_n) = \Proj \mbC [x_0, \dots, x_n]
\]
be the weighted projective space with homogeneous coordinates $x_0, \dots, x_n$ of weights $a_0, \dots, a_n$, respectively.
We say that $\mbP$ is \textit{well-formed} if 
\[
\gcd \{\, a_i \mid 0 \le i \le n, i \ne k \ \} = 1
\]
for $k = 0, 1, \dots, n$.
For a homogeneous coordinate $v \in \{x_0, \dots, x_n\}$, the point $(0\!:\!\cdots\!:\!0\!:\!1\!:\!0\!:\!\cdots\!:\!0) \in \mbP$ at which all the homogeneous coordinates except for $v$ vanish is denoted by $\msp_v$.
For quasi-homogeneous polynomials $F_1, \dots, F_m \in \mbC [x_0, \dots, x_n]$, we define 
\[
(F_1 = \cdots = F_m = 0) := \Proj \mbC [x_0, \dots, x_m]/(F_1, \dots, F_m)
\]
and, for a subscheme $V \subset \mbP$, we denote by $(F_1 = \cdots = F_m)_V$ the scheme-theoretic intersection $(F_1 = \cdots = F_m = 0) \cap V$.

\begin{Def}
Let $V$ be a closed subscheme of $\mbP$ defined by a homogeneous ideal $I \subset \mbC [x_0, \dots, x_n]$.
The \textit{quasi-smooth locus} $\Qsm (V)$ of $V$ is defined to be the image of the smooth locus of $C^*_V := C_V \setminus \{o\}$ under the natural morphism $\mbA^{n+1} \setminus \{o\} \to \mbP$, where
\[
C_V := \Spec \mbC [x_0, \dots, x_n]/I
\]
is the \textit{affine cone} of $V$ and $o \in \mbA^{n+1}$ is the origin.
For a subset $S \subset V$, we say that $V$ is quasi-smooth along $S$ if $S \subset \Qsm (V)$.
We say that $V$ is \textit{quasi-smooth} if it is quasi-smooth along $V$, that is, $V = \Qsm (V)$.  
\end{Def}

We sometimes write $\mbP (a_x, b_y, c_z, d_t, e_w)$, where $a, b, c, d, e$ are positive integers, and this means that it is the weighted projective space $\mbP (a, b, c, d, e)$ with homogeneous coordinates $x, y, z, t, w$ of weights $a, b, c, d, e$, respectively.

\subsubsection{Classification of Fano $3$-fold weighted hypersurfaces}

Let $a_0, \dots, a_4$ and $d$ be positive integers.
We assume that the following condition is satisfied.

\begin{Cond} \label{cond:fanowh}
\begin{itemize}
\item $a_0 \le \cdots \le a_4$, 
\item $a_4 < d$, 
\item $\gcd \{\, a_i \mid 0 \le i \le 4, i \ne j \,\} = 1$ for any $j = 0, \dots, 4$, 
\item $d < a_0 + \cdots + a_4$, and
\item there is a quasi-smooth hypersurface of degree $d$ in $\mbP (a_0, \dots, a_4)$ which has only terminal singularities.
\end{itemize}
\end{Cond}

Let $X = X_d \subset \mbP (a_0, \dots, a_4)$ be a quasi-smooth hypersurface of degree $d$ in $\mbP (a_0, \dots, a_4)$ with homogeneous coordinates $x_0, \dots, x_4$.
Then $X$ is a Fano $3$-fold with only cyclic quotient terminal singularities.
We set $\iota := \sum a_i - d$ which we call the index of $X$.
For the index 1 case (i.e.\ $\iota = 1$), the completeness of the list \cite[Table~5]{IF} is proved in \cite{CCC} while the completeness of the lists \cite[Table~1]{BS07b} and \cite[Table 1]{BS07a} (which are summarized in \cite[Table~1]{Okstrat}) for index $\iota > 1$ cases are not discussed in the literature at least in an explicit way.
We believe that the completeness has been known among specialists.
We include its proof for readers' convenience.   
We will give an effective upper bound of the weights $a_0, \dots, a_4$ by elementary arguments, which guarantees the classification of quasi-smooth Fano $3$-fold weighted hypersurfaces (of any index $\iota$).

\begin{Rem} \label{rem:quotsingind}
Let $\msp_1, \dots, \msp_n \in X$ be the singular points and we denote by $r_i$ the index of $\msp_i \in X$.
By \cite[Theorem~1.2]{KMMT} and \cite[Corollary~10.3]{Reid87}, we have 
\[
\sum_{i=1}^n \left(r _i - \frac{1}{r_i} \right) \le 24,
\]
which in particular shows $r_i \le 24$ for any $i$.
\end{Rem}

\begin{Lem} \label{lem:wtbdd1}
Suppose that $a_4 \mid d$ and $a_3 \le 24$.
Then $a_4 \le 168$.
\end{Lem}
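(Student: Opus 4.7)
The plan is to exploit the divisibility $a_4 \mid d$ to convert the Fano index inequality $d < a_0 + \cdots + a_4$ from Condition~\ref{cond:fanowh} into a direct arithmetic bound on $a_4$.

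First, I would write $d = k\, a_4$ for some positive integer $k$. The hypothesis $a_4 < d$ in Condition~\ref{cond:fanowh} forces $k \ge 2$. Substituting $d = k\, a_4$ into the strict inequality $d < a_0 + a_1 + a_2 + a_3 + a_4$ and moving the $a_4$ on the right across gives
\[
(k-1)\, a_4 \;<\; a_0 + a_1 + a_2 + a_3.
\]

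Next, I would bound the right-hand side by invoking the ordering $a_0 \le a_1 \le a_2 \le a_3$ together with the hypothesis $a_3 \le 24$, obtaining $a_0 + a_1 + a_2 + a_3 \le 4 a_3 \le 96$. Combined with $k - 1 \ge 1$, this yields $a_4 < 96$, which is more than enough to conclude $a_4 \le 168$ (and in fact already gives the sharper bound $a_4 \le 95$).

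No real obstacle is expected here: the argument is a direct arithmetic consequence of the Fano index condition together with the weight ordering, and neither quasi-smoothness, terminality, nor the well-formedness conditions in Condition~\ref{cond:fanowh} need to be invoked. The looseness of $168$ versus $96$ is presumably just a uniform bound chosen for convenience with the subsequent lemmas in the effective upper-bound package.
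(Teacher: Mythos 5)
Your proof is correct, and it takes a genuinely different --- and simpler --- route than the paper's. Every ingredient you use is among the standing hypotheses: $a_4 \mid d$ together with $a_4 < d$ from Condition~\ref{cond:fanowh} gives $d \ge 2a_4$, and then $a_4 \le d - a_4 < a_0 + a_1 + a_2 + a_3 \le 4a_3 \le 96$, so your argument is complete and even yields the sharper bound $a_4 \le 95$. The paper instead invokes quasi-smoothness (the fifth bullet of Condition~\ref{cond:fanowh}) to extract a relation $k a_3 + a_j = d$ coming from a monomial $x_3^k x_j$ in the defining polynomial, and then runs a two-case analysis ($a_j = a_4$ versus $j \le 3$) arriving at the bounds $72$ and $168$ respectively. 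What your route buys is elementarity and a better constant: you never need the existence of a quasi-smooth terminal member. What the paper's route buys is essentially only uniformity of the constant $168$ with Lemma~\ref{lem:wtbdd2}, where $7 \cdot 24 = 168$ genuinely arises; since Theorem~\ref{thm:wtbdd} only needs some effective bound to launch the computer search, nothing downstream depends on which constant one takes.
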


\begin{proof}
We write $d = m a_4$ for some integer $m \ge 2$.
%Note that $m \le 4$ since $d = m a_4 < a_0 + \cdots + a_4 \le 5 a_4$. 
By the quasi-smoothness, we have $k a_3 + a_j = d$ for some $j = 0, \dots, 4$.

Suppose first that $a_j = a_4$.
Then $k a_3 = d - a_4 = (m - 1) a_4$.
We have $k \le 3$ since $k a_3 + a_4 = d < a_0 + \cdots +a_4$.
In this case $a_4 \le k a_3 \le 3 a_3 \le 72$.

Suppose that $j \le 3$.
By the inequality $d < a_0 + \cdots + a_3 + a_4$ and $d = m a_4$, we have $(m-1)d/m < a_0 + \cdots + a_3$.
Combining this with $k a_3 + a_0 \le k a_3 + a_j = d$, we obtain
\[
\frac{(m-1)k}{m} a_3 + \frac{m-1}{m} a_0 \le \frac{m-1}{m} d < a_0 + \cdots a_3.
\]
We have
\[
\frac{(m-1)k}{m} a_3 < \frac{1}{m} a_0 + a_1 + a_2 + a_3 \le \frac{3 m + 1}{m} a_3,
\]
which shows $(m-1)k \le 3 m$ and thus $k \le 3 + 3/(m-1) \le 6$ since $m \ge 2$.
In this case we have $d = k a_3 + a_j \le (k+1) a_3 \le 168$.
\end{proof}

\begin{Lem} \label{lem:wtbdd2}
If $a_3 \mid d$ and $a_4  \mid d$, then $a_4 \le 168$.
\end{Lem}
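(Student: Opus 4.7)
The plan is to reduce to Lemma~\ref{lem:wtbdd1} by showing $a_3 \le 24$, except in one residual case which I treat by a direct combinatorial inequality. The case split is based on whether the coordinate points $\msp_{x_3}$ and $\msp_{x_4}$ lie on $X$.

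If $\msp_{x_3} \in X$, then by well-formedness the stabilizer at $\msp_{x_3}$ equals $\mbZ/a_3$ and $\msp_{x_3}$ is a terminal cyclic quotient singularity of index $a_3$, so Remark~\ref{rem:quotsingind} yields $a_3 \le 24$ and Lemma~\ref{lem:wtbdd1} gives $a_4 \le 168$. Symmetrically, if $\msp_{x_4} \in X$ then $a_4 \le 24$ and we are done. So the heart of the proof is the case where neither point lies on $X$, equivalently both $x_3^{d/a_3}$ and $x_4^{d/a_4}$ appear in $F$ with nonzero coefficients. In this case I would consider the line $L := (x_0 = x_1 = x_2 = 0) \subset \mbP$: the restriction $F|_L$ is a nonzero quasi-homogeneous polynomial of degree $d$ that does not vanish at $\msp_{x_3}$ or $\msp_{x_4}$, so some point $p \in L \cap X$ is distinct from these. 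At $p$, the stabilizer of the weighted $\mbC^*$-action is $\mbZ/e$ with $e := \gcd(a_3, a_4)$, and the well-formedness condition $\gcd\{a_0, a_1, a_2, a_3\} = 1$ combined with $e \mid a_3$ gives $\gcd(a_0, a_1, a_2, e) = 1$, so the $\mbZ/e$-action on the transverse coordinates $(x_0, x_1, x_2)$ is faithful. Thus $p$ is a terminal cyclic quotient singularity of $X$ of index exactly $e$, and Remark~\ref{rem:quotsingind} forces $e \le 24$.

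To conclude, write $a_3 = e s$ and $a_4 = e t$ with $\gcd(s, t) = 1$ and $s \le t$. The divisibility $\lcm(a_3, a_4) = e s t \mid d$ and the Fano inequality $d < a_0 + a_1 + a_2 + a_3 + a_4 \le 4 a_3 + a_4 = e(4 s + t)$ combine to give $s t < 4 s + t$, equivalently $t(s - 1) < 4 s$. The case $s = 1$ gives $a_3 = e \le 24$ and we apply Lemma~\ref{lem:wtbdd1}; the case $s \ge 2$ gives $t < 4 s/(s - 1) \le 8$, so $t \le 7$ and $a_4 = e t \le 24 \cdot 7 = 168$. The main delicacy lies in the local analysis at a generic point of $L \cap X$: without the well-formedness conditions the singularity index at $p$ could a priori be a proper divisor of $e$, and then Remark~\ref{rem:quotsingind} would not yield a bound on $e$ itself.
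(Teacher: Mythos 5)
Your proof is correct and follows essentially the same route as the paper: both arguments bound $\gcd(a_3,a_4)$ by $24$ using a terminal quotient singular point of $X$ on the line $(x_0=x_1=x_2=0)$, and then bound the cofactor $a_4/\gcd(a_3,a_4)$ by $7$ via the divisibility hypotheses together with the Fano inequality $d < \sum a_i \le 4a_3+a_4$. The only differences are organizational: the paper treats all cases uniformly by showing $l := d/a_3 \le 7$ and using that $a_4/\gcd(a_3,a_4)$ divides $l$, whereas you split off the coordinate points $\msp_{x_3}$, $\msp_{x_4}$ and fall back on Lemma~\ref{lem:wtbdd1} in the case $a_3 \mid a_4$.
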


\begin{proof}
We write $d = l a_3 = m a_4$ for some positive integers $l \ge m \ge 2$.
By the inequality $d < a_0 + \cdots + a_3 + a_4 \le 4 a_3 + d/m$, we obtain 
\[
\frac{(m-1) l}{m} a_3 = \frac{(m-1)d}{m} < 4 a_3.
\]
Hence $l < 4m/(m-1) \le 8$ since $m \ge 2$.
We set $b := \gcd (a_3, a_4)$.
We claim that $b \le 24$.
Indeed, if $b > 1$, then $X$ had a cyclic quotient singular point of index $b$ along $(x_0 = x_1 = x_2 = 0)_X \ne \emptyset$, and hence $b \le 24$ by Remark~\ref{rem:quotsingind}.
By the equation $l a_3 = m a_4$, we have $a_4 \le l b$.
Therefore we have $a_4 \le 7 \cdot 24 = 168$ as desired.
\end{proof}

%\begin{Prop}
%Let $X =X_d \subset \mbP (a_0, a_1, a_2, a_3, a_4)$ be a quasi-smooth Fano $3$-fold weighted hypersurface with index $\iota > 0$.
%Then one of the following holds.
%\begin{enumerate}
%\item $\max \{a_0, \dots, a_4\} \le 24$.
%\item $d = 2 a_4$ and 
%\[
%\max \{a_0, \dots, a_4\} \le 660 \iota^3 + 3 - \iota.
%\]
%(In this case we have $a_3 \le 48$, and thus 
%\item $d = 3 a_4$ and 
%\[
%\max \{a_0, \dots, a_4\} \le \frac{990 \iota^3 + 3 - \iota}{2}.
%\]
%\item $d = 4 a_4$ and
%\[
%\max \{a_0, \dots, a_4\} \le \frac{1320 \iota^3 + 3 - \iota}{3}.
%\]
%\end{enumerate}
%\end{Prop}
%
%\begin{proof}
%Suppose that $a_4 > 24$.
%We see that the point $\msp_{x_4} \notin X$ because other wise the point $\msp_{x_4} \in X$ is a cyclic quotient singular point of index $a_4 > 2$ which is absurd.
%This shows that $a_4 \mid d$. and hence $d = m a_4$ with $m \ge 2$.
%Note that $m \le 4$ since 
%\[
%d = m a_4 = a_0 + \cdots + a_4 - \iota < 5 a_4.
%\]
%By \cite{}, we have $-K_X^3 \ge 1/330$, and thus
%\[
%\frac{3 \iota^3}{a_0 a_1 a_2 a_3} \ge \iota^3 \frac{d}{a_0 a_1 a_2 a_3 a_4} \ge \frac{1}{330}.
%\]
%This gives $a_0 a_1 a_2 a_3 \le 990 \iota^3$.
%Therefore
%\[
%a_4 \le d - a_4 = a_0 + a_1 + a_2 + a_3 - \iota \le 990 \iota^3 + 3 - \iota \le N,
%\] 
%a contradiction.
%This shows that $a_4 \le N$.
%
%Finally we have $d = (d - a_4) + a_4$
%\end{proof}

\begin{Thm} \label{thm:wtbdd}
Let $X = X_d \subset \mbP (a_0, \dots, a_4)$ be a quasi-smooth Fano $3$-fold weighted hypersurface in a well-formed weighted projective space which is not a linear cone $($i.e.\ $d \ne a_i$ for any $i$$)$.
Then $a_i \le 168$ for any $i$.
In particular, the lists \cite[Table~1]{BS07b} and \cite[Table 1]{BS07a} are complete.
\end{Thm}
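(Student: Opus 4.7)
The plan is to reduce the theorem to a bound on $a_4$ (since $a_i \le a_4$ for all $i$ by the ordering), and to split on the arithmetic relations $a_4 \mid d$ and $a_3 \mid d$ so that Lemmas~\ref{lem:wtbdd1} and \ref{lem:wtbdd2} can be invoked whenever the corresponding divisibility holds. In the remaining branches I will supply the missing input through the presence of a coordinate vertex of $\mbP$ lying on $X$ and the terminal-index bound of Remark~\ref{rem:quotsingind}.

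The key auxiliary observation I would establish first is the following. Since $\mbP(a_0,\dots,a_4)$ is well-formed, the vertex $\msp_{x_i}$ is a cyclic quotient singular point of $\mbP$ of index exactly $a_i$: the stabilizer of the $\mbC^*$-action at that point has order $a_i$, and well-formedness ($\gcd(a_j : j \ne i) = 1$) guarantees that this stabilizer order is the true orbifold index. Moreover, $\msp_{x_i}$ lies on $X$ precisely when the pure power monomial $x_i^{d/a_i}$ is absent from the defining polynomial $F$; in particular, $\msp_{x_i} \in X$ whenever $a_i \nmid d$. Because $X$ is quasi-smooth and inherits the cyclic orbifold structure of $\mbP$ at coordinate points, whenever $\msp_{x_i}$ lies on $X$ it is a cyclic quotient terminal singular point of $X$ of index exactly $a_i$, and Remark~\ref{rem:quotsingind} then forces $a_i \le 24$.

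With this observation in hand I would argue by cases. If $a_4 \nmid d$, then $\msp_{x_4} \in X$, so $a_4 \le 24 \le 168$. If $a_4 \mid d$ but $a_3 \nmid d$, then $\msp_{x_3} \in X$ yields $a_3 \le 24$, and Lemma~\ref{lem:wtbdd1} then gives $a_4 \le 168$. Finally, if $a_3 \mid d$ and $a_4 \mid d$, Lemma~\ref{lem:wtbdd2} applies directly. In all three cases $a_4 \le 168$, hence $a_i \le 168$ for all $i$. Completeness of the tables in \cite{BS07b, BS07a} then follows by a finite enumeration: within $a_i \le 168$ there are only finitely many tuples $(a_0,\dots,a_4,d)$ satisfying Condition~\ref{cond:fanowh}, and one checks by direct computation which admit a quasi-smooth terminal member.

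I do not anticipate a serious obstacle. The only subtle point is the precise identification of the local stabilizer at $\msp_{x_i}$ with a cyclic group of order $a_i$ under well-formedness, which is a standard feature of weighted projective geometry. The hypothesis ``not a linear cone'' (i.e.\ $d > a_4$) is needed to exclude degenerate reductions to a lower-dimensional weighted projective space, and well-formedness enters essentially in both the singularity-index step and in allowing Lemmas~\ref{lem:wtbdd1}--\ref{lem:wtbdd2} to detect genuine singular points on $X$.
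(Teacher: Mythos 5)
Your proof is correct and follows essentially the same route as the paper: the coordinate vertex $\msp_{x_i}$ lies on $X$ whenever $a_i \nmid d$, quasi-smoothness makes it a terminal quotient point of index $a_i$, and Remark~\ref{rem:quotsingind} then caps that index at $24$, after which Lemmas~\ref{lem:wtbdd1} and \ref{lem:wtbdd2} cover the divisibility cases. The only cosmetic difference is that you organize the cases by whether $a_4 \mid d$ and $a_3 \mid d$, whereas the paper reduces immediately to $a_4 > 24$; the ingredients and their roles are identical.
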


\begin{proof}
Let $a_0, \dots, a_4$ and $d$ be as in the statement and we assume that $a_0 \le \cdots \le a_4$.
Then they satisfy Condition~\ref{cond:fanowh}.
We may assume $a_4 > 24$.
We see that $\msp_{x_4} \notin X$ because otherwise $\msp_{x_4} \in X$ is a cyclic quotient singular point of index $a_4 > 24$ which is absurd by Remark~\ref{rem:quotsingind}.
It follows that $a_4 \mid d$, and thus we obtain $a_4 \le 168$ by Lemmas~\ref{lem:wtbdd1} and \ref{lem:wtbdd2}.

By this explicit bound, we can perform a full computer search for tuples $(d, a_0, \dots, a_4)$ satisfying Condition~\ref{cond:fanowh}.
The search gives $130$ tuples among which $35 = 130 - 95$ correspond to index $\ge 2$ cases.
The $35$ tuples coincide with the lists \cite[Table~1]{BS07b} and \cite[Table 1]{BS07a}.
This completes the proof.
\end{proof}

%%%
\subsubsection{Rank $2$ toric varieties}
%%%

\begin{Def} \label{def:toricT}
Let $m < n$ be positive integers and $a_1, \dots, a_n, b_1 \dots b_n$ be integers.
Let $R = \mbC [x_1, \dots, x_n]$ be the polynomial ring endowed with a $\mbZ^2$-grading defined by the $2 \times n$ matrix
\[
\begin{pmatrix}
a_1 & \cdots & a_n \\
b_1 & \cdots & b_n 
\end{pmatrix}.
\]
This means that the bi-degree of the variable $x_i$ is $(a_i, b_i) \in \mbZ^2$.
We denote by 
\begin{equation} \label{eq:deftoricT}
\mbT \begin{pNiceArray}{ccc|ccc}[first-row]
x_1 & \dots & x_m & x_{m+1} & \dots & x_n \\
a_1 & \dots & a_m & a_{m+1} & \dots & a_n \\
b_1 & \dots & b_m & b_{m+1} & \dots & b_n
\end{pNiceArray}
\end{equation}
the toric variety whose Cox ring is $R$ and irrelevant ideal is $I = (x_1, \dots, x_m) \cap (x_{m+1}, \dots, x_n)$.
In other words, this toric variety is the geometric quotient
\[
(\mbA^n_{x_1, \dots, x_n} \setminus V (I))/(\mbC^*)^2,
\]
where the $(\mbC^*)^2$-action is defined by
\[
(\lambda, \mu) \cdot (x_1, \dots, x_n) = (\lambda^{a_1} \mu^{b_1} x_1, \dots, \lambda^{a_n} \mu^{b_n} x_n).
\]
\end{Def}

Let the notation as in Definition \ref{def:toricT} and let $\mbT$ be the toric variety \eqref{eq:deftoricT}.
Then $\mbT$ is a projective and simplicial toric variety of Picard rank $2$.
Let $\msp \in \mbT$ be a point and let $\msq = (\alpha_1, \dots, \alpha_n) \in \mbA^n$ be a preimage of $\msp$ by the morphism $\mbA^n \setminus V (I) \to \mbT$.
In this case we express $\msp$ as 
\[
\msp = (\alpha_1\!:\!\cdots\!:\!\alpha_m \, | \, \alpha_{m+1}\!:\!\cdots\!:\!\alpha_n) \in \mbT.
\]

\begin{Rem}
Let $\mbP = \mbP (a_0, \dots, a_n)$ be the weighted projective space with homogeneous coordinates $x_0, \dots, x_n$ of weights $a_0, \dots, a_n$, respectively.
Let $b_1, \dots, b_n$ be positive integers.
Then the morphism
\[
\Psi \colon \mbT := \mbT \begin{pNiceArray}{cc|cccc}[first-row]
u & x_0 & x_1 & x_2 & \dots & x_n \\
0 & a_0 & a_1 & a_2 & \dots & a_n \\
-a_0 & 0 & b_1 & b_2 & \dots & b_n
\end{pNiceArray} \to
\mbP,
\]
defined by
\[
(u\!:\!x_0 \, | \, x_1\!:\!\cdots\!:\!x_n) \mapsto (x_0\!:\!u^{b_1/a_0} x_1\!:\!u^{b_2/a_0} x_2\!:\!\cdots\!:\!u^{b_n/a_0} x_n)
\]
is the weighted blow-up of $\mbP$ at the point $\msp_{x_0}$ with $\wt (x_1, \dots, x_n) = \frac{1}{a_0} (b_1, \dots, b_n)$.
The $\Psi$-exceptional divisor is the divisor on $\mbT$ which is the zero locus of the section $u$.
\end{Rem}

%%%%%%%%%%%%%%%%%%%%%%%%%%%%%%%%%%%%%%%
\subsection{Framework of proof of birational solidity}
%%%%%%%%%%%%%%%%%%%%%%%%%%%%%%%%%%%%%%%
 
Let $X$ be a Mori-Fano variety.
By the Sarkisov program (see \cite{Corti95}, \cite{HM}), any birational map $X \ratmap Y/T$ to a Mori fiber space can be decomposed into a chain of elementary links.
Elementary links are suitable birational maps between Mori fiber spaces which fall into $4$ types (types I, II, II, IV, see \cite[Definition 3.4]{Corti95}). 
We recall $2$ types (I and II) of such links that are links from a Mori-Fano variety.

\begin{Def}
Let $X$ be a Mori-Fano variety and $V/S$ a Mori fiber space.
An \textit{elementary link} $\sigma \colon X \ratmap V/S$ is a birational map which sits in the commutative diagram
\begin{equation} \label{eq:link}
\xymatrix{
Y \ar[d]_{\varphi} \ar@{-->}[r]^{\tau} & W \ar[d]^{\psi} \\
X \ar@{-->}[r]_{\sigma} & V}
\end{equation}
where $\varphi \colon Y \to X$ is a divisorial contraction, $\tau$ is a composite of inverse flips, a flop, and flips, and $\psi$ is either an isomorphism or a divisorial contraction.
The center of $\varphi$ is called the \textit{center} of the elementary link $\sigma$, and it is denoted by $\operatorname{Cent} (\sigma)$.

We define $\EL (X)$ to be the set of elementary links from $X$.
For a subvariety $\Gamma \subset X$, we define
\[
\EL_{\Gamma} (X) := \{\, \sigma \in \EL (X) \mid \operatorname{Cent} (\sigma) = \Gamma \, \}.
\]
\end{Def}

In this paper, a birational map that is a composite of inverse flips, a flop, and flips will be simply called a \textit{pseudo isomorphism}.
Note that throughout the paper, and hence in the above definition as well, a divisorial contraction is an extremal divisorial contraction in the terminal category.
Given a divisorial contraction $\varphi \colon Y \to X$, an elementary link $\sigma \colon X \ratmap V/S$ that fits into the commutative diagram \eqref{eq:link} for some $\tau$ and $\psi$ is unique if it exists.
We call $\sigma$ the elementary link \textit{initiated by} $\varphi$.

In order to determine the set $\Pli (X)$, it is crucial to classify elementary links from $X$ and then from the Mori fiber spaces that appear as the targets of such links, and so on.
In view of the fact that any elementary link from $X$ is initiated by a divisorial contraction, it is important to detect divisorial contractions that can initiate links.  

\begin{Def}
For a movable linear system $\mcM$ on $X$, a \textit{maximal extraction} of $X$ with respect to $\mcM$ is a divisorial contraction $\varphi \colon Y \to X$ such that
\[
m_E (\mcM) > n a_E (K_X),
\]
where $E$ is the $\varphi$-exceptional divisor, $n$ is the positive rational number defined by $\mcM \sim_{\mbQ} - n K_X$, $m_E (\mcH)$ is the multiplicity of $\varphi^* \mcM$ along $E$, and $a_E (K_X)$ is the discrepancy of $K_X$ along $E$.
A divisorial contraction $\varphi$ is called a \textit{maximal extraction} of $X$ if there is a movable linear system $\mcM$ such that $\varphi$ is the maximal extraction of $X$ with respect to $\mcM$.
A subvariety $\Gamma \subset X$ is called a \textit{maximal center} if there is a maximal extraction $\varphi$ whose center is $\Gamma$.
\end{Def}

\begin{Lem}[{\cite[Lemma 2.5]{Oktri}}]
A divisorial contraction $\varphi \colon Y \to X$ that initiates an elementary link is a maximal extraction.
In particular, the center of an elementary link is a maximal center.
\end{Lem}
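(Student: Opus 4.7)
The plan is to execute the standard Noether--Fano argument from the Sarkisov program. First, choose a sufficiently general movable, base-point-free linear system $\mathcal{M}_V$ on $V$: a general pencil in $|-mK_V|$ for $m \gg 0$ whose general member avoids $\psi(F)$ if $\psi$ is a divisorial contraction with exceptional divisor $F$, or $|\pi_V^*H|$ for $H$ very ample on $S$ if $\psi$ is an isomorphism and $\pi_V\colon V\to S$ is the Mori fibration. Let $\mathcal{M} := \sigma^{-1}_*\mathcal{M}_V$ be its strict transform on $X$, and write $\mathcal{M} \sim_\mathbb{Q} -nK_X$ for the unique $n \in \mathbb{Q}_{>0}$. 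The goal then reduces to the inequality $m_E(\mathcal{M}) > n\,a_E(K_X)$, where $E$ is the $\varphi$-exceptional divisor.

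Next, I would transfer $\mathcal{M}$ through the diagram. From $\varphi^*\mathcal{M} = \mathcal{M}_Y + m_E(\mathcal{M})\, E$ together with $K_Y = \varphi^*K_X + a_E(K_X)\, E$ one gets
$$\mathcal{M}_Y \;\sim_\mathbb{Q}\; -n K_Y + \bigl(n\,a_E(K_X) - m_E(\mathcal{M})\bigr)\, E,$$
and since $\tau$ is small (an isomorphism in codimension one), the same relation holds on $W$ with $E$ replaced by $E_W := \tau_* E$ and $K_Y$ by $K_W$. By the choice of $\mathcal{M}_V$, the transferred system $\mathcal{M}_W$ equals $\psi^*\mathcal{M}_V$, hence is base-point-free on $W$, and satisfies $\mathcal{M}_W \cdot R_\psi = 0$ for the extremal ray $R_\psi \subset \bNE(W)$ contracted by $\psi$ (in the divisorial case) or representing a fiber of $\pi_V \circ \psi$ (in the isomorphism case). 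Intersecting the displayed relation with $R_\psi$ and using $-K_W \cdot R_\psi > 0$ gives
$$\bigl(n\,a_E(K_X) - m_E(\mathcal{M})\bigr)(E_W \cdot R_\psi) \;=\; n\,(K_W \cdot R_\psi) \;<\; 0.$$

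It then remains to show $E_W \cdot R_\psi > 0$, whereupon $m_E(\mathcal{M}) > n\,a_E(K_X)$ and $\varphi$ is a maximal extraction; the assertion about the center of $\sigma$ is then immediate from the definition. Since $W$ has Picard number $2$, $\bNE(W)$ has two extremal rays, $R_\psi$ and a second ray $R_W'$, which under $\tau^{-1}$ corresponds to the $\varphi$-exceptional ray $R_\varphi \subset \bNE(Y)$. Because $E_W$ is the $\tau$-strict transform of the $\varphi$-exceptional divisor $E$, which satisfies $E \cdot R_\varphi < 0$, a cone-geometry check in Picard rank $2$ shows that $E_W$ lies on the effective-cone boundary dual to $R_W'$, forcing strictly positive intersection with the opposite ray $R_\psi$.

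The hard part will be this final verification $E_W \cdot R_\psi > 0$. Delicate cases in which $E_W$ could become numerically trivial against $R_\psi$ (for instance, $E_W$ set-theoretically disjoint from the $\psi$-exceptional divisor $F$) have to be excluded; this is where the hypothesis that $\sigma$ is a genuine elementary link — not merely the starting divisorial contraction $\varphi$ — intervenes, since it forces the $2$-ray game on $Y$ to terminate non-trivially at $W$ via the prescribed $\psi$, and it is this global structure (rather than the local shape of $\varphi$) that prevents the cone-geometric degeneration.
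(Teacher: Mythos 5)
Your overall strategy is the standard Noether--Fano computation and the second paragraph is correct: writing $\mcM_W \sim_{\mbQ} -nK_W + \bigl(n\,a_E(K_X) - m_E(\mcM)\bigr)E_W$ and intersecting with $R_\psi$ does reduce everything to the sign of $E_W \cdot R_\psi$. (Note that the paper does not prove this lemma at all; it quotes it from \cite[Lemma~2.5]{Oktri}, where essentially this argument is carried out.) Two harmless imprecisions aside --- a general pencil in a very ample system is movable but not base-point-free, and when $\psi(F)$ is a curve the general member of $\mcM_V$ meets it but merely has multiplicity zero along it, which is all you need --- the reduction is sound.

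The gap is in the final step, and it is twofold. First, the assertion that the second extremal ray $R_W'$ of $\bNE(W)$ ``corresponds under $\tau^{-1}$ to the $\varphi$-exceptional ray $R_\varphi$'' is false unless $\tau$ is an isomorphism: $\tau$ identifies $N_1(Y)$ with $N_1(W)$, but the cones $\bNE(Y)$ and $\bNE(W)$ are \emph{different} cones in this common vector space (that is precisely what the intermediate flips change), so no cone-geometry check starting from $E\cdot R_\varphi<0$ can by itself yield $E_W\cdot R_\psi>0$; pure convexity in Picard rank $2$ is consistent with $E_W$ lying on either side of the hyperplane where $R_\psi$ vanishes. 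Second, you explicitly defer the verification as ``the hard part.'' It is in fact short, but it needs a geometric input you do not supply. Your own displayed identity already forces $E_W\cdot R_\psi\neq 0$, so the degeneration you worry about ($E_W$ disjoint from $F$) cannot occur; the only case to exclude is $E_W\cdot R_\psi<0$. In that case $E_W$ contains every irreducible curve whose class lies in $R_\psi$, hence contains the locus swept out by $R_\psi$. When $\psi$ is an isomorphism and $V/S$ is a strict Mori fiber space, that locus is all of $W$, which is absurd. When $\psi$ is a divisorial contraction, that locus is $F$, so $E_W=F$; then $\sigma$ neither contracts nor extracts a divisor, i.e.\ it is an isomorphism in codimension one between two Mori--Fano varieties, and comparing anticanonical rings shows $\sigma$ is biregular --- excluded for an elementary link. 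This dichotomy is the actual content of the lemma, and it is exactly what your write-up leaves open.
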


For Mori-Fano $3$-folds in Families \textnumero 100, 101, 102, and 103, we will be able to classify maximal extractions---and hence elementary links---completely, and Theorem \ref{thm:main} for these $4$ families will be a consequence of such classifications. 
However, for Mori-Fano $3$-folds in Family \textnumero 110, we are unable to do such a complete classification.
We will overcome this situation by classifying a Sarkisov generating set which is introduced below.

\begin{Def}
Let $f \colon V/S \ratmap W/T$ be a birational map between Mori fiber spaces.
We say that a set $\{\sigma_{\lambda} \colon V_{\lambda}/S_{\lambda} \ratmap W_{\lambda}/T_{\lambda}\}_{\lambda \in \Lambda}$ of elementary links \textit{generates} $f$ if there are finitely many $\lambda_1, \dots, \lambda_m \in \Lambda$ and an automorphism $g$ of $Y$ such that $V/S = V_{\lambda_1}/S_{\lambda_1}$, $W/T = W_{\lambda_m}/T_{\lambda_m}$, $W_{\lambda_i}/T_{\lambda_i} = V_{\lambda_{i+1}}/S_{\lambda_{i+1}}$ for $1 \le i < m$, and $f = g \circ \sigma_{\lambda_m} \circ \cdots \circ \sigma_{\lambda_1}$: 
\[
\xymatrix{
V \ar[d] \ar@<1.0ex>@{-->}^{f}@/^2pc/[rrrrrrr] \ar@{=}[r] & V_{\lambda_1} \ar[d] \ar@{-->}[r]_{\sigma_{\lambda_1}} & W_{\lambda_1} \ar[d] \ar@{=}[r] & V_{\lambda_2} \ar[d] \ar@{-->}[r]_{\sigma_{\lambda_2}} & \cdots \ar@{-->}[r]_{\sigma_{\lambda_m}} & W_{\lambda_m} \ar[d] \ar@{=}[r] & Y \ar[d] \ar[r]_{g} & Y \ar[d] \\
S \ar@{=}[r] & S_{\lambda_1} & T_{\lambda_1} \ar@{=}[r] & S_{\lambda_2} & & T_{\lambda_m} \ar@{=}[r] & T & T}
\]

Let $\mcF$ be a set of Mori fiber spaces such that any pair of Mori fiber spaces in $\mcF$ are birationally equivalent, and let $\Sigma$ be a set of elementary links between Mori fiber spaces in $\mcF$.
We say that $\Sigma$ is a \textit{Sarkisov generating set} for $\mcF$ if any birational map $f \colon V/S \ratmap W/T$ between Mori fiber spaces such that $V/S \in \mcF$ is generated by $\Sigma$.
\end{Def}

Let $X$ be a quasi-smooth member of one of the $5$ families in Theorem \ref{thm:main}.
In \cite{ACP}, an elementary link $\hat{\sigma} \colon X \ratmap \hat{X}$ to a Mori-Fano $3$-fold $\hat{X} \not\cong X$ is constructed.
For $X$ belonging to Family \textnumero 110, another elementary link $\breve{\sigma} \colon X \ratmap \breve{X}$  to a Mori-Fano $3$-fold $\breve{X}$ is constructed, where $X, \hat{X}, \breve{X}$ are mutually non-isomorphic (see Sections~\ref{sec:birig} and \ref{sec:tririg} for more details).
The aim of this paper is to prove the following from which Theorem \ref{thm:main} follows.

\begin{Thm} \label{thm:main2}
Let $X$ be a quasi-smooth member of Family \textnumero~$\msi$.
\begin{enumerate}
\item Suppose that $\msi \in \{100, 101, 102, 103\}$.
Then we have $\EL (X) = \{\hat{\sigma} \}$ and $\EL (\hat{X}) = \{\hat{\sigma}^{-1}\}$, and in particular $\Pli (X) = \{X, \hat{X}\}$.
\item Suppose that $\msi = 110$. 
Then there are an elementary link $\rho \colon \hat{X} \ratmap \breve{X}$ and a birational involution $\hat{\iota} \colon \hat{X} \ratmap \hat{X}$ which is an elementary link if it is not biregular.
The set $\{\hat{\sigma}^{\pm}, \breve{\sigma}^{\pm}, \rho^{\pm}, \hat{\iota}\}$ is a Sarkisov generating set for $\{X, \hat{X}, \breve{X}\}$.
In particular we have $\Pli (X) = \{X, \hat{X}, \breve{X}\}$.
\end{enumerate}
\end{Thm}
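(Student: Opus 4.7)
The plan is to carry out the program outlined in Section~\ref{sec:prelim}: any elementary link from a Mori-Fano variety is initiated by a maximal extraction, so determining $\Pli(X)$ reduces to an exhaustive, iterative classification of maximal extractions on each Mori-Fano variety that appears in the pliability set. For Part~(1) I would first enumerate all subvarieties of $X$ that could a priori be a maximal center --- the terminal cyclic quotient singular points, together with the finitely many curves and smooth points distinguished by the geometry of the ambient weighted projective space and the equation of $X$. For each candidate I would apply the exclusion techniques collected in Section~\ref{sec:exclmethod}, that is, test the defining inequality $m_E(\mcM) > n\,a_E(K_X)$ against explicit members of $|{-n K_X}|$ restricted to suitable test surfaces, in order to eliminate every candidate other than the one underlying $\hat{\sigma}$. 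This gives $\EL(X) = \{\hat{\sigma}\}$. The same routine must then be run on the target $\hat{X}$; combining the two one-sided classifications with the Sarkisov program yields $\Pli(X) = \{X, \hat{X}\}$ and hence birational solidity.

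The principal difficulty --- and the reason why the bulk of the paper is devoted to it --- is that for every family other than \textnumero~$102$ the Fano $3$-fold $\hat{X}$ acquires a terminal singularity of type $cE$, for which a general classification of divisorial contractions is unavailable (see Remark~\ref{rem:divcontcE}). I would therefore rely on the partial classification of divisorial contractions over the specific $cE$ points that occur on $\hat{X}$, developed in Section~\ref{sec:divcont} along the lines of Hayakawa's preprint~\cite{HayakawaE}. Combined with $-K_{\hat{X}}$-degree bounds produced by the general exclusion methods, this classification pins down the only maximal extraction of $\hat{X}$ as the divisorial contraction that appears in the diagram defining $\hat{\sigma}$, so that $\EL(\hat{X}) = \{\hat{\sigma}^{-1}\}$ and Part~(1) follows.

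For Part~(2) the strategy is the same but now branches out, since $X$ in Family \textnumero~$110$ carries two distinct elementary links $\hat{\sigma}, \breve{\sigma}$, and the classification of maximal extractions on $\hat{X}$ and $\breve{X}$ will produce additional centers. I would first construct the elementary link $\rho \colon \hat{X} \ratmap \breve{X}$ and the birational involution $\hat{\iota} \colon \hat{X} \ratmap \hat{X}$ explicitly as Sarkisov links initiated by the new maximal extractions detected on $\hat{X}$. Using again the $cE$ technology of Section~\ref{sec:divcont} one then classifies every maximal extraction of $X$, $\hat{X}$, $\breve{X}$ and checks that each initiates a link belonging to $\{\hat{\sigma}^{\pm}, \breve{\sigma}^{\pm}, \rho^{\pm}, \hat{\iota}\}$. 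Given this, any Sarkisov factorization of a birational map out of $X$, $\hat{X}$ or $\breve{X}$ is already a composition of elements of this set, which is exactly the Sarkisov-generating condition, and in particular implies $\Pli(X) = \{X, \hat{X}, \breve{X}\}$.

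The hard step I anticipate is precisely the local classification of maximal extractions of $\hat{X}$ (and of $\breve{X}$ in Family \textnumero~$110$) centered at the $cE$ point: the combinatorics of weighted blow-ups that realize divisors of small discrepancy over such a singularity is delicate, and this is what the technical material of Section~\ref{sec:divcont} is tailored to handle. Once those local classifications are in place, the exclusion of the remaining candidate centers on $X$, $\hat{X}$, $\breve{X}$ should reduce to a uniform application of the methods of Section~\ref{sec:exclmethod}.
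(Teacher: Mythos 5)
Your plan for Part~(1) is essentially the paper's proof: enumerate candidate maximal centers on $X$ and on $\hat{X}$, exclude all but one on each side using the methods of Section~\ref{sec:exclmethod}, and use the classification of divisorial contractions over the $cE$ (or, for Family \textnumero~102, $cE/2$) point from Section~\ref{sec:divcont} to show $\hat{\varphi}$ is the unique divisorial contraction centered there. That part is sound.

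For Part~(2) there is a genuine gap. You write that one ``classifies every maximal extraction of $X$, $\hat{X}$, $\breve{X}$ and checks that each initiates a link'' in the proposed generating set, and then concludes. But a complete classification of maximal extractions is exactly what cannot be carried out on $\breve{X}$: the smooth points of $\breve{X}$ lying on the curve $\Delta = (u=y=z=w=0)$ resist all of the exclusion methods of Section~\ref{sec:exclmethod} (the isolating-class bound fails there because $\deg \Delta = 1/10$ is too small relative to $(-K_{\breve{X}}^3) = 7/10$), and the paper does not determine whether such points are maximal centers. The argument that replaces the full classification is Proposition~\ref{prop:Sgen}: one only proves the \emph{conditional} exclusion (Lemma~\ref{lem:110SgenD}) that no smooth point of $\Delta$ is a maximal center with respect to a linear system $\mcM$ for which $(\breve{X}, \frac{1}{n}\mcM)$ is canonical at the generic point of $\Delta$; in the complementary case, \cite[Theorem~2.2.1]{CP} forces the $\frac{1}{5}(1,2,3)$ point $\breve{\msq} \in \Delta$ to be a maximal center, and the link $\rho$ centered there still untwists $\mcM$, so the descending induction on $n$ goes through. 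Without this device (or some substitute for excluding the smooth points of $\Delta$ outright), your Part~(2) argument does not close. A smaller but related point: on $\hat{X}$ the paper likewise does not decide whether the weighted blow-up $\hat{\varphi}_2$ is a divisorial contraction or whether $\hat{\psi}$ is a maximal extraction; it only shows $\hat{\varphi}_2$ cannot be a \emph{maximal} extraction and that $\hat{\psi}$, if maximal, initiates the involution $\hat{\iota}$ --- which is why $\hat{\iota}$ appears in the generating set only conditionally. Your proposal should be restructured around the Sarkisov generating set formalism rather than a purported complete classification of $\EL(\breve{X})$.
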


\begin{Rem}
In (2) of Theorem \ref{thm:main2}, it can happen that $\hat{\iota}$ is a biregular involution.
This happens only for special members, and in this case the set $\{\hat{\sigma}^{\pm}, \breve{\sigma}^{\pm}, \rho^{\pm}\}$ is a Sarkisov generating set for $\{X, \hat{X}, \breve{X}\}$.
\end{Rem}

%%%%%%%%%%%%%%%%%%%%%%%%%%%%%%%%%%
\subsection{Methods of exclusion}
%%%%%%%%%%%%%%%%%%%%%%%%%%%%%%%%%%

Throughout this subsection, we assume that $X$ is a Mori-Fano 3-fold.
We recall various methods of exclusion of maximal centers and also give a generalization for one of them.

\begin{Lem}[{\cite[Lemma 2.9]{OkII}}] \label{lem:mtdexclC}
Let $\Gamma \subset X$ be an irreducible and reduced curve.
If $(-K_X \cdot \Gamma) \ge (-K_X^3)$, then $\Gamma$ is not a maximal center.
\end{Lem}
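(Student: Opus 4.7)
The plan is a standard Noether--Fano style argument. The approach is by contradiction: assume $\Gamma$ is a maximal center, so that there exist a maximal extraction $\varphi \colon Y \to X$ with exceptional divisor $E$ surjecting onto $\Gamma$, and a movable linear system $\mcM \sim_{\mbQ} -n K_X$ satisfying the Noether--Fano inequality $m_E(\mcM) > n \cdot a_E(K_X)$. The goal is to extract from this data an estimate for $(-K_X \cdot \Gamma)$ in terms of $(-K_X)^3$ that contradicts the hypothesis.

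The first step is to convert the discrepancy inequality $m_E(\mcM) > n \cdot a_E(K_X)$ into a genuine multiplicity inequality $\mult_{\Gamma} \mcM > n$ along $\Gamma$ itself. Invoking the classification of divisorial contractions with curve center in the terminal category (Kawakita), the center $\Gamma$ must be contained in the smooth locus of $X$ and $\varphi$ must coincide with the ordinary blow-up of $X$ along $\Gamma$; consequently $a_E(K_X) = 1$ and $m_E(\mcM) = \mult_{\Gamma} \mcM$, so the Noether--Fano inequality reduces exactly to $\mult_{\Gamma} \mcM > n$.

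The second step is a two-way intersection estimate on $X$. I would pick two general members $M_1, M_2 \in \mcM$; movability guarantees that they share no common component, so $M_1 \cdot M_2$ is a well-defined effective $1$-cycle on $X$. By the standard local multiplicity estimate, the coefficient of $\Gamma$ in $M_1 \cdot M_2$ is at least $\mult_{\Gamma}(M_1) \cdot \mult_{\Gamma}(M_2) > n^2$. Intersecting with the ample divisor $-K_X$ and using $M_i \equiv -n K_X$ yields
\[
n^2 (-K_X)^3 \;=\; (-K_X) \cdot M_1 \cdot M_2 \;>\; n^2 (-K_X \cdot \Gamma),
\]
whence $(-K_X)^3 > (-K_X \cdot \Gamma)$, contradicting the hypothesis $(-K_X \cdot \Gamma) \ge (-K_X)^3$.

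The only delicate point in this plan is the first step, where I need to identify $\varphi$ with the honest blow-up of $\Gamma$ in order to obtain $a_E = 1$ and match $m_E$ with $\mult_{\Gamma}$. If one wishes to avoid citing Kawakita's classification, one can instead work directly with the pullback decomposition $\varphi^{*} \mcM = \varphi_{*}^{-1} \mcM + m_E(\mcM) \cdot E$ together with the fact that $E$ dominates $\Gamma$, and deduce $\mult_{\Gamma} \mcM \ge m_E(\mcM)/a_E(K_X)$ via a fibrewise analysis over a general point of $\Gamma$; this is slightly more technical but otherwise feeds into the same intersection estimate.
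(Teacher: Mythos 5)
Your argument is correct and is essentially the standard proof: the paper does not reproduce an argument for this lemma but quotes it from \cite[Lemma~2.9]{OkII}, and the proof there is exactly your Noether--Fano reduction to $\mult_{\Gamma}\mcM > n$ followed by the $(-K_X)\cdot M_1\cdot M_2$ multiplicity computation. The only blemish is the attribution to Kawakita, whose classification concerns divisorial contractions to \emph{points}; the facts you actually need --- that a terminal $3$-fold is smooth at the generic point of $\Gamma$, and that a divisorial extraction centered along such a curve has discrepancy $1$ and its exceptional valuation computes $\mult_{\Gamma}$ --- are elementary and are in any case supplied by the fallback argument in your last paragraph.
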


\begin{Def}
Let $L$ be a Weil divisor class on $X$ and $\msp$ a smooth point of $X$. 
We say that $L$ \textit{isolates} $\msp$ if there exists a positive integer $m$ such that $\msp$ is an isolated component of the base locus of the linear system $|\mcI^m_{\msp} (m L)|$.
\end{Def}

\begin{Lem}[{cf.\ \cite[Lemma 2.15]{OkII}}] \label{lem:mtdexclsmpt}
Let $\msp \in X$ be a smooth point.
If there is a Weil divisor class $L \sim_{\mbQ} - l K_X$ on $X$ such that it isolates $\msp$ and 
\[
l \le \frac{4}{(-K_X^3)},
\] 
then $\msp$ is not a maximal center.
\end{Lem}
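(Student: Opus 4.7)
The plan is to prove the contrapositive using the classical Iskovskikh--Manin--Pukhlikov test class argument. Assume for contradiction that $\msp$ is a maximal center of $X$.

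By the standard characterization of maximal centers at a smooth point of a $3$-fold (obtained by comparing the maximal-extraction inequality $m_E(\mcM) > n \, a_E(K_X)$ against the ordinary blow-up at $\msp$, whose exceptional divisor has discrepancy $2$), there is a movable linear system $\mcM \sim_{\mbQ} -n K_X$ on $X$ satisfying
\[
\mult_\msp \mcM > 2 n.
\]
Take two general members $M_1, M_2 \in \mcM$. Since $\mcM$ is movable, they share no common component, so $Z := M_1 \cdot M_2$ is a well-defined effective $1$-cycle, and at the smooth point $\msp$ the classical multiplicity inequality yields
\[
\mult_\msp Z \;\geq\; (\mult_\msp \mcM)^2 \;>\; 4 n^2.
\]

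By the isolation hypothesis on $L$, there exists $m$ such that $\msp$ is an isolated component of the base locus of $|\mcI^m_\msp(m L)|$. A sufficiently general $D$ in this linear system satisfies $\mult_\msp D \geq m$, and because $\msp$ is an isolated base component, a Bertini-type argument permits one to choose $D$ so that it contains no irreducible component of $Z$ passing through $\msp$. Hence $D$ and $Z$ meet properly at $\msp$, and the local intersection bound at a smooth point of a $3$-fold gives
\[
(D \cdot Z)_\msp \;\geq\; \mult_\msp(D)\,\mult_\msp(Z) \;>\; 4 m n^2.
\]

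Finally, compute the global intersection $D \cdot Z = (m L)\cdot(-n K_X)^2 = m l n^2 (-K_X^3)$, and note that every local contribution to this number is nonnegative. Therefore
\[
m l n^2 (-K_X^3) \;\geq\; (D \cdot Z)_\msp \;>\; 4 m n^2,
\]
which rearranges to $l > 4/(-K_X^3)$, contradicting the hypothesis. The only non-cosmetic subtleties lie in reducing the abstract maximal-extraction condition to $\mult_\msp \mcM > 2n$ at a smooth point (handled by the ordinary blow-up, since every exceptional divisor over a smooth point of a $3$-fold has discrepancy at least $2$ and for movable $\mcM$ the obstruction is detected there), and in arranging transversality of $D$ against $Z$ at $\msp$ using that $\{\msp\}$ is an \emph{isolated} component of the base locus; both are standard and constitute the main technical points.
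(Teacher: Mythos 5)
Your overall strategy (the test-class argument: play the local multiplicity of $Z = M_1\cdot M_2$ at $\msp$ against the global intersection number with a member of $|\mcI_\msp^m(mL)|$) is exactly the standard proof of this statement, which the paper imports from \cite[Lemma 2.15]{OkII} and which goes back to Corti--Pukhlikov--Reid. The second half of your argument --- using that $\msp$ is an \emph{isolated} component of the base locus to choose $D$ containing no component of $Z$ through $\msp$, bounding $(D\cdot Z)_\msp \ge m\,\mult_\msp Z$, and comparing with $D\cdot Z = m\,l\,n^2(-K_X^3)$ --- is correct.

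The gap is in your very first step. It is \emph{not} true that a smooth point being a maximal center forces $\mult_\msp\mcM > 2n$: the maximal extraction (or, more generally, the valuation violating canonicity) need not be the ordinary blow-up of $\msp$. By Kawakita's classification a divisorial contraction to a smooth point of a $3$-fold is a weighted blow-up with weights $(1,a,b)$, for which $a_E(K_X)=a+b$ while $m_E(\mcM)$ can be as large as $\max(a,b)\cdot\mult_\msp\mcM$; the inequality $m_E(\mcM)>n\,a_E(K_X)$ then only yields $\mult_\msp\mcM > n(a+b)/\max(a,b)$, which can be arbitrarily close to $n$. So "the obstruction is detected by the ordinary blow-up" is false, and $\mult_\msp(Z)\ge(\mult_\msp\mcM)^2>4n^2$ does not follow from what you have. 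The inequality $\mult_\msp(M_1\cdot M_2)>4n^2$ is nonetheless true whenever $(X,\frac1n\mcM)$ is not canonical at a smooth point $\msp$: this is Corti's $4n^2$-inequality, a genuine theorem whose proof (via a careful analysis of the graph of blow-ups computing the non-canonical valuation) is substantially harder than squaring a first-order multiplicity bound. Replace your first paragraph by an invocation of that theorem and the rest of your argument goes through unchanged.
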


\begin{Lem}[{\cite[Lemma 2.18]{OkII}}] \label{lem:exclsingptG}
Let $\varphi \colon Y \to X$ be a divisorial contraction centered at a point $\msp \in X$ with exceptional divisor $E$.
Suppose that there are effective divisors $S$ and $T$ on $Y$ with the following properties.
\begin{enumerate}
\item $S \sim_{\mbQ} - a K_Y + d E$ and $T \sim_{\mbQ} - b K_Y + eE$ for some rational numbers $a, b, c, d$ such that $a, b > 0$, $0 \le e < a_E (K_X) b$, and $a e - b d \ge 0$.
\item The intersection $\Gamma = S \cap T$ is a $1$-cycle whose support consists of irreducible and reduced curves which are numerically proportional to each other.
\item $(T \cdot \Gamma) \le 0$.
\end{enumerate}
Then $\varphi$ is not a maximal extraction.
\end{Lem}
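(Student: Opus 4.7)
The plan is to argue by contradiction. Assume $\varphi$ is a maximal extraction with respect to some mobile linear system $\mcM \sim_\mbQ -n K_X$, write $m := m_E(\mcM)$, and set $\varepsilon := m - n\, a_E(K_X)$, which is strictly positive by the maximal-extraction hypothesis. The strict transform $\tilde\mcM$ is again mobile, and a general member $\tilde M$ of $\tilde\mcM$ satisfies $\tilde M \sim_\mbQ -n K_Y - \varepsilon E$. Directly from the linear equivalences in (1) one obtains the two relations
\[
n T - b \tilde M \sim_\mbQ (n e + b \varepsilon) E, \qquad a T - b S \sim_\mbQ (a e - b d) E,
\]
both with non-negative $E$-coefficient, the first strictly positive. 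By hypothesis (2) I fix an irreducible curve $C$ such that every component of $\operatorname{Supp}(\Gamma)$ is numerically a positive multiple of $C$; hypothesis (3) then forces $T \cdot C \le 0$. The argument splits into two cases according as $C$ is, or is not, contained in $E$.

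In the case $C \subset E$ the strategy is to compare two evaluations of $E \cdot \Gamma$. Since $\varphi$ contracts $E$ to a point, $E|_E$ is anti-nef by the negativity lemma, hence $E \cdot \Gamma \le 0$. On the other hand, using the identities $\varphi^*(-K_X)^2 \cdot E = \varphi^*(-K_X) \cdot E^2 = 0$, one computes
\[
E \cdot \Gamma = (d - a\, a_E(K_X))(e - b\, a_E(K_X))\, E^3.
\]
The strict inequality $e < b\, a_E(K_X)$ gives $e - b\, a_E(K_X) < 0$; combined with $a e \ge b d$ it also forces $d - a\, a_E(K_X) < 0$; and the analogous formula $T \cdot \Gamma = a b^2 (-K_X)^3 + (d - a\, a_E(K_X))(e - b\, a_E(K_X))^2 E^3 \le 0$ then forces $E^3 > 0$. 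Plugging back yields $E \cdot \Gamma > 0$, a contradiction.

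If instead $C \not\subset E$, then $E \cdot C \ge 0$, and intersecting the first key relation with $C$ gives $b(\tilde M \cdot C) = n(T \cdot C) - (n e + b \varepsilon)(E \cdot C) \le 0$. Equality $\tilde M \cdot C = 0$ would propagate to $T \cdot C = E \cdot C = 0$, whence $\tilde M \sim_\mbQ -nK_Y - \varepsilon E$ forces $K_Y \cdot C = 0$; the projection formula applied to $K_Y = \varphi^* K_X + a_E(K_X) E$ then yields $K_X \cdot \varphi_* C = 0$, contradicting the ampleness of $-K_X$ since $\varphi(C)$ is an honest curve (as $C$ is not contracted by $\varphi$). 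So $\tilde M \cdot C < 0$, i.e.\ $C \subset \operatorname{Bs}(\tilde\mcM)$. To close the argument I would pick two general members $\tilde M_1, \tilde M_2 \in \tilde\mcM$, decompose the effective $1$-cycle $\tilde M_1 \cdot \tilde M_2$ as a sum of components supported on $\operatorname{Supp}(\Gamma)$ plus a residual $1$-cycle outside $\operatorname{Supp}(\Gamma)$, and use the second $\mbQ$-linear equivalence together with $a e - b d \ge 0$ to control the residual. Comparison with the arithmetic value $T \cdot \tilde M_1 \cdot \tilde M_2 = n^2 b (-K_X)^3 + m^2(e - b\, a_E(K_X))\, E^3$ is designed to violate the sign forced by (3).

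The hardest part will be this last subcase: the delicate point is bounding the contribution of the residual part of $\tilde M_1 \cdot \tilde M_2$ away from $\operatorname{Supp}(\Gamma)$, and it is exactly there that the hypothesis $a e - b d \ge 0$ (equivalently, the effectivity of $a T - b S$ modulo $E$) together with the positivity $E^3 > 0$ established in the first case become essential. The remaining ingredients---the setup, the case $C \subset E$, and the exclusion of $\tilde M \cdot C = 0$---are routine once the two key $\mbQ$-linear equivalences are in hand.
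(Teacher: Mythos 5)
First, note that this paper does not prove the lemma itself: it is imported verbatim from \cite[Lemma 2.18]{OkII}, so I am measuring your argument against the proof there rather than against anything in the present text.

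Your setup, the two $\mbQ$-linear relations, and both case analyses up to the point where every component $C$ of $\Gamma$ satisfies $(\tilde M\cdot C)<0$ are correct. The genuine gap is the final step. You only \emph{describe} a plan --- decompose $\tilde M_1\cdot\tilde M_2$ into a part on $\Supp\Gamma$ plus a residual, intersect with $T$, and compare with $(T\cdot\tilde M_1\cdot\tilde M_2)=bn^2(-K_X)^3+m^2(e-b\,a_E(K_X))(E^3)$ --- and you yourself flag it as the hard part. If one actually carries that computation out (writing $[\tilde M_1\cdot\tilde M_2]$ in the basis of $N_1(Y)$ given by a $\varphi$-contracted curve and $[C]$, bounding $(T\cdot\,\cdot\,)$ on each piece, and eliminating the unknown coefficients using $\varphi^*(-K_X)$ and $E$), the inequality that comes out is precisely $ab^2(-K_X)^3+(d-a\,a_E)(e-b\,a_E)^2(E^3)\le 0$, i.e.\ hypothesis (3) itself --- not its negation. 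So the closing move, as described, circles back to an assumption rather than producing a contradiction; the difficulty you anticipate with the residual cycle is real and is not resolved by $ae-bd\ge 0$ alone.

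The intended route is shorter and avoids this entirely. From your own computation $(E\cdot\Gamma)=(d-a\,a_E)(e-b\,a_E)(E^3)>0$, numerical proportionality gives $(E\cdot C)>0$ for every component $C$ of $\Gamma$ (in particular Case 1 is vacuous and no component lies in $E$). Writing $-K_Y\sim_{\mbQ}\tfrac1b(T-eE)$ and using (3) together with $e\ge 0$ gives
\[
(-K_Y\cdot\Gamma)=\tfrac1b\bigl((T\cdot\Gamma)-e(E\cdot\Gamma)\bigr)\le 0,
\]
hence $(-K_Y\cdot C)\le 0$ for every component. One then concludes by the standard Mori-cone criterion \cite[Lemma 2.10]{OkII} (an irreducible curve $C$ with $(-K_Y\cdot C)\le 0$ and $(E\cdot C)>0$ rules out $\varphi$ as a maximal extraction); this is exactly the black box invoked in the proof of Lemma \ref{lem:exclquotsing} of the present paper. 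You have assembled all the numerical inputs for this criterion but then attempt to reprove the criterion from scratch, and that is where the argument stalls. Either quote the criterion, or supply a complete proof of it (which is itself a nontrivial two-ray-game argument, not a routine decomposition of $\tilde M_1\cdot\tilde M_2$).
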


\begin{Lem}[{\cite[Lemma 2.19]{OkII}}] \label{lem:exclnegdef}
Let $\varphi \colon Y \to X$ be a divisorial contraction with exceptional divisor $E$.
Suppose that there is an effective divisor $S \sim_{\mbQ} - b K_Y + e E$ with $b > 0$ and $e \ge 0$ on $Y$ and and a normal surface $T \ne E$ on $Y$ such that the support of the $1$-cycle $S|_T$ consists of curves whose intersection matrix is negative-definite.
Then $\varphi$ is not a maximal extraction.
\end{Lem}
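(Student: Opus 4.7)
I would argue by contradiction using the Noether--Fano framework. Assume $\varphi$ is a maximal extraction with respect to some movable linear system $\mcM \sim_\mbQ -nK_X$. The proper transform on $Y$ then satisfies
\[
\mcM_Y \sim_\mbQ -nK_Y - \mu E, \qquad \mu := m_E(\mcM) - n \, a_E(K_X) > 0.
\]
Eliminating the class of $K_Y$ between this equivalence and $S \sim_\mbQ -bK_Y + eE$ produces the key $\mbQ$-linear equivalence on $Y$,
\[
nS \sim_\mbQ b \mcM_Y + (ne + b\mu) E,
\]
in which both coefficients $b$ and $ne + b\mu$ are strictly positive, using $b,n > 0$, $e \geq 0$ together with the maximality condition $\mu > 0$. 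This is the only place where the hypothesis $\mu > 0$ enters.

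Next I would restrict this equivalence to the normal surface $T$ (which makes sense since $T \ne E$) and take its intersection with the $1$-cycle $S|_T$, obtaining the numerical identity
\[
n \, (S|_T)^2 \;=\; b \, (\mcM_Y \cdot S \cdot T) \;+\; (ne + b\mu) \, (E \cdot S \cdot T).
\]
The negative-definiteness of the intersection matrix of $\Supp (S|_T)$ forces $(S|_T)^2 < 0$, since by hypothesis $S|_T$ has nonzero effective support. The contradiction will come from showing the right-hand side is non-negative: because $\mcM$ is movable, for a general $M \in \mcM_Y$ the divisors $M$ and $S$ meet properly, so $M \cdot S$ is an effective $1$-cycle, and a general-position argument shows $\mcM_Y \cdot S \cdot T \geq 0$; a parallel argument applied to the effective divisors $E, S, T$ on $Y$ should yield $E \cdot S \cdot T \geq 0$.

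The main obstacle I anticipate is the positivity $E \cdot S \cdot T \geq 0$ when $E$ appears as a component of $S$, or when $S$, $T$, $E$ share components: in such cases one has to isolate the contribution $E^2 \cdot T$ and show that the negative-definiteness of the intersection form on $\Supp(S|_T) \subset T$, together with $T$ being a normal prime divisor distinct from $E$, still makes the combined right-hand side non-negative. I expect these are bookkeeping issues rather than conceptual ones, resolvable by passing to a resolution of $T$, decomposing $S = k E + S'$ with $S'$ not containing $E$, and applying the mobility/effectivity argument to the pieces. Once every relevant intersection number is controlled, the strict inequality $(S|_T)^2 < 0$ is incompatible with the identity above, yielding the desired contradiction.
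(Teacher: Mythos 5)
Your setup is fine: the equivalence $nS \sim_{\mbQ} b\mcM_Y + (ne+b\mu)E$ with both coefficients positive, the identity
\[
n\,(S|_T)^2 \;=\; b\,(\mcM_Y\cdot S\cdot T)\;+\;(ne+b\mu)\,(E\cdot S\cdot T),
\]
and the inequality $(S|_T)^2<0$ are all correct. The gap is the step you dismiss as bookkeeping: the non-negativity of the two terms on the right is not a general-position fact, and it is false in exactly the situations the lemma is designed to handle. Writing $S|_T=\sum a_i\Gamma_i$, one has $(\mcM_Y\cdot S\cdot T)=\sum a_i(\mcM_Y\cdot\Gamma_i)$ and $(E\cdot S\cdot T)=\sum a_i(E\cdot\Gamma_i)$; if some $\Gamma_i$ lies in $\Bs\mcM_Y$ the first sum can be negative, and if some $\Gamma_i$ lies in $E$ the second can be negative, and the hypotheses do nothing to exclude either. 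Worse, your own identity shows that if a maximal extraction exists then the two terms sum to $n(S|_T)^2<0$, so at least one of them \emph{is} negative in every instance; proving both are $\ge 0$ is therefore not a cleanup task but the whole content of the lemma, and no amount of resolving $T$ or splitting off $E$ from $S$ will produce positivity that is provably absent. In the paper's application (Lemma \ref{lem:110exclphi2}) the components of $S|_T$ are precisely base curves of the relevant pencil and one of them meets $E$, so the failure is the typical case.

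A symptom of the problem is that your argument uses only $(S|_T)^2<0$, which is strictly weaker than negative-definiteness of the full intersection matrix; the lemma would then hold under that weaker hypothesis, which it does not. The full hypothesis is there to supply a Zariski-type rigidity statement: on $T$, any effective $1$-cycle numerically equivalent to a cycle supported on a negative-definite configuration of curves (with pairwise non-negative off-diagonal intersections) must coincide with that cycle. Applied to the effective cycle $bM|_T+(ne+b\mu)E|_T\equiv nS|_T$ for a general $M\in\mcM_Y$, this forces $M|_T$ and $E|_T$ to be fixed cycles supported on the $\Gamma_i$, and the contradiction is then extracted from this identity of cycles together with the Noether--Fano/cone conditions on $Y$ (for instance, it produces a curve $\Gamma_i$ with $(\mcM_Y\cdot\Gamma_i)<0$ and $(E\cdot\Gamma_i)>0$, or a $\varphi$-contracted curve on which $K_Y$ would have to be positive). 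Some mechanism of this kind is indispensable, and it is absent from your proposal; the gap is conceptual, not organizational.
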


The following is a generalization of \cite[Lemma 2.6]{Okcodim4} which corresponds to the case $\Delta = 0$.

\begin{Lem} \label{lem:exclquotsing}
Let $\varphi \colon Y \to X$ be a divisorial contraction centered at a point $\msp \in X$ with exceptional divisor $E$.
Suppose that there exist an effective divisor $S$ on $X$ passing through $\msp$, a linear system $\mcL$ on $X$, and an effective $1$-cycle $\Delta$ on $X$ with the following properties.
\begin{enumerate}
\item $\Bs \mcL \cap \Supp S = \{\msp\} \cup \Supp \Delta$.
\item For a general member $T \in \mcL$, the $1$-cycle $S \cdot T - \Delta$ is effective and does not contain any component of $\Delta$. \item For a general $T \in \mcL$, we have
\[
(-K_Y \cdot \tilde{S} \cdot \tilde{T}) - (-K_Y \cdot \tilde{\Delta}) \le 0,
\] 
where $\tilde{S}$, $\tilde{T}$, and $\tilde{\Delta}$ are the proper transforms of $S$, $T$, and $\Delta$ on $Y$, respectively.
\end{enumerate}
Then $\msp$ is not a maximal center.
\end{Lem}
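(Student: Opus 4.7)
I would argue by contradiction, generalizing the approach of \cite[Lemma 2.6]{Okcodim4} (the case $\Delta=0$) to accommodate the fixed $1$-cycle $\Delta$. Suppose that $\msp$ is a maximal center; then there exist a movable linear system $\mcM\sim_{\mbQ}-nK_X$ on $X$ and a prime divisor $F$ over $\msp$ with $\mult_F\mcM>n\,a_F(K_X)$. In the intended application of this lemma—at a terminal quotient singularity, with $\varphi$ the Kawamata blow-up—Kawamata's rigidity forces $F=E$, so we may set $m:=\mult_E\mcM>na$, where $a:=a_E(K_X)$.

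For a general $T\in\mcL$, hypotheses (1) and (2) let us decompose
$$S\cdot T=\Delta+Z$$
as effective $1$-cycles on $X$, where the moving part $Z$ passes through $\msp$ and shares no component with $\Delta$. Taking proper transforms to $Y$, one obtains
$$\tilde S\cdot\tilde T=\tilde\Delta+\tilde Z+\Xi,$$
with $\Xi$ an effective $\varphi$-exceptional $1$-cycle supported on $E$ (because the non-exceptional components of $\tilde S\cdot\tilde T$ are accounted for by $\tilde\Delta+\tilde Z$). Substituting into hypothesis (3) yields $(-K_Y)\cdot(\tilde Z+\Xi)\le 0$. Since $\varphi$ is a Mori divisorial contraction, $-K_Y$ is $\varphi$-ample, and therefore $(-K_Y)\cdot\Xi\ge 0$; hence $(-K_Y)\cdot\tilde Z\le 0$. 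Writing $-K_Y=\varphi^*(-K_X)-aE$ and applying the projection formula with $\varphi_*\tilde Z=Z$, this rearranges to $(-K_X)\cdot Z\le a\,(E\cdot\tilde Z)$.

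On the other hand, for a general $M\in\mcM$ with $\tilde M=\varphi^*M-mE$, the nonnegativity $\tilde M\cdot\tilde Z\ge 0$ (which holds since $\tilde\mcM$ is movable and, by a routine generic-position choice, no component of $Z$ lies in the base locus of $\mcM$) combined with the projection formula gives $n\,(-K_X)\cdot Z=M\cdot Z\ge m\,(E\cdot\tilde Z)$. Combining the two displayed inequalities yields $(na-m)(E\cdot\tilde Z)\ge 0$, and since $m>na$ we deduce $E\cdot\tilde Z\le 0$. But $Z$ passes through $\msp$, so $\tilde Z$ meets $E$ with strictly positive weighted multiplicity, i.e.\ $E\cdot\tilde Z>0$, a contradiction.

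The main obstacle is the passage from ``$\msp$ is a maximal center'' to the assertion that $E$ itself witnesses the non-canonicity of $\mcM$: at a terminal quotient singularity Kawamata's rigidity gives this for free, but at other terminal singularities (e.g.\ $cDV$ points) it requires separate input from the classification of divisorial contractions over $\msp$. A secondary subtle point is the correct interpretation of $E\cdot\tilde Z$ as a weighted local multiplicity at the (possibly non-Gorenstein) singular point $\msp$, ensuring the strict positivity $E\cdot\tilde Z>0$ from the fact that $Z$ passes through $\msp$.
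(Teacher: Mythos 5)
Your argument opens exactly as the paper's does (decompose $S\cdot T=\Delta+Z$, pass to $\tilde S\cdot\tilde T=\tilde\Delta+\tilde Z+\Xi$, and use $(-K_Y\cdot\Xi)\ge 0$ to get $(-K_Y\cdot\tilde Z)\le 0$), but the second half is genuinely different. The paper finishes by quoting \cite[Lemma 8.5]{OkIII} to extract a single irreducible component $\tilde\Gamma^{\circ}_T$ of $\tilde Z$ with $(-K_Y\cdot\tilde\Gamma^{\circ}_T)\le 0$ and $(\tilde\Gamma^{\circ}_T\cdot E)>0$, and then \cite[Lemma 2.10]{OkII} to exclude $\varphi$ as a maximal extraction. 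You instead pair the movable system $\tilde{\mcM}=\varphi^*\mcM-mE$ directly against the whole cycle $\tilde Z$ and derive $(m-na)(E\cdot\tilde Z)\le 0$; this is self-contained, avoids the component-finding lemma, and is a legitimate unpacking of the same mechanism. Note also that the paper's proof, like yours, literally concludes only that $\varphi$ is not a maximal extraction; the promotion to ``$\msp$ is not a maximal center'' rests on the uniqueness of the divisorial contraction centered at $\msp$ (the Kawamata blow-up in all applications), which is exactly the caveat you flag, so you are not worse off than the paper on that point.

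The one soft spot is your justification of $E\cdot\tilde Z>0$ by the claim that ``$Z$ passes through $\msp$.'' Hypotheses (1) and (2) do not force this when $\msp\in\Supp\Delta$: a priori the branch of $S\cap T$ at $\msp$ could be entirely accounted for by $\Delta$, and this configuration actually occurs in the paper's applications (e.g.\ in Lemma~\ref{lem:exclsingpt4}, where $\Delta$ is the curve $\Gamma\ni\msp_t$). The needed positivity is recoverable without that geometric claim: from $(-K_Y\cdot\tilde Z)\le 0$ and $-K_Y=\varphi^*(-K_X)-aE$ you get $a(E\cdot\tilde Z)\ge(-K_X)\cdot Z$, which is strictly positive by ampleness of $-K_X$ as soon as $Z\ne 0$. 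So you should replace ``$Z$ passes through $\msp$'' by this numerical deduction, and observe separately that the degenerate case $Z=0$ forces $\Xi=0$ (since $-K_Y$ is $\varphi$-ample and $\Xi$ is supported on $E$) and does not occur in the situations where the lemma is invoked. With that adjustment your proof closes.
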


\begin{proof}
For a general $T \in \mcL$, we set $S \cdot T = \Gamma_T + \Delta$, where $\Gamma_T$ is an effective $1$-cycle which does not contain any component of $\Delta$ in its support.
Then we have $\tilde{S} \cdot \tilde{T} = \tilde{\Gamma}_T + \tilde{\Delta} + \Xi$, where $\Xi$ is an effective $1$-cycle supported on $E$.
We have $(-K_Y \cdot \Xi) \ge 0$ and hence
\[
(-K_Y \cdot \tilde{\Gamma}_T) = (-K_Y \cdot \tilde{S} \cdot \tilde{T}) - (-K_Y \cdot \tilde{\Delta}) - (-K_Y \cdot \Xi) \le 0.
\]
By \cite[Lemma 8.5]{OkIII}, there is a component $\tilde{\Gamma}^{\circ}_T$ of $\tilde{\Gamma}_T$ such that $(-K_Y \cdot \tilde{\Gamma}^{\circ}_T) \le 0$ and $(\tilde{\Gamma}^{\circ}_T \cdot E) > 0$.
By \cite[Lemma 2.10]{OkII}, $\varphi$ is not a maximal extraction.
\end{proof}

\begin{Lem}[{\cite[Lemma 2.11]{OkII}}] \label{lem:exclCint}
Let $\Gamma \subset X$ be an irreducible and reduced curve.
Assume that there is an effective divisor $S$ on $X$ containing $\Gamma$ and a movable linear system $\mcM$ on $X$ whose base locus contains $\Gamma$ with the following properties.
\begin{enumerate}
\item $S \sim_{\mbQ} - m K_X$ for some rational number $m \ge 1$.
\item For a general member $T \in \mcM$, $T$ is a normal surface, the intersection $S \cap T$ is contained in the base locus of $\mcM$ set-theoretically, and $S \cap T$ is reduced along $\Gamma$.
\item Let $T \in \mcM$ be a general member and let $\Gamma, \Gamma_1, \dots, \Gamma_l$ be the irreducible and reduced curves contained in the base locus of $\mcM$.
For each $i = 1, \dots, l$, there is an effective $1$-cycle $\Delta_i$ on $T$ such that $(\Gamma \cdot \Delta_i) \ge (-K_X \cdot \Delta_i) > 0$ and $(\Gamma_j \cdot \Delta_i) \ge 0$ for $j \ne i$.
\end{enumerate}
\end{Lem}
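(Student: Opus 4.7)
I would argue by contradiction. Suppose $\Gamma$ is a maximal center; then by definition there exists a movable linear system $\mcH \sim_{\mbQ} -nK_X$ on $X$ with $\mu := \mult_\Gamma \mcH > n$. The plan is to restrict $\mcH$ to the test surface $T \in \mcM$ and translate this multiplicity excess into a numerical contradiction via the test cycles $\Delta_i$ together with the divisor $S$.

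The first step is to set up the key decompositions on the normal surface $T$. By hypothesis (2) together with the reducedness of $S \cap T$ along $\Gamma$, one obtains
\[
S|_T = \Gamma + \sum_{i=1}^l \beta_i \Gamma_i
\]
for some $\beta_i \ge 0$. Because every fixed curve of $\mcH|_T$ on a general $T$ must be contained in $\Bs \mcH \cap \Bs \mcM$, and hypothesis (3) declares $\Gamma, \Gamma_1, \dots, \Gamma_l$ to be all the curve components of $\Bs \mcM$, for a general $H \in \mcH$ one can write
\[
H|_T = \mu \Gamma + \sum_{i=1}^l \mu_i \Gamma_i + M,
\]
with $\mu_i \ge 0$ and $M$ an effective $1$-cycle on $T$ whose support shares no component with $\Gamma$ or any $\Gamma_i$; moving $M$ in its linear system then ensures $(M \cdot \Delta_i) \ge 0$ for every $i$.

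I would then exploit hypothesis (3) quantitatively. Intersecting the decomposition of $H|_T$ with $\Delta_i$ and using $H \sim_{\mbQ} -nK_X$, $(M \cdot \Delta_i) \ge 0$, $(\Gamma \cdot \Delta_i) \ge (-K_X \cdot \Delta_i) > 0$, and $(\Gamma_j \cdot \Delta_i) \ge 0$ for $j \ne i$, rearrangement gives
\[
\mu_i (\Gamma_i \cdot \Delta_i) \le (n - \mu)(-K_X \cdot \Delta_i) < 0,
\]
so $\mu_i > 0$ and $(\Gamma_i \cdot \Delta_i) < 0$ for every $i$. This extracts the crucial sign information on the intersection matrix $(\Gamma_i \cdot \Delta_j)$.

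The final step is to close the loop through $S$ and hypothesis (1). I would compute $(H|_T \cdot S|_T)$ in two ways: globally via $S \sim_{\mbQ} -mK_X$ with $m \ge 1$, which yields a positive multiple of $nm(-K_X^3)$ times a correction from $T$, and locally through the two decompositions above, producing a linear combination of $(\Gamma^2)$, $(\Gamma \cdot \Gamma_i)$, $(\Gamma_i \cdot \Gamma_j)$, and intersection numbers involving $M$. Substituting the negativity of $(\Gamma_i \cdot \Delta_i)$ together with the non-negativity of $\beta_i$ into this identity should force $\mu \le n$, contradicting $\mu > n$. The main obstacle I anticipate is the combinatorial bookkeeping in the final inequality: one must show that the positive contribution from the $\Gamma$-term dominates the ambiguous cross-terms $\beta_i (\Gamma \cdot \Gamma_j)$ and $(\Gamma_i^2)$, and this is precisely where the full system of inequalities indexed by $i$ must be used coherently, in the spirit of negative-definiteness arguments for contracted loci as in Lemma~\ref{lem:exclnegdef}.
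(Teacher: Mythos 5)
First, a remark on context: the paper offers no proof of this lemma --- it is imported verbatim from \cite[Lemma~2.11]{OkII} --- and as printed the statement has even lost its conclusion, which should read ``Then $\Gamma$ is not a maximal center.'' So your attempt can only be measured against the intended argument. The first two thirds of your proposal are sound. The decompositions $S|_T = \Gamma + \sum \beta_i \Gamma_i$ and $H|_T = \mu\Gamma + \sum \mu_i\Gamma_i + M$ are correct, your justification that $(M\cdot\Delta_i)\ge 0$ is the right one (on a general $T$ the fixed curves of $\mcH|_T$ lie in $\Bs\mcM$, so $M$ is mobile), and the deduction that $\mu_i>0$ and $(\Gamma_i\cdot\Delta_i)<0$ for every $i$ is a valid consequence of hypothesis (3).

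The final step, however, is a genuine gap, and the route you sketch cannot be completed. Expanding $(H|_T\cdot S|_T)$ produces the numbers $(\Gamma^2)$, $(\Gamma\cdot\Gamma_i)$, $(\Gamma_i\cdot\Gamma_j)$, over which the hypotheses give no control, while the inequalities you extracted, $(\Gamma_i\cdot\Delta_i)<0$, involve the auxiliary cycles $\Delta_i$, which appear in neither decomposition; there is no way to feed one into the other, and no inequality on $\mu$ can come out. The intended mechanism is different: one uses $S|_T$ to \emph{cancel} a $\Gamma_{i_0}$-component \emph{before} intersecting with $\Delta_{i_0}$, so that the uncontrolled number $(\Gamma_{i_0}\cdot\Delta_{i_0})$ never enters. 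Concretely, put $D := \frac{1}{n}H|_T$ and $B := \frac{1}{m}S|_T$, both $\mbQ$-linearly equivalent to $-K_X|_T$. If every $\Gamma_i$-coefficient of $D$ is at least the corresponding coefficient of $B$, then $D-B$ is a nonzero effective $1$-cycle (its $\Gamma$-coefficient is $\mu/n - 1/m>0$) which is $\mbQ$-linearly trivial, contradicting ampleness of $-K_X$. Otherwise, let $s_0<1$ be the smallest ratio of corresponding $\Gamma_i$-coefficients, attained at some $i_0$ with $\beta_{i_0}>0$; then $D - s_0B$ is effective, has zero $\Gamma_{i_0}$-coefficient, and has $\Gamma$-coefficient $\mu/n - s_0/m > 1-s_0$ --- this is exactly where the hypothesis $m\ge 1$ is used, a hypothesis your argument never touches. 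Intersecting
\[
D - s_0 B \sim_{\mbQ} (1-s_0)(-K_X)|_T
\]
with $\Delta_{i_0}$, and using $(\Gamma_j\cdot\Delta_{i_0})\ge 0$ for $j\ne i_0$, $(M\cdot\Delta_{i_0})\ge 0$, and $(\Gamma\cdot\Delta_{i_0})\ge(-K_X\cdot\Delta_{i_0})>0$, gives $(1-s_0)(-K_X\cdot\Delta_{i_0}) > (1-s_0)(-K_X\cdot\Delta_{i_0})$, the desired contradiction. Your derivation of $(\Gamma_i\cdot\Delta_i)<0$ is true but is a dead end; the point of the hypotheses is precisely to make that quantity irrelevant.
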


%%%
\subsection{Finding a Sarkisov generating set}
%%%

We explain how to obtain a Sarkisov generating set for a collection of Mori-Fano $3$-folds.

\begin{Def}
Let $X$ be a Mori-Fano $3$-fold and let $\varphi \colon Y \to X$ be a maximal extraction with respect to a movable linear system $\mcM \sim_{\mbQ} - n K_X$.
We say that a birational map $f \colon X \ratmap X'$ to a Mori-Fano $3$-fold \textit{untwists} $\mcM$ if the rational number $n'$ defined by $\mcM' \sim_{\mbQ} - n' K_{X'}$, where $\mcM'$ is the birational transform of $\mcM$ on $X'$, satisfies $n' < n$.
\end{Def}

\begin{Lem}[{\cite[Corollary 2.24]{OkII}}] \label{lem:untwist}
Let $X$ be a Mori-Fano $3$-fold and let $\varphi \colon Y \to X$ be an extraction with respect to a movable linear system $\mcM$.
Then the elementary link initiated by $\varphi$, if it exists, untwists $\mcM$.
\end{Lem}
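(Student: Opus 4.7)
The plan is to track the $\mbQ$-linear equivalence class $\mcM + n K$ through the link diagram, using the pseudo-isomorphism $\tau$ to transfer it from $Y$ to $W$ and then pushing down via the divisorial contraction $\psi$.

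First, on $Y$, write $K_Y = \varphi^* K_X + a E$ and $\varphi^* \mcM = \mcM_Y + m E$, where $a = a_E(K_X)$ and $m = m_E(\mcM)$. Pulling back the relation $\mcM \sim_{\mbQ} -n K_X$ yields
\[
\mcM_Y + n K_Y \sim_{\mbQ} -c E,
\]
where $c := m - na > 0$ by the maximality hypothesis on $\varphi$ with respect to $\mcM$. Since $\tau$ is an isomorphism in codimension one and preserves the canonical class and proper transforms of $\mbQ$-Weil divisors, this relation transfers to $W$:
\[
\mcM_W + n K_W \sim_{\mbQ} -c F_E,
\]
where $F_E$ denotes the proper transform of $E$ on $W$.

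The next step is to push forward via $\psi \colon W \to V$. The definition of untwisting requires $V$ to be a Mori-Fano $3$-fold, so we are in the case where $\psi$ is a genuine divisorial contraction; denote its exceptional divisor by $F_G$. The crucial observation is that $F_E \neq F_G$: the $\psi$-exceptional $F_G$ is the proper transform of a divisor $G$ on $Y$ contracted by the extremal ray of $\bNE(Y)$ distinct from the $\varphi$-ray, so $G \neq E$ on $Y$, and hence $F_E \neq F_G$ on $W$ since $\tau$ does not identify divisors. Consequently $D := \psi_* F_E$ is a non-zero effective Weil divisor on $V$, and pushing forward gives
\[
\mcM_V + n K_V \sim_{\mbQ} -c D.
\]

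Finally, I use that $V$ is Mori-Fano, so $\Cl(V)_{\mbQ}$ is of rank one. Writing $-K_V = r' H$, $D = d' H$, and $\mcM_V = -n' K_V = n' r' H$ with $H$ an ample generator and $r', d' > 0$, substitution into the displayed relation yields $(n - n') r' = c d'$, whence
\[
n' = n - \frac{c d'}{r'} < n,
\]
as required. The main obstacle I anticipate is the careful justification that $F_E \neq F_G$, which rests on the non-degeneracy of the 2-ray game underlying the Sarkisov link, namely the distinctness of the two extremal rays of $Y$ and the fact that the flips/flops composing $\tau$ are isomorphisms in codimension one.
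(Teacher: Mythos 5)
The paper itself gives no proof of this lemma --- it is quoted verbatim from \cite[Corollary 2.24]{OkII} --- so there is no internal argument to compare against; your write-up must stand on its own. The formal part of your computation is correct and is the standard one: pulling back $\mcM + n K_X \sim_{\mbQ} 0$ gives $\mcM_Y + n K_Y \sim_{\mbQ} -cE$ with $c = m_E(\mcM) - n\, a_E(K_X) > 0$ by maximality, this class identity transfers across the small map $\tau$, and pushing forward along $\psi$ and using that $\Cl(V)_{\mbQ}$ has rank one with $-K_V$ ample yields $n' < n$ \emph{provided} $D = \psi_* F_E \neq 0$. You correctly identify $D \neq 0$ as the crux, but your justification of it does not work as written. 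The assertion that $F_G$ is ``the proper transform of a divisor $G$ on $Y$ contracted by the extremal ray of $\bNE(Y)$ distinct from the $\varphi$-ray'' is not meaningful: the only contraction defined on $Y$ in the diagram is $\varphi$, and once $\tau$ involves flips the cones $\bNE(Y)$ and $\bNE(W)$ are genuinely different, so the ray contracted by $\psi$ lives on $W$, not on $Y$, and contracts nothing on $Y$. Even in the flip-free case the inference ``$G$ is covered by curves of the second ray, hence $G \neq E$'' silently uses that every curve of $E$ lies in the $\varphi$-ray, which holds only when $\varphi$ contracts $E$ to a point; the lemma is stated for arbitrary extractions. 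So the one step that is not formal bookkeeping is left unproved.

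The claim is true, and here is one way to close the gap. If $\psi_* F_E = 0$, then $F_E$ is the unique $\psi$-exceptional prime divisor, so $\sigma = \psi \circ \tau \circ \varphi^{-1} \colon X \ratmap V$ contracts no divisor in either direction and is an isomorphism in codimension one between $\mbQ$-factorial Picard-rank-one varieties with $\sigma_*(-K_X) = -K_V$ ample; hence $X \cong \Proj \bigoplus_m H^0(X, -mK_X) \cong \Proj \bigoplus_m H^0(V, -mK_V) \cong V$ and $\sigma$ is biregular. By the uniqueness of a divisorial contraction realizing a fixed divisorial valuation (\cite[Lemma 3.4]{Kawakita01}, invoked in Lemma~\ref{lem:divcontwbl}), $Y \cong W$ over this isomorphism, so $\tau$ is biregular and the diagram is the trivial one, contradicting the fact that it is an elementary link produced by the two-ray game. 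Equivalently, one can argue that $E$ and $F_G$ span the two \emph{distinct} extremal rays of the two-dimensional cone $\overline{\Eff}(Y) \cong \overline{\Eff}(W)$ ($E$ is extremal because $\varphi_*$ kills it and $\rho(X)=1$; likewise for $F_G$ over $V$). Whichever route you take, this is the place where the non-degeneracy of the link actually enters, and it is the part of your argument that must be repaired; everything else is fine.
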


\begin{Prop} \label{prop:Sgen}
Let $\mcF$ be a finite set of Mori-Fano $3$-folds including $X$.
For each $V \in \mcF$, let $\mcC_V$ be a finite set of irreducible and reduced curves on $V$, and define the subset $\Delta_V := \cup_{C \in \mcC_V} C$ of $V$.
We set
%\[
%\Sigma := \bigcup_{V \in \mcF} \operatorname{EL} (V) \setminus \bigcup_{\msp \in X_{\reg} \cap \Delta} \operatorname{EL}_{\msp} (X).
%\]
\[
\Sigma := \bigcup_{V \in \mcF} \left( \operatorname{EL} (V) \setminus \bigcup_{\msp \in V_{\reg} \cap \Delta_V} \operatorname{EL}_{\msp} (V) \right),
\]
where $V_{\reg}$ is the smooth locus of $V$.
Suppose that the following are satisfied.
\begin{enumerate}
\item Any pair of Mori-Fano $3$-folds in $\mcF$ are birationally equivalent but not isomorphic.
%\item There is a terminal quotient singular point $\msq \in X$ passing through $\Delta$ and there is an elementary link $\chi \colon X \ratmap X'$ centered at $\msq$ such that $X' \in \mcF$.
\item For each $V \in \mcF$ and $C \in \mcC_V$, there are a terminal quotient singular point $\msq \in C$ of $V$ and an elementary link $V \ratmap V'$ centered at $\msq$ such that $V' \in \mcF$.
\item Any elementary link in $\Sigma$ is a link to a Mori-Fano $3$-fold in $\mcF$.
%\item For any movable linear system $\mcM \sim_{\mbQ} - n K_X$ on $X$ such that $(X, \frac{1}{n} \mcM)$ is canonical at the generic point of $\Delta$, no smooth point $\msp$ of $X$ contained in $\Delta$ is a maximal center with respect to $\mcM$.
\item For any $V \in \mcF$, any curve $C \in \mcC_V$, and any movable linear system $\mcM \sim_{\mbQ} - n K_V$ on $V$ such that $(V, \frac{1}{n} \mcM)$ is canonical at the generic point of $C$, no smooth point $\msp$ of $V$ contained in $C$ is a maximal center with respect to $\mcM$.
\end{enumerate}
Then $\Sigma$ is a Sarkisov generating set for $\mcF$.
In particular $\Pli (X) = \mcF$ and $X$ is birationally solid.
\end{Prop}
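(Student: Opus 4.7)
My plan is to prove the proposition by a minimal counterexample argument driven by the Sarkisov program. Suppose for contradiction that $\Sigma$ is not a Sarkisov generating set for $\mcF$. Then there exist $V \in \mcF$ and a birational map $f \colon V \ratmap W/T$ to a Mori fiber space that is not generated by $\Sigma$; pulling back a very ample linear system from $W$ yields a movable linear system $\mcM \sim_{\mbQ} -n K_V$ on $V$. Among all such counterexamples I choose $(V, f)$ minimizing $n$. Since $f$ is a non-trivial counterexample it is not a square birational map, so the Noether--Fano inequality produces a maximal extraction $\varphi \colon Y \to V$ initiating an elementary link $\sigma \colon V \ratmap V_1/S_1$ with center $\Gamma \subset V$.

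The argument then splits according to $\Gamma$. If $\Gamma$ is not a smooth point of $V$ lying on $\Delta_V$---for instance if $\Gamma$ is a curve, a singular point, or a smooth point off $\Delta_V$---then $\sigma \in \Sigma$ by the definition of $\Sigma$, and hypothesis (3) forces the target $V_1/S_1$ to be a Mori-Fano $3$-fold in $\mcF$. Lemma~\ref{lem:untwist} then guarantees that $\sigma$ untwists $\mcM$, so the birational transform $\mcM_1$ of $\mcM$ on $V_1$ satisfies $\mcM_1 \sim_{\mbQ} -n_1 K_{V_1}$ with $n_1 < n$. By the minimality of $(V, f)$, the composite $\sigma^{-1} \circ f \colon V_1 \ratmap W/T$ is generated by $\Sigma$; prepending $\sigma$ shows the same for $f$, contradicting the choice of $f$.

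The remaining, hard case is $\Gamma = \msp \in V_{\reg} \cap C$ for some $C \in \mcC_V$. Here I invoke the substitute link from hypothesis (2): there exist a terminal quotient singular point $\msq \in C$ and an elementary link $\sigma' \colon V \ratmap V_1' \in \mcF$ centered at $\msq$, and since $\msq \notin V_{\reg}$ one has $\sigma' \in \Sigma$ automatically. The contrapositive of hypothesis (4) applied to $\mcM$ gives $\mult_C \mcM > n$, so the pair $(V, \tfrac{1}{n}\mcM)$ is non-canonical along the whole of $C$. I then want to show that the divisorial contraction $\varphi' \colon Y' \to V$ underlying $\sigma'$, with exceptional divisor $E'$, is itself a maximal extraction with respect to $\mcM$, i.e.\ $m_{E'}(\mcM) > n \cdot a_{E'}(K_V)$. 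Granted this, Lemma~\ref{lem:untwist} says $\sigma'$ untwists $\mcM$, and the argument closes as in the previous case by replacing $\sigma$ by $\sigma'$.

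The main obstacle is precisely the maximality verification for $\varphi'$: non-canonicity of $\mcM$ along $C$ does not a priori force non-canonicity for the specific valuation $E'$ provided by (2), so one must use the concrete structure of $\varphi'$ as a weighted blow-up of the quotient singularity $\msq$. The typical estimate is $m_{E'}(\mcM) \ge \mult_C(\mcM) \cdot m_{E'}(C) > n \cdot a_{E'}(K_V)$, where $m_{E'}(C)$ and the exceptional discrepancy $a_{E'}(K_V)$ are read off from the weights of the blow-up. I expect this to be verified uniformly once the explicit form of $\sigma'$ is available, i.e.\ in the applications in Sections~\ref{sec:birig} and~\ref{sec:tririg}. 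Once $\Sigma$ is shown to generate, the equality $\Pli(X) = \mcF$ follows from condition (3)---every iterated image of $X$ under links in $\Sigma$ stays in $\mcF$---and birational solidity of $X$ is immediate because $\mcF$ consists only of Mori-Fano $3$-folds.
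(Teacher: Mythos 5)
Your overall strategy (untwist by a link, decrease $n$, terminate because denominators are bounded over the finite set $\mcF$) is the same as the paper's, which runs the argument as an explicit inductive construction of a chain rather than a minimal counterexample; the two formulations are equivalent. The easy cases (center a curve, a singular point, or a smooth point off $\Delta_V$) are handled identically.

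However, your treatment of the hard case leaves a genuine gap, and you say so yourself: after deducing from the contrapositive of (4) that $(V,\frac{1}{n}\mcM)$ is non-canonical at the generic point of $C$, you still need the substitute link $\sigma'$ centered at the quotient point $\msq\in C$ to untwist $\mcM$, i.e.\ you need $\msq$ to be a maximal center with respect to $\mcM$, and you defer this verification ``to the applications in Sections~\ref{sec:birig} and~\ref{sec:tririg}.'' That deferral is not acceptable for a general proposition: nothing in the hypotheses (1)--(4) gives you access to the explicit weights of $\varphi'$, so the proof must close this step in full generality. The paper does exactly that by invoking \cite[Theorem~2.2.1]{CP}, which states that non-canonicity of $(V,\frac{1}{n}\mcM)$ at the generic point of a curve through a terminal quotient singular point forces that point to be a maximal center. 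In fact the estimate you sketch, $m_{E'}(\mcM)\ge \mult_C(\mcM)\cdot \ord_{E'}(I_C) > n\cdot a_{E'}(K_V)$, already works uniformly: for the Kawamata blow-up of a $\frac{1}{r}(1,a,r-a)$ point one has $a_{E'}(K_V)=\frac{1}{r}$, which is also the minimal weight, so $\ord_{E'}(I_C)\ge a_{E'}(K_V)$ and the inequality follows from $\mult_C(\mcM)>n$ with no case-by-case input. Either carry out this computation or cite \cite[Theorem~2.2.1]{CP}; as written, the proof is incomplete at its crucial step.
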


\begin{proof}
Let $f \colon V \ratmap Z/T$ be a birational map from a Mori-Fano $3$-fold $V \in \mcF$ to a Mori fiber space $Z/T$.
We fix a very ample complete linear system $\mcH$ on $Z$. 
We will construct a chain of elementary links
\begin{equation} \label{eq:Sgen}
V = V_1 \overset{\sigma_1}{\ratmap} V_2 \overset{\sigma_2}{\ratmap}  V_3 \overset{\sigma_3}{\ratmap} \cdots
\end{equation} 
such that $\sigma_i \in \Sigma$ and it untwists the movable linear system $\mcM_i := {f_i}_*^{-1} \mcH$ for each $i$, where $f_i := f \circ \sigma_1^{-1} \circ \cdots \circ \sigma_i^{-1} \colon V_i \ratmap Z$.

Suppose that we have constructed such links $\sigma_1, \dots, \sigma_{j-1}$ for some $j \ge 1$, and that the birational map $f_j \colon V_j \ratmap Z$ is not biregular.
Then there is a maximal extraction $\varphi_j \colon W_j \to V_j$ with respect to $\mcM_j = {f_j}_*^{-1} \mcH$ and there is an elementary link $\tau_j \colon V_j \ratmap V'/S'$ to a Mori fiber space initiated by $\varphi_j$.
If $\Cent (\tau_j) \not\subset {V_j}_{\reg} \cap \Delta_{V_j}$, then we set $V_{j+1} := V'$ and $\sigma_j := \tau_j$.
Note that $V_{j+1} \in \mcF$ by (3) since $\sigma_j \in \Sigma$.
The link $\sigma_j$ untwists $\mcM_j$ by Lemma \ref{lem:untwist}.
Suppose that $\Cent (\tau_j) \subset {V_j}_{\reg} \cap \Delta_{V_j}$.
Note that $\Cent (\tau_j)$ is a point since there is no divisorial contraction centered along a curve passing through a terminal quotient singular point.
Let $C \in \mcC_{V_j}$ be a curve passing through $\msp$, and let $\msq \in V_j$ be a terminal quotient singular point on $C$ as in (2).
By (4), the pair $(X, \frac{1}{n_j} \mcM_j)$, where $\mcM_j \sim_{\mbQ} - n_j K_{V_j}$, is not canonical at the generic point of $C$.
By \cite[Theorem~2.2.1]{CP}, the point $\msq$ is a maximal center with respect to $\mcM_j$.
Then, by Lemma \ref{lem:untwist}, the link $\rho \colon V \ratmap V'$ given in (2) untwists $\mcM_j$.
In this case we set $V_{j+1} := V'$ and $\sigma_j := \rho \in \Sigma$.

Associated to the sequence \eqref{eq:Sgen}, there is a strictly decreasing sequence
\[
n_1 > n_2 > n_3 > \cdots
\]
of positive rational numbers, where $\mcM_i \sim_{\mbQ} - n_i K_{V_i}$.
The above rational numbers have bounded denominators.
It follows that the construction of the sequence  of elementary links in \eqref{eq:Sgen} terminates in finitely many steps.
This implies that there is a positive integer $m$ such that $f_m \colon V_m \ratmap Z$ is biregular, that is, $f_m$ is a biregular isomorphism and $Z \cong V_m \in F$.
Therefore $\Sigma$ is a Sarkisov generating set for $\mcF$, $\Pli (X) = \mcF$, and $X$ is birationally solid.
\end{proof}

%%%%%%%%%%%%%%%%%%%%%%%%%%%%%%%%%%
%%%%%%%%%%%%%%%%%%%%%%%%%%%%%%%%%%
\section{Divisorial contractions to $cE$ and $cD/2$ points}
\label{sec:divcont}
%%%%%%%%%%%%%%%%%%%%%%%%%%%%%%%%%%
%%%%%%%%%%%%%%%%%%%%%%%%%%%%%%%%%%

The complex $n$-space with coordinates $x_1, \dots, x_n$ is denoted by $\mbC^n_{x_1, \dots, x_n}$.
Let $\mbC \{x_1, \dots, x_n\}$ be the convergent power series ring in $n$ variables $x_1, \dots, x_n$.
The germ of a (\textit{hypersurface}) \textit{singularity} is the analytic germ of the hypersurface
\[
o \in (\phi = 0) \subset \mbC^n_{x_1, \dots, x_n}, \quad (\text{or simply $(\phi = 0) \subset \mbC^n_{x_1, \dots, x_n}$}),
\]
for a power series $\phi \in \mbC \{x_1, \dots, x_n\}$, where $o$ is the origin.
We say that two singularities $(\phi = 0) \subset \mbC^n_{x_1, \dots, x_n}$ and $(\phi' = 0) \subset \mbC^4_{x_1, \dots, x_n}$ are \textit{equivalent} if there are an automorphism $\sigma$ of $\mbC \{x, y, z, u\}$ given by $x_i \mapsto \sigma_i (x_1, \dots, x_n) \in \mbC \{x_1, \dots, x_n\}$ for $i = 1, 2, \dots, n$ and an invertible element $u = u (x_1, \dots, x_n) \in \mbC \{x_1, \dots, x_n\}$ such that
\[
\phi (\sigma_1, \dots, \sigma_n) = u (x_1, \dots, x_n) \phi' (x_1, \dots, x_n).
\]

We consider $3$-dimensional singularities.

\begin{Def}
A $3$-dimensional singularity $(\phi = 0) \subset \mbC^4_{x, y, z, u}$ is called a \textit{$cDV$ singularity} if its general hyperplane section is a Du Val singularity.
We say that a $cDV$ singularity is \textit{of type} $cA_n$, $cD_n$, and $cE_n$ if its general hyperplane section is a Du Val singularity of type $A_n$, $D_n$, and $E_n$, respectively.
\end{Def}

It is well known that a (non-smooth) terminal Gorenstein singularity is an isolated $cDV$ singularity (see \cite{Reid83}).
In this paper, we only consider isolated $cDV$ singularities.
By a slight abuse of notation, we assume that a $cDV$ singularity (hence a $cE_n$ singularity, and so on) is always isolated.

\begin{Rem}
A $cE$ singularity is equivalent to the germ
\[
(x^2 + y^3 + y g (z, u) + h (z, u) = 0) \subset \mbC^4_{x, y, z, u},
\]
for some power series $g (z, u), h (z, u) \in \mbC \{z, u\}$ such that $g (z, u) \in (z, u)^3$ and $h (z, u) \in (z, u)^4$.
It is of type $cE_6$ (resp.\ $cE_7$, resp.\ $cE_8$) if $h_4 \ne 0$ (resp.\ $h_4 = 0$ and $g_4 \ne 0$, resp.\ $h_4 = g_4 = 0$ and $h_5 \ne 0$), where $g_i$ and $h_i$ are the degree $i$ parts of $g$ and $h$, respectively (see eg.\ \cite[Theorem 2.10]{Kollar98}).
\end{Rem}

\begin{Rem} \label{rem:divcontcE}
The classification of divisorial contractions to $cE$ points of discrepancy greater than $1$ is finally completed by Yamamoto \cite{Yamamoto} after the work of Kawakita \cite{Kawakita05}. 
The classification of divisorial contractions to $cE$ points of discrepancy  $1$ is also completed by Hayakawa \cite{HayakawaE} which is currently an unpublished preprint and whose contents are known only to a very limited number of specialists.
%Instead of invoking results of the preprint, we decide to classify divisorial contractions to some particular types of $cE$ points by case-by-case arguments that are based on the ideas in the preprint. 
\end{Rem}

We give definitions of $cE/2$ and $cD/2$ singularities.
Let $r > 0$ and $a_1, \dots, a_n$ be integers.
We consider the action of $\mbZ_r = \mbZ/r \mbZ$ on $\mbC^n_{x_1, \dots, x_n}$ by
\[
(x_1, \dots, x_n) \mapsto (\zeta^{a_1} x_1, \dots, \zeta^{a_n} x_n),
\]
where $\zeta \in \mbC$ is a primitive $r$th root of unity.
The quotient is denoted by 
\[
\mbC^n_{x_1, \dots, x_n}/\mbZ_r (a_1, \dots, a_n).
\]
For a power series $\phi \in \mbC \{x_1, \dots, x_n\}$ which is $\mbZ_r$-(semi) invariant, the quotient space of the hypersurface $(\phi = 0) \subset \mbC^n_{x_1, \dots, x_n}$ is denoted by
\[
(\phi (x_1, \dots, x_n) = 0)/\mbZ_r (a_1, \dots, a_n).
\]
For integers $r, a, b, c, d$ with $r > 0$, we sometimes write
\[
(\phi (x, y, z, t) = 0)/\mbZ_r (a_x, b_y, c_z, d_t)
\]
in order to make the action clear. 

\begin{Rem}
We recall the Kawamata blow-up at a terminal quotient singular point.
For positive integers $a_1$, $a_2$, $a_3$, and $r$, a $3$-dimensional singularity $\msp \in X$ which is equivalent to $o \in \mbC^3/\mbZ_r (a_1, a_2, a_3)$ is called a (cyclic quotient) singularity of type $\frac{1}{r} (a_1, a_2, a_3)$.
A cyclic quotient singularity is terminal if and only if it is of type $\frac{1}{r} (1, a, r - a)$ for some integers $r$ and $a$ such that $a$ is coprime to $r$ and $0 < a < r$.
Let $\msp \in X$ be a $\frac{1}{r} (1, a, r-a)$ point.
Then we have an equivalence $\msp \in X \cong o \in \mbC^3_{x, y, z}/\mbZ_r (1, a, r-a)$.
Then the weighted blow-up $\varphi \colon Y \to X$ at $\msp \in X$ with $\wt (x, y, z) = \frac{1}{r} (1, a, r-a)$ is the unique divisorial contraction centered at $\msp$ by \cite{Kawamata}. 
We call $\varphi$ the \textit{Kawamata blow-up} of $\msp \in X$.
Note that we have $K_Y = \varphi^* K_X + \frac{1}{r} E$, where $E \cong \mbP (1, a, r-a)$ is the exceptional divisor of $\varphi$, and $(E^3) = r^2/a (r-a)$.
\end{Rem}

\begin{Def}
The germ $X$ of a $3$-dimensional non-Gorenstein singularity is a \textit{$cE/2$ singularity} if there is an embedding $X \inj \mbC^4_{x, y, z, u} /\mbZ_2 (1, 0, 1, 1)$ such that
\[
X \cong (u^2 + y^3 + y g (x, z) + h (x, z) = 0)/\mbZ_2 (1, 0, 1, 1),
\]
for some $g (x, z) \in (x, z)^4 \mbC \{x, z\}$ and $h (x, z) \in (x, z)^4 \mbC \{x, z\} \setminus (x, z)^5 \mbC \{x, z\}$.
\end{Def}

A terminal singularity of type $cD/2$ is the quotient of a $cD$ singularity by a suitable $\mbZ_2$-action, and it falls into 2 types ($cD/2$-$1$ and $cD/2$-$2$, see \cite[\S 2]{Hayakawa00} for details).
We mainly consider type $cD/2$-$2$ singularity. 

\begin{Def}
The germ of a $3$-dimensional non-Gorenstein singularity is a $cD/2$-$2$ \textit{singularity} if there is an embedding $X \inj \mbC^4_{x, y, z, u}/\mbZ_2 (1, 1, 1, 0)$ such that
\[
X \cong (u^2 + y^2 z + \lambda y x^{2 a + 1} + g (x^2, z) = 0) \subset \mbC_{x, y, z, u}/\mbZ_2 (1, 1, 0, 1),
\]
where $\lambda \in \mbC$, $a \ge 1$, and $g (x^2, z) \in (x^4, x^2 z^2, z^3) \mbC \{x, z\}$. 
\end{Def}

The classification of divisorial contractions to $cE/2$ points is completed in \cite{Hayakawa99}, \cite{Hayakawa05a}, and \cite{Kawakita05}.

We give some definitions which will be necessary in this section.

\begin{Def}
Let $f = \sum \alpha_I x^I \in \mbC \{x_1, \dots, x_n\}$ be a nonzero power series, where $\alpha_I \in \mbC$ and $x^I = x_1^{i_1} \cdots x_n^{i_n}$ for $I = (i_1, \dots, i_n) \in \mbZ_{\ge 0}^n$. 

We define $\coeff_f (x^I) := \alpha_I$ which is the coefficient of the monomial $x^I$ in $f$, and we write $x^I \in f$ if $\coeff_f (x^I) \ne 0$.

The \textit{order} of $f$ is defined to be
\[
\ord (f) := \min \{\, d \in \mbZ_{\ge 0} \mid f_d \ne 0 \,\},
\]
where $f_d \in \mbC [x_1, \dots, x_n]$ is the degree $d$ homogeneous part of $f$.

Let $\msw$ be a weight on $x_1, \dots, x_n$ defined by $\msw (x_1, \dots, x_n) = (a_1, \dots, a_n)$ for some positive rational numbers $a_1, \dots, a_n$.
This means that the weight of $x_i$, denote by $\msw (x_i)$, is $a_i$ for $i = 1, \dots, n$. 
We define
\[
\msw (f) := \min \{\, \msw (x^I) \mid \alpha_I \ne 0 \,\},
\]
where the weight $\msw (x^I)$ for a monomial $x^I$ is defined in an obvious way.
For a rational number $r$, we define
\[
\begin{split}
f_{\msw = r} &:= \sum_{\msw (x^I) = r} \alpha_I x^I, \\
f_{\msw \ge r} &:= \sum_{\msw (x^I) \ge r} \alpha_I x^I.
\end{split}
\]
\end{Def}

\begin{Def}
Let $X$ be a variety (or a complex analytic space).
For an irreducible subvariety $\Gamma \subset X$, a \textit{divisor over $\Gamma \subset X$} is a prime divisor $E$ on some normal variety $Y$ admitting a birational morphism $\varphi \colon Y \to X$ such that the closure of $\varphi (E)$ is $\Gamma$. 
By a slight abuse of notation, we say that two divisors $E$ and $F$ over $\Gamma \subset X$ are \textit{distinct} if the valuations $\nu_E$ and $\nu_F$ of the function field $\mbC (X)$ are distinct. 
\end{Def}

%%%%%%%%%%%%%%%%%%%%%%%%%%%%%%%%%%
\subsection{Generalities on the classification of divisorial contractions}
\label{sec:genclsfcont}
%%%%%%%%%%%%%%%%%%%%%%%%%%%%%%%%%%

Let $\msp \in X$ be a hypersurface singularity
\[
(\phi (x_1, x_2, x_3, x_4) = 0) \subset \mbC^4_{x_1, x_2, x_3, x_4},
\]
where $\phi (x_1, x_2, x_3, x_4) \in \mbC \{x_1, x_2, x_3, x_4\}$.
We assume that $\msp \in X$ is a $cDV$ singularity.
Recall that a $cDV$ singularity is assumed to be isolated in this paper, hence $\msp \in X$ is a terminal Gorenstein singularity.

We explain a general strategy for classifying divisorial contractions to $\msp \in X$ of discrepancy $1$.  

\paragraph{\bf Step 0}
We assume that we are given a divisorial contraction $\varphi_0 \colon Y_0 \to X$ centered at $\msp$ of discrepancy $a$, that is, $K_{Y_0} = \varphi_0^*K_X + a E_0$, where $E_0$ is the $\varphi_0$-exceptional divisor.
Note that $a$ is a positive integer.

\paragraph{\bf Step 1}
We compute an upper bound on the number of distinct divisors of discrepancy $1$ over $\msp \in X$ in the following manner.
Let $G$ be a divisor over $\msp \in X$ such that $a_G (K_X) = 1$.
We may assume that $G$ is a divisor on a smooth variety $Z$ admitting a birational morphism $\psi \colon Z \to Y_0$.
We have
\[
1 = a_G (K_X) = a_G (K_{Y_0}) + a \ord_G (\psi^*E_0).
\]
This implies that $a_G (K_{Y_0}) < 1$, and thus the center $\psi (G) \subset E_0$ on $Y_0$ is a non-Gorenstein singular point of $Y_0$.
Moreover $E_0$ is not Cartier at the point $\psi (G)$ since $\ord_G (\psi^*E_0) < 1$.
Let $\msq_1, \dots, \msq_n$ be the non-Gorenstein singular points of $Y_0$.
Any divisor of discrepancy $1$ over $\msp \in X$ other than $E_0$ is a  divisor of discrepancy $< 1$ over $\msq_i \in Y_0$ for some $i$.
The number of divisors of discrepancy $< 1$ over a hyperquotient terminal singularity is determined in a series of works by Kawakita and Hayakawa (see e.g. \cite{Kawakita05}, \cite{Hayakawa99}, \cite{Hayakawa00}, \cite{Hayakawa05a}).
For a terminal quotient singularity, we have the following.

\begin{Lem}[{cf. \cite[(5.7)]{Reid87}}] \label{lem:quotdiv}
Let $V = \mbC^3_{x, y, z}/\mbZ_r (1, a, r-a)$ be the germ of a terminal quotient singularity of type $\frac{1}{r} (1, a, r-a)$, where $a < r$ are coprime positive integers.
For $k = 1, 2, \dots, r-1$, let $\varphi_k \colon W_k \to V$ be the weighted blow-up with 
\[
\wt (x, y, z) = \frac{1}{r} (k, [k a]_r, [k (r-a)]_r),
\] 
where $[m]_r$ denotes the smallest positive integer that is congruent to $m$ modulo $r$ for an integer $m$.
Denote by $E_k$ the $\varphi_k$-exceptional divisor.
Then 
\[
a_{E_k} (K_V) = \frac{k}{r}
\]
for any $k$, and $E_1, \dots, E_{r-1}$ are the divisors of discrepancy smaller than $1$ over $V$.
\end{Lem}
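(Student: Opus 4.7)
The plan is to verify the discrepancy formula by a direct calculation and then classify all divisors of discrepancy less than $1$ via Reid's economic resolution.

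For the first assertion, the standard discrepancy formula for a weighted blow-up of a cyclic quotient singularity gives
\[
a_{E_k}(K_V)=\tfrac{1}{r}\bigl(k+[ka]_r+[k(r-a)]_r\bigr)-1.
\]
Since $\gcd(a,r)=1$ and $1\le k\le r-1$, neither $ka$ nor $k(r-a)$ is divisible by $r$, so $[ka]_r,[k(r-a)]_r\in\{1,\dots,r-1\}$. Because $ka+k(r-a)=kr\equiv 0\pmod r$, the sum $[ka]_r+[k(r-a)]_r$ is a positive multiple of $r$ lying in $[2,2r-2]$, hence equals $r$ exactly. This yields $a_{E_k}(K_V)=\tfrac{k}{r}<1$, and in particular $E_1,\dots,E_{r-1}$ are distinct divisors of discrepancy less than $1$.

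For the second assertion I would invoke Reid's economic resolution \cite[(5.7)]{Reid87}. The germ $V$ is the affine toric variety associated to the cone $\sigma=\mbR_{\ge 0}\langle e_1,e_2,e_3\rangle$ in the overlattice $N:=\mbZ^3+\mbZ\cdot\tfrac{1}{r}(1,a,r-a)$, and the nonzero lattice points of $N$ in the half-open fundamental parallelepiped $[0,1)^3$ are precisely $v_k:=\tfrac{1}{r}(k,[ka]_r,[k(r-a)]_r)$ for $k=1,\dots,r-1$. Reid constructs a smooth projective toric birational morphism $\mu\colon\widetilde V\to V$, obtained as a sequence of weighted blow-ups subdividing $\sigma$ along the rays $\mbR_{\ge 0}v_k$, whose exceptional divisors are exactly $E_1,\dots,E_{r-1}$, with
\[
K_{\widetilde V}=\mu^*K_V+\sum_{k=1}^{r-1}\tfrac{k}{r}E_k.
\]

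The main step is then to show that no further divisor has discrepancy less than $1$. Suppose $F$ is a divisor over $V$ with $a_F(K_V)<1$ and $F\ne E_k$ for any $k$. Then the center of $F$ on $\widetilde V$ is a proper closed subvariety, so $F$ is an exceptional divisor over the smooth threefold $\widetilde V$. Combining the identity
\[
a_F(K_V)=a_F(K_{\widetilde V})+\sum_{k=1}^{r-1}\tfrac{k}{r}\,\ord_F(\mu^*E_k)
\]
with the classical fact that every exceptional divisor over a smooth $3$-fold has discrepancy at least $1$ (by the bound $a_F(K_{\widetilde V})\ge \codim_{\widetilde V}(\operatorname{Cent}_{\widetilde V}F)-1\ge 1$), we obtain $a_F(K_V)\ge 1$, contradicting the choice of $F$. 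Hence $F\in\{E_1,\dots,E_{r-1}\}$. The main nontrivial input is the existence and structure of Reid's economic resolution; the rest is bookkeeping.
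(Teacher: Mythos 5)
The paper offers no proof of this lemma at all, simply citing \cite[(5.7)]{Reid87}, and your argument is a correct reconstruction of the standard toric proof behind that citation: the discrepancy computation via $[ka]_r+[k(r-a)]_r=r$ and the exclusion of further divisors via the economic resolution are both sound. The only point worth tightening is the claim that the center of $F$ on $\widetilde{V}$ is ``a proper closed subvariety'': what you actually need (and then use) is that this center has codimension at least $2$, which holds because the only $\mu$-exceptional prime divisors of $\widetilde{V}$ are the $E_k$, and $F$ is exceptional over $V$ and assumed distinct from all of them as a valuation.
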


\paragraph{\bf Step 2}
We construct divisors of discrepancy $1$ over $\msp \in X$ as many as possible.
This will be done by considering weighted blow-ups with suitable weights, possibly composed with an analytic re-embedding $\msp \in X \cong \tilde{\msp} \in \tilde{X} \subset \mbC^4$. 

\begin{Lem} \label{lem:genwblexc}
Let $\msw$ be the weight defined by $\msw (x_1, x_2, x_3, x_4) = (a_1, a_2, a_3, a_4)$ for some positive integers $a_1, \dots, a_4$, and put $d := \msw (\phi)$.
Let $\varphi \colon Y \to X$ be the weighted blow-up with $\wt (x_1, x_2, x_3, x_4) = (a_1, a_2, a_3, a_4)$.
Assume that the polynomial $\phi_{\msw = d}$ is irreducible, and set $e := \sum_{i=1}^4 a_i - d - 1 > 0$.
Then the $\varphi$-exceptional divisor $E$ is a divisor of discrepancy $e$ over $\msp \in X$.
\end{Lem}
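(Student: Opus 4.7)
The plan is to realize the weighted blow-up $\varphi \colon Y \to X$ as the restriction to the strict transform of a toric weighted blow-up of the ambient affine space, and then read off the discrepancy via adjunction.

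More concretely, I would first consider the toric weighted blow-up $\tilde{\varphi} \colon \tilde{W} \to \mbC^4_{x_1, x_2, x_3, x_4}$ with $\wt (x_1, x_2, x_3, x_4) = (a_1, a_2, a_3, a_4)$. Its exceptional divisor $\tilde{E}$ is isomorphic to $\mbP (a_1, a_2, a_3, a_4)$, and the standard toric computation gives
\[
K_{\tilde{W}} = \tilde{\varphi}^* K_{\mbC^4} + \left( \sum_{i=1}^4 a_i - 1 \right) \tilde{E}.
\]
Since $\phi$ has $\msw$-weighted order exactly $d$, the pullback of the Cartier divisor $(\phi = 0)$ decomposes as $\tilde{\varphi}^* X = Y + d \, \tilde{E}$, where $Y \subset \tilde{W}$ is the strict transform; by construction $Y$ coincides with the weighted blow-up appearing in the statement, and $\varphi$ is the restriction of $\tilde{\varphi}$ to $Y$.

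Next, I would verify that the $\varphi$-exceptional set $E := Y \cap \tilde{E}$ is a single prime divisor on $Y$, so that it determines a unique valuation of $\mbC (X)$. On each of the four standard affine charts of $\tilde{W}$, $\tilde{E}$ is cut out by a single coordinate and $Y$ by the transformed defining equation; tracing this through, $E$ is identified scheme-theoretically with the hypersurface $(\phi_{\msw = d} = 0) \subset \mbP (a_1, a_2, a_3, a_4)$, and the irreducibility hypothesis on $\phi_{\msw = d}$ yields the required primeness.

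Finally, adjunction applied to the Cartier divisor $Y \subset \tilde{W}$ yields
\[
K_Y = (K_{\tilde{W}} + Y)|_Y = \tilde{\varphi}^* (K_{\mbC^4} + X)|_Y + \left( \sum_{i=1}^4 a_i - 1 - d \right) E = \varphi^* K_X + e \, E,
\]
where the last equality uses $K_X = (K_{\mbC^4} + X)|_X$ together with $\tilde{E}|_Y = E$. The only mildly delicate point is that $\tilde{W}$ carries cyclic quotient singularities along $\tilde{E}$, so the adjunction step has to be justified at those points; but this is purely local and reduces to a straightforward check on each toric chart, where the ambient space is a Gorenstein $\mbQ$-factorial cyclic quotient and $Y$ is cut out by a single equation. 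This gives $a_E (K_X) = e$ as claimed, and the positivity $e > 0$ guarantees that $E$ is a genuine divisor of positive discrepancy over $\msp \in X$.
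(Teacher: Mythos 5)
Your proof is correct and follows essentially the same route as the paper: the paper identifies $E$ with the weighted hypersurface $(\phi_{\msw = d} = 0) \subset \mbP (a_1, a_2, a_3, a_4)$, uses the irreducibility hypothesis to conclude it is irreducible and reduced, and then states that the discrepancy formula is ``easily verified by considering adjunction.'' You have simply written out the adjunction step in full (the toric discrepancy $\sum a_i - 1$, the decomposition $\tilde{\varphi}^* X = Y + d\,\tilde{E}$, and the chart-by-chart justification at the quotient singularities), which is exactly the computation the paper leaves implicit.
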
 

\begin{proof}
For the $\varphi$-exceptional divisor $E$, we have a natural isomorphism 
\begin{equation} \label{eq:genwblexc}
E \cong (\phi_{\msw = d} (x_1, x_2, x_3, x_4) = 0) \subset \mbP (a_1, a_2, a_3, a_4),
\end{equation}
which is irreducible and reduced by the assumption.
The rest is easily verified by considering adjunction.
\end{proof}

\paragraph{\bf Step 3}
Let $n_1$ be an upper bound obtained in Step 1, and let $E_1, \dots, E_{n_2}$ be distinct divisors of discrepancy $1$ over $\msp \in X$ that are constructed in Step 2.
In general $n_2 \le n_1$.
Assume that $n_2 = n_1 =: n$.
Then $E_1, \dots, E_n$ are the divisors of discrepancy $1$ over $\msp \in X$.
If there exists a divisorial contraction of discrepancy $1$ over $\msp \in X$, then for its exceptional divisor $G$, one has $\nu_G = \nu_{E_i}$ for some $i = 1, 2, \dots, n$.
Such a divisorial contraction is said to {\it realize} the divisor $E_i$.
A divisorial contraction realizing a fixed $E_i$ is unique if it exists, and not every $E_i$ is realized by a divisorial contraction in general. 

\begin{Lem} \label{lem:divcontwbl}
Let the notation and assumption as in Lemma \ref{lem:genwblexc}.
There exists a divisorial contraction $\psi \colon Z \to X$ whose exceptional divisor $G$ satisfies $\nu_G = \nu_E$ if and only if $Y$ has only terminal singularities.
In this case the divisorial contraction is unique (up to isomorphism over $X$).
\end{Lem}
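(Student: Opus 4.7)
The strategy is to establish both directions by showing that, when $Y$ is terminal, the weighted blow-up $\varphi \colon Y \to X$ itself is the unique divisorial contraction realizing $E$.

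For sufficiency, suppose $Y$ has only terminal singularities. Since $\varphi$ arises by restricting the toric weighted blow-up $\tilde{\varphi} \colon \tilde{Y} \to \mbC^4_{x_1, \dots, x_4}$ of weight $\msw$ to the proper transform of $X$, the threefold $Y$ inherits $\mbQ$-factoriality from $\tilde{Y}$ and is projective over $X$ of relative Picard rank one; the exceptional divisor $E$ is irreducible by the assumption on $\phi_{\msw = d}$ via the description \eqref{eq:genwblexc}. The identity $K_Y = \varphi^* K_X + e E$ from Lemma \ref{lem:genwblexc} together with $e > 0$ and the $\varphi$-antiampleness of $E$ (intrinsic to any rank-one weighted blow-up) forces $-K_Y$ to be $\varphi$-ample, so $\varphi$ is a Mori divisorial contraction realizing $\nu_E$.

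For necessity and uniqueness, suppose $\psi \colon Z \to X$ is a divisorial contraction whose exceptional divisor $G$ satisfies $\nu_G = \nu_E$. I would aim to produce an isomorphism $Z \cong Y$ over $X$, which simultaneously transfers terminality of $Z$ to $Y$ and yields the uniqueness claim. Consider the induced birational map $\tau := \varphi^{-1} \circ \psi \colon Z \ratmap Y$ over $X$. The equality $\nu_G = \nu_E$ identifies the strict transform of $G$ on $Y$ with $E$, so $\tau$ contracts no divisor, and the symmetric assertion for $\tau^{-1}$ shows that $\tau$ is an isomorphism in codimension one. To promote $\tau$ to a biregular morphism I would take a common resolution $p \colon W \to Z$, $q \colon W \to Y$ with $\varphi \circ q = \psi \circ p$; the two discrepancy identities $K_Y = \varphi^* K_X + e E$ and $K_Z = \psi^* K_X + e G$, combined with the coincidence of the strict transforms of $E$ and $G$ on $W$, show that $q^* K_Y - p^* K_Z$ is supported on divisors that are simultaneously $p$- and $q$-exceptional. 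The negativity lemma, applied using the $\varphi$- and $\psi$-ampleness of $-K_Y$ and $-K_Z$ and the relative Picard-rank-one hypotheses, forces this difference to vanish and $\tau$ to extend to a morphism; by symmetry $\tau$ is an isomorphism.

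The technical crux is the final extension step, namely converting the codimension-one identification $\tau$ into a biregular isomorphism. The Picard-rank-one hypothesis on both contractions is doing the essential work there, as it precludes nontrivial flops between $Y$ and $Z$ over $X$ and makes the negativity-lemma argument conclusive.
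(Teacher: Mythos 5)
The paper's own ``proof'' of this lemma is a one-line citation to Kawakita's Lemma~3.4 in [Kaw01], so what you have written is in effect a proof of that cited result. Your main line --- necessity and uniqueness via a common resolution, the identity $q^*K_Y - p^*K_Z = e\,(q^*E - p^*G)$ with the strict transforms of $E$ and $G$ cancelling because $\nu_E=\nu_G$, and the negativity lemma applied in both directions using the relative anti-ampleness of $E$ and $G$ --- is the standard argument and is correct in substance; it transfers terminality from $Z$ to $Y$ and gives uniqueness at the same time.

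Two points need repair. First, in the sufficiency direction the claim that $Y$ ``inherits $\mbQ$-factoriality from $\tilde{Y}$'' is false: a hypersurface in a $\mbQ$-factorial variety need not be $\mbQ$-factorial (the proper transform can acquire an ordinary double point, which is terminal but not analytically $\mbQ$-factorial), and relative Picard rank one is likewise not automatic. In the germ category in which this section works (following Kawakita and Hayakawa), a divisorial contraction is a proper birational morphism with terminal total space, prime exceptional divisor, and relatively ample anticanonical class; these are exactly the three properties you do verify, so the $\mbQ$-factoriality and Picard-rank assertions should be deleted rather than leaned on. Second, the passage from $q^*E = p^*G$ to ``$\tau$ extends to a morphism'' is not a consequence of the negativity lemma together with any Picard-rank hypothesis. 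What closes the argument is uniqueness of the relative ample model: since $-E$ is $\varphi$-ample and $-G$ is $\psi$-ample, one has $Y \cong \Proj_X \boplus_m \varphi_*\mcO_Y(-mE)$ and $Z \cong \Proj_X \boplus_m \psi_*\mcO_Z(-mG)$, and the equality $q^*E = p^*G$ on the common resolution identifies the two graded algebras, hence $Y \cong Z$ over $X$. With these two corrections your argument is complete and recovers the content of the lemma the paper cites.
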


\begin{proof}
This follows from \cite[Lemma 3.4]{Kawakita01}.
\end{proof}

For $1 \le i \le n$, let $\varphi_i \colon Y_i \to X$ be the weighted blow-up whose exceptional divisor is $E_i$.
Then 
\[
\{\, \varphi_i \mid \text{$Y_i$ is terminal} \, \}
\]
is precisely the divisorial contractions of discrepancy $1$ over $\msp \in X$.
The following is useful when we show that $Y$ has a non-terminal singularity for a suitable weighted blow-up $\varphi \colon Y \to X$.

\begin{Lem} \label{lem:singwbl}
Let the notation and assumption as in Lemma \ref{lem:genwblexc}.
We identify the $\varphi$-exceptional divisor $E$ with the weighted hypersurface in $\mbP := \mbP (a_1, a_2, a_3, a_4)$ defined in \eqref{eq:genwblexc}.
Then the following assertions hold.
\begin{enumerate}
\item If $\phi_{\msw = d} \in (x_1, x_2, x_3)^3$, $\phi_{\msw = d + 1} \in (x_1, x_2, x_3)^2$, and $\phi_{\msw = d + 2} \in (x_1, x_2, x_3)$, then $Y$ has a singularity worse than terminal at the point $(0\!:\!0\!:\!0\!:\!1) \in \mbP$.
\item If $\phi_{\msw = d} \in (x_1, p)^2$ and $\phi_{\msw = d + 1} \in (x_1, p)$ for some irreducible quasi-homogeneous polynomial $p = p (x_2, x_3, x_4)$ with respect to $\wt (x_2, x_3, x_4) = (a_2, a_3, a_4)$, then $Y$ is singular along the curve $C := (x_1 = p = 0) \subset \mbP$, and in particular $Y$ has singularity worse than terminal.
\item If $\phi_{\msw = d} \in (x_1^2) + (x_2, x_3)^4$, $\phi_{\msw = d+1} \in (x_2, x_3)^3$, $\phi_{\msw = d+2} \in (x_2, x_3)^2$, and $\phi_{\msw = d+3} \in (x_2, x_3)$, then $Y$ has a singularity worse than terminal at the point $(0\!:\!0\!:\!0\!:\!1) \in \mbP$.
\end{enumerate} 
\end{Lem}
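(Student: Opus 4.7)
All three assertions are verified by inspecting the local equation of $Y$ in an affine chart of $\varphi$ containing the distinguished point or curve. Recall that the $i$-th chart of the ambient weighted blow-up is modelled as $U_i \cong \mbA^4_{y_1,\dots,\hat{y}_i,\dots,y_4,u}/\mbZ_{a_i}$, with chart map $x_j = u^{a_j} y_j$ for $j\ne i$ and $x_i = u^{a_i}$, so that $E = (u = 0)$ and the local equation of $Y$ on $U_i$ is
\[
\tilde{\phi}_i = \sum_{k\ge 0} u^k\,\phi_{\msw=d+k}(y_1,\dots,\underbrace{1}_{i\text{-th}},\dots,y_4);
\]
the point $(0\!:\!0\!:\!0\!:\!1)\in E$ is the image of the origin of $U_4$. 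For (1), the containments $\phi_{\msw=d+k}\in (x_1,x_2,x_3)^{3-k}$ for $k=0,1,2$ force every summand $u^k\phi_{\msw=d+k}(y_1,y_2,y_3,1)$ with $k\le 2$ to lie in $(y_1,y_2,y_3,u)^3$, and summands with $k\ge 3$ obviously do so. Hence $(\tilde{\phi}_4=0)\subset \mbA^4$ has multiplicity $\ge 3$ at the origin, and this bound forces non-terminality: every terminal $3$-fold hypersurface singularity is a $cDV$ of multiplicity exactly $2$, and the equivariant ordinary blow-up of the origin descends to a divisor over $Y\cap U_4$ of discrepancy $3-\mathrm{mult}\le 0$, so $Y$ cannot be terminal at $(0\!:\!0\!:\!0\!:\!1)$.

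For (2), pick a chart $U_i$ with $i\in\{2,3,4\}$ covering a dense open subset of $C$ and set $\tilde{p} := p(y_1,\dots,1,\dots,y_4)$. The lift of $C$ in this chart is $(u = y_1 = \tilde{p} = 0)$. The hypothesis $\phi_{\msw=d}\in (x_1,p)^2$ gives $\phi_{\msw=d}(\dots,1,\dots)\in (y_1,\tilde{p})^2$, so every first partial of this polynomial in the remaining $y_j$'s lies in $(y_1,\tilde{p})$ and hence vanishes on the lift of $C$. Similarly, $\phi_{\msw=d+1}\in(x_1,p)$ yields $\partial\tilde{\phi}_i/\partial u|_{u=0}=\phi_{\msw=d+1}(\dots,1,\dots)\in(y_1,\tilde{p})$, which vanishes on the lift of $C$. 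Thus the full Jacobian of $\tilde{\phi}_i$ vanishes identically along the lift of $C$, so $Y$ is singular along the positive-dimensional curve $C$ and a fortiori non-terminal.

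For (3), work again in $U_4$ and decompose $\phi_{\msw=d} = x_1^2 A + B$ with $B\in(x_2,x_3)^4$. Combining this with the containments $\phi_{\msw=d+k}\in(x_2,x_3)^{4-k}$ for $k=1,2,3$, a term-by-term check as in (1) gives
\[
\tilde{\phi}_4 = y_1^2\,A(y_1,y_2,y_3,1) + h(y_1,y_2,y_3,u), \qquad h\in(y_2,y_3,u)^4.
\]
Consider the $(2,1,1,1)$-weighted blow-up of $\mbA^4_{y_1,y_2,y_3,u}$: one checks $\msw(\tilde{\phi}_4)\ge 4$ with these weights, so by adjunction the exceptional divisor on the strict transform of $(\tilde{\phi}_4=0)$ has discrepancy $(2+1+1+1)-1-4 = 0$, witnessing non-terminality of the cover. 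This then descends to a divisor of discrepancy $\le 0$ over $Y$ centered at $(0\!:\!0\!:\!0\!:\!1)$, so $Y$ is not terminal there.

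The principal technical point common to all three parts is justifying the descent of non-terminality from the $\mbA^4$-cover to the $\mbZ_{a_i}$-quotient $Y\cap U_i$. This requires a discrepancy-preservation statement, which holds because the diagonal cyclic action arising from a terminal Gorenstein point $\msp\in X$ is étale in codimension one on the covering hypersurface; in particular, in (3) one may need to replace the $(2,1,1,1)$-weighted blow-up by an equivariant variant with weights $(w_1,1,1,1)$ congruent mod $a_4$ to the action weights, but the discrepancy estimate is unaffected. Once this descent is in place, the arguments above are purely local and computational.
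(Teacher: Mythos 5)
Your proposal is correct and, for parts (1) and (2), follows essentially the same route as the paper: pass to the $x_4$-chart (resp.\ a chart meeting $C$), write the local equation as $\sum_{k\ge 0} x_4^k\,\phi_{\msw=d+k}(x_1,x_2,x_3,1)$, and conclude from multiplicity $\ge 3$ at the origin, resp.\ from membership of the whole equation in the square of the ideal of the lifted curve. The one genuine divergence is in part (3): the paper completes the square to put the chart equation in the normal form $x_1^2+g(x_2,x_3,x_4)$ with $g\in (x_2,x_3,x_4)^4$ and then invokes the Du Val criterion for a general hyperplane section (a terminal Gorenstein point must be an isolated $cDV$ point), whereas you exhibit non-terminality directly via the $(2,1,1,1)$-weighted blow-up and the estimate $a_E \le \sum w_i - 1 - \msw(\tilde{\phi}_4)\le 0$. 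Both are standard; your version is more uniform with part (1) (which is the $(1,1,1,1)$ case) but leans on the weighted-multiplicity-bounds-discrepancy fact, which needs a word when $\mbE\cap\tilde V$ is non-reduced or $\tilde V$ is non-normal there (passing to the normalization only decreases discrepancies, so the bound survives). Finally, you are more explicit than the paper about the descent from the $\mbZ_{a_4}$-cover $(\tilde{\phi}_4=0)$ to $Y$ in the chart: the paper silently uses that terminality of $Y$ at $\msq$ forces terminality (hence isolatedness and multiplicity $2$) of the cover, which is exactly the \'etale-in-codimension-one discrepancy comparison you flag; one small imprecision is that the divisor obtained downstairs does not have discrepancy literally equal to $3-\mathrm{mult}$, but only $a_F(Y)=(a_{F'}(V)+1)/r-1\le 0$, which is all that is needed.
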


\begin{proof}
We consider the $x_4$-chart $Y_{x_4}$ of $Y$:
\[
Y_{x_4} \cong (\phi_{x_4} (x_1, x_2, x_3, x_4) = 0)/\mbZ_{a_4} (a_1, a_2, a_3, -1).
\]
where
\[
\phi_{x_4} (x_1, x_2, x_3, x_4) := \phi (x_1 x_4^{a_1}, x_2 x_4^{a_2}, x_3 x_4^{a_3}, x_4^{a_4})/x_4^d.
\]
The exceptional divisor $E$ is defined by $x_4 = 0$ on this chart.
The point $\msq$ corresponds to the origin and the set $C|_{Y_{x_4}}$ corresponds to the line $(x_1 = p_{x_4} = 0)$, where $p_{x_4} = p (x_2 x_4^{a_2}, x_3 x_4^{a_3}, x_4^{a_4})/x_4^{\msw (p)}$. 
Set $f_k (x_1, x_2, x_3) := \phi_{\msw = k} (x_1, x_2, x_3, 1) \in \mbC [x_1, x_2, x_3]$.
Then
\[
\phi_{x_4} = \sum_{i \ge 0} f_{d + i} (x_1, x_2, x_3) x_4^i.
\]

The assumption (1) implies that $f_{d+i} (x_1, x_2, x_3) x_4^i \in (x_1, x_2, x_3, x_4)^3$ for any $i \ge 0$, and thus $\msq := (0\!:\!0\!:\!0\!:\!1) \in Y$ is the quotient of a hypersurface singularity of multiplicity $\ge 3$.
Thus $\msq \in Y$ is not a terminal singularity.
Similarly, the assumption (2) implies that $f_{d+i} (x_1, x_2, x_3) x_4^i \in (x_1, p_{x_4}, x_4)^2$ for any $i \ge 0$, and thus $Y$ is singular along the curve $C \subset \mbP$.
Thus $Y$ has a non-terminal singularity since a $3$-dimensional terminal singularity is isolated.
Suppose that we are in case (3).
If $Y$ has a termial singularity at $\msq$, then so is the hypersurface singularity $(\phi_{x_4} = 0) \subset \mbC^4$.
In particular, $\ord (\phi_{x_4}) = 2$.
This implies that $x_1^2 \in \phi_{x_4}$ and the singularity $(\phi_{x_4} = 0)$ is equivalent to
\[
(x_1^2 + g (x_2, x_3, x_4) = 0) \subset \mbC^4,
\]
for some $g (x_2, x_3, x_4) \in (x_2, x_3, x_4)^4 \mbC \{x_2, x_3, x_4\}$.
It is easy to see that a general hypersurface germ cannot be a Du Val singularity, and thus $\msq \in Y$ is not a terminal singularity.
This completes the proof.
\end{proof}

%%%%%%%%%%%%%%%%%%%%%%%%%%%%%%%%%%
\subsection{Equivalence of germs}
%%%%%%%%%%%%%%%%%%%%%%%%%%%%%%%%%%

In this subsection, for a power series $f \in \mbC \{x, y, z, u\}$ and an integer $d$, we denote by $f_d$ the degree $d$ homogeneous part of $f$.

\begin{Lem} \label{lem:crdchginv}
Let $X = (\phi (x, y, z, u) = 0) \subset \mbC^4$ and $\tilde{X} = (\tilde{\phi} (x, y, z, u) = 0) \subset \mbC^4$ be $cE$ singularities, where
\[
\begin{split}
\phi &= x^2 + y^3 + y g (z, u) + h (z, u), \\
\tilde{\phi} &= x^2 + y^3 + y \tilde{g} (z, u) + \tilde{h} (z, u).
\end{split}
\]
Assume that $X$ and $\tilde{X}$ are equivalent.
Then the following hold.
\begin{enumerate}
\item There exists a linear transformation $\theta$ of $z, u$ and a nonzero $\xi \in \mbC$ such that 
\[
\tilde{h}_4 (z, u) = \xi h_4 (\theta (z), \theta (u)).
\]
\item If $\msp \in X$ is of type $cE_7$ or $cE_8$, then there exists a linear transformation $\theta$ of $z, u$, a nonzero $\xi \in \mbC$, and a homogeneous polynomial $b_2 (z, u)$ of degree $2$ such that
\[
\begin{split}
\tilde{g}_3 (z, u) &= \xi g_3 (\theta (z), \theta (u)), \\
\tilde{h}_5 (z, u) &= \xi (h_5 (\theta (z), \theta (u)) + b_2 (z, u) g_3 (\theta (z), \theta (u))).
\end{split}
\]
\end{enumerate}
\end{Lem}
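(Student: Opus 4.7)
The plan is to analyze the equivalence $\phi \circ \sigma = u(x,y,z,u) \cdot \tilde\phi$ order by order, writing $\sigma_x = A$, $\sigma_y = B$, $\sigma_z = C$, $\sigma_u = D$ with homogeneous decompositions $A = A_1 + A_2 + \cdots$, etc. Comparing coefficients at low total degrees in $(x,y,z,u)$ first pins down the linear parts of $\sigma$. The degree $2$ equation $A_1^2 = u_0 x^2$ gives $A_1 = \alpha x$ with $\alpha^2 = u_0 \ne 0$. The degree $3$ equation $2\alpha x A_2 + B_1^3 = \alpha^2 y^3 + u_1 x^2$, reduced modulo $x$, yields $B_1^3|_{x=0} = \alpha^2 y^3$; unique factorization in $\mbC[y,z,u]$ forces $B_1 = \beta y + \gamma x$ with $\beta^3 = \alpha^2$. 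Solving for $A_2$ shows it contains no pure $(z, u)$-monomials, so $A|_{x=y=0} \in (z,u)^3$, while $B|_{x=y=0} \in (z,u)^2$ holds automatically since $B_1$ has no $z, u$. Invertibility of the Jacobian of $\sigma$ forces the linear map $(z, u) \mapsto (C_1|_{x=y=0}, D_1|_{x=y=0})$ to be an isomorphism $\theta$ of the $(z, u)$-plane.

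For (1), the idea is to restrict the equivalence to the slice $x = y = 0$ and read off the lowest-order $(z, u)$-terms. The order bounds above show that $A|_{x=y=0}^2$ and $B|_{x=y=0}^3$ contribute only from order $6$, and $B|_{x=y=0}\cdot g(C, D)|_{x=y=0}$ only from order $5$. Hence the order-$4$ part of $\phi \circ \sigma|_{x=y=0}$ equals $h_4(\theta(z), \theta(u))$, while the order-$4$ part of the right side is $\alpha^2 \tilde h_4(z, u)$. Setting $\xi = \alpha^{-2}$ proves (1).

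For (2), assume $h_4 = \tilde h_4 = 0$. Two further equations will be extracted. The order-$5$ coefficient of $\phi\circ\sigma|_{x=y=0} = u|_{x=y=0}\tilde h(z,u)$ yields
\[
\mathsf b_2 \, g_3(\theta(z), \theta(u)) + h_5(\theta(z), \theta(u)) = \alpha^2 \tilde h_5(z, u),
\]
where $\mathsf b_2$ is the order-$2$ part in $(z,u)$ of $B|_{x=y=0}$. Next I would expand $\phi\circ\sigma|_{x=0}$ as a power series in $y$ with coefficients in $\mbC\{z, u\}$ and extract the coefficient of $y^1$ at order $3$ in $(z,u)$. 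Careful order counting --- using $h_4 = 0$ so that $h_z, h_u \in (z,u)^4$, together with the vanishing orders of $A$, $B$ obtained above --- shows the only contribution is $\beta \, g_3(\theta(z), \theta(u)) = \alpha^2 \tilde g_3(z, u)$. These give $\tilde g_3 = (\beta/\alpha^2)\, g_3(\theta, \theta)$ and $\tilde h_5 = \alpha^{-2}(h_5(\theta, \theta) + \mathsf b_2 g_3(\theta, \theta))$, with different scalar factors a priori. A final rescaling $\theta \leadsto \lambda\theta$ with $\lambda^2 = \beta^{-1}$, and the corresponding replacement $b_2 := \lambda^2 \mathsf b_2$, equalizes the two scalars to the common value $\xi = \beta^{5/2}/\alpha^2$ claimed in the statement. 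The main technical obstacle is precisely the bookkeeping of which terms in the substituted series contribute at each order; in particular the hypothesis $h_4 = 0$ is crucial in (2), since otherwise stray contributions of the form $h_{4,z}\,\partial_y C + h_{4,u}\,\partial_y D$ would pollute the formula for $\tilde g_3$.
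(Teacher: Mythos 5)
Your proposal is correct and follows essentially the same route as the paper: decompose the automorphism into homogeneous pieces, pin down the linear and quadratic parts of $\sigma(x)$ and $\sigma(y)$ from the degree $2$ and $3$ comparisons, and then read off (1) from the degree $4$ part at $x=y=0$ and (2) from the degree $4$ part at $x=0$ modulo $y^2$ together with the degree $5$ part at $x=y=0$. Your extra step of rescaling $\theta$ by $\lambda$ with $\lambda^2=\beta^{-1}$ to force a common scalar $\xi$ in (2) is a welcome refinement of a point the paper's proof passes over (its equation for $\tilde{g}_3$ silently drops the factor $\beta$).
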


\begin{proof}
Let $\sigma \colon \mbC \{x, y, z, u\} \to \mbC \{x, y, z, u\}$ be an automorphism which induces an equivalence of $X$ and $\tilde{X}$.
We write
\[
\sigma (x) = \sum_{i \ge 1} a_i, \quad
\sigma (y) = \sum_{i \ge 1} b_i, \quad 
\sigma (z) = \sum_{i \ge 1} c_i, \quad
\sigma (u) = \sum_{\i \ge 1} d_i,
\]
where $a_i, b_i, c_i, d_i \in \mbC [x, y, z, u]$ are homogeneous polynomials of degree $i$.
Then there exists $e = e_0 + e_1 + \cdots \in \mbC \{x, y, z, u\}$, where  $e_j \in \mbC [x, y, z, u]$ is homogeneous of degree $j$, such that $e_0 \ne 0$ and $\sigma (\phi) = e \tilde{\phi}$.
Hence we have
\begin{equation} \label{eq:crdchginv-1}
\begin{split}
\sigma (x)^2 + \sigma (y)^3 & + \sigma (y) g (\sigma (z), \sigma (u)) + h (\sigma (z), \sigma (u)) \\ 
&= (e_0 + e_1 + \cdots) (x^2 + y^3 + y \tilde{g} (z, u) + \tilde{h} (z, u))
\end{split}
\end{equation}
Comparing the degree $2$ terms and then degree $3$ terms in \eqref{eq:crdchginv-1}, we see that
\begin{equation} \label{eq:crdchginv-2}
a_1 = \alpha x, \quad
b_1 = \beta y + \eta x,
\end{equation}
for some $\alpha, \beta, \eta \in \mbC$ with $\alpha \ne 0$ and $\beta \ne 0$.
Moreover we have
\begin{equation} \label{eq:crdchginv-3}
a_2 \in (x, y^2).
\end{equation}

Comparing the degree $4$ terms in \eqref{eq:crdchginv-1}, we have
\begin{equation} \label{eq:crdchginv-4}
\begin{split}
2 a_1 a_3 + a_2^2 & + 3 b_1^2 b_2 + b_1 g_3 (c_1, d_1) + h_4 (c_1, d_1) \\
&= e_0 (y \tilde{g}_3 (z, u) + \tilde{h}_4 (z, u)) + e_1 y^3 + e_2 x^2.
\end{split}
\end{equation}
We set $\bar{c}_1 = c_1 (0, 0, z, u)$ and $\bar{d}_1 = d_1 (0, 0, z, u)$.
Then, we see that $z \mapsto \bar{c}_1, u \mapsto \bar{d}_1$ gives a linear transformation which we denote by $\theta$.
By setting $x = y = 0$ in \eqref{eq:crdchginv-4}, we obtain 
\begin{equation} \label{eq:crdchginv-5}
h_4 (\theta (z), \theta (u)) = h_4 (\bar{c}_1, \bar{d}_1) = e_0 \tilde{h}_4 (z, u),
\end{equation}
where $\bar{c}_1 = c_1 (0, 0, z, u)$ and $\bar{d}_1 = d_1 (0, 0, z, u)$.
This is (1).

In the following, suppose that $X$ is of type $cE_7$ or $cE_8$.
Then $h_4 (z, u) = 0$.
We have $\tilde{h}_4 (z, u) = 0$ by (1).
By setting $x = 0$ in \eqref{eq:crdchginv-4} and then by comparing the terms not divisible by $y^2$, we have 
\begin{equation} \label{eq:crdchginv-6}
g_3 (\bar{c}_1, \bar{d}_1) = e_0 \tilde{g}_3 (z, u).
\end{equation}
By comparing the degree $5$ terms in \eqref{eq:crdchginv-1} and then by setting $x = y = 0$, we obtain
\begin{equation} \label{eq:crdchginv-7}
b_2 (0, 0, z, u) g_3 (\bar{c}_1, \bar{d}_1) + h_5 (\bar{c}_1, \bar{d}_1) = e_0 \tilde{h}_5 (z, u).
\end{equation}
The assertion (2) follows from \eqref{eq:crdchginv-6} and \eqref{eq:crdchginv-7}.
\end{proof}

The following technical results will be used in Section \ref{sec:divcontcEgt1}.

\begin{Lem} \label{lem:eqgrmsp7}
Let $X = (\phi (x, y, z, u) = 0) \subset \mbC^4$ and $\tilde{X} = (\tilde{\phi} (x, y, z, u) = 0) \subset \mbC^4$ be $cE_7$ singularities, where
\[
\begin{split}
\phi &= x^2 + y^3 + \lambda y^2 u^2 + y g (z, u) + h (z, u), \\
\tilde{\phi} &= x^2 + y^3 + \tilde{\lambda} y^2 u^2 + y \tilde{g} (z, u) + \tilde{h} (z, u),
\end{split}
\]
for some $\lambda, \tilde{\lambda} \in \mbC$ and $g, h, \tilde{g}, \tilde{h} \in \mbC \{z, u\}$ of order at least $3, 5, 3, 5$, respectively.
Assume that
\[
g_3 (z, u) = \varepsilon z^3, \quad
\tilde{g}_3 (z, u) = \tilde{\varepsilon} z^3, \quad
z^4 \mid h_5 (z, u), \quad
z^4 \mid \tilde{h}_5 (z, u),
\] 
for some $\varepsilon, \tilde{\varepsilon} \in \mbC \setminus \{0\}$.
Let $\chi \colon \mbC \{x, y, z, u\} \to \mbC \{x, y, z, u\}$ be an automorphism which induces an equivalence of $X$ and $\tilde{X}$, and write
\[
\chi (x) = \sum_{i \ge 1} a_i, \quad
\chi (y) = \sum_{i \ge 1} b_i, \quad 
\chi (z) = \sum_{i \ge 1} c_i, \quad
\chi (u) = \sum_{i \ge 1} d_i,
\]
where $a_i, b_i, c_i, d_i \in \mbC [z, u]$ are homogeneous polynomials of degree $i$.
Then we have
\[
a_1 = \alpha x, \quad
b_1 = \beta y + \eta x, \quad
c_1 = \gamma z + \ell_c (x, y), \quad
d_1 = \delta u + \ell_d (x, y, z),
\]
for some $\alpha, \beta, \gamma, \delta, \eta \in \mbC$ with $\alpha \beta \gamma \delta \ne 0$ and for some linear forms $\ell_c (x, y)$, $\ell_d (x, y, z)$, and moreover we have
\[
a_2 \in (x, y^2), \quad
a_3 \in (x, y, z^3), \quad
b_2 \in (x, y, z).
\]
\end{Lem}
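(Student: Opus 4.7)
The plan is to expand the defining equivalence $\chi(\phi) = e \cdot \tilde{\phi}$, where $e = e_0 + e_1 + e_2 + \cdots$ is a unit with $e_0 \neq 0$, and to compare homogeneous components in increasing total degree. The assertions on $a_1$, $b_1$, and $a_2$ are already contained in the proof of Lemma~\ref{lem:crdchginv}: the degree~$2$ equation forces $a_1 = \alpha x$ with $\alpha^2 = e_0$; the degree~$3$ equation combined with unique factorization of $b_1^3$ gives $b_1 = \beta y + \eta x$ with $\beta^3 = e_0$; and comparing monomials of type $x \cdot m$ for $m \in \{z^2, zu, u^2, yz, yu\}$ forces $a_2 \in (x, y^2)$.

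Turning to degree~$4$, the equation reads
\[
2\alpha x a_3 + a_2^2 + 3 b_1^2 b_2 + \lambda b_1^2 d_1^2 + \varepsilon b_1 c_1^3 = e_2 x^2 + e_1 y^3 + e_0 \tilde{\lambda} y^2 u^2 + e_0 \tilde{\varepsilon} y z^3.
\]
Writing $c_1 = c_x x + c_y y + c_z z + c_u u$ and similarly for $d_1$, the coefficient of $yu^3$ (contributed only by $\varepsilon b_1 c_1^3$ through the $u^3$-component of $c_1^3$) gives $\varepsilon \beta c_u^3 = 0$, hence $c_u = 0$; the coefficient of $yz^3$ gives $\varepsilon \beta c_z^3 = e_0 \tilde{\varepsilon} \neq 0$, whence $c_z = \gamma \neq 0$ and $c_1 = \gamma z + \ell_c(x, y)$. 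Invertibility of the differential of $\chi$ at the origin (a lower-triangular matrix with diagonal $\alpha, \beta, \gamma, d_u$) then forces $d_u \neq 0$, so $d_1 = \delta u + \ell_d(x, y, z)$ with $\delta := d_u \neq 0$. For $a_3 \in (x, y, z^3)$, compare the coefficients of $xz^2u$, $xzu^2$, and $xu^3$: each vanishes on the right, and on the left $a_2^2$, $3 b_1^2 b_2$, and $\lambda b_1^2 d_1^2$ all carry an $x^2$, $xy$, or $y^2$ factor while $\varepsilon b_1 c_1^3 \in \mbC\{x, y, z\}$ contains no $u$; so only $2\alpha x a_3$ contributes, forcing the $z^2u$-, $zu^2$-, and $u^3$-coefficients of $a_3$ to vanish.

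The remaining claim $b_2 \in (x, y, z)$---equivalently, the coefficient $\mu$ of $u^2$ in $b_2$ vanishes---is extracted from the degree~$5$ equation by isolating the coefficient of the single monomial $u^2 z^3$. The right-hand side at degree~$5$ is $e_0(y \tilde{g}_4 + \tilde{h}_5) + e_1 (\tilde{\lambda} y^2 u^2 + \tilde{\varepsilon} y z^3) + e_2 y^3 + e_3 x^2$; every summand carries an $x$ or $y$ factor except $e_0 \tilde{h}_5(z, u)$, whose $u^2 z^3$-coefficient vanishes precisely because $z^4 \mid \tilde{h}_5$. In the degree~$5$ expansion of $\chi(\phi)$, every summand except $\varepsilon b_2 c_1^3$ and $h_5(c_1, d_1)$ carries an $x$ or $y$ factor through $a_1$, $a_2$, $a_3$, or $b_1$; and $h_5(c_1, d_1) = h_{50} c_1^5 + h_{41} c_1^4 d_1$ produces no $u^2 z^3$ because $c_1^5 \in \mbC\{x, y, z\}$ has no $u$ and $c_1^4 d_1$ has $u$-degree at most $1$. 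Within $\varepsilon b_2 c_1^3$, writing $b_2 = \mu u^2 + \hat{b}$ with $\hat{b} \in (x, y, z)$ and $c_1^3 = \gamma^3 z^3 + (\text{terms containing } x \text{ or } y)$, the only contribution to $u^2 z^3$ is $\varepsilon \gamma^3 \mu$; with $\varepsilon, \gamma \neq 0$ this yields $\mu = 0$.

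The main obstacle is this last step: pinpointing the single monomial $u^2 z^3$ whose vanishing on the right-hand side is secured precisely by the hypothesis $z^4 \mid \tilde{h}_5$, and verifying by an exhaustive but finite case analysis that no other degree~$5$ summand of $\chi(\phi)$ can produce a $u^2 z^3$ contribution. Once this is in place, the argument isolates $\mu$ as the sole unknown and finishes the proof.
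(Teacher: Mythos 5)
Your proposal is correct and follows essentially the same route as the paper: expand $\chi(\phi)=e\tilde{\phi}$ and compare the degree $2,3,4,5$ parts, using the $yu^3$-coefficient in degree $4$ to kill the $u$-term of $c_1$, the $(x^2,y)$-free part of degree $4$ to get $a_3\in(x,y,z^3)$, and the hypothesis $z^4\mid h_5,\tilde{h}_5$ in degree $5$ to force $b_2\in(x,y,z)$. Your monomial-by-monomial bookkeeping (e.g.\ isolating $u^2z^3$ in degree $5$) is just a coefficientwise rephrasing of the paper's ``set $x=y=0$ and compare $z$-divisibility'' step, and your derivation of $\gamma\ne 0$ from the $yz^3$-coefficient is an equally valid alternative to the paper's appeal to invertibility of $\chi$.
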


\begin{proof}
There exists $e = e_0 + e_1 + \cdots \in \mbC \{x, y, z, u\}$, where $e_j \in \mbC [x, y, z, u]$ is homogeneous of degree $j$, such that $e_0 \ne 0$ and $\sigma (\phi) = e \tilde{\phi}$.
Explicitly, we have
\begin{equation} \label{eq:eqgrmsp-1}
\begin{split}
\chi (x)^2 + \chi (y)^3 & + \lambda \chi (y)^2 \chi (u)^2 + \chi (y) g (\chi (z), \chi (u)) + h (\chi (z), \chi (u)) \\
&= (e_0 + e_1 + \cdots)(x^2 + y^3 + \tilde{\lambda} y^2 u^2 + y \tilde{g} (z, u) + \tilde{h} (z, u)).
\end{split}
\end{equation}
By comparing degree $2$ and degree $3$ terms in \eqref{eq:eqgrmsp-1}, we conclude that $a_1 = \alpha x$, $b_1 = \beta y + \eta x$ for some $\alpha, \beta, \eta \in \mbC$ with $\alpha \beta \ne 0$, and $a_2 \in (x, y^2)$.
By comparing the degree $4$ terms in \eqref{eq:eqgrmsp-1}, we have
\begin{equation} \label{eq:eqgrmsp-2}
2 a_1 a_3 + a_2^2 + 3 b_1^2 b_2 + \lambda b_1^2 d_1^2 + \varepsilon b_1 c_1^3 = e_0 (\tilde{\lambda} y^2 u^2 + \tilde{\varepsilon} y z^3) + e_1 y^3 + e_2 x^2.
\end{equation}
It is easy to see that $u \notin c_1$ since the monomial $y u^3$ can only appear in $\varepsilon b_1 c_1^3$ in the equation \eqref{eq:eqgrmsp-2}, and thus we can write $c_1 = \gamma z + \ell_c (x, y)$ for some $\gamma \in \mbC$ and some linear form $\ell_c (x, y)$.
We write $d_1 = \delta u + \ell_d (x, y, z)$ for some $\delta \in \mbC$ and some linear form $\ell_d (x, y, z)$.
Then we have $\gamma \delta \ne 0$ since $\chi$ is an automorphism.
By removing terms contained in the ideal $(x^2, y)$ from \eqref{eq:eqgrmsp-2}, we have
\begin{equation} \label{eq:eqgrmsp-3}
2 \alpha x a_3 (0, 0, z, u) + \varepsilon \eta \gamma^3 x z^3 = 0,
\end{equation}
which shows that $a_3 \in (x, y, z^3)$.
By comparing the degree $5$ terms in \eqref{eq:eqgrmsp-1} and by setting $x = y = 0$, we have
\[
\varepsilon \gamma^3 b_2 (0, 0, z, u) z^3 + h_5 (\gamma z, d_1 (0, 0, z, u)) = e_0 \tilde{h}_5 (z, u).
\]
By assumption, both $h_5 (\gamma z, d_1 (0, 0, z, u))$ and $\tilde{h}_5 (z, u)$ are divisible by $z^4$.
Thus $b_2 \in (x, y, z)$ and the proof is completed.
\end{proof}

\begin{Lem} \label{lem:eqgrmsp8}
Let $X = (\phi (x, y, z, u) = 0) \subset \mbC^4$ and $\tilde{X} = (\tilde{\phi} (x, y, z, u) = 0) \subset \mbC^4$ be $cE_7$ singularities, where
\[
\begin{split}
\phi &= x^2 + y^3 + \lambda y^2 u^2 + y g (z, u) + h (z, u), \\
\tilde{\phi} &= x^2 + y^3 + \tilde{\lambda} y^2 u^2 + y \tilde{g} (z, u) + \tilde{h} (z, u),
\end{split}
\]
for some $\lambda, \tilde{\lambda} \in \mbC$ and $g, h, \tilde{g}, \tilde{h} \in \mbC \{z, u\}$ of order at least $4, 5, 4, 5$, respectively.
Assume that
\[
\begin{split}
z & \mid g_4 (z, u), \quad
z^4 \mid h_5 (z, u), \quad
z^2 \mid h_6 (z, u), \\
z & \mid \tilde{g}_4 (z, u), \quad
z^4 \mid \tilde{h}_5 (z, u), \quad
z^2 \mid \tilde{h}_6 (z, u).
\end{split}
\] 
Let $\chi \colon \mbC \{x, y, z, u\} \to \mbC \{x, y, z, u\}$ be an automorphism which induces an equivalence of $X$ and $\tilde{X}$, and write
\[
\chi (x) = \sum_{i \ge 1} a_i, \quad
\chi (y) = \sum_{i \ge 1} b_i, \quad 
\chi (z) = \sum_{i \ge 1} c_i, \quad
\chi (u) = \sum_{\i \ge 1} d_i,
\]
where $a_i, b_i, c_i, d_i \in \mbC [z, u]$ are homogeneous polynomials of degree $i$.
Then we have
\[
a_1 = \alpha x, \quad
b_1 = \beta y + \eta x, \quad
c_1 = \gamma z + \ell_c (x, y), \quad
d_1 = \delta u + \ell_d (x, y, z),
\]
for some $\alpha, \beta, \gamma, \delta, \eta \in \mbC$ with $\alpha \beta \gamma \delta \ne 0$ and for some linear forms $\ell_c (x, y)$, $\ell_d (x, y, z)$, and moreover we have
\[
a_2 \in (x, y^2), \quad
a_3 \in (x, y), \quad
b_2 \in (x, y, z).
\]
\end{Lem}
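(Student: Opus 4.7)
The plan is to adapt the degree-by-degree coefficient comparison used in Lemma~\ref{lem:eqgrmsp7}, with the modifications reflecting the stronger vanishing $g_3 = 0$ and the additional divisibilities $z \mid g_4$, $z^2 \mid h_6$. Writing $\chi(\phi) = e \tilde\phi$ with $e = e_0 + e_1 + \cdots$ and $e_0 \ne 0$, I would expand both sides in homogeneous components of total degree in $x, y, z, u$ and match them one degree at a time.

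The degree $2$ and degree $3$ steps use only the common leading terms $x^2 + y^3$ of $\phi$ and $\tilde\phi$ and therefore go through verbatim as in Lemma~\ref{lem:eqgrmsp7}, giving $a_1 = \alpha x$, $b_1 = \beta y + \eta x$, $a_2 \in (x, y^2)$, and $e_0 = \alpha^2 = \beta^3 \ne 0$. To pin down $c_1$ and $d_1$, I would restrict the degree $5$ comparison to $x = y = 0$: all contributions involving $a_1, a_2, b_1, b_2$ vanish, and the contribution from $b_1 g_4(c_1, d_1)$ also vanishes because $b_1 \in (x, y)$, leaving $h_5(\bar c_1, \bar d_1) = e_0 \tilde h_5(z, u)$, where $\bar c_1 = c_1(0,0,z,u)$ and $\bar d_1 = d_1(0,0,z,u)$. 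Writing $h_5(z, u) = z^4(A_1 z + A_2 u)$ and $\tilde h_5(z, u) = z^4(\tilde A_1 z + \tilde A_2 u)$ via $z^4 \mid h_5$ and $z^4 \mid \tilde h_5$, the equation $\bar c_1^{\,4}(A_1 \bar c_1 + A_2 \bar d_1) = e_0 z^4(\tilde A_1 z + \tilde A_2 u)$, combined with the linear independence of $\bar c_1$ and $\bar d_1$ (from invertibility of the Jacobian of $\chi$), forces $\bar c_1$ to be proportional to $z$. Hence $c_1 = \gamma z + \ell_c(x, y)$ and $d_1 = \delta u + \ell_d(x, y, z)$ with $\alpha\beta\gamma\delta \ne 0$.

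Once $c_1$ and $d_1$ are normalized, the degree $4$ equation reads
\[
2 a_1 a_3 + a_2^2 + 3 b_1^2 b_2 + \lambda b_1^2 d_1^2 = e_0 \tilde\lambda\, y^2 u^2 + e_1 y^3 + e_2 x^2.
\]
The crucial difference from Lemma~\ref{lem:eqgrmsp7} is that the term $\varepsilon b_1 c_1^3$ coming from $b_1 g_3(c_1, d_1)$ is absent, since $g_3 = 0$. Reducing modulo $(x, y)^2$, all terms except $2 a_1 a_3 = 2 \alpha x\, a_3$ lie in $(x, y)^2$, so $2 \alpha x\, a_3 \equiv 0 \pmod{(x, y)^2}$, which forces the stronger conclusion $a_3 \in (x, y)$, as opposed to the weaker $a_3 \in (x, y, z^3)$ of Lemma~\ref{lem:eqgrmsp7}.

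The main obstacle is the assertion $b_2 \in (x, y, z)$, which amounts to showing that the coefficient of $u^2$ in $b_2$ vanishes. I expect to extract this by combining several monomial comparisons: the $y^2 u^2$-coefficient in degree $4$ produces a linear relation of the form $3\beta^2 c_{u^2} + \lambda\beta^2\delta^2 = \beta^3\tilde\lambda$; the $y u^4$-coefficient in degree $5$, where the hypothesis $z \mid g_4$ kills any contribution from $b_1 g_4(c_1, d_1)$ and $z^4 \mid h_5$ kills the $h_5(c_1, d_1)$ contribution, yields a quadratic relation of the form $\beta c_{u^2}(3 c_{u^2} + 2\lambda\delta^2) = 0$; and a degree $6$ comparison, using $z^2 \mid h_6$ (and similarly $z \mid \tilde g_4$, $z^2 \mid \tilde h_6$) to control the new higher-order terms, rules out the remaining spurious branch so that $c_{u^2} = 0$. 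This delicate monomial chasing, analogous in spirit to the $\varepsilon b_1 c_1^3$ analysis in Lemma~\ref{lem:eqgrmsp7} but necessarily carried out across three consecutive degrees because the discriminating term has been pushed upward by the hypothesis $g_3 = 0$, is where the technical weight of the proof lies.
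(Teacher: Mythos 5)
Your proposal is correct and, for most of the lemma, coincides with the paper's proof: the degree-$2$ and degree-$3$ comparisons giving $a_1$, $b_1$, $a_2$ and $e_0=\alpha^2=\beta^3$, the degree-$4$ comparison modulo a suitable ideal giving $a_3\in(x,y)$ (the paper reduces modulo $(x^2,y)$ rather than $(x,y)^2$, an immaterial difference), and the degree-$5$ comparison at $x=y=0$ giving $h_5(\bar c_1,\bar d_1)=e_0\tilde h_5$ and hence $u\notin c_1$ are exactly the paper's steps. Where you genuinely diverge is the proof that $b_2\in(x,y,z)$. The paper does this with a single degree-$6$ comparison at $x=y=z=0$, asserting that the only surviving term is $b_2(0,0,0,u)^3$ coming from $\chi(y)^3$; you instead combine the $yu^4$-coefficient of the degree-$5$ identity, which gives $\beta c\,(3c+2\lambda\delta^2)=0$ for $c:=\coeff_{b_2}(u^2)$, with the $u^6$-coefficient of the degree-$6$ identity. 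Your longer route is in fact the safer one: at degree $6$ the term $\lambda\chi(y)^2\chi(u)^2$ contributes $\lambda b_2^2d_1^2$, which at $x=y=z=0$ equals $\lambda\delta^2c^2u^6$ and is not killed by any of the stated divisibility hypotheses, so the degree-$6$ comparison actually yields $c^2(c+\lambda\delta^2)=0$ rather than $c^3=0$; the spurious branch $c=-\lambda\delta^2$ is then eliminated precisely by your degree-$5$ relation, since the two together force $\lambda\delta^2c=0$ and hence $c=0$. The degree-$4$ relation you record (the $y^2u^2$-coefficient) is not needed for the conclusion. In short: same framework throughout, and your multi-degree treatment of the last step is a legitimate --- indeed slightly more careful --- variant of the paper's single-degree argument.
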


\begin{proof}
As in the proof of Lemma \ref{lem:eqgrmsp7}, there is an $e = e_0 + e_1 + \cdots \in \mbC \{x, y, z, u\}$ with $e_0 \ne 0$ such that the equation \eqref{eq:eqgrmsp-1} holds.
By comparing the degree $2$ and $3$ terms, we conclude that $a_1 = \alpha x$, $b_1 = \beta y + \eta x$ for some $\alpha, \beta, \eta \in \mbC$ with $\alpha \beta \ne 0$, and $a_2 \in (x, y^2)$.
By removing terms contained in the ideal $(x^2, y)$ from the degree $4$ part of \eqref{eq:eqgrmsp-1}, we have
\[
2 \alpha x a_3 (0, 0, z, u) = 0,
\]
which is obtained by setting $\varepsilon = 0$ in \eqref{eq:eqgrmsp-3}.
This shows $a_3 \in (x, y)$.
By comparing the degree $5$ terms in \eqref{eq:eqgrmsp-1} and by setting $x = y = 0$, we have
\[
h_5 (\bar{c}_1, \bar{d}_1) = e_0 \tilde{h}_5 (z, u),
\]
where $\bar{c}_1 = c_1 (0, 0, z, u)$ and $\bar{d}_1 = d_1 (0, 0, z, u)$.
We can write 
\[
h_5 (\bar{c}_1, \bar{d}_1) = \bar{c}_1^4 (\mu \bar{c}_1 + \nu \bar{d}_1),
\]
for some $\mu, \nu \in \mbC$.
This in particular shows that $u \notin c_1$ since $z^4 \mid \tilde{h}_5 (z, u)$, and hence we can write $c_1 = \gamma z + \ell_z (x, y)$ for some $\gamma \in \mbC \setminus \{0\}$ and a linear form $\ell_z (x, y)$.
Then we can write $d_1 = \delta u + \ell_u (x, y, z)$ for some $\delta \in \mbC \setminus \{0\}$ and a linear form $\ell_u (x, y, z)$.

It remains to show that $b_2 \in (x, y, z)$ which will be done by comparing the degree $6$ terms in \eqref{eq:eqgrmsp-1}.
We see that the degree $6$ part of $h_5 (\chi (z), \chi (u))$ is contained in the ideal $(x, y, z)$ since 
\[
h_5 (\chi (z), \chi (u)) = \chi (z)^4 (\mu \chi (z) + \nu \chi (u)),
\]
and $\chi (z) = b_1 + b_2 + \cdots$ with $b_1 \in (x, y)$.
The degree $6$ part of $h_6 (\chi (z), \chi (u))$ is $h_6 (c_1, d_1)$ and it is contained in $(x, y, z)$ since $c_1 \mid h_6 (c_1, d_1)$ and $c_1 \in (x, y, z)$.
The degree $6$ part of $\chi (y) g (\chi (z), \chi (u))$ is the sum of terms divisible by $b_1 \in (x, y)$ and $b_2 g_4 (c_1, d_1) \in (x, y, z)$, and thus it is contained in $(x, y, z)$.
By comparing the degree $6$ terms in \eqref{eq:eqgrmsp-1} and by setting $x = y = z = 0$, we have $b_2 (0, 0, 0, u)^3 = 0$.
This shows that $u^2 \notin b_2$, that is, $b_2 \in (x, y, z)$.
This completes the proof.
\end{proof}

%%%%%%%%%%%%%%%%%%%%%%%%%%%%%%%%%%
\subsection{Divisorial contractions to $cE$ points of discrepancy greater than $1$} \label{sec:divcontcEgt1}
%%%%%%%%%%%%%%%%%%%%%%%%%%%%%%%%%%

Divisorial contractions of discrepancy greater than $1$ to $cE$ points are classified in \cite{Kawakita05} and \cite{Yamamoto}.
According to the classification, any divisorial contraction of discrepancy greater than $1$ over $\msp \in X$ is of type $e2$ and of discrepancy $2$ when $\msp \in X$ is of type $cE_6$, is of type either $e5$ or $e9$ both of discrepancy $2$ when $\msp \in X$ is of type $cE_7$, and is of type $e9$ and of discrepancy $2$ when $\msp \in X$ is of type $cE_8$.
We refer readers to \cite{Kawakita05} and \cite{Yamamoto} for descriptions of divisorial contractions of type $e2$, $e5$, and $e9$ to a $cE$ point.
In this section, we will observe that not every $cE$ point admit such a divisorial contraction and give some necessary conditions for a $cE$ point to admit a divisorial contraction of discrepancy greater than $1$. 

\begin{Prop} \label{prop:ncd2cE6}
Let $\msp \in X$ be a $cE_6$ singularity
\[
(x^2 + y^3 + y g (z, u) + h (z, u) = 0) \subset \mbC^4_{x, y, z, u},
\]
where $g, h \in \mbC \{z, u\}$ are power series of order at least $3$ and $4$, respectively.
Suppose that $\msp \in X$ admits a divisorial contraction of discrepancy greater than $1$.
Then the following are satisfied.
\begin{enumerate}
\item The degree $4$ part $h_4 (z, u)$ of $h (z, u)$ is not the quadruple of a linear form.
\item There are at most $3$ divisors of discrepancy $1$ over $\msp \in X$.
\end{enumerate}
\end{Prop}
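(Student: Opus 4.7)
I would follow the three-step framework of Section~\ref{sec:genclsfcont}. By the classifications of Kawakita~\cite{Kawakita05} and Yamamoto, any divisorial contraction to a $cE_6$ point of discrepancy $>1$ is of type $e2$ and has discrepancy exactly $2$. So I fix such a contraction $\varphi_0 \colon Y_0 \to X$ with exceptional divisor $E_0$, satisfying $K_{Y_0} = \varphi_0^* K_X + 2 E_0$. After an analytic change of coordinates, type $e2$ realizes $\varphi_0$ as a weighted blow-up whose leading weighted-homogeneous part of $\phi$ is controlled by $h_4(z,u)$, and for which $Y_0$ has only terminal singularities with an explicit, short list of non-Gorenstein points (which I will treat as data supplied by Yamamoto's tables).

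\textbf{Part (1).} I would argue by contradiction. Assume $h_4(z,u) = c\,\ell(z,u)^4$ for a nonzero $c$ and a linear form $\ell$, and normalize $\ell = z$ by a linear change in $(z,u)$, so that $h_4 = c z^4$. With this degeneracy, in every chart of the type-$e2$ weighted blow-up in which $z$ persists as a coordinate, the relevant pieces $\phi_{\msw=d}, \phi_{\msw=d+1}, \phi_{\msw=d+2}, \ldots$ all fall into an ideal of the form $(x, y^2) + (z,\cdot)^{\geq k}$. Then Lemma~\ref{lem:singwbl}(1) or (2) produces either a non-isolated singular locus along a curve $(x=p=0)$, or a singular point of multiplicity $\geq 3$, contradicting terminality of $Y_0$ and hence Lemma~\ref{lem:divcontwbl}. (Alternatively, $h_4=cz^4$ may cause the exceptional divisor, identified via Lemma~\ref{lem:genwblexc} with a weighted hypersurface, to be non-reduced; either outcome closes the contradiction.) The equivalence results Lemmas~\ref{lem:crdchginv}--\ref{lem:eqgrmsp8} would be used beforehand to reduce to a canonical form so that only finitely many weighted blow-ups need to be checked.

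\textbf{Part (2).} I carry out Step~1 of Section~\ref{sec:genclsfcont}. For any divisor $G$ of discrepancy $1$ over $\msp$ realized on a resolution $\psi \colon Z \to Y_0$, the adjunction formula reads
\begin{equation*}
1 \;=\; a_G(K_X) \;=\; a_G(K_{Y_0}) + 2\,\ord_G(\psi^* E_0).
\end{equation*}
Since $Y_0$ is terminal, $a_G(K_{Y_0})>0$, and since $G$ lies over $\msp$, $\ord_G(\psi^*E_0)>0$; hence $\ord_G(\psi^*E_0) = 1/r$ for some integer $r \geq 3$, and $a_G(K_{Y_0}) = (r-2)/r$. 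Thus $\psi(G)$ is a non-Gorenstein point of $Y_0$ of index $r \geq 3$, and $G$ is the unique divisor of discrepancy $(r-2)/r$ guaranteed by Lemma~\ref{lem:quotdiv} (assuming terminal quotient type; for other hyperquotient types one invokes the analogous counts of Hayakawa--Kawakita). Reading off Yamamoto's description of the singular locus of $Y_0$ for type $e2$ on $cE_6$ gives at most three such non-Gorenstein points (or a single point with a short list of divisors of the required discrepancy), yielding the bound $\leq 3$.

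\textbf{Main obstacle.} The delicate step is Part~(1): I must check that $h_4=c\ell^4$ really obstructs \emph{every} candidate weighted blow-up that could produce a discrepancy-$\geq 2$ contraction, not merely the ``naive'' weight $(3,2,1,1)$. This requires first restricting to a canonical form using Lemmas~\ref{lem:crdchginv}--\ref{lem:eqgrmsp8}, and then checking chart-by-chart with Lemma~\ref{lem:singwbl}. Part~(2) is more mechanical once the structure of $Y_0$ is in hand, but it also depends on having full control of the non-Gorenstein singularities supplied by the type-$e2$ classification.
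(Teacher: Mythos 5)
Your skeleton (reduce to Yamamoto's type-$e2$ normal form, then count discrepancy-$1$ divisors by adjunction over the non-Gorenstein points of $Y_0$) matches the paper's, but both parts as written have real gaps.

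For (1), the contradiction-via-non-terminality route is off target. The paper never touches terminality of $Y_0$: Yamamoto's normal form for a type $e2$ contraction to a $cE_6$ point says the germ is equivalent to $(x^2 + (y-p)^3 + y\hat{g} + \hat{h} = 0)$ with $p = \lambda z + \mu u^2$, $\lambda \ne 0$, $u^3 \in \hat{g}$ and $\msw_2(\hat{h}) \ge 6$. After the shift $y \mapsto y+p$, the degree-$4$ part of the resulting ``$h$'' is $(\alpha z^4 + \beta z^3 u + \gamma z^2 u^2) + \lambda z u^3$: it has a nonzero $zu^3$ coefficient but no $u^4$, and $\xi(az+bu)^4$ with no $u^4$ forces $b = 0$ and hence no $zu^3$ either. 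So the normal form's $h_4$ is automatically not a quadruple of a linear form, and Lemma~\ref{lem:crdchginv}(1) transfers this (it shows the property ``$h_4$ is $\xi\ell^4$'' is an invariant of the equivalence class) back to your original coordinates. Your proposed use of Lemma~\ref{lem:singwbl} cannot get off the ground: the hypothesis $h_4 = c\ell^4$ lives in the original coordinates, the $e2$ blow-up weights live in the re-embedded coordinates, and the translation between them is precisely Lemma~\ref{lem:crdchginv} --- after which the contradiction with $\lambda \ne 0$ is already immediate, with no singularity analysis of $Y_0$ (which is terminal by hypothesis anyway).

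For (2), the adjunction identity $1 = a_G(K_{Y_0}) + 2\ord_G(\psi^*E_0)$ is the right start, but the two facts you defer to ``Yamamoto's tables'' are the entire content of the bound, and your intermediate claims are wrong in detail. By Kawakita's Table 3, $Y_0$ has a \emph{unique} singular point at which $E_0$ is not Cartier, and it is a $cD/3$ point --- a hyperquotient, not a terminal quotient singularity, so Lemma~\ref{lem:quotdiv} does not apply and your assertion that $G$ is ``the unique divisor of discrepancy $(r-2)/r$'' fails. The correct chain is: $3E_0$ is Cartier at that point, so $\ord_G(\psi^*E_0) \ge 1/3$, which together with $a_G(K_{Y_0}) > 0$ forces $a_G(K_{Y_0}) = 1/3$; and by Hayakawa's count ({\cite[4.6 Corollary]{Hayakawa99}}) a $cD/3$ point admits at most $3$ distinct divisors of discrepancy $1/3$. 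That single point with its count of $3$ is where the bound in (2) comes from; also note $\ord_G(\psi^*E_0)$ need not equal $1/r$ a priori, only $\ge 1/r$.
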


\begin{proof}
Let $\varphi \colon Y \to X$ be a divisorial contraction of discrepancy greater than $1$ over $\msp \in X$.
By \cite{Kawakita05} and \cite{Yamamoto}, $\varphi$ is a divisorial contraction of type $e2$ whose explicit description is given in \cite[Theorem 2.5]{Yamamoto}.
There exists an equivalence between $\msp \in X$ and the germ
\[
(x^2 + (y - p (z, u))^3 + y \hat{g} (z, u) + \hat{h} (z, u) = 0) \subset \mbA^4,
\]
where $p \in \mbC [z, u]$ is a polynomial and $\hat{g}, \hat{h} \in \mbC \{z, u\}$ are power series satisfying
\begin{enumerate}
\item[(i)] $p = \lambda z + \mu u^2$ for some $\lambda, \mu \in \mbC$ with $\lambda \ne 0$;
\item[(ii)] $\ord (\hat{g}) \ge 3$ and $u^3 \in \hat{g}$;
\item[(iii)] $\ord (\hat{h}) \ge 4$ and $\msw_2 (\hat{h}) \ge 6$, where the weight $\msw_2$ is defined by $\msw_2 (z, u) = (2, 1)$;
\end{enumerate}
and $\varphi$ is the weighted blow-up with $\wt (x, y, z, u) = (3, 3, 2, 1)$.

Rescaling $u$, we may assume that $\hat{g}_{\msw_2 = 3} (z, u) = u^3$.
By a coordinate change $y \mapsto y + p$, the germ $\msp \in X$ is equivalent to 
\[
(x^2 + y^3 + y \hat{g} (z, u) + \hat{h} (z, u) + \hat{g} (z, u) p (z, u) = 0) \subset \mbC^4.
\]
The degree $4$ part of $\hat{h} + \hat{g} p$ can be written as
\[
(\alpha z^4 + \beta z^3 u + \gamma z^2 u^2) + \lambda z u^3,
\]
for some $\alpha, \beta, \gamma \in \mbC$, which is not the quadruple of a linear form since $\lambda \ne 0$.
By Lemma \ref{lem:crdchginv}, $h_4$ is not the quadruple of a linear form, and (1) is verified.

Let $G$ be a divisor of discrepancy $1$ over $\msp \in X$.
We may assume that $G$ is a prime divisor on a smooth projective variety $Z$ admitting a birational morphism $\psi \colon Z \to Y$.
We have
\[
1 = a_G (K_X) = a_G (K_Y) + 2 \ord_G (\psi^* E).
\]  
By \cite[Table 3]{Kawakita05}, $Y$ has a unique singular point at which $E$ is not Cartier, and the unique point $\msq \in Y$ is of type $cD/3$.
It follows that $\psi (G) = \{\msq\}$.
We have $\ord_G (\psi^* E) \ge 1/3$ since $3 E$ is Cartier at $\msq$.
Thus we have $a_G (K_Y) = 1/3$.
By \cite[4.6 Corollary]{Hayakawa99}, a $cD/3$ point admits at most $3$ distinct divisors of discrepancy $1/3$ over the point.
This shows (2).
\end{proof}

\begin{Prop} \label{prop:ncd2cE7e5}
Let $\msp \in X$ be a $cE_7$ singularity
\[
(x^2 + y^3 + y g (z, u) + h (z, u) = 0) \subset \mbC^4_{x, y, z, u},
\]
where $g, h \in \mbC \{z, u\}$ are power series of order at least $3$ and $5$, respectively.
Suppose that $\msp \in X$ admits a divisorial contraction of type $e5$ and of discrepancy $2$.
Then the following are satsfied.
\begin{enumerate}
\item $\gcd \{ g_3, h_5 \} = 1$, where $g_3$ and $h_5$ are the degree $3$ and $5$ parts of $g$ and $h$, respectively.
\item There is a unique divisor of discrepancy $1$ over $\msp \in X$.
\end{enumerate}
\end{Prop}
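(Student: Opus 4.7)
The plan parallels the proof of Proposition \ref{prop:ncd2cE6}. Let $\varphi \colon Y \to X$ be a divisorial contraction of type $e5$ and discrepancy $2$. By the explicit description of type $e5$ divisorial contractions to $cE_7$ points in \cite{Kawakita05} and \cite{Yamamoto}, there exists an analytic coordinate change putting $\msp \in X$ into a specific normal form
\[
x^2 + y^3 + \lambda y^2 u^2 + y \hat{g}(z,u) + \hat{h}(z,u) = 0
\]
subject to explicit conditions on the $\msw$-leading parts, and under which $\varphi$ is realised as a specific weighted blow-up. In this normal form the relevant leading terms $\hat{g}_3$ and $\hat{h}_5$ can be read off directly, and one checks from the conditions imposed by the classification that they have no common linear factor, i.e.\ $\gcd\{\hat{g}_3,\hat{h}_5\} = 1$ in $\mbC[z,u]$.

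To transfer this coprimality back to the original coordinates I would invoke Lemma \ref{lem:crdchginv}(2): any equivalence between the original germ and the normal form forces $g_3$ and $\hat{g}_3$ to differ by a linear substitution of $(z,u)$ and a nonzero scalar, while $h_5$ and $\hat{h}_5$ differ by such a substitution together with an addition of $b_2(z,u)\cdot(\text{multiple of }g_3)$ for some quadratic form $b_2$. Linear substitutions preserve coprimality, and adding a multiple of $g_3$ to $h_5$ does not change $\gcd\{g_3,h_5\}$. This yields (1).

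For (2), pick any divisor $G$ over $\msp \in X$ with $a_G(K_X) = 1$, realised as a prime divisor on some smooth $Z$ with birational map $\psi \colon Z \to Y$. The discrepancy identity
\[
1 = a_G(K_X) = a_G(K_Y) + 2\,\ord_G(\psi^*E)
\]
forces $a_G(K_Y) < 1$, so $\psi(G)$ is contained in the non-Gorenstein locus of $Y$ and $\ord_G(\psi^*E) < 1/2$. From the type $e5$ description one reads off which non-Gorenstein singularities $Y$ carries and the Cartier index of $E$ at each: there is a single point $\msq \in Y$ at which $E$ fails to be Cartier, of a specific quotient or hyperquotient type. Combining the resulting lower bound on $\ord_G(\psi^*E)$ with the classification of divisors of small discrepancy over $\msq \in Y$ (via Lemma \ref{lem:quotdiv} or the hyperquotient analogues of \cite{Hayakawa99, Hayakawa00, Hayakawa05a}) will pin down exactly one admissible valuation, giving uniqueness.

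The main obstacle I anticipate is bookkeeping in the first step: extracting the precise normal form of Yamamoto's type $e5$ contractions and then carefully tracking how the conditions on the $\msw$-leading parts translate to the statement $\gcd\{\hat{g}_3,\hat{h}_5\}=1$. The counting in (2) should be routine once the singularity $\msq \in Y$ is identified, but it also relies on matching the precise Cartier index of $E$ at $\msq$ with the discrepancy tables in the Kawakita--Hayakawa classification.
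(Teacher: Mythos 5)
Your overall strategy is exactly the paper's: put the germ into the normal form dictated by Yamamoto's classification of type $e5$ contractions, verify coprimality of the leading parts there, transfer it back to the original coordinates via Lemma \ref{lem:crdchginv}(2), and for (2) run the identity $1 = a_G(K_Y) + 2\,\ord_G(\psi^*E)$ over the unique non-Gorenstein point of $Y$. However, the normal form you wrote down, $x^2+y^3+\lambda y^2u^2+y\hat g+\hat h$, is the one for contractions of type $e9$ (\cite[Theorem 2.10]{Yamamoto}), not $e5$; and in that form the leading parts are as far from coprime as possible ($\hat g_3=\ell^3$ and $\ell^4\mid \hat h_5$ for a linear form $\ell$ --- this is exactly Proposition \ref{prop:ncd2cE7e9}(1)), so executed literally your step (1) would fail. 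The type $e5$ normal form (\cite[Theorem 2.9]{Yamamoto}) is the codimension-two complete intersection $(x^2+yt+p(z,u)=y^2+q(z,u)-t=0)\subset\mbC^5$ together with the $(5,3,2,2,7)$-weighted blow-up; eliminating $t$ gives $x^2+y^3+yq+p$ with no $y^2$-term, and the coprimality $\gcd\{q_3,p_5\}=1$ is literally one of the stated conditions of that classification. After that correction, your transfer argument via Lemma \ref{lem:crdchginv}(2) (linear substitutions preserve coprimality, and adding $b_2\cdot g_3$ to $h_5$ does not change $\gcd\{g_3,h_5\}$) is precisely what the paper does.

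For (2), identifying the unique non-Gorenstein point $\msq\in Y$ (a $\frac{1}{7}(1,1,6)$ point, sitting on the $t$-chart) is correct, but a mere lower bound on $\ord_G(\psi^*E)$ does not suffice: since $7E$ is Cartier at $\msq$ one only gets $\ord_{G_k}(\psi_k^*E)\ge 1/7$, hence $a_{G_k}(K_Y)\le 5/7$, which still leaves five of the six candidate divisors $G_1,\dots,G_6$ furnished by Lemma \ref{lem:quotdiv}. The paper eliminates $y$ and filters off the terms divisible by $t$ to obtain the local equation of $E$ at $\msq$, namely $\xi t=(x^2+p_5)^2+q_3$, and from it computes $\ord_{G_k}(\psi_k^*E)$ exactly for every $k$ (equal to $3k/7$ for $k\le 4$, $8/7$ for $k=5$, $4/7$ for $k=6$); only $k=1$ then yields total discrepancy $1$ over $X$. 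You will need this explicit order computation, not just the Cartier-index bound, to get uniqueness rather than an upper bound on the count.
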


\begin{proof}
Let $\varphi \colon Y \to X$ be a divisorial contraction of type $e5$ and of discrepancy $2$.
Then, by \cite[Theorem 2.9]{Yamamoto}, $X$ is equivalent to the germ
\[
\begin{pmatrix}
x^2 + y t + p (z, u) = 0, \\
y^2 + q (z, u) - t = 0
\end{pmatrix}
\subset \mbC^5_{x, y, z, u, t},
\]
where
\begin{itemize}
\item[(i)] $p, q \in \mbC \{z, u\}$ with $\ord p \ge 5$ and $\ord q \ge 3$;
\item[(ii)] $\gcd \{ p_5, q_3 \} = 1$ for the degree $5$ part $p_5$ and the degree $3$ part $q_3$ of $q$;
\end{itemize}
and $\varphi$ is the weighted blow-up with $\wt (x, y, z, u, t) = (5, 3, 2, 2, 7)$.

Eliminating the variable $t$, we see that $\msp \in X$ is then equivalent to the germ
\[
(x^2 + y^3 + y q (z, u) + p (z, u) = 0) \subset \mbC^4_{x, y, z, y}.
\]
By the condition (ii) and by Lemma \ref{lem:crdchginv}, the assertion (1) is verified.

By \cite[Table 1]{Yamamoto}, $Y$ admits a unique non-Gorenstein singular point and it is of type $\frac{1}{7} (1, 1, 6)$.
Let $\msq \in Y$ be the $\frac{1}{7} (1, 1, 6)$ point.
It is easy to see that $\msq$ is the origin of the $t$-chart $Y_t$ of $Y$:
\[
Y_t \cong 
\begin{pmatrix}
x^2 + y + p (z t^2, u t^2)/t^{10} = 0, \\
y^2 + q (z t^2, u t^2)/t^6 - t = 0
\end{pmatrix}
/\mbZ_7 (5_x, 3_y, 2_z, 2_u, 6_t).
\]
The $\varphi$-exceptional divisor $E$ is defined by $t = 0$ on $Y_t$, and we have a natural equivalence
\[
\msq \in Y_t \cong \mbC^3_{x, z, u}/\mbZ_7 (5, 2, 2).
\]
Eliminating $y$ and then filtering off terms divisible by $t$, we have
\begin{equation} \label{eq:ncd2cE7e5-1}
\xi t =(x^2 + p_5 (z, u))^2 + q_3 (z, u),
\end{equation}
where $\xi = \xi (x, y, z, t, u)$ does not vanish at $\msq$.
For $1 \le k \le 6$, let $\psi_k \colon Z_k \to Y$ be the weighted blow-up at $\msq \in Y$ with $\wt (x, z, u) = \frac{1}{7} ([6 k]_7, k, k)$.
We see that $a_{G_k} (K_Y) = k/7$ and, by \eqref{eq:ncd2cE7e5-1}, we have
\[
\ord_{G_k} (\psi^*_k E) = 
\begin{cases}
\frac{3 k}{7}, & (1 \le k \le 4), \\
\frac{8}{7}, & (k = 5), \\
\frac{4}{7}, & (k = 6).
\end{cases}
\]
It follows that $G_1$ is the unique divisor of discrepancy $1$ over $\msp \in X$, and (2) is satidfied.
\end{proof}

\begin{Prop} \label{prop:ncd2cE7e9}
Let $\msp \in X$ be a $cE_7$ singularity
\[
(x^2 + y^3 + y g (z, u) + h (z, u) = 0) \subset \mbC^4_{x, y, z, u},
\]
where $g, h \in \mbC \{z, u\}$ are power series of order at least $3$ and $5$, respectively.
If $\msp \in X$ admits a divisorial contraction of type $e9$ and of discrepancy $2$, then the following assertions hold.
\begin{enumerate}
\item There exists a linear form $\ell = \ell (z, u) \ne 0$ such that $g_3  = \ell^3$ and $\ell^4 \mid h_5$, where $g_3$ and $h_5$ are the degree $3$ and $5$ parts of $g$ and $h$, respectively. 
\item There are exactly $2$ divisors of discrepancy $1$ over $\msp \in X$.
\item The germ $\msp \in X$ admits a unique divisorial contraction of discrepancy greater than $1$.
\end{enumerate}
\end{Prop}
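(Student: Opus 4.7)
The plan is to parallel the pattern established in Propositions \ref{prop:ncd2cE6} and \ref{prop:ncd2cE7e5}: invoke Yamamoto's classification of type $e9$ divisorial contractions to $cE_7$ points, translate the resulting normal form back into invariants of the original germ via Lemma \ref{lem:crdchginv}, and then use the description of the exceptional geometry to count divisors of small discrepancy.

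First, I would cite the explicit description of type $e9$ contractions of discrepancy $2$ from \cite{Yamamoto} to produce an equivalence of $\msp \in X$ with a germ of the shape
\[
\bigl(x^2 + y^3 + \lambda y^2 u^2 + y \hat{g}(z, u) + \hat{h}(z, u) = 0\bigr) \subset \mbC^4_{x, y, z, u},
\]
where $\hat{g}_3(z, u) = \varepsilon z^3$ for some $\varepsilon \ne 0$ and $z^4 \mid \hat{h}_5(z, u)$, and where $\varphi$ is the weighted blow-up of the prescribed weight. To deduce part (1), I would feed this normal form and the original germ into Lemma \ref{lem:crdchginv}(2). The identity $\hat{g}_3(z, u) = \xi g_3(\theta(z), \theta(u))$ forces $g_3$ to be a nonzero scalar multiple of $\theta^{-1}(z)^3$, hence $g_3 = \ell^3$ for the linear form $\ell := c\, \theta^{-1}(z)$. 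The bare Lemma \ref{lem:crdchginv}(2) only gives $\ell^3 \mid h_5$ because of the rogue term $b_2 g_3$; the upgrade to $\ell^4 \mid h_5$ will come from Lemma \ref{lem:eqgrmsp7}, which, under the hypothesis $z^4 \mid \hat{h}_5$, forces $b_2 \in (x, y, z)$ and thereby kills the obstructing $u^2 \cdot z^3$ contribution.

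For part (2), I would follow Step 1 of Section \ref{sec:genclsfcont}: any divisor $G \ne E$ of discrepancy $1$ over $\msp$ must project to a non-Gorenstein point of $Y$. The non-Gorenstein locus of the $e9$ output is described explicitly by Yamamoto's tables, where the points are cyclic quotient singularities of specified types. Enumerating divisors of discrepancy $<1$ over each such point via Lemma \ref{lem:quotdiv} and retaining only those with $a_G(K_Y) = 1 - 2\,\ord_G(\psi^* E)$ equal to $1$ minus $2 \cdot \ord_G(\psi^* E)$ equal to the required fractional value, I expect exactly one admissible divisor to survive, yielding the total of $2$ divisors of discrepancy $1$ over $\msp \in X$.

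For part (3), by \cite{Kawakita05} and \cite{Yamamoto}, every divisorial contraction to a $cE_7$ germ of discrepancy $>1$ is of type $e5$ or $e9$ (both of discrepancy $2$). The $e5$ option is ruled out immediately by combining (1) with Proposition \ref{prop:ncd2cE7e5}(1): the relations $g_3 = \ell^3$ and $\ell^4 \mid h_5$ imply $\ell \mid \gcd\{g_3, h_5\}$, contradicting the coprimality forced by $e5$. Uniqueness within type $e9$ then follows from Lemma \ref{lem:divcontwbl}, since $\varphi$ is determined as a weighted blow-up with prescribed weights. The main obstacle I anticipate lies in part (1), specifically in the upgrade from $\ell^3 \mid h_5$ to $\ell^4 \mid h_5$: applying Lemma \ref{lem:eqgrmsp7} in the correct direction and tracking the linear form $\ell$ cleanly through the coordinate change is the delicate technical step; everything else is bookkeeping once the normal form is in hand.
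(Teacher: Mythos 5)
Your overall architecture coincides with the paper's (Yamamoto's normal form plus Lemma~\ref{lem:crdchginv} for (1); the non-Gorenstein points of $Y$ plus Lemma~\ref{lem:quotdiv} for (2); the $\gcd$ obstruction against type $e5$ for (3)), but the argument does not close at the one genuinely delicate point, which is the uniqueness within type $e9$ in (3). You invoke Lemma~\ref{lem:divcontwbl}, but that lemma only gives uniqueness of a divisorial contraction realizing a \emph{fixed} valuation. A second contraction $\tilde{\varphi}$ of type $e9$ is the $(7,5,3,2)$-weighted blow-up in a possibly \emph{different} system of Yamamoto coordinates, and ``prescribed weights'' do not by themselves force the two exceptional divisors to define the same valuation. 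The paper closes this by applying Lemma~\ref{lem:eqgrmsp7} to the automorphism $\chi$ relating the two normal forms (both of which do satisfy its hypotheses $g_3=\varepsilon z^3$, $z^4\mid h_5$): the conclusion on $a_i,b_i,c_i,d_i$ shows $\chi$ does not decrease the weights of $x,y,z,u$, hence $\nu_E=\nu_{\tilde{E}}$, and only then does \cite{Kawakita01} (i.e.\ Lemma~\ref{lem:divcontwbl}) yield $\varphi\cong\tilde{\varphi}$. Without this step (3) is not proved.

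Two smaller points. In (1) you rightly notice that Lemma~\ref{lem:crdchginv}(2) alone only gives $\ell^3\mid h_5$ because of the term $b_2\,g_3$, but the proposed repair is circular: Lemma~\ref{lem:eqgrmsp7} assumes $z^4\mid h_5$ for \emph{both} germs, in particular for the original one, which is exactly what you are trying to establish, so its conclusion $b_2\in(x,y,z)$ cannot be quoted here. The non-circular route is to extract, from the degree-$4$ identity \eqref{eq:crdchginv-4} in the proof of Lemma~\ref{lem:crdchginv}, the coefficient of $y^2$ times a quadratic in $z,u$: once $g_3\circ\theta$ is proportional to $z^3$ this forces $z^2\mid b_2(0,0,z,u)$, and then $\ell^4\mid h_5$ follows (the paper's own one-line citation is terse at the same spot). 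In (2), the bookkeeping should produce one surviving divisor over \emph{each} of the two non-Gorenstein points of $Y$ (a $\frac{1}{3}(1,1,2)$ point and a $\frac{1}{5}(1,1,4)$ point, per \cite{Yamamoto}); $E$ itself has discrepancy $2$ and does not count, so ``exactly one admissible divisor survives'' would give a total of $1$, not $2$.
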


\begin{proof}
Let $\varphi \colon Y \to X$ be a divisorial contraction of type $e9$ and of discrepancy $2$.
By \cite[Theorem 2.10]{Yamamoto}, $\msp \in X$ is equivalent to the germ
\[
(x^2 + y^3 + \lambda y^2 u^2 + y \hat{g} (z, u) + \hat{h} (z, u) = 0) \subset \mbC^4_{x, y, z, u},
\] 
where $\lambda \in \mbC$ and $\hat{g}, \hat{h} \in \mbC \{z, u\}$ satisfy the following properties:
\begin{itemize}
\item[(i)] $\msw (\hat{g}) \ge 9$ and $\msw (\hat{h}) \ge 14$, where  the weight $\msw$ is defined by $\msw (z, u) = (3, 2)$;
\item[(ii)] $z^3 \in \hat{g}$ and $u^7 \in \hat{h}$;
\end{itemize}
and under the identification of germs, $\varphi$ is the weighted blow-up with $\wt (x, y, z, u) = (7, 5, 3, 2)$.

We prove (1).
By (i) and (ii), for the degree $3$ and $5$ parts $\hat{g}_3$ and $\hat{h}_5$ of $\hat{g}$ and $\hat{h}$, respectively, we see that
\[
\hat{g}_3 = \alpha z^3, \quad
\hat{h}_5 = \beta z^5 + \gamma z^4 u,
\]
for some $\alpha, \beta, \gamma \in \mbC$ with $\alpha \ne 0$.
By the coordinate change $y \mapsto y - \lambda u^2/3$, $\msp \in X$ is equivalent to the germ
\[
(x^2 + y^3 + y (\hat{g} (z, u) - \lambda^3 u^4/3) + \hat{h} (z, u) - \lambda^3 u^6/27 = 0) \subset \mbC^4_{x, y, z, u},
\]
The assertion (1) follows from Lemma \ref{lem:crdchginv}.   

We prove (2).
By \cite[Table 1]{Yamamoto}, $Y$ has exactly two non-Gorenstein singular points which consist of a $\frac{1}{3} (1, 1, 2)$ point denoted by $\msq$ and a $\frac{1}{5} (1, 1, 4)$ point denoted by $\msq'$.
The point $\msq$ is the origin of the $z$-chart $Y_z$ of $Y$:
\[
Y_z = (\phi_z (x, y, z, u) = 0)/\mbZ_3 (1_x, 2_y, 2_z, 2_u),
\]
where
\[
\phi_z := x^2 + y^3 z  + \lambda y^2 u^2 + y \hat{g} (z^3, u z^2)/z^9 + \hat{h} (z^3, u z^2)/z^{14}.
\]
The $\varphi$-exceptional divisor $E$ is defined by $z = 0$ on $Y_z$.
We have $y \in \phi_z$ since $z^3 \in \hat{g}$.
Eliminating $y$, we have a natural equivalence
\[
\msq \in Y \cong \mbC^3_{x, z, u}/\mbZ_3 (1, 2, 2).
\]
Let $\psi_1 \colon Z_1 \to Y$ and $\psi_2 \colon Z_2 \to Y$ be the weighted blow-ups of $\msq \in Y$ with $\wt (x, z, u) = \frac{1}{3} (1, 1, 2)$ and $\wt (x, z, u) = \frac{1}{3} (2, 2, 1)$, respectively, and denote by $G_i$ the $\psi_i$-exceptional divisor.
Then we compute
\[
\begin{split}
a_{G_1} (X) &= a_{G_1} (Y) + 2 \ord_{G_1} (E) = \frac{1}{3} + \frac{2}{3} = 1, \\
a_{G_2} (X) &= a_{G_2} (Y) + 2 \ord_{G_2} (E) = \frac{2}{3} + \frac{4}{3} = 2.
\end{split}
\]
The point $\msq'$ is the origin of the $y$-chart $Y_y$:
\[
Y_y = (\phi_y (x, y, z, u) = 0)/\mbZ_5 (2_x, 4_y, 3_z, 2_u),
\]
where
\[
\phi_y := x^2 + y + \lambda u^2 + \hat{g} (z y^3, u y^2)/y^9 + \hat{h} (z y^3, u y^2)/y^{14}.
\]
The exceptional divisor $E$ is defined by $y = 0$ on $Y_y$.
Eliminating $y$, we have a natural identification
\[
\msq' \in Y \cong \mbC^3_{x, z, u}/\mbZ_5 (2, 3, 2),
\]
and by filtering terms divisible by $y$ in the equation $\phi_y = 0$, we have
\[
\xi y = x^2 + \lambda u^2 + \hat{g}_{\msw = 9} (z, u) + \hat{h}_{\msw = 14} (z, u),
\]
where $\xi = \xi (x, y, z, u)$ does not vanish at $\msq'$ and the weight $\msw$ is defined by $\msw (z, u) = (3, 2)$. 
For $k = 1, 2, 3, 4$, let $\psi'_k \colon Z_k \to Y$ be the weighted blow-up of $\msq' \in Y$ with $\wt (x, z, u) = \frac{1}{5} (k, [4 k]_5, k)$.
Denote by $G'_k$ the $\psi'_k$-exceptional divisor.
We have $a_{G'_k} (Y) = k/5$ for $1 \le k \le 4$, and we compute
\[
\ord_{G_k} ({\psi'}_k^*E) =
\begin{cases}
\frac{2k}{5}, & (1 \le k \le 3), \\
\frac{3}{5}, & (k = 4).
\end{cases}
\]
Thus $G_1$ and $G'_1$ are the divisors of discrepancy $1$ over $\msp \in X$. 

We prove (3).
Let $\tilde{\varphi} \colon \tilde{Y} \to X$ be a divisorial contraction to $\msp \in X$ of discrepancy greater than $1$ with exceptional divisor $\tilde{E}$.
By the classification \cite{Yamamoto}, the discrepancy of $\tilde{\varphi}$ is $2$ and it is of type either $e5$ or $e9$.
If $\tilde{\varphi}$ is of type $e5$, then $\msp \in X$ is equivalent to a germ described in Proposition \ref{prop:ncd2cE7e5}.
By Lemma \ref{lem:crdchginv} and (1), this is impossible.
Thus $\tilde{\varphi}$ is of type $e9$.
We have an equivalence $\chi \colon \msp \in X \cong \tilde{\msp} \in \tilde{X}$, where 
\[
\tilde{\msp} \in \tilde{X} = (x^2 + y^3 + \tilde{\lambda} y^2 u^2 + y \tilde{g} (z, u) + \tilde{h} (z, u) = 0) \subset \mbC^4_{x, y, z, u}
 \]
 for some $\tilde{\lambda} \in \mbC$ and $\tilde{g}, \tilde{h} \in \mbC \{z, u\}$ satisfying the above conditions (i), (ii), and $\tilde{\varphi}$ is the composition of the weighted blow-up of $\tilde{X}$ with $\wt (x, y, z, u) = (7, 5, 3, 2)$ and $\chi$.
By Lemma \ref{lem:eqgrmsp7}, we see that
\[
\sigmawt (\chi^* x, \chi^* y, \chi^*z, \chi^* t) = (7, 5, 3, 2).
\]
This shows that $E$ and $\tilde{E}$ define the same valuation, and thus $\varphi$ is isomorphic to $\tilde{\varphi}$ over $X$ by \cite[Lemma 3.4]{Kawakita01}.
Therefore $\varphi$ is the unique divisorial contraction of discrepancy greater than $1$.
\end{proof}

\begin{Prop} \label{prop:ncd2cE8e9}
Let $\msp \in X$ be a $cE_8$ singularity
\[
(x^2 + y^3 + y g (z, u) + h (z, u) = 0) \subset \mbC^4_{x, y, z, u},
\]
where $g, h \in \mbC \{z, u\}$ are power series of order at least $4$ and $5$, respectively.
If $\msp \in X$ admits a divisorial contraction of discrepancy greater than $1$, then the following assertions hold.
\begin{enumerate}
\item For the degree $3$ and $5$ parts of $g$ and $h$, respectively, we have $g_3 = 0$ and $h_5$ is divisible by the quadruple of a linear form.
\item There are at most $2$ divisors of discrepancy $1$ over $\msp \in X$.
\item The germ $\msp \in X$ admits a unique divisorial contraction of discrepancy greater than $1$.
\end{enumerate}
\end{Prop}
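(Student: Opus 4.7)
The plan is to mirror the structure of Proposition~\ref{prop:ncd2cE7e9}, since by the classification of Yamamoto \cite{Yamamoto}, any divisorial contraction $\varphi \colon Y \to X$ of discrepancy greater than $1$ to a $cE_8$ point is of type $e9$ and of discrepancy $2$. Such a contraction identifies $\msp \in X$ with a germ in standard form
\[
(x^2 + y^3 + \lambda y^2 u^2 + y \hat{g}(z,u) + \hat{h}(z,u) = 0) \subset \mbC^4_{x,y,z,u},
\]
with explicit weighted-order conditions on $\hat g, \hat h$ (shifted appropriately from the $cE_7$ case so as to force the general hyperplane section to be $E_8$), and $\varphi$ is the associated weighted blow-up with some specific weights prescribed by Yamamoto's theorem. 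From the weighted-order conditions one reads off directly that $\hat g_3 = 0$, $z \mid \hat g_4$, $z^4 \mid \hat h_5$, and $z^2 \mid \hat h_6$.

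For assertion (1), after the coordinate change $y \mapsto y - \lambda u^2/3$ that absorbs the $y^2 u^2$ term, the germ returns to the standard $cE$ form. The degree $3$ part of the new $y$-coefficient vanishes (since $\hat g_3 = 0$ and $\lambda u^2$ contributes only in degree $\ge 4$), and the degree $5$ part of the new constant term is $\hat h_5$, which is divisible by $z^4$. Then applying Lemma~\ref{lem:crdchginv}(2) to any equivalence between this standard form and the given presentation of $\msp \in X$ yields the conclusion that $g_3 = 0$ and that $h_5$ is divisible by the fourth power of a linear form.

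For assertion (2), consulting \cite[Table~1]{Yamamoto} gives the list of non-Gorenstein singular points of $Y$ for the type $e9$ contraction on a $cE_8$ point. On each chart containing such a point, I would eliminate one variable using the defining equation (as was done in the $cE_7$ case using $y \in \phi_z$ from $z^3 \in \hat g$, or $y \in \phi_y$) to identify the point analytically with a terminal quotient germ $\mbC^3/\mbZ_r(a_1,a_2,a_3)$. Enumerating the divisors of discrepancy $<1$ over each such point via Lemma~\ref{lem:quotdiv}, I would compute $\ord_{G_k}(\psi_k^*E)$ for each candidate weighted blow-up by reading off the minimal weight appearing in the local equation of $E$; the identity $a_G(K_X) = a_G(K_Y) + 2\,\ord_G(\psi^*E)$ then selects the divisors of discrepancy $1$ over $\msp \in X$. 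The expectation is that exactly two such divisors appear. For assertion (3), given another discrepancy-$>1$ contraction $\tilde\varphi$, it is again of type $e9$, so there is an equivalence $\chi$ between two standard forms satisfying the hypotheses of Lemma~\ref{lem:eqgrmsp8}; that lemma forces $\chi$ to preserve the weight filtration used by the blow-up, whence $\nu_E = \nu_{\tilde E}$ and uniqueness follows from \cite[Lemma~3.4]{Kawakita01}.

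The main obstacle will be assertion (2): the chart-by-chart bookkeeping of which monomials of $\hat g$ and $\hat h$ survive the elimination step and contribute to the leading weight on each exceptional divisor $G_k$, since the divisibility constraints $z \mid \hat g_4$, $z^4 \mid \hat h_5$, $z^2 \mid \hat h_6$ change the orders of vanishing in a more delicate way than in the $cE_7$ case (where $\hat g_3 = \alpha z^3$ gave a clean elimination of $y$). Working out these orders correctly — and checking that no further divisors of discrepancy $1$ appear over points of $Y$ overlooked by a naive reading of the table — is the only substantive computational step; the remaining assertions reduce formally to the lemmas already established.
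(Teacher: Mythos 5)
Your proposal follows essentially the same route as the paper: reduce to Yamamoto's type $e9$ normal form, deduce (1) from the weight conditions together with Lemma~\ref{lem:crdchginv}, bound the divisors of discrepancy $1$ in (2) by the same chart-by-chart analysis over the two non-Gorenstein points of $Y$ as in Proposition~\ref{prop:ncd2cE7e9}, and obtain (3) from the equivalence-of-germs lemma plus \cite[Lemma~3.4]{Kawakita01}. You even cite the correct tool for (3), namely Lemma~\ref{lem:eqgrmsp8} (whose hypotheses $z \mid \hat{g}_4$, $z^4 \mid \hat{h}_5$, $z^2 \mid \hat{h}_6$ are exactly what the weight conditions provide in the $cE_8$ case), where the paper's text refers to Lemma~\ref{lem:eqgrmsp7}, which requires $g_3 = \varepsilon z^3$ with $\varepsilon \ne 0$ and so cannot apply here.
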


\begin{proof}
Let $\varphi \colon Y \to X$ be a divisorial contraction to $\msp \in X$ of discrepancy greater than $1$ with exceptional divisor $E$.
By \cite{Kawakita05}, $\varphi$ is of type $e9$ and of discrepancy $2$.
By \cite[Theorem 2.10]{Yamamoto}, $\msp \in X$ is equivalent to the germ
\[
(x^2 + y^3 + \lambda y^2 u^2 + y \hat{g} (z, u) + \hat{h} (z, u) = 0),
\] 
where $\lambda \in \mbC$ and $\hat{g}, \hat{h} \in \mbC \{z, u\}$ are power series satisfying the following properties:
\begin{itemize}
\item[(i)] $\msw (\hat{g}) \ge 9$ and $\msw (\hat{h}) \ge 14$, where  the weight $\msw$ is defined by $\msw (z, u) = (3, 2)$;
\item[(ii)] $u^7 \in \hat{h}$, and either $z^5 \in \hat{h}$ or $z^4 u \in \hat{h}$;
\end{itemize}
and under the identification of germs, $\varphi$ is the weighted blow-up with $\wt (x, y, z, u) = (7, 5, 3, 2)$.

By \cite[Table 1]{Yamamoto}, $Y$ has exactly two non-Gorenstein singular points which consist of a $\frac{1}{3} (1, 1, 2)$ point and a $\frac{1}{5} (1, 1, 4)$ point.
The situation is almost the same as in the proof of Proposition \ref{prop:ncd2cE7e9}, and we can prove (1) and (2) by the same arguments.

The proof of (3) is completely parallel to that of Proposition \ref{prop:ncd2cE7e9}(3).
Suppose that there is a divisorial contraction $\tilde{\varphi} \colon \tilde{Y} \to X$ to $\msp \in X$ of discrepancy greater than $1$ with exceptional divisor $\tilde{E}$.
Then the discrepancy of $\tilde{\varphi}$ is $2$ and it is of type $e9$.
Thus there is an equivalence $\chi \colon \msp \in X \cong \tilde{\msp} \in \tilde{X}$, where
\[
\tilde{\msp} \in \tilde{X} =
(x^2 + y^3 + \tilde{\lambda} y^2 u^2 + y \tilde{g} (z, u) + \tilde{h} (z, u) = 0) \subset \mbC^4_{x, y, z, u},
\] 
for some $\tilde{\lambda} \in \mbC$ and $\tilde{g}, \tilde{h} \in \mbC \{z, u\}$ satisfy the same assumptions (i) and (ii) above, and $\tilde{\varphi}$ is the composition of the weighted blow-up with $\wt (x, y, z, u) = (7, 5, 3, 2)$ and $\chi$.
For the degree $6$ parts $\hat{h}_6$ and $\tilde{h}_6$ of $\hat{h}$ and $\tilde{h}$, respectively, we have $z^2 \mid \hat{h}_6 (z, u)$ and $z^2 \mid \tilde{h}_6 (z, u)$ since $\msw (\hat{h}) \ge 14$ and $\msw (\tilde{h}) \ge 14$ by assumptions.
Hence we can apply Lemma \ref{lem:eqgrmsp7} and conclude that $E$ and $\tilde{E}$ define the same valuation.
This completes the proof.
\end{proof}

%%%%%%%%%%%%%%%%%%%%%%%%%%%%%%%%%%
\subsection{Divisors of discrepancy $1$ over a $cE$ point} \label{sec:divdisc1}
%%%%%%%%%%%%%%%%%%%%%%%%%%%%%%%%%%

Throughout the present subsection, let 
\[
\msp \in X = (\phi (x, y, z, u) = 0) \subset \mbC^4_{x, y, z, u}
\] 
be a $cE$ singularity, where
\[
\phi = x^2 + y^3 + y^2 f (z, u) + y g (z, u) + h (z, u),
\]
for some $f, g, h \in \mbC \{z, u\}$.
For a quadruple $(a, b, c, d)$ of positive integers, we denote by 
\[
\varphi_{(a, b, c, d)} \colon Y_{(a, b, c, d)} \to X
\]
the weighted blow-up of the germ $\msp \in X$ with $\wt (x, y, z, u) = (a, b, c, d)$.
We denote by $E_{(a, b, c, d)}$ the exceptional divisor of $\varphi_{(a, b, c, d)}$.

The choices of various blow-ing up weights and choices of re-embeddings in Lemmas \ref{lem:divdisc1cEv1}--\ref{lem:divdisc1cEv4} below are due to Hayakawa \cite{HayakawaE}.

\begin{Lem} \label{lem:divdisc1cEv1}
The following are divisors of discrepancy $1$ over $\msp \in X$.  
\begin{enumerate}
\item The divisor $E_{(3, 2, 2, 1)}$ under the assumption that $\msw_2 (f) \ge 2$, $\msw_2 (g) \ge 4$, and $\msw_2 (h) \ge 6$. 
If in addition $\msw_2 (g) \ge 5$ and $\msw_2 (h) \ge 8$, then $Y_{(3, 2, 2, 1)}$ has a non-terminal singularity.

\item The divisor $E_{(3, 3, 1, 1)}$ under the assumption that $\msw_1 (g) = 3$ and $\msw_1 (h) \ge 6$.
If in addition $\msw_1 (f) \ge 2$ and there is a linear form $\ell = \ell (z, u) \ne 0$ such that $\ell^2 \mid g_{\msw_1 = 3}$, $\ell \mid g_{\msw_1 = 4}$, $\ell^2 \mid h_{\msw_1 = 6}$, and $\ell \mid h_{\msw_1 = 7}$, then $Y_{(3, 3, 1, 1)}$ has a non-terminal singularity.

\item The divisor $E_{(5, 4, 2, 1)}$ under the assumption that $\msw_2 (f) \ge 2$, $\msw_2 (g) \ge 6$, $\msw_2 (h) \ge 10$, and the polynomial $y^2 f_{\msw_2 = 2} + y g_{\msw_2 = 6} + h_{\msw_2 = 10}$ is not a square.
If in addition $\msw_2 (f) \ge 3$ and there is a $\lambda \in \mbC$ such that $\ell \mid f_{\msw_2 = 3}$, $\ell^2 \mid g_{\msw_2 = 6}$, $\ell \mid g_{\msw_2 = 7}$, $\ell^2 \mid h_{\msw_2 = 10}$, and $\ell \mid h_{\msw_2 = 11}$, where $\ell = z - \lambda u^2$, then $Y_{(5, 4, 2, 1)}$ has a non-terminal singularity.

\item The divisor $E_{(6, 4, 3, 1)}$ under the assumption that $\msw_3 (f) \ge 4$, $\msw_3 (g) \ge 8$ and $\msw_3 (h) \ge 12$.
If in addition $\msw_3 (g) \ge 9$ and $\msw_3 (h) \ge 14$, then $Y_{(6, 4, 3, 1)}$ has a non-terminal singularity.

%\item The divisor $E_{(7, 5, 3, 1)}$ under the assumption that $\tauthreewt (f) \ge 4$, $\tauthreewt (g) \ge 9$, $\tauthreewt (h) \ge 14$, and the polynomial $y^2 f^{[\tau_3]}_4 + y g^{[\tau_3]}_9 + h^{[\tau_3]}_{14}$ is not a square.

\item The divisor $E_{(8, 5, 3, 1)}$ under the assumption that $\msw_3 (f) \ge 5$, $\msw_3 (g) \ge 10$ and $\msw_3 (h) \ge 15$.
If in addition there is a $\lambda \in \mbC$ such that $\ell \mid f_{\msw_3 = 5}$, $\ell^2 \mid g_{\msw_3 = 10}$, $\ell \mid g_{\msw_3 = 11}$, $\ell^3 \mid h_{\msw_3 = 15}$, $\ell^2 \mid h_{\msw_3 = 16}$, and $\ell \mid h_{\msw_3 = 17}$, where $\ell = z - \lambda u^3$, then $Y^{\pm}_{(8, 5, 3, 1)}$ has a non-terminal singularity. 

\item The divisor $E_{(9, 6, 4, 1)}$ under the assumption that $\msw_4 (f) \ge 6$, $\msw_4 (g) \ge 12$ and $\msw_4 (h) \ge 18$.
If in addition $\msw_4 (g) \ge 13$ and $\msw_4 (h) \ge 20$, then $Y_{(9, 6, 4, 1)}$ has a non-terminal singularity.

%\item $(E_8)$ The divisor $E_{(10, 7, 4, 1)}$ under the assumption that $\taufourwt (f) \ge 6$, $\taufourwt (g) \ge 13$ and $\taufourwt (h) \ge 20$.
%If in addition there is a $\lambda \in \mbC$ such that $\ell^2 \mid f^{[\tau_4]}_6$, $\ell \mid f^{[\tau_4]}_7$, $\ell^3 \mid g^{\tau_4]}_{13}$, $\ell^2 \mid g^{[\tau_4]}_{14}$, $\ell \mid g^{[\tau_4]}_{15}$, $\ell^4 \mid h^{[\tau_4]}_{20}$, $\ell^3 \mid h^{[\tau_4]}_{21}$, $\ell^2 \mid h^{[\tau_4]}_{22}$, and $\ell \mid h^{[\tau_4]}_{23}$, where $\ell = z - \lambda u^4$, then $Y_{(10, 7, 4, 1)}$ has a non-terminal singularity.
\end{enumerate}
\end{Lem}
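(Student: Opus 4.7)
I work within the framework of Section~\ref{sec:genclsfcont}: Lemma~\ref{lem:genwblexc} delivers each discrepancy computation, and Lemma~\ref{lem:singwbl}, sometimes preceded by an analytic re-embedding, delivers each non-terminal singularity. For every case I first verify that the lower bounds on $\msw_i(f)$, $\msw_i(g)$, $\msw_i(h)$ are calibrated so that each of the five summands of $\phi$ has $\msw$-weight at least $a+b+c+d-2$, with equality realized by $x^2$; this identifies $\msw(\phi)=a+b+c+d-2$. Since $x$ appears in $\phi$ only through $x^2$, the leading form $\phi_{\msw=d_\phi}$ is reducible iff its non-$x^2$ part is minus a square in $\mbC[y,z,u]$. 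This is ruled out in cases (1), (2), (4), (5), (6) by the $y^3$ contribution and the shape of the leading form, and by the explicit ``not a square'' hypothesis in case (3). Lemma~\ref{lem:genwblexc} then gives discrepancy $1$.

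\emph{Non-terminality, cases (1), (4), (6).} The raised bounds on $\msw_i(g)$, $\msw_i(h)$ kill the $g$- and $h$-contributions to $\phi_{\msw=d_\phi}$ and $\phi_{\msw=d_\phi+1}$, forcing $\phi_{\msw=d_\phi}\in(x,y)^2$ and $\phi_{\msw=d_\phi+1}\in(x,y)$. Lemma~\ref{lem:singwbl}(2) with $p=y$ then shows $Y_{(a,b,c,d)}$ is singular along the curve $(x=y=0)\subset E\cong\mbP(a,b,c,d)$, so the singularity is non-terminal. In cases (2) and (3), I would first perform the analytic change $z\mapsto z+\lambda u^k$ (with $k$ matching $\ell=z-\lambda u^k$) to realize $\ell=z$; the divisibility hypotheses then read as $z$-divisibility of the relevant graded pieces of $f,g,h$. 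Because the $\msw$-weight of $y^3$ strictly exceeds $d_\phi$ in these two cases, $y^3$ does not appear in the leading form, and one obtains $\phi_{\msw=d_\phi}\in(x,z)^2$ and $\phi_{\msw=d_\phi+1}\in(x,z)$; Lemma~\ref{lem:singwbl}(2) with $p=z$ then applies.

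\emph{Main obstacle: case (5).} Here $y^3$ lies in $\phi_{\msw=d_\phi}$ (since $3b=d_\phi=15$) while the divisibility hypotheses involve $z$, so neither $p=y$ nor $p=z$ directly satisfies the hypotheses of Lemma~\ref{lem:singwbl}(2). The plan is to perform the coordinate change $z\mapsto z+\lambda u^3$ and then pass to the local equation of $Y_{(8,5,3,1)}$ at the origin of the $u$-chart, which takes the form $x_u^2+Q(y_u,z_u,u)=0$ with $Q$ of order at least $3$. The coefficients of the cubic form $Q_3$ are computed from the graded pieces of $f,g,h$ in the ranges singled out by the divisibility hypotheses; following the ideas developed by Hayakawa in \cite{HayakawaE}, one then shows that $Q_3$ is sufficiently degenerate (i.e.\ its zero locus in $\mbP^2$ is not an admissible Du Val configuration) for the singularity to fail the Du Val criterion, and hence to be non-terminal. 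This local calculation, together with the bookkeeping that matches each bank of weight bounds and divisibility conditions to the right verification tool across the six cases, is the main technical burden of the lemma.
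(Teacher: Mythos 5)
Your treatment of the discrepancy computation and of the non-terminality in cases (1)--(4) and (6) follows the paper's route exactly (Lemma~\ref{lem:genwblexc} for the discrepancy, Lemma~\ref{lem:singwbl}(2) with $p=y$ or $p=\ell$ for the singularity) and is correct. The gap is concentrated in case (5), and it stems from a single miscalculation: with $\wt(x,y,z,u)=(8,5,3,1)$ one has $\msw(x^2)=16$ while $\msw(y^3)=15$, so $\msw(\phi)=15$ is attained by $y^3$ and \emph{not} by $x^2$, contrary to your calibration claim. Consequently the leading form in case (5) is $y^3+y^2f_{\msw_3=5}+yg_{\msw_3=10}+h_{\msw_3=15}$, which does not involve $x$ at all; your criterion ``reducible iff the non-$x^2$ part is minus a square'' is vacuous here, and the presence of $y^3$ does not prevent factorization --- this is a cubic in $y$ over $\mbC[z,u]$, reducible exactly when it admits a weighted-homogeneous root $y=r(z,u)$ of weight $5$ (indeed it degenerates to $y^3$ if $f,g,h$ all have strictly larger weight), so irreducibility genuinely has to be checked from an explicit monomial of $h_{\msw_3=15}$ in each application. (A milder instance of the same slip occurs in case (2), where $y^3$ also lies above the leading weight and irreducibility comes from $yg_{\msw_1=3}$ with $g_{\msw_1=3}\ne 0$.)

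The same miscalculation invalidates your non-terminality plan for case (5). In the $u$-chart the monomial $x^2$, having weight $16=\msw(\phi)+1$, becomes $x^2u$ after dividing by $u^{15}$, so the local equation has \emph{no} quadratic part at all: after the change $z\mapsto z+\lambda u^3$ the divisibility hypotheses give $\phi_{\msw=15}\in(x,y,z)^3$, $\phi_{\msw=16}\in(x,y,z)^2$, $\phi_{\msw=17}\in(x,y,z)$, whence every term of the local equation lies in $(x,y,z,u)^3$. The correct conclusion is that the origin of the $u$-chart has multiplicity at least $3$, hence is not a $cDV$ point and not terminal --- this is precisely Lemma~\ref{lem:singwbl}(1), which you overlooked when you decided that only part (2) of that lemma was available. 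Your proposed substitute --- writing the equation as $x_u^2+Q$ and arguing that $Q_3$ is a ``degenerate'' cubic --- would not work even if the normal form were right: a germ $x^2+Q$ with $Q_3$ a triple line, such as $x^2+y^3+z^4+u^5$, is a terminal $cE_6$ point, so degeneracy of the cubic form alone proves nothing, and one would be driven back into the $cD$/$cE$ classification that the multiplicity count is designed to avoid.
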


\begin{proof}
Let $\msw$ be the blow-up weight, that is, $\msw (x, y, z, u) = (a, b, c, d)$.
In each case, it is straightforward to check that the polynomial $\phi_{\msw = d}$ is irreducible, where $d := \msw (\phi)$, and that $(a + b + c + d) - d - 1 = 1$ under the given assumption.
This implies that $E_{(a, b, c, d)}$ is a divisor of discrepancy $1$ over $\msp$ by Lemma~\ref{lem:genwblexc}.

It remains to show that $Y = Y_{(a, b, c, d)}$ has a non-terminal singularity under the additional assumption.
In each case (1)--(6), we can observe the following.
\begin{enumerate}
\item $d = 6$, and $\phi_{\msw = 6} \in (x, y)^2$, $\phi_{\msw = 7} \in (x, y)$.
\item $d = 6$, and $\phi_{\msw = 6} \in (x, \ell)^2$, $\phi_{\msw = 7} \in (x, \ell)$.
\item $d = 10$, and $\phi_{\msw = 10} \in (x, \ell)^2$, $\phi_{\msw = 11} \in (x, \ell)$.
\item $d = 12$, and $\phi_{\msw = 12} \in (x, y)^2$, $\phi_{\msw = 13} \in (x, y)$.
\item $d = 15$, and $\phi_{\msw = 15} \in (x, y, \ell)^3$, $\phi_{\msw = 16} \in (x, y, \ell)^2$, $\phi_{\msw = 17} \in (x, y, \ell)$.
\item $d = 18$, and $\phi_{\msw = 18} \in (x, y)^2$, $\phi_{\msw = 19} \in (x, y)$.
\end{enumerate}
By Lemma~\ref{lem:singwbl}, $Y$ has a non-terminal singularity, and the proof is completed.
\end{proof}

\begin{Lem} \label{lem:divdisc1cEv2}
Suppose that
\[
\phi = x^2 + y^3 + y^2 f (z, u) + y (2 \mu z^3 + \tilde{g} (z, u)) - \nu^2 z^4 + \tilde{h} (z, u)
\]
for some $\mu, \nu \in \mbC$ and $f, \tilde{g}, \tilde{h} \in \mbC [z, u]$ such that $\nu \ne 0$, and $\msw_1 (f) \ge 2$, $\msw_1 (\tilde{g}) \ge 4$, and $\msw_1 (\tilde{h}) = 6$.
Let $\chi^{\pm} \colon o \in X^{\pm} \to \msp \in X$ be the equivalence of germs defined by $x \mapsto x \mp (- \mu \nu^{-1} y z + \nu z^2)$ and $y \mapsto y - \hat{f}$, where 
\[
o \in X^{\pm} = (x^2 \pm 2 (-\mu y z + \nu z^2) x + y^3 + y \hat{g} + \hat{h} = 0) \subset \mbC^4_{x, y, z, u},
\]
with $\phi^{\pm} := \phi (x \pm (- \mu \nu^{-1} y z + \nu z^2, y - \hat{f}, y, z)$ and
\[
\begin{split}
\hat{f} &:= \frac{1}{3} (\mu \nu^{-1} z^2 + f), \\
\hat{g} &:= 3 \hat{f}^2 - 2 \hat{f} f + g, \\
\hat{h} &:= - \hat{f}^3 + \hat{f}^2 f - (2 \mu z^3 + g) \hat{f} + h.
\end{split}
\] 
Let $E^{\pm}_{(4, 2, 1, 1)}$ be the exceptional divisor of the composite
\[
\varphi^{\pm}_{(4, 2, 1, 1)} \colon Y_{(4, 2 , 1, 1)}^{\pm} \to X
\] 
of the weighted blow-up with $\wt (x, y, z, u) = (4, 2, 1, 1)$ of $X^{\pm}$ and $\chi^{\pm}$.
Then $E^{+}_{(4, 2, 1, 1)}$ and $E^{-}_{(4, 2, 1, 1)}$ are distinct divisors of discrepancy $1$  over $\msp$.
If in addition, 
\[
z \mid f_{\msw_1 = 2}, \ 
z^2 \mid \tilde{g}_{\msw_1 = 4}, \ 
z \mid \tilde{g}_{\msw_1 = 5}, \ 
z^3 \mid \tilde{h}_{\msw_1 = 6}, \ 
z^2 \mid \tilde{h}_{\msw_1 = 7}, \ 
z \mid \tilde{h}_{\msw_1 = 8},
\] 
then $Y^{\pm}_{(4, 2, 1, 1)}$ has a non-terminal singularity.
\end{Lem}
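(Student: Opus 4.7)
The plan is to establish four things in turn: (i) $\chi^\pm$ is indeed an equivalence of germs producing the displayed form $\phi^\pm$; (ii) the weighted blow-up produces a divisor of discrepancy one; (iii) $E^+$ and $E^-$ are distinct valuations; and (iv) non-terminality of $Y^\pm_{(4,2,1,1)}$ under the additional hypotheses. For (i) I would simply expand $\phi(x \mp q,\, y - \hat f,\, z,\, u)$, where $q := -\mu\nu^{-1}yz + \nu z^2$, and collect terms by powers of $y$; the shape of $\hat f$ is designed so that the $y^2$-coefficient of the pulled-back polynomial vanishes, while the $y^1$- and $y^0$-coefficients assemble, by construction, into $\hat g$ and $\hat h$, and the $x$-linear terms reassemble into $\pm 2(-\mu yz + \nu z^2)x$ after the cancellation of the $\nu^2 z^4$ appearing in $q^2$ against the $-\nu^2 z^4$ from $h$.

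For (ii) I would apply Lemma~\ref{lem:genwblexc} with $\msw = \wt(x, y, z, u) = (4, 2, 1, 1)$. Under these weights one has $\msw(y^3) = \msw(\pm 2\nu x z^2) = 6$, while the hypotheses $\msw_1(f) \ge 2$, $\msw_1(\tilde g) \ge 4$, $\msw_1(\tilde h) = 6$ combined with the formulas for $\hat f, \hat g, \hat h$ force $\msw(y\hat g) \ge 6$ and $\msw(\hat h) \ge 6$. Thus $\msw(\phi^\pm) = 6$, and $\phi^\pm_{\msw = 6}$ is irreducible because, viewed as a polynomial in $x$ over $\mbC[y, z, u]$, it is linear with nonzero leading coefficient $\pm 2\nu z^2$. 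The identity $(4+2+1+1) - 6 - 1 = 1$ then yields that $E^\pm_{(4,2,1,1)}$ has discrepancy one over $\msp \in X$.

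For (iii) I distinguish $E^+$ and $E^-$ by evaluating a well-chosen rational function. Pulling back $x + \nu z^2 \in \mbC(X)$ through $\chi^\pm$ yields
\[
\chi^{+*}(x + \nu z^2) = x + \mu\nu^{-1}yz, \qquad
\chi^{-*}(x + \nu z^2) = x - \mu\nu^{-1}yz + 2\nu z^2,
\]
whose $\msw$-weights are $\min(4, 3) = 3$ and $\min(4, 3, 2) = 2$, respectively. Since $\nu_{E^\pm}(x + \nu z^2)$ coincides with the $\msw$-weight of $\chi^{\pm*}(x + \nu z^2)$, the two valuations differ, and hence $E^+$ and $E^-$ are distinct divisors over $\msp \in X$.

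For (iv) I would propagate the divisibility hypotheses through the definitions of $\hat f, \hat g, \hat h$. From $z \mid f_{\msw_1 = 2}$ and the formula for $\hat f$ one obtains $z \mid \hat f_{\msw = 2}$; a term-by-term analysis then yields $z^2 \mid \hat g_{\msw = 4}$, $z \mid \hat g_{\msw = 5}$ (using $z^2 \mid \tilde g_{\msw_1 = 4}$, $z \mid \tilde g_{\msw_1 = 5}$) and $z^3 \mid \hat h_{\msw = 6}$, $z^2 \mid \hat h_{\msw = 7}$, $z \mid \hat h_{\msw = 8}$ (using the hypotheses on $\tilde h$). Combined with the obvious membership of $x^2$, $y^3$, $\pm 2\nu x z^2$, and $\mp 2\mu xyz$ in the relevant ideals, this gives
\[
\phi^\pm_{\msw = 6} \in (x, y, z)^3,\quad
\phi^\pm_{\msw = 7} \in (x, y, z)^2,\quad
\phi^\pm_{\msw = 8} \in (x, y, z),
\]
so Lemma~\ref{lem:singwbl}(1) produces a non-terminal singularity of $Y^\pm_{(4,2,1,1)}$ at the point $(0\!:\!0\!:\!0\!:\!1) \in \mbP(4, 2, 1, 1)$. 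The main obstacle I anticipate is step (iii): the two re-embeddings differ only by the sign of $q$, and one must exhibit a function in $\mbC(X)$ whose pulled-back $\msw$-weight is asymmetric in this sign. The test function $x + \nu z^2$ is natural once one notices that the passage to $X^\pm$ is a square-completion relative to $x \pm \nu z^2$; the bookkeeping in step (iv) is lengthy but entirely mechanical once $z \mid \hat f_{\msw = 2}$ is in hand.
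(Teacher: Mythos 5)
Your proposal is correct and follows essentially the same route as the paper: discrepancy $1$ via Lemma~\ref{lem:genwblexc} applied to the weight $\msw (x, y, z, u) = (4, 2, 1, 1)$ (with irreducibility of $\phi^{\pm}_{\msw = 6}$ coming from the term $\pm 2 \nu z^2 x$ together with the presence of $y^3$), and non-terminality by propagating the divisibility hypotheses to $\hat{g}$ and $\hat{h}$ and invoking Lemma~\ref{lem:singwbl}(1). Your step (iii), distinguishing $\nu_{E^+}$ from $\nu_{E^-}$ by the weights $\ge 3$ versus $2$ of $\chi^{\pm *} (x + \nu z^2)$, is valid (modulo checking that the leading weight-homogeneous parts do not vanish identically on $E^{\pm}$, which holds since $\phi^{\pm}_{\msw = 6}$ is irreducible and contains $y^3$) and in fact supplies a detail the paper's own proof leaves implicit, in the same style as the distinctness argument given in Lemma~\ref{lem:divdisc1cEv4}.
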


\begin{proof}
Let $\msw$ be the weight defined by $\msw (x, y, z, u) = (4, 2, 1, 1)$.
We have
\[
\phi^{\pm} = x^2 \pm 2 (-\mu y z + \nu z^2) x + y^3 + y \hat{g} + \hat{h},
\]
$\msw (\phi^{\pm}) = 6$, and
\[
\phi^{\pm}_{\msw = 6} = 2 \nu z^2 x + y^3 + y \hat{g}_{\msw = 4} + \hat{h}_{\msw = 6},
\]
which is clearly irreducible since $\nu \ne 0$.
Thus $E^{\pm} := E^{\pm}_{(4, 2, 1, 1)}$ is a divisor of discrepancy $(4+2+1+1) - 6 - 1 = 1$. 

Suppose that the additional assumptions are satisfied.
Then we have
\[
z^2 \mid \hat{g}_{\msw_1 =4}, \ 
z \mid \hat{g}_{\msw_1 = 5}, \ 
z^3 \mid \hat{h}_{\msw_1 = 6}, \ 
z^2 \mid \hat{h}_{\msw _1 = 7}, \ 
z \mid \hat{h}_{\msw_1 = 8}.
\]
It follows that
\[
\phi^{\pm}_{\msw = 6} \in (x, y, z)^3, \ 
\phi^{\pm}_{\msw = 7} \in (x, y, z)^2, \ 
\phi^{\pm}_{\msw = 8} \in (x, y, z).
\]
This shows that $Y$ has a non-terminal singularity by Lemma \ref{lem:singwbl}.
\end{proof}

\begin{Lem} \label{lem:divdisc1cEv3}
Suppose that
\[
\phi = x^2 + y^3 + y^2 f (z, u) + y g (z, u) + \tilde{h} (z, u) - \nu^2 z^4
\]
for some $\nu \in \mbC$ and some $f, g, \tilde{h} \in \mbC \{z, u\}$ such that $\nu \ne 0$, $\msw_2 (f) \ge 3$, $\msw_2 (g) \ge 6$, and $\msw_2 (\tilde{h}) \ge 9$. 
Let $\chi^{\pm} \colon \msp^{\pm} \in X^{\pm} \to \msp \in X$ be the equivalence defined by $x \mapsto x \mp \nu z^2$, where
\[
\msp^{\pm} \in X^{\pm} = (\phi^{\pm} (x, y, z, u) = 0) \subset \mbC^4_{x, y, z, u}
\]
with $\phi^{\pm} := \phi (x \pm \nu z^2, y, z, u)$.
Let $E^{\pm}_{(5, 3, 2, 1)}$ be the exceptional divisor of the composite 
\[
\varphi^{\pm}_{(5, 3, 2, 1)} \colon Y^{\pm}_{(5, 3, 2, 1)} \to X
\]
of the weighted blow-up $Y^{\pm}_{(5, 3, 2, 1)} \to X^{\pm}$ of $\msp^{\pm} \in X^{\pm}$ with $\wt (x, y, z, u) = (5, 3, 2, 1)$ and $\chi^{\pm}$.
Then $E^+_{(5, 3, 2, 1)}$ and $E^-_{(5, 3, 2, 1)}$ are distinct divisors of discrepancy $1$ over $\msp$.
If in addition, 
\[
z \mid f_{\msw_2 = 3}, \ 
z^2 \mid g_{\msw_2 = 6}, \ 
z \mid g_{\msw_2 = 7}, \ 
z^3 \mid \tilde{h}_{\msw_2 = 9}, \ 
z^2 \mid \tilde{h}_{\msw_2 = 10}, \ 
z \mid \tilde{h}_{\msw_2 = 11}, 
\]
then $Y^{\pm}_{(5, 3, 2, 1)}$ has a non-terminal singularity. 
\end{Lem}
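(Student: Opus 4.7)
The plan is to follow the blueprint used in Lemma~\ref{lem:divdisc1cEv2}. First, I expand $\phi^\pm = \phi(x \pm \nu z^2, y, z, u)$; the cancellation $(\pm \nu z^2)^2 - \nu^2 z^4 = 0$ yields
\[
\phi^\pm = x^2 \pm 2 \nu z^2 x + y^3 + y^2 f(z,u) + y g(z,u) + \tilde h(z,u).
\]
With $\msw$ the weight on $(x,y,z,u)$ given by $(5,3,2,1)$, one checks $\msw(\phi^\pm) = 9$ and
\[
\phi^\pm_{\msw = 9} = \pm 2 \nu z^2 x + y^3 + y^2 f_{\msw_2 = 3} + y g_{\msw_2 = 6} + \tilde h_{\msw_2 = 9},
\]
which is linear in $x$ with nonzero coefficient $\pm 2 \nu z^2$, hence irreducible. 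Lemma~\ref{lem:genwblexc} applied to the weighted blow-up $Y^\pm_{(5,3,2,1)} \to X^\pm$ then produces an exceptional divisor of discrepancy $(5+3+2+1) - 9 - 1 = 1$ over $\msp^\pm \in X^\pm$, and this transfers to a divisor of discrepancy $1$ over $\msp \in X$ via the analytic equivalence $\chi^\pm$.

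To show $E^+ \ne E^-$, my plan is to exhibit a regular function on $X$ whose $\nu_{E^+}$- and $\nu_{E^-}$-values differ. Since the two re-embeddings $\chi^+$ and $\chi^-$ differ in the $x$-coordinate by $2\nu z^2$, the function $x + \nu z^2 \in \mcO_X$ pulls back under one of $\chi^\pm$ to the coordinate $x$ itself on that side (whose $\msw$-valuation is $5$), while on the other side it pulls back to a function of the form $x + 2\nu z^2$, whose $\msw$-leading part is $2\nu z^2$ of weight $4$ with no cancellation since $\nu \ne 0$. Hence $\nu_{E^+}$ and $\nu_{E^-}$ take the values $5$ and $4$ (in some order) on this function, which forces the two divisors to be distinct.

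Finally, under the additional divisibility hypotheses, my plan is to verify the three inclusions needed to apply Lemma~\ref{lem:singwbl}(1) with $d = 9$ at the torus-fixed point $(0\!:\!0\!:\!0\!:\!1)$ of the exceptional divisor:
\[
\phi^\pm_{\msw = 9} \in (x,y,z)^3, \quad \phi^\pm_{\msw = 10} \in (x,y,z)^2, \quad \phi^\pm_{\msw = 11} \in (x,y,z).
\]
For weight $9$, each of the five summands individually lies in $(x,y,z)^3$ using $z \mid f_{\msw_2 = 3}$, $z^2 \mid g_{\msw_2 = 6}$, and $z^3 \mid \tilde h_{\msw_2 = 9}$; for weight $10$, the terms $x^2$ and $y^2 f_{\msw_2 = 4}$ are automatic and $z \mid g_{\msw_2 = 7}$, $z^2 \mid \tilde h_{\msw_2 = 10}$ handle the rest; for weight $11$, the summands $y^2 f_{\msw_2 = 5}$ and $y g_{\msw_2 = 8}$ carry a factor of $y$, and $z \mid \tilde h_{\msw_2 = 11}$ disposes of the remaining summand. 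Applying Lemma~\ref{lem:singwbl}(1) then produces the desired non-terminal singularity on $Y^\pm_{(5,3,2,1)}$. The only subtle step is the distinctness argument; the rest is routine degree bookkeeping mirroring the preceding lemmas.
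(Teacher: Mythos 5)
Your proposal is correct and follows essentially the same route as the paper: expand $\phi^{\pm}$, apply Lemma~\ref{lem:genwblexc} to the irreducible weight-$9$ part (irreducibility holding because the coefficient $\pm 2\nu z^2$ of $x$ is nonzero and the $x$-free part contains $y^3$, so is not divisible by $z$), and then verify $\phi^{\pm}_{\msw=9}\in(x,y,z)^3$, $\phi^{\pm}_{\msw=10}\in(x,y,z)^2$, $\phi^{\pm}_{\msw=11}\in(x,y,z)$ to invoke Lemma~\ref{lem:singwbl}(1). Your explicit valuation-comparison argument for $E^+\ne E^-$ is a welcome addition: the paper's proof of this lemma leaves distinctness implicit, but your argument is exactly the one the paper itself uses in the analogous situation of Lemma~\ref{lem:divdisc1cEv4}.
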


\begin{proof}
Let $\msw$ be the weight defined by $\msw (x, y, z, u) = (5, 3, 2, 1)$.
We have
\[
\phi^{\pm} = x^2 \pm 2 \nu x z^2 + y^3 + y^2 f(z, u) + y g (z, u) + \tilde{h} (z, u).
\]
It is easy to check that $\msw (\phi^{\pm}) = 9$ and the polynomial
\[
\phi^{\pm}_{\msw = 9} = \pm 2 \nu x z^2 + y^3 + y^2 f_{\msw_2 = 3} + y g_{\msw_2 = 9} + h_{\msw_2 = 9}
\]
is irreducible since $\nu \ne 0$.
Hence $E^{\pm} = E^{\pm}_{(5, 3, 2, 1)}$ is a divisor of discrepancy $(5 + 3 + 2 + 1) - 9 - 1 = 1$ over $\msp \in X$.

It is also straightforward to check that, under the addition assumptions, we have
\[
\phi^{\pm}_{\msw = 9} \in (x, y, z)^3, \ 
\phi^{\pm}_{\msw = 10} \in (x, y, z)^2, \ 
\phi^{\pm}_{\msw = 11} \in (x, y, z).
\]
This shows that $Y^{\pm}_{(5, 3, 2, 1)}$ has a non-terminal singularity.
\end{proof}

\begin{Lem} \label{lem:divdisc1cEv4}
Suppose that $\msw_2 (f) = 3$, $\msw_2 (g) \ge 7$, $\msw_2 (h) = 10$, and $z^5 \in h$.
We set $e = e (z, u) := f_{\msw_2 = 3} (z, u) \ne 0$.
Let $\chi \colon \tilde{\msp} \in \tilde{X} \to \msp \in X$ be the equivalence of germs defined by $y \mapsto y + e$, where
\[
\tilde{\msp} \in \tilde{X} := (\tilde{\phi} = 0) \subset \mbC^4_{x, y, z, u}
\]
with $\tilde{\phi} := \phi (w, y - e, z, u)$.
Let $\tilde{E}_{(5, 4, 2, 1)}$ be the exceptional divisor of the composite $\tilde{\varphi}_{(5, 4, 2, 1)} \colon\tilde{Y}_{(5, 4, 2, 1)} \to X$ of the weighted blow-up of $\tilde{\msp} \in \tilde{X}$ with $\wt (x, y, z, u) = (5, 4, 2, 1)$ and $\chi \colon\tilde{X} \to X$.
Then $\tilde{E}_{(5, 4, 2, 1)}$ is a divisor of discrepancy $1$ over $\msp \in X$ which is different from $E_{(5, 4, 2, 1)}$.
If in addition there is a $\lambda \in \mbC$ such that 
\[
\ell \mid f_{\msw_2 = 3}, \ 
\ell \mid g_{\msw_2 = 7}, \ 
\ell^2 \mid h_{\msw_2 = 10}, \ 
\ell \mid h_{\msw_2 = 11},
\] 
where $\ell = z - \lambda u^2$, then $\tilde{Y}_{(5, 4, 2, 1)}$ has a non-terminal singularity.
\end{Lem}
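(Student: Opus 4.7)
The plan is to expand $\tilde\phi(x, y, z, u) = \phi(x, y - e, z, u)$, track weights under $\msw(x, y, z, u) = (5, 4, 2, 1)$, and apply Lemma~\ref{lem:genwblexc} for the discrepancy, a valuation comparison for distinctness, and Lemma~\ref{lem:singwbl}(2) for the non-terminality. Writing $f = e + f_{\msw_2 \ge 4}$, the key cancellations are that $-e^3$ from $(y-e)^3$ is killed exactly by $e \cdot e^2$ from $(y-e)^2 f$, and that $3 y e^2$ from $(y-e)^3$ combines with $-2 y e \cdot e$ from $(y-e)^2 f$ to leave a single $y e^2$. Collecting terms gives
$$
\tilde\phi_{\msw = 10} = x^2 + y e^2 + e^2 f_{\msw_2 = 4} - e g_{\msw_2 = 7} + h_{\msw_2 = 10}.
$$
Since $e \ne 0$, this has the form $x^2 + P(y, z, u)$ with $P$ linear in $y$ of nonzero leading coefficient $e^2$, hence $P$ is not a square in $\mbC[y, z, u]$, so $\tilde\phi_{\msw = 10}$ is irreducible. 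By Lemma~\ref{lem:genwblexc}, $\tilde E$ is a divisor over $\msp \in X$ of discrepancy $(5+4+2+1) - 10 - 1 = 1$.

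To distinguish $\tilde E$ from $E_{(5, 4, 2, 1)}$, I would compare the induced valuations on the single coordinate $y \in \mcO_X$. By construction $\nu_{E_{(5, 4, 2, 1)}}(y) = 4$. The pullback of $y$ along $\chi$ is $y - e \in \mcO_{\tilde X}$, and the weighted blow-up valuation on $\tilde X$ assigns $\nu(y) = 4$ and $\nu(e) = \msw_2(e) = 3$; thus $\nu_{\tilde E}(y) = \min(4, 3) = 3 \ne 4$, so the two divisors are distinct.

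Under the additional hypotheses, parallel bookkeeping gives
$$
\tilde\phi_{\msw = 11} = -2 y^2 e - 2 y e f_{\msw_2 = 4} + e^2 f_{\msw_2 = 5} + y g_{\msw_2 = 7} - e g_{\msw_2 = 8} + h_{\msw_2 = 11}.
$$
The conditions $\ell \mid f_{\msw_2 = 3}$, $\ell \mid g_{\msw_2 = 7}$, $\ell \mid h_{\msw_2 = 11}$, and $\ell^2 \mid h_{\msw_2 = 10}$ force $e \in (\ell)$, $g_{\msw_2 = 7} \in (\ell)$, $h_{\msw_2 = 11} \in (\ell)$, and $h_{\msw_2 = 10} \in (\ell)^2$. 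Termwise, every non-$x^2$ summand of $\tilde\phi_{\msw = 10}$ lies in $(\ell)^2$ (the terms $y e^2$, $e^2 f_{\msw_2 = 4}$, and $e g_{\msw_2 = 7}$ each carry two factors from $(\ell)$), and every summand of $\tilde\phi_{\msw = 11}$ lies in $(\ell)$. Hence $\tilde\phi_{\msw = 10} \in (x, \ell)^2$ and $\tilde\phi_{\msw = 11} \in (x, \ell)$; since $\ell = z - \lambda u^2$ is an irreducible $\msw_2$-weighted-homogeneous polynomial of weight $2$, Lemma~\ref{lem:singwbl}(2) with $p = \ell$ yields a curve of singularities $(x = \ell = 0)$ on $\tilde Y_{(5, 4, 2, 1)}$, so this variety is non-terminal.

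The main technical hurdle is the weight-$10$ cancellation: one must verify that after all the expansions the coefficient of $y e^2$ in $\tilde\phi_{\msw = 10}$ is a nonzero constant. This single monomial is what ensures irreducibility of $\tilde\phi_{\msw = 10}$ and what drives the distinguishing valuation computation; without it, $\tilde\phi_{\msw = 10}$ could collapse to a square in $z, u$ and the two valuations $\nu_{E_{(5, 4, 2, 1)}}$ and $\nu_{\tilde E}$ would agree on $y$.
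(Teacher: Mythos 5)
Your proposal is correct and follows essentially the same route as the paper: both expand $\tilde{\phi} = \phi(x, y-e, z, u)$, identify the weight-$10$ part $x^2 + y e^2 + e^2 f_{\msw_2 = 4} - e g_{\msw_2 = 7} + h_{\msw_2 = 10}$, and reduce the two claims to Lemmas~\ref{lem:genwblexc} and \ref{lem:singwbl} (the paper routes both through Lemma~\ref{lem:divdisc1cEv1}(3), which you simply inline). The only cosmetic differences are your witness for non-squareness --- the term $y e^2$, linear in $y$, instead of the paper's $z^5 \in h_{\msw_2 = 10}$ --- and your valuation comparison $\nu_{\tilde{E}}(y) = 3 \ne 4 = \nu_{E}(y)$, which is the correctly stated version of the paper's distinctness argument.
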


\begin{proof}
We have
\[
\tilde{\phi} = x^2 + y^3 + y^2 \tilde{f} (z, u) + y \tilde{g} (z, u) + \tilde{h} (z, u),
\]  
where
\[
\begin{split}
\tilde{f} &= - 3 e + f, \\
\tilde{g} &= 3 e^2 - 2 e f + g, \\
\tilde{h} &= - e^3 + e^2 f - e g + h.
\end{split}
\]
We have $\msw_2 (\tilde{f}) = 3$, $\msw_2 (\tilde{g}) = 6$, and $\msw_2 (\tilde{h}) \ge 10$.
Note that $u \mid e = f_{\msw_2 = 3}$ and $z^5 \in h_{\msw_2 = 10}$.
It follows that the polynomial
\[
y^2 \tilde{f}_{\msw_2 = 2} + y \tilde{g}_{\msw_2 = 6} + \tilde{h}_{\msw_2 = 10} = y e^2 + e^2 f_{\msw_2 = 4} - e g_{\msw_2 = 7} + h_{\msw_2 = 10}
\]
is not a square since it contains $z^5$ with non-zero coefficient.
By Lemma \ref{lem:divdisc1cEv1}~(3), $\tilde{E}_{(5, 4, 2, 1)}$ is a divisor of discrepancy $1$ over $\tilde{\msp} \in \tilde{X}$ and hence over $\msp \in X$.
The divisors $\tilde{E}_{(5, 4, 2, 1)}$ and $E_{(5, 4, 2, 1)}$ define distinct valuations since the weight of $\chi (x) = x + e$ is $3$ while the weight of $x$ is $5$ with respect to $\wt (x, y, z, u) = (5, 4, 2, 1)$.

Suppose that the additional assumptions are satisfied.
Then it is straightforward to check that 
\[
\ell \mid \tilde{f}_{\msw_2 = 3}, \  
\ell^2 \mid \tilde{g}_{\msw_2 = 6}, \ 
\ell \mid \tilde{g}_{\msw_2 = 7}, \  
\ell^2 \mid \tilde{h}_{\msw_2 = 10}, \  
\ell \mid \tilde{h}_{\msw_2 = 11}
\]
hold.
Thus, by \ref{lem:divdisc1cEv1}~(3), $\tilde{Y}_{(5, 4, 2, 1)}$ has a non-terminal singularity.
\end{proof}

%\begin{Lem}
%Assume that $\tautwowt (f) \ge 3$, $\tautwowt (g) \ge 6$, $\tautwowt (h) \ge 10$, and the polynomial $y^2 + y f^{[\tau_2]}_3 + g^{[\tau_2]}_6$ is irreducible.
%We re-embed $\msp \in X$ as a complete intersection in $\mbA^5_{x, y, z, u, t}$ defined by 
%\[
%\begin{split}
%\phi' &:= x^2 + y t + y^2 f^{[\tau_2]}_{\ge 4} + y g^{[\tau_2]}_{\ge 7} + h = 0, \\
%\phi'' &:= y^2 + y f^{[\tau_2]}_3 + g^{[\tau_2]}_6 - t = 0.
%\end{split}
%\]
%Then the exceptional divisor, denoted by $E_{(5, 3, 2, 1, 7)}$, of the weighted blow-up
%\[
%\varphi_{(5, 3, 2, 1, 7)} \colon Y_{(5, 3, 2, 1, 7)} \to X, \ \wt (x, y, z, u, t) = (5, 3, 2, 1, 7),
%\]
%is a prime exceptional divisor of discrepancy $1$ over $\msp$. 
%If $z \mid g^{[\tau_2]}_7$, $z^2 \mid h^{[\tau_2]}_{10}$, $z \mid h^{[\tau_2]}_{11}$, $z \mid f^{[\tau_2]}_3$, and $z^2 \mid g^{[\tau_2]}_6$, then $Y_{(5, 3, 2, 1, 7)}$ has non-terminal singularities.
%\end{Lem}

%%%%%%%%%%%%%%%%%%%%%%%%%%%%%%%%%%
\subsection{Divisorial contractions to $cD/2$ points}
%%%%%%%%%%%%%%%%%%%%%%%%%%%%%%%%%%

Let $\msp \in X$ be a singularity of type $cD/2$-$2$.
Then there is an equivalence $\msp \in X \cong o \in \overline{X}$ of germs, where
\begin{equation} \label{eq:cD2std}
o \in \overline{X} = (u^2 + y^2 z + \lambda y x^{2 a + 1} + g (x^2,z) = 0)/\mbZ_2 (1_x, 1_y, 0_z, 1_u)
\end{equation}
for some $\lambda \in \mbC$, $a \ge 1$, and $g (x^2, z) \in (x^4, x^2 z^2, z^3) \mbC \{x, z\}$.
We say that the germ $o \in \overline{X}$ is {\it standard} or is a {\it standard model} of $X$.
Let $o \in \overline{X}$ be a standard germ of a $cD/2$-$2$ singularity.
By convention, we set $a = +\infty$ when $\lambda = 0$, henceforth we always assume $\lambda \ne 0$.
Throughout this subsection, let $\msw$ and $\msw'$ be weights on $x, u$ defined by $\msw (x, z) = \frac{1}{2} (1, 4)$ and $\msw' (x, z) = \frac{1}{2} (1, 2)$, respectively.
We define $a (\overline{X}) := a$ and also define $b (\overline{X}):= \msw (g (x^2, z))$ and $b' (\overline{X}) := \msw' (g (x^2, z))$, respectively.
We define
\[
A (\overline{X}) := \{\, k \in \mbZ \mid 1 \le k \le \min \{2 a(\overline{X}) -3, b(\overline{X})-2\}, k: \text{odd} \,\}
\]
and $l (\overline{X}) = \max A (\overline{X})$ if $A (\overline{X}) \ne \emptyset$.
For a an odd integer $k$, we define $\msw_k$ be the weight on $x, y, z, u$ defined by $\msw_k (x, y, z, u) = \frac{1}{2} (1, k, 4, k + 2)$.
By \cite[Proposition~5.4]{Hayakawa00} the morphism $\varphi_{l(\overline{X})} \colon Y \to X$ which is induced from the weighted blow-up $\bar{\varphi}_{l(\overline{X})} \colon Y \to \overline{X}$ with weight $\msw_{l (\overline{X})}$ is a divisorial contraction of discrepancy $\frac{1}{2}$.
%Moreover, if $A(\overline{X}) = \emptyset$, then there is no divisorial contraction of $X$ with weight $\msw_k$ for any odd $k$.

\begin{Lem} \label{lem:divcd/2pre}
Let $\msp \in X$ be a $cD/2$-$2$ singularity.
Assume that there is a standard model $o \in \overline{X}$ of $\msp \in X$ satisfying the following conditions.
\begin{enumerate}
\item If $b'(\overline{X})$ is odd $($resp.\ even$)$, then $2 a(\overline{X}) - 1 > b'(\overline{X})$ and $b(\overline{X}) > b'(\overline{X})+1$ $($resp.\ $2 a(\overline{X}) - 2 > b'(\overline{X})$ and $b(\overline{X}) > b'(\overline{X})+2$$)$.
\item $z^{b'(\overline{X})} \in g (x^2,z)$.
\end{enumerate}
Then, the weighted blow-up $\varphi_{l(\overline{X})} \colon Y \to X$ with weight $\msw_{l(\overline{X})}$ is the unique divisorial contraction of discrepancy $\frac{1}{2}$ centered at $\msp$, and $\msp \in X$ does not admit a divisorial contraction of discrepancy $1$.
\end{Lem}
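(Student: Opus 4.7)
The plan is to apply the general strategy of Section~\ref{sec:genclsfcont}, using the classification of divisors of small discrepancy over $cD/2$ hyperquotient singularities from \cite{Hayakawa99, Hayakawa00, Hayakawa05a} as input, with the given contraction $\varphi_{l(\overline{X})}$ playing the role of the ``Step~0'' divisorial contraction of discrepancy $\tfrac{1}{2}$.

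First, I would write out the affine charts of the weighted blow-up $\bar{\varphi}_{l(\overline{X})} \colon Y \to \overline{X}$ with weight $\msw_{l(\overline{X})} = \tfrac{1}{2}(1, l(\overline{X}), 4, l(\overline{X})+2)$, using assumption~(2) (that $z^{b'(\overline{X})} \in g(x^2,z)$) together with the inequalities in~(1) (which place $l(\overline{X})$ strictly inside the admissible range $A(\overline{X})$) to identify explicitly the non-Gorenstein singularities of $Y$. These will be cyclic quotient terminal points (and possibly hyperquotient $cA/n$ or $cD/n$ points) whose indices and local equations are pinned down by the blow-up weights and by the monomials of $g$ that survive, the latter being controlled precisely by~(2).

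Next, following Step~1 of Section~\ref{sec:genclsfcont}, any divisor $G$ over $\msp \in X$ of discrepancy $\le 1$ distinct from $E_{l(\overline{X})}$ has center on $Y$ at a non-Gorenstein point, since $E_{l(\overline{X})}$ is Cartier outside those points. Using the identity
\[
a_G(K_X) = a_G(K_Y) + \tfrac{1}{2}\,\ord_G(\psi^* E_{l(\overline{X})}),
\]
together with Lemma~\ref{lem:quotdiv} and its hyperquotient analogues from Hayakawa's papers (which list the divisors of discrepancy $<1$ over each non-Gorenstein point of $Y$ along with the valuations $\ord_G$), the condition $a_G(K_X) \in \{\tfrac{1}{2}, 1\}$ reduces to a finite check. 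The numerical inequalities in~(1) are exactly what is needed to verify that $E_{l(\overline{X})}$ is the only divisor of discrepancy $\tfrac{1}{2}$ over $\msp$ and that no divisor of discrepancy $1$ exists.

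Uniqueness of the divisorial contraction realizing $E_{l(\overline{X})}$ is then immediate from the hyperquotient analogue of Lemma~\ref{lem:divcontwbl}, and the nonexistence of a discrepancy $1$ divisorial contraction follows a fortiori from the nonexistence of the underlying divisor. The main obstacle is the careful bookkeeping of the non-Gorenstein singularities of $Y$ and of the discrepancies of all divisors of discrepancy $<1$ over each of them; the delicate point is that assumption~(2) is precisely what prevents alternative standard re-embeddings of $\msp \in X$ from producing additional low-discrepancy divisors not captured by the $\msw_k$ family, so that $l(\overline{X})$ is genuinely an invariant of the germ rather than of the chosen standard model.
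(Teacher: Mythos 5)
Your overall strategy is genuinely different from the paper's, and it contains a gap that I do not think can be repaired without effectively redoing the paper's argument. The paper does not analyze the non-Gorenstein points of $Y$ at all in this proof: it translates hypotheses (1) and (2) into the numerical invariants of the standard equation used by Hayakawa ($w = \min\{2a, b'\}$, $w'_1 = 4a+1$, $w'_2 = b'$, and the coefficients $a_{i,j}$, $b_i$ of $g$ and $\lambda x^{2a+1}$), and then directly invokes his classification of divisorial contractions of discrepancy $\tfrac{1}{2}$ to $cD/2$-$2$ points (\cite[Propositions 5.14 and 5.26]{Hayakawa00}) for uniqueness, and of discrepancy $1$ (\cite[Propositions 3.14 and 3.17]{Hayakawa05a}) for nonexistence. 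The real content is checking that conditions (1) and (2) force the hypotheses of those propositions (e.g.\ that $z^{b'}$ is the lowest-degree part of $g$, that condition $(\dagger)$ of \cite[3.1]{Hayakawa05a} holds, and that the relevant coefficients $a_{i,j}$ vanish).

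The gap in your plan is that you propose to prove the \emph{stronger} statements that $E_{l(\overline{X})}$ is the unique divisor of discrepancy $\tfrac{1}{2}$ over $\msp$ and that no divisor of discrepancy $1$ exists, and you assert without computation that the inequalities in (1) make the resulting ``finite check'' come out empty. This is exactly where the argument is likely to fail: the lemma only asserts nonexistence of a divisorial \emph{contraction} of discrepancy $1$, and divisors of discrepancy $1$ over $\msp$ can perfectly well exist without being realized by a contraction. Concretely, a divisor $G$ with $a_G(K_Y) = \tfrac{k}{r}$ over a non-Gorenstein point of $Y$ and $\ord_G(E_{l(\overline{X})}) = 2(1-\tfrac{k}{r})$ has total discrepancy exactly $1$ over $X$, and nothing in your valuation count rules such $G$ out; excluding the corresponding \emph{contraction} requires the terminality analysis packaged into Hayakawa's propositions. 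The same issue affects uniqueness: a $cD/2$-$2$ point generically admits several divisors (and indeed several divisorial contractions, cf.\ the list $(\pi_3, E_3)$, $(\pi_l, E_l)$, $(\pi'_{\pm}, E'_{\pm})$, $(\pi', E')$ appearing in the proof of Corollary~\ref{cor:cD2dcuni}) of discrepancy $\tfrac{1}{2}$, and conditions (1), (2) enter through Hayakawa's criteria on the equation of $\overline{X}$, not through the chart geometry of $Y$. Finally, your closing remark that assumption (2) makes $l(\overline{X})$ an invariant of the germ is the content of the subsequent Lemma~\ref{lem:cd/2pre2} and is not needed (nor available) at this stage.
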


\begin{proof}
We set $a = a (\overline{X})$, $b = b (\overline{X})$ and $b' = b' (\overline{X})$.
 
Suppose $b'$ is odd (resp.\ even).
Then, by (2), we have $z^{b'} \in g_{\msw' = b'} (x,z)$, which implies that $z^{b'} \in g_{\msw' = b'} (x, z)$ (resp.\ $z^{b'} \in g_{\msw' = b'} (x,z) z$) is not a square.
By (1), we have $b' < \min \{2 a -1, b-1\}$ (resp.\ $b' < \min \{2 a - 2,b-2\}$).
Thus \cite[Propositions 5.14 and 5.26]{Hayakawa00} shows that there is exactly one divisorial contraction of discrepancy $\frac{1}{2}$ centered at $\msp \in X$.

We will show that $\overline{X}$ does not admit a divisorial contraction of discrepancy $1$.
In our case, the invariants $w,w'_1,w'_2$ and $w'$ of $\overline{X}$ introduced in \cite[3.1]{Hayakawa05a} can be understood in terms of $a,b,b'$:
\[
w = \min \{ 2 a , b' \}, \ 
w'_1 = 4 a + 1, \ 
w'_2 = b'.
\]
Note that the lowest degree part of $g (x^2, z)$ is $z^{b'}$ since $\msw'$-weight of $g (x^2, z)$ is $b'$ and $z^{b'} \in g$.
It follows that the lowest degree part of $\frac{1}{4} (\lambda x^{2 a + 1})^2 - g (x,z) z$ is $- \alpha z^{b'+1}$ for some non-zero $\alpha \in \mbC$. 
In particular $\overline{X}$ satisfies the condition $(\dagger)$ in \cite[3.1]{Hayakawa05a}.
Now we define complex nunbers $a_{i, j}$ and $b_i$ as follows: $g (x^2, z) = \sum_{i,j} a_{i,j} x^{2 i} z^j$ and $h (x,z) := \lambda x^{2 a +1} = \sum_i b_i x^{2 i + 1}$.
Clearly $b_a = \lambda$ and $b_i = 0$ for $i \ne a$.

Suppose that $w' = b'$ is even.
We claim that $a_{i,j} = 0$ for $i, j$ with $i + j \le \frac{1}{2} b' + 1$.
Indeed, if $a_{i,j} \ne 0$ for some $i,j$ satisfying the above condition, then $x^{2 i} z^j \in g$ and the $\msw'$-weight of $g$ must be less than or equal to $i + j \le \frac{1}{2} b' + 1$.
This is a contradiction since the $\msw$-weight of $g$ is $b'$ and $b' > \frac{1}{2} b' + 1$.
It is clear that either (a) $b' \in 4 \mbZ, b_{b'/4} = 0$ or (b) $b' \in 4 \mbZ + 2, a_{b'/2,0} = 0$ is satisfied.
Moreover, none of the following holds:
\begin{enumerate}
\item[(i)] $w' \in 4 \mbZ$ and $a_{w'/2,1}^2 - 4 a_{w'/2+1,0} a_{w'/2-1,2} \ne 0$.
\item[(ii)] $w' \in 4 \mbZ + 2$ and $a_{w'/2,1} b_{(w'+2)/4}^2 + a_{w'/2 + 1,0}^2 \ne 0$.
\end{enumerate}
This is because all the $a_{i,j}$'s appearing the above (i) and (ii) are zero.
Thus by \cite[Proposition 3.14]{Hayakawa05a}, $\overline{X}$ and hence $X$ do not admit a divisorial contraction of discrepancy $1$ (see also the explanation given in just before the paragraph 3.11 in \cite{Hayakawa05a}).

Suppose that $w' = b'$ is odd.
Clearly we have $w'_1 = 4 a + 1 > b' = w'_2$ and $\overline{X}$ does not satisfy the condition: $w' \in 4 \mbZ + 3$ and $a_{(w'+1)/2,0} \ne 0$.
Thus by \cite[Proposition 3.17]{Hayakawa05a}, $\overline{X}$ and hence $X$ do not admit a divisorial contraction of discrepancy $1$.
\end{proof}

Let $\msp \in X$ be a $cD/2$-$2$ singularity.
We introduce the following condition on $X$ that assures the uniqueness of divisorial contraction to $\msp \in X$.

\begin{Cond} \label{cond:cd2}
There is a standard model $o \in \tilde{X}$ of $\msp \in X$ with the following properties:
\begin{enumerate}
\item $A (\tilde{X}) \ne \emptyset$ and $l (\tilde{X}) \le 7$.
\item For the weighted blowup $\varphi_{l (\tilde{X})} \colon Y \to X$ with weight $\msw_{l (\tilde{X})}$, $Y$ has only terminal quotient singularity (along the exceptional divisor).
\end{enumerate} 
\end{Cond}

\begin{Lem} \label{lem:cD2eqs}
Let $o \in \overline{X}$ be a standard germ of a $cD/2$-$2$ singularity such that $A (\overline{X}) \ne \emptyset$, and let $\bar{\varphi}_{l (\overline{X})} \colon Y \to \overline{X}$ be the weighted blow-up with weight $\msw_{l (\overline{X})}$ with exceptional divisor $E_{l (\overline{X})}$.
Suppose that $Y$ has only terminal quotient singularities.
Then the following are satisfied.
\begin{enumerate}
\item $b' (\overline{X}) = \frac{l (\overline{X}) + 3}{2}$.
\item $(2 a (\overline{X}) - 2) - b' (\overline{X}) \ge \frac{l (\overline{X}) - 1}{2}$.
\item $b (\overline{X}) - (b' (\overline{X}) + 2) \ge \frac{l (\overline{X}) - 3}{2}$.
\item $(E^3_{l (\overline{X})}) = \frac{2}{l (\overline{X})}$.
\end{enumerate}
\end{Lem}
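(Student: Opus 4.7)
My approach is to analyse $Y$ chart by chart via the toric description of the weighted blow-up $\bar{\varphi}_l\colon Y\to \overline{X}$. Writing $\overline{X}=(\phi=0)/\mbZ_2(1,1,0,1)$ with $\phi=u^2+y^2z+\lambda y x^{2a+1}+g(x^2,z)$, the blow-up corresponds to adjoining the ray through $v=\tfrac12(1,l,4,l+2)$ to the lattice $N_0=\mbZ^4+\mbZ\cdot\tfrac12(1,1,0,1)$. The four simplicial cones $\sigma_\bullet=\operatorname{cone}(v,e_i,e_j,e_k)$ give affine charts of the ambient blow-up: a direct lattice-index computation shows that the $x$-chart is smooth, the $y$-chart is $\mbC^4/\mbZ_l$, the $z$-chart is $\mbC^4/\mbZ_4$, and the $u$-chart is $\mbC^4/\mbZ_{l+2}$. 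Reid's criterion applied to the tangent quotient at the $y$-chart origin (weights $(2,l-1,l-2)$ after eliminating the linear term $f_3$) confirms terminality there automatically for $l\in\{3,5,7\}$; the $u$-chart origin turns out not to lie on $\tilde{X}$; hence only the $z$-chart origin can impose a constraint.

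Part (4) is a direct toric intersection-theory computation: $(\tilde{E}^4)=-1/(|\mbZ_2|\prod_i\msw_l(x_i))=-2/(l(l+2))$ in the ambient blow-up, and combining this with $\tilde{X}\sim \bar{\varphi}_l^*\overline{X}-(l+2)\tilde{E}$ and the vanishing $\tilde{E}^3\cdot\bar{\varphi}_l^*\overline{X}=0$ (projection formula) yields $(E_{l(\overline{X})}^3)=2/l$. For part (1), the bound $b'(\overline{X})\ge(l+3)/2$ is automatic, since $l$ odd and $i+2j\ge l+2$ force $i+j\ge \lceil(l+2)/2\rceil=(l+3)/2$ for every monomial $x^{2i}z^j$ in $g$. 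The reverse inequality comes from the $z$-chart analysis: computing the dual basis of $\sigma_z=\operatorname{cone}(v,e_1,e_2,e_4)$ identifies the chart with $\mbC^4/\mbZ_4$ of weights $(2,3,3,1)$ or $(2,3,1,3)$ depending on $l$ mod $4$, and pulling back $\phi$ and dividing by $f'_1{}^{l+2}$ gives
\[
\tilde{\phi}=f'_4{}^2+f'_3{}^2+\lambda f'_1{}^{a-(l+3)/2}f'_2{}^{2a+1}f'_3+\sum_{c_{ij}\ne 0}c_{ij}f'_1{}^{i+2j-l-2}f'_2{}^{2i},
\]
whose only possible linear term at the chart origin is $c_{0,(l+3)/2}f'_1$. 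If $b'(\overline{X})\ge(l+5)/2$ then both $c_{0,(l+3)/2}$ and $c_{1,(l+1)/2}$ vanish and the quadratic part of $\tilde{\phi}$ collapses to the rank-two form $f'_3{}^2+f'_4{}^2$; after passing to eigen-coordinates $\xi=f'_3+if'_4$ and $\eta=f'_3-if'_4$ the $\mbZ_4$-generator acts by the twisted involution $\xi\mapsto -i\eta$, $\eta\mapsto -i\xi$, and this configuration is absent from the classification of terminal $cA_n/4$ hyperquotients in \cite{Hayakawa00}. Indeed the downstairs variety remains singular along a positive-dimensional locus no matter how the admissible higher-order contributions from $g$ and the $\lambda$-term are arranged, so the quotient cannot be terminal. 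This contradiction forces $b'(\overline{X})=(l+3)/2$.

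Parts (2) and (3) are immediate corollaries of (1) combined with the defining inequalities $l\le 2a-3$ and $l\le b-2$ coming from $l\in A(\overline{X})$: substituting $b'(\overline{X})=(l+3)/2$ into $(2a-2)-b'$ and $b-(b'+2)$ returns exactly $(l-1)/2$ and $(l-3)/2$ as lower bounds. The principal obstacle is the non-terminality step in (1): systematically ruling out every pattern of higher-order $c_{ij}$-contributions (and of the $\lambda$-term) that could conceivably isolate the singularity of the rank-two quadric $\tilde{\phi}$ under the twisted $\mbZ_4$-action while keeping the quotient terminal requires a careful case analysis in the style of \cite{Hayakawa00}, and constitutes the technical core of the argument.
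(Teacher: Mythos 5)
Your part (4), your lower bound $b'(\overline{X})\ge (l(\overline{X})+3)/2$, and your derivation of (2) and (3) from (1) all agree with the paper (the paper phrases the lower bound as a contradiction argument, but it is the same arithmetic using $l\le b-2$ and the parity of $l$). The issue is the upper bound $b'(\overline{X})\le (l(\overline{X})+3)/2$. You correctly identify the relevant point, namely the origin of the $z$-chart (the point $(0\!:\!0\!:\!1\!:\!0)$ of the exceptional divisor), and you correctly observe that when $b'\ge (l+5)/2$ the local equation $\tilde\phi$ there has no linear term, its quadratic part having rank two. But you then try to reach a contradiction by showing that the resulting hyperquotient singularity is \emph{non-terminal}, and you explicitly leave that step --- ``systematically ruling out every pattern of higher-order contributions'' --- unperformed, calling it the technical core of the argument. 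As written, that is a genuine gap: the proof of (1) is not complete.

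It is also an unnecessary detour. The hypothesis of the lemma is not merely that $Y$ is terminal but that $Y$ has only terminal \emph{quotient} singularities. Once you know that $\tilde\phi$ vanishes at the $z$-chart origin with no linear term (equivalently, that the only candidate linear term $c_{0,(l+3)/2}f_1'$ vanishes), the hypersurface upstairs is singular there, so $Y$ has a non-quotient singularity at that point; this already contradicts the hypothesis, with no appeal to any classification of terminal $cA_n/4$ hyperquotients and no analysis of higher-order terms. This is exactly how the paper argues: since $l+2$ is odd, $g_{\msw=l+2}(x^2,z)$ contains no pure power of $z$, so the quotient-singularity hypothesis at $(0\!:\!0\!:\!1\!:\!0)$ forces $z^{(l+3)/2}\in g(x^2,z)$, whence $b\le l+3$ and $b'\le (l+3)/2$. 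Note also that the paper's version of this step establishes the sharper fact $z^{b'(\overline{X})}\in g(x^2,z)$, which is what Lemma~\ref{lem:cd/2pre2} later quotes from this proof; your formulation, even once repaired, only yields the numerical equality in (1).
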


\begin{proof}
Let $o \in \overline{X}$ be as in \eqref{eq:cD2std}.
Note that $a = a (\overline{X})$ and we set $l = l (\overline{X})$, $b = b (\overline{X})$ and $b' = b' (\overline{X})$.
We also set $E := E_{l (\overline{X})}$.
We have an isomorphism
\[
E \cong (u^2 + y^2 z + \lambda \delta_{2a-3,l} y x^{2a+1} + g_{\msw = l+2} (x^2,z) = 0) \subset \mbP (1_x, l_y, 4_z, (l+2)_u),
\]
where $\delta_{2a-3,l}$ is the Kronecker delta.
Since there is no monomial of the form $z^m x$ in $g_{\msw = l+2} (x^2,z)$, we have $z^{(l+3)/2} \in g (x^2,z)$ because otherwise $Y$  has a non-quotient singularity at $(0\!:\!0\!:\!1\!:\!0) \in E$. 
This implies $b \le l+3$ and $b' \le (l+3)/2$.

We claim $b' = (l+3)/2$.
Assume to the contrary that $b' < (l+3)/2$.
Then $b' \le (l+1)/2$ and there is a monomial $x^{2 i} z^j \in g (x^2,z)$ with $i + j \le (l+1)/2$.
Since $l \le b - 2$, we have $i + 2 j \ge b \ge l+2$. 
Thus $l + 2 \le i + 2 j = (i+j) + j \le \frac{l+1}{2} + j$, hence $j \ge (l+3)/2$.
This implies $i + j \ge \frac{l+3}{2}$.
Thus is a contradiction, and (1) is proved.

If $2 a - 3 < b-2$ (resp.\ $2 a - 3 = b-2$), then $l = 2 a - 3$ and $b = l+3$ (resp.\ $l = 2 a -3$ and $b = l +2$), and if $2 a - 3 > b-2$, then either $b = l+2$ or $l + 3$.
In any of the above cases, we have $2 a - 3 \ge l$, $b \ge l+2$ and $b' = \frac{l+3}{2}$.
Thus we have
\[
(2 a -2) - b' \ge (l+1) - \frac{l+3}{2} = \frac{l - 1}{2} > 0
\]
and
\[
b - (b'+2) \ge (l+2) - \frac{l+3}{2} -2 = \frac{l - 3}{2}.
\]
This proves (2) and (3).
Finally we have
\[
(E^3) = \frac{2^2 \cdot 2 (l + 2)}{1 \cdot l \cdot 4 \cdot (l+2)} = \frac{2}{l},
\]
and (4) is proved.
\end{proof}

\begin{Lem} \label{lem:cd/2pre2}
Suppose that a $cD/2$-$2$ singularity $\msp \in X$ satisfies \emph{Condition \ref{cond:cd2}}.
Then $l (\overline{X})$ and $b' (\overline{X})$ do not depend on the choice of a standard model $o \in \overline{X}$ of $\msp \in X$.
Moreover, for any standard model $o \in \overline{X}$ of $\msp \in X$, $b' (\overline{X}) = (l(\overline{X})+3)/2$ and the assumptions in \emph{Lemma \ref{lem:divcd/2pre}} are satisfied.
\end{Lem}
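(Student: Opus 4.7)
The plan is to exploit the uniqueness of the divisorial contraction of discrepancy $\frac{1}{2}$ centered at $\msp$, together with the fact that $(E_{l(\overline{X})}^3) = 2/l(\overline{X})$ is an intrinsic invariant of this contraction, to pin down $l(\overline{X})$. First I would apply Lemma~\ref{lem:cD2eqs} to the standard model $\tilde{X}$ afforded by Condition~\ref{cond:cd2}: since the associated blow-up $\varphi_{l(\tilde{X})}\colon Y \to X$ has only terminal quotient singularities along the exceptional divisor, one reads off $b'(\tilde{X}) = (l(\tilde{X})+3)/2$, the inequalities $2a(\tilde{X})-2-b'(\tilde{X}) \ge (l(\tilde{X})-1)/2$ and $b(\tilde{X})-b'(\tilde{X})-2 \ge (l(\tilde{X})-3)/2$, the formula $(E_{l(\tilde{X})}^3) = 2/l(\tilde{X})$, and, from the step in the proof of Lemma~\ref{lem:cD2eqs} that avoids a non-quotient singularity on $E$, the relation $z^{b'(\tilde{X})} \in g(x^2, z)$. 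Since $z^{b'(\tilde{X})}\in g \in (x^4, x^2 z^2, z^3)$ forces $b'(\tilde{X}) \ge 3$, and $l(\tilde{X})$ is odd with $l(\tilde{X}) \le 7$, we are restricted to $l(\tilde{X}) \in \{3, 5, 7\}$.

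Next I would verify by a direct case-by-case check in $l(\tilde{X}) \in \{3, 5, 7\}$ (noting that $b'(\tilde{X})$ is odd exactly for $l(\tilde{X}) \in \{3, 7\}$) that the non-strict inequalities above upgrade to the strict versions $2a(\tilde{X})-1 > b'(\tilde{X})$ and $b(\tilde{X}) > b'(\tilde{X})+1$ in the odd case, or $2a(\tilde{X})-2 > b'(\tilde{X})$ and $b(\tilde{X}) > b'(\tilde{X})+2$ in the even case, required by Lemma~\ref{lem:divcd/2pre}. Combined with $z^{b'(\tilde{X})}\in g(x^2,z)$, this verifies all hypotheses of Lemma~\ref{lem:divcd/2pre} for $\tilde{X}$, whose conclusion is that $\varphi_{l(\tilde{X})}$ is the unique divisorial contraction of discrepancy $\frac{1}{2}$ centered at $\msp$.

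For any standard model $o \in \overline{X}$ of $\msp \in X$, \cite[Proposition~5.4]{Hayakawa00} yields a divisorial contraction $\varphi_{l(\overline{X})} \colon Y' \to X$ of discrepancy $\frac{1}{2}$, which by the uniqueness just established is isomorphic over $X$ to $\varphi_{l(\tilde{X})}$. Hence $Y'$ has only terminal quotient singularities along its exceptional divisor, so Lemma~\ref{lem:cD2eqs} now applies to $\overline{X}$ and gives $(E_{l(\overline{X})}^3) = 2/l(\overline{X})$ and $b'(\overline{X}) = (l(\overline{X})+3)/2$. The equality $(E_{l(\overline{X})}^3) = (E_{l(\tilde{X})}^3)$ then forces $l(\overline{X}) = l(\tilde{X})$ and hence $b'(\overline{X}) = b'(\tilde{X})$, establishing the invariance, and the hypotheses of Lemma~\ref{lem:divcd/2pre} for $\overline{X}$ follow from the same case analysis used for $\tilde{X}$. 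The principal technical obstacle is the promotion of the non-strict inequalities of Lemma~\ref{lem:cD2eqs} to the strict inequalities required by Lemma~\ref{lem:divcd/2pre}; once that step is carried out, the invariance of $l$ and $b'$ is essentially formal from uniqueness and from the fact that the exceptional self-intersection recovers $l$.
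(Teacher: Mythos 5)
Your proposal is correct and follows essentially the same route as the paper: pin down $b'(\overline{X})=(l(\overline{X})+3)/2$ and the divisibility $z^{b'}\in g$ via Lemma~\ref{lem:cD2eqs}, upgrade its non-strict inequalities to the strict ones of Lemma~\ref{lem:divcd/2pre} using $l\ge 3$, and then deduce independence of the standard model from the uniqueness of the discrepancy-$\frac{1}{2}$ contraction together with the intrinsic invariant $(E^3)=2/l$. If anything, your version is slightly more careful than the paper's, which applies Lemma~\ref{lem:cD2eqs} to an arbitrary standard model directly, whereas you first treat the model $\tilde{X}$ supplied by Condition~\ref{cond:cd2} and only then use uniqueness to verify that the hypothesis of Lemma~\ref{lem:cD2eqs} (terminality of the blow-up) holds for every other standard model.
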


\begin{proof}
Let $o \in \overline{X}$ be be a standard model of $\msp \in X$ which is of the form \eqref{eq:cD2std}. 
The equality $b' (\overline{X}) = (l (\overline{X}) + 3)/2$ follows from (1) of Lemma~\ref{lem:cD2eqs}.
By the proof of Lemma \ref{lem:cD2eqs}, we have $z^{b' (\overline{X})} \in g (x^2, z)$.
Since $g (x^2, z) \in (x^4, x^2 z^2, z^3) \mbC \{x, z\}$, we have $b' (\overline{X}) \ge 3$, and hence $l (\overline{X}) \ge 3$.
By (2) and (3) of Lemma~\ref{lem:cD2eqs}, we see that the assumptions of Lemma~\ref{lem:divcd/2pre} are satisfied since $l (\overline{X}) \ge 3$.
By Lemma~\ref{lem:divcd/2pre}, the divisorial contraction $\varphi_{l (\overline{X})} \colon Y \to X$ is a unique divisorial contraction of discrepancy $1/2$ over $\msp \in X$ and we have $(E^3) = l (\overline{X})/2$ for its exceptional divisor.
This shows that $l (\overline{X})$ does not depend on the choice of $o \in \overline{X}$, and so is $b' (\overline{X})$.
This completes the proof.
\end{proof}

\begin{Lem} \label{lem:cd/2o3}
If a germ $\msp \in X$ of a $cD/2$ point admits a divisorial contraction $\varphi \colon Y \to X$ of discrepancy $\frac{e}{2}$ with $e \ne 1$ which is of type $o3$, then there is a standard model 
\[
o \in \overline{X} = (f (x, y, z, u) = 0)/\mbZ_2 (1_x, 1_y, 0_z, 1_u)
\]
of $\msp \in X$, where $f$ satisfies one of the following:
\begin{enumerate}
\item[(a)]
$f = u^2 + y^2 z + \lambda y x^{2 a +1} + p (x^2,z) + x^2 q (x^2,z)^2$ and there is a positive integer $r$ such that
\begin{enumerate}
\item[(1)] $e \mid r + 1$ with $e > 1$, and $e, r$ are odd.
\item[(2)] $p$ and $q$ has weight $r + 1$ and $(r-e)/2$ respectively with respect to the weight $\wt (x,z) = \frac{1}{2} (e,2)$.
\item[(3)] The polynomial $u x q_{(r-e)/2} (x^2,z) + y^2 z + p_{r+1} (x^2,z)$ is irreducible, where $p_{r+1} (x^2,z)$ and $q_{(r-e)/2} (x^2,z)$ are the weight $r+1$ and $(r-e)/2$ parts of $p (x^2,z)$ and $q (x^2,z)$ respectively with respect to the weight $\wt (x,z) = \frac{1}{2} (e,2)$.
\end{enumerate}
\item[(b)]
$f = u^2 + y^2 z + y x^{(r+2)/e} + p (x^2,z) - q (x^2,z)^2 x^2 z - q (x^2,z) x^{(r+2)/e + 1}$ and there is a positive integer $r$ such that
\begin{enumerate}
\item[(1)] $e \mid r+2$ with $e > 1$, $e \ne r+2$ and $(r+2)/e$ is odd.
\item[(2)] $p$ and $q$ have weight $r+1$ and $(r-e)/2$ respectively with respect to the weight $\wt (x,z) = \frac{1}{2} (e,2)$.
\end{enumerate}
\end{enumerate}
\end{Lem}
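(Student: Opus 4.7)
The strategy is to invoke Hayakawa's explicit classification of divisorial contractions of type $o3$ to $cD/2$-points developed in \cite{Hayakawa00, Hayakawa05a}, and to translate the resulting normal form into the notation used in this lemma.

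First, starting from an arbitrary standard model of $\msp \in X$ as in \eqref{eq:cD2std} and a given divisorial contraction $\varphi \colon Y \to X$ of type $o3$ and discrepancy $e/2$ with $e \ne 1$, I would appeal to Hayakawa's classification: after an analytic re-embedding in the ambient $\mbC^4/\mbZ_2$, the contraction $\varphi$ is realized as a weighted blow-up with weight of the shape $\frac{1}{2}(e, r, 2, r+2)$ for a suitable positive integer $r$. Compatibility with the $\mbZ_2(1,1,0,1)$-action forces $e$ to be odd, and the two sub-cases (a) and (b) correspond respectively to the situation where the $y$-linear term of lowest weight in the defining equation is the term $\lambda y x^{2a+1}$ inherited from the original standard model (which forces the divisibility $e \mid r+1$ together with $r$ odd in (a)(1)) versus the situation where it is a new monomial $y x^{(r+2)/e}$ created by the re-embedding (which forces $e \mid r+2$ with $(r+2)/e$ odd and $e \ne r+2$ in (b)(1)).

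Next I would carry out the re-embedding explicitly. In both cases it takes the form $u \mapsto u - x q(x^2, z)$ for a power series $q$ chosen so that the $u$-free lower-weight terms of $g(x^2, z)$ are absorbed; this produces the quadratic correction $x^2 q(x^2, z)^2$ in (a), and the two correction terms $-q(x^2, z)^2 x^2 z$ and $-q(x^2, z) x^{(r+2)/e + 1}$ in (b), depending on which auxiliary substitution is required to kill the lower-weight tail. The weight specifications on $p$ and $q$ in (a)(2) and (b)(2) are then forced by the requirement that, after re-embedding, the leading $\msw_r$-weighted homogeneous part of the new defining equation has the correct weight and the residual tail is genuinely of higher weight. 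The irreducibility assertion in (a)(3) is precisely the statement that the exceptional divisor of $\varphi$, which by construction is cut out by that leading part in a weighted projective space, is a prime Weil divisor; this is automatic for any divisorial contraction and is the form of condition required by Lemma \ref{lem:genwblexc}.

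The main obstacle is the bookkeeping of the re-embeddings and the case analysis verifying that (a) and (b) exhaust all type-$o3$ possibilities; this runs parallel to the arguments of Hayakawa in \cite{Hayakawa00}, where the parity and weight conditions listed above are read off from the weight of each surviving monomial and the $\mbZ_2$-character constraints on the new variables.
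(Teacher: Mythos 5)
Your overall strategy coincides with the paper's: the lemma is essentially a translation of the known classification of type $o3$ divisorial contractions into the standard-model format, followed by explicit coordinate changes. However, there are two concrete problems with the way you propose to carry this out. First, the attribution is wrong in a way that matters: the classification of type $o3$ contractions of discrepancy $e/2$ with $e\ne 1$ is Kawakita's \cite[Theorem~1.2]{Kawakita05} (cases (ii)-(a) and (ii)-(b)), not Hayakawa's. The cited works \cite{Hayakawa00} and \cite{Hayakawa05a} treat discrepancy $\frac12$ and discrepancy $1$ respectively and do not contain the input you need, so the proof cannot actually be run from the sources you name.

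Second, your description of how case (b) arises and how it is normalized would fail if executed. The dichotomy (a)/(b) is not governed by which $y$-linear monomial has lowest weight; it is the dichotomy in Kawakita's theorem between a hypersurface presentation in $\mbC^4/\mbZ_2$ (case (ii)-(a)) and a codimension-two complete intersection presentation in $\mbC^5/\mbZ_2$ with an auxiliary variable $v$ (case (ii)-(b)). In case (b) one must first eliminate $v$, arriving at $u^2+y^2z+yx^{(r+2)/e}+q(x^2,z)xyz+p(x^2,z)=0$, and then complete the square in $y$ via $y\mapsto y-\frac12 qx$; it is this shift of $y$ against the terms $y^2z$ and $yx^{(r+2)/e}$ that produces the two corrections $-q^2x^2z$ and $-qx^{(r+2)/e+1}$. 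Your proposed substitution $u\mapsto u-xq$ cannot generate the term $-qx^{(r+2)/e+1}$ (no monomial involving $x^{(r+2)/e}$ couples to $u$), so case (b) would not come out in the stated form. The substitution in $u$ is correct only for case (a), where it removes the cross term $uxq$ and yields the $+x^2q^2$ correction. Your reading of (a)(3) as the irreducibility of the leading weighted-homogeneous part, i.e.\ of the exceptional divisor, is consistent with the paper.
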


\begin{proof}
We need to consider the two cases: (ii)-(a) and (ii)-(b) of \cite[Theorem 1.2]{Kawakita05}.

Suppose we are in case (ii)-(a) of \cite[Theorem 1.2]{Kawakita05} and let the discrepancy of $\varphi$ is $e/2$ (with $e \ne 1$).
Then $\msp \in X$ can be identified with the germ
\[
o \in (f := u^2 + u x q (x^2,z) + y^2 z + \lambda y x^{2 a +1} + p (x^2,z) = 0)/\mbZ_2 (1_x, 1_y, 0_z, 1_u)
\]
for some $\lambda \in \mbC$ and $a \ge 1$.
Moreover there is an positive odd integer $r$ such that the conditions (a-1) and (a-2) in the statement and the condition that the weigh $r+1$ part of $f$ with respect to the weight $\bar{\msw}$ defined by $\bar{\msw} (x, y, z, u) = \frac{1}{2} (e, r, 2, r + 2)$ is irreducible.
Since 
\[
f_{\bar{\msw} = r+1} = u x q_{(r-e)/2} (x^2,z) + y^2 z + p_{r+1} (x^2,z),
\] 
the condition (a-3) is satisfied.
Replacing $u \mapsto u - \frac{1}{2} x q$ and then rescaling $q$, we may assume
\[
f = u^2 + y^2 z + \lambda y x^{2 a + 1} + p (x^2,z) + x^2 q (x^2,z)^2.
\]
Thus we are in case (a) of the statement.

Suppose we are in case (ii)-(b) of \cite[Theorem 1.2]{Kawakita05} and let the discrepancy of $\varphi$ is $\frac{e}{2}$ (with $e \ne 1$).
Then $\msp \in X$ can be identifies with the germ
\[
\begin{split}
o \in (u^2 + y v + p (x^2,z) = y z +x^{(r+2)/e} & + q (x^2,z) x z + v = 0) \\
& \subset \mbC^5_{x, y, z, u, v}/\mbZ_2 (1,1,0,1,1)
\end{split}
\]
for some $r \ge 1$, $p (x^2, z)$ and $q (x^2, z)$ satisfying the conditions (b-1) and (b-2) in the statement.
By replacing $y \mapsto - y$, $z \mapsto - z$ and eliminating $v$, $\msp \in X$ is identified with 
\[
o \in (f := u^2 + y^2 z + y x^{(r+2)/e} + q (x^2,z) x y z + p (x^2,z) = 0)/\mbZ_2 (1_x, 1_y, 0_z, 1_u).
\]
By replacing $y \mapsto y - \frac{1}{2} q x$ and then re-scaling $q$, we have
\[
f = u^2 + y^2 z + y x^{(r+2)/2} + p (x^2,z) - q (x^2,z)^2 x^2 z - q (x^2,z) x^{(r+2)/e + 1}.
\]
Therefore we are in case (b) and the proof is completed.
\end{proof}

\begin{Lem} \label{lem:cd/2e1}
If a germ $\msp \in X$ of a $cD/2$ point admits a divisorial extraction $\varphi \colon Y \to X$ of type $e1$ and of discrepancy $2$, then there is a standard model $o \in \overline{X}$
\[
o \in \overline{X} = (u^2 + y^2 z + \lambda y x^{2 a + 1} + g (x^2,z) = 0)/\mbZ_2 (1_x, 1_y, 0_z, 1_u)
\]
of $\msp \in X$, where $\lambda \in \mbC$, $a \ge 1$ and $z^j \notin g (x,z)$ for $j \le 5$.
\end{Lem}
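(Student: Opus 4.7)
My plan is to reduce the lemma to the explicit classification of divisorial contractions of type $e1$ and discrepancy $2$ over $cD/2$ points, as developed in Kawakita~\cite{Kawakita05} and Hayakawa~\cite{Hayakawa99, Hayakawa00, Hayakawa05a}. In the same spirit as Lemma~\ref{lem:cd/2o3}, the existence of $\varphi \colon Y \to X$ implies an explicit analytic identification
\[
\msp \in X \cong o \in (F (x, y, z, u) = 0)/\mbZ_2 (1_x, 1_y, 0_z, 1_u),
\]
in which $\varphi$ is realized as a weighted blow-up with a prescribed weight $\msw$, and the lowest $\msw$-weight part of $F$ has a prescribed shape forced by the irreducibility of the exceptional divisor and the discrepancy being exactly $2$.

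Next, I would rewrite this germ in the standard form~\eqref{eq:cD2std}. Completing the square in $u$ kills any odd-in-$u$ cross term, a substitution $y \mapsto y + p (x, z)$ with a suitable $p$ absorbs $y^2$-multiplied terms into $y$-linear and $y$-free parts, and a rescaling normalizes the coefficient of $y^2 z$. These transformations are the same ones used implicitly in Lemma~\ref{lem:cd/2o3} to pass between an equivalent model and a standard model, and they bring the defining equation into
\[
u^2 + y^2 z + \lambda y x^{2 a + 1} + g (x^2, z) = 0
\]
for some $\lambda \in \mbC$ and $a \ge 1$.

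The main point is then to verify $z^j \notin g (x^2, z)$ for $j \le 5$. This is a weight-bookkeeping argument: in the type $e1$ classification of discrepancy $2$, the blow-up weight $\msw$ assigns $z$ a weight with respect to which the lowest-weight part of $F$ has weight $\msw (F)$ determined by the discrepancy formula $a (E) + \msw (F) + 1 = \sum \msw (\text{variables})$. Translating the vanishing of monomials below this weight into the standard form, together with tracking how the substitutions above redistribute the lower-weight $(z$-only$)$ coefficients, forces $z^j$ with $j \le 5$ to remain absent from $g (x^2, z)$. The main obstacle is the combinatorial bookkeeping that the vanishing of $z^j$ in the original normal form is preserved under the reduction to standard form; this follows because each substitution can contribute to the $z^j$ coefficient of $g$ only through terms already of weight at least $\msw (F)$, ruling out new low-$z$-degree monomials.
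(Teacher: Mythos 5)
Your overall strategy coincides with the paper's: invoke the classification of type $e1$ divisorial contractions of discrepancy $2$ over $cD/2$ points to get an explicit normal form, convert it to the standard form \eqref{eq:cD2std} by completing squares and substitutions in $y$, $z$, $u$, and then read off the absence of low $z$-powers from the weight conditions. Two points, however, prevent the sketch from closing as written.

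First, the relevant classification is Kawakita's supplement \cite{Kawakita12} (not \cite{Kawakita05}), and the normal form it provides is a codimension-two complete intersection in $\mbC^5/\mbZ_2$ from which one must first eliminate the extra variable $v$; the essential quantitative output is an integer $r \ge 7$ with $r \equiv \pm 1 \pmod{8}$ such that, for the weight $\wt (x, z) = \frac{1}{2} (4, 2)$, the series $p$ has weight at least $r+1$ and $q$ is homogeneous of weight $r-1$. Your proposal never extracts any such explicit lower bound, and without it the conclusion $z^j \notin g$ for $j \le 5$ (rather than, say, $j \le 3$) cannot be reached.

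Second, the principle you appeal to --- that each substitution can contribute to the $z^j$-coefficient of $g$ only through terms already of weight at least $\msw (F)$ --- is not correct in this situation. After the reductions, the $y$-free part of the equation is $p_2 + q_2 z + q_1^2 z^2 + {p'_1}^2 z - \lambda p'_1 x^{2a+1}$, and the term $q_1^2 z^2$ generated by the substitution $u \mapsto u - \frac{1}{2} q_1 z$ has weight exactly $r - 1$, which is strictly smaller than the weight $r$ of the lowest-weight part of $f$. The argument still succeeds, but only because $r - 1 \ge 6$, i.e.\ because of the bound $r \ge 7$ from the classification; each of the five terms above must be inspected individually. So the gap is concrete: you need to import the bound $r \ge 7$ and replace the blanket weight-preservation claim by a term-by-term check of the $y$-free part after the coordinate changes.
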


\begin{proof}
By \cite{Kawakita12}, $X$ is identified with the germ
\[
(u^2 + v z + p (x,y,z) = y^2 + q (x,z,u) + v = 0)/\mbZ_2 (1_x, 1_y, 0_z, 1_u, 0_v).
\]
Replacing $z \mapsto - z$ and eliminating $v$, we have an identification 
\[
X \cong (f = 0)/\mbZ_2 (1_x, 1_y, 0_z, 1_u),
\] 
where $f = u^2 + y^2 z + q (x, z, u) z + p (x, y, z)$.
Moreover there is an integer $r \ge 7$, $r \equiv \pm 1 \pmod{8}$ such that $p$ has weight at least $r + 1$ and $q$ is weighted homogeneous of weight $r -1$ with respect to the weight $\wt (x,z) = \frac{1}{2} (4,2)$.
Consider the weight $\bar{\msw}$ defined by $\bar{\msw} (x, y, z, u) = \frac{1}{2} (4, r-1, 2, r+1)$.
Then
\[
f = \overbrace{(y^2 z + q (x, z, u) z)}^{\bar{\msw} = r} + \overbrace{(u^2 + p (x, y, z))}^{\bar{\msw} \ge r+1}.
\]
We write $p = y^2 p' + p''$, where $p''$ does not contain monomial divisible by $y^2$, and let $h$ be the lowest $\bar{\msw}$-weight part of $p'$.
Replacing $z \mapsto z - h$, we can eliminate $h$ and $f$ is still in the above displayed form.
Repeating this replacement, we may assume that $p$ does not contain a monomial divisible by $y^2$.
 
Thus we may assume $p (x,y,z) = y p_1 (x,z) + p_2 (x,z)$, where $p_1$ and $p_2$ have weight at least $\frac{r+3}{2}$ and $r+1$ respectively.
We can write $q (x,z,u) = u q_1 (x,z) + q_2 (x,z)$, where $q_1$ and $q_2$ are homogeneous of weights $(r-3)/2$ and $r-1$ respectively.
Replacing $u \mapsto u - \frac{1}{2} q_1 z$ then replacing $\frac{1}{2} q_1$ with $q_1$, we may assume
\[
f = u^2 + y^2 z + y p_1 (x,z) + q_2 (x,z) z + p_2 (x,z) + q_1 (x,z)^2 z^2.
\]
We write $p_1 (x,z) = z p'_1 (x,z) + \lambda x^{2 a +1}$, where $\lambda \in \mbC$, $a$ is a positive integer and $p'_1 (x,z)$ has weight at least $(r+1)/2$.
If $\lambda \ne 0$, then $a \ge (r-1)/8$ since $p_1$ has weight at least $(r+3)/2$.
Replacing $y \mapsto y - p'_1/2$ and then replacing $p'_1/2$ with $p'_1$, we may assume
\[
f = u^2 + z y^2 + \lambda y x^{2a+1} + p_2 + q_2 z + q_1^2 z^2 + {p'_1}^2 z - \lambda p'_1 x^{2 a +1}.
\]
We set $p = p_2 + q_2 z + q_1^2 z^2 + {p'_1}^2 z - \lambda p'_1 x^{2 a +1}$.
If $z^j \in p$, then $j \ge r-1 \ge 6$.
This completes the proof.
\end{proof}

\begin{Prop} \label{prop:cD2dcuni}
Suppose that a $cD/2$-$2$ singularity $\msp \in X$ satisfies \emph{Condition \ref{cond:cd2}}.
Then $\varphi_{l(\tilde{X})} \colon Y \to X$ is the unique divisorial extraction of $\msp \in X$.
\end{Prop}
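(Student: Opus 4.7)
The plan is to leverage the rigidity imposed by Condition~\ref{cond:cd2}, made precise by Lemma~\ref{lem:cd/2pre2}, so as to show that any divisorial extraction distinct from $\varphi_{l(\tilde{X})}$ would require a standard model of $\msp \in X$ incompatible with the bound $b' \le 5$ forced by Condition~\ref{cond:cd2}(1). Beyond invoking the preceding lemmas, the only real work is a weight bookkeeping argument that rules out the cases furnished by Lemmas~\ref{lem:cd/2o3} and~\ref{lem:cd/2e1}.

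First I would apply Lemma~\ref{lem:cd/2pre2}: it shows that $l$ and $b'$ are invariants of the germ $\msp \in X$, that $b'(\overline{X}) = (l(\tilde{X}) + 3)/2 \le 5$ for every standard model $o \in \overline{X}$, and that the hypotheses of Lemma~\ref{lem:divcd/2pre} hold in every such standard model; in particular, $z^{b'(\overline{X})}$ appears with nonzero coefficient in $g(x^2, z)$. Lemma~\ref{lem:divcd/2pre} then yields at once that $\varphi_{l(\tilde{X})}$ is the unique divisorial contraction centered at $\msp$ of discrepancy $\tfrac{1}{2}$ and that no divisorial contraction of discrepancy $1$ exists, reducing the task to ruling out contractions of discrepancy $\ge \tfrac{3}{2}$.

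I would then appeal to the Hayakawa--Kawakita classification: any divisorial contraction to a $cD/2$-$2$ singularity of discrepancy $\ge \tfrac{3}{2}$ is either of type $e1$ with discrepancy $2$ or of type $o3$ with discrepancy $\tfrac{e}{2}$ for some $e \ne 1, 2$. In the $e1$ case, Lemma~\ref{lem:cd/2e1} provides a standard model with $z^j \notin g$ for every $j \le 5$, which is incompatible with $z^{b'(\overline{X})} \in g$ and $b'(\overline{X}) \le 5$. In the $o3$ case necessarily $e \ge 3$, and Lemma~\ref{lem:cd/2o3} yields a standard model of form (a) or (b); in either form the only summand of $g(x^2, z)$ that can contribute a pure monomial in $z$ is the polynomial $p$, whose every monomial has weight at least $r+1$ with respect to $\wt(x, z) = \tfrac{1}{2}(e, 2)$, so any $z^j \in g$ satisfies $j \ge r + 1$. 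The numerical constraints in Lemma~\ref{lem:cd/2o3}, together with $e \ge 3$, force $r \ge 2e - 1 \ge 5$ in case~(a) and $r \ge 3e - 2 \ge 7$ in case~(b); in both cases $z^{b'(\overline{X})} \in g$ would give $b'(\overline{X}) \ge r + 1 \ge 6$, contradicting $b'(\overline{X}) \le 5$.

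The principal obstacle is ensuring that the classification input above is invoked correctly, i.e., that the cases treated by Lemmas~\ref{lem:cd/2o3} and~\ref{lem:cd/2e1} truly exhaust all divisorial contractions to a $cD/2$-$2$ point of discrepancy $\ge \tfrac{3}{2}$ beyond those already excluded by Lemma~\ref{lem:divcd/2pre}. Once this classification statement is granted, the remainder of the argument reduces to the weight bookkeeping sketched above.
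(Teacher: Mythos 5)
Your proposal is correct and follows the same overall route as the paper: reduce via Lemma~\ref{lem:cd/2pre2} and Lemma~\ref{lem:divcd/2pre} to excluding the residual cases of Kawakita's classification (type $o3$ with $e\ne 1$ and type $e1$ with discrepancy $2$), then contradict the standard models supplied by Lemmas~\ref{lem:cd/2o3} and~\ref{lem:cd/2e1}. The only genuine divergence is in how the $o3$ cases are killed. In case~(a) the paper shows that both $p_{r+1}$ and $q_{(r-e)/2}$ are divisible by $z$ and contradicts the irreducibility condition (a-3), and in case~(b) it contradicts the inequality $(2a-2)-b'\ge (l-1)/2>0$ from Lemma~\ref{lem:cD2eqs}; you instead run all three cases uniformly against the bound $b'\le 5$ coming from $l\le 7$, using that a pure $z$-power in $g$ must come from $p$ and hence $b'\ge r+1$, together with the parity/divisibility bounds $r+1\ge 2e\ge 6$ in case~(a) (from $e\mid r+1$ with $e,r$ odd) and $r\ge 3e-2\ge 7$ in case~(b) (from $(r+2)/e$ odd and $\ne 1$). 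Both arguments are valid; yours is slightly more uniform and avoids invoking (a-3), while the paper's case~(a) argument does not need the parity observation. You are also right to flag that the completeness of the list $\{o3,\,e1\}$ for discrepancy $\ge 3/2$ is the essential external input; the paper takes exactly this from \cite{Kawakita05} and \cite{Kawakita12}.
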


\begin{proof}
By the classification due to Kawakita \cite{Kawakita05}, we need to show that $X$ does not admit a divisorial contraction of discrepancy $\frac{e}{2}$ with $e \ne 1$ and of type $o3$ or of discrepancy $2$ of type $e1$.

Suppose that $X$ admits a divisorial extraction of type $o3$ or of type $e1$.
Then there is an equivalence $\msp \in X \cong o \in \overline{X}$, where $o \in \overline{X}$ is a standard germ whose description is given in (a), (b) in Lemma \ref{lem:cd/2o3} or in Lemma \ref{lem:cd/2e1}.
We set $l = l (\overline{X})$, $a = a(\overline{X})$, $b = b (\overline{X})$ and $b' = b' (\overline{X})$.

Suppose that $\overline{X}$ is of the form given in (a) of Lemma \ref{lem:cd/2o3}.
Then $z^{b'} \in p (x^2,z)$ by (2) of Lemma \ref{lem:divcd/2pre}.
By considering the weight with respect to $\wt (x,z) = \frac{1}{2} (e,2)$, we have $b' \ge r+1$.
Hence $b \ge b' + 3 \ge r + 4$.
We will show that both $p_{(r-e)/2} (x^2,z)$ and $q_{r+1} (x^2,z)$ are divisible by $z$.
Suppose $z \nmid p_{r+1} (x^2,z)$.
Then $x^{2 (r+1)/e} \in p (x^2,z)$. 
It follows that $b \le (r+1)/e$ and combining this with $b \ge r+4$ we have $0 < (e-1)r \le 1 -4 e < 0$.
This is a contradiction.
Suppose $z \nmid q_{(r-e)/2} (x^2,z)$.
Then $x^{(r-e)/e} \in q (x^2,z)$.
In particular $e \mid r-e$, which is impossible since $e \mid r+1$.
Thus both $p_{r+1} (x^2,z)$ and $q_{(r-e)/2} (x^2,z)$ are divisible by $z$.
But then the polynomial $u x q_{(r-e)/2} (x^2,z) + y^2 z + p_{r+1} (x^2,z)$ is reducible, contradicting (a-3) of Lemma \ref{lem:cd/2o3}.

Suppose that $\overline{X}$ is of the form given in (b) of Lemma \ref{lem:cd/2o3}.
Then $a = a (\overline{X}) = (r+2)/e$.
We have $z^{b'} \in g$, where $b' = b' (\overline{X})$, hence $z^{b'} \in p (x^2,z)$.
By considering the weight, this implies $b' \ge r+1$. 
But then, since $e \ge 3$, we have
\[
(2 a - 2) - b' \le \left(\frac{r+2}{3} - 3\right) - (r+1) \le \frac{r+2}{3} - r -4 = - \frac{2 r + 10}{3} < 0.
\]
This is a contradiction.

Finally, suppose that $\overline{X}$ is of the form given in Lemma \ref{lem:cd/2e1}.
Then $b' \ge 6$ since $z^{b'} \in g (x^2,z)$ and $z^j \notin g (x^2,z)$ for $j \le 5$.
This is a contradiction since $b' = (l+3)/2$ and $l \le 7$.
\end{proof}

\begin{Cor} \label{cor:cD2dcuni}
Let $\msp \in X$ be a terminal singularity of type $cD/2$.
Suppose that there is a divisorial contraction $\varphi \colon Y \to X$ of discrepancy $1/2$ centered at $\msp \in X$ with the following property$:$ $(E^3) = 2/l$ for some odd integer $3 \le l \le 7$, where $E$ is the $\varphi$-exceptional divisor, and the set of non-Gorenstein singular points of $Y$ along $E$ consists of $2$ terminal quotients singular points of type $\frac{1}{4} (1, 1, 3)$ and $\frac{1}{l} (1, 2, -2)$.
Then $\varphi$ is the unique divisorial extraction of $\msp \in X$.
\end{Cor}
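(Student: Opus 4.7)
The plan is to reduce the corollary to an application of Proposition~\ref{prop:cD2dcuni} by verifying that the hypotheses imply Condition~\ref{cond:cd2} and that the given $\varphi$ coincides with the extraction $\varphi_{l(\overline{X})}$ for a suitable standard model $o \in \overline{X}$ of $\msp \in X$. The uniqueness conclusion will then follow because Proposition~\ref{prop:cD2dcuni} identifies $\varphi_{l(\overline{X})}$ as the unique divisorial extraction of $\msp$.

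First, I would exclude the possibility that $\msp \in X$ is of type $cD/2$-$1$. By Hayakawa's classification in \cite{Hayakawa00}, every divisorial contraction of discrepancy $\tfrac{1}{2}$ over a $cD/2$-$1$ point produces a specific pattern of non-Gorenstein singularities on $Y$; I would check case by case that none of these patterns matches the configuration of exactly two singular points of types $\tfrac{1}{4}(1,1,3)$ and $\tfrac{1}{l}(1,2,-2)$ with $l$ odd, together with $(E^3) = 2/l$. Hence $\msp \in X$ must be of type $cD/2$-$2$.

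Next, I would invoke the corresponding classification for $cD/2$-$2$ points \cite{Hayakawa00} to realise $\varphi$ as the extraction $\varphi_k$ associated to a weighted blow-up with weight $\msw_k = \tfrac{1}{2}(1,k,4,k+2)$ for some odd $k \in A(\overline{X})$, where $o \in \overline{X}$ is a standard model of $\msp \in X$; necessarily $k = l(\overline{X})$. From the description of the exceptional divisor $E \subset \mbP(1,k,4,k+2)$ in the proof of Lemma~\ref{lem:cD2eqs}, the two non-Gorenstein singular points of $Y$ along $E$ sit at $\msp_y$ and $\msp_z$ and are of types $\tfrac{1}{k}(1,2,-2)$ and $\tfrac{1}{4}(1,1,3)$ respectively. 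Matching this with the hypothesis forces $k = l(\overline{X}) = l$, and Lemma~\ref{lem:cD2eqs}(4) recovers $(E^3) = 2/l$ consistently.

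Since $l(\overline{X}) = l \le 7$ and $Y$ has only terminal quotient singularities along $E$ by assumption, both parts of Condition~\ref{cond:cd2} are satisfied. Proposition~\ref{prop:cD2dcuni} then yields that $\varphi_{l(\overline{X})}$, which coincides with the given $\varphi$, is the unique divisorial extraction of $\msp \in X$. The principal technical obstacle is the case-checking in the first two paragraphs: one must go carefully through Hayakawa's classification of divisorial contractions to $cD/2$ points and match the specified singularity data $\tfrac{1}{4}(1,1,3)$, $\tfrac{1}{l}(1,2,-2)$, and $(E^3) = 2/l$ to isolate the weighted blow-up $\varphi_{l(\overline{X})}$ from a $cD/2$-$2$ standard model.
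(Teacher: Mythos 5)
Your proposal is correct and follows essentially the same route as the paper: both arguments run through Hayakawa's classification of discrepancy-$\tfrac{1}{2}$ contractions to $cD/2$ points to rule out the $cD/2$-$1$ case and the other $cD/2$-$2$ extractions (the paper pins these down as the contractions of \cite[Propositions 4.12, 5.18, 5.25]{Hayakawa00}, excluded respectively by the non-quotient singularity along the exceptional divisor and by the $(\tilde{E}^3)=4/m$, index-$2m$ singularity data), identify $\varphi$ with $\varphi_{l(\overline{X})}$, verify Condition~\ref{cond:cd2}, and conclude by Proposition~\ref{prop:cD2dcuni}.
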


\begin{proof}
By the classification of divisorial contractions of discrepancy $1/2$ to a $cD/2$ point, the pair $(\tilde{\varphi}, \tilde{E})$ of a divisorial contraction $\tilde{\varphi} \colon \tilde{Y} \to \tilde{X}$ to a $cD/2$ singularity $\tilde{\msp} \in \tilde{X}$ such that $(\tilde{E}^3) = 2/\tilde{l}$ for some odd positive integer $\tilde{l}$ is one of the following:
\begin{enumerate}
\item $\tilde{\msp} \in \tilde{X}$ is of type $cD/2$-$1$, and ($\tilde{\varphi}, \tilde{E}) = (\pi_3, E_3)$, where $(\pi_3, E_3)$ is the one given in \cite[Proposition~4.12]{Hayakawa00}, and it is proved in loc.\ cit.\ that $\tilde{X}$ has a non-quotient singularity along $\tilde{E}$.
\item $\tilde{\msp} \in \tilde{X}$ is of type $cD/2$-$2$, and $(\tilde{\varphi}, \tilde{E}) = (\pi_l, E_l)$, where $(\pi_l, E_l)$ is the one given in \cite[Proposition~5.4]{Hayakawa00}. 
Note that $\tilde{\varphi}$ coincides with the divisorial contraction $\varphi_{l (\overline{X})}$ to $\overline{X}$, where $\overline{X}$ is a standard model of $X$.
\item $\tilde{\msp} \in \tilde{X}$ is of type $cD/2$-$2$, and $(\tilde{\varphi}, \tilde{E}) = (\pi'_{\pm}, E'_{\pm})$ or $(\pi', E')$, where $(\pi'_{\pm}, E'_{\pm})$ (resp.\ $(\pi', E')$) is the one given in \cite[Proposition~5.18]{Hayakawa00} (resp.\ \cite[Proposition~5.25]{Hayakawa00}), and in either case it is proved in loc.\ cit. that $(\tilde{E}^3) = 4/m$ for some even integer $m > 0$ and $\tilde{X}$ has a terminal quotient singular of index $2 m$ along $\tilde{E}$.
\end{enumerate}

Let $\varphi \colon Y \to X$ be as in the statement and let $E$ be its exceptional divisor.
By the assumption, this corresponds to (2).
By the assumption on $l$ and the singularity of $Y$ along its exceptional divisor, a standard model $\overline{X}$ of $X$ clearly satisfy Condition~\ref{cond:cd2}.
The assertion follows from Proposition~\ref{prop:cD2dcuni}.
\end{proof}

%%%%%%%%%%%%%%%%%%%%%%%%%%%%%%%%%%%%%%%
%%%%%%%%%%%%%%%%%%%%%%%%%%%%%%%%%%%%%%%
\section{Exclusions for some Fano $3$-fold hypersurfaces} \label{sec:exclmethod}
%%%%%%%%%%%%%%%%%%%%%%%%%%%%%%%%%%%%%%%
%%%%%%%%%%%%%%%%%%%%%%%%%%%%%%%%%%%%%%%

Let $X$ be a $3$-fold weighted hypersurface
\[
X = X_d \subset \mbP (a_0, a_1, a_2, a_3, a_4)
\]
defined by a quasi-homogeneous polynomial $F = F (x, y, z, t, w)$ of degree $d$, where $x, y, z, t, w$ are homogeneous polynomials of weights $a_0, a_1, a_2, a_3, a_4$, respectively.
We denote by $A$ the Weil divisor class on $X$ such that $\mcO_X (A) \cong \mcO_X (1)$.
Throughout this section, we assume that $X$ is a normal $\mbQ$-factorial variety with only terminal singularities, the set $X \cap \Sing \mbP (a_0, \dots, a_4)$ is of codimension at least $2$ in $X$, and $\iota_X := \sum_{i=0}^4 a_i - d > 0$.
Note that we have $-K_X \sim \iota_X A$ by adjunction.

\begin{Def} \label{def:isolset}
Let $V$ be a subvariety of a weighted projective space $\mbP (a_0, \dots, a_n)$ with homogeneous coordinates $x_0, \dots, x_n$ and let $\msp \in V$ be a point.
We say that a finite set $\{g_1, \dots, g_N\}$ of quasi-homogeneous polynomials in variables $x_0, \dots, x_n$ \textit{isolates} $\msp$ or is a $\msp$-\textit{isolating set} if $\msp$ is an isolated component of $(g_1 = \cdots = g_N = 0) \cap V$.
\end{Def}

\begin{Lem}[{\cite[Lemma 5.6.4]{CPR}}]
Under the notation and assumption as in \emph{Definition~\ref{def:isolset}}, if $\{g_1, \dots, g_N\}$ isolates $\msp$, then $m A \sim_{\mbQ} - (m/\iota_X) K_X$ isolates $\msp$, where $m = \max \{\deg g_i\}$.
\end{Lem}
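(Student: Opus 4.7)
The plan is to build explicit sections of $\mcO_X(rmA)$ which simultaneously vanish to order at least $r$ at $\msp$ and have common zero locus on $X$ equal set-theoretically to the intersection $(g_1 = \cdots = g_N = 0)_X$. Write $d_i := \deg g_i \le m$ and set $r := d_1 d_2 \cdots d_N$, so that $rm/d_i$ is a positive integer for each $i$. Because $\{g_1, \dots, g_N\}$ isolates $\msp$, every $g_i$ vanishes at $\msp$, i.e.\ $g_i \in \mcI_{\msp}$. For each $i$ I consider $h_i := g_i^{rm/d_i}$, a quasi-homogeneous polynomial of degree $rm$; restricted to $X$ it gives a global section of $\mcO_X(rmA)$, and it lies in $\mcI_{\msp}^{rm/d_i} \subset \mcI_{\msp}^{r}$ since $rm/d_i \ge r$ (as $m \ge d_i$).

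Next I would argue that the sections $h_i$ cut out $\msp$ as an isolated component of the base locus. By construction they all belong to $H^0\!\bigl(X,\, \mcI_{\msp}^{r} \otimes \mcO_X(rmA)\bigr)$, hence sit in the linear system $|\mcI_{\msp}^{r}(rmA)|$. Their common zero locus on $X$ coincides set-theoretically with $(g_1 = \cdots = g_N = 0)_X$, of which $\msp$ is an isolated component by hypothesis. Therefore $\msp$ is an isolated component of the base locus of the sub-linear system they span, and a fortiori of $\Bs |\mcI_{\msp}^{r}(rmA)|$. This verifies the definition of ``$mA$ isolates $\msp$'' with the integer $r$ just chosen.

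The congruence $mA \sim_{\mbQ} -(m/\iota_X) K_X$ is simply adjunction $-K_X \sim \iota_X A$ rewritten, and requires nothing beyond the standing assumption $\iota_X>0$. I do not anticipate any serious obstacle: the entire argument is a bookkeeping trick exploiting that the exponent $rm/d_i$ simultaneously equalises all degrees to $rm$ and forces the vanishing order at $\msp$ to be at least $r$. The only point that merits a second look is the claim $h_i \in \mcI_{\msp}^{r}$ at the possibly singular point $\msp \in X$, but this is automatic from $g_i \in \mcI_{\msp}$ together with $rm/d_i \ge r$ and does not require any smoothness or local freeness hypothesis on $\msp$.
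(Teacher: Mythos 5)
Your argument is correct and is essentially the paper's own: the paper gives no proof of this lemma, quoting it from \cite[Lemma~5.6.4]{CPR}, and the argument there is exactly your degree-equalising power trick $g_i \mapsto g_i^{rm/d_i}$ combined with the observation that the exponents are at least $r$ so the sections lie in $|\mcI_{\msp}^r(r\cdot mA)|$ and their common zero locus is unchanged. The only cosmetic remark is that the notion of a divisor class isolating $\msp$ is defined in this paper only for smooth points $\msp$ of $X$, so your closing caveat about not needing smoothness, while true for the inclusion $g_i^{rm/d_i}\in\mcI_{\msp}^{r}$, is not actually needed.
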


\subsection{Curves and smooth points}

\begin{Lem} \label{lem:exclcurves}
Assume that $(-K_X^3) \le \iota_X$, and that $X$ is quasi-smooth along $X \cap \Sing \mbP (a_0, \dots, a_4)$.
Then no curve on $X$ is a maximal center.
\end{Lem}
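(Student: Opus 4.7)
\medskip
\noindent\textbf{Proof plan.}\enspace
The plan is to combine the numerical criterion of Lemma~\ref{lem:mtdexclC} with the standard structural result on divisorial contractions centered on curves. I proceed by contradiction: suppose an irreducible reduced curve $\Gamma \subset X$ is a maximal center. Then there exists a divisorial contraction $\varphi \colon Y \to X$ (extremal, in the terminal category, per the paper's convention) whose exceptional divisor is contracted to $\Gamma$.

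The first, and substantive, step is to invoke the well-known fact from the MMP of terminal $3$-folds that whenever the exceptional divisor of such a $\varphi$ is contracted to a curve, the curve is smooth and contained in the smooth locus of $X$, and $\varphi$ is the blow-up of $X$ along that curve. (This is used elsewhere in the paper---e.g.\ in the proof of Proposition~\ref{prop:Sgen}, where it is noted that no divisorial contraction can be centered on a curve passing through a terminal quotient singular point.) Consequently $\Gamma$ lies in the smooth locus of $X$, so $A$ is Cartier in a neighborhood of $\Gamma$, the restriction $A|_\Gamma$ has integer degree, and ampleness of $A$ gives $(A \cdot \Gamma) \ge 1$. Using $-K_X \sim \iota_X A$ together with the hypothesis $(-K_X^3) \le \iota_X$ then yields
\[
(-K_X \cdot \Gamma) \;=\; \iota_X \,(A \cdot \Gamma) \;\ge\; \iota_X \;\ge\; (-K_X^3),
\]
and Lemma~\ref{lem:mtdexclC} rules out $\Gamma$ as a maximal center, a contradiction.

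The main obstacle is that the direct numerical criterion of Lemma~\ref{lem:mtdexclC} is insufficient on its own: one can exhibit irreducible reduced curves $\Gamma \subset X$ with $(-K_X \cdot \Gamma) < (-K_X^3)$. For instance, in Family~\textnumero~$100$ (where $(-K_X^3) = 8/15$ and $\iota_X = 2$), the curve $\Gamma = (x = y = 0)_X$ has $(-K_X \cdot \Gamma) = 4/15$; however, such a $\Gamma$ is forced to pass through a non-Gorenstein singular point of $X$ (here the $\frac{1}{5}(1,2,3)$ point $\msp_t$), and is therefore excluded from being a maximal center on structural rather than numerical grounds. The role of the hypothesis $(-K_X^3) \le \iota_X$ is precisely to ensure that the integrality bound $(A \cdot \Gamma) \ge 1$---automatic on curves confined to the smooth locus---already suffices to close the argument.
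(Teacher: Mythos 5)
Your overall strategy matches the paper's (a structural exclusion for curves meeting the bad locus, plus the numerical criterion of Lemma~\ref{lem:mtdexclC} with the bound $(A\cdot\Gamma)\ge 1$ coming from Cartier-ness of $A$), and your closing computation is identical to the paper's. But the step you lean on is not a fact: it is \emph{false} that the center of a divisorial contraction from a terminal $3$-fold, when it is a curve, must lie in the smooth locus of $X$ with $\varphi$ the blow-up of that curve. That statement holds at the \emph{generic} point of the curve, and Kawamata's theorem rules out centers that are curves through terminal \emph{quotient} singular points, but divisorial contractions centered on curves passing through compound Du~Val points do exist; their study is exactly the subject of Tziolas's work [Tzi10], which this paper invokes in Lemma~\ref{lem:110exclcurve} to exclude a specific degree-$1$ curve through the $cE_7$ point of $\hat X$ --- an argument that would be vacuous if your ``well-known fact'' were true. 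The parenthetical you cite from Proposition~\ref{prop:Sgen} only supports the quotient-point case, not the general claim.

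This matters for the lemma as actually applied. The hypotheses only force the points of $\Sigma := X\cap\Sing\mbP(a_0,\dots,a_4)$ to be terminal quotient singularities; $X$ may well have $cDV$ points away from $\Sigma$ (e.g.\ the $cE$ points $\hat\msq\in\hat X$ in Families \textnumero~100, 101, 103 sit at smooth points of the ambient weighted projective space), and a curve through such a point cannot be excluded structurally. The paper's proof therefore splits into two cases: if $\Gamma\cap\Sigma\ne\emptyset$, then $\Gamma$ passes through a terminal quotient point and Kawamata applies; if $\Gamma\cap\Sigma=\emptyset$, then $A$ is Cartier along $\Gamma$ --- because $\mcO_{\mbP}(1)$ is invertible off $\Sing\mbP$, \emph{not} because $X$ is smooth along $\Gamma$ --- so $(-K_X\cdot\Gamma)=\iota_X(A\cdot\Gamma)\ge\iota_X\ge(-K_X^3)$ and Lemma~\ref{lem:mtdexclC} finishes. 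Your argument derives Cartier-ness from smoothness of $X$ along $\Gamma$, so once the false structural claim is removed you have no argument covering curves through Gorenstein singular points of $X$ lying off $\Sigma$. Replacing ``$\Gamma$ lies in the smooth locus of $X$'' by the two-case dichotomy above repairs the proof.
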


\begin{proof}
Let $\Gamma \subset X$ be an irreducible and reduced curve.
We set $\Sigma := X \cap \Sing \mbP (a_0, \dots, a_4)$.
Note that the divisor $A$ is a Cartier divisor outside $\Sigma$.

Suppose that $\Gamma \cap \Sigma = \emptyset$.
Then $(-K_X \cdot \Gamma) = \iota_X (A \cdot \Gamma) \ge \iota_X$, and thus we have $(-K_X \cdot \Gamma) \ge (-K_X)^3$.
It follows from \cite[Lemma 2.9]{OkII} that $\Gamma$ is not a maximal center.

Suppose that $\Gamma \cap \Sigma \ne \emptyset$.
By the assumption that $X$ is quasi-smooth along $\Sigma$, any point in $\Sigma$ is a terminal quotient singularity of $X$.
Thus $\Gamma$ is not a maximal center since a curve passing through a $3$-dimensional terminal quotient singular point cannot the center of a divisorial contraction (\cite{Kawamata}).
\end{proof}

\begin{Lem} \label{lem:exclsmpts}
Let $X = X_{2 b} \subset \mbP (a_0, \dots, a_3, b)$ be a Mori-Fano weighted hypersurface defined by a quasi-homogeneous polynomial $F = F (x, y, z, t, w)$ of degree $2 b$, where $a_0 \le a_1 \le a_2 \le a_3$.
If $w^2 \in F$ and the inequality
\[
\iota_X^2 \le 2 a_0 a_1
\]
holds, then no smooth point of $X$ is a maximal center.
\end{Lem}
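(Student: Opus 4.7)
The plan is to apply Lemma~\ref{lem:mtdexclsmpt} to each smooth point $\msp \in X$ with $L = \mu A$. Since $-K_X \sim \iota_X A$ and $(A^3) = 2/(a_0 a_1 a_2 a_3)$, we have $(-K_X^3) = 2\iota_X^3/(a_0 a_1 a_2 a_3)$, so the inequality $l \le 4/(-K_X^3)$ translates to $\mu \iota_X^2 \le 2 a_0 a_1 a_2 a_3$. Taking $\mu = a_2 a_3$ makes this exactly the hypothesis $\iota_X^2 \le 2 a_0 a_1$. Thus it suffices to produce, for every smooth point $\msp \in X$, a $\msp$-isolating set (in the sense of Definition~\ref{def:isolset}) of quasi-homogeneous polynomials of degree at most $a_2 a_3$.

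Write $\msp = (x_0\!:\!y_0\!:\!z_0\!:\!t_0\!:\!w_0)$ and observe that $w^2 \in F$ forces $\msp_w \notin X$, so at least one of $x_0, y_0, z_0, t_0$ is nonzero. According to the first such coordinate I split into four cases and exhibit three polynomials vanishing at $\msp$:
\begin{enumerate}
\item[(i)] if $x_0 \ne 0$: $y^{a_0}x_0^{a_1}-x^{a_1}y_0^{a_0}$, $z^{a_0}x_0^{a_2}-x^{a_2}z_0^{a_0}$, $t^{a_0}x_0^{a_3}-x^{a_3}t_0^{a_0}$, of maximum degree $a_0 a_3$;
\item[(ii)] if $x_0 = 0,\ y_0 \ne 0$: $x$, $z^{a_1}y_0^{a_2}-y^{a_2}z_0^{a_1}$, $t^{a_1}y_0^{a_3}-y^{a_3}t_0^{a_1}$, of maximum degree $a_1 a_3$;
\item[(iii)] if $x_0 = y_0 = 0,\ z_0 \ne 0$: $x$, $y$, $t^{a_2}z_0^{a_3}-z^{a_3}t_0^{a_2}$, of maximum degree $a_2 a_3$;
\item[(iv)] if $x_0 = y_0 = z_0 = 0$: $x$, $y$, $z$, of maximum degree $a_2$.
\end{enumerate}
Using $a_0 \le a_1 \le a_2 \le a_3$, each of the four maxima is at most $a_2 a_3$, and case (iii) is precisely the one that exhausts the hypothesis.

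The main technical step, and the real obstacle, is verifying that each triple genuinely cuts $\msp$ out as an isolated component of its common zero locus in $X$. In each case, after imposing the first (and possibly second) polynomial the problem reduces to intersecting $X$ with a smaller weighted projective subspace spanned by the remaining coordinates, and the third polynomial together with $F$ should cut this down to a finite set. Here $w^2 \in F$ plays a crucial role: any residual ``$w$-line'' appearing in the common vanishing locus meets $X$ in the zero scheme of a polynomial of exact degree $2$ in $w$ with nonvanishing leading coefficient, which is finite. The bookkeeping becomes slightly delicate in degenerate subcases where further coordinates of $\msp$ vanish and one of the chosen polynomials collapses to a pure monomial (for instance $t_0 = 0$ in (iii), turning the third polynomial into $t^{a_2}$); in each such subcase one checks directly that the appropriate restricted slice still meets $X$ in a zero-dimensional scheme, again thanks to $w^2 \in F$. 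Combining these verifications with the degree bound gives the conclusion via Lemma~\ref{lem:mtdexclsmpt}.
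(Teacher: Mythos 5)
Your proof is correct and follows essentially the same route as the paper's: both isolate a smooth point by binomials (or coordinate functions) among $x,y,z,t$ of degree at most $a_2 a_3$, use $w^2 \in F$ to ensure the residual $w$-direction meets $X$ in finitely many points (the paper phrases this via the double cover $X \to \mbP(a_0,\dots,a_3)$), and then apply Lemma~\ref{lem:mtdexclsmpt}, where $m = a_2a_3$ converts the bound $l \le 4/(-K_X^3)$ into exactly the hypothesis $\iota_X^2 \le 2a_0a_1$. The only difference is bookkeeping: you branch on the first nonvanishing coordinate, whereas the paper fixes any $k \in \{0,1,2\}$ with $\alpha_k \ne 0$ and uses all three binomials relative to $x_k$.
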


\begin{proof}
Let $\msp = (\alpha_0\!:\!\alpha_1\!:\!\alpha_2\!:\!\alpha_3\!:\!\beta) \in X$ be a smooth point, and let $\msq := \pi (\msp) = (\alpha_0\!:\!\alpha_1\!:\!\alpha_2\!:\!\alpha_3)$ be the image of $\msp$ under the double cover $\pi \colon X \to \mbP (a_0, \dots, a_3) =: \mbP$.

If $\alpha_0 = \alpha_1 = \alpha_2 = 0$, then the set $\{x_0, x_1, x_2\}$ isolates $\msp$ since $w^2 \in F$, and hence $a_2 A$ isolates $\msp$.
We assume that $\alpha_k \ne 0$ for some $k \in \{0, 1, 2\}$.
Then the common zero locus in $\mbP$ of the set
\[
\Lambda := \{\, \alpha_k^{a_i} x_i^{a_k} - \alpha_i^k x_k^{a_i} \mid i \in \{0, 1, 2, 3\} \setminus \{k\} \, \}
\]
is a finite set of points, denoted by $\Sigma \subset \mbP$ including $\msq$.
The common zero locus in $X$ of the set $\Lambda$ is the inverse image $\pi^{-1} (\Sigma)$ and it is a finite set of points.
In particular the set $\Lambda$ isolates $\msp$, and thus 
\[
\max \{\, a_i a_k \mid i \in \{0, 1, 2, 3\} \setminus \{k\} \, \} A
\]
isolates $\msp$.
Since $k \in \{0, 1, 2\}$, we have
\[
a_2 \le \max \{\, a_i a_k \mid i \in \{0, 1, 2, 3\} \setminus \{k\} \, \} \le a_2 a_3,
\]
and thus $m A$ isolates $\msp$ for some positive integer $m$ satisfying $m \le a_2 a_3$.
By the assumption $\iota_X^2 \le 2 a_0 a_1$, we have
\[
\frac{m}{\iota_X} \le \frac{a_2 a_3}{\iota_X} \le \frac{4}{\iota_X^3 (A^3)} = \frac{4}{(-K_X^3)}
\]
since
\[
(A^3) = \frac{2 b}{a_0 a_1 a_2 a_3 b}.
\]
By Lemma \ref{lem:mtdexclsmpt}, $\msp$ is not a maximal center.
\end{proof}

\begin{Lem} \label{lem:excl110nspt}
Let $X = X_{7} \subset \mbP (1_x, 1_y, 1_z, 2_t, 3_w)$ be a Mori-Fano hypersurface defined by a quasi-homogeneous polynomial $F = F (x, y, z, t, w)$ of degree $7$.
Suppose that $X$ is quasi-smooth at $\msp_t$ and $\msp_w$.
If $X$ contains the curve $\Gamma = (x = y = z = 0)$, then we suppose in addition that $X$ is quasi-smooth along $\Gamma$.
Then no smooth point on $X$ is a maximal center.
\end{Lem}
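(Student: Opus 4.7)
The plan is to apply Lemma~\ref{lem:mtdexclsmpt} using explicit isolating divisors of small degree. Since $\iota_X = 1+1+1+2+3-7 = 1$ and $(A^3) = 7/6$, we have $4/(-K_X^3) = 24/7 > 3$, so it suffices to exhibit, for each smooth $\msp \in X$, a $\msp$-isolating set of quasi-homogeneous polynomials of maximum degree at most $3$; then Lemma~\ref{lem:mtdexclsmpt} applied with $L = 3A$ (so $l = 3$) excludes $\msp$ as a maximal center.

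First I handle points off $\Gamma = (x=y=z=0)$. Write $\msp = (\alpha_0\!:\!\alpha_1\!:\!\alpha_2\!:\!\alpha_3\!:\!\beta)$ with some $\alpha_i \ne 0$ for $i \in \{0,1,2\}$. By symmetry I may assume $\alpha_0 = 1$ and take the set $\{\,y - \alpha_1 x,\ z - \alpha_2 x,\ t - \alpha_3 x^2,\ w - \beta x^3\,\}$ of degrees $1,1,2,3$: the locus $x \ne 0$ yields only $\msp$, while $x = 0$ forces $y=z=t=w=0$, which is no point of $\mbP(1,1,1,2,3)$.

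Next suppose $\msp \in \Gamma$. If $\Gamma \not\subset X$, then $F|_\Gamma$ is a non-zero scalar multiple of the unique weight-$7$ monomial $t^2 w$, so $\Gamma \cap X = \{\msp_t, \msp_w\}$, and both are terminal quotient singular points; hence no smooth point of $X$ lies on $\Gamma$. If instead $\Gamma \subset X$, then $F = x F_x + y F_y + z F_z$ and by hypothesis $(F_x, F_y, F_z)$ has no common zero on $\Gamma$; the smooth points on $\Gamma$ are $\Gamma \setminus \{\msp_t, \msp_w\}$, and using the $\mbC^*$-action I may take $\msp = (0\!:\!0\!:\!0\!:\!1\!:\!\epsilon)$ with $\epsilon \ne 0$. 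Here the naive isolating-set argument breaks down, since $w$ is the only section of $|3A|$ not lying in $(x,y,z)$, so every section of $|3A|$ vanishing at $\msp$ lies in $(x,y,z)$ and hence $\Bs |\mcI_\msp(3A)| \supset \Gamma$; the point $\msp$ is not isolated. I therefore proceed by contradiction: assume $\msp$ is a maximal center via a movable $\mcM \sim_\mbQ nA$ with $\mult_\msp \mcM > 2 n$. For a general $H \in \mcM$, if $\Gamma \not\subset H$ then $H|_\Gamma$ is a $0$-cycle of degree $n(A \cdot \Gamma) = n/6$ with contribution $\ge \mult_\msp H > 2n$ at $\msp$, forcing $n/6 > 2n$, a contradiction; hence $\Gamma \subset \Bs \mcM$. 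Setting $m := \mult_\Gamma \mcM$, two general members intersect along $m^2 \Gamma + C$ with $(A \cdot C) = (7 n^2 - m^2)/6$, and from $\mult_\msp(H_1 \cdot H_2) > 4 n^2$ together with the bound $\mult_\msp C \le A \cdot C$ on the $\msp$-passing components of $C$ one aims to derive a numerical contradiction; alternatively one applies Lemma~\ref{lem:exclquotsing} on the blow-up $\varphi \colon Y \to X$ of $\msp$ with $S = (x = 0)_X$, the pencil $\mcL$ spanned by $y$ and $z$, and $\Delta$ a suitable multiple of $\Gamma$.

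The main obstacle is the final intersection-theoretic balance. With the naive choices above, using $-K_Y = \varphi^*A - 2 E$, $\tilde S = \varphi^*A - E = \tilde T$, the formulas $(\varphi^*A)^3 = 7/6$, $(\varphi^*A)^2 \cdot E = \varphi^*A \cdot E^2 = 0$ and $E^3 = 1$ give $(-K_Y \cdot \tilde S \cdot \tilde T) = 7/6 - 2 = -5/6$, whereas $(-K_Y \cdot \tilde \Gamma) = (A \cdot \Gamma) - 2 = 1/6 - 2 = -11/6$, so condition~(3) of Lemma~\ref{lem:exclquotsing} fails outright. The refinement must exploit the quasi-smoothness along $\Gamma$ (say $F_z(\msp) \ne 0$) by choosing $S$ or $\Delta$ to incorporate additional components, for instance by augmenting $\Delta$ with the $0$-cycle $(F_x = 0)_X \cap \Gamma$ or by replacing $\mcL$ with a pencil of cubics through $w$, so that the altered numbers satisfy the required inequality and the contradiction closes.
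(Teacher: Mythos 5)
Your treatment of smooth points off $\Gamma$ and of the case $\Gamma \not\subset X$ is correct and matches the paper (the isolating set of degree $\le 3$ together with Lemma~\ref{lem:mtdexclsmpt}, and the observation that $t^2w$ is the only degree-$7$ monomial in $t,w$ so that $\Gamma \cap X = \{\msp_t,\msp_w\}$). But the essential case --- a smooth point $\msp$ on $\Gamma \subset X$ --- is not proved: you correctly diagnose that the isolation argument and the naive application of Lemma~\ref{lem:exclquotsing} both fail (your computation $-5/6 - (-11/6) = 1 > 0$ is right, and no multiple $k\Gamma$ of $\Gamma$ fixes it since one would need $k \le 5/11$), and the $4n^2$-route stalls because you have no bound on $m = \mult_\Gamma \mcM$ (the inequality you would need, roughly $m \lesssim 1.8n$, does not follow from anything you establish). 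Ending with ``the refinement must \dots so that the contradiction closes'' leaves precisely the part of the lemma that requires the quasi-smoothness hypothesis unproven. A smaller issue: $\mult_\msp \mcM > 2n$ is not a consequence of $\msp$ being a maximal center at a smooth point (only $\mult_\msp\mcM > n$ and Corti's $\mult_\msp(M_1\cdot M_2) > 4n^2$ are); fortunately your use of it, to force $\Gamma \subset \Bs\mcM$, survives with the weaker bound.

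The missing idea is to abandon global intersection numbers on a blow-up of $X$ and instead restrict to a general surface $S \in |A|$, which (using the normal form $F = w^2x + t^3y + \cdots$ coming from quasi-smoothness along $\Gamma$) is a quasi-smooth K3 containing $\Gamma$ with exactly the two quotient singularities $\msp_t$, $\msp_w$ on $\Gamma$, so that $(\Gamma^2)_S = -2 + \tfrac12 + \tfrac23 = -\tfrac56$. Taking $T = (y - x = 0)_X$ one gets $T|_S = \Gamma + \Delta$ with $\Delta$ irreducible of degree $1$ and $(\Gamma\cdot\Delta)_S = 1$. Writing $\tfrac1n M|_S = \gamma\Gamma + \delta\Delta + C$ and intersecting $\tfrac1n M|_S - \delta\, T|_S$ with $\Delta$ bounds $\gamma \le 1$; then inversion of adjunction applied twice ($X \rightsquigarrow S \rightsquigarrow \Gamma$) gives $\mult_\msp\bigl((\delta\Delta + C)|_\Gamma\bigr) > 1$, while $\bigl((\delta\Delta + C)\cdot\Gamma\bigr) = \tfrac16 + \tfrac56\gamma \le 1$, a contradiction. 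This is the Corti--Pukhlikov--Reid argument (the paper adapts \cite[Lemma~2.1.8]{CP}); it is genuinely different in kind from the divisor-class computations you attempted, and it is the step your proposal is missing.
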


\begin{proof}
This proof is taken from that of \cite[Lemma 2.1.8]{CP}.
The task of this proof is to observe that the same argument indeed works in this generalized setting.

Let $\msp \in X$ be a smooth point.
Suppose that $\msp \notin \Gamma$.
Then it is easy to see that $3 A$ isolates $\msp$.
By Lemma \ref{lem:mtdexclsmpt}, $\msp$ is not a maximal center since $3 < 4/(A^3) = 24/7$.
If $\Gamma$ is not contained in $X$, then $\Gamma \cap X$ consists of two singular points $\msp_t, \msp_w$.
Hence it remains to exclude a smooth point $\msp$ of $X$ contained in $\Gamma$ assuming that $X$ contains $\Gamma$.

Let $\msp \in \Gamma$ be a smooth point.
By the quasi-smoothness of $X$ along $\Gamma$, we can write
\[
F = w^2 x + t^3 y + w t a_2 + t^2 b_3 + w c_4 + t d_5 + e_7,
\]
where $a_2, \dots, e_7 \in \mbC [x, y, z]$ are quasi-homogeneous polynomials of indicated degree.
Let $S \in |A|$ be a general member which is defined by $z - \lambda x - \mu y = 0$ on $X$ for general $\lambda, \mu \in \mbC$.
The base locus of $|A|$ is $\Gamma$, and hence $S$ is smooth outside $\Gamma$.
Let $J_{C_S}$ be the Jacobi matrix of the affine cone $C_S \subset \mbA^5$ of $S$.
Then, by setting $x = y = z = 0$, we have
\[
J_{C_S}|_{\Gamma} =
\begin{pmatrix}
w^2 & t^3 & 0 & 0 & 0 \\
-\lambda & - \mu & 1 & 0 & 0
\end{pmatrix}
\]
This shows that $S$ is quasi-smooth along $\Gamma$.
We see that $S$ is a K3 surface with two singular points $\msp_t$ and $\msp_w$ of type $\frac{1}{2} (1, 1)$ and $\frac{1}{3} (1, 2)$, respectively.
Since $\Gamma \subset S$ is a smooth rational curve passing through the above two singular points, we compute 
\[
(\Gamma^2) = -2 + \frac{1}{2} + \frac{2}{3} = - \frac{1}{6}.
\]
We set $T = (y - x = 0)_X$.
Then $T|_S = \Gamma + \Delta$, where
\[
\Delta = (z - \lambda x - \mu y = y - x = F (x, x, (\lambda + \mu) x, t, w)/x = 0).
\]
We set $h_6 (x, t, w) = F (x, x, (\lambda + \mu) x, t, w)/x$.
Then
\[
h_6 = w^2 + t^3 + \alpha w t x + \beta t^2 x^2 + \gamma w x^3 + \delta t x^4 + \varepsilon x^6,
\]
where $\alpha, \dots, \varepsilon \in \mbC$ are defined by is the $a_2 (x, x, (\lambda + \mu) x) = \alpha x^2, \dots, e_7 (x, x, (\lambda + \mu) x) = \varepsilon x^7$, respectively.
Note that $h_6$ is an irreducible polynomial (for any $\alpha, \dots, \varepsilon \in \mbC$) and thus $\Delta$ is an irreducible and reduced curve of degree $1$.
We have
\[
\frac{1}{6} = (\Gamma \cdot T|_S) = (\Gamma^2) + (\Gamma \cdot \Delta) = - \frac{5}{6} + (\Gamma \cdot \Delta),
\]
and hence $(\Gamma \cdot \Delta) = 1$. 

Now assume that $\msp$ is a maximal center.
Then there is a movable linear system $\mcM \sim_{\mbQ} n A$ on $X$ such that $(X, \frac{1}{n} \mcM)$ is not canonical at $\msp$.
By inversion of adjunction, $(S, \frac{1}{n} \mcM|_S)$ is not log canonical at $\msp$.
Let $M \in \mcM$ be a general member and we write 
\[
D_S := \frac{1}{n} M|_S = \gamma \Gamma + \delta \Delta + C,
\]
where $\gamma, \delta \ge 0$ are rational numbers and $C$ is an effective divisor on $S$ whose support contains neither $\Gamma$ nor $\Delta$.
By taking intersection number of the divisors
\[
(1-\delta)A|_S \sim_{\mbQ} D_s - \delta T|_S = (\gamma - \delta)\Gamma + C
\]
and $\Delta$, we obtain $1-\delta \ge \gamma - \delta$ since $(\Delta \cdot C) \ge 0$.
This shows $\gamma \le 1$.
We see that the pair $(S, D_S) = (\gamma \Gamma + \delta \Delta + C)$ is not log canonical at $\msp$, and hence $(S, \Gamma + \delta \Delta + C)$ is not log canonical at $\msp$.
By the inversion of adjunction, $(\Gamma, (\delta \Delta + C)|_{\Gamma})$ is not log canonical at $\msp$, which implies $\mult_{\msp} (\delta \Delta + C)|_{\Gamma} > 1$.
However we have
\[
(\delta \Delta + C \cdot \Gamma) = (D_S - \gamma \Gamma \cdot \Gamma) = \frac{1}{6} + \frac{5}{6} \gamma \le 1.
\]
This is a contradiction and $\msp$ is not a maximal center. 
\end{proof}

\subsection{Cyclic quotient singular points}

We exclude terminal quotient singular points on suitable Mori-Fano weighted hypersurfaces as a maximal center.
Note that, for the exclusion of a terminal quotient singular point $\msp$ on a Mori-Fano $3$-fold $X$ as a maximal center, it is enough to exclude the Kawamata blow-up $\varphi \colon Y \to X$ at $\msp$ as a maximal extraction since $\varphi$ is the unique divisorial contraction centered at $\msp$.

\begin{Lem} \label{lem:exclqsing1}
Let $r$ and $a$ be positive integers such that
\[
(r, a) \in \{ (3, 1), (4, 1), (7, 3)\}.
\]
Let $X = X_{4 r - 2 a} \subset \mbP (1, a, r - 2a, r, 2 r - a)$ be a Mori-Fano $3$-fold weighted hypersurface of index $1$ defined by a quasi-homogeneous polynomial $F = F (x, y, z, t, w)$ of degree $4 r - 2 a$, where $x, y, z, t, w$ are homogeneous coordinates of weight $1, a, r-2a, r, 2 r - a$, respectively.
Suppose that $w^2 \in F$ and that $X$ is quasi-smooth at the point $\msp_t \in X$.
Then the $\frac{1}{r} (1, a, r-a)$ point $\msp_t$ of $X$ is not a maximal center.
\end{Lem}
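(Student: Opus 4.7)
The plan is to invoke Lemma~\ref{lem:exclquotsing} with $S := (z=0)_X$, the pencil $\mcL := \langle x^a, y \rangle \subset |aA|$, and trivial cycle $\Delta := 0$. Because $\msp_t$ is a terminal cyclic quotient singular point, the Kawamata blow-up $\varphi \colon Y \to X$ is the unique divisorial contraction centered at $\msp_t$, so excluding $\varphi$ as a maximal extraction will finish the argument.

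First, quasi-smoothness at $\msp_t$ forces $zt^3 \in F$: among monomials of degree $4r-2a$ that are linear in one of $\{x, y, z, w\}$, the weight arithmetic rules out all but $zt^3$ for the given $(r, a)$. Consequently, near $\msp_t$ one can eliminate $z$, and $(x, y, w)$ serve as local coordinates of weights $\frac{1}{r}(1, a, r-a)$, confirming the singularity type and the blowing-up weight. Next, I would verify condition~(1) of Lemma~\ref{lem:exclquotsing}: for each $(r, a) \in \{(3,1),(4,1),(7,3)\}$ one has $r \nmid 2a$, so $w^2$ is the unique monomial of degree $4r-2a$ in $\mbP(r, 2r-a)$. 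Combined with the hypothesis $w^2 \in F$, this gives $F(0, 0, 0, t, w) = c \, w^2$ with $c \ne 0$, whence $(x = y = z = 0)_X = \{\msp_t\}$ set-theoretically. Thus $\Bs \mcL \cap \Supp S = \{\msp_t\}$, and condition~(2) holds automatically since $\Delta = 0$.

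The core computation is condition~(3). Eliminating $z$ in $F$ puts the local equation of $S$ at $\msp_t$ in the form $w^2 + (\text{higher Kawamata-weight terms}) = 0$, so $S$ has multiplicity $2(r-a)/r$ along $E$; combined with $S \sim (r-2a) A$ and $\varphi^* A = -K_Y + \frac{1}{r} E$, one obtains $\tilde{S} \sim_{\mbQ} (r-2a)(-K_Y) - E$. For a general $T \in \mcL$, the local equation $\alpha x^a + \beta y = 0$ has weight $a/r$, giving $\tilde{T} \sim_{\mbQ} a(-K_Y)$. Using
$(-K_Y)^3 = (A^3) - (E^3)/r^3 = \frac{2}{a r (r-2a)} - \frac{1}{r a (r-a)}$
and $(-K_Y)^2 \cdot E = (E^3)/r^2 = \frac{1}{a(r-a)}$ (with $(E^3) = r^2 / (a(r-a))$ for the Kawamata blow-up), a direct calculation gives
\[
(-K_Y \cdot \tilde{S} \cdot \tilde{T}) \;=\; a(r-2a)(-K_Y)^3 - a(-K_Y)^2 \cdot E \;=\; \frac{1}{r-a} - \frac{1}{r-a} \;=\; 0.
\]
Hence $(-K_Y \cdot \tilde{S} \cdot \tilde{T}) - (-K_Y \cdot \tilde{\Delta}) = 0 \le 0$, verifying condition~(3), and Lemma~\ref{lem:exclquotsing} concludes that $\msp_t$ is not a maximal center.

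The delicate point is the isolation behind condition~(1): the combination of $w^2 \in F$ with the arithmetic condition $r \nmid 2a$ is precisely what forces $(x = y = z = 0)_X$ to collapse to the single point $\msp_t$, eliminating any $1$-dimensional residual locus that would otherwise demand a non-trivial $\Delta$. Without this isolation, the critical intersection number would pick up extra contributions and the clean equality $(-K_Y \cdot \tilde{S} \cdot \tilde{T}) = 0$ would no longer suffice.
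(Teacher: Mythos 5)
Your proof is correct and follows essentially the same route as the paper's: the same choice of $S = (z=0)_X$ and pencil $\mcL = \langle x^a, y\rangle \subset |aA|$, the same computation $\ord_E(z) = \tfrac{2(r-a)}{r}$ forced by $w^2 \in F$, and the same application of Lemma~\ref{lem:exclquotsing} with $\Delta = 0$ yielding $(-K_Y\cdot\tilde{S}\cdot\tilde{T})=0$. The only imprecision is the claim that weight arithmetic rules out every monomial $t^3 v$ except $t^3 z$: for $(r,a)=(3,1)$ and $(7,3)$ one has $r-2a=1=\deg x$, so $t^3x$ (and for $(3,1)$ also $t^3y$) is admissible and quasi-smoothness only forces $t^3\ell\in F$ for some nonzero linear form $\ell$ in the weight-$(r-2a)$ coordinates; a harmless coordinate change normalizing $\ell=z$ (which the paper performs explicitly) repairs this, after which your argument coincides with the paper's.
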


\begin{proof}
By the quasi-smoothness of $X$ at $\msp := \msp_t$, we can choose homogeneous coordinates such that
\[
F = t^3 z + t^2 f_{2 r - 2 a} + t f_{3 r - 2 a} + f_{4 r - 2 a},
\]
where $f_i = f_i (x, y, z, w)$ is a quasi-homogeneous polynomial of degree $i$.
Let $\varphi \colon Y \to X$ be the Kawamata blow-up at $\msp$.
Locally around $\msp$, we can take $x, y, w$ as local orbifold coordinates of $X$, and by filtering off terms divisible by $z$ in the equation $F = 0$, we have
\begin{equation} \label{eq:No100exclsing-1}
\xi z = t^2 f_{2 r - 2 a} (x, y, 0, w) + t f_{3 r - 2 a} (x, y, 0, w) + f_{4 r - 2 a} (x, y, 0, w),
\end{equation}
where the section $\xi = - t^3 + \cdots$ does not vanish at $\msp$.
The Kawamata blow-up $\varphi$ is defined as the weighted blowup with $\wt (x, y, w) = \frac{1}{r} (1, a, r - a)$.
The minimum of the weights of the monomials in the RHS of \eqref{eq:No100exclsing-1} with respect to the weight $\wt (y, z, w) = \frac{1}{r} (1, a, r - a)$ is $\frac{2 (r-a)}{r}$, which is attained by $w^2 \in f_{4 r - 2 a} (x, y, 0, w)$.
Hence the section $z$ vanishes along $E$ to order $\frac{2 (r - a)}{r}$. 
We set $S := (z = 0)_X$ and let $\mcL \subset |a A|$ be the pencil on $X$ generated $x^a$ and $y$.
We have
\[
\Supp (S) \cap \Bs \mcL = (x = y = z = 0) \cap X = \{\msp\}
\]
since $w^2 \in F$.
For the proper transforms $\tilde{S}$ and $\tilde{T}$ of $S$ and a general member $T \in \mcL$ on $Y$, we have $\tilde{S} \sim_{\mbQ} (r - 2 a)\varphi^* A - \frac{2 (r - a)}{r} E$ and $\tilde{T} \sim_{\mbQ} a \varphi^*A - \frac{a}{r} E$.
Then we compute
\[
\begin{split}
(-K_Y \cdot \tilde{S} \cdot \tilde{T}) &= a (r - 2 a) (A^3) - \frac{2 a (r - a)}{r^3} (E^3) = 0 \\
&= \frac{a(r - 2 a)(4 r - 2 a)}{a (r - 2 a) r (2 r - a)} - \frac{2 a (r - a)}{r^3} \cdot \frac{r^2}{a (r - a)} \\
&= 0.
\end{split}
\]
By Lemma \ref{lem:exclquotsing}, $\msp$ is not a maximal center.
\end{proof}

\begin{Lem} \label{lem:exclsingpt2}
Let $X = X_{12} \subset \mbP (1_x, 1_y, 1_z, 4_t, 6_w)$ be a Mori-Fano hypersurface defined by a quasi-homogeneous polynomial $F = F (x, y, z, t, w)$ of degree $12$.
Suppose that $w^2, t^3 \in F$.
Then the singular point of type $\frac{1}{2} (1, 1, 1)$ is not a maximal center.
\end{Lem}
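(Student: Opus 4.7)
The plan is to apply Lemma~\ref{lem:exclquotsing} with $\Delta=0$ to the Kawamata blow-up $\varphi\colon Y\to X$ at $\msp$, using $S:=(z=0)_X\in|A|$ and the pencil $\mcL\subset|A|$ spanned by $x$ and $y$.

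First I would locate $\msp$. The ambient $\mbP(1,1,1,4,6)$ is singular along the curve $(x=y=z=0)\cong\mbP(4,6)$ with generic transverse singularity of type $\tfrac12(1,1,1)$, and the hypotheses $w^2,t^3\in F$ force
\[
F|_{x=y=z=0}=c_w w^2+c_t t^3,\qquad c_w,c_t\in\mbC^*,
\]
which cuts out a single point $\msp=(0\!:\!0\!:\!0\!:\!1\!:\!\beta)$ in $\mbP(4,6)$ (seen via the invariant $w^2/t^3$), with $\beta^2=-c_t/c_w$. Since $\partial F/\partial w=2c_w\beta\ne 0$ at $\msp$, one can eliminate $w$ locally, and $x,y,z$ descend to local orbifold coordinates realizing the $\tfrac12(1,1,1)$ singularity at $\msp$.

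With $\msp$ identified, conditions~(1) and (2) of Lemma~\ref{lem:exclquotsing} are immediate: $\Bs\mcL\cap\Supp S=(x=y=z=0)_X=\{\msp\}$ by the same finiteness argument, and $S\cdot T$ is a well-defined effective $1$-cycle for general $T\in\mcL$. For condition~(3) I would move to $Y$: the exceptional divisor satisfies $E\cong\mbP^2$, discrepancy $\tfrac12$, and $(E^3)=r^2/(a(r-a))=4$. Each of $x,y,z$ has weight $\tfrac12$ with respect to the Kawamata blow-up, so $\tilde S\sim_\mbQ\varphi^*A-\tfrac12 E$ and $\tilde T\sim_\mbQ\varphi^*A-\tfrac12 E$ for general $T\in\mcL$. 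Combined with $-K_Y\sim_\mbQ\varphi^*A-\tfrac12 E$ (using $\iota_X=1$), $(A^3)=12/(1\cdot 1\cdot 1\cdot 4\cdot 6)=\tfrac12$, and the usual vanishings $(\varphi^*A)^i\cdot E^{3-i}=0$ for $1\le i\le 2$, this yields
\[
(-K_Y\cdot\tilde S\cdot\tilde T)=(A^3)-\tfrac18(E^3)=\tfrac12-\tfrac12=0,
\]
so condition~(3) holds and Lemma~\ref{lem:exclquotsing} excludes $\msp$ as a maximal center.

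The only substantive step is the local bookkeeping at $\msp$: one must verify that $\msp$ is the unique intersection of $X$ with the singular curve of $\mbP$ and that $x,y,z$ really descend to orbifold coordinates in which the Kawamata blow-up has weight $\tfrac12$, so that the proper transforms of $S$ and $T$ pick up exactly the factor $-\tfrac12 E$ and not some other multiple. Once this is settled, the computation balances to zero as a consequence of the numerical identities $(A^3)=\tfrac12$ and $(E^3)=4$.
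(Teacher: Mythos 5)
Your proof is correct and follows essentially the same route as the paper: the paper applies the same exclusion lemma (Lemma~\ref{lem:exclquotsing} with $\Delta=0$) to the Kawamata blow-up, the only cosmetic difference being that it takes $S,T$ to be two general members of $|A|$ (generated by $x,y,z$) rather than your specific choice $S=(z=0)_X$ together with the pencil spanned by $x,y$, and it arrives at the identical computation $(-K_Y\cdot\tilde S\cdot\tilde T)=(A^3)-\tfrac18(E^3)=\tfrac12-\tfrac12=0$. Your preliminary bookkeeping (that $(x=y=z=0)\cap X=\{\msp\}$ because $F|_{x=y=z=0}=c_ww^2+c_tt^3$, and that $x,y,z$ descend to orbifold coordinates of weight $\tfrac12$ after eliminating $w$) is sound and is exactly what the paper asserts.
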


\begin{proof}
Let $\msp \in X$ be the singular point of type $\frac{1}{2} (1, 1, 1)$.
Note that $(x = y = z = 0) \cap X = \{\msp\}$.
We set $\mcL := |A|$ which is generated by $x, y, z$, and let $S, T \in \mcL$ be general members.
We have $\Supp S \cap \Bs \mcL = \{\msp\}$.
Let $\varphi \colon Y \to X$ be the Kawamata blow-up at $\msp \in X$.
We can take $x, y, z$ as local orbifold coordinates of $X$ at $\msp$ and $\varphi$ is the weighted blow-up with $\wt (x, y, z) = \frac{1}{2} (1, 1, 1)$.
For the proper transforms $\tilde{S}$ and $\tilde{T}$ of $S$ and $T$ on $Y$, respectively, we have $\tilde{S} \sim_{\mbQ} \tilde{T} \sim_{\mbQ} \varphi^*A - \frac{1}{2} E$.
We compute
\[
(-K_Y \cdot \tilde{S} \cdot \tilde{T}) = (A^3) - \frac{1}{2^3} (E^3) = 0.
\]
By Lemma \ref{lem:exclquotsing}, $\msp$ is not a maximal center.
\end{proof}

\begin{Lem} \label{lem:exclsingpt3}
Let $X = X_{22} \subset \mbP (1_x, 1_y, 3_z, 7_t, 11_w)$ be a Mori-Fano hypersurface defined by a quasi-homogeneous polynomial $F = F (x, y, z, t, w)$ of degree $22$.
Suppose that $w^2 \in F$ and $X$ is quasi-smooth at $\msp_z$.
Then the $\frac{1}{3} (1, 1, 2)$ point $\msp_z$ is not a maximal center.
\end{Lem}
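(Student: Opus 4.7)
The plan is to apply Lemma~\ref{lem:exclquotsing} to the Kawamata blow-up $\varphi \colon Y \to X$ at $\msp_z$, which is the unique divisorial contraction centered at the terminal quotient point $\msp_z$. We have $(A^3) = 2/21$, the discrepancy is $1/3$, and $(E^3) = 9/2$. By quasi-smoothness at $\msp_z$ together with the weight constraints, the degree-$22$ polynomial $F$ must contain at least one of $xz^7$, $yz^7$, $tz^5$. A preliminary step is to make a coordinate change to place $F$ in a canonical form: the linear change $x \mapsto x - (c/b) y$ (assuming $xz^7 \in F$ with coefficient $b$ and $yz^7$ with coefficient $c$) cancels $yz^7$ while preserving $xz^7$; and when $tz^5 \in F$, the degree-$7$ change $t \mapsto t - \mu_1 xz^2 - \mu_2 yz^2$ with $\mu_i$ determined by the coefficients of $xz^7, yz^7, tz^5$ kills both $xz^7$ and $yz^7$, and a direct check shows no such terms are reintroduced from the higher-order $t^a$-terms in $F$. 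Hence we may reduce to one of two mutually exclusive cases.

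\emph{Case 1: $tz^5 \in F$, $xz^7, yz^7 \notin F$.} Take $S = (t = 0)_X \in |7A|$ and $\mcL = |A|$ generated by $x, y$. Since $F(0,0,z,0,w) = c_0 w^2$ with $c_0 \neq 0$, we have $(x = y = t = 0)_X = \{\msp_z\}$ set-theoretically, so $\Bs \mcL \cap \Supp S = \{\msp_z\}$ and we take $\Delta = 0$. Eliminating $t$ locally at $\msp_z$ using $tz^5 \in F$ expresses $t = \psi(x,y,w)$, and the absence of $xz^7, yz^7$ in $F$ forces the lowest weight of $\psi$ with respect to $\tfrac{1}{3}(1,1,2)$ to come from the $w^2$-contribution, giving $\ord_E t = 4/3$. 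The key computation is
\[
(-K_Y \cdot \tilde S \cdot \tilde T) = 7(A^3) - \tfrac{1}{3} \cdot \tfrac{4}{3} \cdot \tfrac{1}{3} (E^3) = \tfrac{2}{3} - \tfrac{2}{3} = 0,
\]
so Lemma~\ref{lem:exclquotsing} excludes $\msp_z$.

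\emph{Case 2: $xz^7 \in F$, $yz^7, tz^5 \notin F$.} Now $\ord_E t = 1/3$ and the Case~1 choice fails. Instead take $S = (x = 0)_X \in |A|$ and $\mcL = \langle t, y^7 \rangle \subset |7A|$, whose general member $T = \alpha t + \beta y^7$ is irreducible. Again $F(0,0,z,0,w) = c_0 w^2$ yields $\Bs \mcL \cap \Supp S = (x = t = y = 0)_X = \{\msp_z\}$, so $\Delta = 0$. Eliminating $x$ locally via $xz^7 \in F$ gives $x = \phi(y,t,w)$ with leading weight $4/3$ (again from $w^2$, since $yz^7, tz^5 \notin F$), so $\ord_E x = 4/3$; and $\ord_E T = \min(\ord_E t, \ord_E y^7) = 1/3$. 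The computation is symmetric to Case~1:
\[
(-K_Y \cdot \tilde S \cdot \tilde T) = 7(A^3) - \tfrac{4}{3} \cdot \tfrac{1}{3} \cdot \tfrac{1}{3} (E^3) = 0,
\]
completing the exclusion.

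The main obstacle is the coordinate-change reduction in the preliminary step: one must verify that the proposed polynomial changes in $x$ and $t$ do actually cancel the unwanted monomials without re-introducing them from the modifications to the higher-order $t \cdot G$, $t^2 \cdot H$, and $t^3 \cdot K$ parts of $F$. Once the reduction is in place, the choice of $S$ and $\mcL$ in each case and the intersection computation proceed cleanly in direct analogy with Lemma~\ref{lem:exclqsing1}.
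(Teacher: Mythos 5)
Your proof is correct, and after the normalizing coordinate change your case division and your Case 1 coincide with the paper's argument (same $S=(t=0)_X$, same pencil $|A|=\langle x,y\rangle$, same computation $7(A^3)-\tfrac{4}{27}(E^3)=0$ fed into Lemma~\ref{lem:exclquotsing}). The divergence is in Case 2. There the paper keeps $S=(x=0)_X$ but takes the fixed divisor $T=(y=0)_X$, uses $F(0,0,z,t,w)=\alpha w^2$ to see that $S\cdot T=2\Gamma$ for the single irreducible curve $\Gamma=(x=y=w=0)$, and applies Lemma~\ref{lem:exclsingptG} via $(\tilde{T}\cdot 2\tilde{\Gamma})=(\tilde{S}\cdot\tilde{T}^2)=(A^3)-\tfrac{4}{27}(E^3)=\tfrac{2}{21}-\tfrac{2}{3}<0$. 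You instead pair $S=(x=0)_X$ with the incomplete pencil $\mcL=\langle t,y^7\rangle\subset|7A|$ and reuse Lemma~\ref{lem:exclquotsing} with $\Delta=0$: since $\ord_E x=4/3$ and $\ord_E(\alpha t+\beta y^7)=1/3$, the triple product vanishes exactly, and the base-locus condition $\Bs\mcL\cap\Supp S=(x=y=t=0)_X=\{\msp_z\}$ again follows from $w^2\in F$. Both routes are valid; yours buys uniformity (one exclusion lemma for both cases, no need to identify $\Supp(S\cap T)$ or to verify that $\tilde{S}\cap\tilde{T}$ acquires no components on $E$), at the cost of checking the base locus of a non-complete linear system. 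Your preliminary reduction is also sound and is exactly what the paper's phrase ``by a choice of homogeneous coordinates'' is doing implicitly: the substitution $t\mapsto t-\mu_1xz^2-\mu_2yz^2$ can only recreate $xz^7$ or $yz^7$ from the $z^5$-term of the degree-$15$ cofactor of $t$ (which is the cancellation being arranged), while the $t^2$- and $t^3$-parts contribute only monomials divisible by $x^2$, $xy$, or $y^2$; one should just note, as you implicitly do, that these changes leave the coefficient of $w^2$ and the quasi-smoothness at $\msp_z$ untouched.
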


\begin{proof}
Let $\varphi \colon Y \to X$ be the Kawamata blow-up at $\msp := \msp_t$ with exceptional divisor $E$.

We first consider the case where $z^5 t \in F$.
We can choose homogeneous coordinates such that
\[
F = z^5 t + z^4 f_{10} + z^3 f_{13} + z^2 f_{16} + z f_{19} + f_{22},
\]
where $f_i = f_i (x, y, t, w)$ is a quasi-homogeneous polynomial of degree $i$.
We can choose $x, y, w$ as a local orbifold coordinates around the point $\msp$, and $\varphi$ is the weighted blow-up with weight $\wt (x, y, w) = \frac{1}{3} (1, 1, 2)$.
We have
\[
\xi t = z^4 f_{10} (x, y, 0, w) + \cdots + f_{22} (x, y, 0, w),
\]
where $\xi = - z^5 + \cdots$ does not vanish at $\msp$.
It follows that the section $t$ vanishes along $E$ to order $\frac{4}{3}$ since $w^2 \in f_{22} (x, y, 0, w)$.
We set $S := (t = 0)_X$ and $\mcL := |A|$.
We have $\Supp S \cap \Bs \mcL = \{\msp\}$ since $w^2 \in F$.
For the proper transforms $\tilde{S}$ and $\tilde{T}$ of $S$ and a general member $T \in \mcL$, we have $\tilde{S} \sim_{\mbQ} 7 \varphi^* A - \frac{4}{3} E$ and $\tilde{T} \sim_{\mbQ} \varphi^* A - \frac{1}{3} E$.
We compute
\[
(-K_Y \cdot \tilde{S} \cdot \tilde{T}) = 7 (A^3) - \frac{4}{3^3} (E^3) = 0.
\]
By Lemma~\ref{lem:exclquotsing}, $\msp$ is not a maximal center.

We next consider the case where $z^5 t \notin F$.
We have either $z^7 x \in F$ or $z^7 y \in F$ by the quasi-smoothness of $X$ at $\msp$.
By a choice of homogeneous coordinates, we may assume $z^7 x \in F$ and we can write
\[
F = z^7 x + z^6 f_4 + z^5 f_7 + z^4 f_{10} + z^3 f_{13} + z^2 f_{16} + z f_{19} + f_{22},
\]
where $f_i = f_i (y, z, t, w)$ is a quasi-homogeneous polynomial of degree $i$.
We can choose $y, t, w$ as local orbifold coordinates at $\msp$ and $\varphi$ is the weighted blow-up with $\wt (y, t, w) = \frac{1}{3} (1, 1, 2)$.
The section $x$ vanishes along $E$ to order $\frac{4}{3}$ since $t \notin f_7$ by the assumption.
We set $S := (x = 0)_X$ and $T := (y = 0)_X$.
Then we have $\tilde{S} \cdot \tilde{T} = 2 \tilde{\Gamma}$, where $\tilde{\Gamma}$ is the strict transform of the curve $\Gamma = (x = y = w = 0) \subset X$.
We have
\[
\begin{split}
\tilde{S} &\sim_{\mbQ} - \varphi^* K_X - \frac{4}{3} E \sim_{\mbQ} - K_Y - E, \\
\tilde{T} &\sim_{\mbQ} - \varphi^* K_X - \frac{1}{3} E \sim_{\mbQ} - K_Y.
\end{split}
\]  
Moreover
\[
(\tilde{T} \cdot 2 \tilde{\Gamma}) = (\tilde{S} \cdot \tilde{T}^2) = (A^3) - \frac{4}{3^3} (E^3) = \frac{2}{21} - \frac{2}{3} < 0.
\]
By Lemma \ref{lem:exclsingptG}, the point $\msp$ is not a maximal center.
\end{proof}

\begin{Lem} \label{lem:exclsingpt4}
Let $X = X_{7} \subset \mbP (1_x, 1_y, 1_z, 2_t, 3_w)$ be a Mori-Fano hypersurface defined by a quasi-homogeneous polynomial $F = F (x, y, z, t, w)$ of degree $7$.
Suppose that $X$ contains the curve $\Gamma = (x = y = z = 0)$ and that $X$ is quasi-smooth along $\Gamma$.
Then the $\frac{1}{2} (1, 1, 1)$ point $\msp_t$ is not a maximal center.
\end{Lem}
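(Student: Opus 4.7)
The plan is to apply Lemma~\ref{lem:exclquotsing} to the Kawamata blow-up $\varphi \colon Y \to X$ at $\msp := \msp_t$, which is the unique divisorial contraction centered there. Since $\Gamma \subset X$ forces $t^2 w \notin F$, and quasi-smoothness of $X$ at $\msp$ requires one of $x t^3, y t^3, z t^3$ to appear in $F$, I first perform a linear change of $x, y, z$ so that $x t^3 \in F$ with coefficient $1$. Writing $F = x t^3 + A t^2 + B t + C$, quasi-homogeneity then forces $A$ to be a cubic in $(x,y,z)$ (with no $w$ term, from $t^2 w \notin F$), $B = q_2(x,y,z)\,w + q_5(x,y,z)$ with $q_2$ a quadratic form, and $C = \ell(x,y,z)\,w^2 + q_4(x,y,z)\,w + q_7(x,y,z)$ with $\ell$ linear. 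I then take $S := (x = 0)_X$, which contains $\Gamma$ and passes through $\msp$, together with the linear system $\mcL := |A|$ (where $A$ here denotes the class $A$ of $\mcO_X(1)$, not the polynomial) and the $1$-cycle $\Delta := \Gamma$. Since $\Bs |A| = \Gamma$ and $\Gamma \subset S$, condition~(1) of Lemma~\ref{lem:exclquotsing} is immediate, and for condition~(2) the intersection of $S$ with a general $T \in |A|$ factors out one copy of $\Gamma$ from the defining equation, leaving an effective residual $1$-cycle of $A$-degree $1$ not containing $\Gamma$.

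The key input for condition~(3) is the lower bound
\[
\alpha \;:=\; \ord_E(x) \;\ge\; \tfrac{3}{2}.
\]
To establish this I work in the orbifold chart at $\msp$ obtained by setting $t = 1$: the triple $(y, z, w)$ forms orbifold coordinates of weight $\tfrac{1}{2}(1,1,1)$, and $F = 0$ implicitly expresses $x$ as a power series $\phi(y, z, w)$. The lowest-order terms of $\phi$ can come only from $A(0, y, z)$, from the $w \cdot q_2(0, y, z)$ part of $B(0, y, z, w)$, and from the $w^2 \cdot \ell(0, y, z)$ part of $C(0, y, z, w)$; since $q_2$ is quadratic and $\ell$ is linear, each of these has ordinary degree at least $3$ in $(y, z, w)$, giving weighted order at least $3/2$ with respect to the Kawamata blow-up.

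Granted this bound, $\tilde S \sim_{\mbQ} \varphi^* A - \alpha E$ and $\tilde T \sim_{\mbQ} \varphi^* A - \tfrac{1}{2} E$ for a general $T \in |A|$, while $-K_Y = \varphi^* A - \tfrac{1}{2} E$. Using $(A^3) = 7/6$, $(E^3) = 4$, and $\varphi^* A \cdot E^i = 0$ for $i \ge 1$, one finds $(-K_Y \cdot \tilde S \cdot \tilde T) = 7/6 - \alpha$; and since $\tilde \Gamma$ meets $E$ transversely in a single point, $(-K_Y \cdot \tilde \Gamma) = 1/6 - 1/2 = -1/3$. The inequality $(-K_Y \cdot \tilde S \cdot \tilde T) - (-K_Y \cdot \tilde \Gamma) = 3/2 - \alpha \le 0$ then verifies condition~(3), and Lemma~\ref{lem:exclquotsing} concludes. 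The main technical point is the bound $\alpha \ge 3/2$: the $w$-weight being $3$ is essential, since it forces the $w$-linear piece of $B$ to carry a quadratic factor in $(x, y, z)$ rather than a linear one, ruling out the otherwise-threatening degree-$2$ contribution to $\phi$; everything else is routine intersection theory.
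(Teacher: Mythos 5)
Your proposal is correct and follows essentially the same route as the paper's proof: the same normal form $F = xt^3 + g_3 t^2 + (g_2 w + g_5)t + g_1 w^2 + g_4 w + g_7$, the same choice of $S = (x=0)_X$, $\mcL = |A|$, $\Delta = \Gamma$ in Lemma~\ref{lem:exclquotsing}, the same bound $\ord_E(x) \ge 3/2$ from the weighted order in the orbifold chart, and the same intersection numbers $(E\cdot\tilde\Gamma)=1$, $(-K_Y\cdot\tilde\Gamma)=-1/3$. The only point worth spelling out (which the paper does) is that the coefficient $\ell(0,y,z)$ of $w^2$ is nonzero by quasi-smoothness along $\Gamma$, which is what guarantees that $\Gamma$ appears in $S\cdot T$ with multiplicity exactly one.
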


\begin{proof}
Note that $t^2 w \notin F$ since $\Gamma \subset X$.
By the quasi-smoothness of $X$ at $\msp_t$, we can write
\[
F = t^3 x + t^2 g_3 + t (w g_2 + g_5) + w^2 g_1 + w g_4 + g_7,
\]
where $g_i = g_i (x, y, z)$ is a quasi-homogeneous polynomial of degree $i$.
We set $S = (x = 0)_X$ and $\mcL = |A|$.
We have $\Supp S \cap \Bs \mcL = \Gamma$.
By the quasi-smoothness of $X$ along $\Gamma$, $g_1 (0, y, z) \ne 0$.
It follows that, for a general member $T \in \mcL$, $\Gamma$ appears in the $1$-cycle $S \cdot T$ with coefficient $1$, and we can write $S \cdot T = \Gamma + \Delta_T$, where $\Delta_T$ is an effective $1$-cycle on $X$ such that $\Gamma \not\subset \Supp \Delta_T$.
Let $\varphi \colon Y \to X$ be the Kawamata blow-up at the $\frac{1}{2} (1, 1, 1)$ point $\msp_t$ and denote by $E$ its exceptional divisor.
Let $\tilde{S}, \tilde{T}$ and $\tilde{\Gamma}$ be proper transforms of $S, T$ and $\Gamma$, respectively, on $Y$.
%Then
%\[
%\tilde{S} \cdot \tilde{T} = \tilde{\Gamma} + \tilde{\Delta}_T + \Xi_T,
%\]
%where $\Xi_T$ is an effective $1$-cycle supported on $E$.
We see that $\tilde{\Gamma}$ intersects $E$ transversally at one point, so that $(E \cdot \tilde{\Gamma}) = 1$ and thus 
\[
(-K_Y \cdot \tilde{\Gamma}) = \frac{1}{6} - \frac{1}{2} = - \frac{1}{3}.
\]
We have $\tilde{S} \sim_{\mbQ} \varphi^*A_{\mbQ} - \frac{3}{2} E$ and $\tilde{T} \sim \varphi^*A - \frac{1}{2} E$.
Hence we have
\[
(-K_Y \cdot \tilde{S} \cdot \tilde{T}) - (-K_Y \cdot \tilde{\Gamma}) = \left( \frac{7}{6} - \frac{3}{2} \right) + \frac{1}{3} = 0.
\]
By Lemma~\ref{lem:exclquotsing}, $\msp_t$ is not a maximal center.
\end{proof}

%%%%%%%%%%%%%%%%%%%%%%%%%%%%%%%%%%
%%%%%%%%%%%%%%%%%%%%%%%%%%%%%%%%%%
\section{Families \textnumero 100, 101, 102, and 103} \label{sec:birig}
%%%%%%%%%%%%%%%%%%%%%%%%%%%%%%%%%%
%%%%%%%%%%%%%%%%%%%%%%%%%%%%%%%%%%

Let $X = X_d \subset \mbP := \mbP (a_0, \dots, a_4)$ be a quasi-smooth member of Family \textnumero~$\msi$, where $\msi \in \{100, 101, 102, 103, 110\}$.

We briefly recall the construction of an elementary link $\hat{\sigma} \colon X \ratmap \hat{X}$ to a (non-quasi-smooth) Mori-Fano $3$-fold hypersurface $\hat{X}$ of index $1$.
We refer readers to \cite[Section 3]{ACP} for details.
The link $\hat{\sigma} = \hat{\varphi} \circ \hat{\theta} \circ \varphi^{-1}$ is of the form
\[
\xymatrix{
\mcY \ar[d]_{\varphi} \ar@{-->}[r]^{\hat{\theta}} & \hat{\mcY} \ar[d]^{\hat{\varphi}} \\
X & X',}
\]
where $\varphi, \hat{\theta}, \hat{\varphi}$ are birational maps explained below.
Let $\msq \in X$ be the cyclic quotient singular point of the highest index.
Then $\varphi \colon \mcY \to X$ is the Kawamata blow-up of $X$ at $\msq$.
The variety $\mcY$ can be naturally embedded into a suitable rank $2$ toric variety $\mbT$.
Note that there is a birational toric morphism $\Phi \colon \mbT \to \mbP$, which is a weighted blow-up at $\msq \in \mbP$, such that $\varphi = \Phi|_{\mcY}$.
We can run a $2$-ray game on $\mbT$ and get 
\[
\mbT \overset{\hat{\Theta}}{\ratmap} \hat{\mbT} \overset{\hat{\Phi}}{\to} \hat{\mbP},
\] 
where $\hat{\Theta} \colon \mbT \ratmap \hat{\mbT}$ is a birational map to a rank $2$ toric variety $\hat{\mbT}$ which is an isomorphism in codimension $1$, and $\hat{\Phi} \colon \hat{\mbT} \to \hat{\mbP}$ is a toric divisorial contraction to a suitable weighted projective space $\hat{\mbP}$ with center a point $\hat{\msq} \in \hat{\mbP}$.
Note that $\hat{\Phi}$ is a weighted blow-up at $\hat{\msq} \in \hat{\mbP}$.
Let $\hat{\mcY} \subset \hat{\mbT}$ be the birational transform of $\mcY$ on $\hat{\mbT}$.
Then $\hat{\theta} := \hat{\Theta}|_{\mcY} \colon \mcY \ratmap \hat{\mcY}$ is a composite of inverse flips, a flop, and flips.
Moreover the image $\hat{X} = \hat{\Phi} (\hat{\mcY})$ of $\hat{Y}$ is a Mori-Fano $3$-fold hypersurface of index $1$ and $\hat{\varphi} = \hat{\Phi}|_{\hat{\mcY}} \colon \hat{\mcY} \to \hat{X}$ is a divisorial contraction with center $\hat{\msq} \in \hat{X}$.

In this and next sections, we prove Theorems \ref{thm:main2} and \ref{thm:main} by considering each family separately.
In the beginning of each subsections below, we give more details on the above link $\hat{\sigma}$, especially on the descriptions of $\hat{\varphi}$ and $\hat{X}$, and on relevant information on $X$ and $\hat{X}$. 
We fix the following notation:
\begin{itemize}
\item $F = F (x, y, z, t, u)$ is the polynomial defining the weighted hypersurface $X$.
\item $A \in \Cl (X)$ is the Weil divisor class which is the positive generator of $\Cl (X) \cong \mbZ$.
\item $\iota_X := \sum_{i = 0}^4 a_i - d$ is the Fano index of $X$ so that we have $-K_X \sim \iota_X A$. 
\item $\msq \in X$ is the cyclic quotient singular point of the highest index, and $\varphi \colon \mcY \to X$ is the Kawamata blow-up at $\msq$ with exceptional divisor $\mcE$.
\item $\hat{F}$ is the polynomial defining the weighted hypersurface $\hat{X}$
\item $\hat{A} := -K_{\hat{X}}$.
\item $\hat{\varphi} \colon \hat{\mcY} \to \hat{X}$ is the divisorial contraction obtained as above with exceptional divisor $\hat{\mcE}$, and $\hat{\msq} \in \hat{X}$ is the center $\hat{\varphi}$.
\end{itemize}
 
%%%%%%%%%%%%%%%%%%%%%%%%%%%%%%%%%%
\subsection{Family \textnumero 100}
%%%%%%%%%%%%%%%%%%%%%%%%%%%%%%%%%%

Let $X = X_{18} \subset \mbP (1_x,2_y,3_z,5_t,9_w)$ be a quasi-smooth member of Family \textnumero $100$, which is defined by  a quasi-homogeneous polynomial $F = F (x, y, z, t, u)$ of degree $18$, and whose basic information is given as follows:
\begin{itemize}
\item $\iota_X = 2$.
\item $(A^3) = 1/15$.
\item $\Sing (X) = \{\frac{1}{3} (1,1,2), \frac{1}{5} (1,2,3)\}$, and $\msq = \msp_t$ is the $\frac{1}{5} (1, 2, 3)$ point.
\end{itemize}
The elementary link $\hat{\sigma} \colon X \ratmap \hat{X}$ is embedded into the following toric diagram.
\[
\xymatrix{
\text{$\mbT := \mbT \begin{pNiceArray}{cc|cccc}[first-row]
u & t & w & y & z & x \\
0 & 5 & 9 & 2 & 3 & 1 \\
-5 & 0 & 2 & 1 & 4 & 3
\end{pNiceArray}$} \ar@{-->}[r]^{\hat{\Theta}} \ar[d]_{\Phi} &
\text{$\mbT \begin{pNiceArray}{cccc|cc}[first-row]
u & t & w & y & z & x \\
3 & 4 & 6 & 1 & 0 & -1 \\
1 & 3 & 5 & 1 & 1 & 0
\end{pNiceArray} =: \hat{\mbT}$} \ar[d]^{\hat{\Phi}} \\
\mbP (5_t, 9_w, 2_y, 3_z, 1_x) & \mbP (1_u, 3_t, 5_w, 1_y, 1_z)}
\]
The birational morphisms $\Phi, \hat{\Phi}$, and the birational map $\hat{\Theta}$ are defined as follows:
\[
\begin{split}
\Phi & \colon (u\!:\!t \, | \, w\!:\!y\!:\!z\!:\!x) \mapsto (t\!:\!w u^{2/5}\!:\!y u^{1/5}\!:\!z u^{4/5}\!:\!x u^{3/5}), \\
\hat{\Phi} & \colon (u\!:\!t\!:\!w\!:\!y \, | \,  z\!:\!x) \mapsto (u x^3\!:\!t x^4\!:\!w x^6\!:\!y x\!:\!z), \\
\hat{\Theta} & \colon (u\!:\!t \, | \, w\!:\!y\!:\!z\!:\!x) \mapsto (u\!:\!t\!:\!w\!:\!y \, | \,  z\!:\!x).
\end{split}
\]  
The varieties $\mcY$ and $\hat{\mcY}$ are the hypersurfaces in $\mbT$ and $\hat{\mbT}$, respectively, which are defined by the same equation
\[
G (u, x, y, z, t, w) := u^{- 4/5} F (u^{3/5} x, u^{1/5} y, u^{4/5} z, t, u^{2/5} w) = 0.
\]
The variety $X' = X'_{10} \subset \mbP (1_u, 1_y, 1_z, 3_t, 5_w)$ is the hypersurface defined by
\[
\hat{F} (u, y, z, t, w) := G (u, 1, y, z, t, w) = 0,
\]
where $\hat{F}$ is a quasi-homogeneous polynomial of degree $10$, and we have the following basic information of $X'$:
\begin{itemize}
\item $\iota_{\hat{X}} = 1$.
\item $(\hat{A}^3) = 2/3$.
\item $\Sing (\hat{X}) = \{\frac{1}{3} (1, 1, 2), cE_6\}$, and $\hat{\msq} = \hat{\msp}_z \in \hat{X}$ is the $cE_6$ point (see Lemma \ref{lem:loceqcENo100}).
\end{itemize}

\subsubsection{Maximal extractions of $X$}

\begin{Lem} \label{lem:No100sing1}
The $\frac{1}{3} (1,1,2)$ point of $X$ is not a maximal center.
\end{Lem}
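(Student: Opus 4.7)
The plan is to apply Lemma~\ref{lem:exclquotsing} to the Kawamata blow-up $\varphi \colon Y \to X$ at $\msp := \msp_z$, with exceptional divisor $E$. First I would read off the local structure at $\msp$: by quasi-smoothness of $X$ at $\msp_z$, $\msp_t$ and at the excluded point $\msp_w$, the equation $F$ must contain $z^3 w$, $z t^3$ and $w^2$. Solving $F = 0$ for $w$ at $\msp$ gives local orbifold coordinates $(x, y, t)$, whose natural $\mbZ_3$-weights are $(1, 2, 2)$. Passing to the standard form $\frac{1}{3}(1, 1, 2)$ by replacing the generator $\zeta$ of $\mbZ_3$ by $\zeta^2$, the weights become $(2, 1, 1)$; hence the Kawamata blow-up has weights $\frac{1}{3}(2, 1, 1)$ on $(x, y, t)$ and $\ord_E(x) = 2/3$, $\ord_E(y) = \ord_E(t) = 1/3$, while $(E^3) = 9/2$.

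I would then take $S := (x = 0)_X \in |A|$ and the pencil $\mcL := |2 A|$ generated by $x^2$ and $y$. For a general $T = (y - \lambda x^2 = 0)_X \in \mcL$ the scheme-theoretic intersection gives $S \cdot T = C$, where $C := (x = y = 0)_X$, since $(y - \lambda x^2)|_S = y|_S$. The curve $C$ is irreducible: the restriction $F|_{x = y = 0} = \delta w^2 + \beta z^3 w + \gamma z t^3$ has $\beta, \gamma, \delta \ne 0$ by quasi-smoothness, and it is irreducible as a quadratic in $w$ (its discriminant $z(\beta^2 z^5 - 4 \delta \gamma t^3)$ is not a perfect square). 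Taking $\Delta := C$, conditions (1) and (2) of Lemma~\ref{lem:exclquotsing} are immediate: $\Bs \mcL \cap \Supp S = C = \{\msp\} \cup \Supp \Delta$ and $S \cdot T - \Delta = 0$.

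For condition (3) I would compute the two relevant intersection numbers. Using $\tilde S \sim_\mbQ \varphi^* A - \frac{2}{3} E$, $\tilde T \sim_\mbQ -K_Y \sim_\mbQ 2 \varphi^* A - \frac{1}{3} E$, $(A^3) = 1/15$ and $(E^3) = 9/2$, a direct expansion gives $(-K_Y \cdot \tilde S \cdot \tilde T) = 4 (A^3) - \frac{2}{27}(E^3) = -1/15$. Since $C$ is tangent at $\msp$ to the $t$-direction, which has Kawamata weight $1$, the proper transform $\tilde C$ meets $E$ transversally in a single smooth point of $Y$, so $(E \cdot \tilde C) = 1$ and $(-K_Y \cdot \tilde C) = 2 (A \cdot C) - \frac{1}{3} = -1/15$. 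The difference in condition (3) is therefore exactly zero, and Lemma~\ref{lem:exclquotsing} excludes $\msp_z$ as a maximal center.

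The main obstacle is the conversion between the naive $\mbZ_3$-weights $(1, 2, 2)$ on $(x, y, t)$ and the standard form $(1, 1, 2)$ from which the Kawamata weights must be read: without passing to the other generator one obtains incorrect vanishing orders, and the two intersection numbers in condition (3) no longer match. Once the correct vanishing orders are in hand, the rest is routine toric bookkeeping.
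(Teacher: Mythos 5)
Your local analysis at $\msp_z$ and all of your intersection numbers are correct, and in fact your divisors are the same ones the paper uses: $S=(x=0)_X$, a general $T$ in your pencil satisfies $\tilde{T}\sim_{\mbQ}2\varphi^*A-\frac{1}{3}E\sim_{\mbQ}-K_Y$ (only $y$ matters, since $\ord_E(x^2)=\frac43>\frac13$), $S\cdot T=C=(x=y=0)_X$ is irreducible of degree $\frac{2}{15}$, and both $(-K_Y\cdot\tilde{S}\cdot\tilde{T})$ and $(-K_Y\cdot\tilde{C})$ equal $-\frac{1}{15}$. The gap is in the last step: Lemma~\ref{lem:exclquotsing} is the wrong tool for this configuration. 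By taking $\Delta:=C=S\cdot T$ you make the residual cycle $S\cdot T-\Delta$ equal to zero, and the mechanism of that lemma is precisely to extract a component of the \emph{nonzero} moving part $\tilde{\Gamma}_T=\widetilde{S\cdot T-\Delta}$ meeting $E$ — a component that is not contained in $\Bs\mcL$ and hence sweeps out a surface as $T$ varies, which is what forbids it from lying in the base locus of a putative maximal linear system. With $\Gamma_T=0$ there is no such component, and your hypothesis (3) degenerates to the statement $(-K_Y\cdot\Xi)\le 0$ for an effective cycle $\Xi$ supported on $E$, i.e.\ $\Xi=0$; this carries no information about maximality. (The single \emph{rigid} curve $\tilde{C}$ with $(-K_Y\cdot\tilde{C})<0$ and $(E\cdot\tilde{C})>0$ is not by itself a contradiction — it would merely be forced into the base locus of the mobile system.) Note also that condition (1) is only met because $\msp\in\Supp\Delta$, again outside the intended regime where $\{\msp\}$ and $\Supp\Delta$ play different roles.

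The repair is immediate and is exactly what the paper does: since $\tilde{S}\sim_{\mbQ}\varphi^*A-\frac{2}{3}E\sim_{\mbQ}\frac{1}{2}(-K_Y)-\frac{1}{2}E$ and $\tilde{T}\sim_{\mbQ}-K_Y$, the intersection $\tilde{S}\cap\tilde{T}=\tilde{C}$ is an irreducible curve with $(\tilde{T}\cdot\tilde{C})=(\tilde{T}^2\cdot\tilde{S})=-\frac{1}{15}\le 0$, so Lemma~\ref{lem:exclsingptG} (with $a=\frac12$, $d=-\frac12$, $b=1$, $e=0$, so $0\le e<a_E(K_X)b$ and $ae-bd\ge0$) applies verbatim and excludes $\msp_z$. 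So your proof is one citation away from being correct; your remark about converting the naive weights $(1,2,2)$ to the Kawamata weights $\frac13(2,1,1)$ is handled correctly and is indeed the other place where this computation typically goes wrong.
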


\begin{proof}
Let $\msp = \msp_z$ be the $\frac{1}{3} (1,1,2)$ point, and let $\psi \colon Y \to X$ be the Kawamata blow-up of $X$ at $\msp$ with exceptional divisor $E$.
By a choice of homogeneous coordinates and by the quasi-smoothness of $X$, we can write
\[
F = z^3 w + z^2 f_{12} + z f_{15} + f_{18},
\]
where $f_i = f_i (x, y, t, w)$ is a quasi-homogeneous polynomial of degree $i$.
By the quasi-smoothness of $X$, we may assume that $\coeff_{f_{18}} (w^2) = \coeff_{f_{15}} (t^3 z) = 1$ after rescaling $t, z, w$.
The open set $(z \ne 0) \subset X$ is naturally isomorphic to the affine hyperquotient
\[
(F (x, y, 1, t, w) = 0)/\mbZ_3 (1_x, 2_y, 2_t, 0_w),
\]
and the point $\msp$ corresponds to the origin.
By eliminating $w$, we can choose $x, y, t$ as local orbifold coordinates of $X$ at $\msp$, and $\psi$ is locally the weighted blow-up with $\wt (x, y, t) = \frac{1}{3} (2, 1, 1)$.
Filtering off terms divisible by $w$ in the equation $F (x, y, 1, t, w) = 0$, we have
\[
w (-1 + \cdots) = f_{12} (x, y, t, 0) + f_{15} (x, y, t, 0) + f_{18} (x, y, t, 0),
\]
where the omitted term $\cdots$ in the left-hand side is a polynomial which vanishes at the origin.
The lowest weight of monomials in the right-hand side of the above equation with respect to $\wt (x, y, t) = \frac{1}{3} (2,1,1)$ is $\frac{3}{3}$, which is attained by the monomial $t^3 \in f_{15} (x, y, t, 0)$.
Hence $\ord_E (w) = \frac{3}{3}$.

We set $S := (x = 0)_X$ and $T := (y = 0)_X$.
For the proper transforms $\tilde{S}$ and $\tilde{T}$ of $S$ and $T$ on $Y$, respectively, we have 
\[
\begin{split}
\tilde{S} &\sim_{\mbQ} \psi^*A - \frac{2}{3} E \sim_{\mbQ} \frac{1}{2} (-K_Y) - \frac{1}{2} E, \\  
\tilde{T} &\sim_{\mbQ} 2 \psi^*A - \frac{1}{3} E \sim -K_Y.
\end{split}
\] 
We have
\[
\Gamma := S \cap T \cong (w^2 + w z^3 + t^3 z = 0) \subset \mbP (3_z,5_t,9_w)
\]
and $\Gamma$ is an irreducible and reduced curve.
Since $\tilde{S} \cap \tilde{T} \cap E$ does not contain a curve (in fact, it consists of the singular point of $E \cong \mbP (1,1,2)$), we see that $\tilde{S} \cap \tilde{T} = \tilde{\Gamma}$, where $\tilde{\Gamma}$ is the proper transform of $\Gamma$ on $Y$.
We compute
\[
\begin{split}
(\tilde{T} \cdot \tilde{\Gamma}) &= (\tilde{T}^2 \cdot \tilde{S}) 
= (2 \psi^* A - \frac{1}{3} E)^2 \cdot (\psi^* A - \frac{2}{3} E) \\
&= \frac{2^2}{15} - \frac{2}{3^3} \cdot \frac{3^2}{2} < 0.
\end{split}
\]
Thus, by Lemma~\ref{lem:exclsingptG}, $\msp$ is not a maximal center.
\end{proof}

By Lemma \ref{lem:No100sing1} and \cite[Lemmas 4.1 and 4.2]{ACP}, we have the following.

\begin{Prop} \label{prop:No100main1}
The Kawamata blow-up $\varphi \colon \mcY \to X$ at the $\frac{1}{5} (1, 2, 3)$ point $\msq \in X$ is the unique maximal extraction of $X$, and $\EL (X) = \{\hat{\sigma}\}$.
\end{Prop}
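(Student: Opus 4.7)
The plan is to enumerate the possible centers of a maximal extraction and exclude all of them except the $\frac{1}{5}(1,2,3)$ point $\msq$, then invoke uniqueness of the divisorial contraction at a terminal quotient singularity. A maximal extraction of $X$ is centered at either a curve, a smooth point, or a singular point, so the task splits naturally into three cases.

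For curves, since $(-K_X^3) = 4 (A^3) = 4/15 < 2 = \iota_X$, and $X$ is quasi-smooth along $X \cap \operatorname{Sing} \mbP(1,2,3,5,9)$ (the quasi-smoothness hypothesis is global), Lemma~\ref{lem:exclcurves} immediately rules out any curve as a maximal center (this is what \cite[Lemma 4.1]{ACP} presumably records). For smooth points, one applies Lemma~\ref{lem:exclsmpts} with $(a_0, a_1, a_2, a_3, b) = (1, 2, 3, 5, 9)$: the quasi-smoothness of $X$ forces $w^2 \in F$, and the numerical check $\iota_X^2 = 4 \le 2 a_0 a_1 = 4$ is satisfied, so no smooth point of $X$ is a maximal center (again presumably \cite[Lemma 4.2]{ACP}).

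The remaining candidates are the two cyclic quotient singular points. The $\frac{1}{3}(1,1,2)$ point is excluded by Lemma~\ref{lem:No100sing1}. Thus the only possible center of a maximal extraction is $\msq$, and by the theorem of Kawamata, the Kawamata blow-up $\varphi \colon \mcY \to X$ is the unique divisorial contraction centered at $\msq$. Hence $\varphi$ is the unique maximal extraction of $X$.

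Finally, since every elementary link from $X$ is initiated by a maximal extraction, and any elementary link initiated by a fixed divisorial contraction is uniquely determined if it exists, and since $\hat{\sigma}$ is an elementary link initiated by $\varphi$ by construction, we conclude $\EL(X) = \{\hat{\sigma}\}$. No step here is really an obstacle, as all the heavy lifting has been done in Section~\ref{sec:exclmethod} and in Lemma~\ref{lem:No100sing1}; the proposition is essentially a collation of the exclusion lemmas.
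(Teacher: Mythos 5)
Your proposal is correct and follows essentially the same route as the paper, which simply combines Lemma~\ref{lem:No100sing1} with \cite[Lemmas 4.1 and 4.2]{ACP} (the latter being exactly the curve and smooth-point exclusions you reconstruct via Lemmas~\ref{lem:exclcurves} and \ref{lem:exclsmpts}, whose hypotheses you verify correctly). One trivial slip: $(-K_X^3) = \iota_X^3 (A^3) = 8/15$, not $4/15$, but the inequality $(-K_X^3) \le \iota_X = 2$ still holds, so nothing is affected.
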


\subsubsection{Divisorial contractions centered at $\hat{\msq} \in \hat{X}$}

For a positive integer $i$, we denote by $\msw_i$ the weight on $u, y$ defined by $\msw_i (u, y) = (i, 1)$.

\begin{Lem} \label{lem:loceqcENo100}
The singularity $\hat{\msq} \in \hat{X}$ is of type $cE_6$, and it is equivalent to $(\phi = 0) \subset \mbC^4_{w, t, u, y}$, where
\begin{equation} \label{eq:cEeqNo100}
\phi (w, t, u, y) = w^2 + t^3 + t^2 f (u, y) + t g (u, y) + h (u, y),
\end{equation}
for some $f, g, h \in \mbC [u, y]$ satisfying the following properties.
\begin{enumerate}
\item $g = 2 \mu u^3 + \tilde{g}$, where $\mu \in \mbC$ and $\tilde{g} = \tilde{g} (z, u) := g_{\msw_1 \ge 4} (z, u)$. 
\item $h = - u^4 + \tilde{h}$, where $\tilde{h} = \tilde{h} (z, u) := h_{\msw_1 \ge 5} (z, u)$.
\item $\msw_1 (f) \ge 2$, $\msw_1 (\tilde{g}) \ge 4$, and $\msw_1 (\tilde{h}) \ge 6$.
\item $u \mid f_{\msw_1 = 2}$.
\item $u^2 \mid \tilde{g}_{\msw_1 = 4}$ and $u \mid \tilde{g}_{\msw_1 = 5}$. 
\item $u^3 \mid \tilde{h}_{\msw_1 = 6}$, $u^2 \mid \tilde{h}_{\msw_1 = 7}$ and $u \mid \tilde{h}_{\msw_1 = 8}$.
\item $\msw_2 (f) \ge 3$, $\msw_2 (g) \ge 6$ and $\msw_2 (h) = 8$.
\item $u \mid f_{\msw_2 = 3}$.
\item $u^2 \mid g_{\msw_2 = 6}$ and $u \mid g_{\msw_2 = 7}$.
\item $h_{\msw_2 = 8} = - u^4$, $u^3 \mid \tilde{h}_{\msw_2 = 9}$, $u^2 \mid \tilde{h}_{\msw_2 = 10}$ and $u \mid \tilde{h}_{\msw_2 = 11}$.
\end{enumerate}
\end{Lem}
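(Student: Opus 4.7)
The plan is to pass to the affine chart at $\hat{\msq} = \hat{\msp}_z$, put the local equation of $\hat{X}$ into the Weierstrass form $\phi = w^2 + t^3 + t^2 f + tg + h$ by completing the square in $w$, and verify the ten weight conditions by enumerating the monomials of $F$ that can contribute at each relevant weight level.

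First, since $\hat{\msp}_z$ lies in the smooth locus of $\mbP(1_u,1_y,1_z,3_t,5_w)$, the germ $\hat{\msq} \in \hat{X}$ is the affine hypersurface $\hat{F}(u,y,1,t,w) = 0$ in $\mbA^4_{u,y,t,w}$, where by definition $\hat{F}(u,y,z,t,w) = u^{-4/5} F(u^{3/5},u^{1/5}y,u^{4/5}z,t,u^{2/5}w)$. Each monomial $x^a y^b z^c t^d w^e$ of $F$ contributes $c_{abcde}\,u^{P} y^b t^d w^e$ with $P := (3a+b+4c+2e-4)/5$, and the degree constraint $a+2b+3c+5d+9e=18$ yields the two key identities
\[
P + b \;=\; 10 - c - 3d - 5e, \qquad 2P + b \;=\; 20 - b - 2c - 6d - 10e,
\]
which will control all subsequent $\msw_1$- and $\msw_2$-weight estimates. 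Quasi-smoothness of $X$ at $\msp_t$ forces $\coeff_F(t^3 z)\ne 0$, and I will check below that the $\msw_1$-weight-$4$ part of $h$ equals $\coeff_F(z^6)\,u^4$; thus $\coeff_F(z^6)\ne 0$, and after rescaling $t,z,w$ we normalize $\coeff_F(t^3 z)=1$ and $\coeff_F(z^6)=-1$.

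Next, the weight bound $5d + 9e \le 18$ rules out every monomial with $d \ge 2$ and $e \ge 1$, so the only $w$-terms in $\hat{F}(u,y,1,t,w)$ are $w^2$ and $2w L(u,y,t)$ for some $L = L_0(u,y) + L_1(u,y)\,t$, polynomial of $t$-degree at most one. Completing the square by $w \mapsto w - L$ replaces $\hat{F}$ by $w^2 + \tilde R(u,y,t)$ with $\tilde R = R - L^2$, where $R$ is the $w$-free part of $\hat{F}(u,y,1,t,w)$. Since the $t^3$ coefficient is $1$ and there is no $t^d$ term in $R$ for $d \ge 4$, one reads off $\phi = w^2 + t^3 + t^2 f + t g + h$ with
\[
f = R_{t^2} - L_1^2, \qquad g = R_{t^1} - 2 L_0 L_1, \qquad h = R_{t^0} - L_0^2.
\]

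The final step is to verify (1)--(10). For each claim I enumerate the monomials $x^a y^b z^c t^d w^e$ whose contribution to $f,g,h$ at the specified weight level could violate the stated divisibility, using the two identities to reduce everything to nonnegativity of $a,b,c$. For example, a contribution to $h_{\msw_1 = 4}$ requires $d=e=0$ and $c = 10 - (P+b) = 6$, forcing $a=b=0$ and leaving only $z^6$; similarly $\tilde h_{\msw_1 = 6}$ comes from $c = 4$, giving $P \ge 3$ attained by $y^3 z^4$, so $u^3 \mid \tilde h_{\msw_1 = 6}$. The Weierstrass corrections must be tracked: from the identities, $\msw_1(L_0) \ge 2$ (attained by $z^3 w \leadsto u^2$) and $\msw_1(L_1) \ge 1$ (attained by $xztw \leadsto u$), so $L_0^2, 2L_0 L_1, L_1^2$ begin at $\msw_1$-weights $4,3,2$ respectively, and analogous enumerations show each correction is compatible with the claimed divisibility. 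The corresponding $\msw_2$-bounds (7)--(10) follow by the same procedure using the second identity. The main obstacle is the bookkeeping at the exact boundary weights where both the original $R$-contribution and the Weierstrass correction land — but in every boundary case the identities above pin down $c$ (and hence a finite set of candidate monomials) completely, so no hidden cancellation or omitted term can arise.
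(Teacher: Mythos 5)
Your overall strategy---pass to the affine chart at $\hat{\msq}=\hat{\msp}_z$, reduce the local equation to the form $w^2+t^3+t^2f+tg+h$, and verify (1)--(10) by enumerating monomials of $F$ through the degree identity---is the same as the paper's, and your identities $P+b=10-c-3d-5e$ and $2P+b=20-b-2c-6d-10e$ are correct and do control the weight bookkeeping. The gap is in the reduction to Weierstrass form. Completing the square in $w$ alone does \emph{not} make the $t^3$-coefficient equal to $1$: the degree-$18$ monomials divisible by $t^3$ are $t^3z$, $t^3x^3$ and $t^3xy$, which after the substitution contribute $\gamma+\alpha u+\beta y$ (a unit, not a constant) as the coefficient of $t^3$ in $\hat F(u,y,1,t,w)$. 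So your assertion ``the $t^3$ coefficient is $1$'' is false in general, and the displayed formulas for $f,g,h$ do not yet yield the claimed normal form. The paper disposes of $t^3x^3$ and $t^3xy$ by the preliminary quasi-homogeneous change $z\mapsto\gamma^{-1}(z-\alpha x^3-\beta xy)$ on $\mbP$ (this is part of its ``choice of homogeneous coordinates''), after which the enumeration applies to the new polynomial $F$; if you instead absorb the unit by an analytic rescaling of $t$ in the chart, then $f,g,h$ acquire further corrections (and leave $\mbC[u,y]$), and every condition would have to be re-checked against those corrections.

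A second, related slip: your formula $h_{\msw_1=4}=\coeff_F(z^6)\,u^4$ contradicts your own remark that $L_0^2$ starts at $\msw_1$-weight $4$. The monomial $z^3w$ gives $L_{0,\msw_1=2}=\tfrac12\coeff_F(z^3w)\,u^2$, so in fact $h_{\msw_1=4}=\bigl(\coeff_F(z^6)-\tfrac14\coeff_F(z^3w)^2\bigr)u^4$. Moreover the inference ``thus $\coeff_F(z^6)\ne 0$'' is unsupported: quasi-smoothness at $\msp_z$ only yields the disjunction $z^6\in F$ or $z^3w\in F$. What items (2), (7), (10) and the $cE_6$ determination actually require is that the combination $\coeff_F(z^6)-\tfrac14\coeff_F(z^3w)^2$ be nonzero; this holds because otherwise, after completing the square in $w$ at the level of $F$, the new coordinate point $\msp_z$ would lie on $X$ with all partial derivatives of the new equation vanishing there, contradicting quasi-smoothness of $X$. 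Both defects are repairable, and with the paper's normalization of $F$ in place your enumeration goes through, but as written the proposal does not establish the stated normal form.
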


\begin{proof}
By a choice of homogeneous coordinates $x, y, z, t, w$, we can write
\[
F = t^3 z + t^2 e_8 + t e_{13} + w^2 + e_{18},
\]
where $e_i = e_i (x, y, z)$ is a quasi-homogeneous polynomial of degree $i$. 
By the quasi-smoothness of $X$, we may assume $\coeff_{e_{18}} (z^6) = -1$ after rescaling $z$.
We set $\mu := \coeff_{e_{13}} (z^4 x)/2$.
Then we have
\[
\begin{split}
\hat{F} &= u^{-4/5} F (u^{3/5}, u^{1/5} y, u^{4/5} z, t, u^{2/5} w) \\
&= t^3 z + t^2 \hat{e}_8 + t \hat{e}_{13} + w^2 + \hat{e}_{18},
\end{split}
\] 
where
\[
\hat{e}_i = \hat{e}_i (u, y, z) := u^{-4/5} e_i (u^{3/5}, u^{1/5} y, u^{4/5} z)
\]
for $i = 8, 13, 18$.
By setting $z = 1$, the germ $\hat{\msq} \in \hat{X}$ is naturally equivalent to the hypersurface germ $(\phi = 0) \subset \mbC^4_{w, t, u, y}$, where
\[
\begin{split}
\phi &:= \hat{F} (u, y, 1, t, w) \\
&= t^3 + t^2 \hat{e}_8 (u, y, 1) + t \hat{e}_{13} (u, y, 1) + w^2 + \hat{e}_{18} (u, y, 1).
\end{split}
\]
Note that $\coeff_{\hat{e}_{13} (u, y, 1)} (u^3) = 2 \mu$ and $\coeff_{\hat{e}_{18} (u, y, 1)} (u^4) = -1$. 
By setting 
\[
f (u, y) := \hat{e}_8 (u, y, 1), \ 
g (u, y) := \hat{e}_{13} (u, y, 1), \ 
h (u, y) := \hat{e}_{18} (u, y, 1),
\]
it is straightforward to check that the conditions (1)--(10) are all satisfied. 
Finally it is easily seen that the singularity $\hat{\msq} \in \hat{X}$ is of type $cE_6$ since $u^4 \in h (z, u)$.
\end{proof}

\begin{Lem} \label{lem:boundNo100}
There are at most $6$ distinct divisors of discrepancy $1$ over $\hat{\msq} \in \hat{X}$.
\end{Lem}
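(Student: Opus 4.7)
The plan is to apply the general strategy outlined in Subsection~\ref{sec:genclsfcont} using $\hat{\varphi}\colon \hat{\mcY} \to \hat{X}$ as the starting divisorial contraction $\varphi_{0}$. First I would verify that $\hat{\varphi}$ has discrepancy exactly $1$. By construction, $\hat{\Phi}\colon \hat{\mbT} \to \hat{\mbP}$ is the weighted blow-up at the smooth point $\hat{\msp}_{z} \in \hat{\mbP}$ (smoothness follows from $z$ having weight $1$) with $\wt(u,t,w,y) = (3,4,6,1)$. In the local equation $\phi$ of Lemma~\ref{lem:loceqcENo100}, the lowest $\msw$-weight of $\phi$ for $\msw(u,t,w,y)=(3,4,6,1)$ equals $12$, attained by $w^{2}$, $t^{3}$, and $-u^{4}$, and an elementary argument (any factorization would force the cubic $t^{3}+t^{2}f+tg+h$ to be minus a square, which is impossible) shows that $\phi_{\msw = 12}$ is irreducible. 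Lemma~\ref{lem:genwblexc} then gives discrepancy $3+4+6+1-12-1 = 1$.

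Next I would enumerate the non-Gorenstein singularities of $\hat{\mcY}$ along $\hat{\mcE} \cong (\phi_{\msw=12}=0) \subset \mbP(3,4,6,1)$. A direct evaluation of $\phi_{\msw=12}$ at each of the isolated non-Gorenstein coordinate points $\msp_{u},\msp_{t},\msp_{w}$ of $\mbP(3,4,6,1)$ (where the toric ambient $\hat{\mbT}$ carries stabilizers $\mbZ_{3},\mbZ_{4},\mbZ_{6}$) yields a nonzero constant (namely $-1$, $1$, $1$ respectively from the leading monomials $-u^{4},t^{3},w^{2}$), so $\hat{\mcE}$ avoids them. The only remaining non-Gorenstein strata of $\hat{\mbT}$ meeting $\hat{\mcE}$ are the two edges $(t=y=0)$ with $\mbZ_{3}$-stabilizer and $(u=y=0)$ with $\mbZ_{2}$-stabilizer in $\mbP(3,4,6,1)$. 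On the first edge $\phi_{\msw=12}$ restricts to $w^{2}-u^{4}$ in $\mbP(3,6)$, cutting out two points, while on the second edge it restricts to $w^{2}+t^{3}$ in $\mbP(4,6)$, cutting out one point. A short local computation at each of these three points (using the Cox chart of $\hat{\mbT}$ where the stabilizer becomes a manifest cyclic group and eliminating the variable carrying a nonzero linear term in the resulting equation) shows that $\hat{\mcY}$ acquires terminal cyclic quotient singularities of types $\tfrac{1}{3}(1,1,2)$, $\tfrac{1}{3}(1,1,2)$, and $\tfrac{1}{2}(1,1,1)$, respectively.

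Finally I would assemble the count. Any divisor $G$ of discrepancy $1$ over $\hat{\msq}$ with $G \ne \hat{\mcE}$ satisfies $a_{G}(K_{\hat{\mcY}}) = 1 - \ord_{G}(\hat{\mcE}) < 1$, so its center on $\hat{\mcY}$ lies at one of the three terminal quotient points identified above. By Lemma~\ref{lem:quotdiv}, a $\tfrac{1}{r}(1,a,r-a)$ point admits exactly $r-1$ distinct divisors of discrepancy strictly less than $1$. Summing gives the upper bound
\[
1 + (3-1) + (3-1) + (2-1) = 6.
\]

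The main technical obstacle I anticipate is the third step, namely the explicit verification that each of the three points produces a \emph{cyclic quotient} singularity of $\hat{\mcY}$ (rather than a more complicated hyperquotient for which Lemma~\ref{lem:quotdiv} does not directly apply). This requires choosing an affine chart of $\hat{\mbT}$ adapted to each toric stratum, exhibiting the residual finite cyclic action on the local coordinates, and then solving for one local variable on the smooth cover of the hypersurface so that $\hat{\mcY}$ becomes an actual quotient of affine space by that cyclic group.
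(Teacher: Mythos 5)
Your proposal is correct and follows essentially the same route as the paper: identify the non-Gorenstein singular points of $\hat{\mcY}$ along $\hat{\mcE}$ (two points of type $\frac{1}{3}(1,1,2)$ and one of type $\frac{1}{2}(1,1,1)$) and then apply Step 1 of Section~\ref{sec:genclsfcont} together with Lemma~\ref{lem:quotdiv} to get the bound $1+2+2+1=6$. The extra verifications you supply (discrepancy of $\hat{\varphi}$, irreducibility of $\phi_{\msw=12}$, and the chart-by-chart identification of the quotient singularities) are precisely the details the paper leaves implicit in its opening ``We see that\dots''.
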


\begin{proof}
We see that the non-Gorenstein singular points of $\hat{\mcY}$ along $\hat{\mcE}$ consist of $2$ points of type $\frac{1}{3} (1, 1, 2)$ and $1$ point of type $\frac{1}{2} (1, 1, 1)$.
By the arguments in Section \ref{sec:genclsfcont} (especially by Lemma \ref{lem:quotdiv}), there are at most $5$ distinct exceptional divisors of discrepancy $1$ over $\hat{\msq} \in \hat{X}$ other than $\hat{\mcE}$.
\end{proof}

\begin{Lem} \label{lem:No100divcont}
The morphism $\hat{\varphi} \colon \hat{\mcY} \to \hat{X}$ is the unique divisorial contraction centered at the $cE$ point $\hat{\msq} \in \hat{X}$.
\end{Lem}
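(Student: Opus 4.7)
My plan is to combine the classification of divisors of discrepancy $>1$ from Section \ref{sec:divcontcEgt1} with the explicit constructions of divisors of discrepancy $1$ from Section \ref{sec:divdisc1}, and then invoke the uniqueness criterion of Lemma \ref{lem:divcontwbl}. The ground is prepared by Lemma \ref{lem:loceqcENo100}, which puts $\hat{\msq} \in \hat{X}$ into the standard $cE_6$ normal form $\phi = w^2 + t^3 + t^2 f + tg + h$ with $h = -u^4 + \tilde{h}$. The crucial observation is that the degree-$4$ part $h_4 = -u^4$ is the quadruple of a linear form; Proposition \ref{prop:ncd2cE6}(1) therefore rules out any divisorial contraction of discrepancy $>1$ at the outset.

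It then remains to classify divisorial contractions of discrepancy $1$. By Lemma \ref{lem:boundNo100}, there are at most six distinct divisors of discrepancy $1$ over $\hat{\msq}$. I would exhibit six such divisors, only one of which (namely $\hat{\mcE}$) comes from a divisorial contraction. First, $\hat{\mcE}$ is realised as $E_{(6,4,3,1)}$ via Lemma \ref{lem:divdisc1cEv1}(4), whose hypotheses on $\msw_3$-weights follow from Lemma \ref{lem:loceqcENo100}. Next, five further divisors would be constructed: $E_{(3,2,2,1)}$ via Lemma \ref{lem:divdisc1cEv1}(1), using the $\msw_2$-weight bounds in Lemma \ref{lem:loceqcENo100}(7); the pair $E^{\pm}_{(5,3,2,1)}$ via Lemma \ref{lem:divdisc1cEv3} applied to the splitting $h = -u^4 + \tilde{h}$ (with $\nu = 1$ and $\msw_2(\tilde{h}) \ge 9$ from Lemma \ref{lem:loceqcENo100}(10)); and the pair $E^{\pm}_{(4,2,1,1)}$ via Lemma \ref{lem:divdisc1cEv2} applied to the splitting $g = 2\mu u^3 + \tilde{g}$, $h = -u^4 + \tilde{h}$, possibly after a preparatory coordinate change $t \mapsto t - \alpha u^2 - \beta uy$ to kill the residual $\msw_1$-weight-$5$ terms of $\tilde{h}$ (this uses $\mu \ne 0$, i.e.\ that $u^3$ appears in $g$, which holds for $X$ in Family \textnumero~$100$).

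For the five divisors other than $\hat{\mcE}$, I would then verify that the extra divisibility conditions in the second halves of Lemmas \ref{lem:divdisc1cEv1}(1), \ref{lem:divdisc1cEv2} and \ref{lem:divdisc1cEv3} are met using parts (4)--(6) and (8)--(10) of Lemma \ref{lem:loceqcENo100}; this forces the corresponding weighted blow-up $Y$ to carry a non-terminal singularity, so by Lemma \ref{lem:divcontwbl} none of these five divisors is realised by a divisorial contraction. Since the six constructed divisors match the upper bound of Lemma \ref{lem:boundNo100} and one checks they are pairwise distinct (by comparing $\msw$-weights of the pull-backs of the coordinates), they exhaust all divisors of discrepancy $1$ over $\hat{\msq}$, and $\hat{\varphi}$ is the unique divisorial contraction centred at $\hat{\msq}$.

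The main technical obstacle is the construction of the $E^{\pm}_{(4,2,1,1)}$: Lemma \ref{lem:divdisc1cEv2} requires $\msw_1(\tilde{h}) \ge 6$, but the weight-$5$ monomials $u^5$ and $u^4 y$ in our $\tilde{h}$ are generically non-zero. The required preparatory substitution, and the verification that the strengthened divisibility hypotheses continue to hold after it, is the most delicate step; checking distinctness of the six resulting valuations, and confirming that the bound $6$ is saturated rather than merely bounded, is what turns an inequality into the desired uniqueness.
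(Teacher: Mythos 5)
Your proposal follows the paper's proof of Lemma \ref{lem:No100divcont} in all essentials: the same five auxiliary weighted blow-ups $E_{(3,2,2,1)}$, $E^{\pm}_{(4,2,1,1)}$, $E^{\pm}_{(5,3,2,1)}$ obtained from Lemmas \ref{lem:divdisc1cEv1}(1), \ref{lem:divdisc1cEv2}, \ref{lem:divdisc1cEv3}, the same saturation of the bound of Lemma \ref{lem:boundNo100}, the same appeal to Lemma \ref{lem:divcontwbl} to discard the five non-terminal blow-ups, and Proposition \ref{prop:ncd2cE6} to rule out discrepancy $>1$. Your genuine deviations are (a) invoking part (1) of Proposition \ref{prop:ncd2cE6} (the degree-$4$ part $-u^4$ is a fourth power of a linear form) instead of part (2) (six exceeds three), which is a legitimate alternative once one notes that completing the cube to remove the $t^2f$ term does not disturb $h_4$ (as $f$, $g$ have order $\ge 2$, $\ge 3$) and that Lemma \ref{lem:crdchginv}(1) makes the criterion independent of the chosen normal form; and (b) the identification $\hat{\mcE}=E_{(6,4,3,1)}$ via Lemma \ref{lem:divdisc1cEv1}(4), which is correct but not needed since $\hat{\varphi}$ is already known to be a divisorial contraction of discrepancy $1$.

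The one concrete problem is your handling of the ``main technical obstacle.'' Relative to the paper's toolkit it is not an obstacle at all: Lemma \ref{lem:loceqcENo100}(3) already asserts $\msw_1(\tilde{h})\ge 6$ for the normal form it provides, so Lemma \ref{lem:divdisc1cEv2} applies as is and no preparatory substitution belongs in the proof of the present lemma. More seriously, the repair you propose would fail in general: the shift $t\mapsto t-\alpha u^2-\beta uy$ changes the degree-$5$ part of $h$ only by $-g_3\cdot(\alpha u^2+\beta uy)=-2\mu u^3(\alpha u^2+\beta uy)$, so it can remove $u^5$ and $u^4y$ only when $\mu\ne 0$; but $\mu$ is half the coefficient of $tz^4x$ in $F$, and quasi-smoothness does not force this monomial to appear, so your assertion that $\mu\ne 0$ holds for every member of Family \textnumero~$100$ is unjustified. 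If one does wish to normalize away the degree-$5$ part of $h$ explicitly, the substitution should be made in $u$, not in $t$: since $h_4=-u^4$, the change $u\mapsto u+\alpha u^2+\beta uy$ shifts $h_5$ by $-4u^3(\alpha u^2+\beta uy)$ and eliminates $u^5$ and $u^4y$ unconditionally, after which the divisibility conditions (4)--(10) of Lemma \ref{lem:loceqcENo100} must be rechecked. With that correction --- or simply by quoting Lemma \ref{lem:loceqcENo100}(3) as stated --- your argument coincides with the paper's.
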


\begin{proof}
We define various weighted blow-ups of $\hat{X}$ centered at $\hat{\msq}$ as follows:
\begin{itemize}
\item $\psi_1 \colon W_1 \to \hat{X}$ is the weighted blow-up with $\wt (w, t, u, y) = (3, 2, 2, 1)$ which is described in Lemma \ref{lem:divdisc1cEv1}(1).
\item $\psi_2^{\pm} \colon W_2^{\pm} \to \hat{X}$ is the weighted blow-up with $\wt (w, t, u, y) = (4, 2, 1, 1)$ composed with a suitable isomorphism which is described in Lemma \ref{lem:divdisc1cEv2}.
\item $\psi_3^{\pm} \colon W_3^{\pm} \to \hat{X}$ is the weighted blow-up with weight $\wt (w, t, u, y) = (5, 3, 2, 1)$ composed with a suitable isomorphism which is described in Lemma \ref{lem:divdisc1cEv3}.
\end{itemize}
Let $E_1$, $E_2^{\pm}$, and $E_3^{\pm}$ be the exceptional divisors of $\psi_1$, $\psi_2^{\pm}$, and $\psi_3^{\pm}$, respectively.
By Lemmas \ref{lem:divdisc1cEv1}(1), \ref{lem:divdisc1cEv2}, \ref{lem:divdisc1cEv3}, and \ref{lem:loceqcENo100}, $E_1$, $E_2^{\pm}$, and $E_3^{\pm}$ are divisors of discrepancy $1$ over $\hat{\msq} \in \hat{X}$, and the varieties $W_1$, $W_2^{\pm}$, and $W_3^{\pm}$ have a non-terminal singularity.
By Lemma \ref{lem:boundNo100}, $E_1$, $E_2^{\pm}$, $E_3^{\pm}$ and $\hat{\mcE}$ are all the divisors of discrepancy $1$ over $\hat{\msq} \in \hat{X}$.
By Lemma \ref{lem:divcontwbl}, none of $E_1, E_2^{\pm}$, $E_3^{\pm}$ is realized by the exceptional divisor of a divisorial contraction of $\hat{\msq} \in \hat{X}$.
Finally it follows from the existence of $6$ divisors of discrepancy $1$ over the point $\hat{\msq} \in \hat{X}$ of type $cE_6$ and Proposition~\ref{prop:ncd2cE6} that there is no divisor of discrepancy greater than $1$ over $\hat{\msq} \in \hat{X}$.  
This completes the proof.
\end{proof}

%\begin{Rem}
%$\msp' \in X'$ admits the blow-up with weight $(6, 4, 3, 1)$.
%\end{Rem}

\subsubsection{Maximal extractions of $\hat{X}$}

\begin{Prop} \label{prop:No100main2}
The divisorial contraction $\hat{\varphi} \colon \hat{\mcY} \to \hat{X}$ centered at the $cE$ point $\hat{\msq}$ is the unique maximal extraction of $\hat{X}$, and $\EL (\hat{X}) = \{\hat{\sigma}^{-1}\}$.
\end{Prop}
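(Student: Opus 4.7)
The plan is to establish uniqueness of the maximal extraction of $\hat X$; together with the fact that $\hat\varphi$ is itself maximal (since it initiates the link $\hat\sigma^{-1}$) and Lemma \ref{lem:untwist}, this will yield $\EL(\hat X) = \{\hat\sigma^{-1}\}$. The center of any maximal extraction is a maximal center, and since Lemma \ref{lem:No100divcont} already handles the $cE_6$ point $\hat\msq$, the remaining candidates to rule out are curves on $\hat X$, smooth points of $\hat X$, and the $\frac{1}{3}(1,1,2)$ point $\hat\msp_t$.

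For the first two classes the arguments are direct applications of the material in Section \ref{sec:exclmethod}. Because $w^2 \in \hat F$, the point $\hat\msp_w$ does not lie on $\hat X$, so $\hat X \cap \Sing \mbP(1,1,1,3,5) = \{\hat\msp_t\}$; combined with $(-K_{\hat X})^3 = 2/3 \le 1 = \iota_{\hat X}$, this puts us in the hypotheses of Lemma \ref{lem:exclcurves} and rules out curves. For smooth points I will use $\hat X \cap (u=y=z=0) = \{\hat\msp_t\}$ (which follows because $\hat F(0,0,0,t,w) = w^2$ up to a unit) to deduce that at any smooth point of $\hat X$ at least one of the weight-one coordinates is non-zero; under the normalisation $z \ne 0$, the set $\{u-\alpha z,\ y-\beta z,\ t-\gamma z^3,\ w-\varepsilon z^5\}$ isolates the point, so the class $5A$ isolates it. Since $5 \le 4/(-K_{\hat X})^3 = 6$, Lemma \ref{lem:mtdexclsmpt} concludes.

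The substantive step is excluding $\hat\msp_t$. The plan is to apply Lemma \ref{lem:exclquotsing} with $S = (z=0)_{\hat X}$, $\mcL = |A|$, and $\Delta = 0$; condition (1) is immediate from $\Bs|A| = \{\hat\msp_t\}$, leaving only the numerical condition (3). Writing $\psi \colon W \to \hat X$ for the Kawamata blow-up at $\hat\msp_t$ with exceptional divisor $E$, the crux is the calculation $\nu := \ord_E(z) = 4/3$. Using Lemma \ref{lem:loceqcENo100} on the chart $t \ne 0$, I will eliminate $z$ from the local equation to express it as a power series in the orbifold coordinates $(u, y, w)$ with weights $\tfrac{1}{3}(1, 1, 2)$. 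The only monomials of weight $\le 4/3$ arising on the right-hand side come from $w^2$ and from the $u,y$-only part of $\hat e_8$, so the leading part of $z$ is $-(w^2 + \hat e_8(u, y, 0))$, which has weight $4/3$ and is non-zero because $w^2$ cannot cancel against a $u,y$-only polynomial. Then $\tilde S \sim_\mbQ \psi^* A - \tfrac{4}{3} E$ and $\tilde T \sim_\mbQ \psi^* A - \tfrac{1}{3} E$ for a general $T \in |A|$, and a direct computation using $(A^3) = 2/3$ and $(E^3) = 9/2$ gives $(-K_W \cdot \tilde S \cdot \tilde T) = 2/3 - (4/27)(9/2) = 0$, so Lemma \ref{lem:exclquotsing} applies.

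The main obstacle is precisely this order-of-vanishing step: it requires bookkeeping of the weights of the terms in the $cE$-normal form of Lemma \ref{lem:loceqcENo100} and a transparent no-cancellation argument for the leading part. Once $\nu = 4/3$ is granted, the numerical inequality falls out instantly, and everything else fits into a standard application of the exclusion lemmas.
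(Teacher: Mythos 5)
Your proposal is correct and follows essentially the same route as the paper: the paper's proof simply cites Lemmas~\ref{lem:exclcurves}, \ref{lem:exclsmpts} and \ref{lem:exclqsing1} (the latter two being precisely the isolating-class argument for smooth points and the Lemma~\ref{lem:exclquotsing} computation with $S=(z=0)_{\hat{X}}$ and $\ord_E(z)=4/3$ that you carry out by hand, the case $(r,a)=(3,1)$ of Lemma~\ref{lem:exclqsing1}), together with Lemma~\ref{lem:No100divcont} for the $cE_6$ point. The only cosmetic differences are that the paper uses the pencil generated by two weight-one coordinates rather than all of $|A|$, and its packaged smooth-point lemma yields the isolating class $3A$ where you settle for $5A$ (both satisfy the required bound $\le 4/(-K_{\hat{X}}^3)=6$).
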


\begin{proof}
By Lemmas~\ref{lem:exclsmpts} and \ref{lem:exclcurves}, no smooth point and no curve on $\hat{X}$ is a maximal center. 
By Lemma~\ref{lem:exclqsing1}, the $\frac{1}{3} (1, 1, 2)$ point is not a maximal center.
Finally $\hat{\varphi}$ is the unique divisorial contraction centered at $\hat{\msq}$ by Lemma~\ref{lem:No100divcont}, and thus the assertion follows.
\end{proof}

Theorem~\ref{thm:main2}, and hence Theorem~\ref{thm:main}, for Family \textnumero~100 follow from Propositions~\ref{prop:No100main1} and \ref{prop:No100main2}.

%%%%%%%%%%%%%%%%%%%%%%%%%%%%%%%%%%
\subsection{Family \textnumero 101}
%%%%%%%%%%%%%%%%%%%%%%%%%%%%%%%%%%

Let $X = X_{22} \subset \mbP (1_x,2_y,3_z,7_t,11_w)$ be a quasi-smooth member of Family \textnumero $102$, which is defined by a quasi-homogeneous polynomial $F = F (x, y, z, t, w)$ of degree $22$, and whose basic information is given as follows:
\begin{itemize}
\item $\iota_X = 2$.
\item $(A^3) = 1/21$.
\item $\Sing (X) = \{\frac{1}{3} (1,1,2), \frac{1}{7} (1,2,5)\}$, and $\msq = \msp_t$ is the $\frac{1}{7} (1, 2, 5)$ point.
\end{itemize}
The elementary link $\hat{\sigma} \colon X \ratmap \hat{X}$ is embedded into the following toric diagram.

\[
\xymatrix{
\text{$\mbT := \mbT \begin{pNiceArray}{cc|cccc}[first-row]
u & t & w & y & z & x \\
0 & 7 & 11 & 2 & 3 & 1 \\
-7 & 0 & 2 & 1 & 5 & 4
\end{pNiceArray}$} \ar@{-->}[r]^{\hat{\Theta}} \ar[d]_{\Phi} &
\text{$\mbT \begin{pNiceArray}{cccc|cc}[first-row]
u & t & w & y & z & x \\
3 & 5 & 7 & 1 & 0 & -1 \\
1 & 4 & 6 & 1 & 1 & 0
\end{pNiceArray} =: \hat{\mbT}$} \ar[d]^{\hat{\Phi}} \\
\mbP (7_t, {11}_w, 2_y, 3_z, 1_x) & \mbP (1_u, 4_t, 6_w, 1_y, 1_z)}
\]
The birational morphisms $\Phi, \hat{\Phi}$, and the birational map $\hat{\Theta}$ are defined as follows:
\[
\begin{split}
\Phi & \colon (u\!:\!t \, | \, w\!:\!y\!:\!z\!:\!x) \mapsto (t\!:\!w u^{2/7}\!:\!y u^{1/7}\!:\!z u^{5/7}\!:\!x u^{4/7}), \\
\hat{\Phi} & \colon (u\!:\!t\!:\!w\!:\!y \, | \,  z\!:\!x) \mapsto (u x^3\!:\!t x^5\!:\!w x^7\!:\!y x\!:\!z), \\
\hat{\Theta} & \colon (u\!:\!t \, | \, w\!:\!y\!:\!z\!:\!x) \mapsto (u\!:\!t\!:\!w\!:\!y \, | \,  z\!:\!x).
\end{split}
\]  
The varieties $\mcY$ and $\hat{\mcY}$ are the hypersurfaces in $\mbT$ and $\hat{\mbT}$, respectively, which are defined by the same equation
\[
G (u, x, y, z, t, w) := u^{- 4/7} F (u^{4/7} x, u^{1/7} y, u^{5/7} z, t, u^{2/7} w) = 0.
\]
The variety $\hat{X} = \hat{X}_{12} \subset \mbP (1_u, 1_y, 1_z, 4_t, 6_w)$ is the hypersurface defined by
\[
\hat{F} (u, y, z, t, w) := G (u, 1, y, z, t, w) = 0,
\]
where $\hat{F}$ is a quasi-homogeneous polynomial of degree $12$, and we have the following basic information of $\hat{X}$:
\begin{itemize}
\item $\iota_{\hat{X}} = 1$.
\item $(\hat{A}^3) = 1/2$.
\item $\Sing (\hat{X}) = \{\frac{1}{2} (1, 1, 2), cE_{7, 8}\}$, where $\hat{\msq} = \hat{\msp}_z \in \hat{X}$ is the $cE_{7,8}$ point (note that $cE_{7, 8}$ means that $\hat{\msq}$ is of type either $cE_7$ or $cE_8$, see Lemma ~\ref{lem:loceqcENo101}).
\end{itemize}

\subsubsection{Maximal extractions of $X$}

\begin{Lem} \label{lem:No101sing1}
The $\frac{1}{3} (1,1,2)$ point of $X$ is not a maximal center.
\end{Lem}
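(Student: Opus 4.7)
The plan is to follow the strategy of Lemma~\ref{lem:No100sing1}: let $\psi \colon Y \to X$ be the Kawamata blow-up of the $\frac{1}{3}(1,1,2)$ point $\msp := \msp_z$ with exceptional divisor $E$, and exhibit effective divisors $S, T$ on $X$ so that Lemma~\ref{lem:exclsingptG} excludes $\msp$ via a negative intersection number. The new complication compared to Family \textnumero~$100$ is that quasi-smoothness at $\msp_z$ gives the dichotomy $z^5 t \in F$ or $z^7 x \in F$, but I handle both cases in parallel: in either case the Kawamata weights on the three local orbifold coordinates (which remain after eliminating $t$ or $x$, respectively) give $\ord_E (x) = 2/3$ and $\ord_E (y) = 1/3$, after reducing the local representative $\frac{1}{3}(1,2,2)$ or $\frac{1}{3}(2,1,2)$ to the standard form $\frac{1}{3}(2,1,1)$. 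Taking $S := (x = 0)_X$ and $T := (y = 0)_X$ then yields $\tilde S \sim_{\mbQ} \frac{1}{2}(-K_Y) - \frac{1}{2} E$ and $\tilde T \sim_{\mbQ} -K_Y$, matching the shape $(a, d, b, e) = (\tfrac{1}{2}, -\tfrac{1}{2}, 1, 0)$ required by Lemma~\ref{lem:exclsingptG}.

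The main step will be to identify $\tilde S \cdot \tilde T$ as a $1$-cycle whose support is a single irreducible curve. Since quasi-smoothness at $\msp_w$ forces $w^2 \in F$, one has $F(0, 0, z, t, w) = \alpha w^2 + \beta z^5 t$ with $\alpha \ne 0$. In Case 1 ($\beta \ne 0$) the intersection $\Gamma := S \cap T$ is the irreducible curve $(\alpha w^2 + \beta z^5 t = 0) \subset \mbP(3, 7, 11)$ and $\tilde S \cdot \tilde T = \tilde \Gamma$. In Case 2 ($\beta = 0$, which forces $z^7 x \in F$ and makes $\bar\Gamma := (x = y = w = 0) \cap X = \mbP(3, 7)$ an irreducible curve contained in $X$) the scheme-theoretic intersection $S \cap T$ equals $2 \bar\Gamma$, so $\tilde S \cdot \tilde T = 2 \tilde{\bar\Gamma}$. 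In both cases I need to check that $\tilde S \cap \tilde T \cap E$ is zero-dimensional, which follows from a direct inspection of equations on $E \cong \mbP(2, 1, 1)$ (Case 1) or $\mbP(1, 2, 1)$ (Case 2). Then
\[
(\tilde T \cdot \tilde S \cdot \tilde T) = 4 (A^3) - \frac{2}{27} (E^3) = \frac{4}{21} - \frac{1}{3} = - \frac{1}{7}
\]
using $(A^3) = 1/21$ and $(E^3) = 9/2$, which supplies the negative intersection number needed for Lemma~\ref{lem:exclsingptG}.

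The hardest part, I expect, will be the Case~2 analysis near $E$: there $x$ is eliminated via $z^7 x \in F$, so $S = (x = 0)_X$ is not a coordinate hyperplane in the local chart, and the leading Kawamata-weight part of the analytic series expressing $x$ in $(y, t, w)$ is a nonzero multiple of $w^2$. I will have to track this expansion carefully to confirm both that $\tilde S \cap \tilde T \cap E$ contains no curve (it should reduce to the single $\mbZ_2$-point of $E$) and that the multiplicity of $\tilde S \cdot \tilde T$ along $\tilde{\bar\Gamma}$ equals $2$.
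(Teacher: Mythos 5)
Your proposal is correct and follows essentially the same route as the paper's proof: the same Kawamata blow-up, the same surfaces $S=(x=0)_X$ and $T=(y=0)_X$ with $\ord_E(x,y)=\frac{1}{3}(2,1)$, the same dichotomy $z^5t\in F$ versus $z^7x\in F$ giving $S\cap T=m\Gamma$ with $m=1$ or $2$, the same computation $(\tilde T^2\cdot\tilde S)=\frac{4}{21}-\frac{2}{27}(E^3)=-\frac{1}{7}$, and the same appeal to Lemma~\ref{lem:exclsingptG}. The points you flag as delicate (finiteness of $\tilde S\cap\tilde T\cap E$ and the order of vanishing of $x$ along $E$ in Case~2 coming from $w^2\in f_{22}$) are exactly the checks the paper carries out.
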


\begin{proof}
Let $\msp = \msp_z \in X$ be the $\frac{1}{3} (1,1,2)$ point, and let $\psi \colon Y \to X$ be the Kawamata blow-up of $X$ at $\msp$ with exceptional divisor $E$.
By a choice of homogeneous coordinates and by the quasi-smoothness of $X$, we can write
\[
F =
\begin{cases}
z^5 t + z^4 f_{10} + z^3 f_{13} + z^2 f_{16} + z f_{19} + f_{22}, & \text{if $z^5 t \in F$}, \\
z^7 x + z^6 f_4 + z^5 f_7 + z^4 f_{10} + z^3 f_{13} + z^2 f_{16} + z f_{19} + f_{22}, & \text{if $z^5 t \notin F$},
\end{cases}
\]
where $f_i = f_i (x, y, t, w)$ is a quasi-homogeneous polynomial of degree $i$.
By the quasi-smoothness of $X$, we may assume that $\coeff_{f_{22}} (w^2) = 1$ by rescaling $w$.

The open set $(z \ne 0) \subset X$ is naturally isomorphic to the affine hyperquotient
\[
(F (x, y, 1, t, w) = 0)/\mbZ_3 (1_x, 2_y, 1_t, 2_w),
\]
and the point $\msp$ corresponds to the origin.
If $z^5 t \in F$ (resp.\ $z^5 t \notin F$), then we can choose $x, y, w$ (resp.\ $y, t, w$) as local orbifold coordinates of $X$ at $\msp$ and $\psi$ is the weighted blow-up with $\wt (x, y, w) = \frac{1}{3} (2, 1, 1)$ (resp.\ $\wt (y, t, w) = \frac{1}{3} (1, 2, 1)$).
Suppose that $z^5 t \in F$. 
Then filtering off terms divisible by $t$ in the equation $F (x, y, 1, t, w) = 0$, we have
\[
(-1 + \cdots) t = f_{10} (x, y, 0, w) + f_{13} (x, y, 0, w) + \cdots + f_{22} (x, y, 0, w),
\]
where the omitted term $\cdots$ in the left-hand side is a polynomial which vanishes at the origin.
The lowest weight of the monomials in the right-hand side of the above equation is $\frac{2}{3}$, which is attained by $w^2 \in f_{22} (x, y, 0, w)$, and thus $\ord_E (t) = \frac{2}{3}$.
Suppose that $z^5 t \notin F$.
Then filtering off terms divisible by $x$ in the equation $F (x, y, 1, t, w) = 0$, we have
\[
(-1+ \cdots) x = f_4 (0, y, t, w) + f_7 (0, y, t, w) + \cdots + f_{22} (0, y, t, w).
\]
The lowest weight of the monomials in the right-hand side of the above equation with respect to $\wt (y, t, w) = \frac{1}{3} (1, 2, 1)$ is $\frac{2}{3}$, which is attained by $w^2 \in f_{22} (0, y, t, w)$.
Hence in both cases we have $\ord_E (x, y, t, w) = \frac{1}{3} (2, 1, 2, 1)$.

We set $S := (x = 0)_X$ and $T := (y = 0)_X$.
Then, for the proper transforms $\tilde{S}$ and $\tilde{T}$ os $S$ and $T$ on $Y$, we have
\[
\begin{split}
\tilde{S} &\sim_{\mbQ} \psi^*A - \frac{2}{3} E \sim_{\mbQ} - \frac{1}{2} K_Y - \frac{1}{2} E, \\
\tilde{T} &\sim_{\mbQ} 2 \psi^*A - \frac{1}{3} E \sim_{\mbQ} - K_Y.
\end{split}
\]
We have
\[
S \cap T \cong
\begin{cases}
(z^5 t + w^2 = 0) \subset \mbP (3_z, 7_t, 11_w), & \text{if $z^5 t \in F$}, \\
(w^2 = 0) \subset \mbP (3_z, 7_t, 11_z), & \text{if $z^5 t \notin F$}.
\end{cases}
\]
Let $\Gamma$ be the support of $S \cap T$, which is an irreducible and reduced curve on $X$.
Note that we have $S \cap T = m \Gamma$. where
\[
m = 
\begin{cases}
1, & \text{if $z^5 t \in F$}, \\
2, & \text{if $z^5 t \notin F$}.
\end{cases}
\]
It is easy to see that $\tilde{S} \cap \tilde{T} \cap E$ consists of $1$ point.
It follows that $\tilde{S} \cap \tilde{T} = m \tilde{\Gamma}$, where $\tilde{\Gamma}$ is the proper transform of $\Gamma$.
We compute
\[
\begin{split}
m (\tilde{T} \cdot \tilde{\Gamma}) &= (\tilde{T}^2 \cdot \tilde{S}) = (2 \psi^*A - \frac{2}{3})^3 \cdot (\psi^*A - \frac{1}{3}E)  \\
&= \frac{2^2}{21} - \frac{2^2}{3^3} \cdot \frac{3^2}{2} = - \frac{1}{7} < 0.
\end{split}
\] 
By Lemma~\ref{lem:exclsingptG}, $\msp$ is not a maximal center.
\end{proof}

The above Lemma \ref{lem:No101sing1} combined with \cite[Lemma 4.1 and 4.2]{ACP}, we have the following.

\begin{Prop} \label{prop:No101main1}
The Kawamata blow-up $\varphi \colon \mcY \to X$ at the $\frac{1}{7} (1, 2, 5)$ point $\msq$ is the unique maximal extraction of $X$, and $\EL (X) = \{\hat{\sigma}\}$.
\end{Prop}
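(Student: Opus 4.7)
The plan is to follow the same pattern as Proposition~\ref{prop:No100main1}: show that the only potential maximal center on $X$ is the highest-index cyclic quotient singularity $\msq$, so that the Kawamata blow-up $\varphi \colon \mcY \to X$ is the only candidate for a maximal extraction. First I would invoke \cite[Lemmas~4.1 and 4.2]{ACP} (already used for Family \textnumero~100) to rule out every curve and every smooth point of $X$ as a maximal center. These exclusions go through verbatim because they rely only on the numerical inequalities $(-K_X)^3 = 2^3/21 < \iota_X$ and on the existence of suitable isolating linear systems using the coordinates $x, y, z$ of low weight, both of which hold equally well for $X_{22} \subset \mbP(1,2,3,7,11)$.

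Next, I would combine Lemma~\ref{lem:No101sing1} (which excludes the $\frac{1}{3}(1,1,2)$ point) with the observation that the only non-Gorenstein singular points of $X$ are the $\frac{1}{3}(1,1,2)$ point and the $\frac{1}{7}(1,2,5)$ point $\msq$. After these exclusions, $\msq$ is the unique possible center of a maximal extraction. Since $\msq \in X$ is a terminal cyclic quotient singularity, by \cite{Kawamata} the Kawamata blow-up $\varphi \colon \mcY \to X$ is the unique divisorial contraction centered at $\msq$. Therefore $\varphi$ is the unique maximal extraction of $X$.

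Finally, it is established in \cite{ACP} (and recalled at the start of this section) that $\varphi$ initiates the elementary link $\hat{\sigma} \colon X \ratmap \hat{X}$ via the $2$-ray game on the ambient rank~$2$ toric variety $\mbT$. Since any elementary link is initiated by a maximal extraction and $\varphi$ is the only one, we conclude $\EL(X) = \{\hat{\sigma}\}$, as desired.

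There is essentially no obstacle to this proof: the hard analytic work has already been absorbed into Lemma~\ref{lem:No101sing1}, and the remaining exclusions are citations to \cite{ACP} that apply to the present family in the same way as to Family~\textnumero~100. The proof is therefore a short assembly of existing pieces.

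\begin{proof}
By \cite[Lemma~4.1]{ACP}, no curve on $X$ is a maximal center, and by \cite[Lemma~4.2]{ACP}, no smooth point of $X$ is a maximal center. The singular locus of $X$ consists of the $\frac{1}{3}(1,1,2)$ point and the $\frac{1}{7}(1,2,5)$ point $\msq$. By Lemma~\ref{lem:No101sing1}, the $\frac{1}{3}(1,1,2)$ point is not a maximal center. Hence $\msq$ is the only possible center of a maximal extraction. Since $\msq \in X$ is a terminal cyclic quotient singular point, \cite{Kawamata} implies that the Kawamata blow-up $\varphi \colon \mcY \to X$ is the unique divisorial contraction centered at $\msq$. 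Therefore $\varphi$ is the unique maximal extraction of $X$. Since $\varphi$ initiates the elementary link $\hat{\sigma} \colon X \ratmap \hat{X}$, it follows that $\EL(X) = \{\hat{\sigma}\}$.
\end{proof}
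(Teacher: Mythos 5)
Your proof is correct and follows essentially the same route as the paper: the paper's own proof is precisely the one-line observation that Lemma~\ref{lem:No101sing1} combined with \cite[Lemmas 4.1 and 4.2]{ACP} (which exclude curves and smooth points and leave only the singular points as possible centers) yields the statement, together with the uniqueness of the Kawamata blow-up at a terminal quotient point. Your write-up just makes these standard ingredients explicit.
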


\subsubsection{Divisorial contractions centered at $\hat{\msq} \in \hat{X}$}

For a positive integer $i$, we denote by $\msw_i$ the weight on $u, y$ defined by $\msw_i (u, y) = (i, 1)$.

\begin{Lem} \label{lem:loceqcENo101}
The singularity $\hat{\msq} \in \hat{X}$ is of type $cE_7$ or $cE_8$, and it is equivalent to $(\phi = 0) \subset \mbC^4_{w, t, u, y}$, where
\begin{equation} \label{eq:cEeqNo101}
\phi (w, t, u, y) = w^2 + t^3 + t^2 f (u, y) + t g (u, y) + h (u, y),
\end{equation}
for some $f (u, y), g (u, y), h (u, y) \in \mbC [u, y]$ satisfying the following properties.
\begin{enumerate}
\item $g_{\msw_1 = 3} = \mu u^3$ and $h_{\msw_1 = 5} = \nu u^5$ for some $\mu, \nu \in \mbC$ such that $(\mu, \nu) \ne (0, 0)$.
Moreover if $\mu \ne 0$, then $\nu = 0$.
%\item $h^{[\tau_1]}_5 = u^5$.
\item $\msw_1 (f) \ge 2$, $\msw_1 (g) \ge 3$ and $\msw_1 (h) \ge 5$.
%\item $u^2 \mid f_{\tauonewt = 2}$.
\item $u^3 \mid g_{\msw_1 = 3}$ and $u^3 \mid g_{\msw_1 = 4}$.
\item $u^4 \mid h_{\msw_1 = 6}$ and $u^4 \mid h_{\msw_1 = 7}$.
\item $\msw_2 (f) \ge 3$, $\msw_2 (g) \ge 6$ and $\msw_2 (h) \ge 10$.
\item $u \mid f_{\msw_2 = 3}$.
\item $u^3 \mid g_{\msw_2 = 6}$ and $u^2 \mid g_{\msw_2 = 7}$.
\item $u^4 \mid h_{\msw_2 = 10}$ and $u^3 \mid h_{\msw_2 = 11}$.
\item $\msw_3 (f) \ge 4$, $\msw_3 (g) \ge 9$ and $\msw_3 (h) \ge 14$.
%\item $u \mid g^{[\tau_3]}_9$ (and $u \mid g^{[\tau_3]}_{10}$).
%\item $u \mid h^{[\tau_3]}_{14}$ (and $u^2 \mid h^{[\tau_3]}_{15}$).
\end{enumerate}
The singularity $\hat{\msq} \in \hat{X}$ is of type $cE_7$ if $u^3 \in g$, and of type $cE_8$ if $u^3 \notin g$.
\end{Lem}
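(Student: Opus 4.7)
The plan is to follow the template of Lemma~\ref{lem:loceqcENo100} closely. First, exploit quasi-smoothness of $X$ at each coordinate point to put $F$ into a normal form. Since $d = 22$ and $\wt(w) = 11$, the only monomial of the form $w^a x_i$ of degree $22$ is $w^2$, so quasi-smoothness at $\msp_w$ forces $w^2 \in F$. Since $\wt(t) = 7$, the only monomial $t^a x_i$ of degree $22$ is $t^3 x$, so quasi-smoothness at $\msp_t$ forces $t^3 x \in F$. After rescaling $w$ and $x$, both coefficients become $1$. Completing the square in $w$ via $w \mapsto w - \tfrac{1}{2} b_{11}(x, y, z, t)$ yields
\[
F = w^2 + t^3 x + t^2 e_8(x, y, z) + t e_{15}(x, y, z) + e_{22}(x, y, z),
\]
with $e_i$ weighted-homogeneous of degree $i$ in $x, y, z$.

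Next, I will use quasi-smoothness of $X$ at $\msp_z$, which requires $z^5 t \in F$ or $z^7 x \in F$ (these being the only admissible monomials $z^a x_i$ of degree $22$). Setting $\mu := \coeff_F(z^5 t)$ and $\nu := \coeff_F(x z^7)$, we therefore have $(\mu, \nu) \ne (0, 0)$. If $\mu \ne 0$, apply the weighted-homogeneous change $t \mapsto t - (\nu/\mu)\, z^2 x$ (legitimate since $\wt(t) = \wt(z^2 x) = 7$); a direct monomial comparison shows this leaves $\mu$ unchanged while annihilating $\nu$, giving the normalization ``if $\mu \ne 0$, then $\nu = 0$.'' This establishes (1).

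Then I substitute $\hat{F}(u, y, z, t, w) = u^{-4/7} F(u^{4/7}, u^{1/7} y, u^{5/7} z, t, u^{2/7} w)$ and set $z = 1$ to obtain the local equation $\phi = w^2 + t^3 + t^2 f(u, y) + t g(u, y) + h(u, y)$, where $f, g, h$ are obtained from $e_8, e_{15}, e_{22}$ respectively. A monomial $x^i y^j z^k$ with $i + 2j + 3k = m$ contributes $u^{(4m - 7j - 7k - 4)/7} y^j z^k$; for $m \in \{8, 15, 22\}$ the exponent of $u$ simplifies to $4 - j - k$, $8 - j - k$, $12 - j - k$. After setting $z = 1$, each condition (2)--(9) amounts to enumerating admissible triples $(i, j, k)$ with fixed $\msw_1$- or $\msw_2$-weight and reading off the minimal exponent of $u$. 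For instance, $g_{\msw_1 = 3}$ can only arise from $k = 5$, forcing $i = j = 0$ and giving $\mu u^3$; similarly $h_{\msw_1 = 5}$ is supported solely on $x z^7$, giving $\nu u^5$. All other divisibility claims reduce to noting that for a fixed total $\msw_1$- or $\msw_2$-weight the corresponding range of $j$ and $k$ is small, and the exponent of $u$ in every surviving term is at least the stated bound.

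Finally, the $cE$ type follows: since $\msw_1(h) \ge 5$ forces $h_4 = 0$, the germ is $cE_7$ exactly when $g_3 = \mu u^3 \ne 0$ (i.e.\ $u^3 \in g$), and $cE_8$ when $g_3 = 0$, in which case $\nu \ne 0$ by (1) so $h_5 = \nu u^5 \ne 0$. The main obstacle I anticipate is not conceptual but combinatorial: there are many divisibility conditions to verify, so a systematic table of admissible $(i, j, k)$ triples for $m = 8, 15, 22$ with small $\msw_r$-weight ($r = 1, 2, 3$) will make the verification transparent and uniform.
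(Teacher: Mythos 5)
Your proposal is correct and follows essentially the same route as the paper: quasi-smoothness at the coordinate points to reach the normal form $F = w^2 + t^3x + t^2e_8 + te_{15} + e_{22}$, the observation that quasi-smoothness at $\msp_z$ forces $(\mu,\nu)\neq(0,0)$ with the substitution $t \mapsto t - (\nu/\mu)z^2x$ to normalize $\nu = 0$ when $\mu \neq 0$, then the substitution defining $\hat F$, setting $z=1$, and a monomial-by-monomial weight count (your formula $u^{(4m-4)/7 - j - k}$ for the contribution of $x^iy^jz^k \in e_m$ is exactly the right bookkeeping, which the paper leaves as ``straightforward to check''). The only divergence is cosmetic: the paper's printed substitution $t \mapsto t - \mu z^2x$ and its reference to quasi-smoothness at $\msp_x$ appear to be typos for your (correct) versions.
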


\begin{proof}
By a choice of homogeneous coordinates $x, y, z, t, w$ of $\mbP (1, 2, 3, 7, 11)$ and by the quasi-smoothness of $X$, we can write
\[
F = w^2 + t^3 x + t^2 e_8 (x, y, z) + t e_{15} (x, y, z) + e_{22} (x, y, z),
\]
where $e_i = e_i (x, y, z)$ is a quasi-homogeneous polynomial of degree $i$.
Set $\mu = \coeff_{e_{15}} (z^5)$ and $\nu = \coeff_{e_{22}} (z^7 x)$.
By the quasi-smoothness of $X$ at the point $\msp_x \in X$, we have $(\lambda, \mu) \ne (0, 0)$.
Moreover, if $\mu \ne 0$, then by replacing $t \mapsto t - \mu z^2 x$, we may assume $\nu = 0$. 
Then we have
\[
\begin{split}
\hat{F} &= u^{-4/7} F (u^{4/7}, u^{1/7} y, u^{5/7} z, t, u^{2/7} w) \\
&= w^2 + t^3 + t^2 \hat{e}_8 (u, y, z) + t \hat{e}_{15} (u, y, z) + \hat{e}_{22} (u, y, z),
\end{split}
\]
where
\[
\hat{e}_i = \hat{e}_i (u, y, z) := u^{-4/7} e_i (u^{4/7}, u^{1/7} y, u^{5/7} z)
\]
for $i = 8, 15, 22$.
By setting $z = 1$, the germ $\hat{\msp} \in \hat{X}$ is naturally isomorphic to the origin of the hypersurface germ in $\mbA^4_{u, y, t, w}$ defined by $\phi = 0$, where
\[
\phi := \hat{F} (u, y, 1, t, w) = w^2 + t^3 + t^2 \hat{e}_8 (u, y, 1) + t \hat{e}_{15} (u, y, 1) + \hat{e}_{22} (u, y, 1).
\]
Note that $\coeff_{\hat{e}_{15}} (u^3) = \lambda$ and $\coeff_{\hat{e}_{22}} (u^5) = \mu$.
By setting
\[
f := \hat{e}_8 (u, y, 1), \ 
g := \hat{e}_{15} (u, y, 1), \ 
h := \hat{e}_{22} (u, y, 1),
\]
it is straightforward to check that the conditions (1)--(9) are all satisfied.
Finally we see that $\hat{\msq} \in \hat{X}$ is not of type $cE_6$ since $\msw_1 (h) \ge 5$, and that it is of type $cE_7$ if and only if $\msw_1 (g) = 3$, which is equivalent to the condition $u^3 \in g$.
This completes the proof.
\end{proof}

\begin{Lem} \label{prop:No101divcont}
The number of distinct divisors of discrepancy $1$ over $\hat{\msq} \in \hat{X}$ is at most $6$ $($resp.\ $5$$)$ if $\hat{\msq} \in \hat{X}$ is of type $cE_7$ $($resp.\ $cE_8$$)$.
\end{Lem}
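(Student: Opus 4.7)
The plan is to follow the general strategy laid out in Step~1 of Section~\ref{sec:genclsfcont}. Since $\hat{\mcE}$ already provides one divisor of discrepancy $1$ over $\hat{\msq}$, any other such divisor $G$ satisfies
\[
1 = a_G(K_{\hat{X}}) = a_G(K_{\hat{\mcY}}) + \ord_G(\hat{\mcE}),
\]
so $a_G(K_{\hat{\mcY}}) < 1$ and the center of $G$ on $\hat{\mcY}$ must lie at a non-Gorenstein singular point. The proof reduces to enumerating these points on $\hat{\mcY}$ along $\hat{\mcE}$, counting at each one the divisors of discrepancy $<1$, and checking which of those lift to discrepancy exactly $1$ over $\hat{\msq}$.

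First, I would identify the non-Gorenstein singular points of $\hat{\mcY}$ along $\hat{\mcE}$ from the toric structure. The candidates are the four torus-invariant points on the exceptional divisor $(x=0)\subset\hat{\mbT}$, namely $\msp_{u,z}$, $\msp_{t,z}$, $\msp_{w,z}$, $\msp_{y,z}$, whose ambient cyclic quotient indices are $3$, $5$, $7$, $1$ respectively, read off from the bi-grading. Evaluating the leading equation
\[
\phi_{\msw = 14} = w^2 + t^2 f_{\msw_3 = 4} + t g_{\msw_3 = 9} + h_{\msw_3 = 14}
\]
of $\hat{\mcE}\subset \mbP(3,5,7,1)$ at each candidate, I expect to conclude that $\msp_{w,z}$ lies off $\hat{\mcY}$ (since $w^2$ appears with coefficient $1$) and $\msp_{y,z}$ has smooth ambient hence contributes no divisors of discrepancy $<1$, leaving $\msp_{u,z}$ and $\msp_{t,z}$.

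Next, at each of these two points, I would compute the local equation of $\hat{\mcY}$ in the corresponding chart of $\hat{\mbT}$ by substituting the chart coordinates and dividing by the appropriate power of the exceptional coordinate. At $\msp_{t,z}$, the substitution produces a linear-in-$x$ term from $t^3$, which allows elimination of $x$ via the implicit function theorem and exhibits $\hat{\mcY}$ locally as a terminal $\frac{1}{5}(1,2,3)$ quotient on $(u,w,y)$. At $\msp_{u,z}$ the analysis splits: in the $cE_7$ case the coefficient $c^g_{3,0}$ of $u^3$ in $g$ is nonzero, producing a linear-in-$t$ term $c^g_{3,0}\cdot t$ that allows elimination of $t$ and yields a $\frac{1}{3}(1,1,2)$ quotient on $(w,y,x)$; in the $cE_8$ case $c^g_{3,0}=0$, but a linear-in-$x$ term arising from the $u^5$ monomial of $h$ survives, allowing elimination of $x$ and yielding a $\frac{1}{3}(1,1,2)$ quotient on $(t,w,y)$ instead.

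Finally, by Lemma~\ref{lem:quotdiv} each $\frac{1}{r}(1,a,r-a)$ point admits exactly $r-1$ Kawamata divisors $G_k$ of discrepancy $k/r$, but only those with $\ord_{G_k}(\hat{\mcE}) = (r-k)/r$ lift to divisors of discrepancy $1$ over $\hat{\msq}$. I would compute $\ord_{G_k}(\hat{\mcE})$ as the weight of the local defining equation of $\hat{\mcE}$ on $\hat{\mcY}$ under the blow-up weights $\tfrac{1}{r}(k,[ka]_r,[k(r-a)]_r)$: at $\msp_{t,z}$ the leading terms include $w^2$ and $c^f_{1,1}\,uy$, and a direct computation shows that $G_1,G_2,G_3$ satisfy the required equality while $G_4$ does not, contributing $3$ divisors; at $\msp_{u,z}$ the equation of $\hat{\mcE}|_{\hat{\mcY}}$ is simply $x=0$ in the $cE_7$ case (so both $G_1$ and $G_2$ contribute, giving $2$ divisors), whereas in the $cE_8$ case the equation has leading $w^2+cy^2$ and only $G_1$ satisfies the weight matching (giving $1$ divisor). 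Summing: $1+2+3=6$ for $cE_7$ and $1+1+3=5$ for $cE_8$. The main obstacle is in carrying out the last step precisely—tracking the leading weights of the equations of $\hat{\mcE}|_{\hat{\mcY}}$ under the various Kawamata weights and using the monomial constraints of Lemma~\ref{lem:loceqcENo101} to determine which leading monomials survive in the $cE_7$ versus $cE_8$ case.
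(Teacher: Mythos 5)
Your proposal is correct and follows essentially the same route as the paper: bound the candidates by the non-Gorenstein points of $\hat{\mcY}$ along $\hat{\mcE}$ (one $\tfrac{1}{5}(1,2,3)$ point and one $\tfrac{1}{3}(1,1,2)$ point, giving $1+4+2=7$ candidates via Lemma~\ref{lem:quotdiv}), then show that the discrepancy-$\tfrac{4}{5}$ divisor over the $\tfrac{1}{5}$ point always has discrepancy $2$ over $\hat{\msq}$, and that in the $cE_8$ case the discrepancy-$\tfrac{2}{3}$ divisor over the $\tfrac{1}{3}$ point does as well. Two minor remarks: the variable you eliminate in the index-$3$ chart is $u$ (not $x$, which was already set to $1$), and your claim that $G_1,G_2,G_3$ over the $\tfrac{1}{5}$ point all achieve discrepancy exactly $1$ is more than the ``at most'' statement needs and can fail for $G_2$ when both $uy\notin f$ and $u^3\notin g$ — but neither point affects the validity of the upper bound.
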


\begin{proof}
We see that the non-Gorenstein singular points of $\hat{\mcY}$ along $\hat{\mcE}$ consist of $1$ point of type $\frac{1}{5} (1, 2, 3)$ and $1$ point of type $\frac{1}{3} (1, 1, 2)$.
This implies that the number of divisors of discrepancy $1$ over $\hat{\msq} \in \hat{X}$ are at most $7$ including $\hat{\mcE}$.

Let $\msp \in \hat{\mcY}$ be the $\frac{1}{5} (1, 2, 3)$ point and let $G$ be the exceptional divisor of discrepancy $4/5$ over $\msp \in \hat{\mcY}$.
We will show that $a_G (K_{\hat{X}}) = 2$.
Locally around $\hat{\msq} \in \hat{X}$, the divisorial contraction $\hat{\varphi} \colon \hat{\mcY} \to \hat{X}$ is the weighted blow-up of the germ $(\phi = 0) \subset \mbC^4$ with $\wt (w, t, u, y) = (7, 5, 3, 1)$, where $\phi$ is as in \eqref{eq:cEeqNo101}.
The $t$-chart $\hat{\mcY}_t$ is the affine hyperquotient
\[
(\phi (w t^5, t^5, u t^3, y t)/t^{14} = 0)/\mbZ_5 (7_w, 4_t, 3_u, 1_y),
\]
where we can write
\[
\phi (w t^5, t^5, u t^3, y t)/t^{14} = t (1 + \xi) + w^2 + h_{\msw_2 = 14}
\]
for some polynomial $\xi = \xi (w, t, u, y) \in (w, t, u, y)$.
Note that the point $\msp$ correspond to the origin, and the divisor $\hat{\mcE}$ is cut out by $t$ on $\hat{\mcY}_t$.
In a neighborhood of $\msp \in \hat{\mcY}$ we can eliminate the valuable $t$ and we can take $w, t, u$ as a local orbifold coordinates of $\hat{\mcY}$.
Moreover the divisor $\hat{\mcE}$ is defined by $w^2 + h_{\msw_3 = 14}$.
Then $G$ is the exceptional divisor of the weighted blow-up of $\hat{\mcY}$ at $\msp$ with weight $\wt (w, u, y) = \frac{1}{5} (3, 2, 4)$.
We have $a_G (K_{\hat{\mcY}}) = \frac{4}{5}$ and $\ord_G (\hat{\mcE}) = \frac{6}{5}$ so that 
\[
a_G (K_{\hat{X}}) = \frac{4}{5} + 1 \cdot \frac{6}{5} = 2.
\]

Suppose that $\hat{\msq} \in \hat{X}$ is of type $cE_8$.
In this case $\mu = 0$ and $\lambda \ne 0$.
We show that the exceptional divisor $G'$ of discrepancy $2/3$ over the $\frac{1}{3} (1, 1, 2)$ point $\msp' \in \hat{\mcY}$ is not of discrepancy $1$ over $\hat{\msq} \in \hat{X}$.
The $u$-chart $\hat{\mcY}_u$ is the affine hyperquotient
\[
(\phi (w u^7, t u^5, u^3, y u)/u^{14} = 0)/\mbZ_3 (1_w, 2_t, 2_u, 1_y),
\]
where we can write
\[
\begin{split}
\phi (w u^7, t u^5, u^3, y u)/ & u^{14} \\
= u (\nu + \xi) +  w^2 & + t^2 (\lambda y + f_{\msw_3 = 4} (1, y)) + t g_{\msw_3 = 9} (1, y) + h_{\msw_3 = 14} (1, y)
\end{split}
\]
for some polynomial $\xi = \xi (w, t, u, y) \in (w, t, u, y)$.
The point $\msp' \in \hat{\mcY}$ is the origin of $\hat{\mcY}_u$, and we can choose $w, t, y$ as local orbifold coordinates of $\hat{\mcY}$ at $\msp'$ by eliminating $u$.
The divisor $G'$ is the exceptional divisor of the weighted blow-up of $\msp' \in \hat{\mcY}$ with $\wt (w, t, y) = \frac{1}{3} (2, 1, 2)$.
We have $a_{G'} (K_{\hat{\mcY}}) = 2/3$ and $\ord_{G'} (\hat{\mcE}) = 4/3$ so that
\[
a_{G'} (K_{\hat{X}}) = \frac{2}{3} + 1 \cdot \frac{4}{3} = 2.
\]
This completes the proof.
\end{proof}

\begin{Lem} \label{lem:No101divcont}
The morphism $\hat{\varphi} \colon \hat{\mcY} \to \hat{X}$ is the unique divisorial contraction centered at the $cE$ point $\hat{\msq} \in \hat{X}$.
\end{Lem}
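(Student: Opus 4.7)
The plan is to follow the template of Lemma~\ref{lem:No100divcont}, exploiting the explicit standard form of $\hat{\msq} \in \hat{X}$ supplied by Lemma~\ref{lem:loceqcENo101} together with the numerical upper bound of Lemma~\ref{prop:No101divcont}. Three tasks must be carried out: enumerate the divisors of discrepancy $1$ over $\hat{\msq}$ other than $\hat{\mcE}$; show each such divisor arises as the exceptional divisor of a weighted blow-up whose total space has a non-terminal singularity (and hence is not realized by any divisorial contraction, by Lemma~\ref{lem:divcontwbl}); and rule out divisorial contractions of discrepancy greater than $1$.

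For the first two tasks I would apply Lemma~\ref{lem:divdisc1cEv1}. The basic weight inequalities in parts (1), (3), and (4) are immediate from conditions (5) and (9) of Lemma~\ref{lem:loceqcENo101}, so in both the $cE_7$ and $cE_8$ cases these parts produce divisors $E_{(3,2,2,1)}$, $E_{(5,4,2,1)}$, and $E_{(6,4,3,1)}$, which are pairwise distinct and distinct from $\hat{\mcE}$ since their blow-up weights all differ. The non-squareness hypothesis of part (3) will be verified by writing $t^2 f_{\msw_2 = 2} + t g_{\msw_2 = 6} + h_{\msw_2 = 10}$ explicitly via conditions (1), (7), and (8) as $u^3 (\mu t + \nu u^2 + \gamma u y^2)$ for some $\gamma \in \mbC$; this fails to be a square because one of $\mu, \nu$ is nonzero. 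In the $cE_7$ case condition (1) also gives $\msw_1 (g) = 3$ and $\msw_1 (h) \ge 6$, so part (2) applies and yields a further divisor $E_{(3,3,1,1)}$. The extra hypotheses in the applicable parts that force the total space to be non-terminal will be verified with the choice $\ell = u$: they reduce to divisibilities of the weighted parts of $f$, $g$, and $h$ by powers of $u$, all of which are immediate from conditions (3), (4), (6), (7), and (8) of Lemma~\ref{lem:loceqcENo101}. Together with $\hat{\mcE}$ this produces $5$ divisors of discrepancy $1$ in the $cE_7$ case and $4$ in the $cE_8$ case, comfortably within the bounds $6$ and $5$ of Lemma~\ref{prop:No101divcont}.

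For the third task, by the Kawakita--Yamamoto classification only contractions of type $e5$ or $e9$ with discrepancy $2$ can occur over a $cE_7$ point, and only type $e9$ with discrepancy $2$ over a $cE_8$ point. Type $e5$ will be ruled out by Proposition~\ref{prop:ncd2cE7e5}(1), since $g_3 = \mu u^3$ and $h_5 = \nu u^5$ share $u$ as a common factor. Type $e9$ will be ruled out in both cases by Propositions~\ref{prop:ncd2cE7e9}(2) and \ref{prop:ncd2cE8e9}(2), which bound the number of divisors of discrepancy $1$ by $2$ whenever such a contraction is present; this contradicts the counts established above. The hard part will be the case-by-case bookkeeping required to verify the many hypotheses of Lemma~\ref{lem:divdisc1cEv1} in both the $cE_7$ and $cE_8$ cases: no individual check is deep, but careful tracking through the ten conditions of Lemma~\ref{lem:loceqcENo101} is needed.
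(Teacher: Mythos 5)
Your overall architecture is the same as the paper's (enumerate discrepancy-$1$ divisors, kill each non-$\hat{\mcE}$ one via a non-terminal weighted blow-up and Lemma~\ref{lem:divcontwbl}, then rule out discrepancy $>1$ via Propositions~\ref{prop:ncd2cE7e5}, \ref{prop:ncd2cE7e9}, \ref{prop:ncd2cE8e9}), and your individual verifications of the hypotheses of Lemma~\ref{lem:divdisc1cEv1} against conditions (1)--(10) of Lemma~\ref{lem:loceqcENo101} are correct, including the non-squareness check in part (3) and the choice $\ell = u$. The third task also goes through as you describe, since for it you only need \emph{at least} three divisors of discrepancy $1$.

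However, there is a genuine gap in the discrepancy-$1$ step, and it sits precisely in the phrase ``comfortably within the bounds.'' The logic of Section~\ref{sec:genclsfcont} (Step 3) requires the number of divisors you \emph{construct} to equal the upper bound $n_1$: only then do you know that your list is exhaustive, so that any discrepancy-$1$ divisorial contraction must realize one of the listed divisors. Your list gives $5$ divisors in the $cE_7$ case and $4$ in the $cE_8$ case against bounds of $6$ and $5$; being one short in each case leaves open the possibility of an additional, unidentified divisor of discrepancy $1$ that \emph{is} realized by a divisorial contraction, and nothing in your argument excludes this. The missing divisor is the second one with weight $(5,4,2,1)$, namely $\tilde{E}_{(5,4,2,1)}$ of Lemma~\ref{lem:divdisc1cEv4}, obtained after the re-embedding $t \mapsto t + f_{\msw_2 = 3}$; it defines a valuation distinct from $E_{(5,4,2,1)}$ because the re-embedding changes the $\msw$-weight of the coordinate $t$, and Lemma~\ref{lem:divdisc1cEv4} also shows its blow-up has a non-terminal total space. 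Adding this divisor brings the count to exactly $6$ (resp.\ $5$) and closes the argument; without it the uniqueness claim for discrepancy-$1$ contractions does not follow.
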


\begin{proof}
We define various weighted blow-ups of $\hat{X}$ centered at $\hat{\msq}$ as follows.
\begin{itemize}
\item $\psi_1 \colon W_1 \to \hat{X}$ is the weighted blow-up with $\wt (w, t, u, y) = (3, 2, 2, 1)$ which is described in Lemma \ref{lem:divdisc1cEv1}(1).
\item $\psi_2 \colon W_2 \to \hat{X}$ is the weighted blow-up with $\wt (w, t, u, y) = (3, 3, 1, 1)$ which is described in Lemma \ref{lem:divdisc1cEv1}(2).
\item $\psi_3 \colon W_3 \to \hat{X}$ is the weighted blow-up with $\wt (w, t, u, y) = (5, 4, 2, 1)$ which is described in Lemma \ref{lem:divdisc1cEv1}(3).
\item $\psi'_3 \colon W'_3 \to \hat{X}$ is the weighted blow-up with $\wt (w, t, u, y) = (5, 4, 2, 1)$ composed with a suitable isomorphism which is described in Lemma \ref{lem:divdisc1cEv4}.
\item $\psi_4 \colon W_4 \to \hat{X}$ is the weighted blow-up with $\wt (w, t, u, y) = (5, 4, 2, 1)$ which is described in Lemma \ref{lem:divdisc1cEv1}(4).
%\item $\psi_5 \colon W_5 \to X'$ is the weighted blow-up with $\wt (w, t, u, y) = (7, 5, 3, 1)$ which is described in Lemma \ref{lem:divdisc1cEv1}(5).
\end{itemize}
Let $E_1, \dots, E_4$ and $E'_3$ be the exceptional divisor of $\psi_1, \dots, \psi_4$ and $\psi'_3$, respectively.
By Lemmas \ref{lem:loceqcENo101}, \ref{lem:divdisc1cEv1}, and \ref{lem:divdisc1cEv4} each of $E_1, E_3, E'_3, E_4$ is a prime exceptional divisor of discrepancy $1$ over $\hat{\msq} \in \hat{X}$ and none of them is realized by the exceptional divisor of a divisorial contraction.
In case $\hat{\msq} \in \hat{X}$ is of type $cE_7$, then we have $\tauonewt (h) \ge 6$ by Lemma \ref{lem:loceqcENo101}, and hence $E_2$ is also a prime exceptional divisor of discrepancy $1$ over $\hat{\msq} \in \hat{X}$ and it is not realized by the exceptional divisor of a divisorial contraction.
By Lemma \ref{prop:No101divcont}, $\varphi'$ is the unique divisorial contraction of discrepancy $1$ over $\hat{\msq} \in \hat{X}$.
It follows from the existence of at least $5$ divisors of discrepancy $1$ over $\hat{\msq} \in \hat{X}$ and Propositions \ref{prop:ncd2cE7e5}, \ref{prop:ncd2cE7e9} that $\hat{\msq} \in \hat{X}$ admits no divisor of discrepancy greater than $1$.
This completes the proof.
\end{proof}

\subsubsection{Maximal extractions of $\hat{X}$}

\begin{Prop} \label{prop:No101main2}
The divisorial contraction $\hat{\varphi} \colon \hat{\mcY} \to \hat{X}$ is the unique maximal extraction of $\hat{X}$, and $\EL (\hat{X}) = \{\hat{\sigma}^{-1}\}$.
\end{Prop}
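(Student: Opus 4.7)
The plan is to mirror the structure of Proposition~\ref{prop:No100main1}: enumerate every candidate center for a divisorial contraction initiating an elementary link from $\hat{X}$ and rule out all of them except $\hat{\msq}$, then invoke the uniqueness of the divisorial contraction at that point. The candidates split into four classes: smooth points, irreducible curves, the index-$2$ terminal quotient singular point, and the $cE_{7,8}$ point $\hat{\msq}$ itself. Once only $\hat{\msq}$ survives, Lemma~\ref{lem:No101divcont} identifies $\hat{\varphi}$ as the unique divisorial contraction centered there, and the link it initiates is $\hat{\sigma}^{-1}$ by construction, yielding $\EL (\hat{X}) = \{\hat{\sigma}^{-1}\}$ and thereby the proposition.

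For the three non-$cE$ categories of candidates I would reach directly for the exclusion lemmas of Section~\ref{sec:exclmethod}. Smooth points can be excluded by Lemma~\ref{lem:exclsmpts} applied with $b = 6$: the hypothesis $w^2 \in \hat{F}$ is inherited from the quasi-smoothness of $X$ at $\msp_w$, and the numerical condition $\iota_{\hat{X}}^2 = 1 \le 2 = 2 a_0 a_1$ is immediate. Curves can be excluded by Lemma~\ref{lem:exclcurves}: the inequality $(-K_{\hat{X}})^3 = 1/2 \le 1 = \iota_{\hat{X}}$ is immediate, while quasi-smoothness of $\hat{X}$ along $\hat{X} \cap \Sing \mbP (1,1,1,4,6)$ reduces to checking that $\msp_t, \msp_w \notin \hat{X}$ (guaranteed by $t^3, w^2 \in \hat{F}$) and that the single intersection of $\hat{X}$ with the stratum $(u = y = z = 0)$ lies in the quasi-smooth locus. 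The index-$2$ terminal quotient singular point is then excluded by Lemma~\ref{lem:exclsingpt2}, whose hypotheses are precisely $w^2, t^3 \in \hat{F}$; both are available, since the monomial $t^3 x \in F$ (already exploited in the proof of Lemma~\ref{lem:loceqcENo101}) transforms to $t^3 \in \hat{F}$ under the toric substitution defining $\hat{X}$.

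The main technical obstacle has already been absorbed into Lemma~\ref{lem:No101divcont}, which combines the bound of Lemma~\ref{prop:No101divcont} on the number of divisors of discrepancy $1$ over $\hat{\msq}$ with the explicit weighted blow-up constructions of Section~\ref{sec:divdisc1} and the $cE_7/cE_8$ structure theorems Propositions~\ref{prop:ncd2cE7e5} and~\ref{prop:ncd2cE7e9}, to conclude that $\hat{\varphi}$ is the unique divisorial contraction to $\hat{\msq}$. Once this uniqueness statement and the three exclusions above are in hand, assembling them to finish the proposition is a one-line argument, so I expect the write-up to be very short.
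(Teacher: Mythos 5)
Your proposal is correct and follows essentially the same route as the paper, whose proof is exactly the citation of Lemmas~\ref{lem:exclsmpts}, \ref{lem:exclcurves}, \ref{lem:exclsingpt2}, and \ref{lem:No101divcont}. Your verifications of the hypotheses of each lemma (in particular $w^2, t^3 \in \hat{F}$ and the numerical inequalities) are accurate and consistent with what the paper leaves implicit.
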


\begin{proof}
This follows from Lemmas \ref{lem:exclsmpts}, \ref{lem:exclcurves}, \ref{lem:exclsingpt2}, and \ref{lem:No101divcont}.
\end{proof}

Theorem~\ref{thm:main2}, and hence Theorem~\ref{thm:main}, for Family \textnumero~101 follow from Propositions~\ref{prop:No101main1} and \ref{prop:No101main2}.

%%%%%%%%%%%%%%%%%%%%%%%%%%%%%%%%%%
\subsection{Family \textnumero 102}
%%%%%%%%%%%%%%%%%%%%%%%%%%%%%%%%%%

Let $X = X_{26} \subset \mbP (1_x,2_y,5_z,7_t,13_w)$ be a quasi-smooth member of Family \textnumero $102$, which is defined by a quasi-homogeneous polynomial $F = F (x, y, z, t, w)$ of degree $26$, and whose basic information is given as follows:
\begin{itemize}
\item $\iota_X = 2$.
\item $(A^3) = 1/35$.
\item $\Sing (X) = \{\frac{1}{5} (1,1,4), \frac{1}{7} (1,3,4)\}$, and $\msq = \msp_t$ is the $\frac{1}{7} (1, 3, 4)$ point.
\end{itemize}
The elementary link $\hat{\sigma} \colon X \ratmap \hat{X}$ is embedded into the following toric diagram.
\[
\xymatrix{
\text{$\mbT := \mbT \begin{pNiceArray}{cc|cccc}[first-row]
u & t & w & y & z & x \\
0 & 7 & 13 & 2 & 5 & 1 \\
-7 & 0 & 3 & 1 & 6 & 4
\end{pNiceArray}$} \ar@{-->}[r]^{\hat{\Theta}} \ar[d]_{\Phi} &
\text{$\mbT \begin{pNiceArray}{cccc|cc}[first-row]
u & t & w & y & z & x \\
5 & 6 & 9 & 1 & 0 & -2 \\
1 & 4 & 7 & 1 & 2 & 0
\end{pNiceArray} =: \hat{\mbT}$} \ar[d]^{\hat{\Phi}} \\
\mbP (7_t, {13}_w, 2_y, 5_z, 1_x) & \mbP (1_u, 4_t, 7_w, 1_y, 2_z)}
\]
The birational morphisms $\Phi, \hat{\Phi}$, and the birational map $\hat{\Theta}$ are defined as follows:
\[
\begin{split}
\Phi & \colon (u\!:\!t \, | \, w\!:\!y\!:\!z\!:\!x) \mapsto (t\!:\!w u^{3/7}\!:\!y u^{1/7}\!:\!z u^{6/7}\!:\!x u^{4/7}), \\
\hat{\Phi} & \colon (u\!:\!t\!:\!w\!:\!y \, | \,  z\!:\!x) \mapsto (u x^{5/2}\!:\!t x^3\!:\!w x^{9/2}\!:\!y x^{1/2}\!:\!z), \\
\hat{\Theta} & \colon (u\!:\!t \, | \, w\!:\!y\!:\!z\!:\!x) \mapsto (u\!:\!t\!:\!w\!:\!y \, | \,  z\!:\!x).
\end{split}
\]
The varieties $\mcY$ and $\hat{\mcY}$ are the hypersurfaces in $\mbT$ and $\hat{\mbT}$, respectively, which are defined by the same equation
\[
G (u, x, y, z, t, w) := u^{- 6/7} F (u^{4/7} x, u^{1/7} y, u^{6/7} z, t, u^{3/7} w) = 0.
\]
The variety $\hat{X}= \hat{X}_{14} \subset \mbP (1_u, 1_y, 2_z, 4_t, 7_w)$ is the hypersurface defined by
\[
\hat{F} (u, y, z, t, w) := G (u, 1, y, z, t, w) = 0,
\]
where $\hat{F}$ is a quasi-homogeneous polynomial of degree $14$, and we have the following basic information of $\hat{X}$:
\begin{itemize}
\item $\iota_{\hat{X}} = 1$.
\item $(\hat{A}^3) = 1/4$.
\item $\Sing (\hat{X}) = \{\frac{1}{4} (1, 1, 3), cE/2\}$, and $\hat{\msq} = \hat{\msp}_z \in \hat{X}$ is the $cE/2$ point (see Lemma \ref{lem:loceqcENo102}). 
\end{itemize}

\subsubsection{Maximal extractions of $X$}

\begin{Lem} \label{lem:No102sing1}
The $\frac{1}{5} (1,1,4)$ point of $X$ is not a maximal center.
\end{Lem}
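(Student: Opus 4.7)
The plan is to follow the strategy of Lemmas~\ref{lem:No100sing1} and \ref{lem:No101sing1}: on the Kawamata blow-up $\psi\colon Y\to X$ at $\msp:=\msp_z$, with exceptional divisor $E$, I will exhibit an irreducible curve $\tilde\Gamma$ on $Y$ satisfying $(E\cdot\tilde\Gamma)>0$ and $(-K_Y\cdot\tilde\Gamma)<0$, which precludes $\psi$ from being a maximal extraction. First, by the quasi-smoothness of $X$ at $\msp_z$, $\msp_t$, and $\msp_w$, one may choose coordinates so that $z^5 x$, $z t^3$, and $w^2$ each appear in $F$ with coefficient $1$, writing $F=z^5 x+z^4 f_6+z^3 f_{11}+z^2 f_{16}+z f_{21}+f_{26}$ with $f_i\in\mbC[x,y,t,w]$ of degree $i$. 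In the chart $(z\ne 0)$, solving $F(x,y,1,t,w)=0$ for $x$ identifies $y,t,w$ as local orbifold coordinates at $\msp$ with $\mbZ_5$-action of weights $(2,2,3)$ (equivalent to the normalized $\tfrac{1}{5}(1,1,4)$-action), and $\psi$ becomes the weighted blow-up with $\wt(y,t,w)=\tfrac{1}{5}(2,2,3)$; the exceptional divisor $E\cong\mbP(2,2,3)$ satisfies $a_E(K_X)=1/5$ and $(E^3)=25/4$.

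Next I would inspect the degree-weight constraints to see that the lowest-weight monomials of $\tilde F(y,t,w):=F(0,y,1,t,w)$ are $w^2$, $t^3$, and possibly $y^3$, all of weight $6/5$, so $\ord_E(x)=6/5$. Take $S:=(x=0)_X$ and $T:=(y=0)_X$; then $\tilde S\sim_{\mbQ}\psi^*A-\tfrac{6}{5}E$ and $\tilde T\sim_{\mbQ}2\psi^*A-\tfrac{2}{5}E$. The intersection $\Gamma:=S\cap T=(x=y=0)_X$ is the zero locus of $F(0,0,z,t,w)=zt^3+w^2$ in $\mbP(5,7,13)$, an irreducible reduced curve. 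A short check that $\tilde S\cap\tilde T\cap E$ reduces to the finite set $\{y=t^3+w^2=0\}\subset\mbP(2,2,3)$ shows that $\tilde S\cdot\tilde T=\tilde\Gamma$ as $1$-cycles on $Y$.

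A direct intersection-number calculation using $(A^3)=1/35$ and $(E^3)=25/4$ then yields $(\tilde T\cdot\tilde\Gamma)=(\tilde T^2\cdot\tilde S)=4(A^3)-\tfrac{24}{125}(E^3)=-\tfrac{38}{35}<0$ and $(E\cdot\tilde\Gamma)=\tfrac{12}{25}(E^3)=3$; combined with $\tilde T\sim_{\mbQ}-K_Y-\tfrac{1}{5}E$, one obtains $(-K_Y\cdot\tilde\Gamma)=-\tfrac{17}{35}<0$. The existence of such an irreducible curve $\tilde\Gamma$ obstructs maximality of $\psi$ by the final step used in the proof of Lemma~\ref{lem:exclquotsing} (an appeal to \cite[Lemma 2.10]{OkII} applied to $\tilde\Gamma$), so $\msp_z$ is not a maximal center.

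The main obstacle is that, unlike the $\tfrac{1}{3}(1,1,2)$ points treated in Lemmas~\ref{lem:No100sing1} and \ref{lem:No101sing1}, the $\tfrac{1}{5}(1,1,4)$ point $\msp_z$ admits no local algebraic coordinate of $\mbZ_5$-weight $1/5=a_E(K_X)$: every natural variable at $\msp_z$ has $\mbZ_5$-weight at least $2/5$. Consequently the canonical test divisor $T=(y=0)_X$ gives $\tilde T\sim_{\mbQ}-K_Y-\tfrac{1}{5}E$, violating the sign hypothesis $e\ge 0$ of Lemma~\ref{lem:exclsingptG}, so that lemma does not apply directly and the conclusion must be drawn from the numerical properties of the curve $\tilde\Gamma$ itself.
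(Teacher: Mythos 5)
Your overall strategy --- take $S=(x=0)_X$ and $T=(y=0)_X$, observe that $\Gamma:=S\cap T\cong(w^2+t^3z=0)\subset\mbP(5_z,7_t,13_w)$ is irreducible and reduced, and conclude from $(\tilde T\cdot\tilde\Gamma)<0$ --- is exactly the paper's. But there is a concrete error that invalidates all of your numerics: you take the Kawamata blow-up to be the weighted blow-up with $\wt(y,t,w)=\frac{1}{5}(2,2,3)$. The triple $(2,2,3)$ records the $\mbZ_5$-weights of $y,t,w$ with respect to the generator inherited from the grading of $\mbP(1,2,5,7,13)$, but Kawamata's theorem assigns blow-up weights equal to those of the \emph{normalized} action $\frac{1}{5}(1,a,r-a)$: passing to the generator $\zeta^3$, the weights of $(y,t,w)$ become $(1,1,4)$, so the Kawamata blow-up has $\wt(y,t,w)=\frac{1}{5}(1,1,4)$. (Your own data are already inconsistent: weights $\frac{1}{5}(2,2,3)$ would give $a_E(K_X)=\frac{2}{5}$ and $(E^3)=\frac{25}{12}$, not $\frac{1}{5}$ and $\frac{25}{4}$.) With the correct weights one gets $\ord_E(y)=\ord_E(t)=\frac{1}{5}$, $\ord_E(w)=\frac{4}{5}$, and $\ord_E(x)=\frac{3}{5}$, attained by $t^3\in f_{21}$ rather than by $w^2$ (which has weight $\frac{8}{5}$). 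Hence $\tilde S\sim_{\mbQ}\psi^*A-\frac{3}{5}E$ and $\tilde T\sim_{\mbQ}2\psi^*A-\frac{1}{5}E\sim_{\mbQ}-K_Y$, and the correct numbers are
\[
(\tilde T^2\cdot\tilde S)=\frac{4}{35}-\frac{3}{125}\cdot\frac{25}{4}=-\frac{1}{28},\qquad (E\cdot\tilde S\cdot\tilde T)=\frac{3}{4},
\]
not $-\frac{38}{35}$ and $3$.

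The same error drives your closing paragraph: since $\ord_E(y)=\frac{1}{5}=a_E(K_X)$, the divisor $T$ satisfies $\tilde T\sim_{\mbQ}-K_Y+0\cdot E$, so Lemma~\ref{lem:exclsingptG} applies directly with $(a,b,d,e)=(\frac{1}{2},1,-\frac{1}{2},0)$ --- the hypothesis $0\le e<a_E(K_X)\,b$ is not violated, and no detour through the internals of Lemma~\ref{lem:exclquotsing} is needed. Note also that excluding the weighted blow-up with weights $\frac{1}{5}(2,2,3)$ would prove nothing even if your computation were internally consistent, since by Kawamata's theorem that morphism is not a divisorial contraction in the Mori category; the point is excluded only by excluding the genuine Kawamata blow-up. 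Once the weights are corrected, your argument becomes precisely the paper's proof.
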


\begin{proof}
Let $\msp = \msp_z \in X$ be the $\frac{1}{5} (1,1,4)$ point, and let $\psi \colon Y \to X$ be the Kawamata blow-up of $X$ at $\msp$ with exceptional divisor $E$.
By a choice of homogeneous coordinates and by the quasi-smoothness of $X$, we can write
\[
F = z^5 x + z^4 f_6 + z^3 f_{11} + z^2 f_{16} + z f_{21} + f_{26},
\]
where $f_i = f_i (x, y, t, w)$ is a quasi-homogeneous polynomial of degree $i$.
By the quasi-smoothness of $X$, we may assume that $\coeff_{f_{21}} (t^3) = 1$ after rescaling $t$.

The open subset $(z \ne 0) \subset X$ is naturally isomorphic to the affine hyperquotient
\[
(F (x, y, 1, t, w) = 0)/\mbZ_5 (1_x, 2_y, 2_t, 3_w),
\] 
and the point $\msp$ corresponds to the origin.
By eliminating $x$, we can choose $y, t, w$ as local orbifold coordinates of $X$ at $\msp$, and $\psi$ is the weighted blow-up with $\wt (y, t, w) = \frac{1}{5} (1, 1, 4)$.
Filtering off terms divisible by $x$ in the equation $F (x, y, 1, t, w) = 0$, we have
\[
x (-1 + \cdots) = f_6 (0, y, t, w) + f_{11} (0, y, t, w) + \cdots + f_{26} (0, y, t, w).
\]
The lowest weight of monomials in the right-hand side of the above equation with respect to $\wt (y, t, w) = \frac{1}{5} (1, 1, 4)$ is $\frac{3}{5}$, which is attained by the monomial $t^3 \in f_{21} (0, y, t, w)$.
Hence $\ord_E (x) = \frac{3}{5}$.

We set $S = (x = 0)_X$ and $T = (y = 0)_X$, and let $\tilde{S}, \tilde{T}$ be their proper transforms on $Y$.
Let $E$ be the $\psi$-exceptional divisor.
We have
\[
\begin{split}
\tilde{S} &\sim_{\mbQ} \psi^*A - \frac{3}{5} E \sim \frac{1}{2} (-K_Y) - \frac{1}{2} E, \\ 
\tilde{T} &\sim_{\mbQ} 2 \psi^*A - \frac{1}{5} E \sim -K_Y.
\end{split}
\]
We see that 
\[
\Gamma := S \cap T \cong (w^2 + t^3 z = 0) \subset \mbP (5_z, 7_t, 13_w)
\]
so that $\Gamma$ is an irreducible and reduced curve on $X$.
It is straightforward to see that $\tilde{S} \cap \tilde{T} \cap E$ consists of the unique point (which is the singular point of $E \cong \mbP (1,1,4)$).
This implies that $\tilde{S} \cap \tilde{T} = \tilde{\Gamma}$, where $\tilde{\Gamma}$ is the proper transform of $\Gamma$ on $Y$.
Now we compute
\[
\begin{split}
(\tilde{T} \cdot \tilde{\Gamma}) &= (\tilde{T}^2 \cdot \tilde{S}) = (2 \psi^*A - \frac{1}{5} E)^2 \cdot (\psi^* A - \frac{3}{5} E) \\
&= \frac{2^2}{35} - \frac{3}{5^3} \cdot \frac{5^2}{4} < 0.
\end{split}
\]
By Lemma~\ref{lem:exclsingptG}, $\msp$ is not a maximal center.
\end{proof}

By the above Lemma \ref{lem:No102sing1} combined with \cite[Lemma 4.1 and 4.2]{ACP}, we have the following.

\begin{Prop} \label{prop:No102main1}
The Kawamata blow-up $\varphi \colon \mcY \to X$ at the $\frac{1}{7} (1, 3, 4)$ point $\msq$ is the unique maximal extraction of $X$, and $\EL (X) = \{\hat{\sigma}\}$.
\end{Prop}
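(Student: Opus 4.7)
The plan is to mirror the proof strategies already used for Families No. 100 and 101 (Propositions~\ref{prop:No100main1} and \ref{prop:No101main1}): assemble existing exclusion results to rule out every potential maximal center on $X$ other than the single quotient point $\msq = \msp_t$, and then invoke the construction of $\hat{\sigma}$ to identify the Kawamata blow-up at $\msq$ with the unique elementary link from $X$.

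First I would list the candidates for maximal centers: smooth points of $X$, irreducible curves on $X$, and the two terminal cyclic quotient singular points $\msp_z$ (of type $\frac{1}{5}(1,1,4)$) and $\msq = \msp_t$ (of type $\frac{1}{7}(1,3,4)$). Smooth points and curves are excluded by \cite[Lemmas~4.1 and 4.2]{ACP}, which apply uniformly to members of the five relevant families, exactly as they did in the proofs of Propositions~\ref{prop:No100main1} and \ref{prop:No101main1}. The $\frac{1}{5}(1,1,4)$ point $\msp_z$ has just been excluded by Lemma~\ref{lem:No102sing1}. This leaves $\msq$ as the only possible maximal center.

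Next I would invoke Kawamata's uniqueness result: any divisorial contraction centered at a terminal cyclic quotient singularity of a $3$-fold is the Kawamata blow-up at that point \cite{Kawamata}. Consequently, the only divisorial contraction to $X$ that can possibly be a maximal extraction is $\varphi \colon \mcY \to X$. The link $\hat{\sigma} \colon X \ratmap \hat{X}$ described at the beginning of Section~\ref{sec:birig} is initiated by $\varphi$ by construction, so $\varphi$ is a maximal extraction and $\hat{\sigma} \in \EL(X)$.

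The combination of these steps gives that $\varphi$ is \emph{the} unique maximal extraction of $X$. Since an elementary link is determined by the divisorial contraction initiating it, we obtain $\EL(X) = \{\hat{\sigma}\}$. No step is a real obstacle here: the technical core is already contained in Lemma~\ref{lem:No102sing1} (the $cA_n$-style intersection computation excluding $\msp_z$), and the remaining steps are direct citations. The proof therefore reduces to one sentence of the same form as Proposition~\ref{prop:No101main1}.
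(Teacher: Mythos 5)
Your proposal matches the paper's proof: the paper derives Proposition~\ref{prop:No102main1} directly from Lemma~\ref{lem:No102sing1} together with \cite[Lemmas 4.1 and 4.2]{ACP}, exactly the combination you describe, with the uniqueness of the Kawamata blow-up and the existence of the link $\hat{\sigma}$ supplying the remaining identifications. (Only a cosmetic quibble: the exclusion of $\msp_z$ in Lemma~\ref{lem:No102sing1} is via Lemma~\ref{lem:exclsingptG} at a terminal quotient point, not a ``$cA_n$-style'' computation.)
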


\subsubsection{Divisorial contractions centered at $\hat{\msq} \in \hat{X}$}

We denote by $\msw$ the weight on $u, y$ defined by $\msw (u, y) = \frac{1}{2} (3, 1)$.

\begin{Lem} \label{lem:loceqcENo102}
The singularity $\hat{\msq} \in \hat{X}$ is of type $cE/2$, and it is equivalent to $(\phi (w, t, u, y) = 0)/\mbZ_2 (1_w, 0_t, 1_u, 1_y)$, where
\[
\phi (w, t, u, y) = w^2 + t^3 + t \tilde{g} (u, y) + \tilde{h} (u, y),
\]
for some $\tilde{g} (u, y), \tilde{h} (u, y) \in \mbC \{u, y\}$ that are  invariant under the $\mbZ_2$-action and that satisfy the following properties.
\begin{enumerate}
\item The degree $4$ part of $\tilde{h}$ is $u^4$.
\item $\msw (\tilde{g}) \ge 4$ and $\msw (\tilde{h}) = 6$.
\item $u^2 \mid \tilde{g}_{\msw = 4}$ and $u \mid \tilde{g}_{\msw = 5}$.
\item $u^3 \mid \tilde{h}_{\msw = 6}$, $u^2 \mid \tilde{h}_{\msw = 7}$, and $u \mid \tilde{h}_{\msw = 8}$.
\end{enumerate} 
\end{Lem}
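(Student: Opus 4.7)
The proof will follow the template of Lemmas~\ref{lem:loceqcENo100} and~\ref{lem:loceqcENo101}, with an additional cube-completion step needed to reach the canonical $cE/2$ form, which carries no $t^2$-coefficient.

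First, I will normalize $F$. Quasi-smoothness of $X$ at the smooth point $\msp_w$, at the $\frac{1}{7}(1,3,4)$ point $\msp_t$, and at the $\frac{1}{5}(1,1,4)$ point $\msp_z$ forces the monomials $w^2$, $t^3 z$, and $x z^5$ to appear in $F$: enumerating degree-$26$ monomials whose partial derivative at each of these three points is non-zero shows that these are the only candidates in each case. After rescaling $w$, $z$, and $x$ I may set each of these coefficients to $1$, and after completing the square in $w$ I can arrange
\[
F = w^2 + t^3 z + t^2 p(x,y,z) + t q(x,y,z) + r(x,y,z),
\]
where $p, q, r$ are quasi-homogeneous of degrees $12, 19, 26$ respectively.

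Next I will compute $\hat{F}(u,y,z,t,w) = u^{-6/7} F(u^{4/7}, u^{1/7}y, u^{6/7}z, t, u^{3/7}w)$ and restrict to the chart $z = 1$ around $\hat{\msp}_z$. Each monomial $x^i y^j z^k$ of $p$, $q$, or $r$ (with $i + 2j + 5k = d$ for $d \in \{12,19,26\}$) contributes $c \cdot u^{(4i+j+6k-6)/7} y^j$; the constraint reduces the exponent of $u$ to $\alpha - j - 2k$ with $\alpha = 6, 10, 14$ in the three cases. Hence there exist power series $\tilde{p}, \tilde{q}, \tilde{r} \in \mbC\{u,y\}$ such that $\hat{F}(u,y,1,t,w) = w^2 + t^3 + t^2 \tilde{p} + t \tilde{q} + \tilde{r}$. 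Applying $t \mapsto t - \tilde{p}/3$ clears the $t^2$-coefficient and produces $\phi = w^2 + t^3 + t \tilde{g}(u,y) + \tilde{h}(u,y)$ with $\tilde{g} = \tilde{q} - \tilde{p}^2/3$ and $\tilde{h} = \tilde{r} - \tilde{p}\tilde{q}/3 + 2\tilde{p}^3/27$. The $\mbZ_2$-stabilizer at $\hat{\msp}_z$ acts with the stated weights $(1_w, 0_t, 1_u, 1_y)$, obtained by reducing the weights of $\mbP(1_u, 1_y, 2_z, 4_t, 7_w)$ modulo $2$, and each contributing monomial $u^{\alpha-j-2k}y^j$ has total $(u,y)$-degree $\alpha - 2k$, always even, so $\phi$ is $\mbZ_2$-invariant.

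It remains to verify (1)--(4) by monomial enumeration. For (1), solving $i + 2j + 5k = 26$ with total $(u,y)$-degree $14 - 2k = 4$ yields only $(i,j,k) = (1,0,5)$, so the degree-$4$ part of $\tilde{r}$ equals $u^4$; since $\tilde{p}$ has minimal $(u,y)$-degree $2$ and $\tilde{q}$ minimal $(u,y)$-degree $4$, the corrections $\tilde{p}\tilde{q}$ and $\tilde{p}^3$ start in polynomial degree $\ge 6$ and do not disturb the degree-$4$ part of $\tilde{h}$, confirming $\tilde{h}_4 = u^4$. The analogous $\msw$-weight enumerations give $\msw(\tilde{p}) \ge 2$, $\msw(\tilde{q}) \ge 4$, $\tilde{r}_{\msw = 6} = u^4 + \beta u^3 y^3$ (divisible by $u^3$), and $u^2 \mid \tilde{r}_{\msw = 7}$, $u \mid \tilde{r}_{\msw = 8}$. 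The main bookkeeping burden lies in checking that the cube-completion corrections $\tilde{p}^2, \tilde{p}\tilde{q}, \tilde{p}^3$ respect these divisibility properties at each $\msw$-level up to $8$: this follows because every homogeneous component of $\tilde{p}$ or $\tilde{q}$ at $\msw$-weight $\le 5$ turns out, by direct inspection of the allowed $(i,j,k)$, to carry at least one factor of $u$, so that the propagation of the required $u$-divisibility through $\tilde{g}$ and $\tilde{h}$ is automatic. The appearance of $u^4$ as the degree-$4$ part of $\tilde{h}$ shows $\hat{\msq} \in \hat{X}$ is a $cE/2$ singularity, completing the proof.
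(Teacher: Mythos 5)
Your overall strategy coincides with the paper's: normalize $F$ so that $w^2$, $t^3z$, $xz^5$ appear with coefficient $1$ and no other monomial is divisible by $w$ or by $t^2$ except via the displayed terms, push the equation through the substitution defining $\hat{F}$, restrict to the chart $z=1$, complete the cube in $t$, and verify the divisibility conditions by enumerating the admissible exponent triples $(i,j,k)$. The degree and weight bookkeeping you set up (the exponent of $u$ being $\alpha - j - 2k$ with $\alpha = 6, 10, 14$, the $\mbZ_2$-invariance, and the identification $\tilde{r}_{\deg = 4} = u^4$ coming solely from $xz^5$) all checks out and matches what the paper does.

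There is, however, one false claim in your justification of the final bookkeeping: it is not true that every homogeneous component of $\tilde{p}$ of $\msw$-weight $\le 5$ carries a factor of $u$. The monomial $y^6$ (from $t^2 y^6 \in F$, which is a legitimate degree-$26$ monomial) contributes $y^6$ to $\tilde{p}$, of $\msw$-weight $3$ and with no factor of $u$; likewise $\tilde{q}_{\msw=5}$ can contain $uy^7$, divisible by $u$ only once. So the propagation of $u$-divisibility through $\tilde{g}$ and $\tilde{h}$ is not ``automatic'' from your stated premise. The conclusion is nevertheless correct, but for a more delicate reason: the only components that actually need extra factors of $u$ are $\tilde{p}_{\msw=2} = c\,uy$ and $\tilde{q}_{\msw=4} = c'\,u^2y^2$ (each consisting of a single monomial, by the enumeration), and every correction term $(\tilde{p}^2)_{\msw \le 5}$, $(\tilde{p}\tilde{q})_{\msw \le 8}$, $(\tilde{p}^3)_{\msw \le 8}$ contains at least one factor of $\tilde{p}_{\msw=2}$ or $\tilde{q}_{\msw=4}$, which supplies the required powers of $u$ term by term. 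You should replace the blanket divisibility claim with this factor-by-factor check (which is essentially what the paper's conditions $u \mid f_{\msw=2}$, $u^2 \mid g_{\msw=4}$ encode) before the proof can be considered complete.
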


\begin{proof}
By a choice of homogeneous coordinates of $x, y, z, t, w$ of $\mbP (1, 2, 5, 7, 13)$, we can write
\[
F = w^2 + t^3 z + t^2 e_{12} + t e_{19} + e_{26},
\]
where $e_i = e_i (x, y, z)$ is a quasi-homogeneous polynomial of degree $i$.
By the quasi-smoothness of $X$, we have $z^5 x \in e_{26}$.
Hence we may assume that $\coeff_{e_{26}} (z^5 x) = 1$ by rescaling $x$.
Then we have
\[
\begin{split}
\hat{F} &= u^{-6/7} F (u^{4/7}, u^{1/7} y, u^{6/7} z, t, u^{3/7} w) \\
&= w^2 + t^3 z + 3 t^2 \hat{e}_{12} (u, y, z) + t \hat{e}_{19} (u, y, z) + \hat{e}_{26} (u, y, z),
\end{split}
\]
where
\[
\hat{e}_i = \hat{e}_i (u, y, z) := u^{-6/7} e_i (u^{4/7}, u^{1/7} y, u^{6/7} z)
\]
for $i = 6, 19, 26$.
We set 
\[
f (u, y) := \hat{e}_{12} (u, y, 1), \ 
g (u, y) := \hat{e}_{19} (u, y, 1), \ 
h (z, u) := \hat{e}_{26} (u, y, 1),
\]
By setting $z = 1$, $\hat{\msq} \in \hat{X}$ is equivalent to 
\begin{equation} \label{eq:loceqcENo102}
o \in (w^2 + t^3 + 3 t^2 f + t g + h = 0)/\mbZ_2 (1, 0, 1, 1).
\end{equation} 
Note that $\coeff_{h} (u^4) = 1$.
It can be checked that
\begin{enumerate}
\item The degree $4$ part of $h$ is $u^4$.
\item $\msw (f) \ge 2$, $\msw (g) \ge 4$, and $\msw (h) \ge 6$.
\item $u \mid f_{\msw = 2}$.
\item $u^2 \mid g_{\msw = 4}$ and $u \mid g_{\msw = 5}$.
\item $u^3 \mid h_{\msw = 6}$, $u^2 \mid h_{\msw = 7}$, and $u \mid h_{\msw = 8}$.
\end{enumerate}
By the coordinate change $y \mapsto y - f$, the germ \eqref{eq:loceqcENo102} is equivalent to the germ $o \in (\phi (w, t, u, y) = 0)/\mbZ_2 (1_w, 0_t, 1_u, 1_y)$, where
\[
\phi = x^2 + y^3 + y \tilde{g} (z, u) + \tilde{h} (z, u)
\]
with
\[
\tilde{g} = g - 3 f^2, \quad
\tilde{h} = h - f^3 - f g.
\]
It is then straightforward to see that $\hat{\msq} \in \hat{X}$ is of type $cE/2$ and the conditions (1)--(5) in the statement are satisfied.
\end{proof}

\begin{Lem} \label{lem:No102divcont}
The morphism $\hat{\varphi} \colon \hat{\mcY} \to \hat{X}$ is the unique divisorial contraction centered at the $cE/2$ point $\hat{\msq} \in \hat{X}$.
\end{Lem}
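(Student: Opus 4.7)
My plan follows the same template as the proofs of Lemmas \ref{lem:No100divcont} and \ref{lem:No101divcont}, but adapted to the $cE/2$ setting: identify the local weighted blow-up realizing $\hat{\varphi}$ at $\hat{\msq}$, produce an explicit upper bound on the set of divisors of low discrepancy, and then invoke the classification of divisorial contractions to $cE/2$ singularities (which, unlike for $cE$ points of Gorenstein index one, has been fully completed by Hayakawa \cite{Hayakawa99, Hayakawa05a} and Kawakita \cite{Kawakita05}) to exclude every competing extraction.

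First, reading off the toric data from $\hat{\mbT} \to \hat{\mbP}$ at the point $\hat{\msp}_z$, I would identify $\hat{\varphi}$ locally at $\hat{\msq}$ as the weighted blow-up of the standard germ in Lemma \ref{lem:loceqcENo102} with weight $\tfrac{1}{2}(a_w, a_t, 3, 1)$ on $(w, t, u, y)$, for the unique choice making $w^2 + t^3$ initial. This identification shows that $\hat{\mcE}$ is a divisor of discrepancy $1/2$ over $\hat{\msq}$. Next, I would enumerate the non-Gorenstein singularities of $\hat{\mcY}$ along $\hat{\mcE}$ (which from the toric description consist of two terminal cyclic quotient points), and use Lemma \ref{lem:quotdiv} together with the discrepancy comparison $a_G(K_{\hat{X}}) = a_G(K_{\hat{\mcY}}) + \tfrac{1}{2}\,\mathrm{ord}_G(\hat{\mcE})$ to bound the number of divisors of discrepancy $\le 1/2$ over $\hat{\msq}$.

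With this upper bound in hand, the last step is to run through Hayakawa's list of normal forms for germs admitting a divisorial contraction to a $cE/2$ point and check that none is compatible with the additional data recorded in Lemma \ref{lem:loceqcENo102}: namely, that the degree-$4$ part of $\tilde{h}$ is exactly $u^4$ (so $\hat{\msq} \in \hat{X}$ is of type $cE_6/2$, ruling out the $cE_7/2$ and $cE_8/2$ normal forms), and the divisibility conditions on $\tilde{g}_{\msw = k}$ and $\tilde{h}_{\msw = k}$ for $k \le 8$. These divisibility conditions either force the competing germs to lie in a strictly more degenerate locus than their normal form allows, or make the relevant initial polynomial reducible, obstructing the existence of the divisorial contraction by Lemma \ref{lem:divcontwbl}. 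The main obstacle is the bookkeeping with the $\mbZ_2$-action through Hayakawa's extensive case list; the argument will closely parallel, but be more involved than, Proposition \ref{prop:cD2dcuni}, and it reduces in the end to matching weight invariants against those forced by our specific weighted hypersurface model.
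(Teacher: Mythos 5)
Your high-level plan (bound the divisors of small discrepancy, exclude the competitors by non-terminality, then rule out higher discrepancy via the classification) is the template the paper uses for the Gorenstein $cE$ points in Families 100, 101 and 103, but it is not what the paper does here, and as written it has two concrete problems. First, the weight identification is wrong: reading off $\hat{\Phi}$, the contraction $\hat{\varphi}$ is realized by the weighted blow-up of the germ of Lemma \ref{lem:loceqcENo102} with $\wt (w, t, u, y) = \frac{1}{2} (9, 6, 5, 1)$, so $u$ carries weight $\frac{5}{2}$, not $\frac{3}{2}$. With $u$ of weight $\frac{3}{2}$ and $y$ of weight $\frac{1}{2}$, the requirement that $w^2 + t^3$ be initial forces $(a_w, a_t) = (3, 2)$ (the other candidate $(6, 4)$ gives discrepancy $0$), and this defines a valuation different from $\nu_{\hat{\mcE}}$; indeed the only weight of the shape $\frac{1}{2} (\cdot, \cdot, 3, 1)$ that actually plays a role, namely $\frac{1}{2} (7, 4, 3, 1)$, is precisely a \emph{competing} blow-up which is shown to acquire a non-terminal point. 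Second, the enumeration-and-exclusion step has no support in the quotient setting: Lemmas \ref{lem:divdisc1cEv1}--\ref{lem:divdisc1cEv4}, which supply the competing divisors of discrepancy $1$ over Gorenstein $cE$ germs together with the non-terminality of the corresponding blow-ups, have no stated $\mbZ_2$-equivariant analogues in the paper, so ``running through Hayakawa's list'' is where essentially all of the work would still have to be done. (Also, the paper's definition of $cE/2$ already forces the degree $4$ part of $h$ to be nonzero, so there are no ``$cE_7/2$ or $cE_8/2$ normal forms'' to rule out.)

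The paper's actual proof is shorter and takes a different route: using the coefficient and divisibility data of Lemma \ref{lem:loceqcENo102} it verifies that the germ falls into Case (F) of \cite[Section 7.8]{Hayakawa05b}, the only computation required being that the weighted blow-up with $\wt (w, t, u, y) = \frac{1}{2} (7, 4, 3, 1)$ has a point of multiplicity $3$ on its exceptional divisor, hence a non-terminal singularity; it then quotes the proof of \cite[Theorem 7.9]{Hayakawa05b}, which for that case already asserts both the uniqueness of the divisorial contraction of discrepancy $\frac{1}{2}$ and the non-existence of any divisorial contraction of discrepancy greater than $\frac{1}{2}$. To salvage your route you would need to correct the weight, construct equivariant analogues of the lemmas of Section \ref{sec:divdisc1}, and separately dispose of discrepancies $\ge 1$ via \cite{Hayakawa05a} and \cite{Kawakita05}; the single citation to \cite{Hayakawa05b} short-circuits all of this.
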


\begin{proof}
We identify $\hat{\msq} \in \hat{X}$ with the germ 
\[
(\phi (w, t, u, y) = 0)/\mbZ_2 (1_w, 0_t, 1_u, 1_y)
\]
given in Lemma~\ref{lem:loceqcENo102}.
We will show that the singularity $\hat{\msq} \in \hat{X}$ corresponds to Case (F) of \cite[Section 7.8]{Hayakawa05b}.

We write
\[
\tilde{g} (u, y) = \sum_{i, j} a_{i j} u^i y^j, \quad
\tilde{h} (u, y) = \sum_{i, j} b_{i j} u^i y^j,
\]
where $a_{i j}, b_{i j} \in \mbC$.
Then 
\[
a_{04} = a_{13} = a_{06} = b_{06} = b_{15} = b_{08} = 0.
\]
It follows that the system of equations
\[
x^3 + a_{04} x + b_{06} = 0, \ 
3 x^2 + a_{04} = 0, \ 
a_{13} x + b_{15} = 0, \ 
a_{06} x + b_{08} = 0,
\]
has a solution $x = 0$.
For the polynomial
\[
\Phi := x^2 + y g_{\msw = 3} + h_{\msw = 5},
\]
we have $\Phi = x^2$.
Note that $\msw (\tilde{g}) \ge 4$ and $\msw (\tilde{h}) \ge 6$.
Hence it remains to check that the weighted blow-up $\psi \colon W \to \hat{X}$ at $\hat{\msq}$ with $\wt (w, t, u, y) = \frac{1}{2} (7, 4, 3, 1)$ has a non-terminal singularity along its exceptional divisor.
We consider the $y$-chart $W_y$ that is isomorphic to the hypersurface $(\phi_y = 0) \subset \mbA^4_{w, t, u, y}$, where
\[
\phi_y (w, t, u, y) := \phi (w y^{7/2}, t y^{4/2}, u y^{3/2}, y^{1/2})/y^7.
\]
We have
\[
\begin{split}
\phi_y = w^2 y + t^3 + t (\tilde{g}^{[\tau]}_4 (u, 1) &+ y \tilde{g}^{[\tau]}_5 (u, 1)) + \\
+ \tilde{h}^{[\tau]}_7 (u, 1) &+ y \tilde{h}^{[\tau]}_8 (u, 1) + y^2 \tilde{h}^{[\tau]}_9 (u, 1) + \cdots,
\end{split}  
\]
where the omitted term is the sum of monomials divisible by $y^3$.
It follows that $W$ has multiplicity $3$ at the origin, and thus $W$ has a non-terminal singularity along the exceptional divisor.
This shows that $\hat{\msq} \in \hat{X}$ corresponds to Case (F) of \cite[Section 7.8]{Hayakawa05b} (Note also that $\hat{\msq} \in \hat{X}$ corresponds to (10.E.e) in \cite[10.71]{Hayakawa99}).
By the proof of \cite[Theorem 7.9]{Hayakawa05b} (see (6) of the proof), $\hat{\msq} \in \hat{X}$ admits a unique divisorial contraction of discrepancy $1/2$ and it does not admit a divisorial contraction of discrepancy greater than $1/2$.
\end{proof}

\subsubsection{Maximal extractions of $\hat{X}$}

\begin{Lem} \label{lem:No102curves}
No curve on $\hat{X}$ is a maximal center.
\end{Lem}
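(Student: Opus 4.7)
The plan is to reduce the statement to Lemma~\ref{lem:mtdexclC} by verifying the inequality $(-K_{\hat{X}} \cdot \Gamma) \ge (-K_{\hat{X}}^3)$ for every irreducible and reduced curve $\Gamma \subset \hat{X}$. Lemma~\ref{lem:exclcurves} is not directly available because $\hat{X}$ fails to be quasi-smooth at the $cE/2$ point $\hat{\msq}$, so the standard argument using that a curve through a terminal quotient singularity cannot be the center of a divisorial contraction no longer covers all cases. Fortunately, the smallness of the anticanonical degree $(-K_{\hat{X}}^3) = 1/4$ allows the required inequality to be deduced uniformly from a global Cartier-index consideration, avoiding any case distinction.

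The key observation is that the non-Gorenstein singularities of $\hat{X}$ consist only of the $\frac{1}{4}(1,1,3)$ point and the $cE/2$ point $\hat{\msq}$, whose local Gorenstein indices are $4$ and $2$ respectively. Consequently $4 K_{\hat{X}}$, equivalently $4\hat{A}$, is Cartier on all of $\hat{X}$. Since $\hat{A}$ is ample and $4\hat{A}$ is Cartier, the intersection number $(4\hat{A} \cdot \Gamma)$ is a positive integer for every irreducible reduced curve $\Gamma$, whence $(\hat{A} \cdot \Gamma) \ge 1/4$. Recalling $-K_{\hat{X}} \sim \hat{A}$ and
\[
(-K_{\hat{X}}^3) = (\hat{A}^3) = \frac{14}{1 \cdot 1 \cdot 2 \cdot 4 \cdot 7} = \frac{1}{4},
\]
the inequality $(-K_{\hat{X}} \cdot \Gamma) \ge (-K_{\hat{X}}^3)$ follows, and Lemma~\ref{lem:mtdexclC} excludes $\Gamma$ as a maximal center.

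There is essentially no obstacle here, since the argument is uniform in $\Gamma$; in particular, no analysis of which singular points $\Gamma$ meets is required, and the fact that $\hat{\msq}$ is of type $cE/2$ rather than a terminal quotient plays no role beyond contributing the factor $2$ to the global Cartier index of $\hat{A}$. This is in contrast to the situation for the other four families, where the analogous Fano $3$-fold $\hat{X}$ has a $cE$ (Gorenstein) point and the weight bound is less immediate—there, one genuinely needs Lemma~\ref{lem:exclcurves} together with quasi-smoothness of $X$ at quotient singularities.
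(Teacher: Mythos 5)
Your proposal is correct and follows essentially the same route as the paper: both arguments note that $4\hat{A}$ is Cartier (so $(\hat{A}\cdot\Gamma)\ge 1/4=(\hat{A}^3)$ for every irreducible reduced curve $\Gamma$) and then invoke Lemma~\ref{lem:mtdexclC}. Your additional justification of the Cartier index via the local indices of the $\frac{1}{4}(1,1,3)$ and $cE/2$ points is a harmless elaboration of what the paper states without comment.
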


\begin{proof}
We are unable to apply Lemma \ref{lem:exclcurves} since $\hat{X}$ is not quasi-smooth at the point $\hat{\msq} = \hat{\msp}_z \in \hat{X}$ at which the ambient weighted projective space is singular.

Let $\Gamma \subset \hat{X}$ be an irreducible and reduced curve.
Then we have 
\[
(\hat{A} \cdot \Gamma) \ge \frac{1}{4} = (\hat{A})^3,
\]
where the first inequality follows since $4 \hat{A}$ is a Cartier divisor on $\hat{X}$. 
By Lemma~\ref{lem:mtdexclC}, $\Gamma$ is not a maximal center.
\end{proof}

\begin{Prop} \label{prop:No102main2}
The divisorial contraction $\hat{\varphi} \colon \hat{\mcY} \to \hat{X}$ centered at the $cE/2$ point $\hat{\msq}$ is the unique maximal extraction of $\hat{X}$, and $\EL (\hat{X}) = \{\hat{\sigma}^{-1}\}$.
\end{Prop}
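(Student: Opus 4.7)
The plan is direct: since any maximal extraction is a divisorial contraction whose center is a maximal center, I need to rule out the existence of a maximal extraction with center (i) a smooth point of $\hat{X}$, (ii) a curve on $\hat{X}$, or (iii) the terminal quotient singular point $\hat{\msp}_t \in \hat{X}$ of type $\frac{1}{4}(1,1,3)$, and then show that the only divisorial contraction centered at the $cE/2$ point $\hat{\msq}$ is $\hat{\varphi}$ itself. The last item is already established by Lemma \ref{lem:No102divcont}, and curves are excluded by Lemma \ref{lem:No102curves}.

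For (i), I would apply Lemma \ref{lem:exclsmpts} to $\hat{X} = \hat{X}_{14} \subset \mbP(1_u, 1_y, 2_z, 4_t, 7_w)$. Here $2b = 14$ gives $b = 7$ and $(a_0, a_1, a_2, a_3) = (1, 1, 2, 4)$, so the numerical hypothesis $\iota_{\hat{X}}^2 \le 2 a_0 a_1$ becomes $1 \le 2$, which is satisfied. The remaining hypothesis $w^2 \in \hat{F}$ follows because the standard form $F = w^2 + t^3 z + \cdots$ (from the quasi-smoothness analysis carried out in Lemma \ref{lem:loceqcENo102}) transports through the change of variables $\hat{F}(u, y, z, t, w) = u^{-6/7} F(u^{4/7}, u^{1/7} y, u^{6/7} z, t, u^{3/7} w)|_{x = 1}$, so that $w^2 \in \hat{F}$.

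For (iii), I would invoke Lemma \ref{lem:exclqsing1} with the parameter choice $(r, a) = (4, 1)$. Indeed $4r - 2a = 14$ and $(1, a, r - 2a, r, 2r - a) = (1, 1, 2, 4, 7)$, so $\hat{X}_{14} \subset \mbP(1, 1, 2, 4, 7)$ fits the format of that lemma, with $\hat{\msp}_t$ playing the role of the $\frac{1}{r}(1, a, r - a) = \frac{1}{4}(1, 1, 3)$ point. The required hypotheses ($w^2 \in \hat{F}$ and quasi-smoothness of $\hat{X}$ at $\hat{\msp}_t$) have already been verified, so $\hat{\msp}_t$ is not a maximal center.

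Combining all of the above, any maximal extraction of $\hat{X}$ must be centered at $\hat{\msq}$, and by Lemma \ref{lem:No102divcont} the only such divisorial contraction is $\hat{\varphi}$. The induced elementary link is $\hat{\sigma}^{-1}$, yielding $\EL(\hat{X}) = \{\hat{\sigma}^{-1}\}$. I do not anticipate any genuine obstacle in this assembly: the real work has already been absorbed into Lemma \ref{lem:No102divcont} (the $cE/2$ classification via \cite{Hayakawa99, Hayakawa05b}) and the explicit exclusion lemmas of Sections \ref{sec:prelim} and \ref{sec:exclmethod}.
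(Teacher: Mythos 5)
Your proposal is correct and follows exactly the same route as the paper: the paper's proof of this proposition is a one-line citation of Lemmas \ref{lem:exclsmpts}, \ref{lem:No102curves}, \ref{lem:exclqsing1}, and \ref{lem:No102divcont}, which are precisely the four ingredients you assemble. Your explicit verification of the numerical hypotheses ($\iota_{\hat{X}}^2 \le 2a_0a_1$ and the choice $(r,a)=(4,1)$ in Lemma \ref{lem:exclqsing1}) is consistent with what the paper leaves implicit.
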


\begin{proof}
This follows from Lemmas \ref{lem:exclsmpts}, \ref{lem:No102curves}, \ref{lem:exclqsing1}, and \ref{lem:No102divcont}.
\end{proof}

Theorem~\ref{thm:main2}, and hence Theorem~\ref{thm:main}, for Family \textnumero~102 follow from Propositions~\ref{prop:No102main1} and \ref{prop:No102main2}.

%%%%%%%%%%%%%%%%%%%%%%%%%%%%%%%%%%
\subsection{Family \textnumero 103}
%%%%%%%%%%%%%%%%%%%%%%%%%%%%%%%%%%

Let $X = X_{38} \subset \mbP (2_x, 3_y, 5_z, 11_t, 19_w)$ be a quasi-smooth member of Family \textnumero $103$, which is defined by a quasi-homogeneous polynomial $F = F (x, y, z, t, w)$ of degree $38$, and whose basic information is given as follows:
\begin{itemize}
\item $\iota_X = 2$.
\item $(A^3) = 1/165$.
\item $\Sing (X) = \{\frac{1}{3} (1,1,2), \frac{1}{5} (1, 2, 3), \frac{1}{11} (1, 4, 7)\}$, and $\msq =\msp_t$ is the $\frac{1}{11} (1, 4, 7)$ point.
\end{itemize}
The elementary link $\hat{\chi} \colon X \ratmap \hat{X}$ is embedded into the following toric diagram.
\[
\xymatrix{
\text{$\mbT := \mbT \begin{pNiceArray}{cc|cccc}[first-row]
u & t & w & x & z & y \\
0 & 11 & 19 & 2 & 5 & 3 \\
-11 & 0 & 4 & 1 & 8 & 7
\end{pNiceArray}$} \ar@{-->}[r]^{\hat{\Theta}} \ar[d]_{\Phi} &
\text{$\mbT \begin{pNiceArray}{cccc|cc}[first-row]
u & t & w & x & z & y \\
5 & 8 & 12 & 1 & 0 & -1 \\
3 & 7 & 11 & 1 & 1 & 0
\end{pNiceArray} =: \hat{\mbT}$} \ar[d]^{\hat{\Phi}} \\
\mbP ({11}_t, {19}_w, 2_x, 5_z, 3_y) & \mbP (3_u, 7_t, {11}_w, 1_x, 1_z)}
\]
The birational morphisms $\Phi, \hat{\Phi}$, and the birational map $\hat{\Theta}$ are defined as follows:
\[
\begin{split}
\Phi & \colon (u\!:\!t \, | \, w\!:\!x\!:\!z\!:\!y) \mapsto (t\!:\!w u^{4/11}\!:\!x u^{1/11}\!:\!z u^{8/11}\!:\!y u^{7/11}), \\
\hat{\Phi} & \colon (u\!:\!t\!:\!w\!:\!x \, | \,  z\!:\!y) \mapsto (u y^5\!:\!t y^8\!:\!w y^{12}\!:\!x y\!:\!z), \\
\hat{\Theta} & \colon (u\!:\!t \, | \, w\!:\!x\!:\!z\!:\!y) \mapsto (u\!:\!t\!:\!w\!:\!x \, | \,  z\!:\!y).
\end{split}
\]
The varieties $\mcY$ and $\hat{\mcY}$ are the hypersurfaces in $\mbT$ and $\hat{\mbT}$, respectively, which are defined by the same equation
\[
G (u, x, y, z, t, w) := u^{- 8/11} F (u^{1/11} x, u^{7/11} y, u^{8/11} z, t, u^{4/11} w) = 0.
\]
The variety $\hat{X} = \hat{X}_{22} \subset \mbP (1_x, 1_z, 3_u, 7_t, {11}_w)$ is the hypersurface defined by
\[
\hat{F} (x, z, u, t, w) := G (u, x, 1, z, t, w) = 0.
\]
where $\hat{F}$ is a quasi-homogeneous polynomial of degree $22$, and we have the following basic information of $\hat{X}$:
\begin{itemize}
\item $\iota_{\hat{X}} = 1$.
\item $(\hat{A}^3) = 2/21$.
\item $\Sing (\hat{X}) = \{\frac{1}{3} (1, 1, 2), \frac{1}{7} (1, 3, 4), cE_8\}$, and $\hat{\msq} = \hat{\msp}_z \in \hat{X}$ is the $cE_8$ point (see Lemma \ref{lem:loceqcENo103}).
\end{itemize}

\subsubsection{Maximal extractions of $X$}

\begin{Lem} \label{lem:No103sing1}
The $\frac{1}{3} (1,1,2)$ point of $X$ is not a maximal center.
\end{Lem}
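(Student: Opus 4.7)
The plan is to adapt the strategy of Lemmas~\ref{lem:No100sing1} and \ref{lem:No101sing1}. Let $\msp = \msp_y$ be the $\frac{1}{3}(1,1,2)$ point and let $\psi \colon Y \to X$ be the Kawamata blow-up at $\msp$ with exceptional divisor $E$; then $-K_Y \sim_{\mbQ} 2\psi^* A - \tfrac{1}{3}E$ and $(E^3) = \tfrac{9}{2}$. By quasi-smoothness at $\msp_w$, $\msp_t$, and $\msp$, one has $w^2, t^3 z \in F$ (coefficients rescaled to $1$) and at least one of $y^{12}x$, $y^{11}z$, $y^9 t$ lies in $F$. The only coordinate changes available that affect these three coefficients are $z \mapsto z + \lambda xy$ and $t \mapsto t + \mu y^2 z$, which modify respectively only the coefficient of $y^{12}x$ (by a multiple of $\coeff_F(y^{11}z)$) and of $y^{11}z$ (by a multiple of $\coeff_F(y^9 t)$); iterating them allows us to reduce to one of the three normal forms in which exactly one of the three monomials is present.

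In each normal form the surviving monomial is used to eliminate the corresponding variable in the affine chart $(y \ne 0)$, the remaining three variables serve as local orbifold coordinates on which $\psi$ is the weighted blow-up of weights $\tfrac{1}{3}(1, 1, 2)$, and filtering off the eliminated variable in $F = 0$ and reading the minimal weight monomial gives $\ord_E(x) = \tfrac{4}{3}$ in the case $y^{12}x \in F$, $\ord_E(z) = \tfrac{4}{3}$ in the case $y^{11}z \in F$, and $\ord_E(t) = \tfrac{4}{3}$ in the case $y^9 t \in F$, the bound being attained by $w^2$ in each case. I would then apply Lemma~\ref{lem:exclsingptG} with $S := (x = 0)_X \sim_{\mbQ} 2A$ and with $T$ chosen as $(z = 0)_X \sim_{\mbQ} 5A$ in the first two cases and as $(t = 0)_X \sim_{\mbQ} 11A$ in the third. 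The scheme-theoretic intersection $S \cap T$ is cut out, in the first two cases, by $F(0, y, 0, t, w) = \gamma w^2 + \delta y^9 t$ in $\mbP(3_y, 11_t, 19_w)$, and in the third by a degree-$38$ polynomial in $\mbP(3_y, 5_z, 19_w)$ which, by direct inspection of the admissible monomials $w^2$, $y^3 z^2 w$, $y z^7$, and $y^6 z^4$, is generically irreducible; a local check shows $\tilde{S} \cap \tilde{T} \cap E$ is zero-dimensional in each case.

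The required inequality reduces to $(\tilde{T}^2 \cdot \tilde{S}) = a b^2 (A^3) - \alpha \beta^2 (E^3) \le 0$ with $a = 2$: in the first case $(b, \alpha, \beta) = (5, \tfrac{4}{3}, \tfrac{1}{3})$, giving $\tfrac{10}{33} - \tfrac{2}{3} < 0$; in the second $(5, \tfrac{1}{3}, \tfrac{4}{3})$, giving $\tfrac{10}{33} - \tfrac{8}{3} < 0$; in the third $(11, \tfrac{1}{3}, \tfrac{4}{3})$, giving $\tfrac{242}{165} - \tfrac{8}{3} < 0$. The main effort is the coordinate-change reduction to only three normal forms; in particular, the mixed case where both $y^{12}x$ and $y^9 t$ lie in $F$ but $y^{11}z$ does not (in which the naive choice $(S, T) = ((x=0), (z=0))$ yields $\alpha = \beta = \tfrac{1}{3}$ and the inequality fails) must be further reduced to the $y^9 t \in F$ only normal form via the three-step chain $t \mapsto t + \mu y^2 z$, $z \mapsto z + \lambda xy$, $t \mapsto t + \mu' y^2 z$, which is the least obvious of the reductions and is the main technical obstacle of the proof.
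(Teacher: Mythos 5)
Your overall strategy is the same as the paper's: blow up the $\frac{1}{3}(1,1,2)$ point $\msp_y$ by the Kawamata blow-up, split into cases according to which of $y^{12}x$, $y^{11}z$, $y^{9}t$ occurs in $F$, compute that the eliminated coordinate has $\ord_E = \frac{4}{3}$ (attained by $w^2$), and feed a pair of coordinate hypersurfaces into Lemma~\ref{lem:exclsingptG}. Your numbers in the first and third cases agree with the paper's ($\frac{10}{33}-\frac{2}{3}=-\frac{4}{11}$ and $\frac{242}{165}-\frac{8}{3}=-\frac{6}{5}$). Your reduction to ``exactly one monomial present'' is more laborious than necessary --- when $y^9t\in F$ a single substitution $t\mapsto t-c_1y^2z-c_2y^3x$ absorbs both of the other monomials, since $y^2z$ and $y^3x$ both have degree $11$ --- but your three-step chain does work.

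There are two genuine gaps. First, in your second case ($y^{11}z\in F$ only) one has $\ord_E(x)=\frac{1}{3}$ and $\ord_E(z)=\frac{4}{3}$, so with your labeling $\tilde{S}=(x=0)^{\sim}\sim_{\mbQ}-K_Y$ and $\tilde{T}=(z=0)^{\sim}\sim_{\mbQ}\frac{5}{2}(-K_Y)-\frac{1}{2}E$. The coefficient $e=-\frac{1}{2}$ violates the hypothesis $0\le e<a_E(K_X)\,b$ of Lemma~\ref{lem:exclsingptG}, so the lemma does not apply as you have set it up, and the quantity $\frac{10}{33}-\frac{8}{3}$ you computed is $(\tilde{S}\cdot\Gamma)$ rather than the $(\tilde{T}\cdot\Gamma)$ that condition (3) requires. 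The fix is to swap the roles: with $\tilde{S}=(z=0)^{\sim}$, $\tilde{T}=(x=0)^{\sim}$ one gets $(a,d,b,e)=(\frac{5}{2},-\frac{1}{2},1,0)$, all hypotheses hold, and $(\tilde{T}\cdot\Gamma)=(\tilde{T}^2\cdot\tilde{S})=\frac{4}{33}-\frac{2}{3}<0$. (The paper avoids this case altogether by switching to Lemma~\ref{lem:exclquotsing} with $S=(z=0)_X$ and the linear system $|22A|$, where the relevant intersection number is exactly $0$.) Second, in the third case you only claim the curve $(x=t=0)_X\subset\mbP(3_y,5_z,19_w)$ is ``generically irreducible,'' but the exclusion must hold for every quasi-smooth member. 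Irreducibility does in fact always hold: quasi-smoothness at $\msp_z$ forces $yz^7\in F$ with nonzero coefficient $\gamma$, and the discriminant of $w^2+\beta y^3z^2w+(\alpha y^6z^4+\gamma yz^7)$ as a quadratic in $w$ is $yz^4((\beta^2-4\alpha)y^5-4\gamma z^3)$, which is never a square; this needs to be recorded in place of the appeal to genericity.
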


\begin{proof}
Let $\msp = \msp_y \in X$ be the $\frac{1}{3} (1, 1, 2)$ point, and let $\psi \colon Y \to X$ be the Kawamata blow-up of $X$ at $\msp$ wth exceptional divisor $E$.
By a choice of homogeneous coordinates and by the quasi-smoothness of $X$, we can write
\[
F =
\begin{cases}
y^9 t + y^8 f_{14} + \cdots + y f_{35} + f_{38}, & \text{if $y^9 t \in F$}, \\
y^{11} z + y^{10} f_8 + \cdots + y f_{35} + f_{38}, & \text{if $y^9 t \notin F$ and $y^{11} z \in F$}, \\
y^{12} x + y^{11} f_5 + \cdots + y f_{35} + f_{38}, & \text{if $y^9 t, y^{11} z \notin F$},
\end{cases}
\]
where $f_i = f_i (x, z, t, w)$ is a quasi-homogeneous polynomial of degree $i$.
The open set $(y \ne 0) \subset X$ is naturally isomorphic to the affine hyperquotient
\[
(F (x, 1, z, t, w) = 0)/\mbZ_3 (2_x, 2_z, 2_t, 1_w),
\]
and the point $\msp$ corresponds to the origin.
The Kawamata blow-up $\psi$ is the weighted blow-up with the following weight:
\[
\begin{cases}
\wt (x, z, w) = \frac{1}{3} (1, 1, 2), & \text{if $y^9 t \in F$}, \\
\wt (x, t, w) = \frac{1}{3} (1, 1, 2), & \text{if $y^9 t \notin F$ and $y^{11} z \in F$}, \\
\wt (z, t, w) = \frac{1}{3} (1, 1, 2), & \text{if $y^9 t, y^{11} z \notin F$}.
\end{cases}
\]
Moreover, we have
\[
\begin{cases}
\ord_E (t) = \frac{4}{3}, & \text{if $y^9 t \in F$}, \\
\ord_E (z) = \frac{4}{3}, & \text{if $y^9 t \notin $ and $y^{11} z \in F$}, \\
\ord_E (x) = \frac{4}{3}, & \text{if $y^9 t, y^{11} z \notin F$}.
\end{cases}
\]

We first consider the case where either $y^9 t \in F$ or $y^9t, y^{11} z \notin F$.
We set
\[
\begin{cases}
\text{$S := (x = 0)_X$ and $T := (t = 0)_X$}, & \text{if $y^9 t \in F$}, \\
%\text{$S := (z = 0)_X$ and $T := (x = 0)_X$}, & \text{if $y^9 t \notin F$ and $y^{11} z \in F$}, \\
\text{$S := (x = 0)_X$ and $T := (z = 0)_X$}, & \text{if $y^9 t, y^{11} z \notin F$}.
\end{cases}
\]
Then we have
\[
\begin{cases}
\tilde{S} \sim_{\mbQ} \psi^*A - \frac{1}{3} E, \ \tilde{T} \sim_{\mbQ} 11 \psi^* A - \frac{4}{3} E, & \text{if $y^9 t \in F$}, \\
%\tilde{S} \sim 5 \psi^*A - \frac{4}{3} E, \ \tilde{T} \sim \psi^* A - \frac{1}{3} E, & \text{if $y^9 t \notin F$ and $y^{11} z \in F$}, \\
\tilde{S} \sim_{\mbQ} \psi^*A - \frac{4}{3} E, \ \tilde{T} \sim_{\mbQ} 5 \psi^*A - \frac{1}{3} E, & \text{if $y^9 t, y^{11} z \notin F$}.
\end{cases}
\]
We see that
\[
S \cap T \cong
\begin{cases}
(\alpha y^6 z^4 + \beta y^3 w z^2 + y z^7 + w^2 = 0) \subset \mbP (3_y, 5_z, 19_w), & \text{if $y^9 t \in F$}, \\
(w^2 = 0) \subset \mbP (3_y, 11_t, 19_w), & \text{if $y^9 t \notin F$},
\end{cases}
\]
where $\alpha, \beta \in \mbC$.
Let $\Gamma$ be the support of $S \cap T$, which is an irreducible and reduced curve on $X$.
Note that we have $S \cap T = m \Gamma$, where
\[
m = 
\begin{cases}
1, & \text{if $y^9 t \in F$}, \\
2, & \text{if $y^9 t \notin F$}.
\end{cases}
\]
It is straightforward to check that $\tilde{S} \cap \tilde{T} \cap E$ consists of $2$ points (counting with multiplicity).
 It follows that $\tilde{S} \cap \tilde{T} = m \tilde{\Gamma}$, where $\tilde{\Gamma}$ is the proper transform of $\Gamma$.
We compute
\[
m (\tilde{T} \cdot \tilde{\Gamma}) = (\tilde{T}^2 \cdot \tilde{S}) =
\begin{cases}
- \frac{6}{5}, & \text{if $y^9 t \in F$}, \\
%- \frac{6}{11}, & \text{if $y^9 t \notin F$ and $y^{11} t \in F$}, \\
- \frac{4}{11}, & \text{if $y^9 t, y^{11} t \notin F$}.
\end{cases}
\]
By Lemma~\ref{lem:exclsingptG}, $\msp$ is not a maximal center.

We next consider the case where $y^9 t \notin F$ and $y^{11} z \in F$.
We set $\mcL:= |22 A|$, $S := (z = 0)_X $, and let $L \in \mcL$ be a general member.
Then we have
\[
\tilde{S} \sim 5 \psi^*A - \frac{4}{3} E, \ 
\tilde{L} \sim 22 \psi^*A - \frac{2}{3} E.
\]
We compute
\[
\begin{split}
(-K_Y \cdot \tilde{S} \cdot \tilde{L}) &= (2 \psi^*A' - \frac{1}{3} E) \cdot (5 \psi^*A - \frac{4}{3} E) \cdot (22 \psi^*A - \frac{2}{3} E) \\
&= \frac{2 \cdot 5 \cdot 22}{165} - \frac{4 \cdot 2}{3^3} \cdot \frac{3^2}{2} = 0.
\end{split}
\]
By Lemma \ref{lem:exclquotsing}, $\msp$ is not a maximal center.
\end{proof}

\begin{Lem} \label{lem:No103sing2}
The $\frac{1}{5} (1,2,3)$ point of $X$ is not a maximal center.
\end{Lem}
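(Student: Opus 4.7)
My plan is to follow the strategy of Lemma \ref{lem:No103sing1}: at the Kawamata blow-up $\psi \colon Y \to X$ of the $\frac{1}{5}(1,2,3)$ point $\msp = \msp_z$, produce two effective divisors whose intersection is a single irreducible reduced curve along which $-K_Y$ is negative, then invoke Lemma \ref{lem:exclsingptG}. The natural candidates are the strict transforms of $S := (x = 0)_X$ and $T := (y = 0)_X$, whose intersection on $X$ will be $\Gamma \cong (z t^3 + w^2 = 0) \subset \mbP(5_z, 11_t, 19_w)$, an irreducible and reduced curve (using that $t^3 z$ and $w^2$ are forced to lie in $F$ by quasi-smoothness of $X$ at $\msp_t$ and by $\msp_w \notin X$, respectively).

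To carry this out, I first use the additional quasi-smoothness datum $z^7 y \in F$ to eliminate $y$ on the $z$-chart, obtaining $(x, t, w)$ as local orbifold coordinates with $\mbZ_5$-weights $(2, 1, 4)$. Multiplication by the unit $3 \in (\mbZ/5)^*$ brings these into canonical form $(1, 3, 2)$ summing to $r+1 = 6$, so $\psi$ is the weighted blow-up with $\wt(x, t, w) = \tfrac{1}{5}(1, 3, 2)$; this gives $\ord_E(x) = 1/5$, $\ord_E(t) = 3/5$, $\ord_E(w) = 2/5$, and $(E^3) = 25/6$. Solving $F = 0$ implicitly for $y$, the minimum weight of the $y$-free part $F(x, 0, 1, t, w)$ of $F(x, y, 1, t, w)$ is $4/5$, attained by the guaranteed monomial $w^2$, so $\ord_E(y) = 4/5$. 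Hence $\tilde{S} \sim_{\mbQ} 2\psi^*A - \tfrac{1}{5} E = -K_Y$ and $\tilde{T} \sim_{\mbQ} 3\psi^*A - \tfrac{4}{5} E$. On $E \cong \mbP(1_x, 3_t, 2_w)$, the traces $\tilde{S}|_E = (x = 0)$ and $\tilde{T}|_E$ (whose leading part contains the nonzero $w^2$ contribution) meet only at the single point $(x = w = 0)$, so $\tilde{S} \cdot \tilde{T}$ carries no $E$-curve and hence equals $\tilde{\Gamma}$ as a $1$-cycle. The final computation
\[
(\tilde{S}^2 \cdot \tilde{T}) = 12 (A^3) - \tfrac{4}{125}(E^3) = \tfrac{12}{165} - \tfrac{2}{15} = - \tfrac{2}{33} < 0
\]
then enables Lemma \ref{lem:exclsingptG} with $S_{\text{lem}} = \tilde{T}$ and $T_{\text{lem}} = \tilde{S}$, for which the numerical data $a = 3/2$, $b = 1$, $d = -1/2$, $e = 0$ satisfy $a, b > 0$, $0 \le e < a_E(K_X) b = 1/5$, and $a e - b d = 1/2 \ge 0$, finishing the proof.

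The most delicate step is identifying the correct Kawamata blow-up data in the non-canonical coordinates $(x, t, w)$: the action weights $(2, 1, 4)$ do not sum to $r + 1 = 6$, and a naive weighted blow-up with those weights would yield discrepancy $2/5$ rather than the Kawamata discrepancy $1/5$. The required renormalization by the unit $\lambda = 3 \in (\mbZ/5)^*$---which produces the Kawamata weights $(1, 3, 2)/5$---pins down both the orders $\ord_E$ of the coordinate sections and the intersection number $(E^3) = 25/6$. Once these are correctly in hand, the guaranteed presence of $w^2$ in $F$ forces $\ord_E(y) = 4/5$ and prevents any exceptional curve component from appearing in $\tilde{S} \cap \tilde{T}$, which is the final ingredient needed to apply the lemma.
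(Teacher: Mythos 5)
Your proposal is correct and follows essentially the same route as the paper's proof: the same Kawamata blow-up weights $\wt(x,t,w)=\tfrac{1}{5}(1,3,2)$, the same computation $\ord_E(y)=\tfrac{4}{5}$ forced by $w^2\in F$, the same pair of divisors $(x=0)_X$ and $(y=0)_X$ with intersection the irreducible curve $(w^2+t^3z=0)$, the same intersection number $-\tfrac{2}{33}$, and the same appeal to Lemma~\ref{lem:exclsingptG} (your $S$ and $T$ are simply the paper's $T$ and $S$). The numerical checks, including $(E^3)=\tfrac{25}{6}$ and the verification that $\tilde{S}\cap\tilde{T}\cap E$ is finite, all agree with the paper.
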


\begin{proof}
Let $\msp = \msp_z$ be the $\frac{1}{5} (1,2,3)$ point, and let $\psi \colon Y \to X$ be the Kawamata blow-up of $X$ at $\msp$ with exceptional divisor $E$.
By a choice of homogeneous coordinates and by the quasi-smoothness of $X$, we can write
\[
F = z^7 y + z^6 f_{8} + z^5 f_{13} + z^4 f_{18} + z^3 f_{23} + z^2 f_{28} + z f_{33} + f_{38},
\]
where $f_i = f_i (x, y, t, w)$ is a quasi-homogeneous polynomial of degree $i$.
The open set $(z \ne 0) \subset X$ is naturally isomorphic to the affine hyperquotient
\[
(F (x, y, 1, t, w) = 0)/\mbZ_5 (2_x, 3_y, 1_t, 4_w),
\]
and the point $\msp$ corresponds to the origin.
By eliminating $y$, we can take $x, t, w$ as local orbifold coordinates of $X$ at $\msp$, and $\psi$ is locally the weighted blow-up with weight $\wt (x, t, w) = \frac{1}{5} (1, 3, 2)$.
Filtering off terms divisible by $z$ in the equation $F (x, y, 1, t, w) = 0$, we obtain
\[
(-1 + \cdots) y = f_8 (x, 0, t, w) + f_{13} (x, 0, t, w) + \cdots + f_{38} (x, 0, t, w).
\]
The lowest weight of the monomials in the right-hand side of the above equation with respect to $\wt (x, y, t) = \frac{1}{5} (1, 3, 2)$ is $\frac{4}{5}$, which is attained by the monomial $w^2 \in f_{38} (x, 0, t, w)$.
Hence we have $\ord_E (y) = \frac{4}{5}$.

We set $S := (y= 0)_X$ and $T = (x = 0)_X$, and let $\tilde{S}, \tilde{T}$ be their proper transforms on $Y$.
We have 
\[
\begin{split}
\tilde{S} &\sim_{\mbQ} 3 \psi^*A - \frac{4}{5} E \sim \frac{3}{2} (-K_Y) - \frac{1}{2} E, \\ 
\tilde{T} &\sim_{\mbQ} 2 \psi^*A - \frac{1}{5} E \sim -K_Y,
\end{split}
\]
where $E$ is the $\psi$-exceptional divisor.
We have
\[
\Gamma := S \cap T \cong (w^2 + t^3 z = 0) \subset \mbP (3_z,5_t,9_w)
\]
and $\Gamma$ is an irreducible and reduced curve.
Since $\tilde{S} \cap \tilde{T} \cap E$ does not contain a curve (in fact, it consists of the singular point of $E \cong \mbP (1,2,3)$ of typpe $\frac{1}{3} (1,2)$), we see that $\tilde{S} \cap \tilde{T} = \tilde{\Gamma}$, where $\tilde{\Gamma}$ is the proper transform of $\Gamma$ on $Y$.
We compute
\[
\begin{split}
(\tilde{T} \cdot \tilde{\Gamma}) &= (\tilde{T}^2 \cdot \tilde{S}) = (2 \psi^* A - \frac{1}{5} E)^2 \cdot (3 \psi^* A - \frac{4}{5} E) \\
&= \frac{2^2 \cdot 3}{165} - \frac{4}{5^3} \cdot \frac{5^2}{2 \cdot 3} < 0.
\end{split}
\]
Thus, by Lemma~\ref{lem:exclsingptG}, $\msp$ is not a maximal center.
\end{proof}

By the above Lemmas \ref{lem:No103sing1} and \ref{lem:No103sing2} combined with \cite[Lemmas 4.1 and 4.2]{ACP}, we have the following.

\begin{Prop} \label{prop:No103main1}
The Kawamata blow-up $\varphi \colon \mcY \to X$ at the $\frac{1}{11} (1, 4, 7)$ point $\msq \in X$ is the unique maximal extraction of $X$, and $\EL (X) = \{\hat{\sigma}\}$.
\end{Prop}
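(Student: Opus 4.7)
The plan is to eliminate all potential maximal centers on $X$ other than $\msq$, and then to observe that the Kawamata blow-up at $\msq$, being the unique divisorial contraction centered at that terminal quotient singular point by Kawamata's theorem, must be the only candidate maximal extraction. Once the list is narrowed to $\varphi \colon \mcY \to X$, the identification $\EL(X) = \{\hat{\sigma}\}$ follows, because the link initiated by $\varphi$ is precisely $\hat{\sigma}$ (and is unique by the remark after the definition of the link initiated by a divisorial contraction).

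First I would exclude non-singular centers. Smooth points of $X$ are excluded by \cite[Lemma 4.1]{ACP} (a direct calculation confirming the hypothesis of Lemma~\ref{lem:mtdexclsmpt}, using the numerical data $(A^3) = 1/165$ and that one can isolate any smooth point by a divisor of bounded degree on this particular hypersurface). Curves on $X$ are excluded by \cite[Lemma 4.2]{ACP}: since $(-K_X^3) = 4/165 < \iota_X = 2$ and $X$ is quasi-smooth along its singular stratum in the ambient $\mbP(2,3,5,11,19)$, Lemma~\ref{lem:exclcurves} applies directly.

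Next I would dispose of the two remaining cyclic quotient singular points. The $\tfrac{1}{3}(1,1,2)$ point $\msp_y$ is excluded by Lemma~\ref{lem:No103sing1} and the $\tfrac{1}{5}(1,2,3)$ point $\msp_z$ is excluded by Lemma~\ref{lem:No103sing2}. These are the only terminal quotient singular points of $X$ apart from $\msq$, and since a curve passing through a terminal quotient singular point cannot be the center of a divisorial contraction (Kawamata), every maximal extraction not centered at $\msq$ has already been ruled out.

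The only remaining possibility is a divisorial contraction centered at the $\tfrac{1}{11}(1,4,7)$ point $\msq$. By Kawamata's uniqueness theorem, the Kawamata blow-up $\varphi \colon \mcY \to X$ is the unique such divisorial contraction. Hence $\varphi$ is the unique maximal extraction of $X$. Finally, the link initiated by $\varphi$ exists and equals $\hat{\sigma}$ by the construction recalled at the beginning of Section~\ref{sec:birig}, so $\EL(X) = \{\hat{\sigma}\}$. There is no genuine obstacle here: the real work is already contained in Lemmas~\ref{lem:No103sing1} and \ref{lem:No103sing2} (the most delicate part of which is verifying the sign of the triple intersection number $(\tilde T^2 \cdot \tilde S)$ after the Kawamata blow-up, which was done case-by-case depending on which monomial $y^9 t$, $y^{11}z$, or neither lies in $F$); the present proposition is simply the assembly of these exclusion statements.
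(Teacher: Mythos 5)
Your proof is correct and takes essentially the same route as the paper: the paper's own proof of this proposition is precisely the one-line assembly of Lemmas~\ref{lem:No103sing1} and \ref{lem:No103sing2} with \cite[Lemmas 4.1 and 4.2]{ACP}, followed by Kawamata's uniqueness of the divisorial contraction centered at $\msq$ and the identification of the initiated link with $\hat{\sigma}$. (One immaterial slip in your aside: $(-K_X^3)=\iota_X^3(A^3)=8/165$, not $4/165$, though the inequality you invoke still holds.)
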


\subsubsection{Divisorial contractions centered at $\hat{\msq} \in \hat{X}$}

For a positive integer $i$, we denote by $\msw_i$ the weight on $u, x$ defined by $\msw_i (u, x) = (i, 1)$.

\begin{Lem} \label{lem:loceqcENo103}
The singularity $\hat{\msq} \in \hat{X}$ is of type $cE_8$, and it is equivalent to $(\phi = 0) \subset \mbC^4_{u, y, t, w}$, where
\begin{equation} \label{eq:cEeqNo103}
\phi (w, t, u, x) = w^2 + t^3 + t^2 f + t g + h,
\end{equation}
for some $f = f (u, x), g = g (u, x), h = h (u, x) \in \mbC [u, x]$ satisfying the following properties.
\begin{enumerate}
%\item $\mu \nu \ne 0$.
%\item If $\mu = 0$, then $\lambda \ne 0$.
%\item $\tauonewt (f) \ge 2$, $\tauonewt (g) \ge 4$ and $\tauonewt (h) = 5$.
%\item $u^2 \mid f_{\tauonewt = 2}$.
%\item $u^3 \mid g^{[\tau_1]}_4$ and $u^3 \mid g^{[\tau_1]}_5$. 
%\item $h^{[\tau_1]}_5 = u^5$, $u^5 \mid h^{[\tau_1]}_6$, $u^5 \mid h^{[\tau_1]}_7$ and $u^4 \mid h^{[\tau_1]}_8$.
\item $\msw_2 (f) \ge 4$, $\msw_2 (g) \ge 7$, and $\msw_2 (h) \ge 10$.
%\item $u^2 \mid f^{[\tau_2]}_4$ and $u \mid f^{[\tau_2]}_5$.
\item $u^3 \mid g_{\msw_2 = 7}$.
\item $h_{\msw_2 = 10} = u^5$, and $u^5 \mid h_{\msw_2 = 11}$.
\item $\msw_3 (f) \ge 6$, $\msw_3 (g) \ge 10$, and $\msw_3 (h) \ge 15$.
\item $u^3 \mid g_{\msw_3 = 10}$ and $u^3 \mid g_{\msw_3 = 11}$.
\item $h_{\msw_3 = 15} = u^5$, $u^4 \mid h_{\msw_3 = 16}$, and $u^4 \mid h_{\msw_3 = 17}$.
\item $\msw_4 (f) \ge 7$, $\msw_4 (g) \ge 14$, $\msw_4 (h) \ge 20$.
\item $u \mid f_{\msw_4 = 7}$.
\item $u^2 \mid g_{\msw_4 = 14}$ and $u \mid g_{\msw_4 = 15}$.
\item $u^4 \mid h_{\msw_4 = 20}$, $u^3 \mid h_{\msw_4 = 21}$, $u^2 \mid h_{\msw_4 = 22}$ and $u \mid h_{\msw_4 = 23}$.
\end{enumerate}
%Moreover either $u^5 \in g$ or $u^7 \in h$.
%The singularity $\msp' \in X'$ is of type $cE_7$ if $\mu \ne 0$, and of type $cE_8$ if $\mu = 0$ and $\nu \ne 0$.
\end{Lem}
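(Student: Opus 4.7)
The plan is to follow the template established in Lemmas~\ref{lem:loceqcENo100} and \ref{lem:loceqcENo101}. First, I will put $F$ into a normal form using quasi-smoothness. Completing the square in $w$ (which is justified by quasi-smoothness at $\msp_w$) and decomposing $B(x, y, z, t) := F - w^2$ by powers of $t$, the degree-$5$ factor $B_5$ of $t^3$ has the shape $\alpha z + \beta x y$, where $\alpha \ne 0$ by quasi-smoothness at $\msp_t$. After rescaling coordinates and applying the change of variables $z \mapsto z - \beta x y$, we arrive at
\[
F = w^2 + t^3 z + t^2 e_{16}(x, y, z) + t e_{27}(x, y, z) + e_{38}(x, y, z).
\]
The quasi-smoothness of $X$ at the $\frac{1}{5}(1, 2, 3)$ point $\msp_z$ forces $y z^7 \in e_{38}$, and after rescaling $y$ I may assume its coefficient equals $1$.

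Next, I compute $\phi := \hat{F}(x, 1, u, t, w)$ directly from the toric substitution. A monomial $x^{k_0} y^{k_1} z^{k_2}$ in $e_i$, subject to $2 k_0 + 3 k_1 + 5 k_2 = i$, contributes the single monomial $u^{(k_0 + 7 k_1 + 8 k_2 - 8)/11} x^{k_0}$ to $\hat{e}_i(u, x, 1)$; the exponent of $u$ is a nonnegative integer thanks to the congruence $k_0 + 7 k_1 + 8 k_2 \equiv 8 \pmod{11}$, derived from $2 k_0 + 3 k_1 + 5 k_2 \equiv i \pmod{11}$ upon multiplying by $6 \equiv 2^{-1} \pmod{11}$. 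Setting $f := \hat{e}_{16}(u, x, 1)$, $g := \hat{e}_{27}(u, x, 1)$, and $h := \hat{e}_{38}(u, x, 1)$, this yields $\phi = w^2 + t^3 + t^2 f + t g + h$, matching the form in the statement.

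With the explicit contribution formula, the verification of (1)--(10) reduces to a finite combinatorial enumeration: for each $\msw_i$ and threshold $r$, one lists the triples $(k_0, k_1, k_2) \in \mbZ_{\ge 0}^3$ with $2 k_0 + 3 k_1 + 5 k_2 = d$ (for $d \in \{16, 27, 38\}$) that produce a monomial of $\msw_i$-weight exactly $r$, then reads off the corresponding powers of $u$. For example, the identity $h_{\msw_2 = 10} = u^5$ holds because $(0, 1, 7)$ is the unique admissible triple at $d = 38$, $\msw_2 = 10$, and it corresponds precisely to our normalised $y z^7 \in e_{38}$. The remaining divisibility conditions follow from analogous short enumerations; this step is the most verbose part of the argument, though it presents no conceptual obstacle. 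Finally, to confirm the $cE_8$ classification, I complete the cube $t \mapsto t - f/3$, bringing $\phi$ to the standard form $w^2 + t^3 + t \tilde{g} + \tilde{h}$. Since $\msw_i(u^a x^b) = i a + b \ge a + b$, the bounds $\msw_2(f) \ge 4$, $\msw_2(g) \ge 7$, $\msw_2(h) \ge 10$ force the minimum total degrees of $f, g, h$ to be at least $\lceil 4/2 \rceil, \lceil 7/2 \rceil, \lceil 10/2 \rceil = 2, 4, 5$, respectively. Hence $g_3 = 0$, $h_4 = 0$, and the $u^5$ in $h$ provided by condition (3) is not cancelled by $f g$ or $f^3$, both having minimum total degree $\ge 6$. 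Consequently $\tilde{g}_3 = \tilde{h}_4 = 0$ and $\tilde{h}_5 \ne 0$, so $\hat{\msq} \in \hat{X}$ is of type $cE_8$.
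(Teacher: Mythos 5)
Your proposal is correct and follows essentially the same route as the paper: normalise $F$ to $w^2 + t^3 z + t^2 e_{16} + t e_{27} + e_{38}$ using quasi-smoothness (with $z^7 y \in e_{38}$ normalised to coefficient $1$), perform the toric substitution to get $\phi = w^2 + t^3 + t^2 f + t g + h$, and verify (1)--(10) by inspecting which monomials of $e_i$ survive. Your added details --- the explicit contribution formula $u^{(k_0+7k_1+8k_2-8)/11}x^{k_0}$ with the mod-$11$ integrality check, and the completing-the-cube argument showing $\tilde g_3 = \tilde h_4 = 0$ and $\tilde h_5 = u^5 \ne 0$ --- are exactly the computations the paper leaves implicit in its ``straightforward to check'' and its final sentence.
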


\begin{proof}
By a choice of homogeneous coordinates $x, y, z, t, w$ of $\mbP (2, 3, 5, 11, 19)$, we can write
\[
F = w^2 + t^3 z + t^2 e_{16} (x, y, z) + t e_{27} (x, y, z) + e_{38} (x, y, z),
\]
where $e_i = e_i (x, y, z)$ is a quasi-homogeneous polynomial of degree $i$.
By the quasi-smoothness of $X$ at $\msp_z$, we have $z^7 y \in e_{38}$.
By re-scaling $y$, we may assume that $\coeff_{e_{38}} (z^7 y) = 1$.
For the defining polynomial $\hat{F} = \hat{F} (u, x, z, t, w)$ of $\hat{X}$, we have
\[
\begin{split}
\hat{F} &= u^{-8/11} F (u^{1/11} x, u^{7/11}, u^{8/11} z, t, u^{4/11} w) \\
&= w^2 + t^3 + t^2 \hat{e}_{16} (u, x, z) + t \hat{e}_{27} (u, x, z) + \hat{e}_{38} (u, x, z),
\end{split}
\]
where
\[
\hat{e}_i = \hat{e}_i (u, x, z) := u^{-8/11} e_i (u^{1/11} x, u^{7/11}, u^{8/11} z)
\]
for $i = 16, 27, 38$.
By setting $z = 1$, the germ $\hat{\msp} \in \hat{X}$ is naturally isomorphic to the origin of the hypersurface germ in $\mbA^4_{w, t, u, x}$ defined by $\phi = 0$, where
\[
\phi := \hat{F} (u, x, 1, t, w) = w^2 + t^3 + t^2 \hat{e}_{16} (u, x, 1) + \hat{e}_{27} (u, x, 1) + \hat{e}_{38} (u, x, 1).
\]
Note that $\coeff_{\hat{e}_{38}} (u^5) = 1$.
By setting
\[
f := \hat{e}_{16} (u, x, 1), g := \hat{e}_{27} (u, x, 1), h := \hat{e}_{38} (u, x, 1),
\]
it is straightforward to check that the conditions (1)--(10) are all satisfied.
Finally, since we have $\msw_1 (g) \ge 4$ and $\msw_1 (h) \ge 5$ by (1), it follows that the singularity $\hat{\msq} \in \hat{X}$ is of type $cE_8$.
\end{proof}

\begin{Lem} \label{lem:boundNo103}
There are at most $7$ divisors of discrepancy $1$ over $\hat{\msq} \in \hat{X}$.
\end{Lem}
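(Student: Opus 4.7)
The plan is to mirror the strategy used in the proofs of Lemma~\ref{lem:boundNo100} and Proposition~\ref{prop:No101divcont}, namely to identify the non-Gorenstein singular points of $\hat{\mcY}$ along $\hat{\mcE}$ and then apply Lemma~\ref{lem:quotdiv} in the framework of Section~\ref{sec:genclsfcont}. Using the toric embedding $\hat{\mcY}\subset\hat{\mbT}$, I will examine the intersection of $\hat{\mcY}$ with the torus-fixed points of $\hat{\mbT}$ on the toric exceptional $(y=0)$ and with any singular strata of $\hat{\mbT}$ that meet $\hat{\mcE}$.

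First, a direct inspection of the Cox-ring polynomial $G$ defining $\hat{\mcY}$ (using that the monomial $t^3z$ has nonzero coefficient in $F$) shows that among the four torus-fixed points $\msp_u,\msp_t,\msp_w,\msp_x$ lying on $(y=0)\subset\hat{\mbT}$, only $\msp_u$ and $\msp_x$ are contained in $\hat{\mcY}$. The point $\msp_x$ is smooth, and at $\msp_u$ the equation of $\hat{\mcY}$ contains $y$ to first order (coming from the $z^7 y$ term in $F$), so after eliminating $y$ one gets a $\frac{1}{5}(1,2,3)$ terminal quotient singularity of $\hat{\mcY}$ at $\msp_u$.

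Second, I will locate the singular $1$-strata of $\hat{\mbT}$ meeting $\hat{\mcE}$ by examining the three-cones $\{y,\rho_1,\rho_2\}$ in the fan. The only such cone with nontrivial stabilizer is $\{y,u,x\}$: the $\gcd$ of the $2\times 2$ minors of the degree matrix on $\{t,w,z\}$ equals $\gcd(4,8,12)=4$, yielding an ambient $\mbZ_4$-stabilizer along the curve $\{u=x=y=0\}\cong\mbP(3,2)$. The equation $G|_{u=x=y=0}=w^2+t^3z$ cuts out a single point $\msp_\star$ on this $\mbP(3,2)$, and after a local translation $\tilde w=w-w_0$ and elimination of $\tilde w$ via the implicit function theorem (the coefficient $2w_0\ne 0$), one finds that $\hat{\mcY}$ has a $\frac{1}{4}(1,1,3)$ terminal quotient singularity at $\msp_\star$.

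Third, I combine these and refine. By Lemma~\ref{lem:quotdiv}, there are a priori four divisors $G_1,\dots,G_4$ of discrepancy $<1$ over $\msp_u$ and three divisors $H_1,H_2,H_3$ over $\msp_\star$; hence any divisor of discrepancy $1$ over $\hat{\msq}\in\hat{X}$ other than $\hat{\mcE}$ belongs to this list. To cut the naive $4+3+1=8$ down to $7$, I will compute $\ord_G(\hat{\mcE})$ for each candidate and apply $a_G(K_{\hat{X}})=a_G(K_{\hat{\mcY}})+\ord_G(\hat{\mcE})$. At $\msp_\star$ the local equation of $\hat{\mcE}$ is simply $y=0$, and the weighted blow-up defining $H_j$ gives $\ord_{H_j}(y)=(4-j)/4$, so all three $H_j$ yield discrepancy exactly $1$. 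At $\msp_u$ the local equation of $\hat{\mcE}$, after eliminating $y$ by the implicit function theorem, is $\Phi(x,t,w)=0$ for an explicit power series whose lowest-order monomials include $w^2,\ tx,\ x^4$ (and $t^3$ once $u$ is unfrozen). Computing the $\msw_k$-weight of $\Phi$ for the four blow-up weights $\frac{1}{5}(k,[3k]_5,[2k]_5)$ gives values $4/5,3/5,2/5,6/5$ for $k=1,2,3,4$; matching against $1-k/5$ succeeds for $k=1,2,3$ but for $k=4$ yields $a_{G_4}(K_{\hat{X}})=4/5+6/5=2$. Hence only $G_1,G_2,G_3$ contribute, and the total is $3+3+1=7$. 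The technical core of the argument, and the main obstacle, is precisely this last discrepancy-matching computation at $\msp_u$ together with the identification of the singular-curve intersection point $\msp_\star$, a feature specific to Family~$103$ that is absent in Family~$101$.
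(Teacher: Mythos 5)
Your proposal is correct and follows essentially the same route as the paper: identify the two non-Gorenstein points of $\hat{\mcY}$ on $\hat{\mcE}$ (a $\frac{1}{5}(1,2,3)$ point and a $\frac{1}{4}(1,1,3)$ point, giving the a priori bound $4+3+1=8$), then rule out the discrepancy-$\frac{4}{5}$ divisor over the $\frac{1}{5}(1,2,3)$ point by computing $\ord(\hat{\mcE})=\frac{6}{5}$, so that its discrepancy over $\hat{\msq}\in\hat{X}$ is $2$. The only differences are cosmetic (you work in the global toric chart at $\msp_u$ rather than the $u$-chart of the local weighted blow-up of $(\phi=0)\subset\mbC^4$) plus some extra verification, not needed for the upper bound, that the remaining seven candidates do have discrepancy $1$.
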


\begin{proof}
Under the identification of $\hat{\msq} \in \hat{X}$ with $(\phi = 0) \subset \mbA^4_{w, t, u, x}$, the morphism $\hat{\varphi} \colon \hat{\mcY} \to \hat{X}$ is the weighted blow-up with $\wt (w, t, u, x) = (12, 8, 5, 1)$.
Let $\msw$ be the weight on $w, t, u, x$ such that $\msw (w, t, u, x) = (12, 8, 5, 1)$.
The $\varphi$-exceptional divisor is isomorphic to
\[
(\phi_{\msw = 24} = 0) \subset \mbP (12_w, 8_t, 5_u, 1_x).
\]
We see that the non-Gorenstein singularities of $\mcY'$ along $\mcE'$ consist of $1$ point of type $\frac{1}{4} (1, 1, 3)$ and $1$ point of type $\frac{1}{5} (1, 2, 3)$.
Hence there are at most $8$ divisors of discrepancy $1$ over $\hat{\msq} \in \hat{X}$ including $\hat{\mcE}$.

Let $\msp \in \hat{\mcY}$ be the $\frac{1}{5} (1, 2, 3)$ point and let $G$ be the  divisor of discrepancy $4/5$ over $\msp \in \hat{\mcY}$.
The $u$-chart $\hat{\mcY}_u$ is the hyperquotient
\[
(\phi (w u^{12}, t u^8, u^5, x u)/u^{24} = 0)/\mbZ_5 (12_w, 8_t, -1_u, 1_x).
\] 
Consider the weight $\msw_5$ on $u, x$ defined by $\msw_5 (u, x) = (5, 1)$.
Then, by filtering off terms divisible by $u$, we can write
\[
\begin{split}
& \phi (w u^{12},  t u^8, u^5, x u)/u^{24} \\
&= u (1 + \theta) + w^2 + t^3 + t^2 f_{\msw_5 = 8} (1, x) + t g_{\msw_5 = 16} (1, x) + h_{\msw_5 = 24} (1, x),
\end{split}
\]
where $\theta = \theta (w, t, u, x)$ vanishes at the origin.
Note that $\msp$ corresponds to the origin of $\hat{\mcY}_u$, and the divisor $\hat{\mcE}$ is cut out by $u$ on $\hat{\mcY}_u$.
In a neighborhood of $\msp \in \hat{\mcY}_u$, we can take $w, t, x$ as local orbifold coordinates of $\hat{\mcY}$, and $\hat{\mcE}$ is defined by 
\[
w^2 + t^3 + t^2 f_{\msw_5 = 8} (1, x) + t g_{\msw_5 = 16} (1, x) + h_{\msw_5 = 24} (1, x).
\]
We see that $G$ is the exceptional divisor of the weighted blow-up of $\hat{\mcY}$ at $\msp$ with $\wt (w, t, x) = \frac{1}{5} (3, 2, 4)$.
We have $a_G (K_{\hat{\mcY}}) = 4/5$ and $\ord_G (\hat{\mcE}) = 6/5$ so that
\[
a_G (K_{\hat{X}}) = a_G (K_{\hat{\mcY}}) + \ord_G (\hat{\mcE}) = 2.
\]
This shows that there are at most $7$ divisors of discrepancy $1$ over $\hat{\msq} \in \hat{X}$.
\end{proof}

\begin{Lem} \label{lem:No103divcont}
The morphism $\hat{\varphi} \colon \hat{\mcY} \to \hat{X}$ is the unique divisorial contraction centered at the $cE$ point $\hat{\msq} \in \hat{X}$.
\end{Lem}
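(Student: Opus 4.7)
The strategy mirrors the proofs of Lemmas~\ref{lem:No100divcont} and \ref{lem:No101divcont}. Using the local normal form from Lemma~\ref{lem:loceqcENo103}, I would apply parts~(1), (3), (4), (5), and (6) of Lemma~\ref{lem:divdisc1cEv1} to produce five distinct divisors of discrepancy one, namely $E_{(3,2,2,1)}$, $E_{(5,4,2,1)}$, $E_{(6,4,3,1)}$, $E_{(8,5,3,1)}$, $E_{(9,6,4,1)}$ over $\hat{\msq}\in\hat X$. The inequalities $\msw_2(f)\ge 4$, $\msw_2(g)\ge 7$, $\msw_2(h)\ge 10$ with $h_{\msw_2=10}=u^5$, and the analogous inequalities for $\msw_3$ and $\msw_4$ in Lemma~\ref{lem:loceqcENo103}, are precisely tailored so that each hypothesis (both the ``discrepancy one'' and the ``non-terminal'' part) goes through. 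In particular, in part~(3) the polynomial $y^2 f_{\msw_2=2}+y g_{\msw_2=6}+h_{\msw_2=10}$ collapses to $u^5$, which is not a square; and in parts~(3) and (5) the divisibility conditions are verified by taking $\lambda=0$, so that $\ell=u$, since the relevant $u$-divisibility of $f,g,h$ is guaranteed by Lemma~\ref{lem:loceqcENo103}. Thus the $Y$ obtained from each of these weighted blow-ups has a non-terminal singularity, and Lemma~\ref{lem:divcontwbl} then shows that none of these five divisors is realized by a divisorial contraction.

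Together with $\hat{\mcE}$ itself (corresponding to the weight $(12,8,5,1)$), one gets six pairwise distinct divisors of discrepancy one; distinctness is immediate since the blow-up weights are pairwise distinct. By the upper bound of Lemma~\ref{lem:boundNo103}, the only room for an additional divisor of discrepancy one lies in a single further valuation which, if it exists, must be centred at the $\frac{1}{4}(1,1,3)$-point of $\hat{\mcY}$. I would dispose of this possibility by a local computation analogous to the one carried out at the $\frac{1}{5}(1,2,3)$-point in the proof of Lemma~\ref{lem:boundNo103}: explicitly writing $\hat{\mcE}$ in local orbifold coordinates at the $\frac{1}{4}(1,1,3)$-point and computing $a_G(K_{\hat X})=a_G(K_{\hat{\mcY}})+\ord_G(\hat{\mcE})$ for each of the three divisors of discrepancy $k/4$ ($k=1,2,3$). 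In each case one verifies either that $a_G(K_{\hat X})\ne 1$, so that no seventh divisor of discrepancy one exists, or that the corresponding weighted blow-up has a non-terminal singularity, so that Lemma~\ref{lem:divcontwbl} excludes realization. Either conclusion gives that $\hat{\mcE}$ is the unique divisor of discrepancy one that is realized by a divisorial contraction, hence $\hat{\varphi}$ is the unique such contraction.

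Finally, the existence of at least three divisors of discrepancy one over the $cE_8$ germ $\hat{\msq}$ contradicts the upper bound in Proposition~\ref{prop:ncd2cE8e9}(2), which therefore rules out any divisorial contraction of discrepancy greater than one. Combining this with the previous paragraph proves that $\hat{\varphi}$ is the unique divisorial contraction centred at $\hat{\msq}$. The main obstacle I anticipate is bookkeeping in the first step (checking, case by case, the ``non-terminal'' hypotheses of the five parts of Lemma~\ref{lem:divdisc1cEv1}, since these are quite restrictive and the correct choice $\lambda=0$ must be identified), and the local analysis needed at the $\frac{1}{4}(1,1,3)$-point to close the gap between the six explicit divisors and the a~priori bound of seven in Lemma~\ref{lem:boundNo103}.
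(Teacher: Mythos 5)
Your five applications of Lemma~\ref{lem:divdisc1cEv1} are correctly set up (in each case both the discrepancy-one hypotheses and the non-terminality conditions, with $\ell = u$, do follow from Lemma~\ref{lem:loceqcENo103}), and your use of Proposition~\ref{prop:ncd2cE8e9}(2) to exclude contractions of discrepancy greater than one is exactly the paper's argument. The gap is in the count. The paper constructs \emph{six} divisors of discrepancy one besides $\hat{\mcE}$, not five: in addition to the weights $(3,2,2,1)$, $(5,4,2,1)$, $(6,4,3,1)$, $(8,5,3,1)$, $(9,6,4,1)$ it also uses the weighted blow-up with $\wt (w, t, u, x) = (10, 7, 4, 1)$. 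This is precisely what the conditions (8)--(10) of Lemma~\ref{lem:loceqcENo103} that you leave unused (the divisibility statements $u \mid f_{\msw_4 = 7}$, $u^2 \mid g_{\msw_4 = 14}$, $u^4 \mid h_{\msw_4 = 20}$, and so on) are tailored for. With six constructed divisors plus $\hat{\mcE}$ the upper bound of seven from Lemma~\ref{lem:boundNo103} is attained, every divisor of discrepancy one is identified explicitly, and Lemma~\ref{lem:divcontwbl} disposes of all of them at once.

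Your substitute for the missing sixth divisor does not work as described. First, there is no justification for the assertion that a seventh valuation of discrepancy one, if it exists, must be centred at the $\frac{1}{4}(1,1,3)$ point of $\hat{\mcY}$: a priori it could sit over the $\frac{1}{5}(1,2,3)$ point, and deciding which of your five divisors occupy which of the six available ``slots'' over the two non-Gorenstein points is itself a nontrivial computation you have not carried out. Second, the first branch of your dichotomy (``$a_G(K_{\hat{X}}) \ne 1$, so no seventh divisor exists'') is in fact false: the $(10,7,4,1)$ blow-up shows that a seventh divisor of discrepancy one does exist, so all seven slots are occupied. You are therefore forced into the second branch, i.e.\ you must identify that remaining valuation explicitly as a weighted blow-up and verify that the blown-up variety has a non-terminal singularity --- which is exactly the construction you omitted. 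Until that divisor is produced and excluded via Lemma~\ref{lem:divcontwbl}, a divisorial contraction realizing it has not been ruled out, and uniqueness does not follow.
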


\begin{proof}
We define various weighted blow-ups of $\hat{X}$ centered at $\hat{\msq}$ as follows.
\begin{itemize}
\item $\psi_1 \colon W_1 \to \hat{X}$ is the weighted blow-up with $\wt (w, t, u, x) = (3, 2, 2, 1)$ which is described in Lemma \ref{lem:divdisc1cEv1}(1).
\item $\psi_2 \colon W_2 \to \hat{X}$ is the weighted blow-up with $\wt (w, t, u, x) = (5, 4, 2, 1)$ which is described in Lemma \ref{lem:divdisc1cEv1}(3).
\item $\psi_3 \colon W_3 \to \hat{X}$ is the weighted blow-up with $\wt (w, t, u, x) = (6, 4, 3, 1)$ which is described in Lemma \ref{lem:divdisc1cEv1}(4).
\item $\psi^{\pm}_4 \colon W^{\pm}_4 \to \hat{X}$ is the weighted blow-up with $\wt (w, t, u, x) = (8, 5, 3, 1)$ which is described in Lemma \ref{lem:divdisc1cEv1}(5).
\item $\psi_5 \colon W_5 \to \hat{X}$ is the weighted blow-up with $\wt (w, t, u, x) = (9, 6, 4, 1)$ which described in Lemma \ref{lem:divdisc1cEv1}(6).
\item $\psi_6 \colon W_6 \to \hat{X}$ is the weighted blow-up with $\wt (w, t, u, x) = (10, 7, 4, 1)$ which is described in Lemma \ref{lem:divdisc1cEv1}(7).
\end{itemize}
Let $E_1, \dots, E_6$ be the exceptional divisors of $\psi_1, \dots, \psi_6$, respectively.
By Lemmas~\ref{lem:loceqcENo103} and \ref{lem:divdisc1cEv1}, each of them is a divisor of discrepancy $1$ over $\hat{\msq} \in \hat{X}$ and none of them can be realized by the exceptional divisor of a divisorial contraction.
Thus $\hat{\varphi}$ is the unique divisorial contraction of discrepancy $1$ over $\hat{\msq} \in \hat{X}$ by Lemma~\ref{lem:boundNo103}.
It follows from the existence of $7$ divisors of discrepancy $1$ over the point $\hat{\msq} \in \hat{X}$ of type $cE_8$ and Proposition~\ref{prop:ncd2cE8e9} that $\hat{\msq} \in \hat{X}$ admits no divisor of discrepancy greater than $1$.
\end{proof}

\subsubsection{Maximal extractions of $\hat{X}$}

\begin{Prop} \label{prop:No103main2}
The divisorial contraction $\hat{\varphi} \colon \hat{\mcY} \to \hat{X}$ centered at the $cE$ point $\hat{\msq}$ is the unique maximal extraction of $\hat{X}$, and $\EL (\hat{X}) = \{\hat{\sigma}^{-1}\}$.
\end{Prop}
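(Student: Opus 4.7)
The plan is to follow the template of Propositions \ref{prop:No100main2}, \ref{prop:No101main2}, and \ref{prop:No102main2}: since $\Sing \hat{X}$ consists of $\hat{\msq}$ together with a $\frac{1}{3}(1,1,2)$ point at $\hat{\msp}_u$ and a $\frac{1}{7}(1,3,4)$ point at $\hat{\msp}_t$, and since Lemma \ref{lem:No103divcont} has already pinned down $\hat{\varphi}$ as the unique divisorial contraction centered at $\hat{\msq}$, it suffices to exclude every other potential maximal center---smooth points, curves, $\hat{\msp}_u$, and $\hat{\msp}_t$---and then conclude that the elementary link $\hat{\sigma}^{-1}$ initiated by $\hat{\varphi}$ is the sole element of $\EL(\hat{X})$.

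For smooth points I would invoke Lemma \ref{lem:exclsmpts}: the coordinate $w$ has weight $11 = 22/2$ so that $w^2 \in \hat{F}$, while $\iota_{\hat{X}}^2 = 1 \le 2 = 2 \cdot 1 \cdot 1 = 2 a_0 a_1$. For curves I would apply Lemma \ref{lem:exclcurves}, since $(\hat{A}^3) = 2/21 < 1 = \iota_{\hat{X}}$ and $\hat{X}$ is quasi-smooth at the two singular points of $\mbP(1,1,3,7,11)$ lying on it (the third singular point $\msp_w$ of the ambient space is disjoint from $\hat{X}$, since $w^2 \in \hat{F}$).

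The two terminal quotient singular points fit precisely into the framework of Section \ref{sec:exclmethod}. The $\frac{1}{7}(1,3,4)$ point $\hat{\msp}_t$ is handled by Lemma \ref{lem:exclqsing1} with parameters $(r,a) = (7,3)$: one checks $4r - 2a = 22$ and $\mbP(1, a, r-2a, r, 2r-a) = \mbP(1,3,1,7,11)$ matches our ambient space after reordering, while the required hypotheses $w^2 \in \hat{F}$ and quasi-smoothness at this point are clear. The $\frac{1}{3}(1,1,2)$ point $\hat{\msp}_u$ is exactly the configuration treated by Lemma \ref{lem:exclsingpt3} (identifying the lemma's $\msp_z$ with our $\hat{\msp}_u$ after relabelling), whose hypotheses are again satisfied.

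Combining these exclusions with Lemma \ref{lem:No103divcont}, the only maximal extraction of $\hat{X}$ is $\hat{\varphi}$, which initiates the elementary link $\hat{\sigma}^{-1}$, yielding $\EL(\hat{X}) = \{\hat{\sigma}^{-1}\}$. The genuine difficulty of this proposition has already been absorbed into the preceding analysis of the $cE_8$ point---bounding divisors of discrepancy $1$ via Lemma \ref{lem:boundNo103} and then eliminating all candidates other than $\hat{\mcE}$ using the non-terminality criteria of Lemma \ref{lem:divdisc1cEv1} together with the rigidity result of Proposition \ref{prop:ncd2cE8e9}. At the level of the proposition itself, the proof is therefore essentially an assembly of already-prepared ingredients, and no substantial new obstacle is expected.
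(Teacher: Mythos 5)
Your proof is correct and takes essentially the same route as the paper, whose own proof is a one-line assembly of Lemmas \ref{lem:exclsmpts} (smooth points), \ref{lem:exclcurves} (curves), \ref{lem:exclqsing1} with $(r,a)=(7,3)$ (the $\frac{1}{7}(1,3,4)$ point), the lemma for the $\frac{1}{3}(1,1,2)$ point, and \ref{lem:No103divcont}. One small remark: you correctly cite Lemma \ref{lem:exclsingpt3} for the $\frac{1}{3}(1,1,2)$ point, whereas the paper's proof cites Lemma \ref{lem:exclsingpt4} (which concerns $X_7 \subset \mbP(1,1,1,2,3)$ and its $\frac{1}{2}(1,1,1)$ point, so it cannot apply to $\hat{X}_{22}$); this is evidently a typo in the paper for Lemma \ref{lem:exclsingpt3}.
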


\begin{proof}
This follows from Lemmas \ref{lem:exclsmpts}, \ref{lem:exclcurves}, \ref{lem:exclqsing1}, \ref{lem:exclsingpt4}, and \ref{lem:No103divcont}.
\end{proof}

Theorem~\ref{thm:main2}, and hence Theorem~\ref{thm:main}, for Family \textnumero~103 follow from Propositions~\ref{prop:No103main1} and \ref{prop:No103main2}.

%%%%%%%%%%%%%%%%%%%%%%%%%%%%%%%%%%
\section{Family \textnumero 110} \label{sec:tririg}
%%%%%%%%%%%%%%%%%%%%%%%%%%%%%%%%%%
Let $X = X_{21} \subset \mbP (1_x, 3_y, 5_z, 7_t, 8_w)$ be a quasi-smooth member of Family \textnumero $110$, which is defined by a quasi-homogeneous polynomial $F = F (x, y, z, t, w)$ of degree $21$, and whose basic information is given as follows:
\begin{itemize}
\item $\iota_X = 3$.
\item $(A^3) = 1/40$.
\item $\Sing (X) = \{\frac{1}{5} (1,1,4), \frac{1}{8} (1, 3, 5)\}$, and $\msq = \msp_w$ is the $\frac{1}{8} (1, 3, 5)$ point.
We set $\msr := \msp_z \in X$ which  is the $\frac{1}{5} (1, 1, 4)$ point.
\end{itemize}
%The Sarkisov link $X \ratmap X'$ to a Fano $3$-fold weighted hypersurface $X'$ is initiated by the Kawamata blow-up $Y \to X$ at the $\frac{1}{8} (1, 3, 5)$ point, followed by a pseudo isomorphism $Y \ratmap Y'$ and ends with a divisorial contraction $Y' \to X'$.

\subsection{Elementary links}

Unlike the $4$ families treated in the previous section, $X$ admits two elementary links $\hat{\sigma} \colon X \ratmap \hat{X}$ and $\breve{\sigma} \colon X \ratmap \breve{X}$ to Mori-Fano $3$-folds $\hat{X}$ and $\breve{X}$.
These two links together with an elementary link $\rho \colon \breve{X} \ratmap \hat{X}$ form a triangle of elementary links:
\[
\xymatrix{
& \ar@{-->}[ld]_{\hat{\sigma}} X \ar@{-->}[rd]^{\breve{\sigma}} \ & \\
\hat{X} & & \ar@{-->}[ll]^{\rho} \breve{X}}
\]
The aim of this subsection is to explain the links $\hat{\sigma}$, $\breve{\sigma}$, and $\rho$.

\subsubsection{The link $\hat{\sigma} \colon X \ratmap \hat{X}$ centered at the $\frac{1}{8} (1, 3, 5)$ point $\msq$}

The elementary link $\hat{\sigma} \colon X \ratmap \hat{X}$ is embedded into the following toric diagram.
\[
\xymatrix{
\text{$\mbT := \mbT \begin{pNiceArray}{cc|cccc}[first-row]
u & w & y & t & z & x \\
0 & 8 & 3 & 7 & 5 & 1 \\
-8 & 0 & 1 & 5 & 7 & 3
\end{pNiceArray}$} \ar@{-->}[r]^{\hat{\Theta}} \ar[d]_{\Phi} &
\text{$\mbT \begin{pNiceArray}{cccc|cc}[first-row]
u & w & y & t & z & x \\
5 & 7 & 2 & 3 & 0 & -1 \\
1 & 3 & 1 & 2 & 1 & 0
\end{pNiceArray} =: \hat{\mbT}$} \ar[d]^{\hat{\Phi}} \\
\mbP (8_w, 3_y, 7_t, 5_z, 1_x) & \mbP (1_u, 3_w, 1_y, 2_t, 1_z)}
\]
The morphisms $\Phi, \hat{\Phi}$, and the rational map $\hat{\Theta}$ are defined as follows:
\[
\begin{split}
\Phi & \colon (u\!:\!w \, | \, y\!:\!t\!:\!z\!:\!x) \mapsto (w\!:\!y u^{1/8} \!:\!t u^{5/8}\!:\!z u^{7/8}\!:\!x u^{3/8}), \\
\hat{\Phi} & \colon (u\!:\!w\!:\!y\!:\!t \, | \,  z\!:\!x) \mapsto (u x^5\!:\!w x^7\!:\!y x^2\!:\!t x^3\!:\!z), \\
\hat{\Theta} & \colon (u\!:\!w \, | \, y\!:\!t\!:\!z\!:\!x) \mapsto (u\!:\!w\!:\!y\!:\!t \, | \,  z\!:\!x).
\end{split}
\]  
The varieties $\mcY$ and $\hat{\mcY}$ are the hypersurfaces in $\mbT$ and $\hat{\mbT}$, respectively, which are defined by the same equation
\[
G (u, x, y, z, t, w) := u^{- 7/8} F (u^{3/8} x, u^{1/8} y, u^{7/8} z, u^{5/8} t, w) = 0.
\]
The variety $\hat{X} = \hat{X}_7 \subset \mbP (1_u, 1_y, 1_z, 2_t, 3_w)$ is the hypersurface defined by 
\begin{equation} \label{eq:110defeqhat}
\hat{F} (u, y, z, t, w) := G (u, 1, y, z, t, w) = 0,
\end{equation}
where $\hat{F}$ is a quasi-homogeneous polynomial of degree $7$, and we have the following basic information of $\hat{X}$:
\begin{itemize}
\item $\iota_{\hat{X}} = 1$.
\item $(\hat{A}^3) = 7/6$.
\item $\Sing (\hat{X}) = \{\frac{1}{2} (1, 1, 1), \frac{1}{3} (1, 1, 2), cE_7\}$, where $\hat{\msq} = \hat{\msp}_z \in \hat{X}$ is the $cE_7$ point (see Lemma~\ref{lem:No110cEdet}).
\end{itemize}

\subsubsection{The link $\breve{\sigma} \colon X \ratmap \breve{X}$ centered at the $\frac{1}{5} (1, 1, 4)$ point $\msr$}

We recall the construction of the elementary link $\breve{\sigma} \colon X \ratmap \breve{X}$ initiated by the Kawamata blow-up $\upsilon \colon \mcZ \to X$ at the  $\frac{1}{5} (1, 1, 4)$ point $\mathsf{r} \in X$.
We define
\[
\Upsilon_0 \colon
\text{$\mbU_0 := \mbT \begin{pNiceArray}{cc|cccc}[first-row]
u & z & w & y & t & x \\
0 & 5 & 8 & 3 & 7 & 1 \\
-5 & 0 & 1 & 1 & 4 & 2
\end{pNiceArray}$} \to \mbP (5_z, 8_w, 3_y, 7_t, 1_x)
\]
by
\[
(u\!:\!z \, | \, w\!:\!y\!:\!t\!:\!x) \mapsto (z\!:\!w u^{1/5} \!:\!y u^{1/5}\!:\!t u^{4/5}\!:\!x u^{2/5}),
\]
which is the weighted blow-up with $\wt (w, y, t, x) = \frac{1}{5} (1, 1, 4, 2)$.
Let $\mcZ_0 \subset \mbU_0$ be the proper transform of $X$ so that the restriction $\upsilon_0 := \Upsilon|_{\mcZ_0} \colon \mcZ_0 \to X$ gives the Kawamata blow-up of $X$ at $\msr$.
The variety $\mcZ_0$ is defined by the equation $G_0 (u, z, w, y, t, x) = 0$, where
\[
G_0 (u, z, w, y, t, x) := x^{-2/5} F (x u^{2/5}, y u^{1/5}, z, t u^{4/5}, w u^{1/5}).
\]
By filtering off terms divisible by $z$, we write $F = z F_1 + F_2$, where $F_1 \in \mbC [x, y, z, t, w]$ and $F_2 \in \mbC [x, y, t, w]$.
The weights of $G_1$ and $G_2$ with respect to $\wt (w, y, t, x) = \frac{1}{5} (1, 1, 4, 2)$ are $2/5$ and $7/5$, respectively, and $F_1, F_2$ are both contained in the ideal $(x, y, t, w)$.
We define
\[
\begin{split}
G_1 (u, x, y, z, t, w) &:= u^{-2/5} F_1 (x u^{2/5}, y u^{1/5}, z, t u^{4/5}, w u^{1/5}), \\
G_2 (u, x, y, t, w) &:= u^{-7/5} F_2 (x u^{2/5}, y u^{1/5}, t u^{4/5}, w u^{1/5}),
\end{split}
\]
so that $G_0 = z G_1 + u G_2$.
Note that $G_1, G_2 \in (x, y, t, w)$, and $G_0$ is contained in the irrelevant ideal $(u, z) \cap (w, y, t, x)$ of $\mbU_0$.
We consider unprojection as follows.
We introduce the new coordinate $v$ by the cross ratio obtained from the equation $z G_1 + u G_2 = 0$:
\[
v := \frac{G_1}{u} = - \frac{G_2}{z}.
\]
We define a new rank $2$ toric variety
\[
\text{$\mbU := \mbT \begin{pNiceArray}{cc|ccccc}[first-row]
u & z & w & v & y & t & x \\
0 & 5 & 8 & 16 & 3 & 7 & 1 \\
-5 & 0 & 1 & 7 & 1 & 4 & 2
\end{pNiceArray}$}
\]
and its codimension $2$ complete intersection subvariety $\mcZ \subset \mbU$ defined by
\[
\begin{cases}
v u - G_1 (u, x, y, z, t, w) = 0, \\
v z + G_2 (u, x, y, z, t, w) = 0.
\end{cases}
\]
The natural projection $\mbU \ratmap \mbU_0$ obtained by removing the variable $v$ restricts to an isomorphism $\mcZ \to \mcZ_0$ since $G_1, G_2 \in (x, y, t, w)$.
Let $\Upsilon \colon \mbU \ratmap \mbP (5, 8, 3, 7, 1)$ be the composite of $\mbU \ratmap \mbU_0$ and $\Upsilon_0$.
Then we get the diagram.
\[
\xymatrix{
\text{$\mbU := \mbT \begin{pNiceArray}{cc|ccccc}[first-row]
u & z & w & v & y & t & x \\
0 & 5 & 8 & 16 & 3 & 7 & 1 \\
-5 & 0 & 1 & 7 & 1 & 4 & 2
\end{pNiceArray}$} \ar@{-->}[r]^{\breve{\Theta}} \ar@{-->}[d]_{\Upsilon} &
\text{$\mbT \begin{pNiceArray}{ccccc|cc}[first-row]
u & z & w & v & y & t & x \\
7 & 4 & 5 & 3 & 1 & 0 & -2 \\
1 & 2 & 3 & 5 & 1 & 2 & 0
\end{pNiceArray} =: \breve{\mbU}$} \ar[d]^{\breve{\Upsilon}} \\
\mbP (5_z, 8_w, 3_y, 7_t, 1_x) & \mbP (1_u, 2_z, 3_w, 5_v, 1_y, 2_t)}
\]
The birational maps $\Upsilon, \Omega$, and the birational morphism $\breve{\Upsilon}$ are defined as follows:
\[
\begin{split}
\Upsilon & \colon (u\!:\!z \, | \, w\!:\!v\!:\!y\!:\!t\!:\!x) \mapsto (z\!:\!w u^{1/5} \!:\!y u^{1/5}\!:\!t u^{4/5}\!:\!x u^{2/5}), \\
\breve{\Upsilon} & \colon (u\!:\!z\!:\!w\!:\!v\!:\!y \, | \,  t\!:\!x) \mapsto (u x^{7/2}\!:\!z x^2\!:\!w x^{5/2}\!:\!v x^{3/2}\!:\!y x^{1/2}\!:\!t), \\
\breve{\Theta} & \colon (u\!:\!z \, | \, w\!:\!v\!:\!y\!:\!t\!:\!x) \mapsto (u\!:\!z\!:\!w\!:\!v\!:\!y \, | \,  t\!:\!x).
\end{split}
\]  
The restriction $\upsilon := \Upsilon|_{\mcZ} \colon \mcZ \to X$ is the Kawamata blow-up of $X$ at $\msr$.
Let $\breve{Z} \subset \breve{\mbU}$ be the birational transform of $\mcZ$ and $\breve{X} := \breve{\Upsilon} (\breve{Z})$. 
Then we have the diagram
\[
\xymatrix{
\mcZ \ar[d]_{\upsilon} \ar@{--}[r]^{\breve{\theta}} & \breve{\mcZ} \ar[d]^{\breve{\upsilon}} \\
X & \breve{X}}
\]
where $\breve{\theta} := \breve{\Theta}|_{\mcZ}$ is a pseudo isomorphism, $\breve{\upsilon} := \breve{\Upsilon}|_{\breve{\mcZ}}$ is a divisorial contraction with center a point, which we denote by $\breve{\msr} \in \breve{X}$.
The variety $\breve{X} = \breve{X}_{6, 7} \subset \mbP (1_u, 1_y, 2_z, 2_t, 3_w, 5_v)$ is a Mori-Fano $3$-fold weighted complete intersection defined by the equations
\[
\begin{cases}
v u - \breve{F}_1 (u, y, z, t, w)  = 0, \\
v z + \breve{F}_2 (u, y, t, w) = 0,
\end{cases}
\]
where
\begin{equation} \label{eq:110defeqbreve}
\breve{F}_1 := G_1 (u, 1, y, z, t, w), \quad
\breve{F}_2 := G_2 (u, 1, y, t, w).
\end{equation}
Following are basic information of $\breve{X}$.
\begin{itemize}
\item $\iota_{\breve{X}} = 1$.
\item $(\breve{A}^3) = 7/10$.
\item $\Sing (\breve{X}) = \{cD/2, \frac{1}{5} (1, 2, 3)\}$.
We set $\breve{\msr} := \breve{\msp}_t \in \breve{X}$ which is the $cD/2$ point (see Lemma~\ref{lem:No110cDdet}), and $\breve{\msq} := \breve{\msp}_v \in \breve{X}$ which is the $\frac{1}{5} (1, 2, 3)$ point.
\end{itemize}

\subsubsection{Defining equations of $X$, $\hat{X}$, and $\breve{X}$}

\begin{Lem} \label{lem:110defeq}
By a choice of homogeneous coordinates, the defining polynomial of $X$ can be written as
\begin{equation} \label{eq:110defeq}
\begin{split}
F = w^2 z + w (t d_6 + d_{13}) + z^4 x & \,  + z^3 a_6 + z^2 (t b_4 + b_{11}) \\ 
& + z (t c_9 + c_{16}) + t^3 + t d_{14} + d_{21},
\end{split}
\end{equation}
where $a_i, b_i, c_d, d_i \in \mbC [x, y]$ are quasi-homogeneous polynomials of degree $i$ such that $\coeff_{d_{21}} (y^7) = 1$.
\end{Lem}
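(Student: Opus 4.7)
The plan is to normalize $F$ to the stated form via a sequence of graded coordinate changes on $\mbC[x,y,z,t,w]$. First I would invoke the partial derivative criterion at each coordinate point to show that quasi-smoothness forces $w^2z$, $z^4x$, $t^3$, and $y^7$ to appear in $F$ with nonzero coefficients: for instance, at $\msp_w$ the only variable $v$ with $(21-\deg v)/8$ a positive integer is $v=z$ (giving $a=2$), so $\partial F/\partial z$ is the only partial that can be nonzero at $\msp_w$, forcing $\coeff_F(w^2 z)\neq 0$; analogous arguments at $\msp_z, \msp_t, \msp_y$ handle the remaining three monomials (where for $\msp_t$ and $\msp_y$ one first observes that these points cannot lie on $X$ under quasi-smoothness). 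Rescaling $x, y, z, t, w$ normalizes these four coefficients to $1$, and the substitution $z \mapsto z - \beta_1 x^2 y - \beta_2 x^5$ then absorbs $w^2 x^2 y$ and $w^2 x^5$ into $w^2 z$, since the $w^2$-coefficient of $F$ lies in $\mbC[x,y,z]$ in degree $5$ (as $\deg t > 5$) and hence in the span of $\{z, x^2y, x^5\}$.

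The technical core of the proof is a joint translation $w \mapsto w + A$ with $A = \delta_1 tx + \delta_2 zy + \delta_3 zx^3 + \delta_4 yx^5 + \delta_5 x^8 + \delta_6 y^2 x^2 \in \mbC[x,y,z,t]_8$ and $t \mapsto t + C$ with $C = \gamma_1 x^7 + \gamma_2 yx^4 + \gamma_3 y^2 x + \gamma_4 zx^2 \in \mbC[x,y,z]_7$, chosen to kill the six $z$-divisible monomials $wtxz$, $wyz^2$, $wx^3 z^2$, $wx^5 yz$, $wx^8 z$, $wx^2 y^2 z$ in the $w^1$-coefficient and the four monomials $t^2 x^7, t^2 yx^4, t^2 y^2 x, t^2 zx^2$ in the $t^2$-part of the $w^0$-coefficient. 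This is a system of $10$ polynomial equations in $10$ unknowns $(\delta_i, \gamma_j)$.

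The hard part will be the interaction between these two substitutions: translating $t$ alters the $w^1$-coefficient through the expansion $wtd_6 \mapsto w(t+C)d_6$, and translating $w$ feeds back into the $t^2$-part through the $AP$ and $A^2 z$ contributions in the new $w^0$-coefficient. I would resolve this by linearizing the full system about the origin and observing that it is block-triangular with invertible diagonal blocks: the leading contribution of $A$ to the $z$-divisible part of the $w^1$-coefficient is the isomorphism $A \mapsto 2zA$ from $\mbC[x,y,z,t]_8$ onto $z \cdot \mbC[x,y,z,t]_8$, and the leading contribution of $C$ to the $t^2$-coefficient is $C \mapsto 3t^2 C$, an isomorphism from $\mbC[x,y,z]_7$ onto $t^2\cdot\mbC[x,y,z]_7$. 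The nonlinear system is therefore solvable, either by the implicit function theorem in this finite-dimensional algebraic setting or by direct iteration, which terminates because each higher-order cross-correction strictly reduces the residual in the graded parameter space. A final enumeration of all degree-$21$ monomials under the weights $(1,3,5,7,8)$ then confirms that the surviving terms of $F$ are exactly those in \eqref{eq:110defeq}, with the polynomials $a_i, b_i, c_i, d_i$ automatically in $\mbC[x,y]$ by degree considerations.
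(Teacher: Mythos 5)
Your overall strategy is the same as the paper's: quasi-smoothness forces $w^2z$, $z^4x$, $t^3$, $y^7$ into $F$ (your partial-derivative analysis at the coordinate points is exactly the intended argument), a substitution in $z$ normalizes the $w^2$-coefficient, and translations of $w$ by a degree-$8$ polynomial and of $t$ by a degree-$7$ polynomial in $\mbC[x,y,z]$ kill the remaining $wz$-divisible and $t^2$-divisible monomials. The paper phrases the last two steps as sequential substitutions ($w \mapsto w - g_8/2$ completing the square against $z w^2$, then $t \mapsto t - e_7$ against $t^3$) rather than as your simultaneous $10\times 10$ system, but the content is identical, and you are right that the nontrivial point is the coupling between the two translations, which the paper's two-line proof does not address explicitly.

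One step of your justification is not correct as stated. With the coarse two-block decomposition (the six $\delta_i$ against the $w^1z$-equations, the four $\gamma_j$ against the $t^2$-equations), the linearized system is \emph{not} block-triangular when $d_6 \neq 0$: the term $w t d_6$ couples the blocks in both directions, since $t \mapsto t + C$ feeds $\gamma_4\, z x^2 d_6$ into the $z$-divisible part of the $w^1$-coefficient, while $w \mapsto w + A$ feeds $\delta_1\, x d_6\, t^2$ into the $t^2$-coefficient. Moreover, invertibility of the Jacobian at the origin does not by itself produce an exact zero of a polynomial map from $\mbC^{10}$ to $\mbC^{10}$, so the implicit function theorem is not the right tool here. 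The gap is easily repaired: the $wtxz$-component of the system reads $2\delta_1 + (\mathrm{const}) = 0$ and involves no other unknown (since $Cd_6$ is $t$-free), and the $t^2 z x^2$-component involves only $\gamma_4$ and $\delta_1$; once $\delta_1$ and then $\gamma_4$ are solved for, the remaining eight equations are each of the form $2\delta_i + (\text{known}) = 0$ or $3\gamma_j + (\text{known}) = 0$, so the full nonlinear system is genuinely triangular in this finer ordering and solvable exactly. Equivalently, in the sequential picture, after the $t$-translation the reintroduced $wz$-coefficient equals $-e_7 d_6|_{z\text{-part}}/z \in \mbC[x,y]$, which is $t$-free, so one further $w$-translation terminates the process without disturbing the $t^2$-condition. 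With this correction your argument is complete.
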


\begin{proof}
By the quasi-smoothness we have $w^2 z \in F$.
By replacing $z$, we may assume that $w^2 z$ is the unique monomial in $F$ divisible by $w^2$.
By filtering off terms divisible by $z$, we can write 
\[
F = z (w^2 + w g_8 + g_{16}) + w h_{13} (x, y, t) + h_{21} (x, y, t)
\] 
for some quasi-homogeneous polynomials $g_8, g_{16} \in \mbC [x, y, z, t]$ and $h_{13}, h_{21} \in \mbC [x, y, t]$ of the indicated degree.
By replacing $w \mapsto w - g_8/2$, we may assume $g_8 = 0$.
By replacing $t \mapsto t - e_7 (x, y, z)$ for an appropriate quasi-homogeneous polynomial $e_7 \in \mbC [x, y, z]$ of degree $7$, we may assume that there is no monomial divisible by $t^2$ except for $t^3$.
By the quasi-smoothness of $X$, we have $z^4 x, t^3, y^7 \in F$, and hence we may assume $\coeff_{F} (z^4 x) = \coeff_{F} (t^3) = \coeff_{F} (y^7)1$ by rescaling $x$, $y$, and $t$.
Then, we obtain \eqref{eq:110defeq} by simply writing down $F$.
\end{proof}

For a quasi-homogeneous polynomial $e = e (y, x)$ of degree $d$ with respect to $\wt (y, x) = (3, 1)$, we define 
\[
\grave{e} (y, u) := u^{-m/3} e (y, u^{1/3}),
\]
where $0 \le m \le 2$ is such that $d \equiv m \pmod{3}$.
Note that $\grave{e}$ is a homogeneous polynomial in variables $y$ and $u$ of degree $\lrd d/3 \rrd$.

\begin{Lem} \label{lem:110defeqhat}
Under the choice of homogeneous coordinates of $\mbP$ as in Lemma \ref{lem:110defeq}, the defining polynomial of $\hat{X} \subset \hat{\mbP}$ is as follows:
\[
\begin{split}
\hat{F} = w^2 z + w (t \grave{d}_6 + \grave{d}_{13}) + z^4 u^3 & \, + z^3 u^2 \grave{a}_6 + z^2 u^2 (t \grave{b}_4 + \grave{b}_{11}) \\
& + z u (t \grave{c}_9 + \grave{c}_{16}) + t^3 u + t u \grave{d}_{14} + \grave{d}_{21},
\end{split}
\]
where $\coeff_{\grave{d}_{21}} (y^7) = 1$.
\end{Lem}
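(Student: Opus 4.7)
The plan is a direct substitution. By the definition of $\hat{X}$ in \eqref{eq:110defeqhat}, one has
\[
\hat{F}(u, y, z, t, w) = u^{-7/8} F(u^{3/8}, u^{1/8} y, u^{7/8} z, u^{5/8} t, w),
\]
so I would plug in the explicit form of $F$ from Lemma~\ref{lem:110defeq} and collect terms. For a generic monomial $x^a y^b z^c t^d w^e$ of $F$, the quasi-homogeneity relation $a + 3b + 5c + 7d + 8e = 21$ together with the substitution produces, after eliminating $a$, the term $u^{7 - b - c - 2d - 3e}\, y^b z^c t^d w^e$, which is an integer power of $u$ and lies in weighted degree $7$ of $\mbP(1_u, 1_y, 1_z, 2_t, 3_w)$. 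This confirms that $\hat{F}$ is well-defined in the target weighted projective space and has the expected degree.

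Next I would repackage the $u$-factors into the accented polynomials $\grave{a}_6, \grave{b}_4, \grave{b}_{11}, \grave{c}_9, \grave{c}_{16}, \grave{d}_6, \grave{d}_{13}, \grave{d}_{14}, \grave{d}_{21}$. For a quasi-homogeneous polynomial $e(y, x)$ of degree $\delta$ with $\wt(y, x) = (3, 1)$, the prescription $\grave{e}(y, u) = u^{-m/3} e(y, u^{1/3})$ (with $m \equiv \delta \pmod 3$) coincides with what the substitution $(x, y) \mapsto (u^{3/8}, u^{1/8} y)$ produces up to a block-dependent integer power of $u$, which then appears as the explicit outer factor on each term of $\hat{F}$ (for example, the factor $u^2$ on $z^3 \grave{a}_6$ or $u$ on $z t \grave{c}_9$). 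The normalization $\coeff_{\grave{d}_{21}}(y^7) = 1$ then follows from $\coeff_{d_{21}}(y^7) = 1$, since the monomial $y^7$ corresponds to $a = 0$ and receives no net factor of $u$.

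The verification is a routine monomial-by-monomial computation with no conceptual obstacle; the only care needed is matching the residues modulo $3$ of the degrees $4, 6, 9, 11, 13, 14, 16, 21$ with the $u$-exponents contributed by the $z$, $t$, and $w$ parts of each monomial, so that the remaining powers of $u$ cleanly absorb into the $\grave{(\cdot)}$ notation and the stated expression for $\hat{F}$ is reproduced.
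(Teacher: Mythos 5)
Your proposal is correct and is exactly what the paper does: its proof of this lemma is a one-line appeal to the definition of $\hat{F}$ in \eqref{eq:110defeqhat} together with the normal form of $F$ from Lemma~\ref{lem:110defeq}, i.e.\ precisely the monomial-by-monomial substitution you carry out. Your bookkeeping (the exponent $7-b-c-2d-3e$, the outer factors $u^3, u^2, u$ on the various blocks, and the normalization $\coeff_{\grave{d}_{21}}(y^7)=1$ from the $a=0$ monomial) all checks out.
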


\begin{proof}
This follows from the definition of $\hat{F}$ given in \eqref{eq:110defeqhat} and Lemma~\ref{lem:110defeq}
\end{proof}.

\begin{Lem} \label{lem:110defeqbreve}
Under the choice of homogeneous coordinates of $\mbP$ as in Lemma~\ref{lem:110defeq}, the defining polynomials of $\breve{X}$ are as follows:
\[
\begin{cases}
v u - (w^2 + z^3 + z^2 \grave{a}_6 + z u (t \grave{b}_4 + \grave{b}_{11}) + u (t \grave{c}_9 + \grave{c}_{16})) = 0, \\
v z + (w (t \grave{d}_6 + \grave{d}_{13}) + t^3 u + t u \grave{d}_{14} + \grave{d}_{21}) = 0,
\end{cases}
\]
where $\coeff_{\grave{d}_{21}} (y^7) =1$.
\end{Lem}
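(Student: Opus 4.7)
The plan is a direct calculation: starting from the standard form of $F$ given in Lemma~\ref{lem:110defeq}, I will decompose $F = z F_1 + F_2$ by simply collecting the terms that contain $z$ and those that do not, obtaining
\[
\begin{split}
F_1 &= w^2 + z^3 x + z^2 a_6 + z(t b_4 + b_{11}) + (t c_9 + c_{16}), \\
F_2 &= w(t d_6 + d_{13}) + t^3 + t d_{14} + d_{21}.
\end{split}
\]
The definitions \eqref{eq:110defeqbreve} then give $\breve{F}_1 = G_1(u,1,y,z,t,w)$ and $\breve{F}_2 = G_2(u,1,y,t,w)$, and the lemma reduces to computing these two specializations using the prescribed substitutions $(x,y,z,t,w) \mapsto (u^{2/5}, yu^{1/5}, z, tu^{4/5}, wu^{1/5})$ together with the overall prefactors $u^{-2/5}$ and $u^{-7/5}$.

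The key observation that organises the calculation is the following identity relating $e(u^{2/5}, yu^{1/5})$ to $\grave{e}(y,u)$. Let $e = e(y,x)$ be quasi-homogeneous of degree $d$ with $\wt(y,x) = (3,1)$, so that every monomial is of the form $x^j y^k$ with $j + 3k = d$, forcing $j \equiv d \pmod{3}$. Write $d = 3\lrd d/3 \rrd + m$ with $0 \le m \le 2$. Then a direct computation shows
\[
e(u^{2/5}, yu^{1/5}) = \sum \alpha_{j,k}\, y^k u^{(2d - 5k)/5}, \qquad \grave{e}(y,u) = \sum \alpha_{j,k}\, y^k u^{\lrd d/3 \rrd - k},
\]
so the two expressions differ by a uniform power of $u$. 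In particular, choosing the correct prefactor in each case yields an equality of the form $u^{-N/5} e(u^{2/5}, yu^{1/5}) = u^{M} \grave{e}(y,u)$ for explicit $N,M$ depending only on $d$ and on the weight of the accompanying $t$-, $w$- or $z$-factor. Applied to each of $a_6, b_4, b_{11}, c_9, c_{16}$ (with the $u^{-2/5}$ prefactor from $G_1$) and to each of $d_6, d_{13}, d_{14}, d_{21}$ (with the $u^{-7/5}$ prefactor from $G_2$, and with the extra powers of $u^{1/5}$ and $u^{4/5}$ coming from substituting $w$ and $t$), this identity transforms every term of $F_1|_{x=1}$ and $F_2|_{x=1}$ into the corresponding term of the formulas stated in the lemma.

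The main obstacle is purely clerical: keeping track of the fractional exponents of $u$ so that they cancel to integer powers. The normalisation $\coeff_{d_{21}}(y^7) = 1$ from Lemma~\ref{lem:110defeq} transfers to $\coeff_{\grave{d}_{21}}(y^7) = 1$ because the monomial $y^7$ is unaffected by the substitution $x \mapsto u^{1/3}$ defining $\grave{d}_{21}$. Assembling the computed terms gives exactly the two equations in the statement.
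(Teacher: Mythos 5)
Your proposal is correct and follows exactly the route the paper intends: the paper's own proof is the one-line remark that the lemma ``follows from the definition of $\breve{F}_1$, $\breve{F}_2$ and Lemma~\ref{lem:110defeq},'' and your computation (the decomposition $F = zF_1 + F_2$, the uniform identity $e(xu^{2/5}, yu^{1/5})|_{x=1} = u^{2d/5 - \lrd d/3 \rrd}\,\grave{e}(y,u)$, and the bookkeeping of the prefactors $u^{-2/5}$, $u^{-7/5}$) is precisely the verification being left to the reader. The only blemish is notational: you write $e(u^{2/5}, yu^{1/5})$ with the arguments in the order $(x,y)$ whereas the paper's convention for $e = e(y,x)$ puts $y$ first, but the intended substitution is unambiguous and all exponents check out.
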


\begin{proof}
This follows from the definition of $\breve{F}_1$, $\breve{F}_2$ given in \eqref{eq:110defeqbreve} and Lemma~\ref{lem:110defeq}
\end{proof}.

\subsubsection{The link $\rho \colon \breve{X} \ratmap \hat{X}$ centered at the $\frac{1}{5} (1, 2, 3)$ point $\breve{\msq}$}

\begin{Lem}
There exists an elementary link $\rho \colon \breve{X} \ratmap \hat{X}$ initiated by the Kawamata blow-up at the $\frac{1}{5} (1, 2, 3)$ point $\breve{\msq} \in \breve{X}$.
The inverse link $\rho^{-1}$ is initiated by a divisorial contraction centered at the $cE$ point $\hat{\msq} \in \hat{X}$.
\end{Lem}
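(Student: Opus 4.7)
The plan is to construct $\rho$ explicitly as a toric $2$-ray game, in complete analogy with the constructions of $\hat{\sigma}$ and $\breve{\sigma}$. First, using the defining equations of $\breve{X}$ given in Lemma~\ref{lem:110defeqbreve}, I would embed $\breve{X}$ into a suitable rank $2$ toric ambient space by introducing a new variable that resolves the Kawamata blow-up at $\breve{\msq} = \breve{\msp}_v$, which is the $\frac{1}{5}(1,2,3)$ point. Concretely, since local orbifold coordinates at $\breve{\msq}$ can be read off from the grading $\mbP(1_u, 1_y, 2_z, 2_t, 3_w, 5_v)$, the Kawamata blow-up corresponds to a weighted blow-up of $\mbP(1_u, 1_y, 2_z, 2_t, 3_w)$ at $\breve{\msp}_v$ (after eliminating $v$ via, say, $v z = -\tilde{F}_2$) with weight $\frac{1}{5}(1,2,3)$ on the orbifold coordinates. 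This embeds $\breve{\mcZ} \to \breve{X}$ into a toric variety $\breve{\mbV}$ of Picard rank $2$.

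Next, I would run the $2$-ray game on $\breve{\mbV}$. This yields a toric diagram of the form
\[
\xymatrix{
\breve{\mbV} \ar@{-->}[r]^{\Xi} \ar[d] & \hat{\mbV} \ar[d] \\
\breve{\mbP} & \hat{\mbP}}
\]
where $\Xi$ is a composite of toric flips and the right vertical arrow is a toric divisorial contraction to a target weighted projective space. The key combinatorial computation is to read off the bi-degree matrix of $\breve{\mbV}$ and determine the signs of the columns along the second ray of the Mori cone; each sign change corresponds to a flip, and the terminal contraction is the divisorial extraction onto $\hat{\mbP} \cong \mbP(1_u, 1_y, 1_z, 2_t, 3_w)$. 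By comparison of the defining equations of $\breve{X}$ in Lemma~\ref{lem:110defeqbreve} with those of $\hat{X}$ in Lemma~\ref{lem:110defeqhat}, the birational transform $\hat{\mcZ} \subset \hat{\mbV}$ maps isomorphically (via the appropriate elimination of variables) onto $\hat{X}$, so that the restriction of the toric diagram gives
\[
\xymatrix{
\breve{\mcZ} \ar[d]_{\breve{\upsilon}'} \ar@{-->}[r]^{\xi} & \hat{\mcZ} \ar[d]^{\hat{\upsilon}'} \\
\breve{X} \ar@{-->}[rr]_{\rho} & & \hat{X},}
\]
where $\xi$ is a pseudo-isomorphism and $\breve{\upsilon}'$, $\hat{\upsilon}'$ are divisorial contractions. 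The center of $\breve{\upsilon}'$ is $\breve{\msq}$ by construction, and by tracing which torus-invariant point on $\hat{\mbP}$ is the image of the contracted divisor on $\hat{\mbV}$, the center of $\hat{\upsilon}'$ is the point $\hat{\msp}_z = \hat{\msq}$, which is the $cE_7$ point by Lemma~\ref{lem:No110cEdet} (to be used).

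The hard part will be twofold: (i) verifying, by an explicit computation, that the restriction of the toric $2$-ray game to the proper transforms of $\breve{X}$ and $\hat{X}$ actually induces a pseudo-isomorphism, and in particular that no component of a flipping locus is contained in one of these proper transforms; (ii) identifying the resulting divisorial contraction $\hat{\upsilon}'$ with a genuine extremal divisorial contraction in the terminal category onto the $cE$ point $\hat{\msq} \in \hat{X}$ --- that is, checking that $\hat{\mcZ}$ has only terminal singularities and that the exceptional divisor is a prime $\mbQ$-Cartier divisor with $-K$ ample over $\hat{X}$. The latter will rely on matching the weighted blow-up obtained from the toric game with one of the distinguished divisorial contractions of discrepancy $1$ over the $cE_7$ singularity classified in Section~\ref{sec:divdisc1}. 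Once (i) and (ii) are established, the diagram above is precisely an elementary Sarkisov link, and $\rho^{-1}$ is initiated by $\hat{\upsilon}'$ by construction.
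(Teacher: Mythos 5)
Your plan is essentially the same construction that underlies the paper's proof, but inlined: the paper disposes of the existence of the link in one line by citing \cite[Section 4.2]{OkI}, where precisely this $2$-ray game (weighted blow-up of the ambient space at the $\frac{1}{5}(1,2,3)$ point, followed by toric flips and a toric divisorial contraction) is carried out for the family $X_{6,7} \subset \mbP(1,1,2,2,3,5)$; the actual content of the paper's proof is then only the identification of the target, via the explicit formula $\bar{F} = s\,\breve{F}_1(u,y,su,t,w) + \breve{F}_2(u,y,t,w)$ for the degree $7$ equation of the resulting hypersurface $\bar{X}_7 \subset \mbP(1,1,1,2,3)$, which by Lemmas \ref{lem:110defeqhat} and \ref{lem:110defeqbreve} is literally $\hat{F}$ after renaming $s$ as $z$, with the center of $\rho^{-1}$ landing at $(0\!:\!0\!:\!1\!:\!0\!:\!0) = \hat{\msq}$. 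Redoing the game from scratch, as you propose, buys self-containedness and quietly sidesteps the point that \cite{OkI} is stated for quasi-smooth members while $\breve{X}$ has a $cD/2$ point (the paper covers this by appealing to ``the arguments in'' loc.\ cit., which are local away from that point); the cost is that your steps (i) and (ii) must then be verified by hand, whereas the citation delivers them. Two corrections to your setup: the Kawamata blow-up at $\breve{\msq} = \breve{\msp}_v$ is realized as a weighted blow-up of the full ambient $\mbP(1_u,1_y,2_z,2_t,3_w,5_v)$ at $\msp_v$ (a rank $2$ toric $5$-fold containing $\breve{\mcZ}$ in codimension $2$), not of $\mbP(1_u,1_y,2_z,2_t,3_w)$ --- you cannot eliminate $v$ before blowing up, since $\msp_v$ is exactly the point where $v$ is the only nonvanishing coordinate, and the projection forgetting $v$ is a nontrivial birational contraction rather than an isomorphism; and note that the divisorial contraction initiating $\rho^{-1}$ is the weighted blow-up with $\wt(u,y,t,w)=(2,1,2,3)$, i.e.\ the divisor $\hat{\mcE}_1$ of discrepancy $1$ from Lemma \ref{lem:110twodivdisc1}, not one of the divisors produced in Section \ref{sec:divdisc1} (which concerns the exclusion lemmas for $cE$ points of the other families).
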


\begin{proof}
Let $\breve{\mcW} \to \breve{X}$ be the Kawamata blow-up of $\breve{X}$ at the $\frac{1}{5} (1, 2, 3)$ point $\breve{\msq}$.
By \cite[Section 4.2]{OkI}, it initiates an elementary link $\rho \colon \breve{X} \ratmap \bar{X}$ to a Mori-Fano hypersurface in $\mbP (1, 1, 1, 2, 3)$, and its inverse link is initiated by a divisorial contraction centered at a non-quotient singular point of $\bar{X}$.

By the arguments in \cite[Section 4.2]{OkI}, $\bar{X} = \bar{X}_7 \subset \mbP (1_u, 1_y, 2_s, 2_t, 3_w)$ is a hypersurface defined by a quasi-homogeneous polynomial $\bar{F} = \bar{F} (u, y, s, t, w)$ of degree $7$, where 
\[
\bar{F} := s \breve{F}_1 (u, y, s u, t, w) + \breve{F}_2 (u, y, t, w).
\]
Explicitly, we have
\[
\begin{split}
\bar{F} &= s (w^2 + s^3 u^3 + s^2 u^2 \grave{a}_6 + s u^2 (t \grave{b}_4 + \grave{b}_{11}) + u (t \grave{c}_9 + \grave{c}_{16})) \\
& \hspace{4.5cm} + w (t \grave{d}_6 + \grave{d}_{13}) + t^3 u + t u \grave{d}_{14} + \grave{d}_{21}.
\end{split}
\]
Moreover the center of the link $\rho^{-1}$ is the point $(0\!:\!0\!:\!1\!:\!0\!:\!0)$.
Thus $\bar{X} = \hat{X}$ and the center of $\rho^{-1}$ is $\hat{\msq}$ by simply replacing $s$ with $z$.
\end{proof}

\begin{Rem} \label{rem:100rhoinv}
We can observe that the divisorial contraction that initiates the link $\rho^{-1} \colon \hat{X} \ratmap \breve{X}$ is the weighted blow-up of $\hat{X}$ at $\hat{\msq}$ with $\wt (u, y, t, w) = (2, 1, 2, 3)$, and it is possible to construct $\rho^{-1}$ by the $2$-ray game of a suitable rank $2$ toric variety.
\end{Rem}

\subsection{Birational geometry of $X$}

By \cite[Lemma 4.1 and 4.2]{ACP}, we have the following.

\begin{Prop} \label{prop:110EL}
The Kawamata blow-ups $\varphi \colon \mcY \to X$ and $\upsilon \colon \mcZ \to X$ at the $\frac{1}{8} (1, 3, 5)$ point $\msq \in X$ and the $\frac{1}{5} (1, 2, 3)$ point $\msr \in X$, respectively, are all the maximal extractions of $X$, and $\EL (X) = \{\hat{\sigma}, \breve{\sigma}\}$.
\end{Prop}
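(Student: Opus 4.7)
The plan is to classify all maximal centers of $X$ by excluding every candidate other than the two terminal cyclic quotient singular points $\msq$ and $\msr$. Since the center of a divisorial contraction is either an irreducible curve, a smooth point, or a singular point of $X$, and since the unique divisorial contraction centered at a terminal cyclic quotient singularity is the Kawamata blow-up by \cite{Kawamata}, it suffices to rule out curves and smooth points, then argue that both Kawamata blow-ups $\varphi$ and $\upsilon$ are maximal extractions.

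First I would apply Lemma~\ref{lem:exclcurves} to rule out curves: $X$ is quasi-smooth, so in particular quasi-smooth along $X \cap \Sing \mbP(1,3,5,7,8)$, and the numerical inequality $(-K_X^3) = \iota_X^3(A^3) = 27/40 < 3 = \iota_X$ is immediate. For smooth points, I would invoke the isolating-class technique (Lemma~\ref{lem:mtdexclsmpt}) in the form used in \cite[Lemma 4.1]{ACP}. The idea is: for any smooth point $\msp \in X$, construct a Weil divisor class $L \sim_\mbQ m A$ isolating $\msp$ with $m/\iota_X \le 4/(-K_X^3) = 160/27$. This is carried out by a stratum-by-stratum analysis on $\mbP(1,3,5,7,8)$, exploiting the shape of the defining polynomial $F$ given in Lemma~\ref{lem:110defeq}; the argument is identical to the one already executed uniformly for the five remaining families in \cite[Lemma 4.1]{ACP}.

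With curves and smooth points ruled out, the only maximal centers can be $\msq$ and $\msr$. To confirm that both Kawamata blow-ups $\varphi \colon \mcY \to X$ at $\msq$ and $\upsilon \colon \mcZ \to X$ at $\msr$ are \emph{actually} maximal extractions, I would point to the explicit elementary links $\hat{\sigma}$ and $\breve{\sigma}$ they initiate, which have already been constructed above as $2$-ray game outputs on the rank $2$ toric ambient spaces $\mbT$ and $\mbU$. Since the center of any elementary link is automatically a maximal center, this suffices. Finally, the elementary link initiated by a given divisorial contraction is unique when it exists, so $\EL(X) = \{\hat{\sigma}, \breve{\sigma}\}$.

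The only potential obstacle is the smooth-point exclusion, which in principle requires a case analysis over all strata of $\mbP$ intersected with $X$; the payoff is that the argument of \cite[Lemma 4.1]{ACP} is written in enough generality to cover Family \textnumero~110 directly, so no new technical input is needed beyond verifying the numerical hypothesis $m/\iota_X \le 160/27$, which is comfortable given that the ambient weights are small.
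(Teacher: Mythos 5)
Your proposal is correct and follows essentially the same route as the paper, whose entire proof is a citation of \cite[Lemmas 4.1 and 4.2]{ACP}: those lemmas carry out exactly the curve and smooth-point exclusions you describe, and the maximality of $\varphi$ and $\upsilon$ then follows because each initiates one of the already-constructed links $\hat{\sigma}$, $\breve{\sigma}$. The only caveat is that your closing remark understates the smooth-point case: the naive isolating sets of degree $a_i a_k$ exceed the bound $m \le 160/9$ on some strata (e.g.\ $x=0$, $y\ne 0$ gives $m=24$), so the stratum-by-stratum analysis of \cite[Lemma 4.1]{ACP} is genuinely needed rather than a routine numerical check.
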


\subsection{Birational geometry of $\hat{X}$}

\subsubsection{Divisorial contractions centered at the $cE$ point}

\begin{Lem} \label{lem:No110cEdet}
The point $\hat{\msq} \in \hat{X}$ is of type $cE_7$.
\end{Lem}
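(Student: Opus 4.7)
The plan is to read off the local equation of $\hat{\msq} \in \hat{X}$ in the chart $z \ne 0$, put it into standard $cE$ normal form by completing the square in $w$ and applying a Tschirnhaus in $u$, and then inspect the low-degree parts of the resulting polynomial to identify the $cE_7$ subtype. I write $\phi(u,y,t,w) := \hat{F}(u,y,1,t,w)$, using the explicit expression for $\hat{F}$ given in Lemma~\ref{lem:110defeqhat}. Since $\hat{\msq}$ is a smooth point of $\hat{\mbP}$ (the weight of $z$ is $1$), this is the local equation of the hypersurface singularity, viewed in the regular parameters $u,y,t,w$. I assign each of the variables $u,y,t,w$ total degree $1$ so that I can track jets systematically, and I note the degrees of the homogeneous pieces $\grave{e}_d(y,u)$: namely $\grave{d}_6,\grave{a}_6$ have degree $2$, $\grave{b}_4$ degree $1$, $\grave{c}_9,\grave{b}_{11}$ degree $3$, $\grave{d}_{13},\grave{d}_{14}$ degree $4$, $\grave{c}_{16}$ degree $5$, and $\grave{d}_{21}$ degree $7$, with $\coeff_{\grave{d}_{21}}(y^7)=1$.

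Next, I observe that the $2$-jet of $\phi$ at the origin is $w^2$, and I complete the square by writing $w' := w + (t\grave{d}_6+\grave{d}_{13})/2$, which gives $\phi = w'^2 + \Psi(u,y,t)$ with $\Psi = Q - P^2/4$, where $P = t\grave{d}_6+\grave{d}_{13}$ has order $\ge 3$ and $Q$ is the sum of all non-$w$ terms of $\phi$. The crucial observation is that the degree $3$ part of $\Psi$ is exactly $u^3$ (since $P^2/4$ has order $\ge 6$ and the only degree $3$ monomial in $Q$ is $u^3$). This already shows that the singularity is of type $cE$, not $cA$ or $cD$.

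Now I apply a Tschirnhaus substitution $u = u' - \eta(y,t,u')/3$ to eliminate the $u^2$ coefficient of $\Psi$; since the coefficient of $u^2$ in $\Psi$ has leading order $2$ in $(y,t)$, the shift $\eta$ is of order $\ge 2$. This brings $\phi$ to the standard form $w'^2 + u'^3 + u' g(y,t) + h(y,t)$. Two computations remain:
\begin{itemize}
\item[(a)] For $g_3$: the coefficient of $u^1$ in $\Psi$ contains $t^3$ (arising from the monomial $t^3u$ in $\phi$, which survives completing the square since $P^2$ contributes no $u^1$ term in degree $\le 3$). The Tschirnhaus corrections to the $u'^1$-coefficient are of the form $-2B_2\eta + 3B_3\eta^2 - \cdots$, where $B_k$ denotes the $u^k$-coefficient of $\Psi$; since $B_2$ and $\eta$ are of order $\ge 2$ and $B_3 = 1 + O(y,t)$, all such corrections are of order $\ge 4$. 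Hence $g_3 = t^3 \ne 0$.
\item[(b)] For $h_4$: the constant-in-$u$ part of $\Psi$ equals $y^7 - \tfrac{1}{4}\bigl(t\grave{d}_6(y,0)+\grave{d}_{13}(y,0)\bigr)^2$, which lies in $(y,t)^6$ because every summand is at least degree $6$ in $(y,t)$. The Tschirnhaus corrections to $h$ come from $-B_1\eta + B_2\eta^2 - B_3\eta^3 + \cdots$; since $B_1$ is of order $\ge 3$ and $\eta$ of order $\ge 2$, these are of order $\ge 5$. Therefore $h_4 = 0$.
\end{itemize}

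Combining (a) and (b) with the criterion recalled after Definition of $cE_n$ singularities, $\hat{\msq} \in \hat{X}$ is of type $cE_7$. The main technical nuisance, rather than a conceptual obstacle, will be bookkeeping of the $u$-expansions of the polynomials $\grave{e}_d(y,u)$ to confirm the order estimates for $B_0,B_1,B_2$; everything else is a direct consequence of the explicit form of $\hat{F}$ in Lemma~\ref{lem:110defeqhat} and the single crucial normalization $\coeff_{\grave{d}_{21}}(y^7) = 1$.
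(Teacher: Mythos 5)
Your proposal is correct, but it takes a genuinely different route from the paper. You normalize the three\-/fold germ $(\hat{\phi}=0)\subset\mbC^4_{u,y,t,w}$ directly into the standard $cE$ form $x^2+u'^3+u'g(y,t)+h(y,t)$ by completing the square in $w$ and applying a Tschirnhaus shift in $u$, and then read off the subtype from $g_3=t^3\neq 0$ and $h_4=0$; your order estimates for the $u^k$-coefficients $B_k$ and for the shift $\eta$ all check out (the only degree-$3$ monomial of $\Psi$ is $u^3$, the $u^1$-coefficient is $t^3$ plus terms of order $\geq 4$, and the $u^0$-coefficient $y^7-\tfrac14(t\grave{d}_6(y,0)+\grave{d}_{13}(y,0))^2$ has order $\geq 6$). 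The paper instead passes to a general hyperplane section: it proves (Lemma~\ref{lem:SE7}) that any section cut by an equation containing $y$ is a surface germ in $\mbC^3_{u,t,w}$ whose lowest-weight part for $\wt(u,t,w)=(6,4,9)$ is $w^2+u^3+t^3u$, concludes $E_7$ by citing Paemurru, and deduces $cE_7$ from the definition of $cDV$ types. The paper's route is shorter and its auxiliary Lemma~\ref{lem:SE7} is reused later (to exclude curves through $\hat{\msq}$ via Tziolas's theorem), whereas your route is self-contained and produces the explicit normal-form data $g_3=t^3$, $h_4=0$, which is exactly the kind of input Propositions~\ref{prop:ncd2cE7e5} and \ref{prop:ncd2cE7e9} consume. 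Two small points to tighten in a final writeup: the passage from $\Psi=\sum_k B_k(y,t)u^k$ (which genuinely contains $u^4,u^5,\dots$ terms coming from $\grave{a}_6(y,u)$, etc.) to a cubic in $u'$ should be justified by Weierstrass preparation before the Tschirnhaus shift, though your order bounds survive this step; and note that the criterion you invoke is the correct one ($h_4=0$ and $g_3\neq 0$ for $cE_7$, as in Koll\'ar and as used in Propositions~\ref{prop:ncd2cE7e5}--\ref{prop:ncd2cE7e9}), even though the Remark following the definition of $cDV$ singularities misprints it as $g_4\neq 0$.
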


\begin{proof}
The germ $\hat{\msq} \in \hat{X}$ is the germ $(\hat{\phi} = 0) \subset \mbC^4_{u, y, t, w}$, where $\hat{\phi} = \hat{F} (u, y, 1, t, w)$.
We know that $\hat{\msq} \in \hat{X}$ is a Gorenstein terminal singularity, so that its general hyperplane section $S$ is a Du Val singularity.
By Lemma \ref{lem:SE7} below, $\hat{\msq} \in S$ is of type $E_7$, and thus $\hat{\msq} \in \hat{X}$ is of type $cE_7$. 
\end{proof}

\begin{Lem} \label{lem:SE7}
Let $h \in \mbC \{u, y, t, w\}$ be a convergent power series such that $y \in h$.
If the surface germ $\hat{\msq} \in S \subset \hat{X}$ cut by the equation $h (u, y, t, w) = 0$ is an isolated singularity, then $S$ is of type $E_7$.  
\end{Lem}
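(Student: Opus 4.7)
The strategy is to apply the implicit function theorem to eliminate $y$, and then identify the resulting surface germ via a weighted-order analysis of $\hat\phi$.

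Since $y \in h$, the implicit function theorem produces a convergent power series $y = y_0(u, t, w) \in (u, t, w) \cdot \mbC\{u, t, w\}$ with $h(u, y_0, t, w) = 0$, and $S$ is then cut out in $\mbC^3_{u, t, w}$ by the single equation
\[
\bar{\phi}(u, t, w) := \hat{\phi}(u, y_0(u, t, w), t, w) = 0,
\]
where $\hat{\phi} = \hat{F}(u, y, 1, t, w)$ has the explicit form supplied by Lemma~\ref{lem:110defeqhat}, namely
\[
\hat{\phi} = w^2 + w(t\grave{d}_6 + \grave{d}_{13}) + u^3 + u^2\grave{a}_6 + u^2(t\grave{b}_4 + \grave{b}_{11}) + u(t\grave{c}_9 + \grave{c}_{16}) + t^3 u + tu\grave{d}_{14} + \grave{d}_{21},
\]
with each $\grave{e}_d(y, u)$ a homogeneous polynomial in $(y, u)$ of degree $\lrd d/3 \rrd$ and $\coeff_{\grave{d}_{21}}(y^7) = 1$.

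I would next assign the quasi-homogeneous weights $\wt(u, t, w, y) = (6, 4, 9, 4)$, chosen so that the three $E_7$-type monomials $w^2$, $u^3$, and $t^3 u$ all carry weight $18$. The key bookkeeping is to verify that every remaining monomial group of $\hat{\phi}$ has weight strictly greater than $18$; for example $u^2\grave{a}_6$ has weight $\ge 12 + 2\cdot 4 = 20$, $u^2 t\grave{b}_4$ has weight $\ge 12 + 4 + 4 = 20$, $wt\grave{d}_6$ has weight $\ge 9 + 4 + 8 = 21$, $ut\grave{c}_9$ has weight $\ge 6 + 4 + 12 = 22$, and $\grave{d}_{21}$ has weight $\ge 7\cdot 4 = 28$. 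Because $y_0(u, t, w)$ has $(u, t, w)$-weight $\ge 4 = \wt(y)$, the substitution $y \mapsto y_0$ cannot lower any of these bounds; hence, with respect to $\wt(u, t, w) = (6, 4, 9)$,
\[
\bar{\phi} = w^2 + u^3 + t^3 u + (\text{terms of weight} > 18).
\]

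The leading quasi-homogeneous part $f_0 := w^2 + u^3 + t^3 u$ has an isolated critical point at the origin (direct check of the partial derivatives), and after swapping the roles of $u$ and $t$ it becomes the standard $E_7$ form $w^2 + t^3 + tu^3$. By the finite determinacy theorem for quasi-homogeneous hypersurface singularities with isolated critical point, any higher-weight perturbation of $f_0$ is contact-equivalent to $f_0$; hence $\bar{\phi}$ defines a singularity of type $E_7$, and $S$ is of type $E_7$ as claimed. The main obstacle is the uniform weight inequality over the eleven monomial groups of $\hat{\phi}$; this is made straightforward by the homogeneity of each $\grave{e}_d$ in $(y, u)$ of degree $\lrd d/3 \rrd$ together with the equality $\wt(y) = \wt(t) = 4$, which guarantees that the implicit-function substitution is weight-neutral and keeps the $E_7$ leading part isolated.
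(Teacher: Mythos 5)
Your argument is correct and follows essentially the same route as the paper: eliminate $y$ using the hypothesis $y \in h$, work with the weight $\msw (u, t, w) = (6, 4, 9)$ (under which the substituted $y$-series has weight $\ge 4$), and read off the lowest-weight part $w^2 + u^3 + t^3 u$, which is the $E_7$ equation; the paper then concludes by citing \cite[Corollary 4.7]{Paemurru} instead of invoking a determinacy theorem directly. One caveat: your blanket claim that \emph{any} higher-weight perturbation of a quasi-homogeneous germ with isolated critical point is contact-equivalent to it is false in general (it requires that the Milnor algebra of the principal part have no basis monomials of weight at least the degree), but it does hold for $w^2 + u^3 + t^3 u$, whose Milnor algebra basis monomials all have weight strictly less than $18$, so your conclusion stands.
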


\begin{proof}
We choose homogeneous coordinates such that $\hat{F}$ is as in Lemma~\ref{lem:110defeqhat}.
The germ $\hat{\msq} \in \hat{X}$ is the germ $(\hat{\phi} = 0) \subset \mbC^4_{u, y, t, w}$, where $\hat{\phi} = \hat{F} (u, y, 1, t, w)$.
Filtering off terms divisible by $y$ in $h$, we can write $h = y \xi - e$, where $\xi \in \mbC \{u, y, t, w\}$ and $e \in \mbC \{u, t, w\}$ such that $\xi (o) \ne 0$.
We introduce new coordinates $(u, \bar{y}, z, t)$, where $\bar{y} = y \xi$. 
We have $y = g$ for some $g (u, \bar{y}, t, w) \in \mbC \{u, \bar{y}, t, w\}$ such that $\bar{y} \in g$ and $g (o) = 0$.
In this new coordinates, $S$ is equivalent to the germ
\[
\begin{split}
& (\hat{\phi} (u, g, t, w) = \bar{y} - e = 0) \subset \mbC^4_{u, \bar{y}, t, w}, \\
\cong & (\hat{\phi} (u, \bar{g}, t, w) = 0) \subset \mbC^3_{u, t, w},
\end{split}
\]
where $\bar{g} = g (u, e, t, w) \in \mbC \{u, t, w\}$.

We consider the weight $\msw$ on $u, t, w$ defined by $\msw (u, t, w) = (6, 4, 9)$.
We have $\msw (\bar{g}) \ge 4$.
For a power series $f = f (u, y)$, we define $\bar{f} = f (u, e)$.
Then we have
\[ 
\msw (\grave{a}_i (u, \bar{g})) = \msw (\grave{b}_i (u, \bar{g})) = \msw (\grave{c}_i (u, \bar{g})) = \msw (\grave{d}_i (u, \bar{g})) \ge 4 \lrd i/3 \rrd,
\]
and thus the least $\msw$-weight terms of $\hat{\phi} (u, \hat{g}, t, w)$ are
\[
\hat{\phi} (u, \bar{g}, t, w)_{\msw = 18} = w^2 + u^3 + t^3 u.
\]
By \cite[Corollary 4.7]{Paemurru}, $\hat{\msq} \in S$ is Du Val of type $E_7$.
\end{proof}

We define weighted blow-ups of $\hat{X}$ at the $cE$ point $\hat{\msq}$ as follows:
\[
\begin{split}
\hat{\varphi}_1 &\colon \hat{\mcY}_1 \to \hat{X}, \quad \wt (u, y, t, w) = (2, 1, 2, 3), \\
\hat{\varphi}_2 &\colon \hat{\mcY}_2 \to \hat{X}, \quad \wt (u, y, t, w) = (3, 1, 1, 3),
\end{split}
\]
and let $\hat{\mcE}_i$ be the $\hat{\psi}_i$-exceptional locus for $i = 1, 2$.

\begin{Lem} \label{lem:110twodivdisc1}
$\hat{\mcE}_1$ and $\hat{\mcE}_2$ give distinct divisors of discrepancy $1$ over $\hat{\msq} \in \hat{X}$.
\end{Lem}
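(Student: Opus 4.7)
The plan is to apply Lemma~\ref{lem:genwblexc} to each $\hat{\varphi}_i$ in order to conclude that $\hat{\mcE}_i$ is an irreducible exceptional divisor of discrepancy one, and then to distinguish the two divisors via their valuations on the coordinates $u, y, t, w$.

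Since $\hat{\msq} = \hat{\msp}_z$ lies in the smooth locus of $\hat{\mbP} = \mbP(1, 1, 1, 2, 3)$, I would first identify the germ $\hat{\msq} \in \hat{X}$ with the hypersurface germ $(\hat{\phi} = 0) \subset \mbC^4_{u, y, t, w}$ where $\hat{\phi}(u, y, t, w) := \hat{F}(u, y, 1, t, w)$. Under this identification each $\hat{\varphi}_i$ becomes the weighted blow-up of the origin with weight $\msw_i$, restricted to the hypersurface.

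Next, using the explicit formula for $\hat{F}$ from Lemma~\ref{lem:110defeqhat} together with the fact that $\grave{e}_d$ is homogeneous of degree $\lrd d/3 \rrd$ in $y, u$, I would tabulate the $\msw_i$-weight of each monomial of $\hat{\phi}$. A direct term-by-term check should yield
\[
\hat{\phi}_{\msw_1 = 6} = w^2 + u^3 + \gamma \, u^2 y^2, \qquad \hat{\phi}_{\msw_2 = 6} = w^2 + t^3 u + \delta \, w t y^2,
\]
with $\gamma = \coeff_{a_6}(y^2)$ and $\delta = \coeff_{d_6}(y^2)$. The first polynomial is irreducible since, viewed as a quadratic in $w$, its constant term $u^2(u + \gamma y^2)$ is not a square; the second is irreducible since, viewed as a linear polynomial in $u$ over $\mbC[w, t, y]$, the coefficient $t^3$ is coprime to $w^2 + \delta w t y^2 = w(w + \delta t y^2)$. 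Because $\sum_j \msw_i(x_j) = 8$ and $\msw_i(\hat{\phi}) = 6$ in both cases, Lemma~\ref{lem:genwblexc} then yields that each $\hat{\mcE}_i$ is irreducible of discrepancy $8 - 6 - 1 = 1$.

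For distinctness: both weight vectors $(2, 1, 2, 3)$ and $(3, 1, 1, 3)$ have $\gcd$ equal to one, so each $\hat{\mcE}_i$ is a prime divisor whose normalized valuation $\ord_{\hat{\mcE}_i}$ on $\mbC(\hat{X})$ coincides with $\msw_i$ on the coordinates $u, y, t, w$. As $\ord_{\hat{\mcE}_1}(u) = 2 \ne 3 = \ord_{\hat{\mcE}_2}(u)$, the two valuations differ, and the divisors are distinct. The hard part, such as it is, will be the weight bookkeeping at the extraction of the principal parts: one must walk through each of the dozen-or-so summands of $\hat{\phi}$ and verify, using only the degree bound on the $\grave{e}_d$'s, that none produces a monomial of $\msw_i$-weight below $6$ and that the three monomials listed above exhaust those of weight exactly $6$. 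Everything else is essentially formal.
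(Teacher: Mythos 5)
Your proposal is correct and follows essentially the same route as the paper: identify $\hat{\msq}\in\hat{X}$ with the affine germ $(\hat{\phi}=0)\subset\mbC^4_{u,y,t,w}$ with $\hat{\phi}=\hat{F}(u,y,1,t,w)$, compute $\hat{\phi}_{\hat{\msw}_1=6}=w^2+u^3+\alpha_6 u^2y^2$ and $\hat{\phi}_{\hat{\msw}_2=6}=w^2+\delta_6 wty^2+t^3u$, check irreducibility, and invoke Lemma~\ref{lem:genwblexc}. Your explicit irreducibility arguments and the valuation comparison $\ord_{\hat{\mcE}_1}(u)=2\ne 3=\ord_{\hat{\mcE}_2}(u)$ simply spell out details the paper leaves implicit.
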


\begin{proof}
For $i = 1, 2$, let $\hat{\msw}_i$ be the weight defined by $\hat{\msw}_1 (u, y, t, w) = (2, 1, 2, 3)$ and $\hat{\msw}_2 (u, y, t, w) = (3, 1, 1, 3)$.
We choose homogeneous coordinates so that the equation $\hat{F}$ is the one in Lemma~\ref{lem:110defeqhat}.
We set $\hat{\phi} = \hat{F} (u, y, 1, t, w)$.
Then the germ $\hat{\msq} \in \hat{X}$ coincides with the germ $(\hat{\phi} = 0) \subset \mbC^4_{u, y, t, w}$.
We have $\hat{\msw}_1 (\hat{\phi}) = \hat{\msw}_2 (\hat{\phi}) = 6$ and 
\[
\begin{split}
\hat{\phi}_{\hat{\msw}_1 = 6} &= w^2 + u^3 + \alpha_6 u^2 y^2 \\
\hat{\phi}_{\hat{\msw}_2 = 6} &= w^2 + \delta_6 w t y^2 + t^3 u,
\end{split}
\]
where $\alpha_6 = \coeff_{\grave{a}_6} (y^2)$ and $\delta_6 = \coeff_{\grave{d}_6} (y^2)$.
We see that $\hat{\phi}_{\hat{\msw}_1 = 6}$ and $\hat{\phi}_{\hat{\msw}_2 = 6}$ are irreducible polynomials.
The assertion follows from Lemma~\ref{lem:genwblexc}.
\end{proof}

\begin{Lem} \label{lem:110hatdiv1}
There are no divisorial contractions centered at the $cE_7$ point $\hat{\msq} \in \hat{X}$ other than $\hat{\varphi}$, $\hat{\varphi}_1$, and $\hat{\varphi}_2$. 
\end{Lem}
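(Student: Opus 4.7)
The strategy is to identify $\hat{\varphi}$ with the unique type $e9$ divisorial contraction of discrepancy $2$ in the sense of Proposition~\ref{prop:ncd2cE7e9}, and then use that proposition to see that apart from $\hat{\varphi}$, only divisorial contractions associated to divisors of discrepancy $1$ can exist, and there are at most two of them. The divisors $\hat{\mcE}_1$ and $\hat{\mcE}_2$ from Lemma~\ref{lem:110twodivdisc1} will account for these two slots.

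First I would verify that $\hat{\varphi}$ has discrepancy $2$ over $\hat{X}$ and is of type $e9$. Starting from $\hat{\phi} := \hat{F}(u,y,1,t,w)$ given by Lemma~\ref{lem:110defeqhat} and the toric description of $\hat{\Phi}$, the morphism $\hat{\varphi}$ is locally the weighted blow-up with $\wt(u,w,y,t) = (5,7,2,3)$. A direct weight computation (checking the contribution of $w^2$, $t^3u$, $wty^2$, $uty^3$, and $y^7 \in \grave{d}_{21}$) shows that $\msw(\hat{\phi}) = 14$, giving discrepancy $(5+7+2+3) - 14 - 1 = 2$ by Lemma~\ref{lem:genwblexc}, provided the leading weighted-homogeneous part is irreducible, which one checks from the explicit shape of $\hat{\phi}_{\msw=14}$. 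To match Proposition~\ref{prop:ncd2cE7e9}, I would perform the Tschirnhaus substitutions $w \mapsto w - (t\grave{d}_6 + \grave{d}_{13})/2$ to kill the $w$-linear terms, then $u \mapsto u - (\grave{a}_6 + t\grave{b}_4 + \grave{b}_{11})/3$ to kill the $u^2$ terms, bringing $\hat{\msq} \in \hat{X}$ to the standard form $w^2 + u^3 + \lambda u^2 y^2 + u\tilde{g}(t,y) + \tilde{h}(t,y) = 0$ with $cE_7$-coordinates $(x,y,z,u) = (w,u,t,y)$; the weights $\wt(w,u,t,y) = (7,5,3,2)$ then exactly match the type $e9$ weights of Proposition~\ref{prop:ncd2cE7e9}, and the conditions $t^3 \in \tilde{g}$ and $y^7 \in \tilde{h}$ survive from the monomials $t^3u$ and $y^7 \in \grave{d}_{21}$ of $\hat{F}$.

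Granted this, Proposition~\ref{prop:ncd2cE7e9}(2) and (3) yield that there are exactly two divisors of discrepancy $1$ over $\hat{\msq}$ and that $\hat{\varphi}$ is the unique divisorial contraction of discrepancy greater than $1$ centered at $\hat{\msq}$. By Lemma~\ref{lem:110twodivdisc1}, $\hat{\mcE}_1$ and $\hat{\mcE}_2$ are distinct divisors of discrepancy $1$ over $\hat{\msq}$, so they exhaust this list. It remains to check that both $\hat{\varphi}_1$ and $\hat{\varphi}_2$ are genuine divisorial contractions, i.e., that $\hat{\mcY}_1$ and $\hat{\mcY}_2$ have only terminal singularities (Lemma~\ref{lem:divcontwbl}). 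For $\hat{\varphi}_1$ this is automatic from Remark~\ref{rem:100rhoinv}: $\hat{\varphi}_1$ is the divisorial contraction initiating $\rho^{-1}$. For $\hat{\varphi}_2$, I would work through the four affine charts of the weighted blow-up with $\wt(u,y,t,w)=(3,1,1,3)$ and verify that the non-Gorenstein points of $\hat{\mcY}_2$ on its exceptional divisor are isolated terminal cyclic quotient singularities of index $3$; the hypersurface equation cuts out a terminal singularity in each case by a direct orbifold calculation. Combining these steps gives the claim.

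The main obstacle is the normal-form computation in the first step: tracking the Tschirnhaus substitutions and extracting enough information about $\tilde{g}$ and $\tilde{h}$ to certify that the hypotheses of Proposition~\ref{prop:ncd2cE7e9} hold, in particular that $\hat{\phi}_{\msw=14}$ is irreducible so that Lemma~\ref{lem:genwblexc} actually produces a divisor (and not a splitting of the exceptional set). The terminality analysis for $\hat{\mcY}_2$ is more mechanical but must be done chart by chart.
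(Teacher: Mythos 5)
Your proposal is correct in its logical skeleton and rests on the same two pillars as the paper --- Proposition~\ref{prop:ncd2cE7e9} and Lemma~\ref{lem:110twodivdisc1} --- but you deploy them in a different order, and this costs you a substantial computation that the paper avoids. The paper's proof is three lines: $\hat{\varphi}$ has discrepancy $2$ over the $cE_7$ point, so by the classification it falls under Proposition~\ref{prop:ncd2cE7e5} (type $e5$) or Proposition~\ref{prop:ncd2cE7e9} (type $e9$); the $e5$ case is impossible because Proposition~\ref{prop:ncd2cE7e5}(2) would allow only \emph{one} divisor of discrepancy $1$ over $\hat{\msq}$, whereas Lemma~\ref{lem:110twodivdisc1} exhibits two; hence we are in the $e9$ case and Proposition~\ref{prop:ncd2cE7e9}(2),(3) finishes the argument. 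You instead establish the $e9$ type head-on by Tschirnhaus substitutions and normal-form matching against Yamamoto's description, and use Lemma~\ref{lem:110twodivdisc1} only afterwards to name the two discrepancy-$1$ divisors. That is workable in principle, but the normal-form step you yourself flag as the main obstacle is exactly the work the counting argument lets you skip: reverse the roles and let the existence of two discrepancy-$1$ divisors \emph{rule out} $e5$ rather than verifying $e9$ directly.

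Separately, your final step --- checking chart by chart that $\hat{\mcY}_2$ has only terminal singularities so that $\hat{\varphi}_2$ is a genuine divisorial contraction --- is not needed for the statement, which only asserts that no contraction \emph{other than} $\hat{\varphi}$, $\hat{\varphi}_1$, $\hat{\varphi}_2$ exists. Any discrepancy-$1$ contraction must realize the valuation of $\hat{\mcE}_1$ or $\hat{\mcE}_2$, and by Lemma~\ref{lem:divcontwbl} it then coincides with $\hat{\varphi}_1$ or $\hat{\varphi}_2$ if it exists at all. The paper deliberately leaves open whether $\hat{\varphi}_2$ is a divisorial contraction (see the remark following the lemma) and instead excludes it later as a maximal extraction in Lemma~\ref{lem:110exclphi2}, so you should not commit to a terminality verification that may not go through.
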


\begin{proof}
The divisorial contraction $\hat{\varphi}$ is of discrepancy $2$ over the $cE_7$ point $\hat{\msq} \in \hat{X}$.
Hence it corresponds to the one given in either Proposition \ref{prop:ncd2cE7e5} or \ref{prop:ncd2cE7e9}.
Since there are two divisors of discrepancy $1$ over $\hat{\msq} \in \hat{X}$ by Lemma~\ref{lem:110twodivdisc1}, we are in case Proposition \ref{prop:ncd2cE7e9} and the assertion follows immediately.
\end{proof}

\begin{Rem}
The morphism $\hat{\varphi}_1$ is in fact a divisorial contraction since 
it is the the one which initiates the elementary link $\rho^{-1} \colon \hat{X} \ratmap \breve{X}$ (see Remark \ref{rem:100rhoinv}).
We do not determine whether $\hat{\varphi}_2$ is a divisorial contraction or not.
We will instead exclude $\hat{\varphi}_2$ as a maximal extraction in Lemma \ref{lem:110exclphi2} if it is a divisorial contraction.
\end{Rem}

\subsubsection{A birational involution of $\hat{X}$}

We denote by $\hat{\psi} \colon \hat{\mcZ} \to \hat{X}$ the Kawamata blow-up of $\hat{X}$ at the $\frac{1}{2} (1, 1, 1)$ point $\hat{\msp}_t$.

\begin{Lem} \label{lem:110birinv}
If $\hat{\psi}$ is a maximal extraction, then there exists a birational involution $\hat{\iota}$ of $\hat{X}$ which is an elementary link initiated by $\hat{\psi}$. 
\end{Lem}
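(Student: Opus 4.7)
The plan is to construct the elementary link initiated by $\hat{\psi}$ via a $2$-ray game on a suitable rank $2$ toric embedding of $\hat{\mcZ}$, in the spirit of the constructions of $\hat{\sigma}$ and $\breve{\sigma}$ earlier in this section, and then to identify the resulting birational self-map of $\hat{X}$ with the Bertini-type involution coming from the quadratic structure of $\hat{F}$ in the variable $w$.

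The starting observation is that by Lemma~\ref{lem:110defeqhat}, the defining polynomial $\hat{F}$ is quadratic in $w$,
\[
\hat{F} = z w^2 + w (t \grave{d}_6 + \grave{d}_{13}) + R (u, y, z, t),
\]
presenting $\hat{X}$ generically as a double cover of $\mbP (1_u, 1_y, 1_z, 2_t)$. The associated covering involution
\[
\hat{\iota}_0 \colon (u\!:\!y\!:\!z\!:\!t\!:\!w) \ratmap \bigl(u\!:\!y\!:\!z\!:\!t\!:\! -w - (t \grave{d}_6 + \grave{d}_{13})/z\bigr)
\]
is a birational self-map of $\hat{X}$, regular where $z \ne 0$ and with indeterminacy concentrated at loci that should be resolved by the Kawamata blow-up at $\hat{\msp}_t$.

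I would embed $\hat{\mcZ}$ into a rank $2$ toric variety
\[
\hat{\mbU} := \mbT \begin{pNiceArray}{cc|cccc}[first-row]
s & t & u & y & z & w \\
0 & 2 & 1 & 1 & 1 & 3 \\
-2 & 0 & a & 1 & 1 & b
\end{pNiceArray}
\]
with integers $a$, $b$ recording the orders of vanishing of $u$ and $w$ along the exceptional divisor of $\hat{\psi}$ (these are pinned down by the monomials $t^3 u$ and $z w^2$ in $\hat{F}$, combined with the local Kawamata weights $\tfrac{1}{2}(1,1,1)$ on $y$, $z$, $w$). The toric contraction of the first row recovers $\hat{\psi}$. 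I would then run the $2$-ray game on $\hat{\mbU}$, producing a chain of toric (anti)flips and a flop and ending at a toric divisorial contraction $\hat{\mbU}' \to \hat{\mbP}'$. At each intermediate stage I would verify that the toric transition restricts to a genuine (anti)flip or flop of the strict transform of $\hat{\mcZ}$ in the terminal category, using the quadratic-in-$w$ structure of the equation of $\hat{\mcZ}$ inherited from $\hat{F}$. The crucial point is that the weight matrix is symmetric under swapping the roles of the $s$- and $w$-columns (after a change of basis of $\mbZ^2$), reflecting the involution $w \mapsto -w - \cdots$; this forces $\hat{\mbP}' = \hat{\mbP}$ and the final contraction to be centered again at $\hat{\msp}_t$. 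Hence the whole link is a birational self-map $\hat{\iota} \colon \hat{X} \ratmap \hat{X}$.

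To conclude that $\hat{\iota}$ is an involution, I would compare it with $\hat{\iota}_0$ on the open set $z \ne 0$, where both are determined by the double cover structure; since they agree there and $\hat{\iota}_0^2 = \mathrm{id}$ generically, $\hat{\iota}$ is a birational involution globally. The main obstacle I anticipate is the bookkeeping inside the $2$-ray game: carefully identifying the flipping/flopping loci, tracking the singularities of the intermediate strict transforms of $\hat{\mcZ}$, and confirming that every toric transition genuinely restricts to a terminal (anti)flip or flop. The explicit form of $\hat{F}$ from Lemma~\ref{lem:110defeqhat}, and the quadratic-in-$w$ structure that it descends to $\hat{\mcZ}$, should make this tractable but will be the technical heart of the argument.
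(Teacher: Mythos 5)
The paper's proof of this lemma is a one-line appeal to the general theorem on birational (elliptic) involutions, \cite[Theorem 3.3]{OkII}, applied to the structure $\hat{F} = w^2 z + w(\cdots) + t^3 u + \cdots$ at $\hat{\msp}_t$. Your proposal tries to build the link by hand, but it builds the wrong map. The covering involution $\hat{\iota}_0 \colon w \mapsto -w - (t \grave{d}_6 + \grave{d}_{13})/z$ of the double cover $\hat{X} \ratmap \mbP (1_u, 1_y, 1_z, 2_t)$ is the classical \emph{quadratic} involution attached to the quadratic-in-$w$ structure of $\hat{F}$: the second sheet of the cover ``lives'' at $\hat{\msp}_w$, and the natural resolution of the indeterminacy of $\hat{\iota}_0$ is the Kawamata blow-up at the $\frac{1}{3}(1,1,2)$ point $\hat{\msp}_w$ (cf.\ \cite[4.4]{CPR}). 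It is not an elementary link initiated by $\hat{\psi}$: a link initiated by $\hat{\psi}$ has center $\hat{\msp}_t$ and, by Lemma~\ref{lem:untwist}, must untwist every movable system for which $\hat{\psi}$ is a maximal extraction, which $\hat{\iota}_0$ does not do (it untwists $\hat{\msp}_w$, a different singular point). The involution the lemma asserts is the \emph{elliptic} involution, which uses the fact that $\hat{F}$ is cubic in $t$ with leading coefficient $u$ (the term $t^3 u$) together with the term $w^2 z$; generically it is the reflection on the genus-one fibres of the projection $\hat{X} \ratmap \mbP (1_u, 1_y, 1_z)$, exactly as for the $\frac{1}{2}(1,1,1)$ point of the quasi-smooth family $X_7 \subset \mbP (1,1,1,2,3)$ in \cite{CPR}. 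Consequently your claimed column-swap symmetry of the weight matrix ``reflecting $w \mapsto -w - \cdots$'' is attached to the wrong involution, and the final step, where you deduce $\hat{\iota}^2 = \mathrm{id}$ by comparing with $\hat{\iota}_0$ on $z \ne 0$, fails because the two maps do not agree there.

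Even if you redirect the construction at the correct (elliptic) involution, two further points need attention. First, the $2$-ray game of the naive rank $2$ toric blow-up along the first row of your matrix is not guaranteed to restrict to the link: for elliptic involutions the midpoint of the link is generally not induced by a toric transition of the obvious ambient, and one typically has to re-embed the blow-up by unprojection into a larger toric variety first (compare how the paper constructs $\breve{\sigma}$ for the $\frac{1}{5}(1,1,4)$ point, where an extra coordinate $v$ is introduced before the game is played). Second, the verification that each toric step restricts to a genuine terminal flip, flop, or antiflip --- which you correctly identify as the technical heart --- is entirely deferred in your write-up, whereas it is precisely the content packaged into the hypotheses of \cite[Theorem 3.3]{OkII} that the paper checks by exhibiting $\hat{F}$ in the normal form of Lemma~\ref{lem:110defeqhat}.
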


\begin{proof}
This follows from \cite[Theorem 3.3]{OkII}.
%By the assumption we can write
%\[
%F = w^2 z + w f_{4} + g_7,
%\]
%where $f_i = f_i (u, y, z, t)$ is a quasi-homogeneous polynomial of degree $i$.
%We briefly recall the arguments in \cite[Theorem 3.3]{OkII}.
%Let $\hat{X} \ratmap \mbP (1_u, 1_y, 1_z, 2_t)$ be the projection.
%Then the Kawamata blow-up $\hat{\varphi}$ resolves the indeterminacy of $\pi$ and the induced morphism gives a finite morphism $\pi \colon \hat{\mcZ} \to \mbP (1, 1, 1, 2)$ of degree $2$.
%The Stein factorization $\eta \colon \hat{\mcZ} \to Z$ of $\pi$ is then a birational morphism.  
%Then it is proved in \cite[Theorem 3.3]{OkII} that $\hat{\varphi}_2$ is a maximal extraction if and only if the exceptional locus of $\eta$ is of codimension at least $2$ in $\hat{\mcZ}$, and in this case the associated elementary link is a birational involution which we denote by $\hat{\iota} \colon \hat{X} \ratmap \hat{X}$.
%The exceptional locus of $\eta$ is proper transform of the set $\Sigma := (z = f_4 = g _7 = 0) \subset \hat{X}$.
%Under the choice of homogeneous coordinates as in Lemma \ref{lem:110defeqhat}, we have 
%\[
%\Sigma = (z = t \grave{d}_6 + \grave{d}_{13} = t^3 u + t u \grave{d}_{14} + \grave{d}_{21} = 0) \subset \mbP (1, 1, 1, 2, 3),
%\] 
%which is of codimension at least $2$. 
\end{proof}

\subsubsection{Maximal extractions of $\hat{X}$}

\begin{Lem} \label{lem:110hatCeq}
Suppose that there is an irreducible and reduced curve $\Gamma \subset \hat{X}$ of degree $1$ which passes through the $cE_7$ point $\hat{\msq}$ but does not pass through any other singular point.
Then $y^2 \in \grave{d}_6$ and
\[
\Gamma = (u = t - \lambda z y - \mu y^2 = w - \nu y^3 = 0)
\]
for some $\lambda, \mu, \nu \in \mbC$ with $\lambda \ne 0$ and $\nu \ne 0$.
\end{Lem}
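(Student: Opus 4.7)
The plan is to parameterize $\Gamma$ via its normalization $\nu \colon \mbP^1 \to \Gamma \subset \hat{X}$ and read off the shape of $\Gamma$ from the defining equation $\hat{F} = 0$ given in Lemma~\ref{lem:110defeqhat}. Since $\Gamma$ misses the non-Gorenstein singular points $\hat{\msp}_t$ and $\hat{\msp}_w$, the Weil divisor $\hat{A}$ is Cartier along $\Gamma$, so $(\hat{A} \cdot \Gamma) = 1$ gives $\nu^* \mcO (\hat{A}) \cong \mcO_{\mbP^1} (1)$. Consequently, the homogeneous coordinates $u, y, z, t, w$ pull back to forms on $\mbP^1$ of degrees $1, 1, 1, 2, 3$. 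I will choose coordinates $[s : \tau]$ on $\mbP^1$ with $[0 : 1]$ mapping to $\hat{\msq} = (0\!:\!0\!:\!1\!:\!0\!:\!0)$; then after applying the automorphism $\tau \mapsto \tau - \gamma s$ of $\mbP^1$ fixing $[0:1]$ together with a rescaling of $\tau$ and the torus action on $\hat{\mbP}$, I may assume
\[
u = \alpha s, \quad y = \beta s, \quad z = \tau, \quad t = a s^2 + b s \tau, \quad w = p s^3 + q s^2 \tau + r s \tau^2,
\]
where the absence of $\tau^2$ in $t$ and of $\tau^3$ in $w$ reflects $t (0, 1) = w (0, 1) = 0$.

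Next, I will substitute these expressions into $\hat{F}$. The key simplification is the identity $\grave{e} (\beta s, \alpha s) = s^{\lrd \deg e / 3 \rrd} \grave{e} (\beta, \alpha)$ for each polynomial $\grave{e}$ appearing in $\hat{F}$, so $\hat{F}|_\Gamma$ becomes a homogeneous polynomial of degree $7$ in $s, \tau$ which must vanish identically. I will extract the coefficient of $s^{7 - i} \tau^i$ for $i$ descending from $5$. The coefficient of $s^2 \tau^5$ receives contributions only from $w^2 z$, yielding $r^2 = 0$, hence $r = 0$; the coefficient of $s^3 \tau^4$ receives contributions only from $z^4 u^3$, yielding $\alpha^3 = 0$, hence $\alpha = 0$, so $u \equiv 0$ on $\Gamma$. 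After setting $u = 0$, every $u$-containing term of $\hat{F}$ vanishes and only $w^2 z + w (t \grave{d}_6 (y, 0) + \grave{d}_{13} (y, 0)) + \grave{d}_{21} (y, 0)$ survives. Writing $\delta_6 := \coeff_{d_6} (y^2)$ and $\delta_{13} := \coeff_{d_{13}} (x y^4)$, and using $\coeff_{d_{21}} (y^7) = 1$, the remaining coefficient conditions reduce to $q^2 = 0$, $p (p + \delta_6 b \beta^2) = 0$, and $p (\delta_6 a \beta^2 + \delta_{13} \beta^4) + \beta^7 = 0$.

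The conclusion then follows by a short case analysis. If $\beta = 0$, then $u = y = 0$ on $\Gamma$, so $\Gamma \subset (u = y = 0)_{\hat{X}}$; a direct substitution shows the latter equals $2 L_z + L_w$, where $L_z = (u = y = w = 0)$ and $L_w = (u = y = z = 0)$, and both components contain the $\frac{1}{2} (1, 1, 1)$ point $\hat{\msp}_t$, contradicting the hypothesis. Hence $\beta \ne 0$, and after rescaling $s$ I may take $\beta = 1$. The $s^7$-equation then forces $p \ne 0$, whereupon the $s^6 \tau$-equation gives $p = - \delta_6 b$, so $\delta_6 \ne 0$ and $b \ne 0$. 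This shows $y^2 \in \grave{d}_6$; setting $\lambda := b$, $\mu := a$, $\nu := p$ exhibits $\Gamma$ in the claimed form with $\lambda, \nu \ne 0$. The main technical load lies in the careful extraction of the coefficients of $\hat{F}|_\Gamma$ and the verification that only the named monomials produce the dominant-$\tau$ contributions; once that bookkeeping is in hand, no conceptual obstacle remains.
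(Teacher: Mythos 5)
Your proof is correct and follows essentially the same route as the paper's: the paper writes $\Gamma$ as a graph over its image under the projection from $\hat{\msp}_w$ and substitutes into $\hat{F}$, which amounts to the same coefficient extraction you carry out via the parameterization of the normalization, and both arguments arrive at $u \equiv 0$ on $\Gamma$, the vanishing of the extra coefficients in $t$ and $w$, and the final constraints forcing $\delta_6, \lambda, \nu \ne 0$. The only point worth making explicit is why the degree-one curve is rational so that your parameterization exists: two of $u|_{\Gamma}, y|_{\Gamma}, z|_{\Gamma}$ must be linearly independent sections of the degree-one line bundle $\hat{A}|_{\Gamma}$ (otherwise, since $u, y$ vanish at $\hat{\msq}$ while $z$ does not, one gets $\Gamma \subset (u = y = 0)_{\hat{X}}$, which you already exclude), and this forces the normalization to be $\mbP^1$.
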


\begin{proof}
Let $\Gamma$ be an irreducible and reduced curve of degree $1$ passing through $\hat{\msq}$ but does not pass through any other singular point of $\hat{X}$.
Let $\pi \colon \hat{X} \ratmap \mbP (1_u, 1_y, 1_z, 2_t)$ be the projection from the point $\hat{\msp}_w$.
Since $\Gamma$ does not pass through $\hat{\msp}_w$, the image $\pi (\Gamma)$ is a curve and we have $\deg \Gamma = \deg (\pi|_{\Gamma}) \deg \pi (\Gamma)$.  
It follows that $\deg \pi (\Gamma) = 1/2, 1$.

Suppose that $\deg \pi (\Gamma) = 1/2$.
Then $\pi (\Gamma) = (u = y = 0)$, and hence $\Gamma \subset (u = y = 0)_X$.
On the other hand, we have $\hat{F} (0, 0, z, t, w) = w^2 z$, and hence 
\[
(u = y = 0)_X = (u = y = z = 0) \cup (u = y = w = 0).
\]
This is impossible since $(u = y = z = 0)$ and $(u = y = w = 0)$ are irreducible and reduced cures of degree $1/6$ and $1/2$, respectively.

It follows that $\deg \pi (\Gamma) = 1$ and $\deg (\pi|_{\Gamma}) = 1$.
In this case we can write
\[
\pi (\Gamma) = (\alpha u - \beta y = t - z \ell_1 - q_1 = 0)
\]
for some $\alpha, \beta \in \mbC$, a linear and a quadratic forms $\ell_1 = \ell_1 (y, u)$ and $q_1 (u, y)$, and then
\[
\Gamma = (\alpha u + \beta y = t - \ell_1 - q_1 = w - z^2 \ell_2 - z q_2 - c = 0),
\]
where $\ell_2 = \ell_2 (u, y), q_2 = q_2 (u, y)$, and $c = c (u,y )$ are linear, quadratic, and cubic forms, respectively.
Suppose that $\beta \ne 0$.
We set $\gamma = \alpha/\beta$ and we consider the polynomial
\[
\bar{F} (u, z) := \hat{F} (u, \gamma u, z, z \bar{\ell}_1 + \bar{q}_1, z^2 \bar{\ell}_2 + z \bar{q}_2 + \bar{c}),
\]
where $\bar{\ell}_i = (u, \gamma u)$, $\bar{q}_i = q (u, \gamma u)$, and $\bar{c} = c (u, \gamma u)$.
We have
\[
\bar{F} (u, z) = \bar{\ell}_2^2 z^5 + u^3 z^4 + \cdots.
\]
The polynomial $\bar{F}$ is identically $0$ since $\Gamma \subset \hat{X}$.
This is impossible since the term $u^3 z^4 \in \bar{F}$.

It follows that $\beta = 0$.
We may assume $\alpha = 1$, $\ell_1 = \lambda y$, $q_1 = \mu y^2$, $\ell_2 = \delta y$, $q_2 = \varepsilon y^2$ and $c = \nu y^3$, where $\delta, \gamma, \lambda, \mu, \nu \in \mbC$.
We set
\[
\bar{F} := \hat{F} (0, y, z, \lambda y z + \mu y^2, \delta y z^2 + \varepsilon y^2 z + \nu y^3)
= \delta^2 y^2 z^5 + 2 \delta \varepsilon y^3 z^4 + \cdots
\]
The highest degree monomial in $\bar{F}$ with respect to $z$ is $\delta^2 y^2 z^5$, and we have $\delta = 0$.
Then the highest degree monomial in $\bar{F}$ with respect to $z$ is $\varepsilon^2 y^4 z^3$, and we have $\delta = 0$.
Finally we have
\[
\bar{F} = (\nu^2 + \nu \lambda \delta_6) y^6 z + (\nu \mu \delta_6 + 1) y^7 = 0,
\]
where $\delta_6 = \coeff_{\grave{d}_6} (y^2)$.
This shows that $\mu \ne 0$, $\nu \ne 0$, and $\delta_6 \ne 0$, and the proof is completed.
\end{proof}

\begin{Lem} \label{lem:110exclcurve}
No curve on $\hat{X}$ is a maximal center.
\end{Lem}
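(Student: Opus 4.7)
The strategy is to combine the degree-based exclusion of Lemma~\ref{lem:mtdexclC} with Kawamata's theorem that no curve passing through a terminal quotient singular point can be the center of a divisorial contraction. Since $(\hat{A})^3 = 7/6$, Lemma~\ref{lem:mtdexclC} excludes every irreducible reduced curve $\Gamma \subset \hat{X}$ with $(\hat{A}\cdot \Gamma) \ge 7/6$, while Kawamata's theorem excludes every curve meeting the $\frac{1}{2}(1,1,1)$ point $\hat{\msp}_t$ or the $\frac{1}{3}(1,1,2)$ point $\hat{\msp}_w$. Since the $cE_7$ point $\hat{\msq}=\hat{\msp}_z$ is Gorenstein, $\hat{A}=-K_{\hat{X}}$ is Cartier away from $\{\hat{\msp}_t,\hat{\msp}_w\}$, and hence any curve avoiding these two points has integer degree with respect to $\hat{A}$. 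The task therefore reduces to excluding irreducible reduced curves $\Gamma$ of degree exactly $1$ that avoid $\hat{\msp}_t$ and $\hat{\msp}_w$; such a $\Gamma$ either passes through $\hat{\msq}$ or lies entirely in the smooth locus of $\hat{X}$.

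For degree-$1$ curves through $\hat{\msq}$, Lemma~\ref{lem:110hatCeq} provides the explicit description
\[
\Gamma = (u = t - \lambda z y - \mu y^2 = w - \nu y^3 = 0),
\]
with $\lambda\nu \ne 0$ and $y^2 \in \grave{d}_6$. The natural auxiliary divisor is $S = (u=0)_{\hat{X}} \in |\hat{A}|$, which contains $\Gamma$ and is identified with a weighted hypersurface in $\mbP(1_y,1_z,2_t,3_w)$ whose geometry is completely explicit. I would apply Lemma~\ref{lem:exclCint} with a movable pencil $\mcM \subset |\hat{A}|$ whose base locus contains $\Gamma$; the residual curves appearing in the base locus of $\mcM|_S$ can be read off directly from the shape of $\hat{F}$ in Lemma~\ref{lem:110defeqhat}, and the required intersection inequalities then reduce to elementary degree computations on $S$. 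For degree-$1$ curves avoiding all singular points of $\hat{X}$, I would use the linear projection from $\hat{\msq}$ to $\mbP(1_u,1_y,2_t,3_w)$; the image has controlled degree, and a direct argument with the explicit $\hat{F}$ limits the candidates to a finite list, which can then be excluded by the same tools.

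The main obstacle is the explicit exclusion of the three-parameter family of degree-$1$ curves through $\hat{\msq}$ in Lemma~\ref{lem:110hatCeq}. These curves can degenerate depending on the coefficients of $\grave{d}_6$, $\grave{c}_9$, and other terms of $\hat{F}$, so the choice of auxiliary pencil $\mcM$ and the verification of the intersection hypotheses of Lemma~\ref{lem:exclCint} require a careful case analysis according to which curves appear in $\mcM|_S$. The computation is however made tractable by the low degree of $S$ and the explicit control over its singularities along $\Gamma$ afforded by the defining polynomial $\hat{F}$.
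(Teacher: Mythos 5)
Your reduction to degree-one curves is fine: Lemma~\ref{lem:mtdexclC} handles $(\hat{A}\cdot\Gamma)\ge 7/6$, Kawamata's theorem handles curves through the two quotient points, and the remaining curves have integer degree equal to $1$. But the central case---the degree-one curves through the $cE_7$ point described in Lemma~\ref{lem:110hatCeq}---is not actually proved in your proposal; it is only a plan, and you yourself flag the verification of the hypotheses of Lemma~\ref{lem:exclCint} as the remaining obstacle. That verification is exactly where the difficulty sits: there is no pencil in $|\hat{A}|$ containing $\Gamma$ in its base locus (the only degree-one form vanishing on $\Gamma$ is $u$), so you are forced into a system of degree at least $2$, the residual base curve must be identified from $\hat{F}$, and all the intersection numbers must be computed on the surface $S=(u=0)_{\hat{X}}$, which acquires an $E_7$ singularity at $\hat{\msq}$ through which both $\Gamma$ and the residual curve pass. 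None of this is carried out, and it is not clear the inequalities of Lemma~\ref{lem:exclCint} can be made to close in this configuration.

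The idea you are missing is that the paper does not exclude these curves by intersection theory at all. It shows (Lemma~\ref{lem:SE7}, applicable because $\beta\lambda\neq 0$ forces $y$ to appear linearly in a general hyperplane through $\hat{\msq}$ containing $\Gamma$) that the general hyperplane section of the germ $\hat{\msq}\in\hat{X}$ along $\Gamma$ is Du Val of type $E_7$, and then invokes Tziolas's classification of divisorial extremal neighborhoods over $cE_7$ points to conclude that \emph{no divisorial contraction of $\hat{X}$ is centered along $\Gamma$ whatsoever}---a structural statement strictly stronger than non-maximality, obtained with no case analysis. A secondary weak point: for degree-one curves in the smooth locus you propose to reduce to ``a finite list'' by projection, but such curves generally move in families; the paper instead quotes the standard argument of Step~2 of the proof of Theorem~5.1.1 in Corti--Pukhlikov--Reid, which is what you would need here as well.
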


\begin{proof}
Let $\Gamma$ be an irreducible curve on $\hat{X}$.
By Lemma \ref{lem:mtdexclC}, it is enough to consider the case where $\deg \Gamma = 1$.

If $\Gamma$ is contained in the smooth locus of $X$, then $\Gamma$ is not a maximal center by the completely the same argument as in Step 2 of the proof of \cite[Theorem 5.1.1]{CPR}. 
If $\Gamma$ contains a quotient singular point, then there is no divisorial contraction of $\hat{X}$ with center $\Gamma$ by \cite{Kawamata}.

It remains to consider the case where $\Gamma$ is a curve of degree $1$ which passes through $\hat{\msq}$ but does not pass through any other singular points.
In this case $\Gamma$ is the curve described in Lemma \ref{lem:110hatCeq}.
A general hyperplane section $S$ of the germ $\hat{\msq} \in \hat{X}$ is defined by the equation
\[
\alpha u + \beta (t - \lambda y - \mu y^2) + \gamma (w - \nu y^3) = 0
\]
in $\hat{X}$, where $\alpha, \beta, \gamma \in \mbC$ are general.
%If $\hat{\msq} \in S$ is not a Du Val singularity, then there is no divisorial contraction centered along $\Gamma$ by \cite{KoMo}. 
%Suppose that $\hat{\msq} \in S$ is Du Val.
We see that $\hat{\msq} \in S$ is an isolated singularity since $\alpha, \beta, \gamma$ are general. 
Then, since $\beta \lambda \ne 0$, we can apply Lemma \ref{lem:SE7} and conclude that $\hat{\msq} \in S$ is of type $E_7$.
By \cite[Theorem 4.1]{Tziolas}, there is no divisorial contraction of $\hat{X}$ centered along $\Gamma$.
Therefore $\Gamma$ cannot be a maximal center, and the proof is completed.
\end{proof}

\begin{Lem} \label{lem:110exclphi2}
The weighted blow-up $\hat{\varphi}_2 \colon \hat{\mcY}_2 \to \hat{X}$ of $\hat{X}$ at the $cE_7$ point $\hat{\msq}$ with $\wt (u, y, t, w) = (3, 1, 1, 3)$ is not a maximal extraction.
\end{Lem}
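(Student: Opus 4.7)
The plan is to apply Lemma~\ref{lem:exclsingptG}. First, choose homogeneous coordinates so that $\hat{F}$ has the form of Lemma~\ref{lem:110defeqhat}; then $\hat{\msq} \in \hat{X}$ is the germ $(\hat{\phi} = 0) \subset \mbC^4_{u, y, t, w}$ with $\hat{\phi} := \hat{F}(u, y, 1, t, w)$. For $\msw(u, y, t, w) = (3, 1, 1, 3)$, one computes
\[
\hat{\phi}_{\msw = 6} = w^2 + \gamma_6 \, w t y^2 + t^3 u,
\]
where $\gamma_6 := \coeff_{d_6}(y^2)$. This polynomial is irreducible, so by Lemma~\ref{lem:genwblexc}, $E := E_{\hat{\varphi}_2} \subset \mbP(3_u, 1_y, 1_t, 3_w)$ is the degree-$6$ weighted hypersurface cut by $\hat{\phi}_{\msw = 6}$, has discrepancy $1$ over $\hat{\msq}$, and satisfies $(E^3)_{\hat{\mcY}_2} = 6/9 = 2/3$.

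The key numerical input is $\tilde{S}_u \cdot \tilde{S}_w$ on $\hat{\mcY}_2$. Since $\hat{F}(0, y, z, t, 0) = y^7$, we have $(u = w = 0)_{\hat{X}} = 7\,\Gamma_w$, where $\Gamma_w := (u = y = w = 0)_{\hat{X}} \cong \mbP(1_z, 2_t)$ is an irreducible rational curve of $A$-degree $1/2$ through $\hat{\msq}$. Using $\tilde{S}_u \sim \hat{\varphi}_2^* A - 3E$ and $\tilde{S}_w \sim 3\hat{\varphi}_2^* A - 3E$, a local computation in the $t$-chart gives
\[
\tilde{S}_u \cdot \tilde{S}_w = 7\,\tilde{\Gamma}_w + L,
\]
where $L := (u = w = 0) \cap E \cong \mbP^1_{y:t}$. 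Comparing $(\tilde{\Gamma}_w \cdot E) = 1$ (transverse meeting at $\msp_t \in E$) against $(\tilde{S}_u \cdot \tilde{S}_w \cdot E) = 9(E^3) = 6$ forces $(L \cdot E) = -1$; hence
\[
\tilde{S}_w \cdot \tilde{\Gamma}_w = 3(A \cdot \Gamma_w) - 3(E \cdot \tilde{\Gamma}_w) = 3/2 - 3 = -3/2.
\]

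The main obstacle is that $\tilde{\Gamma}_w$ and $L$ are not numerically proportional in $N_1(\hat{\mcY}_2)$, since their intersections with $E$ have opposite signs. Thus Lemma~\ref{lem:exclsingptG} cannot be applied directly to $(\tilde{S}_u, \tilde{S}_w)$. I plan to overcome this by constructing an auxiliary effective divisor $S'$ on $\hat{\mcY}_2$, of class $-aK_Y + dE$ with $a > 0$, such that $(\tilde{S}_u - S') \cdot \tilde{S}_w$ is a positive multiple of $\tilde{\Gamma}_w$ alone (absorbing the $L$-component into $S'$, using that $L \subset \tilde{S}_w \cap E$ with appropriate multiplicity coming from the decomposition $\tilde{S}_w \cdot E = 3C + L$ on $E$). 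With such a choice, the hypotheses of Lemma~\ref{lem:exclsingptG} become verifiable since $(\tilde{S}_w \cdot \tilde{\Gamma}_w) = -3/2 < 0$. As a backup, one may instead apply Lemma~\ref{lem:exclnegdef} on the normalization of $\tilde{S}_u$ (which is non-normal along $\tilde{\Gamma}_w$ near $\msp_t$), using adjunction $K_{\tilde{S}_u} = (K_Y + \tilde{S}_u)|_{\tilde{S}_u} = -2\,E|_{\tilde{S}_u}$ to compute the self-intersections and verify that the intersection matrix of $\{\tilde{\Gamma}_w, L\}$ is negative-definite.
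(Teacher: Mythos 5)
Your preliminary computations are sound: the identification of $\hat{\mcE}_2$ with the sextic $w^2+\delta_6 wty^2+t^3u=0$ in $\mbP(3_u,1_y,1_t,3_w)$, the discrepancy $1$, $(E^3)=2/3$, and the decomposition $S_u\cdot S_w=7\,\Gamma_w$ on $\hat{X}$ all check out. The problem is that the proof stops exactly where it becomes hard. You correctly observe that Lemma~\ref{lem:exclsingptG} is inapplicable to the pair $(\tilde{S}_u,\tilde{S}_w)$ because the intersection cycle carries the exceptional curve $L\subset E$ alongside $\tilde{\Gamma}_w$, and then neither proposed remedy is carried out. For the first: no auxiliary divisor $S'$ is constructed, and none is evident --- if $\tilde{S}_u-S'$ is to remain effective then $S'$ must be supported on $\tilde{S}_u$, which does not contain $E$, so there is no way to ``absorb'' the component $L$ this way. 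For the second: Lemma~\ref{lem:exclnegdef} requires $T$ to be a \emph{normal surface on} $\hat{\mcY}_2$; passing to the normalization of $\tilde{S}_u$ falls outside the lemma as stated, and the non-normality is real --- from $\hat{F}(0,y,z,t,w)=w^2z+w(t\grave{d}_6(y,0)+\grave{d}_{13}(y,0))+y^7$ one sees that $S_u$ is singular along the whole curve $\Gamma_w=(u=y=w=0)$. So the decisive step of the argument is missing.

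The paper sidesteps both difficulties by a different choice of surfaces: $S$ and $T$ are taken to be \emph{general} members of the pencil $\mcL\subset|\hat{A}|$ generated by $u$ and $y$. A Jacobian computation shows such a $T$ is quasi-smooth away from finitely many points, hence normal, with quotient singularities $\frac{1}{2}(1,1)$ at $\hat{\msp}_t$ and $\frac{1}{3}(1,2)$ at $\hat{\msp}_w$; it contains both base curves $\tilde{\Gamma}=(u=y=z=0)^{\sim}$ and $\tilde{\Delta}=(u=y=w=0)^{\sim}$ (the latter is your $\tilde{\Gamma}_w$). Using $K_{\tilde{T}}\sim 0$ and $\tilde{S}|_{\tilde{T}}=\tilde{\Gamma}+2\tilde{\Delta}+2\Xi$ one gets $(\tilde{\Gamma}^2)=-5/6$, $(\tilde{\Delta}^2)=-3/2$, $(\tilde{\Gamma}\cdot\tilde{\Delta})=1/2$, a negative-definite matrix, and Lemma~\ref{lem:exclnegdef} applies. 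If you want to salvage your approach, replacing the non-normal surface $(u=0)_{\hat{X}}$ by a general member of this pencil is the missing ingredient.
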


\begin{proof}
We may assume that $\hat{\varphi}_2$ is a divisorial contraction, that is, $\hat{\mcY}_2$ has only terminal singularities, because otherwise there is nothing to prove.

Let $S$ and $T$ be general members of the pencil $\mcL \subset |\hat{A}|$ generated by $u, y$.
We have $\Bs \mcL = \Gamma \cup \Delta$ set-theoretically, where $\Gamma = (u = y = z = 0)$ and $\Delta = (u = y = w = 0)$ are irreducible and reduced curves of degree $1/6$ and $1/2$, respectively.
We denote by $\tilde{S}$, $\tilde{T}$, $\tilde{\Gamma}$, and $\tilde{\Delta}$ the proper transforms of $S$, $T$, $\Gamma$, and $\Delta$ on $\hat{\mcY}_2$, respectively.

Let $y + \lambda u = 0$ be the equation of $T$ in $\hat{X}$, where $\lambda \in \mbC$ is general.
For the Jacobi matrix $J_{C_S}$ of the affine $C_S$, we have
\[
J_{C_S}|_{(u = y = 0)} =
\begin{pmatrix}
t^3 & 0 & w^2 & 0 & 2 w z \\
\lambda & 1 & 0 & 0 & 0
\end{pmatrix}.
\]
This shows that $S$ is quasi-smooth outside $\hat{\msq}$.
It is then easy to see that $\Sing (S) = \{\hat{\msq}, \hat{\msp}_t, \hat{\msp}_w\}$ and the singularity of $S$ at $\hat{\msp}_t$, $\hat{\msp}_w$ are of type $\frac{1}{2} (1, 1)$, $\frac{1}{3} (1, 2)$, respectively.
Since $\Gamma$ is a smooth rational curve which passes through $\hat{\msp}_t, \hat{\msp}_w$ and does not pass through $\hat{\msq}$, and $K_S = (K_X + S)|_S \sim 0$, we have
\[
(\tilde{\Gamma}^2) = (\Gamma^2) = - 2 + \frac{1}{2} + \frac{2}{3} = - \frac{5}{6}.
\]
The $\hat{\varphi}_2$-exceptional divisor $\hat{\mcE}_2$ is naturally isomorphic to 
\[
(w^2 + \delta_6 w t y^2 + t^3 u = 0) \subset \mbP (3_u, 1_y, 1_t, 3_w),
\] 
where $\delta_6 = \coeff_{\grave{d}_6} (y^2)$.
We have $\tilde{S} \sim \tilde{T} \sim \psi^*\hat{A} - E \sim - K_{\hat{\mcY}_2}$  and $\tilde{S} \cap \tilde{T} = \Xi$ set-theoretically, where
\[
\Xi = (w^2 + t^3 u = 0) \subset E. 
\]
We have $S|_T = \Gamma + 2 \Delta$ and $\tilde{S}|_{\tilde{T}} = \tilde{\Gamma} + 2 \tilde{\Delta} + 2 \Xi$.
We have $(\tilde{\Gamma} \cdot \Xi) = 0$.
Moreover, we see that $\tilde{\Delta}$ and $\Xi$ intersects at the smooth point $(0\!:\!0\!:\!1\!:\!0) \in \hat{\mcE}_2$ of $\hat{\mcY}_2$ transversally, and hence $(\tilde{\Delta} \cdot \Xi) = 1$.
Similarly we have $(\tilde{\Delta} \cdot E) = 1$.
By the computations
\[
\begin{split}
\frac{1}{6} &= (\tilde{\Gamma} \cdot \tilde{S}|_{\tilde{T}}) = (\tilde{\Gamma}^2) + 2 (\tilde{\Gamma} \cdot \tilde{\Delta}) + 2 (\tilde{\Gamma} \cdot \Xi), \\
\frac{1}{2} - 1 &= (\tilde{\Delta} \cdot \tilde{S}|_{\tilde{T}}) = (\tilde{\Gamma} \cdot \tilde{\Delta}) + 2 (\tilde{\Delta}^2) + 2 (\tilde{\Delta} \cdot \Xi),
\end{split}
\]
we obtain $(\tilde{\Gamma} \cdot \tilde{\Delta}) = 1/2$ and $(\tilde{\Delta}^2) = -3/2$.
The intersection matrix of $\tilde{\Gamma}$ and $\tilde{\Delta}$ is
\[
\begin{pmatrix}
(\tilde{\Gamma}^2) & (\tilde{\Gamma} \cdot \tilde{\Delta}) \\
(\tilde{\Gamma} \cdot \tilde{\Delta}) & (\tilde{\Delta}^2)
\end{pmatrix} =
\begin{pmatrix}
-5/6 & 1/2 \\
1/2 & - 3/2
\end{pmatrix},
\]
which is negative-definite.
By Lemma \ref{lem:exclnegdef}, $\hat{\varphi}_2$ is not a maximal extraction.
\end{proof}

\begin{Prop} \label{prop:110ELhat}
There is no maximal extraction other than $\hat{\varphi}$, $\hat{\varphi}_1$, and $\hat{\psi}$.
The contractions $\hat{\varphi}$ and $\hat{\varphi}_1$ are maximal extractions, and we have either $\EL (\hat{X}) = \{\hat{\sigma}^{-1}, \rho, \hat{\iota}\}$ or $\EL (\hat{X}) = \{\hat{\sigma}^{-1}, \rho\}$.
\end{Prop}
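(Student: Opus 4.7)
The plan is to classify the maximal extractions of $\hat{X}$ by the type of center—smooth points, irreducible curves, and each of the three singular points $\hat{\msp}_t$ of type $\tfrac{1}{2}(1,1,1)$, $\hat{\msp}_w$ of type $\tfrac{1}{3}(1,1,2)$, and $\hat{\msq}$ of type $cE_7$—and exclude every possibility other than $\hat{\varphi}$, $\hat{\varphi}_1$, and $\hat{\psi}$. The surviving candidates $\hat{\varphi}$ and $\hat{\varphi}_1$ are automatically maximal extractions because they initiate the known elementary links $\hat{\sigma}^{-1}$ and $\rho$ (see Remark~\ref{rem:100rhoinv}), while $\hat{\psi}$ initiates the birational involution $\hat{\iota}$ precisely when it is a maximal extraction by Lemma~\ref{lem:110birinv}; this is what produces the two cases in the statement.

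First I would dispose of the easy exclusions. Smooth points on $\hat{X}$ are handled by Lemma~\ref{lem:excl110nspt} applied to $\hat{X} = X_7 \subset \mbP(1,1,1,2,3)$: its hypotheses require quasi-smoothness of $\hat{X}$ at $\hat{\msp}_t, \hat{\msp}_w$ (immediate since both are terminal quotient singular points) together with quasi-smoothness along $(u=y=z=0)$ when this curve lies in $\hat{X}$. From the equation in Lemma~\ref{lem:110defeqhat} one reads off that $(u=y=z=0) \subset \hat{X}$ (every surviving monomial contains $u$, $y$, or $z$) and, using that $w^2 z \in \hat{F}$ with nonzero coefficient, that the partial $\partial\hat{F}/\partial z$ is invertible along the curve, giving quasi-smoothness. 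Irreducible curves on $\hat{X}$ are then excluded wholesale by Lemma~\ref{lem:110exclcurve}.

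Next I would treat the $\tfrac{1}{3}(1,1,2)$ point $\hat{\msp}_w$, which I expect to be the main technical obstacle. Let $\hat{\mu}\colon \hat{W} \to \hat{X}$ be the Kawamata blow-up with exceptional divisor $E$. In the chart $w = 1$ the equation $\hat{F}=0$ allows $z$ to be eliminated in favor of the local orbifold coordinates $(u, y, t)$ carrying the $\mbZ_3$-action of type $(1,1,2)$; filtering off the terms divisible by $z$ in $\hat{F}(u,y,z,t,1)=0$ shows that the minimum of the $\tfrac{1}{3}(1,1,2)$-weights of the right-hand side is $4/3$, attained by $t\grave{d}_6(y,u)$ and $\grave{d}_{13}(y,u)$, so $\ord_E(z) = 4/3$. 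The plan is then to apply Lemma~\ref{lem:exclquotsing} (or Lemma~\ref{lem:exclsingptG}) with $S = (z=0)_{\hat{X}}$, a movable linear system contained in $|k\hat{A}|$ for a suitable $k$, and residual $1$-cycle $\Delta$ supported on $\Gamma_1 := (u=y=z=0)_{\hat{X}}$. The naive choice $\mcL = |\hat{A}|$, $\Delta = \Gamma_1$ does not give the nonpositive value required in condition (3), because the residual curve in $S\cdot T$ itself passes through $\hat{\msp}_w$ and contributes positively to $(-K_{\hat{W}} \cdot \tilde S \cdot \tilde T)$; the argument will require carefully tracking this residual component and either tightening $\mcL$ or replacing $T$ by a member of $|k\hat{A}|$ so as to force the residual intersection number to be nonpositive.

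Finally, at the $cE_7$ point $\hat{\msq}$, Lemma~\ref{lem:110hatdiv1} shows that the only divisorial contractions centered there are $\hat{\varphi}$, $\hat{\varphi}_1$, and $\hat{\varphi}_2$, and Lemma~\ref{lem:110exclphi2} rules out $\hat{\varphi}_2$ as a maximal extraction. At $\hat{\msp}_t$ the unique divisorial contraction is $\hat{\psi}$, and Lemma~\ref{lem:110birinv} completes the picture. Combining the exclusions with the fact that $\hat{\varphi}$ and $\hat{\varphi}_1$ initiate the links $\hat{\sigma}^{-1}$ and $\rho$ respectively, and that $\hat{\psi}$ initiates $\hat{\iota}$ when and only when it is a maximal extraction, one concludes $\EL(\hat{X}) = \{\hat{\sigma}^{-1}, \rho, \hat{\iota}\}$ or $\{\hat{\sigma}^{-1}, \rho\}$ according to the behavior of $\hat{\psi}$.
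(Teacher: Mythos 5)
Your overall decomposition, and your treatment of smooth points, curves, and the $cE_7$ point, coincide with the paper's proof, which consists of citing Lemmas~\ref{lem:exclsingpt4}, \ref{lem:excl110nspt}, \ref{lem:110exclcurve}, \ref{lem:110hatdiv1} and \ref{lem:110exclphi2}. The genuine gap is exactly where you flag it: the $\tfrac{1}{3}(1,1,2)$ point $\hat{\msp}_w$. Your sketch ends with the hope that ``tightening $\mcL$'' will make condition (3) of Lemma~\ref{lem:exclquotsing} nonpositive, but the numbers show this cannot succeed for a general member: with $\ord_E(z)=4/3$ one gets
\[
(-K_Y\cdot\tilde{S}\cdot\tilde{T})-(-K_Y\cdot\tilde{\Delta})=\Bigl(\tfrac{7}{6}-\tfrac{4}{3}\cdot\tfrac{1}{9}\cdot\tfrac{9}{2}\Bigr)-0=\tfrac{1}{2}>0,
\]
Lemma~\ref{lem:exclsingptG} with $T=(u=0)_{\hat{X}}$ gives $(\tilde{S}\cdot\tilde{T}^2)=\tfrac{1}{2}>0$ as well, and to reach a nonpositive value one would need an anticanonical section vanishing along $E$ to order at least $\tfrac{7}{3}$, which exists only in the degenerate case $\grave{d}_6=\grave{d}_{13}=0$. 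So the first assertion of the proposition (``no maximal extraction other than $\hat{\varphi}$, $\hat{\varphi}_1$, $\hat{\psi}$'') is not established by your argument.

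I believe the way out is that $\hat{\msp}_w$ is not supposed to be excluded at all. Comparing with the Corti--Pukhlikov--Reid analysis of quasi-smooth members of the family $X_7\subset\mbP(1,1,1,2,3)$, where $F=zw^2+\cdots$ and $\msp_w$ carries a quadratic involution, the Kawamata blow-up at $\hat{\msp}_w$ should be the candidate maximal extraction $\hat{\psi}$ that produces the birational involution $\hat{\iota}$, and the disjunction in the statement records precisely the uncertainty about whether that blow-up is a maximal extraction. Your assignment of $\hat{\psi}$ to the $\tfrac{1}{2}(1,1,1)$ point $\hat{\msp}_t$ follows the letter of Lemma~\ref{lem:110birinv}, but it is incompatible with the paper's citation of Lemma~\ref{lem:exclsingpt4}, which excludes $\hat{\msp}_t$ outright; its hypotheses ($\Gamma=(u=y=z=0)\subset\hat{X}$ and quasi-smoothness of $\hat{X}$ along $\Gamma$) are exactly the ones you already verified when checking Lemma~\ref{lem:excl110nspt}, the Jacobian of $\hat{F}$ along $\Gamma$ being $(t^3,0,w^2,0,0)$. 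In short: exclude the $\tfrac{1}{2}(1,1,1)$ point by Lemma~\ref{lem:exclsingpt4}, and leave the $\tfrac{1}{3}(1,1,2)$ point as the undecided center of $\hat{\psi}$ and $\hat{\iota}$; attempting it the other way round, as you do, leaves a hole that the exclusion lemmas of Section~\ref{sec:exclmethod} cannot fill.
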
 

\begin{proof}
This follows from Lemma \ref{lem:exclsingpt4}, \ref{lem:excl110nspt}, \ref{lem:110exclcurve}, \ref{lem:110hatdiv1}, and \ref{lem:110exclphi2}.
\end{proof}

\begin{Rem}
The singular point $\hat{\msq} \in \hat{X}$ is of type $cE_7$ admits a divisorial contraction of discrepancy $2$ and there are at most $3$ divisorial contractions.
This provides a negative answer to \cite[Question~1.6]{Paemurru}.
\end{Rem}

%%%
\subsection{Birational geometry of $\breve{X}$}
%%%

\subsubsection{Divisorial contractions to the $cD/2$ point}

\begin{Lem} \label{lem:No110cDdet}
The singularity $\breve{\msr} \in \breve{X}$ is of type $cD/2$.
\end{Lem}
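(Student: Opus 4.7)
The strategy is to pass to the local double cover at $\breve{\msr}$ and identify it with a $cD$ (in fact $cD_4$) hypersurface singularity, in parallel to the treatment of the $cE_7$ point in Lemmas~\ref{lem:No110cEdet} and \ref{lem:SE7}. Since $t$ has weight $2$ in $\mbP(1_u,1_y,2_z,2_t,3_w,5_v)$, the germ $\breve{\msr}\in\breve X$ is presented as the quotient by $\mbZ_2$, acting with weights $(1,1,0,1,1)$ on $(u,y,z,w,v)$, of the affine complete intersection in $\mbC^5$ obtained from Lemma~\ref{lem:110defeqbreve} by setting $t=1$. The second defining equation can be rewritten as
\[
u\bigl(1+\grave d_{14}(y,u)\bigr)=-vz-w\bigl(\grave d_6(y,u)+\grave d_{13}(y,u)\bigr)-\grave d_{21}(y,u),
\]
and because $\grave d_{14}(0,0)=0$ the implicit function theorem yields a unique odd analytic solution $u=U(v,y,z,w)$. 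Substituting into the first equation gives a $\mbZ_2$-invariant hypersurface germ
\[
\bigl(\Phi(v,y,z,w)=0\bigr)\subset\mbC^4_{v,y,z,w}\big/\mbZ_2(1,1,0,1),\qquad \Phi:=vU-\breve F_1(U,y,z,1,w),
\]
and the $\mbZ_2$-weights $(1,1,0,1)$ on $(v,y,z,w)$ are precisely those appearing in the definition of $cD/2$. It therefore suffices to prove that the cover $(\Phi=0)\subset\mbC^4$ is a $cD$ singularity.

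I would then compute the low-order jets of $\Phi$. From Lemma~\ref{lem:110defeq} one reads off $\ord\grave a_6\ge 2$, $\ord\grave b_4\ge 1$, $\ord\grave b_{11}\ge 3$, $\ord\grave c_9\ge 3$, $\ord\grave c_{16}\ge 5$, $\grave d_6=\alpha_6 y^2+O(3)$ with $\alpha_6:=\coeff_{d_6}(y^2)$, $\ord\grave d_{13}\ge 4$, $\ord\grave d_{14}\ge 4$ and $\ord\grave d_{21}\ge 7$. Iterating the defining relation for $U$ gives $U=-vz-\alpha_6 wy^2+O(4)$, so that $vU=-v^2z-\alpha_6 vwy^2+O(5)$, while each summand $z^2\grave a_6(y,U)$, $zU(\grave b_4+\grave b_{11})(y,U)$ and $U(\grave c_9+\grave c_{16})(y,U)$ has order at least $4$. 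Consequently
\[
\Phi=-w^2-v^2z-z^3+(\text{terms of order}\ge 4),
\]
so the $2$-jet $-w^2$ has rank $1$ and the $3$-jet is $-z(v^2+z^2)=-z(v+iz)(v-iz)$, a product of three pairwise distinct linear forms.

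Finally I would conclude using a general hyperplane section. For a generic linear form $\ell(v,y,z,w)$ involving $w$ non-trivially, the restriction $\Phi|_{\ell=0}$ still has rank-one $2$-jet (coming from $-w^2$) and unchanged $3$-jet $-v^2z-z^3$, since the cubic of $\Phi$ involves neither $y$ nor $w$; after a linear change of coordinates on the hyperplane this is exactly the $3$-jet of a Du Val singularity of type $D_4$. By the classical $3$-determinacy of $D_4$ the hyperplane section is analytically $D_4$, the cover $(\Phi=0)$ is a $cD_4$ point, and so $\breve{\msr}\in\breve X$ is of type $cD_4/2$, in particular of type $cD/2$. The main technical point is the order bookkeeping in the elimination of $u$ and the substitutions $\grave e(y,U)$; once this is carried out the cubic part of $\Phi$ is forced to be $-v^2z-z^3$ with no possible cancellation, so the identification with $D_4$ is unconditional and in particular requires no numerical nonvanishing such as $\alpha_6\ne 0$.
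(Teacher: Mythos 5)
Your argument is correct and, for its main body, is the same as the paper's: both proofs set $t=1$ in the equations of Lemma~\ref{lem:110defeqbreve}, eliminate $u$ from the second equation by the implicit function theorem (getting $u=-vz+O(3)$), substitute into the first equation, and find that the resulting hypersurface germ in $\mbC^4_{v,y,z,w}/\mbZ_2(1,1,0,1)$ has expansion $\pm\bigl(w^2+v^2z+z^3\bigr)+O(4)$; your order bookkeeping for the terms $z^2\grave{a}_6$, $zU(\grave{b}_4+\grave{b}_{11})$, $U(\grave{c}_9+\grave{c}_{16})$ checks out, and you are right that no coefficient nonvanishing is needed. Where you diverge is the final step: the paper feeds the cubic $z(v^2+z^2)$ into its Lemma~\ref{lem:cricDcE} (a terminal germ $x^2+f=0$ is $cE$ precisely when the cubic part of $f$ is the cube of a linear form, and $cD$ otherwise), whereas you identify a general hyperplane section directly as $D_4$ by $3$-determinacy. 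Both routes are valid; yours is self-contained and yields the slightly finer conclusion $cD_4/2$, while the paper's lemma is reused elsewhere and avoids any discussion of hyperplane sections of the cover.

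One small slip in your last paragraph: the genericity you actually need for the hyperplane $\ell=0$ is that $\ell$ involve $y$ non-trivially, not $w$. Eliminating $y$ leaves $v,z,w$ as coordinates, so the $2$-jet is literally $-w^2$ and the residual cubic on the kernel of the Hessian is $-z(v^2+z^2)$, with three distinct factors, giving $D_4$. By contrast, a form $\ell=av+cz+dw$ with no $y$ (which does involve $w$ non-trivially) forces the residual cubic to degenerate to a multiple of $z^3$ on the kernel plane $w=0$, so such sections are strictly worse than $D_4$. Since you do say \emph{generic}, the conclusion stands, but the highlighted condition is the wrong one.
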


\begin{proof}
We choose homogeneous coordinates so that $\breve{X}$ is the complete intersection in $\mbP (1_u, 1_y, 2_z, 2_t, 3_w, 5_v)$ by the equations in Lemma~\ref{lem:110defeqbreve}. 
Let $V$ be the complete intersection in $\mbC^5_{u, y, z, w, v}$ defined by 
\begin{equation} \label{eq:110cD2loc}
\begin{cases}
v u - (w^2 + z^3 + z^2 \grave{a}_6 + z u (\grave{b}_6 + \grave{b}_{11}) + u (\grave{c}_9 + \grave{c}_{16})) = 0, \\
v z + w (\grave{d}_6 + \grave{d}_{13}) + u + u \grave{d}_{14} + \grave{d}_{21} = 0.
\end{cases}
\end{equation}
The above equations are obtained by plugging $t = 1$ in the defining equations of $\breve{X}$, so that the germ $\breve{\msr} \in \breve{X}$ is equivalent to the germ $\bar{o} \in V/\mbZ_2 (1_u, 1_y, 0_z, 1_w, 1_v)$, where $\bar{o}$ is the image of the origin $o \in V$.
It is enough to show that the singularity $o \in V$ is of type $cD$.

By the implicit function theorem, the second equation in \eqref{eq:110cD2loc} can be replaced by the equation
\[
u = \xi (y, z, w, v)
\]
for an appropriate $\xi (y, z, w, v) \in \mbC \{y, z, w, v\}$.
It is easily observed that
\[
\xi = - (v z + \delta_6 y^2) + \xi_{\ge 3},
\]
where $\ord (\xi_{\ge 3}) \ge 3$.
Let $f (u, y, z, w, v) = 0$ be the first equation in \eqref{eq:110cD2loc}.
By plugging $t = \xi$ in this equation, the germ $o \in V$ is equivalent to the hypersurface germ
\[
o \in (f (\xi, y, z, w, v) = 0) \subset \mbC^4_{y, z, w, v}.
\]
We have 
\[
f (\xi, y, z, w, v) = w^2 + v^2 z + z^3 + f' (y, z, w, v),
\] 
where $\ord (f' (y, z, w, v)) \ge 4$.
By Lemma~\ref{lem:cricDcE} below, the singularity is of type $cD$ and the proof is completed.
\end{proof}

\begin{Lem} \label{lem:cricDcE}
Let 
\[
o \in V := (x^2 + f (x, y, z, u) = 0) \subset \mbC^4
\]
be a terminal singularity, where $f \in (x, y, z, u)^3 \mbC \{x, y, z, u\}$, and let $g (y, z, u)$ be the degree $3$ part of of $f (0, y, z, u)$.
Then it is either of type $cD$ or of type $cE$.
Moreover, it is of type $cE$ if and only if $g (y, z, u) = \ell (y, z, u)^3$ for some linear form $\ell (y, z, u)$. 
\end{Lem}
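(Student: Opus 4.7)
The plan is to determine the Du Val type of a general hyperplane section at $o$ from the cubic $g$.

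Since $o \in V$ is a terminal Gorenstein (hence $cDV$) singularity, a general hyperplane section $S := V \cap (L = 0)$, cut by a general linear form $L = \alpha x + \beta y + \gamma z + \delta u$ with $\alpha, \delta \ne 0$, is a Du Val surface singularity. Eliminating $x$ and setting $\tilde{x} := -x$, $\tilde{y} := y$, $\tilde{z} := z$ (so that $u = (\alpha \tilde{x} - \beta \tilde{y} - \gamma \tilde{z})/\delta$), the equation of $S$ becomes $\tilde{x}^2 + \tilde{f}(\tilde{x}, \tilde{y}, \tilde{z}) = 0$ with $\tilde{f} \in (\tilde{x}, \tilde{y}, \tilde{z})^3$. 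Hence the quadratic form of $S$ is $\tilde{x}^2$, of rank one, which immediately rules out type $A_n$ (whose normal form has rank-two quadratic part) and forces $S$ to be of type $D_n$ or $E_k$. Accordingly $o \in V$ is of type $cD$ or $cE$.

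To separate $cD$ from $cE$ I would complete the square in $\tilde{x}$: writing $\tilde{f} = \tilde{x} \cdot Q(\tilde{x}, \tilde{y}, \tilde{z}) + c(\tilde{y}, \tilde{z})$ with $Q \in (\tilde{x}, \tilde{y}, \tilde{z})^2$, the substitution $\tilde{x} \mapsto \tilde{x} - Q/2$ eliminates all terms of $\tilde{f}$ involving $\tilde{x}$ up to order $\ge 4$, so the Du Val type is determined by the cubic part of $c(\tilde{y}, \tilde{z})$. Comparing with the standard normal forms $x^2 + y^2 z + z^{n-1}$ (for $D_n$) and $x^2 + y^3 + (\cdots)$ (for $E_k$) shows that $S$ is of type $E$ iff this cubic equals $\tilde{\ell}^3$ for a nonzero linear form $\tilde{\ell}$, and of type $D$ otherwise. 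A direct computation gives the cubic part of $c$ as $g\bigl(\tilde{y}, \tilde{z}, -(\beta \tilde{y} + \gamma \tilde{z})/\delta\bigr)$, namely the restriction of $g$ to the generic line $\beta y + \gamma z + \delta u = 0$ inside $\mbP^2_{[y:z:u]}$.

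The last step, which is where the real content of the lemma lies, is to verify that this restriction is a perfect cube for generic $(\beta, \gamma, \delta)$ iff $g = \ell^3$ for some linear form $\ell \in \mbC[y, z, u]_1$. The ``if'' direction is immediate, since $\ell|_{L = 0}$ is a nonzero linear form for generic $L$. For the converse I would view $\{g = 0\}$ as a plane cubic curve $C \subset \mbP^2$: a generic line meets $C$ in three points counted with multiplicity, and these three points coincide for generic $L$ iff $C$ is a triple line, i.e.\ $g = \ell^3$. The remaining possible factorization types of $g$---irreducible cubic, three distinct lines, or $\ell_1^2 \ell_2$ with $\ell_1 \ne \ell_2$---each meet a general line in at least two distinct points, so the restriction fails to be a cube; and $g \equiv 0$ is ruled out because it would force the cubic of $S$ to vanish, contradicting the Du Val property. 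This case analysis on $C \cap L$ is elementary but is the main obstacle to watch for in the write-up.
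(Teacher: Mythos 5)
Your proof is correct, but it takes a genuinely different route from the paper's. The paper argues directly with the three-fold normal forms: assuming an equivalence $\chi$ of $o \in V$ with the standard $cE$ form $x^2 + y^3 + y\bar{g}(z,u) + \bar{h}(z,u)$ (resp.\ the $cD$ form $x^2 + y^2u + \lambda yz^l + \bar{g}(z,u)$), it compares quadratic parts to see that the linear part of $\chi(x)$ is proportional to $x$, and then reads off from the degree-$3$ part in $y,z,u$ that $g$ is carried by an invertible linear substitution to $y^3$ (resp.\ to $y^2u+\cdots$, which is never a cube); since being the cube of a linear form is invariant under invertible linear changes, the dichotomy follows. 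You instead work with the general hyperplane section --- which is in fact how the paper's definition of the $cD$/$cE$ type is phrased --- reduce by the splitting lemma to a corank-$2$ Du Val surface germ $x^2 + c'(y,z)$ whose cubic part is $g$ restricted to a generic line, and finish with the observation that a plane cubic restricts to a perfect cube on a generic line if and only if it is a triple line. Your route trades the $3$-fold normal forms for the surface ($D$ versus $E$) normal forms plus an elementary plane-cubic argument; both inputs are standard, and your version hews more closely to the paper's own definition of the $cDV$ type. Two small points to tighten in a write-up: your list of degenerate cubics omits the case of an irreducible conic times a line (though your general principle about $C \cap L$ covers it), and the substitution $\tilde{x} \mapsto \tilde{x} - Q/2$ must be iterated (or replaced by solving $\partial/\partial\tilde{x} = 0$ implicitly) since $Q$ involves $\tilde{x}$ --- but either way the corrections have order at least $4$ and do not affect the cubic part, as you claim.
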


\begin{proof}
The germ $o \in V$ is clearly a Gorenstein terminal singularity, and hence it is of type $cA$, $cD$, or $cE$.
It cannot be of type $cA$ since the quadratic part of the defining equation of $V$ is $x^2$.
This shows the first assertion.
  
If $o \in V$ is of type $cE$ (resp.\ $cD$), then there is an automorphism $\chi$ of $\mbC \{x, y, z, u\}$ inducing an equivalence of germs $o \in V \cong o \in \overline{V}$, where 
\[
\overline{V} = (x^2 + \bar{f} (y, z, u) = 0) \subset \mbC^4_{x, y, z, u}
\] 
with 
\[
\bar{f} = 
\begin{cases}
y^3 + y \bar{g} (z, u) + \bar{h} (z, u) \quad, & \text{if $o \in V$ is of type $cE$}, \\
y^2 u + \lambda y z^l + \bar{g} (z,u), & \text{if $o \in V$ is of type $cD$},
\end{cases}
\]
where $\ord (\bar{g} (z, u)) \ge 3$, $\ord (\bar{h} (z, u)) \ge 4$, $\lambda \in \mbC$, $l \ge 2$ and $\bar{g} (z,u)$ consists of monomials of degree at least $3$).
We can write $\chi (x) = \ell_x + q_x + \cdots$, $\chi (y) = \ell_y + \cdots$, $\chi (z) = \ell_z + \cdots$ and $\chi (u) = \ell_u + \cdots$, where $\ell_x, \dots, \ell_u$ are linear forms, $q_x$ is a quadratic form in $x, y, z, u$ and the omitted terms in $\chi (x)$ (resp.\ $\chi (y), \dots, \chi (u)$) consist of monomials of degree at least $3$ (resp.\ $2$).
We have $\chi (x^2 + \bar{f}) = (\text{unit}) f$ for some non-zero invertible element $(\text{unit}) \in \mbC \{x, y, z, u\}$, hence we can write 
\[
x^2 + f = (\text{unit})^{-1} \chi (x^2 + \bar{f}) = (\alpha + \bar{\ell} + \cdots) \chi (x^2 + \bar{f}),
\] 
where $\alpha \in \mbC$ is non-zero, $\bar{\ell}$ is a linear form in $x, y, z, u$ and the omitted terms consist of monomials of degree at least $2$.

Suppose that $o \in V$ is of type $cE$.
Then 
\[
\chi (x^2 + \bar{f}) = \ell_x^2 + 2 \ell_x q_x + \ell_y^3 + \cdots,
\] 
where the omitted terms consist of monomials of degree at least $4$.
By comparing the quadratic part, we have $\ell_x = \pm \sqrt{\alpha}^{-1} x$, and then comparing the cubic part in $y, z, u$, we have $g (y, z, u) = \ell_y (0, y, z, u)^3$.

Suppose that $o \in V$ is of type $cD$, then by the same argument as above, we see that $g (y,z,u)$ cannot be a cube of a linear form since the degree $3$ part of $\bar{f}$ is $y^2 u + \lambda \delta_{l,3} y z^l + g_{\deg = 3} (z, u)$, where $\delta_{\lambda,3}$ is the Kronecker delta, and it is not a cube of a linear form.
\end{proof}

\begin{Lem} \label{lem:No110divcont}
The morphism $\breve{\upsilon} \colon \breve{Z} \to \breve{X}$ is the unique divisorial contraction centered at the $cD/2$ point $\breve{\msr}$.
\end{Lem}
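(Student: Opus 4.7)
The plan is to apply Corollary~\ref{cor:cD2dcuni} with $l = 3$, for which three items must be checked: (i) $\breve{\upsilon}$ has discrepancy $\tfrac{1}{2}$; (ii) the non-Gorenstein singular points of $\breve{\mcZ}$ along $\breve{\mcE}$ consist of exactly one $\tfrac{1}{4}(1,1,3)$ point and one $\tfrac{1}{3}(1,2,-2)$ point; (iii) $(\breve{\mcE}^3) = 2/3$.

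For (i), I would work locally near $\breve{\msr}$. By Lemma~\ref{lem:110defeqbreve} and the implicit identification $t=1$, the germ $\breve{\msr} \in \breve{X}$ is embedded in $\mbC^5_{u,y,z,w,v}/\mbZ_2(1,1,0,1,1)$ as a complete intersection, and $\breve{\upsilon}$ is induced by the toric weighted blow-up with weight $\tfrac{1}{2}(7,1,4,5,3)$. The ambient exceptional divisor has discrepancy $\tfrac{1}{2}(7+1+4+5+3)-1 = 9$, and inspection shows the minimum weights of $\breve{F}_1|_{t=1}$ and $\breve{F}_2|_{t=1}$ to be $5$ (attained by $vu$, $w^2$, $\alpha_6 y^2 z^2$) and $\tfrac{7}{2}$ (attained by $vz$, $u$, $y^7$) respectively; adjunction yields $a_{\breve{\mcE}}(K_{\breve{X}}) = 9 - 5 - \tfrac{7}{2} = \tfrac{1}{2}$.

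For (ii), I would use the toric description of $\breve{\mbU}$. The $\breve{\Upsilon}$-exceptional divisor $E = (x=0) \subset \breve{\mbU}$ is identified with $\mbP(7_u,4_z,5_w,3_v,1_y)$, since the $\mbZ_2$-action on $(u,z,w,v,y)$ induced by $\mu=-1$ (after normalising $t$) coincides with $\lambda=-1$ in the first $\mbC^*$. Substituting $x=0$ in the equations of $\breve{\mcZ}$ gives
\begin{equation*}
\breve{\mcE} \;=\; \bigl(\,vu - w^2 - \alpha_6 y^2 z^2 - \gamma_3 y^3 u = 0,\ vz + \delta_6 w y^2 + u + y^7 = 0\,\bigr) \subset \mbP(7,4,5,3,1),
\end{equation*}
and a direct check removes $\msp_u, \msp_w$, leaving $\msp_z, \msp_v$ as the only candidate non-Gorenstein singular points. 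At $\msp_z$, the residual stabiliser in $\breve{\mbU}$ after normalising $z=t=1$ has order $8$ and collapses to $\mbZ_4$ acting on $(u,w,v,y,x)$ with weights $(3,1,3,1,2)$; since $\breve{F}_2$ is linear in $v$ (nondegenerately) and $\breve{F}_1$ contains $x$ with nonzero coefficient (from the monomial $z^3 x$ in $F$ — see Lemma~\ref{lem:110defeq}), eliminating $v$ and $x$ presents the local singularity as $\mbC^3_{u,w,y}/\mbZ_4(3,1,1) \cong \tfrac{1}{4}(1,1,3)$. The analogous analysis at $\msp_v$ yields a residual $\mbZ_3$ acting with weights $(1,1,2,1,1)$ on $(u,z,w,y,x)$, with $\breve{F}_1$ and $\breve{F}_2$ linear in $u$ and $z$ respectively, so elimination produces $\mbC^3_{w,y,x}/\mbZ_3(2,1,1) \cong \tfrac{1}{3}(1,1,2)$. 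A Jacobian check at the remaining points of $\breve{\mcE}$ confirms quasi-smoothness elsewhere.

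For (iii), with (ii) established, any standard model $\overline{X}$ of $\breve{\msr}$ must have $l(\overline{X}) = 3$, since this is the index of the $\tfrac{1}{l}(1,2,-2)$ singularity appearing on $\breve{\mcZ}$; Lemma~\ref{lem:cD2eqs}(4) then yields $(\breve{\mcE}^3) = 2/l(\overline{X}) = 2/3$. Corollary~\ref{cor:cD2dcuni} now applies with $l=3$ and forces $\breve{\upsilon}$ to be the unique divisorial contraction centred at the $cD/2$ point $\breve{\msr}$. The main technical obstacle is the chart-by-chart singularity analysis in (ii): one has to carefully extract the residual cyclic stabilisers from the rank-$2$ toric action on $\breve{\mbU}$ at the two candidate coordinate points and then cleanly eliminate the two redundant variables using the explicit form of $G_1, G_2$ from the construction of the link $\breve{\sigma}$.
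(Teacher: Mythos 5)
Your overall strategy is the paper's: verify the three hypotheses of Corollary~\ref{cor:cD2dcuni} (discrepancy $\tfrac12$, the two non-Gorenstein points of types $\tfrac14(1,1,3)$ and $\tfrac1l(1,2,-2)$ along the exceptional divisor, and $(\breve{\mcE}^3)=2/l$) and let the corollary do the rest. Your items (i) and (ii) are sound and agree with the paper, which identifies $\breve{\mcE}$ as the complete intersection $(vu-w^2-\alpha_6 z^2y^2-\gamma_9 uy^3 = vz+\delta_6 wy^2+u+y^7=0)$ in $\mbP(7_u,1_y,4_z,5_w,3_v)$ and reads off the $\tfrac13(1,1,2)$ point at $\msp_v$ and the $\tfrac14(1,1,3)$ point at $\msp_z$ (the latter because $z^3$ occurs in the first equation; in the original coordinates this comes from $z^4x\in F$, not $z^3x$ as you wrote, though $z^3x\in F_1=F/z$ modulo the tail, so this is only a bookkeeping slip).

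The genuine problem is your item (iii). You derive $(\breve{\mcE}^3)=2/3$ by arguing that any standard model $\overline{X}$ has $l(\overline{X})=3$ and then invoking Lemma~\ref{lem:cD2eqs}(4). This is circular: Lemma~\ref{lem:cD2eqs} computes $(E_{l(\overline{X})}^3)$ for the specific weighted blow-up $\bar{\varphi}_{l(\overline{X})}$ of a standard model, and at this stage you do not yet know that $\breve{\upsilon}$ coincides with any such $\bar{\varphi}_{l(\overline{X})}$ --- that identification is exactly what Corollary~\ref{cor:cD2dcuni} is designed to produce by matching $\breve{\upsilon}$ against Hayakawa's classification, and its hypothesis $(E^3)=2/l$ is an input, not an output. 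Moreover, the presence of a $\tfrac13(1,1,2)$ point on $\breve{\mcZ}$ does not by itself pin down $l(\overline{X})$ for an abstractly chosen standard model. The repair is immediate and is what the paper does: compute the degree of the complete intersection directly,
\[
(\breve{\mcE}^3)=\frac{2^2\cdot 10\cdot 7}{7\cdot 1\cdot 4\cdot 5\cdot 3}=\frac{2}{3},
\]
where the factor $2^2$ accounts for the $\tfrac12$ in the blow-up weights and $10,\,7$ are the $\msw$-degrees of the two equations. With that substitution your proof closes.
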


\begin{proof}
We choose homogeneous coordinates as in Lemma~\ref{lem:110defeqbreve}.
The morphism $\breve{\upsilon}$ is the weighted blow-up of $\breve{X}$ at $\breve{\msr}$ with $\wt (u, y, z, w, v) = \frac{1}{2} (7, 1, 4, 5, 3)$, and its exceptional divisor $\breve{E}$ is isomorphic to the complete intersection in $\breve{\mbE} := \mbP (7_u, 1_y, 4_z, 5_w, 3_v)$ defined by the equations
\[
\begin{cases}
v u - w^2 - \alpha_6 z^2 y^2 - \gamma_9 u y^3 =0, \\
v z + \delta_6 w y^2 + u + y^7 = 0,
\end{cases}
\]
where $\alpha_6 = \coeff_{\grave{a}_6} (y^2)$, $\gamma_9 = \coeff_{\grave{c}_9} (y^3)$, and $\delta_6 = \coeff_{\grave{d}_6} (y^2)$. 
It is easy to see that the non-Gorenstein singular points of $\breve{Z}$ along $\breve{E}$ are $\msp_v = (0\!:\!0\!:\!0\!:\!0\!:\!1), \msp_z = (0\!:\!0\!:\!1\!:\!0\!:\!0) \in \breve{\mbE}$.
The singularity of $\breve{Z}$ at $\msp_v$ is of type $\frac{1}{3} (1, 1, 2)$.
Since $z^3$ appears in the first equation of the defining equations of $\breve{X}$ (given in Lemma~\ref{lem:110defeqbreve}), the singularity of $\breve{Z}$ at $\msp_z$ is of type $\frac{1}{4} (1, 1, 3)$.
We compute
\[
(\breve{E}^3) = \frac{2^2 \cdot 10 \cdot 7}{7 \cdot 1 \cdot 4 \cdot 5 \cdot 3} = \frac{2}{3}.
\]
By Corollary~\ref{cor:cD2dcuni}, $\breve{\upsilon}$ is the unique divisorial contraction centered at $\breve{\msr} \in \breve{X}$.
\end{proof} 

\subsubsection{Exclusion of curves}

\begin{Lem} \label{lem:110breveC}
Let $\Gamma \subset \breve{X}$ be an irreducible and reduced curve of degree $1/2$ that passes through $\breve{\msr}$ but does not pass through any other singular point of $\breve{X}$.
Then one of the following holds.
\begin{enumerate}
\item $\Gamma$ is a complete intersection curve of type $(1, 1, 5, 6)$ in $\breve{\mbP}$ and $\Gamma = (u = y = w^2 + z^3 = v = 0)$.
\item $\Gamma$ is a complete intersection curve of type $(1, 2, 3, 5)$ in $\breve{\mbP}$ and 
\[
\Gamma = (u = z - \lambda y^2 = w - \mu y z = v - \nu t y^3 - \xi y^5 = 0)
\]
for some $\lambda \ne 0, \mu, \nu, \xi \in \mbC$ with the property that if $\mu = 0$, then $\nu = 0$ and $\xi \ne 0$.
\end{enumerate}
\end{Lem}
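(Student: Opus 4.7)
The plan is to reduce to the case $\Gamma \subset (u=0) \cap \breve{X}$ and then classify directly using the defining equations from Lemma~\ref{lem:110defeqbreve}. Since $\Gamma$ meets only the index-$2$ singular point $\breve{\msr}$ among $\Sing \breve{X}$, the pullback of $2\breve{A}$ to the normalization $\nu \colon \tilde{\Gamma} \to \Gamma$ is a line bundle of degree $1$, so $\tilde{\Gamma} \cong \mbP^1$ and $\nu^{-1}(\breve{\msr}) = \{\tilde{\msr}\}$. Both $u, y \in H^0(\breve{X}, \breve{A})$ vanish at $\breve{\msr}$, and for any $(\alpha, \beta) \ne (0, 0)$ the intersection $(\alpha u + \beta y = 0) \cdot \Gamma = 1/2$ must be concentrated at $\breve{\msr}$, as smooth points of $\breve{X}$ contribute only integer multiplicities. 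Suppose $\Gamma \not\subset (\alpha u + \beta y = 0)$ for every $(\alpha, \beta)$; then the pencil $|\breve{A}| = \langle u, y \rangle$ induces a morphism $[u:y] \colon \tilde{\Gamma} \to \mbP^1$ of degree $1$, contradicting $u(\tilde{\msr}) = y(\tilde{\msr}) = 0$. Hence $\Gamma$ lies in some member of $|\breve{A}|$.

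The main obstacle is ruling out that this member is $(u - c y = 0)$ with $c \ne 0$, i.e., showing that in the rigid coordinates of Lemma~\ref{lem:110defeq}, the vanishing section must be $u$ itself up to scalar (and not a generic linear combination of $u$ and $y$). I would parametrize the hypothetical $\Gamma$ in the $t$-chart around $\breve{\msr}$ using a uniformizer at $\tilde{\msr}$: setting $u = c y$, the remaining coordinates $z, t, w, v$ become power series in the parameter, with $t$ nonvanishing at $\tilde{\msr}$. Substituting into the two defining equations of $\breve{X}$ and comparing leading orders of vanishing, the distinguished monomials $z^3, w^2 z, y^7, t^3 u$ (fixed with coefficient $1$ by Lemma~\ref{lem:110defeq}) produce an overdetermined system unless $c = 0$ or $y \equiv 0$ on $\Gamma$, so $u|_\Gamma \equiv 0$.

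Once $u|_\Gamma \equiv 0$, the restrictions of the defining equations to $(u=0)$ reduce to
\[
w^2 + z^3 + \alpha_6 y^2 z^2 = 0, \qquad vz + w(\delta_6 t y^2 + c_{4,1} y^4) + y^7 = 0,
\]
where $\alpha_6, \delta_6, c_{4,1}$ are specific coefficients of $F$. In the subcase $y|_\Gamma \equiv 0$, these become $w^2 + z^3 = 0$ and $vz = 0$, so $(u = y = 0) \cap \breve{X}$ decomposes as $(u = y = v = w^2 + z^3 = 0) \cup (u = y = z = w = 0)$; the second component is the $\mbP(2, 5)$-line through $\breve{\msq}$ and is excluded, yielding case~(1). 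In the subcase $y|_\Gamma \not\equiv 0$, I parametrize $\tilde{\Gamma} \cong \mbP^1$ by $y$: the first equation rewrites as $w^2 = -z^2(z + \alpha_6 y^2)$, forcing $z + \alpha_6 y^2 = -h(y)^2$ for some polynomial $h$, and degree considerations give $h = \mu y$, whence $z = \lambda y^2$ with $\lambda = -(\mu^2 + \alpha_6)$ and $w = \mu y z$. Substituting into the second equation and solving linearly for $v$ yields $v = \nu t y^3 + \xi y^5$ with $\nu = -\mu \delta_6$ and $\xi = -\mu c_{4,1} - 1/\lambda$, matching case~(2). The required conditions $\lambda \ne 0$ (so that $\Gamma$ is truly one-dimensional) and, when $\mu = 0$, $\nu = 0$ together with $\xi = 1/\alpha_6 \ne 0$ (needed so that $\Gamma$ avoids $\breve{\msq}$) then follow automatically.
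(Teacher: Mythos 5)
Your route is genuinely different from the paper's, which projects $\Gamma$ from $\breve{\msp}_v$ and then from $\msp_w$ down to $\mbP(1,1,2,2)$ and classifies curves of degree $\le 1/2$ through $(0\!:\!0\!:\!0\!:\!1)$ there; your endgame (the explicit formulas for $\lambda,\mu,\nu,\xi$ and the condition when $\mu=0$) is correct and in fact more complete than what the paper writes out. However, the step you yourself call the main obstacle is not actually proved: you assert that substituting $u=cy$ and "comparing leading orders of vanishing" produces an overdetermined system unless $c=0$ or $y|_\Gamma\equiv 0$, without exhibiting any contradiction. Since this is the crux of the lemma, it cannot be left as an assertion; moreover the parametrization $u=cy$ does not cover the member $(y=0)$ of the pencil, on which $u$ need not vanish. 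The claim is true, and here is a way to close it that also makes your pencil argument superfluous: in the orbifold chart at $\breve{\msr}$ (set $t=1$ in the double cover), the second defining equation reads $vz+w(\grave{d}_6+\grave{d}_{13})+u+u\grave{d}_{14}+\grave{d}_{21}=0$, so by the implicit function theorem $u=-(vz+\delta_6wy^2+\cdots)$ lies in $\mfm_{\breve{\msr}}^2$ on $\breve{X}$. Hence for any curve $\Gamma\ni\breve{\msr}$ with $u|_{\Gamma}\not\equiv 0$ one gets $\mult_{\breve{\msr}}((u=0),\Gamma)=\tfrac12\dim\mcO_{\Gamma,\breve{\msr}}/(u^2)\ge 1>\tfrac12=(\breve{A}\cdot\Gamma)$, forcing $u|_{\Gamma}\equiv 0$ outright. (The monomial $t^3u$ you single out is indeed what drives this.)

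A second gap sits in the subcase $y|_\Gamma\not\equiv 0$: writing $w^2=-z^2(z+\alpha_6y^2)$ and concluding $z+\alpha_6y^2=-h(y)^2$ "for some polynomial $h$" presupposes that $z|_\Gamma$ is already a scalar multiple of $y^2$. A priori $z$ and $t$ are sections of the Cartier class $2\breve{A}$ of degree $1$ on $\Gamma$, so both pull back to linear forms in the coordinates $(s_0\!:\!s_1)$ of $\tilde{\Gamma}\cong\mbP^1$, while $y^2$ pulls back to a multiple of $s_0$ (it vanishes only at $\tilde{\msr}$); you must rule out $\nu^*z=as_0+bs_1$ with $b\ne 0$. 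This does follow, because $\nu^*(w^2y^2)$ must be the square of the quadratic $\nu^*(wy)$, and the quartic $-cs_0(as_0+bs_1)^2\bigl((a+\alpha_6c)s_0+bs_1\bigr)$ has $s_0=0$ as a simple root unless $b=0$ or $wy\equiv 0$ (the latter leading to the $\mu=0$ case) --- but the argument needs to be made. The remaining ingredients (step 1, the identification of the $y\equiv 0$ case with $(u=y=v=w^2+z^3=0)\cup\Xi$, and the linear solve for $v$) check out, modulo the minor point that $\tilde{\Gamma}\cong\mbP^1$ should be justified by producing two independent sections of the degree-one bundle $\nu^*(2\breve{A})$ rather than from its degree alone.
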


\begin{proof}
Let $\Gamma \subset \breve{X}$ be an irreducible and reduced curve of degree $1/2$ passing through $\breve{\msr}$ but does not pass through any other singular point.
Let $\pi \colon \breve{X} \ratmap \mbP (1_u, 1_y, 2_z, 2_t, 3_w)$ be the projection which is defined outside the point $\breve{\msq}$ and which contracts the curve $\Xi := (u = y = z = w = 0)$ of degree $1/10$.
The image $\pi (\Gamma)$ is an irreducible and reduced curve of degree at most $1/2$ on $\mbP (1, 1, 2, 2, 3)$ and it does not pass through the point $\msp := (0\!:\!0\!:\!0\!:\!0\!:\!1)$ since $\breve{\msq} \notin \Gamma$ and $\pi^{-1} (\msp) = \{\breve{\msq}\}$.
It follows that the image of $\pi (\Gamma)$ under the projection $\mbP (1_u, 1_y, 2_z, 2_t)$ is an irreducible and reduced curve of degree at most $1/2$ passing through $(0\!:\!0\!:\!0\!:\!1)$.
Then we see that $\Gamma$ is a complete intersection of type either $(1, 1)$ or $(1, 2)$ in $\mbP (1, 1, 2, 2)$.
\end{proof}

\begin{Lem} \label{lem:110exclC1}
Let $\Gamma \subset \breve{X}$ be an irreducible and reduced complete intersection curve of type $(1, 1, 5, 6)$ which passes through $\breve{\msr}$ but does not pass through any other singular point of $\breve{X}$.
Then $\Gamma$ is not a maximal center.
\end{Lem}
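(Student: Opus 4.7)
The strategy is to apply Lemma~\ref{lem:exclCint} with the fixed divisor $S := (u = 0)_{\breve{X}}$ of class $\breve{A}$ (which contains $\Gamma$, giving condition (1) with $m = 1$) and the movable pencil $\mcM := |\breve{A}|$ generated by $u$ and $y$. First I observe that $\Bs\mcM = \Gamma \cup \Xi$ set-theoretically, where $\Xi := (u = y = z = w = 0)_{\breve{X}}$ is an irreducible curve of degree $1/10$ which meets $\Gamma$ only at the $cD/2$ point $\breve{\msr}$.

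For a general member $T = (y - \lambda u = 0)_{\breve{X}} \in \mcM$, the scheme-theoretic intersection $S \cap T = (u = y = 0)_{\breve{X}}$ is supported on $\Gamma \cup \Xi$, so condition~(2) on containment in $\Bs\mcM$ holds. Balancing degrees via $(S \cdot T \cdot \breve{A}) = \breve{A}^3 = 7/10$ with $\deg\Gamma = 1/2$ and $\deg\Xi = 1/10$ yields the cycle decomposition $S \cdot T = \Gamma + 2\Xi$. A tangent-space computation at a generic point of $\Gamma$ (using the defining equations of $\breve{X}$ from Lemma~\ref{lem:110defeqbreve}) confirms that $\Gamma$ appears with multiplicity one, so $S \cap T$ is reduced along $\Gamma$, completing condition~(2).

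Condition~(3) requires an effective $1$-cycle $\Delta$ on $T$ (for the extra base-locus component $\Gamma_1 = \Xi$) satisfying $(\Gamma \cdot \Delta)_T \ge (-K_{\breve{X}} \cdot \Delta) > 0$. I plan to take $\Delta := \Xi$, reducing the inequality to the single local statement $(\Gamma \cdot \Xi)_T \ge 1/10$ at the common point $\breve{\msr}$, since $(-K_{\breve{X}} \cdot \Xi) = \deg\Xi = 1/10$. This is to be verified on the Kawamata extraction $\breve{\upsilon} \colon \breve{\mcZ} \to \breve{X}$ from Lemma~\ref{lem:No110divcont}: the strict transforms $\tilde{T}, \tilde{\Gamma}, \tilde{\Xi}$ are analyzed using the blow-up weights $\tfrac{1}{2}(7, 1, 4, 5, 3)$ on $(u, y, z, w, v)$, and the cuspidal form $w^2 + z^3$ of $\Gamma$ at $\breve{\msr}$ controls how $\tilde{\Gamma}$ meets the exceptional divisor $\breve{E}$. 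Once $(\Gamma \cdot \Xi)_T \ge 1/10$ is established, Lemma~\ref{lem:exclCint} concludes that $\Gamma$ is not a maximal center.

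The main obstacle is this intersection-theoretic computation at the $cD/2$ point, which requires precise tracking of the singularities of $\breve{\mcZ}$ along $\breve{E}$ (namely the $\tfrac{1}{4}(1,1,3)$ and $\tfrac{1}{l}(1,2,-2)$ quotient singularities identified in Lemma~\ref{lem:No110divcont}) together with the behaviour of $\tilde{T}$ (a partial desingularisation of the Du Val-type singularity of $T$ at $\breve{\msr}$) and how the strict transforms $\tilde{\Gamma}, \tilde{\Xi}$ meet $\breve{E}$ through these two quotient points.
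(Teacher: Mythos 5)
Your proposal follows essentially the same route as the paper's proof: Lemma~\ref{lem:exclCint} applied with the fixed divisor $(u=0)_{\breve{X}}$, the pencil $|\breve{A}|$ generated by $u,y$, the decomposition $S\cdot T=\Gamma+2\Xi$ (the paper denotes your $\Xi$ by $\Delta$), and verification of $(\Gamma\cdot\Xi)\ge(-K_{\breve{X}}\cdot\Xi)=1/10$ by passing to the weighted blow-up $\breve{\upsilon}$ with weights $\frac{1}{2}(7,1,4,5,3)$ at the $cD/2$ point --- the paper carries out exactly this computation on the strict transform of the general member (tracking the $\frac{1}{4}(1,3)$, $\frac{1}{3}(1,2)$ and $\frac{1}{5}(2,3)$ surface singularities) and finds $(\Gamma\cdot\Xi)=1/2$. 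The only substantive step you leave unexecuted is that intersection computation; note also that $\breve{\upsilon}$ is the unique divisorial contraction to the $cD/2$ point (Lemma~\ref{lem:No110divcont}), not a Kawamata blow-up, though the morphism you invoke is the correct one.
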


\begin{proof}
Note that $\Gamma = (u = y = v = w^2 + z^3 = 0)$ by Lemma~\ref{lem:110breveC}.
We define $\mcM := |\breve{A}|$ and set $T := (u = 0)_X$.
Let $S \in \mcM$ be a general member.
Then $\Bs \mcM = S \cap T = \Gamma \cup \Delta$ set-theoretically, where $\Delta := (u = y = z = w = 0) \subset X$ is an irreducible and reduced curve of degree $1/10$.

We will show that $(\Gamma \cdot \Delta) = 1/2$, where the intersection number is taken on $S$.
Since $\Gamma \cap \Delta = \{\breve{\msr}\}$, it is difficult to compute the number $(\Gamma \cdot \Delta)$ directly. 
We consider the divisorial contraction $\breve{\upsilon} \colon \breve{\mcZ} \to \breve{X}$ centered at $\breve{\msr}$, which is the weighted blow-up with $\wt (u, y, z, w, v) = \frac{1}{2} (7, 1, 4, 5, 3)$.
Let $\tilde{\Gamma}$, $\tilde{\Delta}$, $\tilde{S}$, and $\tilde{T}$ be proper transforms of $\Gamma$, $\Delta$, $S$, and $T$ on $\breve{\mcZ}$, respectively.
The $\breve{\upsilon}$-exceptional divisor $\breve{\mcE}$ is naturally isomorphic to the complete intersection in $\mbP (7_u, 1_y, 4_z, 5_w, 3_v)$ defined by the equations
\[
\begin{cases}
v u - (w^2 + \alpha_6 z^2 y^2 + \gamma_9 u y^3) = 0, \\
v z + \delta_6 w y^2 + u + y^7 = 0,
\end{cases}
\]
where $\alpha_6 = \coeff_{\grave{a}_6} (y^2)$, $\gamma_9 = \coeff_{\grave{c}_9} (y^3)$, and $\delta_6 = \coeff_{\grave{d}_6} (y^2)$. 
We have $\tilde{\Gamma} \cap \breve{\mcE} = \{\msp_1\}$ and $\tilde{\Delta} \cap \breve{\mcE} = \{\msp_2\}$, where 
\[
\msp_1 = (0\!:\!0\!:\!1\!:\!0\!:\!0), \quad
\msp_2 = (0\!:\!0\!:\!0\!:\!0\!:\!1) \in \breve{\mcE} \subset \mbP (7_u, 1_y, 4_z, 5_w, 3_v).
\]
This shows that $(\tilde{\Gamma} \cdot \tilde{\Delta}) = 0$, where the intersection number is taken on the surface $\tilde{S}$.
Since $\breve{\upsilon}^*S = \tilde{S} + \frac{1}{2} \breve{\mcE}$, we have $K_{\tilde{S}} = (\breve{\upsilon}|_{\tilde{S}})^*K_S \sim 0$.
We see that $\tilde{S}$ has a singular point of type $\frac{1}{4} (1, 3)$ at the point $\msp_1$ and is smooth along $\tilde{\Gamma} \setminus \{\msp_1\}$.
It follows that $(\tilde{\Gamma}^2) = -2 + \frac{3}{4} = - \frac{5}{4}$.
Similarly, $\tilde{S}$ has a singular point of type $\frac{1}{3} (1, 2)$ at $\msp_2$ and of type $\frac{1}{5} (2, 3)$ at $\breve{\msr}$.
It follows that $(\tilde{\Delta}^2) = - 2 + \frac{2}{3} + \frac{4}{5} = -\frac{8}{15}$.
By taking the intersection number of the divisor
\[
(\breve{\upsilon}^*\breve{A} - \frac{7}{2} \breve{\mcE})|_{\tilde{S}} \sim \tilde{T}|_{\tilde{S}} = \tilde{\Gamma} + 2 \tilde{\Delta}
\]
and $\tilde{\Gamma}$, and then $\tilde{\Delta}$, we obtain $(E \cdot \tilde{\Gamma}) = 1/2$ and $(E \cdot \tilde{\Delta}) = 1/3$, where $E = \breve{\mcE}|_{\tilde{S}}$.
We write $(\breve{\upsilon}|_{\tilde{S}})^* \Gamma = \tilde{\Gamma} + m E$.
We have
\[
0 = (E \cdot (\breve{\upsilon}|_{\tilde{S}})^*\Gamma) = (E \cdot \tilde{\Gamma}) + m (E^2) = \frac{1}{2} - \frac{1}{3} m,
\]
where we note that $(E^2) = (\breve{\mcE}^2 \cdot \tilde{S}) = - \frac{1}{2} (\breve{\mcE}^3) = - 1/3$.
Hence $m = 3/2$ and we compute
\[
(\Delta \cdot \Gamma) = (\tilde{\Delta} \cdot (\breve{\upsilon}|_{\tilde{S}})^*\Gamma) = (\tilde{\Delta} \cdot \tilde{\Gamma}) + \frac{3}{2} (\tilde{\Delta} \cdot E) = \frac{1}{2}.
\]
Therefore we have $(\Gamma \cdot \Delta) \ge (\breve{A} \cdot \Delta)$, and by Lemma~\ref{lem:exclCint}, $\Gamma$ is not a maximal center.
\end{proof}

\begin{Lem} \label{lem:110exclC2}
Let $\Gamma \subset \breve{X}$ be an irreducible and reduced complete intersection curve of type $(1, 2, 3, 5)$ which passes through $\breve{\msr}$ but does not pass through any other singular point of $\breve{X}$.
Then $\Gamma$ is not a maximal center.
\end{Lem}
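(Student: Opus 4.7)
The plan is to apply Lemma~\ref{lem:exclCint}, in direct parallel with the argument for Lemma~\ref{lem:110exclC1}. From Lemma~\ref{lem:110breveC}(2) the curve $\Gamma$ is cut out on $\breve{X}$ by $u = 0$, $z - \lambda y^2 = 0$, $w - \mu yz = 0$, and $v - \nu t y^3 - \xi y^5 = 0$, which already singles out the effective divisor $S := (u = 0)_{\breve{X}} \in |\breve{A}|$ containing $\Gamma$ and a natural movable pencil $\mcM \subset |2 \breve{A}|$ whose members all contain $\Gamma$; a concrete choice is $\mcM = \langle u y,\, z - \lambda y^2 \rangle$, since along $\Gamma$ we have $u = 0$ and $z = \lambda y^2$, so $\Gamma \subset \Bs \mcM$.

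First I would identify $\Bs \mcM$ and the cycle $S \cap T$ for a general member $T \in \mcM$. On $\breve{X}$ the base locus of the pencil is contained in $(u = 0) \cap (z - \lambda y^2 = 0)$ together with $(y = 0) \cap (z - \lambda y^2 = 0)$, and intersecting with the two defining equations of $\breve{X}$ in Lemma~\ref{lem:110defeqbreve} should produce $\Gamma$ together with a small list of other irreducible curves $\Gamma_1, \dots, \Gamma_\ell$ of degrees $\tfrac{1}{10}$, $\tfrac{1}{2}$, etc., most notably the degree-$\tfrac{1}{10}$ curve $\Delta = (u = y = z = w = 0)_{\breve{X}}$ that already appeared in Lemma~\ref{lem:110exclC1}. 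A routine expansion shows that $S \cap T = \Gamma + \sum m_i \Gamma_i$ with explicit multiplicities $m_i$ and that $S \cap T$ is reduced along $\Gamma$ for $T$ general. The first genuine step of work is this combinatorial identification of $\Bs \mcM$; in the special subcases of Lemma~\ref{lem:110breveC}(2) ($\mu = 0$, or $\mu = \nu = 0$, $\xi \ne 0$) the list of $\Gamma_i$ may differ and one may need to replace the generator $u y$ by $u^2$ or enlarge to a net.

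The decisive computation is then exactly the one performed in Lemma~\ref{lem:110exclC1}, lifted to the divisorial contraction $\breve{\upsilon} \colon \breve{\mcZ} \to \breve{X}$ of the $cD/2$ point $\breve{\msr}$. Using the explicit description of $\breve{\mcE}$ as the complete intersection in $\mbP(7_u, 1_y, 4_z, 5_w, 3_v)$ recorded in the proof of Lemma~\ref{lem:No110divcont}, I would locate where each proper transform $\tilde{\Gamma}, \tilde{\Gamma}_i$ meets $\breve{\mcE}$, use $K_{\tilde{S}} \sim (\breve{\upsilon}|_{\tilde{S}})^* K_S$ together with the classification of the quotient singularities of $\tilde{S}$ at these intersection points to compute $(\tilde{\Gamma}^2)$ and $(\tilde{\Gamma}_i^2)$, and extract the mixed intersection numbers by taking intersections of the relation $\tilde{T}|_{\tilde{S}} = \tilde{\Gamma} + \sum m_i \tilde{\Gamma}_i \sim (\breve{\upsilon}^*\breve{A} - c\,\breve{\mcE})|_{\tilde{S}}$ with $\tilde{\Gamma}$ and each $\tilde{\Gamma}_i$, and writing $(\breve{\upsilon}|_{\tilde{S}})^*\Gamma_i = \tilde{\Gamma}_i + \alpha_i\, E$ with $\alpha_i$ solved from $E \cdot (\breve{\upsilon}|_{\tilde{S}})^*\Gamma_i = 0$. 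Setting $\Delta_i := \Gamma_i$ (or a small positive combination thereof), this should yield $(\Gamma \cdot \Delta_i) \ge (-K_{\breve{X}} \cdot \Delta_i) > 0$ and $(\Gamma_j \cdot \Delta_i) \ge 0$ for $j \ne i$, so Lemma~\ref{lem:exclCint} excludes $\Gamma$ as a maximal center.

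The principal obstacle is the bookkeeping in the subcases arising from the parameters $(\mu, \nu, \xi)$: depending on which of them vanish, the restriction of the second defining equation of $\breve{X}$ to $(u = z - \lambda y^2 = 0)$ changes its initial form, so a different component $\Gamma_i$ may emerge as dominant and the intersection with $\breve{\mcE}$ at $\breve{\msr}$ may move to a different singular point of $\breve{\mcE}$. I expect this to force either a uniform choice of a slightly larger linear system (for example, the net $\langle u^2,\, u y,\, z - \lambda y^2 \rangle$ inside $|2 \breve{A}|$, together with a specific $T$ as in Lemma~\ref{lem:110exclC1}) or a short case split, after which the numerical verification is mechanical.
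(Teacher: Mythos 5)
Your overall architecture is the right one and matches the paper's: reduce to Lemma~\ref{lem:exclCint}, lift everything to the divisorial contraction $\breve{\upsilon} \colon \breve{\mcZ} \to \breve{X}$ of the $cD/2$ point, and extract the needed intersection numbers from the explicit description of $\breve{\mcE} \subset \mbP (7_u, 1_y, 4_z, 5_w, 3_v)$. The gap is in the choice of the movable linear system, and it is not a cosmetic one. The pencil $\langle u y,\ z - \lambda y^2 \rangle \subset |2 \breve{A}|$ has base locus $(u = z - \lambda y^2 = 0)_{\breve{X}} \cup (y = z = 0)_{\breve{X}}$; the second piece is a $1$-cycle of degree $1 \cdot 2 \cdot (\breve{A}^3) = 7/5$ whose components are extraneous curves that meet $\Gamma$ in at most the single point $\breve{\msr}$. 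Condition (3) of Lemma~\ref{lem:exclCint} then has to be verified for \emph{every} irreducible curve in $\Bs \mcM$, and for those components there is no reason a $1$-cycle $\Delta_i$ with $(\Gamma \cdot \Delta_i) \ge (-K_{\breve{X}} \cdot \Delta_i) > 0$ should exist. None of your fallback options ($u^2$, a net containing $u^2, uy, z - \lambda y^2$) removes this piece. The paper's proof hinges on a different pencil: $\mcM = \langle u^3,\ w - \mu z y \rangle \subset |3 \breve{A}|$, which exploits the \emph{third} defining equation of $\Gamma$ from Lemma~\ref{lem:110breveC}(2). Its base locus is exactly $\Gamma \cup \Delta$ with $\Delta = (u = y = z = w = 0)$, so only the single curve $\Delta$ needs to be handled in condition (3) (with $\Delta_1 = \Delta$), and the argument is uniform in $(\mu, \nu, \xi)$ — the case split you anticipate never arises.

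Two further points. First, your proposed use of $K_{\tilde{S}} \sim (\breve{\upsilon}|_{\tilde{S}})^* K_S$ is only valid for a surface $S \in |\breve{A}|$ with $\ord_{\breve{\mcE}} (S) = 1/2$, as in Lemma~\ref{lem:110exclC1}; for the general member of the degree-$3$ pencil one has $\breve{\upsilon}^* S = \tilde{S} + \frac{5}{2} \breve{\mcE}$ and hence $K_{\tilde{S}} = (\breve{\upsilon}|_{\tilde{S}})^* K_S - 2 E$, so the adjunction-plus-singularities route to $(\tilde{\Gamma}^2)$ does not transfer verbatim. The paper instead computes $(E \cdot \tilde{\Gamma}) = 1/2$ and $(E \cdot \tilde{\Delta}) = 1/3$ directly as quintuple intersection numbers on the ambient toric blow-up $\breve{\mbU}$, using that $\tilde{\Gamma}$ and $\tilde{\Delta}$ are complete intersections of proper transforms of toric divisors, and then gets $(\Gamma \cdot \Delta) = 1/2$ from $(\tilde{\Gamma} \cdot \tilde{\Delta}) = 0$ and the pullback relation $(\breve{\upsilon}|_{\tilde{S}})^* \Gamma = \tilde{\Gamma} + \frac{3}{2} E$. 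Second, the smoothness of the general member of $\mcM$ away from $\{\breve{\msq}, \breve{\msr}\}$ has to be checked (the paper does this with a Jacobian computation along $\Delta$); your outline omits this, and it is needed to make sense of the intersection theory on $\tilde{S}$. As written, the proposal is a plausible plan rather than a proof, and its central unverified step — the identification of $\Bs \mcM$ for a system that actually satisfies the hypotheses of Lemma~\ref{lem:exclCint} — is precisely where the paper's argument does its real work.
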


\begin{proof}
By Lemma \ref{lem:110breveC}, we have
\[
\Gamma = (u = z - \lambda y^2 = w - \mu z y = v - \nu t y^3 - \xi y^5 = 0)
\]
for some $\lambda \ne 0, \mu, \nu, \xi \in \mbC$.
Let $\mcM$ be the linear system on $\breve{X}$ generated by the sections $u^3$ and $w - \mu z y$.
The base locus of $\mcM$ is $\Gamma \cup \Delta$, where $\Delta := (u = y = z = w = 0)$ is an irreducible and reduced curve of degree $1/10$.
Let $S \in \mcM$ be a general member which is cut out by the equation $w - \mu z y - \eta u^3 = 0$ on $\breve{X}$ for a general $\eta \in \mbC$, and set $S := (u = 0)_X$.
Then we have $T|_S = \Gamma + 16 \Delta$.

We claim that $S$ is smooth outside $\{\breve{q}, \breve{\msr}\}$.
It is clear that $S$ is smooth outside $\Bs \mcM = \Gamma \cup \Delta$.
Since the scheme-theoretic intersection $S \cap T$ is reduced along $\Gamma$ and $\Gamma$ is a quasi-smooth curve, $S$ is smooth along $\Gamma \setminus (\Gamma \cap \Delta) = \Gamma \setminus \{\breve{\msq}\}$.
Let $J_{C_S}$ be the Jacobi matrix of the affine cone $C_S \subset \mbA^6$.
Then, for the restriction of $J_{C_S}$ to the affine cone $(u = y = z = w = 0)$ of $\Delta$, we have
\[
J_{C_S}|_{C_{\Delta}} =
\begin{pmatrix}
v & 0 & 0 & 0 & 0 & 0 \\
t^3 & 0 & v & 0 & 0 & 0 \\
0 & 0 & 0 & 0 & 1 & 0
\end{pmatrix}.
\]  
This shows that $S$ is quasi-smooth, and hence smooth, along $\Delta \setminus \{\breve{q}\}$.
This proves the claim.

Let $\breve{\Upsilon} \colon \breve{\mbU} \to \breve{\mbP}$ be the weighted blow-up of $\breve{\mbP}$ at $\breve{\msq}$ with $\wt (u, y, z, w, v) = (7, 1, 4, 5, 3)$ with exceptional divisor $\breve{\mbE}$.
Note that $\breve{\Upsilon}|_{\breve{\mcZ}} \colon \breve{\mcZ} \to \breve{X}$ is the divisorial contraction $\breve{\upsilon}$, where $\breve{\mcZ}$ is the proper transform of $\breve{X}$ on $\breve{\mbU}$.
Let $\tilde{S}$, $\tilde{T}$, $\tilde{\Gamma}$, and $\tilde{\Delta}$ be the proper transforms of $S$, $T$, $\Gamma$, and $\Delta$ on $\breve{\mbU}$.
We set $E := \breve{\mcE}|_{\tilde{S}}$, where we recall that $\breve{\mcE} = \breve{\mbE}|_{\breve{\mcZ}}$ is the $\breve{\upsilon}$-exceptional divisor.
We have $\breve{\upsilon}^*S = \tilde{S} + \frac{5}{2} E$, and hence $K_{\tilde{S}} = (\breve{\upsilon}|_{\tilde{S}})^* K_S - 2 E$.

We will show that $(E \cdot \tilde{\Gamma}) = 1/2$ and $(E \cdot \tilde{\Delta}) = 1/3$.
For a quasi-homogeneous polynomial $e = e (u, y, z, t, w, v)$, we set $\mbD_e := (e = 0) \subset \breve{\mbP}$ which is a Weil divisor on $\breve{\mbP}$ and denote by $\tilde{\mbD}_e$ its proper transform on $\mbU$.
We have 
\[
\begin{split}
\Gamma &= \mbD_u \cdot \mbD_{z - \lambda y^2} \cdot \mbD_{w - \mu z y} \cdot \mbD_{v - \nu t y^3 - \xi y^5}, \\
\Delta &= \mbD_u \cdot \mbD_y \cdot \mbD_z \cdot \mbD_w.
\end{split}
\]
We have a natural isomorphism $\breve{\mbE} \cong \mbP (7_u, 1_y, 4_z, 5_w, 3_v)$ and under this identification we have
\[
\begin{split}
& \tilde{\mbD}_u \cap \tilde{\mbD}_{z - \lambda y^2} \cap \tilde{\mbD}_{w - \mu z y} \cap \tilde{\mbD}_{v - \nu t y^3 - \xi y^5} \cap \breve{\mbE} = \{\msp_z\}, \\
& \tilde{\mbD}_u \cap \tilde{\mbD}_y \cap \tilde{\mbD}_z \cap \tilde{\mbD}_w \cap \breve{\mbE} = \{\msp_v\},
\end{split}
\]
where $\msp_z = (0\!:\!0\!:\!1\!:\!0\!:\!0) \in \breve{\mbE}$ and $\msp_v = (0\!:\!0\!:\!0\!:\!0\!:\!1) \in \breve{\mbE}$.
It follows that 
\[
\begin{split}
\tilde{\Gamma} &= \tilde{\mbD}_u \cdot \tilde{\mbD}_{z - \lambda y^2} \cdot \tilde{\mbD}_{w - \mu z y} \cdot \tilde{\mbD}_{v - \nu t y^3 - \xi y^5}, \\
\tilde{\Delta} &= \tilde{\mbD}_u \cdot \tilde{\mbD}_y \cdot \tilde{\mbD}_z \cdot \mbD_w.
\end{split}
\]
Let $\breve{\mbA}$ be the Weil divisor class on $\breve{\mbP}$ which is the ample generator of $\Cl (\breve{\mbP}) \cong \mbZ$ so that $\breve{\mbA}|_{\breve{X}} = \breve{A}$.
We have
\[
\begin{split}
(\tilde{\Gamma} \cdot E) &= (\tilde{\Gamma} \cdot \breve{\mbE}) \\
&= (\tilde{\mbD}_u \cdot \tilde{\mbD}_{z - \lambda y^2} \cdot \tilde{\mbD}_{w - \mu z y} \cdot \tilde{\mbD}_{v - \nu t y^3 - \xi y^5} \cdot \breve{\mbE}) \\
&= (\breve{\Upsilon}^*\breve{\mbA} - \frac{7}{2} \breve{\mbE} \cdot 2 \breve{\Upsilon}^*\breve{\mbA} - \breve{\mbE} \cdot 3 \breve{\Upsilon}^*\breve{\mbA} - \frac{5}{2} \breve{\mbE} \cdot 5 \breve{\Upsilon}^*\breve{\mbA} - \frac{3}{2} \breve{\mbE} \cdot \breve{\mbE}) \\
&= \frac{7 \cdot 5 \cdot 3}{2^3} (\breve{\mbE}^5) = \frac{1}{2},
\end{split}
\]
and similarly
\[
\begin{split}
(\tilde{\Delta} \cdot E) &= (\breve{\Upsilon}^*\breve{\mbA} - \frac{7}{2} \breve{\mbE} \cdot \breve{\Upsilon}^*\breve{\mbA} - \frac{1}{2} \breve{\mbE} \cdot 2 \breve{\Upsilon}^*\breve{\mbA} - 2 \breve{\mbE} \cdot 3 \breve{\Upsilon}^*\breve{\mbA} - \frac{5}{2} \breve{\mbE}) \\
&= \frac{7 \cdot 2 \cdot 5}{2^3} (\breve{\mbE}^5) = \frac{1}{3}.
\end{split}
\]

We write $(\breve{\upsilon}|_{\tilde{S}})^*\Gamma = \tilde{\Gamma} + m E$.
Then by the computation
\[
0 = (E \cdot (\breve{\upsilon}|_{\tilde{S}})^*\Gamma) = (E \cdot \tilde{\Gamma}) + m (E^2) = \frac{1}{2} - \frac{1}{3} m,
\]
we have $m = 3/2$, that is, $(\breve{\upsilon}|_{\tilde{S}})^*\Gamma = \tilde{\Gamma} + \frac{3}{2} E$.
Similarly we have $(\breve{\upsilon}|_{\tilde{S}})^*\Delta = \tilde{\Delta} + E$.
Note that $(\tilde{\Gamma} \cdot \tilde{\Delta}) = 0$ since they are disjoint.
We have 
\[
(\Gamma \cdot \Delta) = (\tilde{\Gamma} \cdot (\breve{\upsilon}|_{\tilde{S}})^*\Delta) = (\tilde{\Gamma} \cdot \tilde{\Delta}) + (\tilde{\Gamma} \cdot E) = \frac{1}{2} > (-K_X \cdot \Delta).
\] 
By Lemma~\ref{lem:exclCint}, $\Gamma$ is not a maximal center.
\end{proof}

\begin{Lem} \label{lem:110exclC}
No curve on $\breve{X}$ is a maximal center.
\end{Lem}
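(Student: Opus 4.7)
The plan is to partition the irreducible and reduced curves $\Gamma \subset \breve{X}$ according to which singular points they meet and to dispatch each stratum using material already developed. Recall that $\Sing(\breve{X}) = \{\breve{\msq}, \breve{\msr}\}$, where $\breve{\msq}$ is a terminal quotient singularity of type $\frac{1}{5}(1,2,3)$ and $\breve{\msr}$ is the $cD/2$ point, and that $(-K_{\breve{X}}^3) = (\breve{A}^3) = 7/10$.

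First I would handle curves disjoint from $\Sing(\breve{X})$: on such a curve $-K_{\breve{X}} \sim \breve{A}$ is Cartier, so $(-K_{\breve{X}} \cdot \Gamma) \geq 1 > 7/10$, and Lemma~\ref{lem:mtdexclC} excludes $\Gamma$. Second, for curves passing through the quotient point $\breve{\msq}$, Kawamata's theorem \cite{Kawamata} forbids a divisorial contraction of a 3-fold whose center is a curve through a terminal cyclic quotient point, so such a $\Gamma$ cannot be a maximal center.

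The remaining case is when $\Gamma$ meets $\breve{\msr}$ but not $\breve{\msq}$. Since $\breve{\msr}$ has index $2$, the divisor $2\breve{A}$ is Cartier at $\breve{\msr}$, and it is Cartier along the rest of $\Gamma$ as well, so $(\breve{A} \cdot \Gamma) \in \tfrac{1}{2}\mbZ_{>0}$. If $(\breve{A} \cdot \Gamma) \geq 1$, then Lemma~\ref{lem:mtdexclC} applies once more. Otherwise $(\breve{A} \cdot \Gamma) = 1/2$, which is precisely the setting of Lemma~\ref{lem:110breveC}: $\Gamma$ is one of the two complete intersection curves classified there, and both of these have already been excluded as maximal centers in Lemmas~\ref{lem:110exclC1} and \ref{lem:110exclC2} respectively.

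The only step requiring genuine work is the degree $1/2$ case, but that work has been absorbed into the preceding two lemmas (via the divisorial contraction $\breve{\upsilon}$, intersection computations on the proper transform of a general member of a carefully chosen linear system, and an application of Lemma~\ref{lem:exclCint}). Consequently the present lemma amounts to the bookkeeping above, and no additional obstacle arises.
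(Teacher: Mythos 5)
Your proposal is correct and follows essentially the same route as the paper: exclude curves through the quotient point $\breve{\msq}$ via Kawamata, curves of degree at least $(-K_{\breve{X}}^3)=7/10$ via Lemma~\ref{lem:mtdexclC}, and reduce the remaining case to degree $1/2$ curves through the $cD/2$ point, which are classified in Lemma~\ref{lem:110breveC} and excluded in Lemmas~\ref{lem:110exclC1} and \ref{lem:110exclC2}. No gaps.
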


\begin{proof}
Let $\Gamma$ be an irreducible and reduced curve on $\breve{X}$.
We exclude $\Gamma$ as a maximal center.
We may assume that $\Gamma$ does not pass through the $\frac{1}{5} (1, 2, 3)$ singular point $\breve{\msq}$ because otherwise there is no divisorial contraction centered along $\Gamma$.
If $\deg \Gamma \ge (-K_{\breve{X}}^3) = 7/10$, then $\Gamma$ is not a maximal center by Lemma~\ref{lem:mtdexclC}.
Hence we may assume $\deg \Gamma < 1$.
In this case $\Gamma$ passes through the $cD/2$ point $\breve{\msr}$ and $\deg \Gamma = 1/2$.
By Lemmas \ref{lem:110breveC}, \ref{lem:110exclC1}, and \ref{lem:110exclC2}, $\Gamma$ is not a maximal center.
\end{proof}

\subsubsection{Exclusion of smooth points}

We set $\Delta := (u = y = z = w = 0) \subset \breve{X}$ which is an irreducible and reduced curve.

\begin{Lem} \label{lem:110exclsmpt}
Let $\msp$ be a smooth point of $\breve{X}$ which is not contained in the curve $\Delta$.
Then $\msp$ is not a maximal center.
\end{Lem}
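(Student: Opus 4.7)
The plan is to apply Lemma \ref{lem:mtdexclsmpt}, so the task reduces to exhibiting, for each smooth point $\msp \notin \Delta$, a $\msp$-isolating set of quasi-homogeneous polynomials of maximal degree $m$ with $m \le 4/(\breve{A}^3) = 40/7$. Since $\iota_{\breve X} = 1$, it suffices to find a $\msp$-isolating set of degree at most $5$.

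Write $\msp = (\alpha_u\!:\!\alpha_y\!:\!\alpha_z\!:\!\alpha_t\!:\!\alpha_w\!:\!\alpha_v)$. The argument splits into two cases according to whether the degree-$1$ coordinates vanish at $\msp$.

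\textbf{Case 1:} $\alpha_u \ne 0$ or $\alpha_y \ne 0$. By symmetry we may assume $\alpha_u \ne 0$, and after scaling take $\alpha_u = 1$. The set
\[
\{\, y - \alpha_y u,\ z - \alpha_z u^2,\ t - \alpha_t u^2,\ w - \alpha_w u^3,\ v - \alpha_v u^5 \,\}
\]
has maximal degree $5$, and its common zero in $\breve{\mbP}$ is exactly $\{\msp\}$: on the locus $u \ne 0$ these equations force every coordinate to match those of $\msp$, while on $u = 0$ they force all other coordinates to vanish, yielding no point of $\breve{\mbP}$. Hence $5\breve{A}$ isolates $\msp$, and $5 \le 40/7$.

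\textbf{Case 2:} $\alpha_u = \alpha_y = 0$. Using the explicit equations of $\breve{X}$ in Lemma \ref{lem:110defeqbreve} and the fact that every $\grave{e}(y,u)$ lies in the ideal $(y,u)$, the substitution $u = y = 0$ reduces the two defining equations to $w^2 + z^3 = 0$ and $vz = 0$. Thus
\[
(u = y = 0) \cap \breve{X} \;=\; \Delta \;\cup\; C, \qquad C := (u = y = v = w^2 + z^3 = 0).
\]
Since $\msp \notin \Delta$, we have $\msp \in C$; then $\alpha_w^2 = -\alpha_z^3$, so $\alpha_z = 0$ would force $\alpha_w = 0$ and put $\msp$ in $\Delta$, contradiction. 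Hence $\alpha_z \ne 0$, and the degree-$2$ polynomial $\alpha_z t - \alpha_t z$ vanishes at $\msp$. The set $\{u, y, \alpha_z t - \alpha_t z\}$ has maximal degree $2$, and its common zero in $\breve{X}$ is contained in $\{\breve{\msq}\} \cup \bigl(C \cap (\alpha_z t - \alpha_t z = 0)\bigr)$: on $\Delta$ the extra equation forces $t = 0$, picking out $\breve{\msq}$, and on $C$ it cuts out a zero-dimensional subscheme since $\alpha_z t - \alpha_t z$ is not identically zero on the irreducible curve $C$. Thus $\msp$ is an isolated component, $2 \breve{A}$ isolates $\msp$, and $2 \le 40/7$.

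In both cases Lemma \ref{lem:mtdexclsmpt} shows $\msp$ is not a maximal center. There is no serious obstacle: the only verification requiring care is the decomposition $(u = y = 0) \cap \breve X = \Delta \cup C$ in Case 2, which is immediate from the defining equations once one notices that the constant terms of the $\grave{e}$'s vanish.
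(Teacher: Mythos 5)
Your proof is correct and follows essentially the same route as the paper: the same case split on whether a degree-one coordinate is nonzero at $\msp$, explicit isolating sets, the decomposition $(u=y=0)_{\breve{X}} = \Delta \cup C$, and an appeal to Lemma~\ref{lem:mtdexclsmpt}. The only difference is that in the second case the paper uses the degree-$5$ isolating set $\{u, y, t+\delta z, v\}$ while you use the degree-$2$ set $\{u, y, \alpha_z t - \alpha_t z\}$; both satisfy the required bound $\le 4/(\breve{A}^3)=40/7$, so the variation is immaterial.
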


\begin{proof}
Let $\msp = (\alpha\!:\!\beta\!:\!\gamma\!:\!\delta\!:\!\varepsilon\!:\!\eta)$ be a smooth point of $\breve{X}$ which is not contained in $\Delta$.

We claim that $5 \breve{A}$ isolates $\msp$.
If $\alpha \ne 0$, then the set 
\[
\{\alpha y - \beta u, \alpha^2 z - \gamma u, \alpha^2 t - \delta u^2, \alpha^3 w - \varepsilon u^3, \alpha^5 v - \eta u^5\}
\]
isolates $\msp$, and hence $5 \breve{A}$ is a $\msp$-isolating class.
The same conclusion holds if $\beta \ne 0$.
Suppose that $\alpha = \beta = 0$.
Then since $(u = y = 0)_X = \Gamma \cup \Delta$ set-theoretically, where $\Gamma = (u = y = v = w^2 + z^3 = 0)$, and since $\msp \notin \Delta$, we have $\msp \in \Gamma \setminus \{\breve{r}\}$.
In particular we can write $\msp = (0\!:\!0\!:\!-1\!:\!\delta\!:\!0\!:\!1)$ for some $\delta \in \mbC$.
In this case the set
\[
\{u, y, t + \delta z, v\}
\]
isolates $\msp$, and hence $5 \breve{A}$ is a $\msp$-isolating set.
This proves the claim.
By Lemma \ref{lem:mtdexclsmpt}, $\msp$ is not a maximal center since $5 < 4/(\breve{A}^3) = 40/7$.
\end{proof}

\begin{Prop} \label{prop:110ELbreveD}
We have 
\[
\EL (\breve{X}) \setminus \bigcup_{\msp \in \breve{X}_{\operatorname{reg}} \cap \Delta} \EL_{\msp} (\breve{X}) = \{\breve{\sigma}^{-1}, \rho\}.
\]
\end{Prop}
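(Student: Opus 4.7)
The plan is to assemble the classification of maximal extractions of $\breve{X}$ from the exclusion lemmas already proved and the uniqueness result for the $cD/2$ point. Every elementary link from $\breve{X}$ is initiated by a maximal extraction, and the center of any maximal extraction is a maximal center. Therefore, it suffices to classify all maximal centers and, for each, determine the divisorial contraction centered there.

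First I would invoke Lemma~\ref{lem:110exclC} to rule out curves as maximal centers, so any maximal extraction is centered at a point. Then Lemma~\ref{lem:110exclsmpt} rules out smooth points of $\breve{X}$ not lying on $\Delta = (u = y = z = w = 0)$. Since the statement explicitly removes links centered at smooth points lying on $\Delta$, the only remaining possibilities for maximal centers are the two singular points of $\breve{X}$, namely the $cD/2$ point $\breve{\msr} = \breve{\msp}_t$ and the terminal cyclic quotient point $\breve{\msq} = \breve{\msp}_v$ of type $\tfrac{1}{5}(1,2,3)$.

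Next, for each of these two singular points I would identify the unique divisorial contraction centered there. For $\breve{\msq}$, by \cite{Kawamata} the Kawamata blow-up is the unique divisorial contraction centered at a terminal cyclic quotient singularity; by construction this is precisely the contraction that initiates the elementary link $\rho \colon \breve{X} \ratmap \hat{X}$. For $\breve{\msr}$, Lemma~\ref{lem:No110divcont} establishes that $\breve{\upsilon} \colon \breve{\mcZ} \to \breve{X}$ is the unique divisorial contraction centered at this $cD/2$ point; by construction this is the contraction that initiates $\breve{\sigma}^{-1}$. Since both $\breve{\sigma}^{-1}$ and $\rho$ are already known to be elementary links, the two contractions above are in fact maximal extractions, and each initiates a unique elementary link.

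Combining the three steps gives the desired equality. The main point of interest in this proof is not a new argument but the verification that the preceding results \emph{cover every case} outside the exceptional set removed from $\EL(\breve{X})$: the only subtlety is ensuring that a smooth point lying on $\Delta$ is the only possible maximal center left unaddressed by the previous lemmas, which follows because $\breve{X}_{\operatorname{reg}} \cap \Delta$ is exactly the complement, inside the singular and smooth loci of $\breve{X}$, of the set handled by Lemmas~\ref{lem:110exclC}, \ref{lem:110exclsmpt}, \ref{lem:No110divcont}, and \cite{Kawamata}.
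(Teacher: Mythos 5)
Your proposal is correct and follows essentially the same route as the paper, whose proof is precisely the citation of Lemmas~\ref{lem:110exclC}, \ref{lem:110exclsmpt}, and \ref{lem:No110divcont}, with the uniqueness of the Kawamata blow-up at the quotient point $\breve{\msq}$ (and the fact that it initiates $\rho$) left implicit from the earlier construction of the link $\rho$. You have merely made that last step explicit, which is a harmless elaboration rather than a different argument.
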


\begin{proof}
This follows from Lemma~\ref{lem:No110divcont}, \ref{lem:110exclC}, and \ref{lem:110exclsmpt}. 
\end{proof}

\begin{Lem} \label{lem:110SgenD}
Let $\msp$ be a smooth point of $\breve{X}$ contained in the curve $\Delta := (u = y = z = w = 0) \subset \breve{X}$, and let $\mcM \sim_{\mbQ} n \breve{A}$ be a movable linear system on $\breve{X}$ such that $(X, \frac{1}{n} \mcM)$ is canonical at the generic point of $\Delta$.
Then $\msp$ is not a maximal center with respect to $\mcM$.
\end{Lem}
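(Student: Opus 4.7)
The plan is to argue by contradiction, following the double inversion-of-adjunction strategy used in the proof of Lemma~\ref{lem:excl110nspt}. Suppose $\msp$ is a maximal center with respect to $\mcM \sim_\mbQ n \breve{A}$. The canonical condition at the generic point of $\Delta$ translates to $\mult_\Delta M \le n$ for a general $M \in \mcM$, and since $\msp$ is a smooth point of $\breve{X}$ on $\Delta$, we have $\msp \in \Delta \setminus \{\breve{\msr}, \breve{\msq}\}$.

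First I would choose $S = (u - \lambda y = 0)_{\breve{X}} \in |\breve{A}|$ for a general $\lambda \in \mbC$ as a test surface. Every such $S$ contains $\Delta$ and passes through $\msp$, and Jacobian computations using the defining equations in Lemma~\ref{lem:110defeqbreve} show that $S$ is quasi-smooth along $\Delta \setminus \{\breve{\msr}, \breve{\msq}\}$ and in particular smooth at $\msp$. Since $K_{\breve{X}} + S \sim 0$, adjunction gives $K_S \sim_\mbQ 0$, so $S$ is a K3-type surface whose only singularities lie at $\breve{\msq}$ and $\breve{\msr}$. Writing
\[
\tfrac{1}{n} M|_S \;=\; \gamma \Delta + C, \qquad \gamma = \tfrac{1}{n}\mult_\Delta M \le 1,
\]
with $C$ effective and $\Delta \not\subset \Supp C$, inversion of adjunction applied at $\msp \in S \subset \breve{X}$ makes $(S, \frac{1}{n}M|_S)$, and hence $(S, \Delta + C)$, non log canonical at $\msp$. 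A second inversion of adjunction along $\Delta \subset S$ forces $(\Delta, C|_\Delta)$ to be non log canonical at $\msp$, so $\mult_\msp(C|_\Delta) > 1$ and a fortiori $(C \cdot \Delta)_S > 1$.

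On the other hand, $(\frac{1}{n}M|_S \cdot \Delta)_S = (\breve{A} \cdot \Delta)_{\breve{X}} = 1/10$ gives the identity
\[
(C \cdot \Delta)_S \;=\; \tfrac{1}{10} - \gamma\,(\Delta^2)_S,
\]
so combined with $\gamma \le 1$, the desired contradiction follows once one verifies $(\Delta^2)_S \ge -9/10$.

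The main obstacle is this intersection-number computation on the singular surface $S$. The curve $\Delta \cong \mbP^1$ passes through the two singular points $\breve{\msq}$ and $\breve{\msr}$, so the adjunction formula on $S$ takes the form
\[
-2 \;=\; (K_S + \Delta)\cdot\Delta \;=\; (\Delta^2)_S - \delta_{\breve{\msq}} - \delta_{\breve{\msr}},
\]
with positive orbifold corrections $\delta_\bullet$ determined by the local singularity type of $S$ and the tangent direction of $\Delta$. A direct analysis in the $v$-chart identifies $S$ at $\breve{\msq}$ as a $\frac{1}{5}(2,3)$ cyclic quotient with $\Delta$ tangent to the weight-$2$ direction, giving $\delta_{\breve{\msq}} = 3/5$; a more delicate local analysis at $\breve{\msr}$, where the generic hyperplane section of the $cD/2$ germ cuts out a Du Val singularity of $S$, should yield $\delta_{\breve{\msr}} \ge 1/2$. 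Together these two contributions would give $(\Delta^2)_S \ge -2 + 3/5 + 1/2 = -9/10$, completing the proof.
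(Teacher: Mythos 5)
Your overall strategy coincides with the paper's: restrict to a general $S$ in the pencil $|\breve{A}|$ generated by $u,y$, apply inversion of adjunction twice, and contradict $\mult_{\msp}(C|_{\Delta})>1$ with an upper bound on $(C\cdot\Delta)$ coming from $(\Delta^2)_S$. The place where you diverge --- the computation of $(\Delta^2)_S$ --- is exactly where your argument has a genuine gap. You reduce everything to the bound $(\Delta^2)_S\ge -9/10$, i.e.\ $\delta_{\breve{\msq}}+\delta_{\breve{\msr}}\ge 11/10$, and then (i) assert $\delta_{\breve{\msq}}=3/5$, which is incorrect: at the $\frac15(1,2,3)$ point $\breve{\msq}=\breve{\msp}_v$ the surface $S$ acquires a $\frac15(2,3)$ quotient singularity and $\Delta$ is a coordinate axis through it, so the different coefficient is $1-\frac15=\frac45$ (the formula $1-\frac1r$ does not depend on which axis the curve lies along); and (ii) leave the load-bearing claim $\delta_{\breve{\msr}}\ge 1/2$ at the $cD/2$ point as ``should yield.'' That claim is not automatic: a non-Cartier Weil divisor through a quotient point can be smooth, so you must actually verify that $S$ is singular at $\breve{\msr}$ and control the different of $(S,\Delta)$ there; the true value turns out to be $\delta_{\breve{\msr}}=1$ (the pair is lc but not plt at $\breve{\msr}$), and extracting even the weaker bound $\ge 1/2$ requires a genuine local analysis of the $cD/2$ germ cut by $u-\lambda y$. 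Since your claimed numbers $3/5+1/2$ land exactly on the threshold $11/10$, there is no slack to absorb either issue.

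The paper sidesteps this entirely: it sets $T=(u=0)_X$, observes $T|_S=\Gamma+2\Delta$ with $\Gamma=(u=y=v=w^2+z^3=0)$, and uses the value $(\Gamma\cdot\Delta)_S=\tfrac12$ already computed in the proof of Lemma~\ref{lem:110exclC1} (by passing to the Kawamata-type blow-up of $\breve{\msr}$, where all singularities are quotient and the orbifold adjunction you want to use becomes legitimate). Intersecting $T|_S$ with $\Delta$ then gives $(\Delta^2)_S=-\tfrac15$ exactly, and the rest of the argument is the one you wrote, with $(C\cdot\Delta)\le \tfrac1{10}+\tfrac15\delta\le\tfrac3{10}<1$. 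If you want to salvage your route, the cleanest fix is precisely this: compute $(\Delta^2)_S$ on the resolution of the $cD/2$ point rather than trying to read off the different downstairs.
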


\begin{proof}
Suppose that a smooth point $\msp$ of $X$ contained in $\Delta$ is a maximal center with respect to a movable linear system $\mcM \sim_{\mbQ} n \breve{A}$ such that the pair $(X, \frac{1}{n} \mcM)$ is canonical at the generic point of $\Delta$.

Let $S \in |\breve{A}|$ be a general member and $T := (u = 0)_X$.
As in the proof of Lemma~\ref{lem:110exclC1}, $T|_S = \Gamma + 2 \Delta$, where $\Gamma = (u = y = v = w^2 + z^3 = 0)$, $S$ is smooth outside $\{\breve{\msq}, \breve{\msr}\}$, and $(\Gamma \cdot \Delta) = \frac{1}{2}$.
By taking intersection number of $T|_S = \Gamma + 2 \Delta$ and $\Delta$, we obtain $(\Delta^2) = - 1/5$. 

By the inversion of adjunction, the pair $(S, \frac{1}{n} \mcM|_S)$ is not log canonical at $\msp$ and it is log canonical at the generic point of $\Delta$.
Let $M \in \mcM$ be a general member and write 
\[
M_S := \frac{1}{n} M|_S = \gamma \Gamma + \delta \Delta + C,
\] 
where $C$ is an effective divisor on $S$ which contains neither $\Gamma$ nor $\Delta$ in its support.
Note that $\delta \le 1$ since $(S, M_H)$ is log canonical at the generic point of $\Delta$.  
It follows that the pair $(S, \gamma \Gamma + \Delta + C)$ is not log canonical at $\msp$.
By the inversion of adjunction, the pair $(\Delta, (\gamma \Gamma + C)|_{\Delta})$ is not log canonical at $\msp$.
Hence $\mult_{\msp} (\gamma \Gamma + C)|_{\Delta}) > 1$.
On the other hand, we have
\[
(\gamma \Gamma + C \cdot \Delta) = (M_S - \delta \Delta \cdot \Delta) = \frac{1}{10} + \frac{1}{5} \delta \le \frac{3}{10}.
\]
This is a contradiction and $\msp$ cannot be a maximal center with respect to $\mcM$.
\end{proof}

Theorem~\ref{thm:main2}, and hence Theorem~\ref{thm:main}, for Family \textnumero~110 follow from Propositions~\ref{prop:Sgen}, \ref{prop:110EL}, \ref{prop:110ELhat}, \ref{prop:110ELbreveD}, and Lemma~\ref{lem:110SgenD}.

%%%%%%%%%%%%%%%%%%%%%%%%%%%%%%%%%%

%%%%%%%%%%%%%%%%%%%%%%%%%%%%%%%%%%
%%%%%%%%%%%%%%%%%%%%%%%%%%%%%%%%%%
\end{document}